\newcommand{\R}{\mathbb{R}}
\newcommand{\D}{\mathbb{D}}
\newcommand{\Z}{\mathbb{Z}}
\newcommand{\E}{\mathbb{E}}
\newcommand{\EE}{\mathbb{E}}
\newcommand{\HH}{\mathcal{H}}
\newcommand{\lipone}{\text{\rm Lip(1)}}
\newcommand{\Prob}{\mathbb{P}}
\providecommand{\abs}[1]{\left\lvert#1\right\rvert}
\providecommand{\norm}[1]{\lVert#1\rVert}
\newtheorem{lemma}{Lemma}
\newtheorem{theorem}{Theorem}
\newtheorem{corollary}{Corollary}
\theoremstyle{definition}
\newtheorem{remark}{Remark}
\numberwithin{equation}{chapter}
\numberwithin{theorem}{chapter}
\numberwithin{lemma}{chapter}
\numberwithin{remark}{chapter}
\title {Stein's method for steady-state diffusion approximations}
\author {Anton Braverman}
\begin{document}

\maketitle
\makecopyright

\begin{abstract}
Diffusion approximations have been a popular tool for performance analysis in queueing theory, with the main reason being tractability and computational efficiency. This dissertation is concerned with establishing theoretical guarantees on the performance of steady-state diffusion approximations of queueing systems. We develop a modular framework based on Stein's method that allows us to establish error bounds, or convergence rates, for the approximations. We apply this framework three queueing systems: the Erlang-C, Erlang-A, and $M/Ph/n+M$ systems. 

The former two systems are simpler and allow us to showcase the full potential of the framework. Namely, we prove that both
  Wasserstein and Kolmogorov distances between the stationary
  distribution of a normalized customer count process, and that of an
  appropriately defined diffusion process decrease at a rate of
  $1/\sqrt{R}$, where $R$ is the offered load. Futhermore, these error
  bounds are \emph{universal}, valid in any load condition from
  lightly loaded to heavily loaded. For the Erlang-C model, we also show that a diffusion approximation with state-dependent diffusion coefficient can achieve a rate of convergence of $1/R$, which is an order of magnitude faster when compared to approximations with constant diffusion coefficients. 
\end{abstract}

\begin{biosketch}
Anton received his Bachelors degree in Statistics and Mathematics from the University of Toronto in 2012. 
\end{biosketch}


\begin{acknowledgements}
I am grateful to my parents for raising me to set high standards in life, and to my grandfather for being uncompromising in his attitude towards education. I am eternally grateful to Jim Dai for taking me on as an apprentice, and showing me how to stand on my own two feet. An even deeper sentiment goes out to my friends, who always provided me with an escape to normality whenever it was needed.
\\\\
\noindent Half of this research was fueled by my grandmother's delicious cooking. 
\end{acknowledgements}

\contentspage

\normalspacing \setcounter{page}{1} \pagenumbering{arabic}
\pagestyle{cornell} \addtolength{\parskip}{0.5\baselineskip}

\chapter{Introduction} \label{chap:intro}
Diffusion approximations have been a popular tool for performance analysis in queueing theory, with the main reason being tractability and computational efficiency. 
As an example, in  \cite{DaiHe2013}, an algorithm was developed to compute the stationary distribution of the diffusion approximation of the $M/H_2/500+M$ system, which is a many-server queue with $500$ servers, Poisson arrivals, hyper-exponential service times and customer abandonment. 
The approximation is remarkably accurate; see,
for example, Figure 1 there. It was demonstrated there that computational efficiency, in terms of both time and memory, can be
achieved by diffusion approximations. For example, in the $M/H_2/500+M$
system, it took around 1
hour and peak memory usage of 5 GB to compute the stationary distribution of the customer count.  On the same computer,  it took less than 1 minute to compute the
stationary distribution of the corresponding diffusion approximation, and peak memory usage was less than 200 MB. This dissertation is concerned with establishing error bounds on steady-state diffusion approximations of queueing systems.

 The main technical driver of our results is a mathematical framework known as Stein's method. Stein's method is a powerful method used for studying approximations of probability distributions, and is best known for its ability to establish convergence rates. It has been widely used
in probability, statistics, and their wide range of applications such
as bioinformatics; see, for example, the survey papers \cite{Ross2011,
  Chat2014}, the recent book \cite{ChenGoldShao2011} and the
references within. Applications of Stein's method always involve some unknown distribution to be approximated, and an approximating distribution. For instance, the first appearance of the method  in \cite{Stei1972} involved the sum of identically distributed dependent random variables as the unknown, and the normal as approximating distribution. Other approximating distributions include the Poisson \cite{Chen1975}, binomial \cite{Ehm1991}, and multinomial \cite{Loh1992} distributions, just to name a few. To begin our discussion, we provide an example to illustrate the type of result that will be frequently encountered in this document.

\section{A Typical Result} \label{sec:INtypical}
Consider the Erlang-A and Erlang-C queuing systems. Both systems have $n$ homogeneous servers that serve customers in a first-come-first-serve manner. Customers arrive according to a Poisson process with rate $\lambda$, and customer service times are assumed to be i.i.d.\ having exponential distribution with mean $1/\mu$. In the Erlang-A system, each customer
has a patience time and when his waiting time in queue exceeds his
patience time, he abandons the queue without service; the patience
times are assumed to be i.i.d.\ having exponential distribution with mean
$1/\alpha$. We consider the birth-death process
\begin{align}
X=\{X(t), t\ge 0\}, \label{eq:CTMCunscaleddef}
\end{align} 
 where $X(t)$ is the number of customers in the system at time $t$.  In the Erlang-A system, $\alpha$ is assumed to be positive and
therefore the mean patience time is finite. This guarantees that the CTMC $X$ is positive recurrent. In the Erlang-C system, $\alpha=0$, and in order for the CTMC to be positive recurrent we need to assume that the offered load to the system, defined as $R = \lambda / \mu$, satisfies
 \begin{equation}
  \label{eq:erlang-cstable}
  R  < n.
\end{equation}
For both Erlang-A and Erlang-C systems, we use $X(\infty)$ to denote the random variable having the stationary distribution of $X$. 

Consider the case when $\alpha = 0$ and \eqref{eq:erlang-cstable} is satisfied. Set 
\begin{align*}
\tilde X(\infty) = (X(\infty) - R) /\sqrt{R},
\end{align*} 
and let $Y(\infty)$ denote a continuous random
variable on $\R$ having density
\begin{equation}
  \label{eq:stddenC}
\kappa \exp\Big(\frac{1}{\mu} {\int_0^xb(y)dy}\Big), \quad x \in \R,
\end{equation}
where $\kappa>0$ is a normalizing constant that makes the density integrate to one,
\begin{equation}
b(x) = \big((x+\zeta)^--\zeta^-\big)\mu \quad \text{ for $x \in \R$}, \quad \text{ and } \quad \zeta =\big(R -n\big)/\sqrt{R}. \label{eq:bandz}
\end{equation}
Although our choice of notation does not make this explicit, we highlight that the random variable $Y(\infty)$ depends on $\lambda, \mu$, and $n$, meaning that we are actually dealing with a \emph{family} of random variables $\{Y^{(\lambda, \mu, n)}(\infty)\}_{(\lambda, \mu, n)}$. The following theorem illustrates the type of result that can be obtained by Stein's method. 
\begin{theorem}
\label{thm:erlangCW}
Consider the Erlang-C system. For all $n \geq 1,\\ \lambda > 0$, and $\mu > 0$ satisfying $1 \leq R < n$,
\begin{equation}
  \label{eq:erlangCW}
d_W(\tilde X(\infty), Y(\infty)) :=  \sup \limits_{h(x) \in {\lipone}}  \big|\E h(\tilde X(\infty)) - \E h(Y(\infty))\big|\leq  \frac{190}{\sqrt{R}},
\end{equation}
where 
\begin{equation*}
\lipone=\{h: \R\to\R, \abs{h(x)-h(y)}\le
\abs{x-y},\ x,y \in \R\}.
\end{equation*}
\end{theorem}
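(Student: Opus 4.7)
My approach is to apply Stein's method, using the generator of the diffusion that has $Y(\infty)$ as its stationary distribution. Since the density in \eqref{eq:stddenC} satisfies $p'(x)/p(x) = b(x)/\mu$, the operator
\[
\mathcal{G}f(x) := \mu f''(x) + b(x)f'(x)
\]
generates a diffusion with stationary density $p$, and hence $\E[\mathcal{G}f(Y(\infty))] = 0$ for a sufficiently rich class of functions $f$. For each $h \in \lipone$, I set up the Stein equation
\[
\mu f_h''(x) + b(x) f_h'(x) = h(x) - \E h(Y(\infty)), \qquad x \in \R,
\]
which is a first-order linear ODE in $f_h'$ and admits an explicit integral solution.

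Next, I would derive Stein-factor bounds on $\|f_h'\|_\infty$, $\|f_h''\|_\infty$, and the essential supremum of $f_h'''$ away from the kink of $b$ at $-\zeta$. Since $h$ is merely Lipschitz, these bounds must be free of any smoothness assumption on $h$; they may depend on $\mu$ and universal constants, but must be uniform in $\lambda$ and $n$. The piecewise linear form of $b$ forces a careful split of the integral representation at $-\zeta$ with explicit tracking of constants in order to land on the numeric value $190$.

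I would then exploit stationarity of the CTMC $X$. Setting $g_h(k) = f_h\big((k - R)/\sqrt{R}\big)$ for $k \in \Z_+$, stationarity yields $\E[\mathcal{A}g_h(X(\infty))] = 0$, where
\[
\mathcal{A}g(k) = \lambda\bigl(g(k+1) - g(k)\bigr) + \mu(k \wedge n)\bigl(g(k-1) - g(k)\bigr)
\]
is the CTMC generator. A second-order Taylor expansion of $g_h(k \pm 1) - g_h(k)$ about $\tilde{x} = (k - R)/\sqrt{R}$, together with the algebraic identity $\lambda - \mu(k \wedge n) = \sqrt{R}\,b(\tilde{x})$ that follows from $\lambda = \mu R$ and the definitions of $b$ and $\zeta$, produces
\[
\mathcal{A}g_h(k) = b(\tilde{x}) f_h'(\tilde{x}) + \frac{\lambda + \mu(k \wedge n)}{2R}\, f_h''(\tilde{x}) + \mathrm{Rem}(k),
\]
where $\mathrm{Rem}(k)$ is a cubic Taylor remainder involving $f_h'''$. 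The leading terms nearly coincide with $\mathcal{G}f_h(\tilde{x})$; the discrepancy between the $f_h''$ coefficients here and in $\mathcal{G}$ is controlled by $\mu|\tilde{x}|/\sqrt{R}$, and the remainder is controlled by $\|f_h'''\|_\infty/\sqrt{R}$. Taking expectations, invoking the Stein factors, and controlling moments such as $\E|\tilde{X}(\infty)|$ uniformly in the load completes the estimate.

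The main obstacle is the second step: pinning down explicit, load-independent Stein factors on $f_h'$, $f_h''$, and the variation of $f_h''$ across the kink of $b$, with constants tight enough to yield the universal value $190$. A secondary challenge is establishing a uniform moment bound on $\tilde{X}(\infty)$ over the entire stable range $1 \leq R < n$ --- a step that can be bootstrapped by applying Stein's method itself to suitable moment-type test functions, but which is essential for making the error bound \emph{universal}.
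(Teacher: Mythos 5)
Your proposal is essentially the paper's approach: set up the Poisson equation for the piecewise-OU generator, convert the Wasserstein discrepancy to $\E[G_{\tilde X} f_h - G_Y f_h]$ via stationarity of the CTMC, Taylor-expand the CTMC generator (using $\lambda - \mu(k\wedge n) = \sqrt{R}\,b(\tilde x)$), and close the estimate with gradient and moment bounds. The bootstrap for moment bounds via Lyapunov test functions is also what the paper does.

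There is one substantive misapprehension, however, and it is load-bearing for universality. You write that the Stein factors $\|f_h'\|_\infty$, $\|f_h''\|_\infty$, $\|f_h'''\|_\infty$ and the moment $\E|\tilde X(\infty)|$ ``must be uniform in $\lambda$ and $n$.'' Neither is in fact uniform over the stable range $1\le R<n$: as the system approaches critical loading, $|\zeta| = (n-R)/\sqrt{R}\to 0$, and the paper's gradient bounds scale like $1/|\zeta|$ on $\{x\le -\zeta\}$ (and $x/|\zeta|$ on $\{x\ge -\zeta\}$), while $\E|\tilde X(\infty)|$ grows like $1/|\zeta|$ as well. Universality does not come from making these quantities individually uniform; it comes from the fact that the gradient bounds and the moment bounds, partitioned by the events $\{\tilde X(\infty)\le -\zeta\}$ and $\{\tilde X(\infty)\ge -\zeta\}$, carry \emph{reciprocal} powers of $|\zeta|$ that cancel in the products that actually appear in the error terms. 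Concretely, the paper proves \emph{restricted} moment bounds such as $\E[|\tilde X(\infty)|\,1(\tilde X(\infty)\le -\zeta)] \le 2|\zeta|$ and $\Prob(\tilde X(\infty)\le -\zeta)\le (2+\delta)|\zeta|$ alongside $O(1)$ restricted second-moment bounds, and pre-combined bounds like $|\zeta|\,\Prob(\tilde X(\infty)\ge -\zeta)\le 7/4$; these $O(|\zeta|)$ factors precisely absorb the $O(1/|\zeta|)$ factors in $f_h''$ and $f_h'''$. Without this region-by-region cancellation, the argument as you have phrased it would yield a bound that degenerates as $R\uparrow n$, defeating the universality you are aiming for. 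Once you replace the goal ``uniform Stein factors and a uniform first-moment bound'' with ``reciprocal $|\zeta|$-scalings on matching regions,'' the rest of your sketch goes through and matches the paper.
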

Several points are worth mentioning. First, we note that Theorem~\ref{thm:erlangCW} is not a limit theorem. Steady-state approximations are usually justified by some kind of limit theorem. That is, one considers a sequence of queueing systems and proves that the corresponding sequence of steady-state distributions converges to some limiting distribution as traffic intensity approaches one, or as the number of servers goes to infinity. In contrast, our theorem holds for any finite parameter choices of $\lambda, n$, and $\mu $ satisfying \eqref{eq:erlang-cstable} and $R \geq 1$. Second, the error bound in \eqref{eq:erlangCW} is \emph{universal}, as it does not assume any relationship between $\lambda,n$, and $\mu$, other than the stability condition \eqref{eq:erlang-cstable} and the condition that $R \geq 1$.  Universal approximations were previously studied in \cite{GlynWard2003, GurvHuanMand2014}. One consequence of universality is that the error bound holds  when  parameters $\lambda,n$, and $\mu$  fall in  one of the 
following asymptotic regimes:
\begin{align*}
&n = \left \lceil R+ \beta R\right \rceil, \quad n = \left \lceil R + \beta \sqrt{R}\right \rceil,  \quad \text{ or }  \quad n = \left \lceil R + \beta \right \rceil,
 \end{align*}
 where $\beta > 0$ is fixed, while  $R \to \infty$.  
The first two parameter regimes above describe the quality-driven (QD), and quality-and-efficiency-driven (QED) regimes, respectively. The last regime is the nondegenerate-slowdown (NDS) regime, which was studied in \cite{Whit2003, Atar2012}. Third, as part of the universality of Theorem~\ref{thm:erlangCW}, we see that 
\begin{align}
\big|\E X(\infty) - \big( R + \sqrt{R}\E Y(\infty) \big) \big| \leq 190. \label{eq:const_error}
\end{align}
For a fixed $n$, let $\rho = R/n \uparrow 1$. One expects that $\E X(\infty)$ be on the order of $1/(1-\rho)$. Conventional heavy-traffic limit theorems often guarantee that the left hand side of \eqref{eq:const_error} is at most $o(1/\sqrt{1-\rho})$, whereas our error is bounded by a constant regardless of the load condition. This suggests that the diffusion approximation for the Erlang-C system is accurate not only as $R \to \infty$, but also in the heavy-traffic setting when $R \to n$. Table~\ref{tab1} contains some numerical results where we calculate the error on the left side of \eqref{eq:const_error}. The constant $190$ in \eqref{eq:const_error} is unlikely to be a sharp upper bound. In this thesis, we do not focus on optimizing such upper bounds, as Stein's method is not known for producing sharp constants.
\begin{table}[bt]
  \begin{center}
  \begin{tabular}{rcc | r cc }
\multicolumn{3}{c}{$n=5$} & \multicolumn{3}{c}{$n=500$} \\
\hline
$R$ & $\E X(\infty)$ & Error &
$R$ & $\E X(\infty)$ & Error \\
\hline
 3        &   3.35  &0.10  & 300 & 300.00 & $6\times 10^{-14}$  \\
 4        &  6.22  & 0.20  & 400 & 400.00 & $2\times 10^{-6}$  \\
 4.9        &  51.47  &0.28 & 490& 516.79 & 0.24   \\
 4.95        &  101.48  & 0.29 & 495& 569.15& 0.28  \\
 4.99        & 501.49  & 0.29 & 499 & 970.89& 0.32 \\
  \end{tabular}
  \caption{Comparing the error $\big|\E X(\infty) - \big( R + \sqrt{R}\E Y(\infty) \big) \big|$ for different system configurations.\label{tab1}}
  \end{center}
\end{table}
Theorem~\ref{thm:erlangCW} provides rates of convergence under the Wasserstein metric \cite{Ross2011}. The Wasserstein metric $d_W(\cdot, \cdot)$ is one of the most commonly studied metrics when Stein's method is concerned. This is because the the space $\lipone$ is relatively simple to work with, but is also rich enough so that convergence under the Wasserstein metric implies the convergence in
distribution \cite{GibbSu2002}. 

\section{Outline of the Stein Framework} \label{sec:INoutline}
Having seen the example in the previous section, let us briefly outline the key components of Stein's method. These are the Poisson equation, generator comparison, gradient bounds, moment bounds, and state-space collapse (SSC).  The generator comparison idea is also known as the generator approach, and is attributed to Barbour \cite{Barb1988, Barb1990} and G{\"o}tze \cite{Gotz1991}. Chapter~\ref{chap:erlangAC} is devoted to a detailed walkthrough of the first four of these components, while the SSC component is not required until Chapter~\ref{chap:phasetype}.

Consider two sequences of
stochastic processes $\{X^{(\ell)}\}_{\ell =1}^{\infty}$ and
$\{Y^{(\ell)}\}_{\ell=1}^{\infty}$ indexed by $\ell$, where $X^{(\ell)} = \{X^{(\ell)}(t)\in
\R^d, t \geq 0\}$ is a continuous-time Markov chain (CTMC) and $Y^{(\ell)} = \{Y^{(\ell)}(t) \in \R^d, t \geq 0\}$
is a diffusion process.
Suppose $X^{(\ell)}(\infty)$ and
$Y^{(\ell)}(\infty)$ are two random vectors having the stationary
distributions of $X^{(\ell)}$ and $Y^{(\ell)}$, respectively. 
Let $G_{X^{(\ell)}}$ and $G_{Y^{(\ell)}}$ be the generators of $X^{(\ell)}$ and $Y^{(\ell)}$,
respectively;  for a diffusion process, $G_{Y^{(\ell)}}$ is a second order
elliptic differential operator. For a function $h: \R^d \to \R$ in a "nice" (but
large enough) class, we wish to bound
\begin{displaymath}
\abs{\E h(X^{(\ell)}(\infty)) - \E h(Y^{(\ell)}(\infty))}.
\end{displaymath}

 The first component is to set up the Poisson equation 
\begin{equation} \label{eq:intropoisson}
G_{Y^{(\ell)}} f_h(x) =\E h(Y^{(\ell)}(\infty)) -  h(x), \quad x \in \R^{d}.
\end{equation}
We then take the expectation of both sides above to see that
\begin{equation}
  \label{eq:PoissonRep}
\E h(Y^{(\ell)}(\infty)) - \E h(X^{(\ell)}(\infty))  = \E G_{Y^{(\ell)}} f_h(X^{(\ell)}(\infty)).  
\end{equation}
 When $d = 1$, the Poisson equation \eqref{eq:intropoisson} is an ordinary differential equation (ODE), and when $d > 1$, it is a partial
differential equation (PDE). To execute Stein's method, we require bounds on the derivatives of $f_h(x)$ (usually up to the third derivative). We refer to these as gradient bounds.

The next step is to rely on the following relationship between the generator and stationary distribution of a CTMC. One can check that a random vector $X^{(\ell)}(\infty) \in \R^d$ has the
stationary distribution of the CTMC $X^{(\ell)}$ if and only if
\begin{equation}
  \label{eq:bar}
\E G_{X^{(\ell)}} f (X^{(\ell)}(\infty)) = 0
\end{equation}
for all functions $f:\R^d\to \R$ that have compact support.  For a given $h(x)$, the
corresponding Poisson equation solution $f_h(x)$ does not have compact
support, but it is typically not hard to prove that
\eqref{eq:bar} continues to hold for $f_h(x)$.  Thus, it follows from \eqref{eq:PoissonRep} and \eqref{eq:bar} that
\begin{equation} \label{eq:introgencoup}
\E h(Y^{(\ell)}(\infty))- \E h(X^{(\ell)}(\infty)) = \E [G_{Y^{(\ell)}} f_h(X^{(\ell)}(\infty)) - G_{X^{(\ell)}} f_h (X^{(\ell)}(\infty))].
\end{equation}

The focus now falls on bounding the right side of  (\ref{eq:introgencoup}). To do so, we study
\begin{equation}
  \label{eq:generatorDiff}
  G_{X^{(\ell)}}f_h(x) -G_{Y^{(\ell)}}f_h(x)
\end{equation}
for each $x$ in the state space of $X^{(\ell)}$. By performing Taylor
expansion on $G_{X^{(\ell)}}f_h(x)$, we find that the difference in \eqref{eq:generatorDiff} involves the product of partial derivatives of $f_h(x)$ and terms related to the transition structure of $X^{(\ell)}$. The former are why we need gradient bounds, and the latter can typically be bounded by a polynomial of $x$.
Therefore, we also need bounds on various moments of
$\abs{X^{(\ell)}(\infty)}$,  which we refer to as moment bounds. The main challenge is that both gradient and moment bounds must be
\textit{uniform} in $\ell$. Once we have both gradient and moment bounds, the right hand side of \eqref{eq:generatorDiff} can be bounded. 

We point out that this procedure can also be carried out when $X^{(\ell)}$ itself is not a
CTMC, but a function of some higher dimensional CTMC $U^{(\ell)} = \{U^{(\ell)}(t)
\in \mathcal{U}, t \geq 0\}$, where the dimension of the state space
$\mathcal{U}$ is strictly greater than $d$. When this is the case, the CTMC $U^{(\ell)}$ must exhibit some form of SSC.  
This is the case in Chapter~\ref{chap:phasetype}, where we study the $M/Ph/n+M$ system.
This difference in
dimensions is partly responsible for  the computational speedup in diffusion
approximations; most complex stochastic processing systems
exhibit some form of SSC
\cite{Reim1984b,BellWill2005,FoscSalz1978,Harr1998,HarrLope1999,
  Whit1971,Will1998a,Bram1998a,DaiTezc2011,EryiSrik2012}.
 Let $G_U$ be the generator of $U^{(\ell)}$ and
$U^{(\ell)}(\infty)$ have its stationary distribution. Now, BAR \eqref{eq:bar} becomes 
$G_{U^{(\ell)}} F(U^{(\ell)}(\infty))=0$ for each `nice' $F:\mathcal{U}\to \R$. Furthermore,
(\ref{eq:introgencoup}) becomes
\begin{equation} \label{eq:introgencoupSSC}
\E h(X^{(\ell)}(\infty)) - \E h(Y^{(\ell)}(\infty)) = \E [G_{Y^{(\ell)}} f_h(X^{(\ell)}(\infty)) - G_{U^{(\ell)}} F_h(U^{(\ell)}(\infty))],
\end{equation}
where $F_h: \mathcal{U}\to \R$ is the lifting of $f_h:\R^d\to \R$ defined by letting $x \in \R^d$ be the projection of $u \in \mathcal{U}$ and then setting 
\begin{equation}
  \label{eq:lifting}
  F_h(u) = f_h(x).
\end{equation}
As before, we can perform Taylor expansion on $G_{U^{(\ell)}}F_h(u)$ to simplify
the difference $G_{U^{(\ell)}}F_h(u)-G_{Y^{(\ell)}}f_h(x)$.  To use this difference to bound
the right side of (\ref{eq:introgencoupSSC}), we need a steady-state
SSC result for $U^{(\ell)}(\infty)$, which tells us how to approximate
$U^{(\ell)}(\infty)$ from $X^{(\ell)}(\infty)$ and guarantees that this
approximation error is small. Typically, such an SSC result relies heavily on the structure of $U^{(\ell)}$. The SSC component will not appear until Chapter~\ref{chap:phasetype}.

\section{Related Literature}
This dissertation lies at the intersection of two mathematical communities: the queueing theory, and  Stein method communities. It is therefore appropriate to separate the literature review into two parts. We begin with the literature from queueing theory.

Diffusion approximations are a popular tool in queueing theory, and are usually ``justified'' by heavy traffic limit theorems. For example, a typical limit theorem would say that an appropriately scaled and centered version of the process $X$ in \eqref{eq:CTMCunscaleddef} converges to some limiting diffusion process as the system utilization $\rho$ tends to one.
Proving such limit theorems has been an active area of research in
the last 50 years; see, for example,
\cite{Boro1964,Boro1965,IgleWhit1970,IgleWhit1970a, Harr1978,Reim1984}
for single-class queueing networks, \cite{Pete1991, Bram1998a,
  Will1998a} for multiclass queueing networks,
\cite{KangKellLeeWill2009, YaoYe2012} for bandwidth sharing networks,
\cite{HalfWhit1981,Reed2009,DaiHeTezc2010} for many-server queues. The
convergence used in these limit theorems is the convergence in
distribution on the path space $\D([0, \infty), \R^d)$, endowed with
Skorohod $J_1$-topology \cite{EthiKurt1986,Whit2002}. The
$J_1$-topology on $\D([0,\infty), \R^d)$ essentially means convergence
in $\D([0, T], \R^d)$ for each $T>0$. In particular, it says nothing
about the convergence at ``$\infty$''. Therefore, these limit theorems
do not justify steady-state convergence.

The jump from convergence on $\D([0, T], \R^d)$ to convergence of stationary distributions was first established in the
seminal paper \cite{GamaZeev2006}, where the authors prove an
interchange of limits for generalized Jackson networks of
single-server queues.  The results in \cite{GamaZeev2006} were
improved and extended by various authors for networks of
single-servers \cite{BudhLee2009, ZhanZwar2008, Kats2010}, for
bandwidth sharing networks~\cite{YaoYe2012}, and for many-server
systems \cite{Tezc2008, GamaStol2012, Gurv2014a}.  These ``interchange
of limits'' theorems are qualitative and thus do not provide rates of
convergence as in Theorem~\ref{thm:erlangCW}.

The first paper to have established convergence rates for steady-state diffusion approximations was \cite{GurvHuanMand2014}, which studied the Erlang-A system (many-server queue with customer abandonment) using an excursion based approach. Their approximation error bounds are universal. Although the authors in \cite{GurvHuanMand2014} did not study the Erlang-C system, their approach appears to be extendable to it as well. However, their method is not readily generalizable to the multi-dimensional setting. 

Following \cite{GurvHuanMand2014}, Gurvich \cite{Gurv2014} develops an approach to prove statements similar to Theorem~\ref{thm:erlangCW} for various queueing systems. Along the way, he independently rediscovers
many of the ideas central to Stein's method in the setting of
steady-state diffusion approximations. In particular, Gurvich's results also rely on the Poisson equation, generator comparison, gradient and moment bounds components discussed in Section~\ref{sec:INoutline}. Gurvich packages the necessary conditions to establish convergence rates into a single condition which requires the existence of a uniform Lyapunov function for the diffusion processes. In particular, this Lyapunov function provides the necessary moment and gradient bounds to establish convergence rates. However, his results are no longer immediately applicable when the SSC component is required, i.e.\ when dim$(\mathcal{U}) > d$. In contrast, Stein's method is a \textit{modular} framework. It allows one to separate a problem into its components, e.g.\ gradient bounds, moment bounds etc., and treat the difficulties of each component in isolation. 



We now discuss the relevant literature in the Stein method community. The first uses of Stein's method for stationary distributions of Markov processes traces back to \cite{Barb1988}, where it is pointed out that Stein's method can be applied anytime the approximating distribution is the stationary distribution of a Markov proccess.  That paper considers the multivariate Poisson, which is the stationary distribution of a certain multi-dimensional birth-death process. One of the major contributions of \cite{Barb1988} was to show how viewing the Poisson distribution as the stationary distribution of a Markov chain could be exploited to establish gradient bounds using coupling arguments; cf. the discussion around \eqref{eq:relval} of this document. A similar idea was subsequently used for the multivariate normal distribution through its connection to the multi-dimensional Ornstein--Uhlenbeck process in \cite{Barb1990, Gotz1991}.

Of the papers that use the connection between Stein's method and Markov processes, \cite{BrowXia2001} and the more recent \cite{KusuTudo2012} are the most relevant to this work. The former studies one-dimensional birth-death processes, with the focus being that many common distributions such as the Poisson, Binomial, Hypergeometric, Negative Binomial, etc., can be viewed as stationary distributions of a birth-death process. Although the Erlang-A and Erlang-C models are also birth-death processes, the focus in Chapter~\ref{chap:erlangAC} is on how well these models can be approximated by diffusions, e.g.\ qualitative features of the approximation like the universality in Theorem~\ref{thm:erlangCW}. Diffusion approximations go beyond approximations of birth-death processes, with the real interest lying in cases when a higher-dimensional Markov chain collapses to a one-dimensional diffusion, e.g.\ \cite{Tezc2008,Stol2004,DaiLin2008}, or when the diffusion approximation is multi-dimensional \cite{Harr1978,Reim1984,Pete1991, Bram1998a,  Will1998a}.

In \cite{KusuTudo2012}, the authors apply Stein's method to one-dimensional diffusions. The motivation is again that many common distributions like the gamma, uniform, beta, etc.,  happen to be stationary distributions of diffusions. Their chief result is to establish gradient bounds for a very large class of diffusion processes, requiring only the mild condition that the drift of the diffusion be a decreasing function. However, their result cannot be applied here, because it is impossible to say how their gradient bounds depend on the parameters of the diffusion. Detailed knowledge of this dependence is crucial, because we are dealing with a \emph{family} of approximating  distributions;  cf. \eqref{eq:stddenC} and the comments below (\ref{eq:bandz}).

Outside the diffusion approximation domain, Ying has recently
  successfully applied Stein's framework to establish error bounds
  for steady-state mean-field approximations \cite{ying2016,ying2016b}.
There is one additional recent line of work \cite{BlanGlyn2007,  JansLeeuZwar2008, JansLeeuZwar2008a,  JansLeeuZwar2011, LeeuZhanZwar2012} that deserves mention, where the theme is corrected diffusion approximations using asymptotic series expansions. In particular, \cite{JansLeeuZwar2011} considers the Erlang-C system and  \cite{LeeuZhanZwar2012} considers the Erlang-A system. In these papers, the authors derive series expansions for various steady-state quantities of interest like the probability of waiting $\Prob(X(\infty) \geq n)$. These types of series expansions are very powerful because they allow one to approximate steady-state quantities of interest within arbitrary precision. However, while accurate, these expansions vary for different performance metrics (e.g.\ waiting probability, expected queue length), and require non-trivial effort to be derived. They also depend on the choice of parameter regime, e.g.\ Halfin-Whitt. In contrast, the results provided by the Stein approach can be viewed as more robust because they capture multiple performance metrics and  multiple parameter regimes at the same time.

\section{Outline of Dissertation}
The rest of this document is structured as follows. Chapter~\ref{chap:erlangAC} serves as an introduction to Stein's method, where we outline the main steps of the procedure and carry them out on the Erlang-A and Erlang-C models. In Chapter~\ref{chap:dsquare} we work in the setting of the Erlang-C model. We prove that we can achieve a faster convergence rate by using a diffusion approximation with a state dependent diffusion coefficient. Finally, in Chapter~\ref{chap:phasetype}, we apply Stein's method to the $M/Ph/n+M$ queueing system, which is a significantly more complicated model than both the Erlang-A and Erlang-C systems. Each of the chapters requires its own moment and gradient bounds. We aggregate all moment bounds in Appendix~\ref{app:MOMENTS}, and all gradient bounds in Appendix~\ref{app:GRADIENTS}. 

\section{Notation}
\label{sec:notation}
All random variables and stochastic processes are defined on a common
probability space $(\Omega, \mathcal{F}, \mathbb{P})$ unless otherwise
specified. For a sequence of random variables $\{X^n\}_{n=1}^{\infty}$, we write $X^n \Rightarrow X$ to denote convergence in distribution (also known as weak convergence) of $X^n$ to some random variable $X$. If $a > b$, we adopt the convention that $\sum \limits_{i=a}^b (\cdot) = 0$. For an
integer $d \geq 1$, $\R^d$ denotes the $d$-dimensional Euclidean space
and $\Z_+^d$ denotes the space of $d$-dimensional vectors whose
elements are non-negative integers. For $a,b \in \R$, we define $a
\vee b = \max\{a,b\}$ and $a \wedge b = \min\{a,b\}$. For $x \in \R$,
we define $x^+ = x \vee 0$ and $x^- = (-x)\vee 0$.  For $x \in \R^d$,
we use $x_i$ to denote its $i$th entry and $\abs{x}$ to denote its
Euclidean norm. For $x,y \in \R^d$, we write $x \leq y$ when $x_i \leq
y_i$ for all $i$ and when $x \leq y$ we define the vector interval
$[x,y] = \{z: x \leq z \leq y\}$. All vectors are assumed to be
column vectors. We let $x^T$ and $A^T$ denote the transpose of a
vector $x$ and matrix $A$, respectively. For a matrix $A$, we use
$A_{ij}$ to denote the entry in the $i$th row and $j$th column. We reserve $I$
for the identity matrix, $e$ for the vector of all ones and $e^{(i)}$
for the vector that has a one in the $i$th element and zeroes
elsewhere; the dimensions of these vectors will  be clear from the context.

\subsection{Probability Metrics}
For two random variables $U$ and $V$, define their Wasserstein distance, or Wasserstein metric, to be 
\begin{equation}
  \label{eq:dW}
  d_W(U, V) = \sup_{h(x) \in \lipone} \abs{\E[h(U)] -\E[h(V)]},
\end{equation}
where 
\begin{equation*}
\lipone=\{h: \R\to\R, \abs{h(x)-h(y)}\le
\abs{x-y}\}.
\end{equation*}
It is known, see for example \cite{Ross2011}, convergence under the 
Wasserstein metric implies convergence in distribution. We can replace $\lipone$ in (\ref{eq:dW}) by 
\begin{align}
{\cal H}_{K}=\{1_{(-\infty, a]}(x): a\in \R \}, \label{eq:classkolm}
\end{align}
and define 
\begin{align}
d_K(U, V) = \sup_{h(x) \in {\cal H}_{K}} \abs{\E[h(U)] -\E[h(V)]}. \label{eq:kolmogorov}
\end{align}
This quantity is known as the Kolmogorov distance, or Kolmogorov metric.

\chapter{Introduction to Stein's Method via the Erlang-A and Erlang-C Models} \label{chap:erlangAC}
 The goal of this Chapter is to introduce the reader to the main ideas behind Stein's method, and specifically in the context of steady-state diffusion approximations. We use the Erlang-A and Erlang-C systems as working examples to illustrate the technical aspects of the method. We begin this chapter with Section~\ref{sec:introchapCA}, where we recall some details about the Erlang-A and Erlang-C models. In Section~\ref{sec:CAresults}, we list the main results of this chapter. In Section~\ref{sec:CAroadmap}, we outline the key steps of the Stein framework: the Poisson equation, generator comparison, gradient bounds and moment bounds. In Section~\ref{sec:CAproofW} we prove Theorem~\ref{thm:erlangCW}, which is a result about the Wasserstein distance. In Section~\ref{sec:CAkolmogorov}, we discuss the Kolmogorov distance and the additional difficulties typically associated with it. Finally, we briefly discuss the approximation of higher moments in Section~\ref{sec:CAexten}. 
 
This chapter is based on \cite{BravDaiFeng2016}. The author would like to acknowledge Jiekun Feng, who contributed significantly to the contents of this chapter, and in particular to the results about the Erlang-A model.

\section{Chapter Introduction} \label{sec:introchapCA}
Section~\ref{sec:INtypical} already describes much of the focus of this chapter. We quickly recall some of the details about the Erlang-C and Erlang-A systems introduced there. Both systems have $n$ homogeneous servers that serve customers in a first-come-first-serve manner. Customers arrive according to a Poisson process with rate $\lambda$, and customer service times are assumed to be i.i.d.\ having exponential distribution with mean $1/\mu$. In the Erlang-A system, each customer
has a patience time and when his waiting time in queue exceeds his
patience time, he abandons the queue without service; the patience
times are assumed to be i.i.d.\ having exponential distribution with mean
$1/\alpha$. Recall that $X=\{X(t), t\ge 0\}$ is the customer count process. This process is positive recurrent when $\alpha > 0$, or if $\alpha = 0$ and the offered load $R = \lambda/\mu$ satisfies $R < n$. We use $X(\infty)$ to denote the random variable having the stationary distribution of $X$, and set $\tilde X(\infty) = (X(\infty) - R) /\sqrt{R}$. Theorem~\ref{thm:erlangCW} states that 
\begin{align}
d_W(\tilde X(\infty), Y(\infty)) =  \sup \limits_{h(x) \in {\lipone}}  \big|\E h(\tilde X(\infty)) - \E h(Y(\infty))\big|\leq  \frac{190}{\sqrt{R}}, \label{CW:dummy}
\end{align}
where $Y(\infty)$ is the random variable defined in \eqref{eq:stddenC}.

In addition to the discussion on universality below Theorem~\ref{thm:erlangCW} in Section~\ref{sec:INtypical}, there are two additional aspects that we will focus on in this chapter. From \eqref{CW:dummy}, we know that the first moment of $\tilde X(\infty)$ can be approximated universally by the first moment of $Y(\infty)$. It is natural to ask what can be said about the approximation of higher moments. We performed some numerical experiments in which we approximate the second and tenth moments of $\tilde X(\infty)$ in a system with $n = 500$. The results are displayed in Table~\ref{tab2}. One can see that the approximation errors grow as the offered load $R$ gets closer to $n$. We will see in Section~\ref{sec:CAexten} that this happens because the $(m-1)$th moment appears in the approximation error of the $m$th moment. A similar phenomenon was first observed for the $M/GI/1+GI$ model in Theorem 1 of \cite{GurvHuan2016}. 
\begin{table}[h]
  \begin{center}
  \resizebox{\columnwidth}{!}{
  \begin{tabular}{rcc | c c }
 $R$ & $\E (\tilde X(\infty))^2$ & $\big| \E (\tilde X(\infty))^2 - \E(Y(\infty))^2\big|$ & $\E(\tilde X(\infty))^{10}$ & $\big| \E (\tilde X(\infty))^{10} - \E(Y(\infty))^{10}\big|$ \\
\hline
 300        &  1                    & $4.55\times 10^{-15}$ & $9.77\times 10^2$ & 31.58\\
 400        &  1                    & $5.95\times 10^{-7}$ & $9.70\times 10^{2}$ & 24.44\\
 490        &  6.96                 & 0.11                 & $7.51\times 10^{9}$ & $7.01\times 10^{8}$\\
 495        &  31.56                & 0.27                & $9.10\times 10^{12}$ & $4.34\times 10^{11}$ \\
 499        &  $9.47\times 10^{2}$  & 1.59               & $1.07\times 10^{20}$ & $1.03\times 10^{18}$ \\
 499.9        &  $9.94\times 10^{4}$  & 16.50              & $1.13\times 10^{30}$ & $1.09\times 10^{27}$ \\
  \end{tabular}
  }
  \caption{Approximating the second and tenth moments of $\tilde X(\infty)$ with $n = 500$. The approximation error grows as $R$ approaches $n$ and suggests that the diffusion approximation of higher moments is not universal.\label{tab2}}
  \end{center}
\end{table} 

%

The rate of convergence in \eqref{CW:dummy} is for the  Wasserstein metric \cite{Ross2011}, which is usually the simplest metric to work with.  Another metric commonly studied in problems involving Stein's method is the Kolmogorov metric, which measures the distance between cumulative distribution functions of two random variables. The Kolmogorov distance between $\tilde X(\infty)$ and $Y(\infty)$ is
\begin{align*}
\sup_{h(x) \in \HH_K}  \big|\E h(\tilde X(\infty)) - \E h(Y(\infty))\big|, \quad \text{ where } \quad   {\cal H}_{K}=\{1_{(-\infty, a]}(x): a\in \R \}.
\end{align*}
Theorems~\ref{thm:erlangCK} and \ref{thm:erlangAK} of Section~\ref{sec:CAresults} involve the Kolmogorov metric. A general trend in Stein's method is that establishing convergence rates for the Kolmogorov metric often requires much more effort than establishing rates for the Wasserstein metric, and our problem is no exception. The extra difficulty always comes from the fact that the test functions belonging to the class $\HH_K$ are discontinuous, whereas the ones in $\lipone$ are Lipschitz-continuous. In Section~\ref{sec:CAkolmogorov}, we describe how to overcome this difficulty in our model setting. We now move on to state the main results of this chapter.

\section{Main results}
\label{sec:CAresults}
Recall the offered load $R = \lambda/\mu$. For notational convenience we define $\delta>0$ as
\begin{equation*}
    \delta = \frac{1}{ \sqrt{R}} = \sqrt{\frac{\mu}{\lambda}}.
\end{equation*}
Let $x(\infty)$ be the unique solution to the flow balance equation
\begin{align}
  \lambda = \big(x(\infty)\wedge n\big )\mu + \big(x(\infty)-n\big)^+ \alpha. \label{eq:xinf}
\end{align}
Here, $x(\infty)$ is interpreted as the equilibrium number of customers in the corresponding fluid model,
and is the point at which the arrival rate equals the departure rate. The latter is the sum of the service completion rate and the customer abandonment rate with $x(\infty)$ customers in the system. One can check that the flow balance equation has a unique solution $x(\infty)$ given by
 \begin{equation}
   \label{eq:equi}
  x(\infty) =
  \begin{cases}
    n + \frac{\lambda-n\mu }{\alpha} & \text{ if } R \geq n,\\
    R & \text{ if } R < n.
  \end{cases}
 \end{equation}
By noting that the number of busy servers $x(\infty) \wedge	n$ equals $n$ minus the number of idle servers $(x(\infty)-n)^-$, the equation in \eqref{eq:xinf} becomes
\begin{equation}
  \label{eq:lambdamudiff}
  \lambda - n \mu = \big(x(\infty)-n\big)^+\alpha - \big(x(\infty)-n\big)^-\mu.
\end{equation}
We note that $x(\infty)$ is well-defined even when $\alpha = 0$, because in that case we always assume that $R < n$.

We consider the CTMC
\begin{align} \label{eq:scaledctmc}
\tilde X = \{ \tilde X(t) := \delta(X(t) - x(\infty)),\ t \geq 0\},
\end{align}
and let the random variable $\tilde X(\infty)$ have its stationary distribution. Define 
\begin{equation}
\zeta =\delta\big(x(\infty) -n\big), \label{CA:zeta}
\end{equation}
and
\begin{equation}
  \label{eq:b-erlang-A}
  b(x) = \big((x+\zeta)^--\zeta^-\big)\mu -\big((x+\zeta)^+ -\zeta^+\big)\alpha \quad \text{ for } x\in \R,
\end{equation}
with convention that $\alpha$ is set to be zero in the Erlang-C
system. For intuition about the quantity $\zeta$, we note that in the Erlang-C system satisfying $R < n$, 
  \begin{align*}
    n = R -\zeta \sqrt{R}.
  \end{align*}
Thus,  $-\zeta=\abs{\zeta}>0$ is precisely the 
``safety coefficient'' in the square-root safety-staffing principle~\cite[equation (15)]{GansKoolMand2003}.
We point out that the event $\{\tilde X(t) = -\zeta \}$ corresponds to the event $\{X(t) = n\}$.

 Throughout this chapter, let $Y(\infty)$ denote a continuous random
variable on $\R$ having density
\begin{equation}
  \label{CA:stdden}
  \nu(x)= \kappa \exp\Big(\frac{1}{\mu} {\int_0^xb(y)dy}\Big),
\end{equation}
where $\kappa>0$ is a normalizing constant that makes the density integrate to one. Note that these definitions are consistent with \eqref{eq:stddenC} and \eqref{eq:bandz}.


\begin{theorem}
\label{thm:erlangAW}
Consider the Erlang-A system ($\alpha > 0$). There exists an increasing function $C_W : \R_+ \to \R_+$ such that for all $n \geq 1, \lambda > 0, \mu>0$, and $\alpha>0$ satisfying $R \geq 1$,
\begin{equation}
  \label{eq:erlangAW}
  d_W(\tilde X(\infty), Y(\infty)) \le C_W(\alpha/\mu)  \delta.
\end{equation}
\end{theorem}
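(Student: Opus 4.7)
The plan is to execute the four-step Stein framework sketched in Section~\ref{sec:INoutline}. First, I identify the target diffusion: matching the exponent in \eqref{CA:stdden} against the one-dimensional stationary Fokker-Planck equation reveals that $\nu$ is the stationary density of the diffusion with generator
\begin{equation*}
G_Y f(x) = \mu f''(x) + b(x) f'(x).
\end{equation*}
For each $h \in \lipone$ I would solve the Poisson equation $G_Y f_h = \E h(Y(\infty)) - h$ explicitly using $\nu$ as an integrating factor, and then verify via a truncation argument (exploiting the birth-death structure of $X$ and at least a first-moment bound on $\tilde X(\infty)$) that the basic adjoint relation $\E G_{\tilde X} f_h(\tilde X(\infty)) = 0$ continues to hold for the non-compactly-supported $f_h$. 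This delivers the generator-coupling identity
\begin{equation*}
\E h(\tilde X(\infty)) - \E h(Y(\infty)) = \E\bigl[G_{\tilde X} f_h(\tilde X(\infty)) - G_Y f_h(\tilde X(\infty))\bigr].
\end{equation*}

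Next, I compare the generators directly. On the scaled lattice the birth-death generator is
\begin{equation*}
G_{\tilde X} f(\tilde x) = \lambda\bigl[f(\tilde x + \delta) - f(\tilde x)\bigr] + d(x)\bigl[f(\tilde x - \delta) - f(\tilde x)\bigr],
\end{equation*}
where $x = \tilde x/\delta + x(\infty)$ and $d(x) = (x\wedge n)\mu + (x-n)^+\alpha$. A short calculation using \eqref{eq:lambdamudiff} produces the two crucial identities $\delta(\lambda - d(x)) = b(\tilde x)$ and $\delta^2\lambda = \mu$. A second-order Taylor expansion of $f(\tilde x\pm\delta)$ with Lagrange remainder then shows that $G_{\tilde X} f - G_Y f$ equals $-\tfrac{\delta}{2} b(\tilde x) f''(\tilde x)$ plus a cubic-in-$\delta$ remainder involving $f'''$ evaluated at mean-value points. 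Using $|b(\tilde x)|\le (\mu+\alpha)|\tilde x|$ together with the flow-balance identity $\E d(X(\infty))=\lambda$ (which follows from BAR applied to the identity function and from $\delta^3\lambda = \delta\mu$), taking absolute values and expectations reduces the Wasserstein distance bound to
\begin{equation*}
d_W(\tilde X(\infty), Y(\infty)) \le \delta\,\Bigl[\tfrac{\mu+\alpha}{2}\,\E|\tilde X(\infty)|\cdot\|f_h''\|_\infty + \tfrac{\mu}{3}\,\|f_h'''\|_\infty\Bigr].
\end{equation*}

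It then suffices to prove two auxiliary estimates, both uniform in $\lambda,\mu,n$. For the moment bound $\E|\tilde X(\infty)|\le C(\alpha/\mu)$, I would use a Foster-Lyapunov argument with the quadratic test function $\tilde x^2$ applied to $G_{\tilde X}$; since $b$ drives the chain back toward $-\zeta$ with slope at least $\min(\mu,\alpha)$, the resulting drift inequality yields a second-moment bound with a constant depending on $\alpha/\mu$. For the gradient bounds $\mu\|f_h''\|_\infty + \mu\|f_h'''\|_\infty\le C'(\alpha/\mu)$ for $h\in\lipone$, I would differentiate the explicit integral formula for $f_h$; the Gaussian-type decay of $\nu$ produced by the piecewise-linear drift $b$ (slope $-\mu$ on the left of $-\zeta$ and slope $-\alpha$ on the right) makes the relevant integrals convergent, and a careful estimate produces pointwise bounds on $f_h''$ and $f_h'''$.

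The main obstacle will be the gradient bound on $f_h'''$. The drift $b$ is merely Lipschitz at the kink $\tilde x=-\zeta$, so standard uniform-in-parameter estimates for derivatives of the resolvent do not apply verbatim; one must split each integral in the formula for $f_h$ at the kink and carefully match the two halves. Moreover, the two linear slopes $-\mu$ and $-\alpha$ control the tails of $\nu$ asymmetrically, so every boundary integral inherits a dependence on the ratio $\alpha/\mu$. This is precisely why $C_W$ in \eqref{eq:erlangAW} must depend on $\alpha/\mu$ rather than being a universal constant as in the Erlang-C case (where the corresponding slope on one side is simply zero). Once the gradient bounds are in hand, combining them with the displayed bound and the moment bound delivers \eqref{eq:erlangAW} with an increasing $C_W(\alpha/\mu)$, as required.
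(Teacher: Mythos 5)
Your overall framework---Poisson equation, generator comparison via the basic adjoint relation, second-order Taylor expansion, then gradient and moment bounds---is exactly the one the paper uses, and the structural identities you record ($\delta(\lambda - d(x)) = b(\tilde x)$, $\delta^2\lambda = \mu$) are correct. However, the step where you pass to
\begin{equation*}
d_W(\tilde X(\infty), Y(\infty)) \le \delta\,\Bigl[\tfrac{\mu+\alpha}{2}\,\E|\tilde X(\infty)|\cdot\|f_h''\|_\infty + \tfrac{\mu}{3}\,\|f_h'''\|_\infty\Bigr]
\end{equation*}
is too coarse and cannot yield the claimed conclusion that $C_W$ is \emph{increasing} in $\alpha/\mu$. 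The issue is a missing cancellation. The sup-norm of $f_h''$ is of order $\tfrac{1}{\mu}\bigl(\sqrt{\mu/\alpha}\wedge \tfrac{1}{|\zeta|}\bigr)$ (see the paper's Lemma~\ref{lem:gradboundsAWunder}), and the moment bound $\E[|\tilde X(\infty)|\,1(\tilde X(\infty)\ge -\zeta)]$ is of order $\tfrac{\mu}{\mu\wedge\alpha}\wedge \tfrac{1}{|\zeta|}$. If $\alpha \le \mu$ and $|\zeta|$ is small (critically loaded Erlang-A with weak abandonment), both of these are of order $\sqrt{\mu/\alpha}$, and your bound
\begin{equation*}
(\mu+\alpha)\,\E|\tilde X(\infty)|\cdot\|f_h''\|_\infty \approx \mu\cdot\sqrt{\tfrac{\mu}{\alpha}}\cdot\tfrac{1}{\mu}\sqrt{\tfrac{\mu}{\alpha}} = \tfrac{\mu}{\alpha}
\end{equation*}
diverges as $\alpha/\mu \to 0$. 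That contradicts the theorem's assertion of an increasing $C_W$; indeed it must not diverge, because the $\alpha = 0$ Erlang-C case has the universal constant $190$.

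The paper avoids this by never taking sup-norms. Instead, the error term $\E[\,|f_h''(\tilde X(\infty))\,b(\tilde X(\infty))|\,]$ is split along $\{x\le -\zeta\}$ and $\{x\ge -\zeta\}$, and on each region the \emph{exact form of $b$} is exploited: on $\{x\ge -\zeta\}$ one writes $|b(x)|\le \alpha|x+\zeta| + \mu|\zeta|$ and bounds $\E[(\tilde X(\infty)+\zeta)1(\tilde X(\infty)\ge -\zeta)]$ and $\mu|\zeta|\Prob(\tilde X(\infty)\ge -\zeta)$ separately. Then the $\alpha$ in front of $|x+\zeta|$ cancels against the $\sqrt{\mu/\alpha}$ in the gradient bound (yielding $\sqrt{\alpha/\mu}$), and $\sqrt{\alpha/\mu}$ then cancels the $\sqrt{\mu/\alpha}$ in the moment bound \eqref{eq:mwu4}; simultaneously the $\mu|\zeta|$ factor cancels the $1/|\zeta|$ branch of the gradient bound. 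Replacing the region-specific quantities $|b(x)|$ by the Lipschitz bound $(\mu+\alpha)|x|$, and the region-matched moment/gradient pairs by global $\E|\tilde X(\infty)|\cdot\|f_h''\|_\infty$, destroys exactly these cancellations. So the gap is not at the level of polishing the gradient-bound computation at the kink, as you anticipate, but earlier: one must keep $b$ and the gradient bounds in region-dependent form and pair them with the indicator-weighted moment bounds \eqref{eq:mwu1}--\eqref{eq:mwu6} and \eqref{eq:mwo7}--\eqref{eq:mwo10}, exactly as the Erlang-C proof of Theorem~\ref{thm:erlangCW} demonstrates.
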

\begin{remark}
The proof of Theorems~\ref{thm:erlangCW} and \ref{thm:erlangAW} uses the same ideas. Therefore, for the sake of brevity, we only give an outline for the proof of Theorem~\ref{thm:erlangAW}  in Section~\ref{app:AWoutline}, without filling in all the details. It is for this reason that we do not write out the explicit form of $C_W(\alpha/\mu)$, although it can be obtained from the proof. The same is true for Theorem~\ref{thm:erlangAK} below.

\end{remark}
Given two random variables $U$ and $V$, \cite[Proposition 1.2]{Ross2011} implies that when $V$ has a density that is bounded by $C>0$, 
\begin{equation}
  \label{eq:dwdKbound}
  d_K(U, V) \le \sqrt{2C d_W(U, V)}.
\end{equation}
At best, \eqref{eq:dwdKbound} and Theorems~\ref{thm:erlangCW} and \ref{thm:erlangAW} imply a convergence rate of $\sqrt{\delta}$ for $d_K(\tilde X(\infty), Y(\infty))$. However, this bound is typically too crude, and the following two theorems show that convergence happens at rate $\delta$. Theorem~\ref{thm:erlangCK} is proved in Section~\ref{sec:kproof}. The proof of Theorem~\ref{thm:erlangAK} is outlined in Section~\ref{app:AKoutline}.

\begin{theorem}
\label{thm:erlangCK}
Consider the Erlang-C system ($\alpha = 0$). For all $n \geq 1,\\ \lambda > 0$, and $\mu>0$ satisfying $1 \leq R < n$,
\begin{equation}
  \label{eq:erlangCK}
  d_K(\tilde X(\infty), Y(\infty)) \le 156\delta.
\end{equation}
\end{theorem}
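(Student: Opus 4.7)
The plan is to apply Stein's method with the family of indicator test functions $h_a(x) = 1_{(-\infty, a]}(x)$, paralleling the Wasserstein proof of Theorem~\ref{thm:erlangCW}. The new obstacle is that $h_a$ is discontinuous at $a$, so the Poisson-equation solution $f_a$ has a second derivative with a jump of size $1/\mu$ at $a$ and its third derivative only exists as a distribution. This rules out the third-order Taylor expansion that would ordinarily be used in the generator comparison step, and the main task is to replace it with a second-order expansion whose remainder can be controlled in spite of the jump.

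First I would write down the one-dimensional Poisson equation
\begin{equation*}
\mu f_a''(x) + b(x) f_a'(x) = \E h_a(Y(\infty)) - h_a(x), \quad x \in \R,
\end{equation*}
and solve it via the integrating factor $\nu$ from \eqref{CA:stdden}. Standard manipulations yield closed-form representations for $f_a'$ and $f_a''$, from which I would extract bounds on $\|f_a'\|_\infty$, $\|f_a''\|_\infty$, and a Lipschitz constant for $f_a''$ on each side of $a$, all uniform in the threshold $a$ and in the parameters $(\lambda, \mu, n)$; these gradient bounds are relegated to Appendix~\ref{app:GRADIENTS}. The basic adjoint relation $\E G_{\tilde X} f_a(\tilde X(\infty)) = 0$ together with the Poisson equation then gives
\begin{equation*}
\Prob(\tilde X(\infty) \le a) - \Prob(Y(\infty) \le a) = \E\bigl[G_Y f_a(\tilde X(\infty)) - G_{\tilde X} f_a(\tilde X(\infty))\bigr].
\end{equation*}
For each lattice point $x$ in the support of $\tilde X(\infty)$ I would then use the integral remainder form
\begin{equation*}
f_a(x \pm \delta) - f_a(x) \mp \delta f_a'(x) - \tfrac{\delta^2}{2} f_a''(x) = \int_0^\delta (\delta - s)\bigl[f_a''(x \pm s) - f_a''(x)\bigr]\, ds,
\end{equation*}
so that away from the threshold the Lipschitz estimate on $f_a''$ produces an $O(\delta^3)$ contribution per term, which after multiplication by the $O(1/\delta^2)$ birth and death rates integrates to an $O(\delta)$ error.

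The hard part is the jump of size $1/\mu$ in $f_a''$ at $a$. It contributes an additional piece to the integral remainder of size $\frac{1}{\mu}\mathbf{1}\{a \in (x, x+\delta)\}$, which after multiplication by the birth rate $\lambda = \mu/\delta^2$ produces an error of order $\Prob(\tilde X(\infty) \in [a-\delta, a])$; since $\tilde X(\infty)$ lives on the lattice $\{\delta(k-R) : k \in \Z_+\}$, this reduces to a uniform bound on the maximum single-point mass of $\tilde X(\infty)$. I would establish such a bound by exploiting the explicit product-form representation of the Erlang-C stationary distribution, which is a truncated Poisson with mean $R$ on $\{0, \ldots, n\}$ glued to a geometric tail above $n$: by Stirling the modal mass of the Poisson part is of order $1/\sqrt{R} = \delta$, and the geometric part is largest at its left endpoint, which is the Poisson mass at $n$. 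Combining this point-mass bound with the drift/diffusion mismatch term $-\tfrac{\delta}{2} b(x) f_a''(x)$ that emerges from matching the second-order Taylor coefficient to the diffusion generator $\mu f_a''$, and with moment bounds on $|b(\tilde X(\infty))|$ drawn from Appendix~\ref{app:MOMENTS}, yields $d_K(\tilde X(\infty), Y(\infty)) \le 156\,\delta$.
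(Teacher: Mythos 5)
Your overall architecture (indicator test functions, Poisson equation, second-order Taylor expansion with integral remainder, and a dedicated treatment of the jump in $f_a''$ at $a$) matches the paper's. The genuinely different ingredient is how you handle the local-mass term $\Prob(\tilde X(\infty)\in[a-\delta,a+\delta])$ that the jump produces. The paper does \emph{not} prove a direct point-mass bound; instead Lemma~\ref{lem:kolmfixC} uses the birth-death flow-balance equations to show $\Prob(a-\delta<\tilde X(\infty)\le a+\delta) \le 2\delta\,\omega(F_W) + d_K(\tilde X(\infty),W) + O(\delta^2)$, and then the proof closes by a bootstrap: the resulting inequality has $\tfrac{1}{2}d_K(\tilde X(\infty),Y(\infty))$ on the right, so it can be solved for $d_K$. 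You instead propose to bound $\max_k\pi_k \le C\delta$ directly from the product form (truncated Poisson on $\{0,\dots,n\}$ glued to a geometric tail) via Stirling. That route is sound in substance --- the truncated-Poisson modal mass is $\Theta(1/\sqrt{R})$ and the geometric tail only decreases --- but it forces you into an asymptotic case analysis (how $n-R$ compares to $\sqrt{R}$, Stirling error terms, etc.) to get uniform constants. In practice the trivial bound $d_K\le 1$ disposes of all $R<156^2$, so Stirling is only needed for large $R$; this means your approach is workable, but the paper's bootstrap is cleaner for producing an explicit, small constant without asymptotics, and is genuinely a different mechanism at the crucial step.

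Two points to tighten. First, the phrase ``a Lipschitz constant for $f_a''$ on each side of $a$'' is not literally available: differentiating the Poisson equation gives $f_a'''(x) = f_a'(x) + x f_a''(x)$ on $x<-\zeta$, which grows like $|x|$, so the modulus of continuity of $f_a''$ is not uniform in $x$. The remainder really costs $O(\delta\,(1+|\tilde X(\infty)|))$ per lattice site, and the moment bounds (in particular the bounds that keep $\E[|\tilde X(\infty)|\,1(\tilde X(\infty)\le-\zeta)]$ and $|\zeta|\,\Prob(\tilde X(\infty)\ge-\zeta)$ under control) are essential, not a finishing touch; you gesture at this but the argument as written would not close without it. Second, you assert the constant $156$ at the end but your route would yield a different (and $R$-dependent near the Stirling crossover) constant unless you track errors much more carefully; as stated this is a claimed conclusion rather than a derived one.
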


\begin{theorem}
\label{thm:erlangAK}
Consider the Erlang-A system ($\alpha > 0$). There exists an increasing function $C_K : \R_+ \to \R_+$ such that for all $n \geq 1, \lambda > 0, \mu>0$, and $\alpha>0$ satisfying $R \geq 1$,
\begin{equation}
  \label{eq:erlangAK}
  d_K(\tilde X(\infty), Y(\infty)) \le C_K(\alpha/\mu)\delta.
\end{equation}
\end{theorem}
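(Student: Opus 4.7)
The plan is to follow the Stein framework laid out in Section~\ref{sec:CAroadmap} and mirror the proof of Theorem~\ref{thm:erlangCK} in Section~\ref{sec:kproof}, with the additional bookkeeping required by the abandonment parameter $\alpha$. Fix $a \in \R$ and take the test function $h_a(x) = 1_{(-\infty, a]}(x)$. I would solve the Poisson equation $G_Y f_a(x) = \E h_a(Y(\infty)) - h_a(x)$, where $G_Y$ is the generator of the Erlang-A diffusion whose stationary density is $\nu$ in~\eqref{CA:stdden} with drift $b$ given by~\eqref{eq:b-erlang-A}. The basic adjoint relationship \eqref{eq:bar} for the birth-death CTMC $\tilde X$ then yields
\begin{equation*}
\E h_a(\tilde X(\infty)) - \E h_a(Y(\infty)) = \E\bigl[G_Y f_a(\tilde X(\infty)) - G_{\tilde X} f_a(\tilde X(\infty))\bigr],
\end{equation*}
and the goal is to bound the right-hand side by $C_K(\alpha/\mu)\delta$ uniformly in $a$.

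The central difficulty, as flagged in Section~\ref{sec:CAkolmogorov}, is that $h_a$ is discontinuous, so $f_a''$ has a jump at $x=a$ and a naive third-order Taylor expansion of $G_{\tilde X}f_a - G_Y f_a$ breaks down at that point. My first step would therefore be to establish gradient bounds for $\|f_a\|_\infty$, $\|f_a'\|_\infty$, $\|f_a''\|_\infty$, and $\|f_a'''\|_\infty$ on $\R \setminus \{a\}$, each uniform in $a$ and with explicit dependence on $\alpha/\mu$. These come from differentiating the explicit integral representation of $f_a$ against $\nu$ and controlling the tails of $\nu$, using the fact that the added negative drift $-\alpha((x+\zeta)^+-\zeta^+)$ makes $\nu$ decay faster than in the Erlang-C case. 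In parallel I would import the moment bounds on $\tilde X(\infty)$ from Appendix~\ref{app:MOMENTS}, again tracking the $\alpha/\mu$ dependence.

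With these ingredients in hand, I would Taylor-expand $G_{\tilde X}f_a(x) - G_Y f_a(x)$ exactly as in the Wasserstein proof of Theorem~\ref{thm:erlangAW}, obtaining terms of order $\delta$ times products of higher derivatives of $f_a$ and polynomials in $x$. On the event $\{|\tilde X(\infty)-a|>\delta\}$ the expansion is valid and contributes at most $C(\alpha/\mu)\delta$ once the gradient and moment bounds are combined. On the complementary event $\{|\tilde X(\infty)-a|\le\delta\}$, where the jump of $f_a''$ at $a$ invalidates the expansion, I would use the crude bound $|G_Y f_a(x) - G_{\tilde X} f_a(x)|\le C(\alpha/\mu)$ together with a concentration estimate $\Prob(|\tilde X(\infty)-a|\le\delta)\le C(\alpha/\mu)\delta$. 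This latter estimate is the discrete analogue of the density-boundedness used in~\eqref{eq:dwdKbound}; it follows from the fact that $\tilde X(\infty)$ is supported on a lattice of spacing $\delta$ combined with a uniform pointwise bound on the stationary mass of the Erlang-A birth-death chain derived from its detailed-balance recursion.

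The main obstacle I anticipate is establishing the uniform pointwise bound on the stationary mass of $\tilde X(\infty)$ with a constant depending \emph{only} on $\alpha/\mu$, not on $(\lambda,\mu,n,\alpha)$ individually, which is what makes the concentration estimate universal. A secondary difficulty is verifying that the gradient bounds on $f_a$ are continuous in $\alpha/\mu$ down to $\alpha/\mu \downarrow 0$, so that $C_K(\alpha/\mu)$ can be taken increasing and the Erlang-A bound degenerates smoothly into the Erlang-C bound of Theorem~\ref{thm:erlangCK}.
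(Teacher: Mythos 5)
Your overall framework---Poisson equation, BAR-based generator comparison, a Taylor expansion that isolates the jump of $f_a''$ at $x=a$, Kolmogorov gradient bounds on $f_a$ with explicit $\alpha/\mu$ dependence, and moment bounds on $\tilde X(\infty)$---matches the paper's Section~\ref{app:AKoutline}, which mirrors the Erlang-C Kolmogorov proof almost verbatim and applies the Erlang-A analogues (Lemmas~\ref{lem:moment_bounds_A_under}, \ref{lem:gradboundsAK}, \ref{lem:kolmfixA}, \ref{lem:densboundA}) to the expansion~\eqref{eq:third_bounds}. The one place you diverge from the paper, and it is the crux of the argument, is how the concentration term $\Prob(a-\delta<\tilde X(\infty)\le a+\delta)$ is controlled.

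You propose to prove a direct, universal pointwise bound on the stationary mass of the birth--death chain, $\sup_k\pi_k\le C(\alpha/\mu)\delta$, via the detailed-balance recursion, and to feed this into a crude estimate on the local event. The paper does something structurally different: its Lemma~\ref{lem:kolmfixA} shows
\[
\Prob(a-\delta<\tilde X(\infty)\le a+\delta)\le 2\delta\,\omega(F_Y) + d_K\bigl(\tilde X(\infty),Y(\infty)\bigr) + O\!\left(\bigl(\tfrac{\alpha}{\mu}\vee 1\bigr)^2\delta^2\right),
\]
and Lemma~\ref{lem:densboundA} bounds $\omega(F_Y)$ (the diffusion's density). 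Because this probability enters the final inequality with coefficient $\tfrac12$, one gets $d_K\le\tfrac12 d_K + C(\alpha/\mu)\delta$, which is then solved as a bootstrap. This self-referential step is what makes the argument go through without ever establishing a uniform pointwise bound on $\pi_k$. Your route is not impossible, but the estimate you assert is not proved anywhere in the paper and is not trivial: the closest thing the paper establishes (in the proof of Lemma~\ref{lem:moment_bounds_C}, Erlang-C case) is $\pi_{k^*}\le\abs{\zeta}\delta$, which is useless when $\abs{\zeta}$ is of order $1/\delta$ (lightly loaded systems). To make your approach rigorous you would need a new lemma, roughly that the flow-balance ratios force $\Omega(1/\delta)$ lattice sites near the mode to carry mass comparable to $\pi_{k^*}$, with all constants controlled by $\alpha/\mu$ alone. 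That is real additional work you have currently stated as a fact, and you should flag it as the main open step rather than a routine consequence of the recursion.

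Your closing concern about making the gradient bounds continuous as $\alpha/\mu\downarrow 0$ is not needed: the theorem only asks that $C_K$ be increasing in $\alpha/\mu$, and Theorem~\ref{thm:erlangCK} is a separate, standalone statement; there is no requirement that the Erlang-A constant degenerate into the Erlang-C one.
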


 Theorems \ref{thm:erlangCW} and
\ref{thm:erlangCK} are new, but versions of Theorems~\ref{thm:erlangAW} and \ref{thm:erlangAK} were first proved in the
pioneering paper \cite{GurvHuanMand2014} using an excursion based approach. However, our notion of universality in those theorems is stronger than the one in \cite{GurvHuanMand2014}, because most of their results require $\mu$ and $\alpha$ to be fixed. The only exception is in Appendix C of that paper, where the authors consider the NDS regime with $\mu  = \mu(\lambda) = \beta \sqrt{\lambda}$ and $\lambda = n\mu + \beta_1 \mu$ for some $\beta> 0$ and $\beta_1 \in \R$.


We emphasize that both constants $C_W$ and $C_K$ are increasing in $\alpha/\mu$. That is, for an Erlang-A system with a higher abandonment rate with respect to its service rate, our error bound becomes larger. The reader may wonder why these constants depend on $\alpha/\mu$, while the constant in the Erlang-C theorems does not depend on anything. Despite our best efforts, we were unable to get rid of the dependency on $\alpha/\mu$. The reason is that the Erlang-C model depends on only three parameters ($\lambda, \mu, n$), while the Erlang-A model also depends on $\alpha$. 
As a result, both the gradient bounds and moment bounds have an extra factor $\alpha/\mu$  in the Erlang-A model. 
 For example, compare Lemma~\ref{lem:gradboundsCW} in Section~\ref{sec:CAwassergrad_bounds} with Lemma~\ref{lem:gradboundsAWunder} in Section~\ref{sec:CAapplemmas}.

%
%
%
%
%
%
%
%
%
%

\section{Outline of the Stein Framework}
\label{sec:CAroadmap}
In this section we introduce the main tools needed to prove
Theorems~\ref{thm:erlangCW}--\ref{thm:erlangAK}. However, the framework presented here is generic, and is not limited to the Erlang-A or Erlang-C systems. It can be applied  whenever one compares a Markov chain to a diffusion process; the content here will be referred to liberally in all chapters of this dissertation. The following is an informal outline of the rest of this section. 

We know that $\tilde X(\infty)$ follows the stationary distribution of the CTMC $\tilde X$, and that this CTMC has a generator $G_{\tilde X}$. To $Y(\infty)$, we will associate a diffusion process with generator $G_Y$. We will start by fixing a test function $h :\R \to \R$ and deriving the identity 
\begin{align}
\big| \E h(\tilde X(\infty)) - \E h(Y(\infty)) \big| = \big| \E G_{\tilde X} f_h(\tilde X(\infty)) - \E  G_{Y} f_h(\tilde X(\infty)) \big|, \label{eq:outline_identity}
\end{align}
where $f_h(x)$ is a solution to the Poisson equation
\begin{align*}
G_{Y} f_h(x) = \E h(Y(\infty)) - h(x), \quad x \in \R.
\end{align*}
We then focus on bounding the right hand side of \eqref{eq:outline_identity}, which is easier to handle than the left hand side. This is done by performing a Taylor expansion of $G_{\tilde X} f_h(x)$ in Section~\ref{sec:CAtaylor}. To bound the error term from the Taylor expansion, we require bounds on various moments of $\big| \tilde X(\infty) \big|$, as well as the derivatives of $f_h(x)$. We refer to the former as moment bounds, and the latter as gradient bounds. These are presented in Sections~\ref{sec:CAmomentbounds} and \ref{sec:CAwassergrad_bounds}, respectively.

\subsection{The Poisson Equation of a Diffusion Process}
\label{sec:diffPoisson}\
A one-dimensional diffusion process can be described by its generator 
\begin{equation}
  \label{CA:genericgenerator}
G f(x) =  \bar b(x)f'(x) +  \frac{1}{2} \bar a(x) f''(x) \text{ \quad for $x \in \R$}, \ f \in C^2(\R).
\end{equation}
The functions $\bar b(x)$ and $\bar a(x)$ are known as the drift, and diffusion coefficient, respectively. It is typically required that $\bar a(x) > 0$ for all $x \in \R$, and that both $\bar b(x)$ and $\bar a(x)$ satisfy some regularity condition, e.g.\ Lipschitz continuity.
  
 The random variable $Y(\infty)$ in
Theorems~\ref{thm:erlangCW}--\ref{thm:erlangAK} is well-defined and
its density is given in (\ref{CA:stdden}). It turns out that $Y(\infty)$ has the stationary distribution of a diffusion process $Y=\{Y(t), t\ge 0\}$. The process $Y$ is the one-dimensional piecewise
Ornstein--Uhlenbeck (OU) process, whose generator is given by
\begin{equation}
  \label{CA:GY}
G_Y f(x) =  b(x)f'(x) +  \mu f''(x) \text{ \quad for $x \in \R$}, \ f \in C^2(\R),
\end{equation}
where $b(x)$ is defined in (\ref{eq:b-erlang-A}).
Clearly,  $b(0)=0$, and  $b(x)$ is Lipschitz continuous. Indeed,
\begin{displaymath}
\abs{  b(x) -b(y)} \le (\alpha\vee \mu) \abs{x-y} \quad \text{ for } x, y\in \R.
\end{displaymath}
The generator in \eqref{CA:GY} has a \emph{constant} diffusion coefficient $\bar a(x) = 2\mu$. 

Since the diffusion process $Y$ depends on parameters $\lambda, n, \mu$, and
$\alpha$ in an arbitrary way, there is no appropriate way to talk about the limit of $Y(\infty)$ in terms of these parameters. Therefore, we call $Y$ a \emph{diffusion model}, as opposed to a
\emph{diffusion limit}. Having a diffusion model whose input
parameters are directly taken from the corresponding Markov chain
model is critical to achieve \emph{universal accuracy}. In other words, this diffusion model is accurate in any parameter regime, from underloaded, to critically loaded, and to overloaded. Diffusion models, not limits,  of queueing networks with a given set of parameters have been advanced in \cite{HarrWill1987,HarrNguy1993, DaiHe2013,GlynWard2003, Gurv2014, GurvHuanMand2014,GurvHuan2016}.

The main tool we use is known as the Poisson equation. It allows us to say that $Y(\infty)$ is a good estimate for $\tilde X(\infty)$ if the generator of $Y$ behaves similarly to the generator of $\tilde X$, where $\tilde X$ is defined in \eqref{eq:scaledctmc}. Let $\cal{H}$ be a class of functions $h : \R \to \R$, to be specified shortly.  For each function $h(x) \in \cal{H}$, consider the Poisson equation 
\begin{align} \label{CA:poisson}
G_Y f_h(x) =  b(x) f_h'(x) + \mu f_h''(x) = \E h(Y(\infty)) - h(x), \quad  x\in \R.
\end{align}
The solution to the Poisson equation is described by the following generic lemma. 
\begin{lemma} \label{lem:solution}
Let $\bar a:\R \to \R_+$ and $\bar b: \R \to \R$ be continuous functions, and assume that $\bar a(x) > 0$ for all $x \in \R$. Assume also that
\begin{align*}
\int_{-\infty}^{\infty}\frac{2}{\bar a(x)} \exp \Big({\int_{0}^{x} \frac{2 \bar b(u)}{\bar a(u)} du} \Big)dx < \infty,
\end{align*}
and let $V$ be a continuous random variable with density 
\begin{align*}
\frac{\frac{2}{\bar a(x)} \exp \Big({\int_{0}^{x} \frac{2 \bar b(u)}{\bar a(u)} du} \Big)}{\int_{-\infty}^{\infty}\frac{2}{\bar a(x)} \exp \Big({\int_{0}^{x} \frac{2 \bar b(u)}{\bar a(u)} du} \Big)dx}, \quad x \in \R. 
\end{align*}
Fix $h:\R \to \R$ satisfying $\E \abs{h(V)} < \infty$, and consider the Poisson equation 
\begin{align}
\frac{1}{2} \bar a(x) f_h''(x) + \bar b(x) f_h'(x) = \E h(V) - h(x), \quad x \in \R. \label{eq:genericpoisson}
\end{align}
There exists a solution $f_h(x)$ to this equation satisfying 
\begin{align}
f_h'(x) =&\ e^{-\int_{0}^{x} \frac{2\bar b(u)}{\bar a(u)}du}\int_{-\infty}^{x} \frac{2}{\bar a(y)} (\E h(V) - h(y)) e^{\int_{0}^{y} \frac{2\bar b(u)}{\bar a(u)}du} dy  \label{eq:fprimenegv1} \\
=&\ -e^{-\int_{0}^{x} \frac{2\bar b(u)}{\bar a(u)}du} \int_{x}^{\infty} \frac{2}{\bar a(y)} (\E h(V) - h(y)) e^{\int_{0}^{y} \frac{2\bar b(u)}{\bar a(u)}du} dy \label{eq:fprimeposv1}, \\
f_h''(x) =&\  - \frac{2\bar b(x)}{\bar a(x)} f_h'(x) +  \frac{2}{\bar a(x)}\big(\E h(V) - h(x) \big). \label{eq:fppv2}
\end{align}
\end{lemma}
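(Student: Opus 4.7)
The lemma asserts existence of a specific solution to a linear second-order ODE, so the natural strategy is to treat it as a \emph{first-order} linear ODE in $f_h'(x)$ and solve it with the standard integrating-factor technique. First I would rewrite \eqref{eq:genericpoisson} in the form
\begin{equation*}
\big(f_h'(x)\big)' + \frac{2\bar b(x)}{\bar a(x)} f_h'(x) = \frac{2}{\bar a(x)}\big(\E h(V) - h(x)\big),
\end{equation*}
and identify the integrating factor $p(x) := \exp\!\big(\int_0^x \tfrac{2\bar b(u)}{\bar a(u)}\,du\big)$, so that the equation becomes $\tfrac{d}{dx}\big[p(x) f_h'(x)\big] = p(x) \cdot \tfrac{2}{\bar a(x)}\big(\E h(V)-h(x)\big)$. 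Any function $f_h$ obtained by integrating this once (to get $f_h'$) and a second time (to get $f_h$) will satisfy the Poisson equation; the content of the lemma is that a particular choice of constants of integration produces the two stated integral representations for $f_h'$, and that \eqref{eq:fppv2} then follows mechanically from \eqref{eq:genericpoisson}.

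The key observation that ties the two representations together is that the (unnormalized) stationary density of $V$ is exactly $\tfrac{2}{\bar a(y)} p(y)$. Hence
\begin{equation*}
\int_{-\infty}^{\infty} \frac{2}{\bar a(y)}\big(\E h(V) - h(y)\big) p(y)\, dy = 0,
\end{equation*}
since $\E h(V)$ is defined as the integral of $h$ against the normalized stationary density. This identity means that the antiderivative of $\tfrac{2}{\bar a(\cdot)}(\E h(V)-h(\cdot)) p(\cdot)$ can be written either as an integral from $-\infty$ up to $x$ or as the negative of an integral from $x$ to $\infty$; dividing by $p(x)$ gives \eqref{eq:fprimenegv1} and \eqref{eq:fprimeposv1}, respectively. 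Formula \eqref{eq:fppv2} is then just algebraic rearrangement of \eqref{eq:genericpoisson} after solving for $f_h''(x)$.

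The main obstacle, and essentially the only real content beyond bookkeeping, will be to justify that the two improper integrals in \eqref{eq:fprimenegv1} and \eqref{eq:fprimeposv1} are actually well-defined. The plan is to bound the integrand by (integrable) $|\E h(V)| \cdot \tfrac{2}{\bar a(y)} p(y)$ plus $|h(y)| \cdot \tfrac{2}{\bar a(y)} p(y)$; the first is integrable by the hypothesis $\int_{-\infty}^{\infty} \tfrac{2}{\bar a(x)}\exp\!\big(\int_0^x \tfrac{2\bar b(u)}{\bar a(u)}du\big)dx < \infty$, and the second is integrable precisely because $\E|h(V)| < \infty$ and the stationary density equals $\tfrac{2}{\bar a(y)}p(y)$ up to its normalizing constant. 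Once integrability is in hand, one verifies by direct differentiation that the function $f_h'(x)$ defined by either \eqref{eq:fprimenegv1} or \eqref{eq:fprimeposv1} satisfies $(p f_h')' = p \cdot \tfrac{2}{\bar a}(\E h(V)-h)$, which upon dividing through by $p(x)$ is \eqref{eq:genericpoisson} after substituting \eqref{eq:fppv2}; then any second antiderivative $f_h$ of $f_h'$ yields a solution of the Poisson equation with the required derivative formulas.
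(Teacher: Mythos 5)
Your proposal is correct and takes the same route as the paper: the paper's proof is the terse ``verify directly that both forms satisfy the equation; integrability follows from $\E|h(V)|<\infty$,'' and you have simply filled in the integrating-factor calculation and the key identity $\int_{-\infty}^{\infty}\tfrac{2}{\bar a(y)}(\E h(V)-h(y))p(y)\,dy=0$ that reconciles the two integral representations. Nothing to add.
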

\begin{proof}
The integrals in \eqref{eq:fprimenegv1} and \eqref{eq:fprimeposv1} are finite because $\E \abs{h(V)} < \infty$. One can verify directly that both forms of $f_h'(x)$ in \eqref{eq:fprimenegv1} and \eqref{eq:fprimeposv1} satisfy \eqref{eq:genericpoisson}. The form of $f_h''(x)$ follows from rearranging \eqref{eq:genericpoisson}.
\end{proof}

\begin{remark}
Provided $h(x)$, $\bar a(x)$, and $\bar b(x)/\bar a(x)$ are sufficiently differentiable, $f_h(x)$ can have more than two derivatives. For example, 
\begin{align}
f_h'''(x) =&\ -\Big(\frac{2\bar b(x)}{\bar a(x)}\Big)' f_h'(x) - \frac{2\bar b(x)}{\bar a(x)} f_h''(x) - \frac{2}{\bar a(x)}h'(x) - \frac{2\bar a'(x)}{\bar a^2(x)}\big( \E h(V) - h(x)\big).  \label{eq:fppp}
\end{align}
\end{remark}


In this chapter, we take $\HH = \lipone$ when we deal
with the Wasserstein metric (Theorems~\ref{thm:erlangCW} and
\ref{thm:erlangAW}), and we choose $\HH = \HH_K$ (defined in \eqref{eq:classkolm}) when we deal with the
Kolmogorov metric (Theorems~\ref{thm:erlangCK} and
\ref{thm:erlangAK}). We claim that $\abs{\E h(Y(\infty))} <
\infty$. Indeed, when $\HH = \HH_K$, this clearly holds. 
When $\mathcal{H} = \lipone$, without loss of generality we take $h(0)= 0$ in \eqref{CA:poisson}, and use the Lipschitz
property of $h(x)$ to see that
\begin{align*}
\abs{\E h(Y(\infty))} \leq  \E \abs{Y(\infty)} < \infty, 
\end{align*}
where the finiteness of $\E \abs{Y(\infty)}$ will be proved in
\eqref{CW:fbound7}. 

 From (\ref{CA:poisson}), one has
\begin{align}
\big| \E h(\tilde X(\infty)) - \E h(Y(\infty)) \big| =&\ \big| \E G_Y f_h(\tilde X(\infty)) \big|.\label{eq:error1}
\end{align}
In (\ref{eq:error1}), $\tilde X(\infty)$ has the stationary
distribution of the CTMC $\tilde X$, not necessarily defined on the same
probability space of $Y(\infty)$.  Actually, $\tilde X(\infty)$ in
(\ref{eq:error1}) can be replaced by any other random variable,
although one does not expect the error on the right side to be small
if this random variable has no relationship with the diffusion process
$Y$.

\subsection{Comparing Generators}
\label{sec:compare}
To prove Theorems~\ref{thm:erlangCW}--\ref{thm:erlangAK}, we need to
bound the right side of (\ref{eq:error1}).  The CTMC $\tilde X$
defined in \eqref{eq:scaledctmc} also has a generator.  We bound the
right side of (\ref{eq:error1}) by showing that the diffusion
generator in (\ref{CA:GY}) is similar to the CTMC generator.

 For any $k \in \Z_+$, we define $x = x_k = \delta(k - x(\infty))$. Then for any function $f:\R\to\R$, the generator of $\tilde X$ is given by  
\begin{align}\label{eq:GX}
G_{\tilde X} f(x) = \lambda (f(x + \delta) - f(x)) + d(k) (f(x-\delta) - f(x)),
\end{align}
where
\begin{align} \label{eq:deathrate}
d(k) = \mu (k \wedge n) + \alpha (k-n)^+,
\end{align}
is the departure rate corresponding to the system having $k$ customers.  One may check that 
\begin{align}
b(x) = \delta( \lambda - d(k)). \label{eq:bk}
\end{align}
The relationship between $G_{\tilde X}$ and the stationary distribution of $\tilde X$ is illustrated by the following lemma.
\begin{lemma} \label{lem:gz}
Let $f(x): \R \to \R$ be a function such that $\abs{f(x)} \leq C(1+\abs{x})^3$ for some $C > 0$ (i.e.\ $f(x)$ is dominated by a cubic function), and assume that the CTMC $\tilde X$ is positive recurrent. Then
\begin{align*}
\EE \big[ G_{\tilde X} f(\tilde X(\infty)) \big] = 0.
\end{align*}
\end{lemma}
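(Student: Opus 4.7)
The plan is to prove this as the classical basic adjoint relationship (BAR) for a birth-death CTMC, where the only subtlety is justifying the interchange of summation for an unbounded test function. First I would note that the stationary distribution $\pi$ of the positive recurrent birth-death chain $\tilde X$ satisfies the detailed balance relations $\lambda \pi(x_k) = d(k+1)\pi(x_{k+1})$ for $k \ge 0$, which follow directly by solving the local balance equations associated with the generator in (\ref{eq:GX}). For any $f$ with finite support, substituting (\ref{eq:GX}) into $\sum_k \pi(x_k) G_{\tilde X} f(x_k)$ and re-indexing the forward-jump and backward-jump terms separately using detailed balance produces exact cancellation, so the sum is zero. This is the standard finite-support BAR for birth-death processes.

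For $f$ merely dominated by a cubic, this re-indexing must be justified by absolute convergence of the double sum. Using (\ref{eq:GX}) and (\ref{eq:deathrate}), one has $\abs{G_{\tilde X} f(x_k)} \le 2C(\lambda + d(k))(1 + \abs{x_k} + \delta)^3$, and since $\lambda + d(k)$ is at most linear in $k$ (hence in $\abs{x_k}$ after the shift by $x(\infty)$), this is bounded by $C'(1 + \abs{x_k})^4$ for some constant $C'$ depending on the system parameters. Hence it suffices to verify $\E(1 + \abs{\tilde X(\infty)})^4 < \infty$. For the Erlang-C model this is immediate from the explicit geometric tail $\pi(x_k) \propto (R/n)^{k-n}$ for $k \ge n$; for Erlang-A with $\alpha > 0$, the linear growth of $d(k)$ beyond $n$ forces $\pi$ to have super-geometric tails. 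In both cases all polynomial moments exist, so absolute convergence holds and the cancellation that gave zero in the finite-support case extends verbatim via Fubini's theorem to yield $\E G_{\tilde X} f(\tilde X(\infty)) = 0$.

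The main (in fact only) obstacle is the moment bound $\E(1 + \abs{\tilde X(\infty)})^4 < \infty$; once this is granted, the algebra is routine. An alternative route that avoids appealing to the explicit form of $\pi$ would be a Foster--Lyapunov argument with test function $V(x) = (1+\abs{x})^4$: the negative drift of $\tilde X$ towards $x(\infty)$, visible from (\ref{eq:lambdamudiff}) and the definition of $b(x)$ in (\ref{eq:b-erlang-A}), dominates $G_{\tilde X} V$ outside a bounded set and yields the required moment bound by a standard Foster-type argument. In any event, the moment bounds that will appear in Section~\ref{sec:CAmomentbounds} and Appendix~\ref{app:MOMENTS} supply considerably more than is needed to close this lemma.
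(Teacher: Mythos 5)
Your argument is correct and takes a genuinely different route from the paper's. The paper appeals directly to a sufficient condition of Henderson (Proposition~1.1 of \cite{Hend1997}) or Glynn--Zeevi (Proposition~3 of \cite{GlynZeev2008}), which reduces the claim to verifying $\E\big[\abs{G_{\tilde X}(\tilde X(\infty), \tilde X(\infty))} \cdot \abs{f(\tilde X(\infty))}\big] < \infty$; that requirement is then met by a Foster--Lyapunov argument (via \cite[Theorem~4.3]{MeynTwee1993b}) using $V(k)=k^4$ for Erlang-C and $V(k)=k^5$ for Erlang-A. You instead re-derive the basic adjoint relationship from scratch, exploiting the birth--death structure through detailed balance and justifying the re-indexing by Fubini, and then establish the needed moment bound from the explicit geometric/super-geometric tails of $\pi$. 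Your route is more elementary and self-contained; the paper's is more portable, which matters later in the dissertation where the same scheme is re-used for the $M/Ph/n+M$ chain, which is not birth--death (compare Lemma~\ref{lemma:CTMCbar}). Both approaches hinge on exactly the same absolute-integrability condition.

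Two small remarks. First, you uniformly bound $\abs{G_{\tilde X}f(x_k)} \le C'(1+\abs{x_k})^4$ by treating $\lambda + d(k)$ as linear in $k$; for Erlang-C the rate $d(k)=\mu(k\wedge n)$ is in fact \emph{bounded}, so a third moment suffices there, as the paper exploits. This costs you nothing here since all polynomial moments exist, but it is slightly wasteful. Second, in your ``alternative route'' you suggest Foster--Lyapunov with $V(x)=(1+\abs{x})^4$ to get the fourth moment; that test function only delivers the third moment (one derivative is lost in the drift inequality), so you would need $V(x)=(1+\abs{x})^5$ for Erlang-A, matching the paper's choice of $V(k)=k^5$. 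Neither remark affects the correctness of your main argument.
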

\begin{remark}
We will see in Lemma~\ref{lem:gradboundsCW} later this section, in Lemmas~\ref{lem:gradboundsCK} and \ref{lem:gradboundsAK} of Section~\ref{sec:CAkolmogorov}, and in  Lemma~\ref{lem:gradboundsAWunder} of Section~\ref{app:grad_A} that there is a family of solutions to the Poisson equation \eqref{CA:poisson} whose first derivatives grow at most linearly in both the Wasserstein and Kolmgorov settings, meaning that these solutions satisfy the conditions of Lemma~\ref{lem:gz}.
\end{remark}
The proof of Lemma~\ref{lem:gz} is provided in Section~\ref{app:misc}. Suppose for now that for any $h(x) \in \cal{H}$, the solution to the Poisson equation $f_h(x)$ satisfies the conditions of Lemma~\ref{lem:gz}. We can apply Lemma~\ref{lem:gz} to \eqref{eq:error1} to see that
\begin{align}
\big| \E h(\tilde X(\infty)) - \E h(Y(\infty)) \big| =&\ \big| \E G_Y f_h(\tilde X(\infty)) \big| \notag \\
=&\ \big| \E G_{\tilde X} f_h(\tilde X(\infty)) - \E G_Y f_h(\tilde X(\infty)) \big| \notag \\
\leq&\ \E \big| G_{\tilde X} f_h(\tilde X(\infty)) -  G_Y f_h(\tilde X(\infty)) \big|. \label{eq:gen_bound}
\end{align}
While the two random variables on the left side of
(\ref{eq:gen_bound}) are usually defined on different probability
spaces, the two random variables on the right side of
(\ref{eq:gen_bound}) are both functions of $\tilde X(\infty)$. Thus, we have 
achieved a coupling through Lemma \ref{lem:gz}. Setting up the Poisson equation is a generic first step one performs any time one wishes to apply Stein's method to a problem. The next step is to bound the equivalent of our $\big| \E G_Y f_h(\tilde X(\infty)) \big|$. This is usually done by using a coupling argument. However, this coupling is always problem specific, and is one of the greatest sources of difficulty one encounters when applying Stein's method. In our case, this generator coupling is natural because we deal with Markov processes $\tilde X$ and $Y$.

Since the generator completely characterizes the behavior of a Markov process, it is natural to expect that convergence of generators implies convergence of Markov processes. Indeed, the question of weak convergence was studied in detail, for instance in \cite{EthiKurt1986}, using the martingale problem of Stroock and Varadhan \cite{StroVara1979}. However, \eqref{eq:gen_bound} lets us go beyond weak convergence, both because different choices of $h(x)$ lead to different metrics of convergence, and also because the question of convergence rates can be answered. One interpretation of the Stein approach is to view $f_h(x)$ as a Lyapunov function that gives us information about $h(x)$. Instead of searching very hard for this Lyapunov function, the Poisson equation \eqref{CA:poisson} removes the guesswork. However, this comes at the cost of $f_h(x)$ being defined implicitly as the solution to a differential equation.

\subsection{Taylor Expansion}
\label{sec:CAtaylor}

To bound the right side of (\ref{eq:gen_bound}), 
we study the difference $G_{\tilde X} f_h(x) - G_Y f_h(x)$. For that we perform a Taylor expansion on $G_{\tilde X}f_h(x)$. To illustrate this, suppose
that $f_h''(x)$ exists for all $x \in \R$, and is absolutely
continuous. Then for any $k \in \Z_+$, and $x = x_k = \delta(k -
x(\infty))$, we recall that $b(x) = \delta(\lambda - d(k))$ in \eqref{eq:bk} to see
that
\begingroup
\allowdisplaybreaks
\begin{align*}
G_{\tilde X} f_h(x) =&\ \lambda (f_h(x + \delta) - f_h(x)) + d(k) (f_h(x-\delta) - f_h(x)) \notag \\
=&\ f_h'(x) \delta (\lambda - d(k)) + \frac{1}{2} \delta^2 f_h''(x)(\lambda + d(k)) \notag  \\
&+ \frac{1}{2} \lambda \delta^2 (f_h''(\xi) - f_h''(x)) + \frac{1}{2} d(k) \delta^2 (f_h''(\eta) - f_h''(x)) \notag \\
=&\ f_h'(x)  b(x) +  \frac{1}{2} \delta^2 (2\lambda - \frac{1}{\delta} b(x)) f_h''(x) \notag \\
&+ \frac{1}{2} \mu (f_h''(\xi) - f_h''(x)) + \frac{1}{2} ( \lambda -  \frac{1}{\delta}b(x))  \delta^2 (f_h''(\eta) - f_h''(x))\\
=&\ G_Y f_h(x) - \frac{1}{2} \delta f_h''(x)b(x) + \frac{1}{2} \mu (f_h''(\xi) - f_h''(x)) \\
&+ \frac{1}{2} ( \mu -  \delta b(x))   (f_h''(\eta) - f_h''(x)),
\end{align*}%
\endgroup
where $\xi \in [x, x+\delta]$ and $\eta \in [x-\delta,x]$. We invoke the absolute continuity of $f_h''(x)$ to get
\begin{align}
&\ \Big| \E h(\tilde X(\infty)) - \E h(Y(\infty)) \Big| \notag \\
 \leq&\ \frac{1}{2} \delta \E \Big[ \big|f_h''(\tilde X(\infty))b(\tilde X(\infty)) \big| \Big] + \frac{\mu}{2} \E \bigg[ \int_{\tilde X(\infty)}^{\tilde X(\infty) + \delta} \abs{f_h'''(y)}dy\bigg] \notag \\
&+ \frac{\mu}{2} \E \bigg[ \int_{\tilde X(\infty)-\delta}^{\tilde X(\infty)} \abs{f_h'''(y)}dy\bigg] + \frac{1}{2} \delta\E \bigg[  \big| b(\tilde X(\infty))\big| \int_{\tilde X(\infty)-\delta}^{\tilde X(\infty)} \abs{f_h'''(y)}dy\bigg]. \label{eq:first_bounds}
\end{align}
As one can see, to show that the right hand side of \eqref{eq:first_bounds} vanishes as $\delta \to 0$, we must be able to bound the derivatives of $f_h(x)$; we refer to these as gradient bounds. Furthermore, we will also need bounds on moments of $\big| \tilde X(\infty) \big|$; we refer to these as moment bounds. Both moment and gradient bounds will vary between the Erlang-A or Erlang-C setting, and the gradient bounds will be different for the Wasserstein, and Kolmogorov settings. Moment bounds will be discussed shortly, and gradient bounds 
in the Wasserstein setting
will be presented in Section~\ref{sec:CAwassergrad_bounds}.  We discuss the Kolmogorov setting separately in Section~\ref{sec:CAkolmogorov}. In that case we face an added difficulty because $f_h''(x)$ has a discontinuity, and we cannot use \eqref{eq:first_bounds} directly.
\subsection{Moment Bounds}
\label{sec:CAmomentbounds}
The following lemma presents the necessary moment bounds to bound \eqref{eq:first_bounds} in the Erlang-C model, and is proved in Appendix~\ref{app:momCproof}. These moment
  bounds are used for both the Wasserstein and Kolmogorov metrics.
\begin{lemma} \label{lem:moment_bounds_C}
Consider the Erlang-C model ($\alpha = 0$). For all $n \geq 1, \lambda > 0$, and $\mu>0$ satisfying $0 < R < n $,
\allowdisplaybreaks
\begin{align} 
&\E \Big[(\tilde X(\infty))^2 1(\tilde X(\infty) \leq -\zeta)\Big] \leq \frac{4}{3} + \frac{2\delta^2}{3}, \label{CW:xsquaredelta}\\
&\E \Big[ \big|\tilde X(\infty) 1(\tilde X(\infty) \leq -\zeta)\big| \Big] \leq \sqrt{\frac{4}{3} + \frac{2\delta^2}{3}}, \label{CW:xminusdelta}\\
&\E \Big[ \big|\tilde X(\infty) 1(\tilde X(\infty) \leq -\zeta)\big| \Big] \leq 2\abs{\zeta} \label{CW:xminuszeta}\\
&\E \Big[\big|\tilde X(\infty)1(\tilde X(\infty) \geq -\zeta)\big| \Big] \leq \frac{1}{\abs{\zeta}} + \frac{\delta^2}{4\abs{\zeta}} + \frac{\delta}{2}, \label{CW:xplus}\\
&\Prob(\tilde X(\infty) \leq -\zeta) \leq (2+\delta)\abs{\zeta}. \label{CW:idle_prob}
\end{align}
\end{lemma}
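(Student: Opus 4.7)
The plan is to apply the basic adjoint relationship (BAR) of Lemma~\ref{lem:gz}, namely $\E[G_{\tilde X} f(\tilde X(\infty))]=0$ for $f$ of at most cubic growth, to a small collection of polynomial and piecewise test functions. In the Erlang-C case ($\alpha=0$) the drift simplifies to $b(x)=-\mu x$ on $\{x\le-\zeta\}$ and $b(x)=-\mu\abs{\zeta}$ on $\{x>-\zeta\}$, so each BAR identity becomes a clean linear relation among the quantities to be bounded.

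First, I would apply BAR with $f(x)=x$ and $f(x)=x^2$. Using $\delta(\lambda-d(k))=b(x)$, the former gives $\E b(\tilde X(\infty))=0$, equivalently $\E(\tilde X(\infty)+\zeta)^-=\abs{\zeta}$ and $-\E[\tilde X(\infty)1(\tilde X(\infty)\le-\zeta)]=\abs{\zeta}\,\Prob(\tilde X(\infty)>-\zeta)$. A direct generator calculation for $f(x)=x^2$ yields $G_{\tilde X} f(x)=2xb(x)+2\mu-\delta b(x)$, so BAR gives $\E[\tilde X(\infty) b(\tilde X(\infty))]=-\mu$, which in piecewise form reads
\[
\E[\tilde X(\infty)^2 \, 1(\tilde X(\infty)\le-\zeta)] + \abs{\zeta}\,\E[\tilde X(\infty) \, 1(\tilde X(\infty)>-\zeta)] = 1.
\]
Since $\{\tilde X(\infty)>-\zeta\}=\{X(\infty)\ge n+1\}$ and $\tilde X(\infty)>\abs{\zeta}\ge 0$ on this event, both summands above are non-negative.

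The first four bounds follow from these two identities with modest extra work. Non-negativity in the second-moment identity yields $\E[\tilde X(\infty)^2 1(\tilde X(\infty)\le-\zeta)]\le 1$ and $\E[\tilde X(\infty) 1(\tilde X(\infty)>-\zeta)]\le 1/\abs{\zeta}$, giving the leading terms of \eqref{CW:xsquaredelta} and \eqref{CW:xplus}; the remaining $\delta$-dependent slack absorbs the boundary mass at $\tilde X(\infty)=-\zeta$, controlled by the detailed-balance identity $\Prob(X(\infty)=n)=\delta\abs{\zeta}\,\Prob(X(\infty)\ge n+1)$ coming from local balance at level $n$ (equivalently, from BAR applied to $f(x)=(x+\zeta)^+$). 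Bound \eqref{CW:xminusdelta} is then immediate from \eqref{CW:xsquaredelta} via Cauchy--Schwarz together with $\Prob(\tilde X(\infty)\le-\zeta)\le 1$. For \eqref{CW:xminuszeta}, I use the pointwise inequality $\abs{\tilde X(\infty)}\le(\tilde X(\infty)+\zeta)^-+\abs{\zeta}$, valid on $\{\tilde X(\infty)\le-\zeta\}$; taking expectations and invoking $\E(\tilde X(\infty)+\zeta)^-=\abs{\zeta}$ produces the constant $2$ exactly.

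The main obstacle is \eqref{CW:idle_prob}. The $f(x)=x$ identity expresses $\Prob(\tilde X(\infty)\le-\zeta)$ through a signed quantity and hence yields only a lower bound on the probability, not the upper bound that is required. To obtain an upper bound of order $(2+\delta)\abs{\zeta}$, the plan is to apply BAR to a piecewise-quadratic test function $f$ whose generator pointwise dominates a positive multiple of $1(x\le-\zeta)$, converting the BAR equality into a genuine probability inequality; carefully tracking the jump contribution of such an $f$ at the boundary $\tilde X(\infty)=-\zeta$ so that the precise constants $2$ and $\delta$ emerge will be the most delicate step of the argument.
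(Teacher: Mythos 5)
Your plan for \eqref{CW:xsquaredelta}--\eqref{CW:xplus} is sound and takes a slightly different route from the paper: you exploit the \emph{exact} BAR identity
\[
\E\big[\tilde X(\infty)^2\,1(\tilde X(\infty)\le-\zeta)\big] + \abs{\zeta}\,\E\big[\tilde X(\infty)\,1(\tilde X(\infty)>-\zeta)\big] = 1,
\]
obtained directly from $\E[\tilde X(\infty)\,b(\tilde X(\infty))]=-\mu$, whereas the paper starts from a one-sided Lyapunov-style inequality
$G_{\tilde X}V(x)\le 1(x\le-\zeta)\mu(-\tfrac32 x^2+\tfrac{\delta^2}{2})+\cdots$.
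Your version is cleaner for \eqref{CW:xsquaredelta} (you get $\le 1$ immediately) and your argument for \eqref{CW:xminuszeta} via $\abs{\tilde X}\le(\tilde X+\zeta)^-+\abs\zeta$ and the idle-server identity $\E(\tilde X+\zeta)^-=\abs\zeta$ is exactly the paper's argument. One small caveat: converting the $\{>-\zeta\}$ identity to the $\{\ge-\zeta\}$ bound \eqref{CW:xplus} via the boundary mass $\abs\zeta\pi_n=\delta\zeta^2\Prob(\tilde X>-\zeta)\le\delta$ yields $\tfrac{1}{\abs\zeta}+\delta$, which does \emph{not} match the stated bound $\tfrac{1}{\abs\zeta}+\tfrac{\delta^2}{4\abs\zeta}+\tfrac{\delta}{2}$ when $\abs\zeta>\delta/2$; the bound is of the right order but the exact constants of the lemma as stated would not be recovered, which matters downstream where the paper plugs these constants in explicitly.

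The genuine gap is \eqref{CW:idle_prob}. You correctly identify that the linear BAR gives only a signed identity, and you propose a Lyapunov argument with a piecewise-quadratic $f$ whose generator dominates a multiple of $1(x\le-\zeta)$. That is not what the paper does, and I doubt it succeeds in that form. The obstruction is that on the ``idle'' region $\{x\le-\zeta\}$ the drift is $b(x)=-\mu x$, which is \emph{positive} for $x<0$ and \emph{negative} for $0<x\le-\zeta$; a single piecewise-quadratic test function cannot make $G_{\tilde X}f$ pointwise dominate a positive multiple of the indicator across a region where the drift changes sign without introducing a competing negative term of the same order. The paper instead uses the birth--death structure directly: it locates the mode $k^*$ of $\{\pi_k\}$ (which satisfies $k^*\le n$ because $R<n$), splits $\Prob(X(\infty)\le n)=\sum_{k\le n-\sqrt R}\pi_k+\sum_{n-\sqrt R<k\le n}\pi_k$, bounds the first sum by $I/\sqrt R=\abs\zeta$ via Markov's inequality against the idle count $I=\abs\zeta/\delta$, bounds the second sum by $(\sqrt R+1)\pi_{k^*}$, and then controls $\pi_{k^*}\le\abs\zeta/\sqrt R$ by applying BAR to the \emph{truncated linear} function $f(x)=k\wedge k^*$. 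The unimodality of $\{\pi_k\}$ (a birth--death chain fact) is essential here and is absent from your proposal. You would need to supply either that structural input or a different mechanism to close \eqref{CW:idle_prob}.
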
 
\noindent We see that \eqref{CW:xplus} immediately implies that when $\delta \leq 1$, 
\begin{align}
\abs{\zeta} \Prob( \tilde X(\infty) \geq -\zeta) \leq&\ \abs{\zeta} \wedge \E \Big[\big| \tilde X(\infty) 1(\tilde X(\infty) \geq -\zeta)\big| \Big]\notag \\
 \leq&\ \abs{\zeta} \wedge \Big(\frac{1}{\abs{\zeta}} + \frac{\delta^2}{4\abs{\zeta}} + \frac{\delta}{2} \Big) \notag \\
\leq&\ 7/4, \label{eq:xplusbound}
\end{align}
where to get the last inequality we considered separately the cases where $\abs{\zeta} \leq 1$ and $\abs{\zeta} \geq 1$. This bound will be used in the proofs of Theorems~\ref{thm:erlangCW} and \ref{thm:erlangCK}. 

One may wonder why the bounds are separated using the indicators $1\{\tilde X(\infty) \leq -\zeta\}$ and $1\{\tilde X(\infty) \geq -\zeta\}$. This is related to the drift $b(x)$ appearing in \eqref{eq:first_bounds}, and the fact that $b(x)$ takes different forms on the regions  $x\leq -\zeta$ and $x\geq -\zeta$. Furthermore, it may be unclear at this point why both \eqref{CW:xminusdelta} and \eqref{CW:xminuszeta} are needed, as the left hand side in both bounds is identical. The reason is that \eqref{CW:xminusdelta} is an $O(1)$ bound (we think of $\delta \leq 1$), whereas \eqref{CW:xminuszeta} is an $O(\abs{\zeta})$ bound. The latter is only useful when $\abs{\zeta}$ is small, but this is nevertheless an essential bound to achieve universal results. As we will see later, it negates $1/\abs{\zeta}$ terms that appear in \eqref{eq:first_bounds} from $f_h''(x)$ and $f_h'''(x)$.

For the Erlang-A model, we also require moment bounds similar to those stated in Lemma~\ref{lem:moment_bounds_C}. Both the proof, and subsequent usage, of the Erlang-A moment bounds are similar to the proof and subsequent usage of the Erlang-C moment bounds. We therefore delay their precise statement until Lemma~\ref{lem:moment_bounds_A_under} in Section~\ref{sec:CAapplemmas} to avoid distracting the reader with a bulky lemma.

%

\subsection{Wasserstein Gradient Bounds} \label{sec:CAwassergrad_bounds}
Given a function $h(x)$, there are multiple solutions to the Poisson equation \eqref{CA:poisson}. Going forward, when we refer to a solution $f_h(x)$, we mean the solution in Lemma~\ref{lem:solution} with $\bar b(x) = b(x)$ and $\bar a(x) = 2\mu$. The following lemma presents Wasserstein gradient bounds for the Erlang-C model. It is proved in Section~\ref{app:wgradient}.
\begin{lemma} \label{lem:gradboundsCW}
Consider the Erlang-C model ($\alpha = 0$), and fix $ h(x) \in \lipone$. Then $f_h(x)$ is twice continuously differentiable, with an absolutely continuous second derivative. Furthermore, for all $n \geq 1, \lambda > 0$, and $\mu>0$ satisfying $0 < R < n $, 
\begin{align}
\abs{f_h'(x)} \leq&\
\begin{cases}
\frac{1}{\mu }(7.5 + 5/\abs{\zeta}), \quad x \leq -\zeta,\\
\frac{1}{\mu }\frac{1}{\abs{\zeta}}(x + 1 + 2/\abs{\zeta}), \quad x \geq -\zeta.
\end{cases}\label{eq:WCder1} \\
\abs{f_h''(x)} \leq&\ 
\begin{cases}
\frac{34}{\mu }( 1 + 1/\abs{\zeta}), \quad x \leq -\zeta,\\
\frac{1}{\mu \abs{\zeta}}, \quad x \geq -\zeta,
\end{cases} \label{eq:WCder2}
\end{align}
and for those $x \in \R$ where $f_h'''(x)$ exists, 
\begin{align}
\abs{f_h'''(x)} \leq&\ 
\begin{cases}
\frac{1}{\mu}(17 + 10/\abs{\zeta}) , \quad x \leq -\zeta,\\
2/\mu, \quad x \geq -\zeta.
\end{cases} \label{eq:WCder3}
\end{align}
\end{lemma}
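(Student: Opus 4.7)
The plan is to exploit the explicit integral formulas for $f_h'(x)$ from Lemma~\ref{lem:solution} specialized to $\bar a(x)=2\mu$ and $\bar b(x)=b(x)$. Because $\zeta<0$ in the Erlang-C setting, $b(x)=-\mu x$ on $(-\infty,-\zeta]$ and $b(x)=-\mu|\zeta|$ on $[-\zeta,\infty)$, and a direct computation of $\int_0^x b(u)/\mu\,du$ shows that the density of $Y(\infty)$ is proportional to $e^{-x^2/2}$ for $x\leq -\zeta$ and to $e^{|\zeta|^2/2-|\zeta|x}$ for $x\geq -\zeta$, i.e.\ a truncated standard Gaussian on the left glued to an exponential of rate $|\zeta|$ on the right. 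With $\bar h(y)=\E h(Y(\infty))-h(y)$, Lemma~\ref{lem:solution} then gives
\begin{equation*}
f_h'(x)\;=\;\frac{1}{\mu\,\nu(x)}\int_{-\infty}^x \bar h(y)\nu(y)\,dy\;=\;-\frac{1}{\mu\,\nu(x)}\int_x^\infty \bar h(y)\nu(y)\,dy,
\end{equation*}
and without loss of generality I would take $h(0)=0$, so that $|\bar h(y)|\leq |y|+\E|Y(\infty)|$ and $\E|Y(\infty)|$, computable directly from $\nu$, is of order $1+1/|\zeta|$.

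For $x\geq -\zeta$ I would use the second representation and split $\bar h(y)=\bar h(x)+(h(x)-h(y))$ with $|h(x)-h(y)|\leq |y-x|$. The integrals against the exponential tail are exact, $\int_x^\infty \nu(y)\,dy=\nu(x)/|\zeta|$ and $\int_x^\infty (y-x)\nu(y)\,dy=\nu(x)/|\zeta|^2$, which combined with $|\bar h(x)|\leq |x|+\E|Y(\infty)|$ yields the $(x+1+2/|\zeta|)/(\mu|\zeta|)$ estimate in \eqref{eq:WCder1}. For $x\leq -\zeta$ I would use the first representation; now Mills-ratio estimates of the form $\int_{-\infty}^x e^{-y^2/2}\,dy\leq e^{-x^2/2}/(|x|\vee 1)$ and $\int_{-\infty}^x |y|\,e^{-y^2/2}\,dy=e^{-x^2/2}$ bound the Gaussian integrals against $\nu(x)$ by absolute constants, and the $|\zeta|$ dependence enters only through $\E|Y(\infty)|$, producing the $(7.5+5/|\zeta|)/\mu$ bound. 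The $f_h''$ estimate then follows directly from the Poisson equation in the form $\mu f_h''(x)=\bar h(x)-b(x)f_h'(x)$ together with $|b(x)|=\mu(|x|\wedge |\zeta|)$ and the $f_h'$ bounds just obtained; on $x\leq -\zeta$ the apparent linear-in-$x$ growth of $b(x)f_h'(x)$ is absorbed by an integration-by-parts against $\nu'(y)=-y\,\nu(y)$ inside the Gaussian integral, recovering the $34(1+1/|\zeta|)/\mu$ estimate. For $f_h'''$, formula \eqref{eq:fppp} with constant $\bar a$ reduces to $f_h'''(x)=-(b(x)/\mu)'f_h'(x)-(b(x)/\mu)f_h''(x)-h'(x)/\mu$; since $b'(x)\in\{-\mu,0\}$ and $|h'|\leq 1$ almost everywhere, plugging in the $f_h'$ and $f_h''$ bounds gives \eqref{eq:WCder3}, and absolute continuity of $f_h''$ follows because $f_h'''$ is locally bounded off the kink while $f_h''$ is continuous across it by the Poisson equation itself.

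The main obstacle is the universality of the constants: every bound must hold for \emph{every} admissible $\zeta<0$ with no hidden dependence on $(\lambda,\mu,n)$ beyond the explicit factors of $\mu$ and powers of $1/|\zeta|$ shown in the statement. As $|\zeta|\downarrow 0$ (the heavy-traffic corner of Erlang-C), $\E|Y(\infty)|$ and the ratios $\int_x^\infty \nu/\nu(x)$, $\int_x^\infty (y-x)\nu/\nu(x)$ all blow up polynomially in $1/|\zeta|$, and the delicate point is to verify that these divergences combine into precisely the powers of $1/|\zeta|$ stated and not into something worse. As $|\zeta|\uparrow\infty$ (deeply underloaded), the Gaussian half dominates, and the challenge is to extract Mills-ratio bounds whose constants are independent of $|x|$, so that the $(7.5+5/|\zeta|)/\mu$ estimate does not pick up a stray factor of $|x|$ from the left tail. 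Matching the two regime-specific estimates across the kink at $x=-\zeta$ so that the same absolute constants work on both sides is the concrete bookkeeping responsible for the specific numerical constants in \eqref{eq:WCder1}--\eqref{eq:WCder3}.
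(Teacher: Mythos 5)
Your overall strategy -- explicit integral representations from Lemma~\ref{lem:solution}, $h(0)=0$, Mills-ratio-type bounds for the Gaussian half, exact exponential integrals for the $x\geq-\zeta$ half -- is the same route the paper takes. But there are two concrete gaps.

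First, your handling of the region $x\in[0,-\zeta]$ is incorrect as stated. You say ``for $x\leq-\zeta$ I would use the first representation'' and invoke $\int_{-\infty}^x e^{-y^2/2}\,dy\leq e^{-x^2/2}/(|x|\vee 1)$ and $\int_{-\infty}^x |y|e^{-y^2/2}\,dy=e^{-x^2/2}$, but both identities fail for $x>0$: the first is false (the left side is at least $\sqrt{\pi/2}>1$ while the right is at most $1$), and the second equals $2-e^{-x^2/2}$, not $e^{-x^2/2}$. With the first representation alone, the ratio $\nu(x)^{-1}\int_{-\infty}^x|y|\nu(y)\,dy$ is $2e^{x^2/2}-1$ on $[0,-\zeta]$, which blows up like $e^{\zeta^2/2}$ when $|\zeta|$ is large. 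The paper's Lemma~\ref{lem:lowlevelCW} carries \emph{both} representations on $[0,-\zeta]$ and takes the minimum of the two resulting bounds (compare \eqref{CW:fbound1}, \eqref{CW:fbound3} with \eqref{CW:fbound2}, \eqref{CW:fbound4}); considering $|\zeta|\leq 1$ and $|\zeta|\geq1$ separately is what produces the uniform constant $7.5+5/|\zeta|$. You acknowledge that matching across the kink is delicate, but the mechanism you describe would not deliver it.

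Second, your derivation of \eqref{eq:WCder3} for $x\leq-\zeta$ is wrong. You propose to bound $f_h'''(x)=-(b/\mu)'f_h'-(b/\mu)f_h''-h'/\mu$ by ``plugging in the $f_h'$ and $f_h''$ bounds''. On $x\leq-\zeta$ we have $|b(x)|=\mu|x|$, unbounded, while \eqref{eq:WCder2} only gives a uniform bound $|f_h''(x)|\leq 34(1+1/|\zeta|)/\mu$; multiplying them gives $|b(x)f_h''(x)/\mu|\leq 34|x|(1+1/|\zeta|)/\mu$, which grows linearly in $|x|$ and is strictly weaker than the stated $(17+10/|\zeta|)/\mu$. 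What is actually needed is the sharper product estimate $|b(x)f_h''(x)|\leq 1+\sup_{y\leq x}\mu|f_h'(y)|$, which the paper obtains by multiplying the integral representation \eqref{eq:hf21} of $f_h''$ (i.e.\ your integration-by-parts formula) by $|b(x)|$ and then applying the Mills-ratio bound \eqref{eq:main1} so that the $|b(x)|$ factor cancels against the $1/b(x)$ coming out of the integral. Without this cancellation the $f_h'''$ bound on $x\leq-\zeta$ does not close.

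Your sketch for $f_h''$ (differentiate the Poisson equation / integrate by parts to replace $\bar h$ by $h'$) is conceptually the same as the paper's Lemma~\ref{lem:handle} and is fine in spirit, though left imprecise; the two real issues are the ones above.
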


\begin{remark}
This lemma validates the Taylor expansion used to obtain \eqref{eq:first_bounds}  because  $f_h''(x)$ is absolutely continuous. Furthermore, $f_h(x)$  satisfies the conditions of Lemma~\ref{lem:gz}, because $f_h'(x)$ grows at most linearly.
\end{remark}
\noindent Gradient bounds, also known as Stein factors, are central to any application of Stein's method. The problem of gradient bounds for diffusion approximations can be divided into two cases: the one-dimensional case, and the multi-dimensional case. In the former, the Poisson equation is an ordinary differential equation (ODE) corresponding to a one-dimensional diffusion process. In the latter, the Poisson equation is a partial differential equation (PDE) corresponding to a multi-dimensional diffusion process.

The one-dimensional case is simpler, because the explicit form of $f_h(x)$ is given to us by Lemma~\ref{lem:solution}. To bound $f_h'(x)$ and $f_h''(x)$ we can analyze \eqref{eq:fprimenegv1}--\eqref{eq:fppv2} directly, as we do in the proof of Lemma~\ref{lem:gradboundsCW}. In Appendix~\ref{app:gradboundsgeneric}, we see that this direct analysis can be used as a go-to method for one-dimensional diffusions. However, it fails in the multi-dimensional case, because closed form solutions for PDE's are not typically known. In this case, it helps to  exploit the fact that $f_h(x)$ satisfies
\begin{align}
f_h(x) = \int_{0}^{\infty} \Big(\E \big[h(Y(t))\ |\ Y(0) = x\big] - \E h(Y(\infty))\Big) dt, \label{eq:relval}
\end{align}
where $Y = \{Y(t), t\geq 0\}$ is a diffusion process with generator $G_{Y}$ \cite{PardVere2001}. To bound derivatives of $f_h(x)$ based on \eqref{eq:relval}, one may use coupling arguments to bound finite differences of the form $\frac{1}{s}(f_h(x+ s) - f_h(x))$. For examples of coupling arguments, see \cite{Barb1988, BarbBrow1992, BrowXia2001, BarbXia2006, GanXia2015, GorhMack2016}. A related paper to these types of gradient bounds is \cite{Stol2015}, where the author used a variant of \eqref{eq:relval} for the fluid model of a flexible-server queueing system as a Lyapunov function.  As an alternative to coupling, one may combine \eqref{eq:relval} with a-priori Schauder estimates from PDE theory, as was done in \cite{Gurv2014}. 

Just like we did with the moment bounds, we delay the Erlang-A gradient bounds to Lemma~\ref{lem:gradboundsAWunder} in Section~\ref{sec:CAapplemmas}. We are now ready to prove Theorem~\ref{thm:erlangCW}.

\section{Proof of Theorem~\ref{thm:erlangCW} (Erlang-C Wasserstein)}
\label{sec:CAproofW}
In this section we prove Theorem~\ref{thm:erlangCW}. Fixing $h(x) \in \lipone$, we see from Lemma~\ref{lem:gradboundsCW} that $f_h''(x)$ is absolutely continuous, implying that \eqref{eq:first_bounds} holds. We recall it here as
\begin{align}
&\ \Big| \E h(\tilde X(\infty)) - \E h(Y(\infty)) \Big| \notag \\
 \leq&\ \frac{1}{2} \delta \E \Big[ \big|f_h''(\tilde X(\infty))b(\tilde X(\infty)) \big| \Big] + \frac{\mu}{2} \E \bigg[ \int_{\tilde X(\infty)}^{\tilde X(\infty) + \delta} \abs{f_h'''(y)}dy\bigg] \notag \\
&+ \frac{\mu}{2} \E \bigg[ \int_{\tilde X(\infty)-\delta}^{\tilde X(\infty)} \abs{f_h'''(y)}dy\bigg] + \frac{1}{2} \delta\E \bigg[  \big| b(\tilde X(\infty))\big| \int_{\tilde X(\infty)-\delta}^{\tilde X(\infty)} \abs{f_h'''(y)}dy\bigg], \label{CW:second_bounds}
\end{align}
where $\delta = 1/\sqrt{R} = \sqrt{\mu/\lambda}.$
The proof of Theorem~\ref{thm:erlangCW} simply involves applying the moment bounds and gradient bounds to show that the error bound in \eqref{CW:second_bounds} is small.
\begin{proof}[Proof of Theorem~\ref{thm:erlangCW}]
Throughout the proof we assume that $R \geq 1$, or equivalently, $\delta \leq 1$. We bound each of the terms on the right side of \eqref{CW:second_bounds} individually. We recall here that the support of $\tilde X(\infty)$ is a $\delta$-spaced grid, and in particular this grid contains the point $-\zeta$. In the bounds that follow, we will often consider separately the cases where $\tilde X(\infty) \leq -\zeta - \delta$, and $\tilde X(\infty) \geq -\zeta$. We recall that 
\begin{align}
b(x) = \mu \big((x+\zeta)^--\zeta^-\big) = 
\begin{cases}
-\mu x, \quad x \leq -\zeta,\\
\mu \zeta, \quad x \geq -\zeta,
\end{cases} \label{eq:bexpand}
\end{align}
and apply the moment bounds \eqref{CW:xminusdelta}, \eqref{CW:xminuszeta}, and the gradient bound \eqref{eq:WCder2}, to see that
\begin{align*}
\E \Big[ \big|f_h''(\tilde X(\infty))b(\tilde X(\infty)) \big| \Big] \leq &\ 34(1 + 1/\abs{\zeta})\E \Big[\big|\tilde X(\infty) \big| 1(\tilde X(\infty) \leq -\zeta - \delta) \Big] \\
&+ \Prob(\tilde X(\infty) \geq -\zeta) \\
\leq &\ 34(1 + 1/\abs{\zeta})\bigg(2\abs{\zeta} \wedge \sqrt{\frac{4}{3} + \frac{2\delta^2}{3}}\bigg) + 1 \\
\leq &\ 34\Big(\sqrt{\frac{4}{3} + \frac{2\delta^2}{3}} + 2\Big) + 1 \\
\leq&\ 34\big(\sqrt{2} + 2\big) + 1 \leq 118.
\end{align*}
Next, we use \eqref{CW:idle_prob} and the gradient bound in \eqref{eq:WCder3} to get
\begin{align*}
&\ \frac{\mu }{2} \E \bigg[\int_{\tilde X(\infty)}^{\tilde X(\infty)+\delta} \abs{f_h'''(y)} dy \bigg] \\
\leq&\   \frac{\delta}{2}\Big((17 + 10/\abs{\zeta})\Prob(\tilde X(\infty) \leq -\zeta - \delta) + 2\Prob(\tilde X(\infty) \geq -\zeta)\Big) \\
\leq&\ \frac{\delta}{2}\Big(17 + \frac{10}{\abs{\zeta}}(3 \abs{\zeta}) \Big) \leq 24\delta.
\end{align*}
By a similar argument, we can show that
\begin{align*}
\frac{\mu }{2}\E \bigg[\int_{\tilde X(\infty)-\delta}^{\tilde X(\infty)} \abs{f_h'''(y)} dy \bigg] \leq 24\delta,
\end{align*}
with the only difference in the argument being that we consider the cases when $\tilde X(\infty) \leq -\zeta$ and $\tilde X(\infty) \geq -\zeta + \delta$, instead of $\tilde X(\infty) \leq -\zeta -\delta$ and $\tilde X(\infty) \geq -\zeta$. Lastly, we use the form of $b(x)$, the moment bounds \eqref{CW:xminusdelta}, \eqref{CW:xminuszeta}, and \eqref{eq:xplusbound}, and the gradient bound \eqref{eq:WCder3} to get 
\begin{align*}
&\ \frac{\delta}{2} \E \bigg[ \big| b(\tilde X(\infty)) \big| \int_{\tilde X(\infty)-\delta}^{\tilde X(\infty)} \abs{f_h'''(y)} dy \bigg] \\
\leq&\ \frac{\delta^2}{2}\Big( (17 + 10/\abs{\zeta})\E \Big[ \big| \tilde X(\infty)\big| 1(\tilde X(\infty) \leq -\zeta) \Big] + 2\abs{\zeta} \Prob(\tilde X(\infty) \geq -\zeta + \delta)\Big)\\
\leq&\ \frac{\delta^2}{2}\Big( (17 + 10/\abs{\zeta})\Big(2\abs{\zeta} \wedge \sqrt{\frac{4}{3} + \frac{2\delta^2}{3}}\Big) + 14/4 \Big)\\
\leq&\ \frac{\delta^2}{2}\bigg(17 \sqrt{2} + 20 + 14/4 \bigg) \leq 24\delta^2.
\end{align*}
Hence, from \eqref{eq:first_bounds} we conclude that for all $R \geq 1$, and $h(x) \in \lipone$,
\begin{align}
&\ \Big| \E h(\tilde X(\infty)) - \E h(Y(\infty)) \Big| \leq  \delta (118 + 24 + 24 + 24 \delta) \leq 190\delta, \label{eq:intermproofWC}
\end{align}
which proves Theorem~\ref{thm:erlangCW}.
\end{proof}
\section{Proof Outline for Theorem~\ref{thm:erlangAW} (Erlang-A Wasserstein)}
We begin by stating some necessary moment and gradient bounds, and then  outline the proof of Theorems~\ref{thm:erlangAW}.

\subsection{Erlang-A Moment and Gradient Bounds} \label{sec:CAapplemmas}

 The following lemma states the necessary moment bounds for the Erlang-A model. The underloaded and overloaded cases have to be handled separately. Since the drift $b(x)$ is different between the Erlang-A and Erlang-C models, the quantities bounded in the following lemma will resemble those in Lemma~\ref{lem:moment_bounds_C}, but will not be identical. Its proof is outlined in Appendix~\ref{app:mom_A}.
\begin{lemma}\label{lem:moment_bounds_A_under}
Consider the Erlang-A model ($\alpha>0$). Fix $n \geq 1, \lambda > 0, \\ \mu > 0$, and $\alpha > 0$. If $0 < R \leq n $ (an underloaded system), then
\allowdisplaybreaks
\begin{align}
&\E\Big[ \big(\tilde X(\infty)\big)^2 1(\tilde X(\infty)\leq -\zeta)\Big] \leq \frac{1}{3}\Big(\frac{\alpha}{\mu }\delta^2 + \delta^2 + 4 \Big), \label{eq:mwuK1}\\
&\E\Big[ \big|\tilde X(\infty)1(\tilde X(\infty)\leq -\zeta)\big|\Big] \leq \sqrt{\frac{1}{3}\Big(\frac{\alpha}{\mu }\delta^2 + \delta^2 + 4 \Big)}, \label{eq:mwu1}\\
&\E\Big[ \big|\tilde X(\infty)1(\tilde X(\infty)\leq -\zeta)\big|\Big] \leq 2\abs{\zeta} + \frac{\alpha}{\mu } \sqrt{\frac{1}{3}\Big(\frac{\mu }{\alpha}\delta^2 + \frac{\mu }{\alpha}4 + \delta^2  \Big)},  \label{eq:mwu2}\\
&\E \Big[ \big|\tilde X(\infty)1(\tilde X(\infty)\geq -\zeta)\big|\Big]\notag \\ 
\leq&\ \bigg(1 + \frac{\delta^2}{4} +  \frac{\delta}{2} \sqrt{\frac{1}{3}\Big(\frac{\alpha}{\mu }\delta^2 + \delta^2 + 4 \Big)}\bigg) \Big(\frac{\mu }{\mu \wedge \alpha} \wedge \frac{1}{\abs{\zeta}} \Big), \label{eq:mwu3}\\
&\E \Big[ (\tilde X(\infty)+\zeta)^2 1(\tilde X(\infty)\geq -\zeta)\Big]\leq \frac{1}{3}\Big(\frac{\mu }{\alpha}\delta^2 + \frac{\mu }{\alpha}4 + \delta^2  \Big), \label{eq:mwuK2}\\
&\E \Big[ (\tilde X(\infty)+\zeta)1(\tilde X(\infty)\geq -\zeta)\Big]\leq \sqrt{\frac{1}{3}\Big(\frac{\mu }{\alpha}\delta^2 + \frac{\mu }{\alpha}4 + \delta^2  \Big)}, \label{eq:mwu4}\\
&\E \Big[ (\tilde X(\infty)+\zeta)1(\tilde X(\infty)\geq -\zeta)\Big]\leq \frac{1}{\abs{\zeta}} \Big( \frac{\delta^2}{4}\frac{\alpha}{\mu } + \frac{\delta^2}{4} + 1 \Big), 
\label{eq:mwu5}\\
&\Prob(\tilde X(\infty)\leq -\zeta) \leq (2+\delta)\bigg(\abs{\zeta} + \frac{\alpha}{\mu } \sqrt{\frac{1}{3}\Big(\frac{\mu }{\alpha}\delta^2 + \frac{\mu }{\alpha}4 + \delta^2  \Big)}\bigg). \label{eq:mwu6}
\end{align}
and if $n \leq R$ (an overloaded system), then
\begin{align}
&\E\Big[ \big|\tilde X(\infty)1(\tilde X(\infty)\leq -\zeta)\big|\Big]\leq \sqrt{\frac{1}{\alpha \wedge \mu } \Big( \alpha \frac{\delta^2}{4} + \mu \Big) },
\label{eq:mwo7}\\
&\E\Big[ \big|\tilde X(\infty)1(\tilde X(\infty)\leq -\zeta)\big|\Big]\leq  \frac{1}{\abs{\zeta}} \Big(\frac{\delta^2}{4}+\frac{\mu}{\alpha}\Big),
\label{eq:mwo8}\\
&\E \Big[(\tilde X(\infty))^2 1(\tilde X(\infty) \geq -\zeta)\Big] \leq  \frac{1}{3}\Big(\delta^2 + 4\frac{\mu }{\alpha} \Big), \label{eq:mwo2}\\
&\E\Big[ \big|\tilde X(\infty)1(\tilde X(\infty)\geq -\zeta)\big|\Big] \leq  \sqrt{\frac{1}{3}\Big(\delta^2 + 4\frac{\mu }{\alpha} \Big)}, \label{eq:mwo1}\\
&\E \Big[\big| (\tilde X(\infty)+\zeta)1(\tilde X(\infty)\leq -\zeta)\big|\Big]\leq \frac{1}{\abs{\zeta}}\Big(\frac{\delta^2}{4}+1\Big),
\label{eq:mwo3}\\
&\E \Big[ (\tilde X(\infty)+\zeta)^21(\tilde X(\infty)\leq -\zeta)\Big]\leq  \frac{\delta^2}{4}\frac{\alpha}{\mu }+1, \label{eq:mwoK1}\\
&\E \Big[\big| (\tilde X(\infty)+\zeta)1(\tilde X(\infty)\leq -\zeta)\big|\Big]\leq  \sqrt{ \frac{\delta^2}{4}\frac{\alpha}{\mu }+1}, \label{eq:mwo4}\\
&\E \Big[\big| (\tilde X(\infty)+\zeta)1(\tilde X(\infty)\leq -\zeta)\big|\Big]\leq  \frac{\alpha}{\mu} \sqrt{\frac{1}{3}\Big(\delta^2 + 4\frac{\mu }{\alpha} \Big)},
\label{eq:mwo5}\\
&\Prob(\tilde X(\infty) \leq -\zeta)\leq (3+\delta)\frac{16}{\sqrt 2}\Big(\frac{\delta^2}{4}+1\Big)  \bigg(\Big(\frac{1}{\zeta}\vee \frac{\alpha}{\mu}\Big)\wedge \sqrt{\frac{\alpha}{\mu}}\bigg).\label{eq:mwo10}
\end{align}
\end{lemma}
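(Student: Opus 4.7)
The plan is to follow the template of the Erlang-C moment bound proof (Lemma~\ref{lem:moment_bounds_C}) appealed to in Appendix~\ref{app:momCproof}, adapting it to the Erlang-A drift \eqref{eq:b-erlang-A}. The single tool driving every bound is the basic adjoint relationship $\E G_{\tilde X} f(\tilde X(\infty)) = 0$, which by Lemma~\ref{lem:gz} is available for any $f$ dominated by a cubic polynomial. Applying it with $f(x)=x$, and using that $b(x)=\delta(\lambda-d(k))$ from \eqref{eq:bk}, yields the flow-balance identity $\E b(\tilde X(\infty))=0$. Plugging in the piecewise form of $b(x)$ (which in the underloaded case is $-\mu x$ on $\{x\leq -\zeta\}$ and $\mu\zeta-\alpha(x+\zeta)$ on $\{x\geq -\zeta\}$, with the roles of $\mu$ and $\alpha$ swapped when $\zeta\geq 0$) gives a linear relation between $\E[\tilde X(\infty)1(\tilde X(\infty)\le -\zeta)]$ and $\E[(\tilde X(\infty)+\zeta)1(\tilde X(\infty)\ge -\zeta)]$, plus a boundary contribution proportional to $\zeta$. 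This identity is the source of the $\alpha/\mu$ and $\mu/\alpha$ factors appearing throughout the statement, and it is used repeatedly to convert a bound on one side of $-\zeta$ into a bound on the other.

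The second-moment bounds \eqref{eq:mwuK1}, \eqref{eq:mwuK2}, \eqref{eq:mwoK1}, \eqref{eq:mwo2} are obtained by feeding BAR piecewise quadratic test functions, specifically $f(x)=\tfrac{1}{2}((x+\zeta)^-)^2$ and $f(x)=\tfrac{1}{2}((x+\zeta)^+)^2$. A short Taylor expansion of $G_{\tilde X} f(x)$ analogous to the one in Section~\ref{sec:CAtaylor} shows that the dominant term is $f'(x)b(x)$, which in either case reduces to a negative multiple (with coefficient $\mu$ or $\alpha$ depending on the side and the regime) of the one-sided second moment we wish to bound, while the remaining $O(\delta^2)$ terms are controlled by $(\lambda+d(k))\delta^2\leq 2\lambda \delta^2 + \delta\abs{b(x)}$. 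Rearranging yields the stated inequalities, with the factor $\delta^2=1/R$ eventually canceling the $R$ hidden in $\lambda$.

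The first-moment bounds \eqref{eq:mwu1}, \eqref{eq:mwu4}, \eqref{eq:mwo1}, \eqref{eq:mwo4} come out of Cauchy--Schwarz applied to the second-moment bounds just established. The alternative ``$1/\abs{\zeta}$'' and ``$\alpha/\mu$'' versions \eqref{eq:mwu2}, \eqref{eq:mwu3}, \eqref{eq:mwu5}, \eqref{eq:mwo3}, \eqref{eq:mwo5}, \eqref{eq:mwo8} are obtained by substituting the flow-balance identity of the previous paragraph into those same second-moment estimates: the identity lets one trade a first moment on one side of $-\zeta$ for a multiple of $\abs{\zeta}$ times a probability plus an appropriately weighted first moment on the other side, and the minimum of the two resulting estimates gives the two-branch form of each bound, exactly as in \eqref{CW:xminusdelta}--\eqref{CW:xminuszeta}. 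The probability estimates \eqref{eq:mwu6} and \eqref{eq:mwo10} are then extracted by a one-step argument: using the birth-death reversibility relation $\lambda \pi_{k-1} = d(k) \pi_k$ (so that $\Prob(\tilde X(\infty) \le -\zeta)$ can be compared against $\Prob(\tilde X(\infty) \in [-\zeta,-\zeta+\delta])$ which in turn is absorbed into an already-proved first-moment bound), together with Markov's inequality multiplied by $\abs{\zeta}$, which produces the $(2+\delta)$ or $(3+\delta)$ prefactors.

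The main obstacle is the bookkeeping at $x=-\zeta$, where the chosen test functions have continuous first derivatives but discontinuous second derivatives, so the Taylor remainder in $G_{\tilde X} f(-\zeta)$ must be handled by direct computation rather than by invoking a derivative bound; this is where the stray $\delta$- and $\delta^2$-terms in every inequality come from. A second, subtler obstacle is that the overloaded case is not a mechanical mirror of the underloaded one—the mass of $\tilde X(\infty)$ sits on opposite sides of $-\zeta$ in the two regimes, and the drift coefficients $\mu$ and $\alpha$ swap roles—so each of the two blocks of bounds must be proved separately, and tracking the dependence on $\alpha/\mu$ uniformly (which is what universality demands) is precisely what forces the double-branch form of almost every bound in the statement.
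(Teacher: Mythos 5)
The main gap is your choice of test functions for the ``unshifted'' one-sided second moments \eqref{eq:mwuK1} and \eqref{eq:mwo2}. You propose to feed BAR the piecewise quadratics $f(x)=\tfrac12((x+\zeta)^-)^2$ and $f(x)=\tfrac12((x+\zeta)^+)^2$. This works for the \emph{shifted} moments \eqref{eq:mwuK2}, \eqref{eq:mwu5}, \eqref{eq:mwoK1}, \eqref{eq:mwo3}: on the side of $-\zeta$ away from the origin the drift is $-c(x+\zeta)$ plus a constant, so $b(x)f'(x)$ decomposes into $-c(x+\zeta)^2$ plus a term whose sign is controlled by the sign of $\zeta$ against the sign of $(x+\zeta)$. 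But for \eqref{eq:mwuK1} in the underloaded case the relevant region $\{x\le -\zeta\}$ contains $x=0$ and the drift there is $-\mu x$, so $b(x)f'(x)=-\mu x(x+\zeta)$, which is \emph{not} sign-definite on that region (it is positive for $0<x<\abs{\zeta}$). You cannot read off a bound on $\E[\tilde X^2\,1(\tilde X\le-\zeta)]$ from a term that changes sign; and if you try to salvage it by expanding $x(x+\zeta)=(x+\zeta)^2-\zeta(x+\zeta)$ and inserting the flow-balance identity, the $\zeta$-dependent cross terms only partially cancel and the resulting bound scales with $\zeta^2$, which is strictly weaker than the $\zeta$-uniform constant claimed in \eqref{eq:mwuK1}. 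The same failure occurs for \eqref{eq:mwo2} in the overloaded case, where $\{x\ge-\zeta\}$ contains $0$ and the drift is $-\alpha x$.

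What is actually needed is the \emph{full} quadratic $V(x)=x^2$. On the side containing $0$ it gives $b(x)V'(x)=-2\mu x^2$ (resp.\ $-2\alpha x^2$), exactly aligned with the target moment; the nontrivial content is the algebraic manipulation on the opposite side, where one writes $2xb(x)=-2\alpha(x+\zeta)^2+2\zeta(\alpha+\mu)(x+\zeta)-2\mu\zeta^2$ (underloaded) and then checks that the second and third terms have the same favorable sign as the first, using $\zeta<0$ and $(x+\zeta)\ge 0$. (The overloaded case is the mirror image.) This sign bookkeeping is the real step, and it is not captured by saying the drift term ``reduces to a negative multiple of the one-sided second moment''. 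With $V=x^2$ the off-side contribution also immediately yields \eqref{eq:mwuK2} and \eqref{eq:mwu5} by dropping the complementary negative terms, so the paper's single computation replaces two of your three test functions. You should also note that the overloaded tail-probability bound \eqref{eq:mwo10} needs an extra ingredient — a stochastic comparison in the arrival rate — because the one-step birth–death argument you sketch does not close when $\lambda$ is much larger than $n\mu$. The rest of the structure (first moments via Cauchy–Schwarz, the alternative $\abs{\zeta}$- and $\alpha/\mu$-dependent bounds via the flow-balance identity, and the mode/idle-server argument for the underloaded probability bound) matches the paper's route.
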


The following Wasserstein gradient bounds are proved in Appendix~\ref{app:grad_A}.

\begin{lemma} \label{lem:gradboundsAWunder} 
Consider the Erlang-A model ($\alpha > 0$), and fix $h(x) \in \lipone$. Then $f_h(x)$ given in Lemma~\ref{lem:solution} is twice continuously differentiable, with an absolutely continuous second derivative. Furthermore, there exists a constant $C > 0$ independent of $\lambda, n, \mu$, and $\alpha$ such that for all $n \geq 1, \lambda > 0, \mu>0$, and $\alpha > 0$ satisfying $0 < R \leq n$ (an underloaded system), 
\begin{align}
\abs{f_h'(x)}\leq&\ 
\begin{cases}
C \left(\sqrt{\frac{\mu}{\alpha}}\wedge \frac{1}{\abs{\zeta}}+1\right)\frac{1}{\mu},\quad x\leq -\zeta,\\
C \left(\frac{\mu}{\alpha}+\sqrt{\frac{\mu}{\alpha}}\wedge \frac{1}{\abs{\zeta}}+1\right)\frac{1}{\mu},\quad x\geq -\zeta,
\end{cases} \label{eq:gwu1} \\
\abs{f_h''(x)} \leq&\
\begin{cases}
C \left(\sqrt{\frac{\mu}{\alpha}}\wedge \frac{1}{\abs{\zeta}}+1\right)\frac{1}{\mu}, \quad x\leq 0, \\
C \left[\left(\frac{\alpha}{\mu}+\sqrt{\frac{\alpha}{\mu}}+1\right)\left(\sqrt{\frac{\mu}{\alpha}}\wedge \frac{1}{\abs{\zeta}}\right)+1\right]\frac{1}{\mu},\quad x\in [0,-\zeta],\\
C\left(\frac{\alpha}{\mu}+\sqrt{\frac{\alpha}{\mu}}+1\right)\left(\sqrt{\frac{\mu}{\alpha}}\wedge \frac{1}{\abs{\zeta}}\right)\frac{1}{\mu},\quad x\geq -\zeta,
\end{cases} \label{eq:gwu2} \\
\end{align}
and for those $x \in \R$ where $f_h'''(x)$ exists,
\begin{align}
\abs{f_h'''(x)} \leq&\ 
\begin{cases}
C\left(\sqrt{\frac{\mu}{\alpha}}\wedge \frac{1}{\abs{\zeta}}+1\right)\frac{1}{\mu},
\quad x\leq 0,\\
C\left(\sqrt{\frac{\mu}{\alpha}}\wedge \frac{1}{\abs{\zeta}}+\frac{\alpha}{\mu}+\sqrt{\frac{\alpha}{\mu}}+1\right)\frac{1}{\mu}, \quad x\in [0,-\zeta], \\
C \left(\frac{\alpha}{\mu}+\sqrt{\frac{\alpha}{\mu}}+1\right)\frac{1}{\mu},\quad x\geq -\zeta,
\end{cases} \label{eq:gwu3}
\end{align}
and for all $n \geq 1, \lambda > 0, \mu>0$, and $\alpha > 0$ satisfying $n \leq R$ (an overloaded system),
\begin{align}
\abs{f_h'(x)}\leq&\
\begin{cases}
C \Big(\frac{1}{\mu}+ \frac{1}{\sqrt \alpha}\frac{1}{\sqrt\mu} + \frac{\zeta}{\mu} \wedge \frac{1}{\alpha}\Big), \quad x\leq-\zeta,\\
C \Big(\frac{1}{\mu}+\frac{1}{\sqrt\alpha}\frac{1}{\sqrt\mu}+\frac{1}{\alpha}\Big), \quad x\geq -\zeta,
\end{cases} \label{eq:gwo1} \\
\abs{f_h''(x)}\leq &\
\begin{cases}
C\Big(\frac{1}{\mu}+ \frac{1}{\sqrt \alpha}\frac{1}{\sqrt\mu} + \frac{\zeta}{\mu} \wedge \frac{1}{\alpha}\Big),\quad x\leq-\zeta,\\
 C\Big(\frac{\alpha}{\mu}+\sqrt{\frac{\alpha}{\mu}}+1\Big)\frac{1}{\mu}|x| +
C\Big(\frac{1}{\mu}+\frac{1}{\sqrt \alpha}\frac{1}{\sqrt\mu}\Big),\quad x\geq -\zeta,
\end{cases} \label{eq:gwo2}\\
\end{align}
and for those $x \in \R$ where $f_h'''(x)$ exists,
\begin{align}
\abs{f_h'''(x)}\leq&\ \frac{C}{\mu}\Big(1+\sqrt{\frac{\mu}{\alpha}}+\zeta \wedge \frac{\mu}{\alpha}\Big),\quad x\leq -\zeta,\label{eq:gwo3}\\
\abs{f_h'''(x)}\leq&\ \frac{C}{\mu}\Big(\frac{\alpha}{\mu}+\sqrt{\frac{\alpha}{\mu}}+1\Big)
\Big(1+\frac{\alpha}{\mu}x^2\Big)
+\frac{C}{\mu}\Big(\frac{\alpha}{\mu}+\sqrt{\frac{\alpha}{\mu}}\Big)\abs{x},\quad x\geq -\zeta,\label{eq:gwo41}\\
\abs{f_h'''(x)} \leq&\
\frac{C}{\mu}\Big(\frac{\alpha}{\mu}+\sqrt{\frac{\alpha}{\mu}}+1\Big)
+ \frac{C}{\mu}\Big(\frac{\alpha}{\mu}+\sqrt{\frac{\alpha}{\mu}}+1\Big)^2 \abs{x} ,\quad x\geq -\zeta. \label{eq:gwo42}
\end{align}

\end{lemma}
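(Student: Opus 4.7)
The plan is to mirror the argument used for the Erlang-C Wasserstein gradient bounds in Lemma~\ref{lem:gradboundsCW}, working directly with the explicit integral representations from Lemma~\ref{lem:solution} specialized to $\bar a(x) = 2\mu$ and $\bar b(x) = b(x)$. The first task is to compute $F(x) := \frac{1}{\mu}\int_0^x b(y)\,dy$ explicitly so that the density $\nu(x) = \kappa e^{F(x)}$ of $Y(\infty)$ is available in closed form. In both the underloaded ($\zeta \leq 0$) and overloaded ($\zeta \geq 0$) cases $\nu$ is piecewise Gaussian: on the idle side $x \leq -\zeta$ the quadratic part of $F$ has coefficient $-1/2$ (Gaussian decay rate $1$), while on the queue side $x \geq -\zeta$ it has coefficient $-\alpha/(2\mu)$ (Gaussian decay rate $\alpha/\mu$); only the sign of $\zeta$, and hence the position of the kink, distinguishes the two regimes. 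Consequently all the tail integrals $\int_x^{\infty}\nu$ and $\int_{-\infty}^x\nu$ reduce to Mill's-ratio estimates of the form $\int_x^{\infty} e^{-y^2/2}\,dy \le e^{-x^2/2}/|x|$ and its rate-$\alpha/\mu$ analog, and the moments of $V := Y(\infty)$ follow from the same explicit density.

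Turning to $f_h'(x)$, I would assume without loss of generality that $h(0) = 0$, so that the Lipschitz property gives $|h(y) - \E h(V)| \le |y| + \E|V|$. Using both equivalent representations \eqref{eq:fprimenegv1} and \eqref{eq:fprimeposv1} of $f_h'$ and the triangle inequality inside each integral,
\begin{equation*}
|f_h'(x)| \le \frac{1}{\mu\,\nu(x)} \min\Bigl\{\int_{-\infty}^x (|y|+\E|V|)\nu(y)\,dy,\ \int_x^{\infty}(|y|+\E|V|)\nu(y)\,dy\Bigr\},
\end{equation*}
and for each of the three subregions $x \le 0$, $0 \le x \le -\zeta$, $x \ge -\zeta$ in the underloaded case (two subregions in the overloaded case) I would estimate the right-hand side using the Gaussian tail bounds together with the total-mass bound $\kappa\int\nu = 1$. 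The factors $\sqrt{\mu/\alpha}\wedge 1/|\zeta|$ appearing in \eqref{eq:gwu1}--\eqref{eq:gwu3} reflect the competition between the queue-side length-scale $\sqrt{\mu/\alpha}$ and the offset $|\zeta|$ of the kink; a parallel accounting produces the $\zeta/\mu \wedge 1/\alpha$ factors in the overloaded bounds \eqref{eq:gwo1}--\eqref{eq:gwo2}.

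For $f_h''(x)$ I would use \eqref{eq:fppv2}, which here reads $f_h''(x) = -(b(x)/\mu) f_h'(x) + \frac{1}{\mu}(\E h(V) - h(x))$. Since $b(x)/\mu$ is piecewise linear with slopes $-1$ and $-\alpha/\mu$, plugging in the Step~2 bounds on $f_h'$ together with $|h(x) - \E h(V)|\le |x| + \E|V|$ produces the piecewise bounds in \eqref{eq:gwu2} and \eqref{eq:gwo2}; the linear-in-$|x|$ growth on the queue side of the overloaded case is driven by $b(x) = -\alpha x$ there. For $f_h'''(x)$, the formula \eqref{eq:fppp} collapses because $\bar a$ is constant to
\begin{equation*}
f_h'''(x) = -\bigl(b(x)/\mu\bigr)' f_h'(x) - \bigl(b(x)/\mu\bigr) f_h''(x) - \frac{1}{\mu}h'(x),
\end{equation*}
valid at every $x \neq -\zeta$ since $h$ is Lipschitz (so $h'$ exists a.e.\ with $|h'|\le 1$) and $b$ is piecewise linear with $|(b/\mu)'| \in \{1, \alpha/\mu\}$. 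Assembling the bounds from the previous two steps then yields \eqref{eq:gwu3} and \eqref{eq:gwo3}--\eqref{eq:gwo42}, and the absolute continuity of $f_h''$ is a byproduct, since the right-hand side of \eqref{eq:fppv2} is a sum of Lipschitz pieces that match continuously at $-\zeta$.

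The main obstacle is obtaining bounds that are uniform simultaneously in $\alpha/\mu$ and in $|\zeta|$, as required by the universality in Theorem~\ref{thm:erlangAW}. The delicate spot is the transition region near $x = -\zeta$: the queue-side piece of $\nu$ has effective width $\sqrt{\mu/\alpha}$, so Mill's-ratio estimates need separate treatment according to whether $|\zeta|$ is small compared to $\sqrt{\mu/\alpha}$ (the queue-side bulk sits near the origin and the two pieces of $\nu$ overlap on the characteristic length-scale) or large compared to it (the queue-side piece is essentially a one-sided Gaussian detached from the origin). This dichotomy is exactly what forces the min-structure $\sqrt{\mu/\alpha}\wedge 1/|\zeta|$ in the underloaded bounds and $\zeta/\mu \wedge 1/\alpha$ in the overloaded bounds, and tracking these scales consistently across the five or six subregions spanning both regimes constitutes the main bookkeeping challenge of the proof.
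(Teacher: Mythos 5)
Your plan for $f_h'(x)$ is sound and matches the paper's: you use both representations \eqref{eq:fprimenegv1}--\eqref{eq:fprimeposv1}, estimate $|h(y)-\E h(V)|\le|y|+\E|V|$, and control the resulting Mill's-ratio integrals on each subregion. This is exactly what the paper does (via the helper estimates packaged in Lemma~\ref{lem:lowlevWunder} and Lemma~\ref{lem:lowlevWover}, which are the Erlang-A analogues of Lemma~\ref{lem:lowlevelCW}).

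The gap is in how you propose to bound $f_h''$ and $f_h'''$. You suggest reading \eqref{eq:fppv2} as $f_h''(x) = -(b(x)/\mu)f_h'(x) + \frac{1}{\mu}(\E h(V) - h(x))$ and ``plugging in the Step~2 bounds on $f_h'$ together with $|h(x)-\E h(V)|\le|x|+\E|V|$.'' Done literally, this cannot produce the required estimate in the underloaded regime for $x\ge-\zeta$, where the claimed bound on $|f_h''(x)|$ is a \emph{constant} in $x$: both $|b(x)f_h'(x)|$ and $|h(x)-\E h(V)|$ grow linearly in $|x|$ (since for $x\ge-\zeta$, $b(x)\sim -\alpha x$ while $f_h'(x)$ tends to a nonzero constant of order $1/\alpha$), and the two contributions cancel only when you work with the exact expression, not with separate upper bounds on each piece. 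The paper avoids this by invoking the alternative integral representations \eqref{eq:hf21}--\eqref{eq:hf22} from Lemma~\ref{lem:handle}, which are obtained by integrating by parts against the stationary density: the integrand then involves $h'(y)$ (bounded by $1$ since $h\in\lipone$) and $f_h'(y)$ (already bounded), rather than $h(y)$ itself, and the cancellation is handled once and for all. The same issue recurs for $f_h'''$: in \eqref{eq:fppp}, the term $|b(x)f_h''(x)|$ cannot be bounded by multiplying your (too-loose) $f_h''$ bound by $|b(x)|$. The paper instead multiplies \eqref{eq:hf21}--\eqref{eq:hf22} by $|b(x)|$ before estimating, letting the $1/|b(x)|$ factor in the Lemma~\ref{lem:main} estimates absorb it. In short: the architecture of your proof needs the Lemma~\ref{lem:handle} representations, or an equivalent integration-by-parts identity, to capture the cancellation; the ``direct formula plus triangle inequality'' route works only when $h$ is bounded (as in the Kolmogorov lemmas) or when linear growth is tolerated in the stated bound (as in the overloaded region $x\ge-\zeta$, where the paper does use the direct form, as you suggest).
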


\subsection{Proof Outline} \label{app:AWoutline}
Proving Theorem~\ref{thm:erlangAW} consists of bounding the four error terms in \eqref{eq:first_bounds}. Since the procedure is very similar to the proof of Theorem~\ref{thm:erlangCW}, we will only outline which gradient and moment bounds need to be used to bound each error term.

We start with the underloaded case, when $R \leq n$. To bound the first term in \eqref{eq:first_bounds}, we use moment bounds \eqref{eq:mwu1}, \eqref{eq:mwu2}, and \eqref{eq:mwu4}, together with the gradient bounds in \eqref{eq:gwu2}. For the second and third terms, we use moment bound \eqref{eq:mwu6} and the gradient bounds in \eqref{eq:gwu3}. For the fourth term, we use moment bounds \eqref{eq:mwu1}--\eqref{eq:mwu4}, and the gradient bounds in \eqref{eq:gwu3}.

We now prove the overloaded case, when $R \geq n$. To bound the first term in \eqref{eq:first_bounds}, we use moment bounds \eqref{eq:mwo7}--\eqref{eq:mwo5}, together with the gradient bounds in \eqref{eq:gwo2}. For the second and third terms, we use moment bounds \eqref{eq:mwo2},\eqref{eq:mwo1}, and \eqref{eq:mwo10}, together with the gradient bounds in \eqref{eq:gwo3} and \eqref{eq:gwo41}. For the fourth term, we use moment bounds \eqref{eq:mwo7}--\eqref{eq:mwo5}, and gradient bounds in \eqref{eq:gwo3} and \eqref{eq:gwo42}.

\section{The Kolmogorov Metric}
\label{sec:CAkolmogorov}
In this section we prove Theorem~\ref{thm:erlangCK}, which is stated in the Kolmogorov setting. The biggest difference between the  Wasserstein and Kolmogorov settings is that in the latter, the test functions $h(x)$ used in the Poisson equation \eqref{CA:poisson} are discontinuous. For this reason, new gradient bounds need to be derived separately for the Kolmogorov setting; we present these new gradient bounds in Section~\ref{sec:kgrad}. Furthermore, the solution to the Poisson equation no longer has a continuous second derivative, meaning that the Taylor expansion we used to derive the upper bound in \eqref{eq:first_bounds} is invalid. We discuss an alternative to \eqref{eq:first_bounds} in Section~\ref{sec:ktaylor}. This alternative bound contains a new error term that cannot be handled by the gradient bounds, nor the moment bounds. This term appears because the solution to the Poisson equation has a discontinuous second derivative, and to bound it we present Lemma~\ref{lem:kolmfixC}. We then prove Theorem~\ref{thm:erlangCK}  in  Section~\ref{sec:kproof}, and outline the proof for Theorem~\ref{thm:erlangAK} in Section~\ref{app:AKoutline}.

\subsection{Kolmogorov Gradient Bounds}
\label{sec:kgrad}

Recall that in the Kolmogorov setting, we take the class of test functions for the Poisson equation \eqref{CA:poisson} to be $\HH_K$ defined in \eqref{eq:classkolm}. For the statement of the following two lemmas, we fix $a \in \R$ and set $h(x) = 1_{(-\infty, a]}(x)$. Recall that the Poisson equation has multiple solutions, but going forward we always work with the one from Lemma~\ref{lem:solution}. Furthermore, we use $f_a(x)$ instead of $f_h(x)$ to denote the solution to the Poisson equation.
\begin{lemma} \label{lem:gradboundsCK}
Consider the Erlang-C model ($\alpha = 0$). Then $f_a(x)$ is continuously differentiable, with an absolutely continuous derivative. Furthermore, for all $n \geq 1, \lambda > 0$, and $\mu > 0$ satisfying $0 < R  < n$, 
\begin{align}
\abs{f_a'(x)} \leq 
\begin{cases}
4/\mu , \quad x \leq -\zeta, \\
\frac{1}{\mu \abs{\zeta}}, \quad x \geq -\zeta,
\end{cases} \label{eq:KCder1}
\end{align}
and for all $x \in \R$,
\begin{align}
\abs{f_a''(x)} \leq 2/\mu, \label{eq:KCder2}
\end{align}
where $f_a''(x)$ is understood to be the left derivative at the point $x = a$.
\end{lemma}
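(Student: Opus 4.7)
The plan is to leverage the explicit solution formula from Lemma~\ref{lem:solution}. Setting $\bar a(x) = 2\mu$, $\bar b(x) = b(x)$, and $h(x) = 1_{(-\infty, a]}(x)$, let $F$ denote the CDF of $Y(\infty)$. Evaluating either integral representation in Lemma~\ref{lem:solution} and using that $\nu(y)=\kappa e^{\int_0^y b/\mu\,du}$ is the density of $Y(\infty)$, one arrives at the unified expression
\begin{equation*}
f_a'(x) \;=\; -\,\frac{F(x \wedge a)\,(1 - F(x \vee a))}{\mu\,\nu(x)},
\end{equation*}
which is continuous in $x$ (the two branches agree at $x=a$), so $f_a$ is continuously differentiable. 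Absolute continuity of $f_a'$ is then inherited from the Poisson equation, whose right-hand side has bounded variation.

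For the first derivative, the formula immediately yields $|f_a'(x)| \leq \min(F(x),1-F(x))/(\mu\nu(x))$. On $x\geq -\zeta$ the drift $b(x)=\mu\zeta$ is constant, making $\nu$ a pure exponential and producing the clean identity $(1-F(x))/\nu(x) = 1/|\zeta|$, hence $|f_a'(x)|\leq 1/(\mu|\zeta|)$. On $x\leq -\zeta$ the density is $\nu(x)=\kappa e^{-x^2/2}$, and $F(x)/\nu(x)=\Phi(x)/\phi(x)$ in standard normal notation. For $x\leq 0$ the Mills-type bound $\Phi(x)/\phi(x)\leq \sqrt{\pi/2}$ suffices. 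For $0 < x\leq -\zeta$ I would split $\int_x^\infty \nu(y)/\nu(x)\,dy$ at $-\zeta$, bounding the Gaussian piece by $\int_x^{-\zeta} e^{-x(y-x)}\,dy \leq 1/x$ and the exponential tail by $1/|\zeta|$, giving $(1-F(x))/\nu(x)\leq 1/x+1/|\zeta|$. Combining the Mills estimate on $[0,1]$ with this tail estimate on $[1,-\zeta]$ delivers the uniform bound $|f_a'(x)|\leq 4/\mu$.

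For the second derivative I would use the Poisson equation $\mu f_a''(x) = F(a) - 1_{(-\infty,a]}(x) - b(x)f_a'(x)$. A remarkable cancellation occurs on $x > \max(a,-\zeta)$: substituting $b(x)(1-F(x))/(\mu\nu(x)) = \zeta/|\zeta| = -1$ into $\mu f_a''(x) = F(a)\bigl[1+b(x)(1-F(x))/(\mu\nu(x))\bigr]$ gives $f_a''\equiv 0$. For $-\zeta\leq x\leq a$, the identity $F(x)=1-\nu(x)/|\zeta|$ collapses the expression to $|f_a''(x)|=\nu(a)/(\mu\nu(x))\leq 1/\mu$. For $x\leq -\zeta$ the Poisson equation reduces to $\mu f_a''(x)=-(1-F(a))\bigl[1+xF(x)/\nu(x)\bigr]$ when $x\leq a$ and to $\mu f_a''(x)=F(a)\bigl[1-x(1-F(x))/\nu(x)\bigr]$ when $x>a$; each is bounded case-by-case using the Mills-type estimates above.

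The main obstacle is securing the sharp constant $2/\mu$ on the subregion $0 < x\leq -\zeta$, since a naive triangle inequality on the Poisson equation combined with $|f_a'(x)|\leq 4/\mu$ only gives $3/\mu$. The improvement must exploit the telescoping identity $F(x)+(1-F(x))=1$. For instance, in the subcase $0 < x\leq a$, write
\begin{equation*}
\mu\,|f_a''(x)| \;=\; (1-F(a)) \;+\; \frac{xF(x)(1-F(a))}{\nu(x)},
\end{equation*}
then use $1-F(a)\leq 1-F(x)$ together with $x(1-F(x))/\nu(x)\leq 1+x/|\zeta|\leq 2$ to bound the second term by $2F(x)$; the total collapses to $(1-F(x))+2F(x) = 1+F(x)\leq 2$. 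The jump of $f_a''$ at $x=a$ is of size $1/\mu$, and both one-sided limits remain within $2/\mu$ by the preceding case analysis, so the left-derivative convention at $x=a$ is consistent with the stated bound.
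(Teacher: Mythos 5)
Your proof is correct and takes essentially the same route as the paper: the explicit solution formula from Lemma~\ref{lem:solution} (which you compactly rewrite as $f_a'(x) = -F(x\wedge a)(1-F(x\vee a))/(\mu\nu(x))$), the split at $\{0,-\zeta,a\}$, and Mills-type Gaussian-tail estimates. Where you diverge is in the argument for $f_a''$. The paper invokes the Mills-type bounds \eqref{eq:main1}--\eqref{eq:main2} of Lemma~\ref{lem:main}, which give $\min(F(x),1-F(x))/(\mu\nu(x))\le 1/|\bar b(x)|$ for every $x\neq 0$; the point hidden there is that $\bar b$ is non-increasing \emph{across} the break $-\zeta$, so for $0<x\le-\zeta$ one dominates $\int_x^\infty \nu(y)/\nu(x)\,dy \le \int_x^\infty e^{-x(y-x)}\,dy = 1/x$ in a single comparison, without splitting the tail at $-\zeta$. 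Multiplying by $|\bar b(x)|=\mu x$ yields $|\bar b(x) f_a'(x)|\le 1$ uniformly in $x$, and the Poisson equation delivers $|f_a''|\le 2/\mu$ with no case analysis at all. By splitting the tail at $-\zeta$ you instead get the weaker $(1-F(x))/\nu(x)\le 1/x + 1/|\zeta|$, hence $|\bar b(x)f_a'(x)|\le 1+x/|\zeta|\le 2$ and only $3/\mu$ from the triangle inequality. Your telescoping repair is valid in the one subcase you spell out, and the remaining subcases do go through (e.g.\ for $a<x\le 0$ one uses $F(a)\le F(x)$ together with $|x|F(x)/\nu(x)\le 1$), but leaving them as ``handled case-by-case'' is a gap you should fill --- the paper's monotone-drift observation sidesteps all of it. One small logical slip: claiming absolute continuity of $f_a'$ is ``inherited from the Poisson equation'' gets the direction backwards. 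It is the explicit integral formula that directly exhibits $f_a'$ as continuous and piecewise $C^1$ (hence absolutely continuous), and only then does rearranging the Poisson equation identify $f_a''$ a.e.
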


\begin{lemma}\label{lem:gradboundsAK}
Consider the Erlang-A model ($\alpha > 0$). Then $f_a(x)$ is continuously differentiable, with an absolutely continuous derivative.  Fix $n \geq 1, \lambda > 0, \mu > 0$, and $\alpha > 0$. If $0 < R  \leq n$ (an underloaded system), then
\begin{align}
\abs{f_a'(x)} \leq 
\begin{cases}
\frac{1}{\mu }\sqrt{2\pi}e^{1/2} , \quad x \leq -\zeta, \\
\frac{1}{\mu } \Big(\sqrt{\frac{\pi}{2} \frac{\mu }{\alpha}} \wedge\frac{1}{\abs{\zeta}}\Big), \quad x \geq -\zeta,
\end{cases} \label{eq:ACuder1}
\end{align}
and if $n \leq R$ (an overloaded system), then
\begin{align}
\abs{f_a'(x)} \leq 
\begin{cases}
\frac{1}{\mu }\sqrt{\frac{\pi}{2}} , \quad x \leq -\zeta, \\
\frac{1}{\mu } \sqrt{\frac{\pi}{2}}\Big(1 + \sqrt{ \frac{\mu }{\alpha}} \Big), \quad x \geq -\zeta.
\end{cases} \label{eq:ACoder1}
\end{align}
Moreover,  for all  $\lambda>0, n \geq 1, \mu > 0$, and $\alpha > 0$, and all $x \in \R$,
\begin{align}
\abs{f_a''(x)} \leq 3/\mu, \label{eq:ACder2}
\end{align}
where $f_a''(x)$ is understood to be the left derivative at the point $x = a$.
\end{lemma}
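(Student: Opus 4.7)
The plan is to work from the explicit representations of $f_a'$ and $f_a''$ provided by Lemma~\ref{lem:solution}, specialized to $\bar a(x)=2\mu$ and $\bar b(x)=b(x)$. With $\nu$ the stationary density of $Y(\infty)$ and $F$ its CDF, equation \eqref{eq:fprimenegv1} gives
\begin{equation*}
f_a'(x)=\frac{1}{\mu\,\nu(x)}\int_{-\infty}^x\!\big(F(a)-1_{(-\infty,a]}(y)\big)\nu(y)\,dy
=\begin{cases}-\dfrac{\Prob(Y(\infty)>a)\,F(x)}{\mu\,\nu(x)},& x\le a,\\[4pt] -\dfrac{F(a)\,\Prob(Y(\infty)>x)}{\mu\,\nu(x)},& x\ge a,\end{cases}
\end{equation*}
where we used the alternative form \eqref{eq:fprimeposv1} on the second piece. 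Continuity of $f_a'$ follows from matching the two pieces at $x=a$, and absolute continuity from the right-hand side of the Poisson equation; $h$ has a single jump at $a$, so differentiability everywhere except $a$ is immediate. The statement about interpreting $f_a''(a)$ as the left derivative is then just a bookkeeping choice.

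The heart of the proof is to bound $\Prob(Y(\infty)>a)\,F(x)/\nu(x)$ and $F(a)\,\Prob(Y(\infty)>x)/\nu(x)$, which since $\Prob(Y(\infty)>a),F(a)\le 1$ reduces to estimating the Mill's ratios $F(x)/\nu(x)$ and $\Prob(Y(\infty)>x)/\nu(x)$ in the appropriate regions. In the underloaded case ($\zeta\le 0$), the drift is $b(y)=-\mu y$ on $y\le -\zeta$, so $\nu$ coincides (up to normalization) with the standard normal density there; a standard Mill's-ratio computation gives $F(x)/\nu(x)\le \sqrt{2\pi}\,e^{1/2}$ for $x\le -\zeta$, yielding \eqref{eq:ACuder1} on the left piece. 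On the right piece $y\ge -\zeta$, $b(y)=\mu\zeta-\alpha(y+\zeta)$, so $\nu$ is Gaussian in $(y+\zeta)$ with ``variance'' $\mu/\alpha$; the same Mill's-ratio calculation, combined with the coarser deterministic bound $F(x)/\nu(x)\le 1/\Prob(Y(\infty)\in(\text{interval around }-\zeta))\lesssim 1/|\zeta|$, produces the minimum $\sqrt{\pi/(2)\,\mu/\alpha}\wedge 1/|\zeta|$. The overloaded case ($\zeta\ge 0$) is symmetric: on $x\ge -\zeta$ the density is Gaussian with ``variance'' $\mu/\alpha$, giving the $(1+\sqrt{\mu/\alpha})$ factor, while on $x\le -\zeta$ the shifted Gaussian form of $\nu$ yields $\sqrt{\pi/2}/\mu$.

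For $f_a''$, I would exploit \eqref{eq:fppv2}, which in our setting reads
\begin{equation*}
f_a''(x)\;=\;-\frac{b(x)}{\mu}\,f_a'(x)\;+\;\frac{1}{\mu}\big(F(a)-1_{(-\infty,a]}(x)\big).
\end{equation*}
The second term is bounded by $1/\mu$ in absolute value, so it suffices to show $|b(x)f_a'(x)|\le 2$ uniformly. Using $\mu\nu'(y)=b(y)\nu(y)$ and integrating by parts on $\int_{-\infty}^x b(y)\nu(y)\,dy=\mu\nu(x)$, one obtains the identity
\begin{equation*}
\frac{b(x)F(x)}{\mu\,\nu(x)}\;=\;1\;-\;\frac{1}{\mu\,\nu(x)}\int_{-\infty}^x |b'(y)|\,F(y)\,dy,
\end{equation*}
and a symmetric identity on $[x,\infty)$ for the $\Prob(Y(\infty)>x)$ piece. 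Since $b'\le 0$ piecewise (equal to $-\mu$ or $-\alpha$), both quantities lie in $[-1,1]$ after another application of the same Mill's-ratio bound on $\int|b'|F/(\mu\nu)$, giving $|b(x)f_a'(x)|\le 2$ and hence $|f_a''(x)|\le 3/\mu$.

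The main obstacle is the regime-dependent Mill's-ratio analysis: the density $\nu$ changes its Gaussian scaling at $-\zeta$, and one must combine a tail-based bound (giving the $\sqrt{\mu/\alpha}$ factor) with a mass-based bound (giving the $1/|\zeta|$ factor) so that the estimate remains \emph{uniform} in $\zeta$ and $\alpha/\mu$. The deterministic $3/\mu$ estimate on $f_a''$, in contrast, follows cleanly from the Poisson equation and the integration-by-parts identity without any regime splitting, which is why no parameter-dependent constant appears there.
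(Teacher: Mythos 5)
Your representation of $f_a'$ is correct and is essentially the form the paper uses (equations \eqref{eq:fprimenegv1}--\eqref{eq:fprimeposv1} specialize to exactly your $\Prob(Y(\infty)>a)F(x)/(\mu\nu(x))$ for $x\le a$ and $F(a)\Prob(Y(\infty)>x)/(\mu\nu(x))$ for $x\ge a$, since $\nu(y)\propto\frac{2}{\bar a(y)}e^{\int_0^y 2\bar b/\bar a}$). The paper bounds the resulting quantities via Lemma~\ref{lem:main} together with the ``ingredient'' Lemmas~\ref{lem:lowlevWunder} and \ref{lem:lowlevWover}, which is the same kind of Mill's-ratio control you sketch. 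So the strategy matches.

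However, there is a concrete gap on the left piece. You assert that in the underloaded case
\begin{equation*}
\frac{F(x)}{\nu(x)}\;=\;e^{x^2/2}\int_{-\infty}^{x}e^{-y^2/2}\,dy\;\le\;\sqrt{2\pi}\,e^{1/2}
\qquad\text{for all }x\le -\zeta .
\end{equation*}
This is false whenever $\abs{\zeta}>1$: the left-hand side is increasing in $x$, and as $x\uparrow -\zeta$ with $-\zeta$ large it behaves like $e^{x^2/2}\sqrt{2\pi}\to\infty$. The constant $\sqrt{2\pi}e^{1/2}$ is only valid for $x\le 1$. What saves the bound (and is what the paper does) is that for $x\in[0,-\zeta]$ one must switch to the other representation: since $\Prob(Y(\infty)>a)\le\Prob(Y(\infty)>x)$ for $x\le a$, one actually has
\begin{equation*}
\abs{f_a'(x)}\;\le\;\frac{\min\{F(x),\,\Prob(Y(\infty)>x)\}}{\mu\,\nu(x)}\qquad\text{for all }x\in\R,
\end{equation*}
and for $x\in(1,-\zeta]$ the second entry of the min is the useful one. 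It is controlled by inequality \eqref{eq:main2} of Lemma~\ref{lem:main}, which gives $\Prob(Y(\infty)>x)/(\mu\nu(x))\le 1/\abs{b(x)}$, hence a bound of order $1/(\mu x)\le 1/\mu$ on this stretch. The paper's case split $\abs{\zeta}\le 1$ vs.\ $\abs{\zeta}>1$ in the proof of Lemma~\ref{lem:gradboundsCK} is precisely this argument, and it carries over to the Erlang-A drift since $b$ is still decreasing with a single zero at the origin. Without the min, your claimed uniform constant cannot be obtained.

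The $f_a''$ bound has a similar issue. Your integration-by-parts identity
\begin{equation*}
\frac{b(x)F(x)}{\mu\nu(x)}\;=\;1-\frac{1}{\mu\nu(x)}\int_{-\infty}^{x}\abs{b'(y)}F(y)\,dy
\end{equation*}
does show the left side is $\le 1$, but the lower bound $\ge -1$ is not established merely by ``another application of the same Mill's-ratio bound'': the correction term $\frac{1}{\mu\nu(x)}\int_{-\infty}^{x}\abs{b'(y)}F(y)\,dy$ grows without bound as $x$ increases, so you cannot conclude $\abs{b(x)F(x)/(\mu\nu(x))}\le 2$ from this identity alone. The clean way is again to switch representations: on $x>0$ where $b(x)<0$, use $\abs{f_a'(x)}\le\Prob(Y(\infty)>x)/(\mu\nu(x))$ together with \eqref{eq:main2}, which gives $\abs{b(x)}\abs{f_a'(x)}\le 1$ directly; on $x<0$, \eqref{eq:main1} gives the same. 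This in fact yields $\abs{f_a''(x)}\le 2/\mu$ (the paper's $3/\mu$ is a safe but non-tight statement), which dominates anything your identity could produce.
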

\noindent Lemmas~\ref{lem:gradboundsCK} and \ref{lem:gradboundsAK} are proved in Appendix~\ref{app:kgradient}. Unlike the Wasserstein setting, these lemmas do not guarantee that $f_a''(x)$ is absolutely continuous. Indeed, for any $a \in \R$, 
substituting $h(x) = 1_{(-\infty, a]}(x)$ into~\eqref{eq:poisson} gives us
\begin{align*}
\mu f_a''(x) =  \Prob(Y(\infty) \leq a) - 1_{(-\infty, a]}(x) - b(x) f_a'(x).
\end{align*}
Since $b(x) f_a'(x)$ is a continuous function, the above equation implies that $f_a''(x)$ is discontinuous at the point $x = a$. Thus, we can no longer use the error bound in \eqref{eq:first_bounds}, and require a different expansion of $G_{\tilde X} f_a(x)$. 
\subsection{Alternative Taylor Expansion}
\label{sec:ktaylor}
To get an error bound similar to \eqref{eq:first_bounds}, we first define 
\begin{align}
\epsilon_1(x) =&\ \int_x^{x+\delta} (x+\delta -y)(f_a''(y)-f_a''(x-))dy, \label{CA:eps1def} \\
\epsilon_2(x) =&\  \int_{x-\delta}^{x} (y-(x-\delta))(f_a''(y)-f_a''(x-))dy. \label{CA:eps2def}
\end{align}
Now observe that
\begin{align}
f_a(x+\delta) -f_a(x) =&\  f_a'(x) \delta  + \int_x^{x+\delta} (x+\delta -y)f_a''(y)dy \notag  \\
=&\  f_a'(x) \delta  +  \frac{1}{2}\delta^2 f_a''(x-)\notag  \\
&+  \int_x
^{x+\delta} (x+\delta -y)(f_a''(y)-f_a''(x-))dy  \notag \\
 =&\  f_a'(x) \delta  + \frac{1}{2}\delta^2 f_a''(x-) +  \epsilon_1(x), \label{CA:ktaylor1}
\end{align}
and
\begin{align*}
  (f_a(x-\delta) -f_a(x))  =&\ - f_a'(x) \delta  +
 \int_{x-\delta}^x (y-(x-\delta))f_a''(y)dy  \\
=&\ -f_a'(x) \delta  +  \frac{1}{2}\delta^2 f_a''(x-) \\
&+  \int_{x-\delta}^{x} (y-(x-\delta))(f_a''(y)-f_a''(x-))dy  \\
=&\ -f_a'(x) \delta  +  \frac{1}{2}\delta^2 f_a''(x-) + \epsilon_2(x).
\end{align*}
For $k \in \Z_+$ and  $x=x_k=\delta(k-x(\infty))$, we recall the forms of $G_Y f_a(x)$ and $G_{\tilde X} f_a(x)$ from \eqref{CA:GY} and \eqref{eq:GX} to see that
\begin{align*}
  G_{\tilde X}f_a(x) =&\ \lambda  \delta f_a'(x) + \lambda \frac{1}{2}\delta^2 f_a''(x-) + \lambda \epsilon_1(x)\\
  & - d(k) \delta f_a'(x) + d(k) \frac{1}{2} \delta^2 f_a''(x-) + d(k)
  \epsilon_2(x)\\
=&\ b(x) f_a'(x) + \lambda \frac{1}{2}\delta^2 f_a''(x-) + \lambda \epsilon_1(x)\\
&+ (\lambda -\frac{1}{\delta}b(x)) \frac{1}{2} \delta^2 f_a''(x-) + (\lambda-\frac{1}{\delta}b(x) )
  \epsilon_2(x)\\
=&\ G_Y f(x) - b(x)\frac{1}{2} \delta f_a''(x-) + \lambda(\epsilon_1(x)+\epsilon_2(x)) - \frac{1}{\delta}b(x)\epsilon_2(x),
\end{align*}
where in the second equality we used the fact that $b(x) = \delta( \lambda - d(k))$, and in the last equality we use that $\delta^2 \lambda = \mu$. Combining this with \eqref{eq:gen_bound}, we have an error bound similar to \eqref{eq:first_bounds}:
\begin{align}
&\ \Big| \Prob(\tilde X(\infty) \leq a) - \Prob(Y(\infty) \leq a) \Big| \notag \\
 \leq&\ \frac{1}{2} \delta \E \Big[ \big|f_a''(\tilde X(\infty)-)b(\tilde X(\infty)) \big| \Big] + \lambda \E \Big[ \big| \epsilon_1(\tilde X(\infty)) \big| \Big] \notag \\
& + \lambda \E \Big[ \big|\epsilon_2(\tilde X(\infty))\big| \Big] + \frac{1}{\delta} \E \Big[ \big| b(\tilde X(\infty))\epsilon_2(\tilde X(\infty))\big| \Big], \label{eq:third_bounds}
\end{align}
where $\epsilon_1(x)$ and $\epsilon_2(x)$ are as in \eqref{CA:eps1def} and \eqref{CA:eps2def}, respectively. To bound the error terms in \eqref{eq:third_bounds} that are associated with $\epsilon_1(x)$ and $\epsilon_2(x)$, we need to analyze the difference $f_a''(y) - f_a''(x-)$ for $\abs{x-y} \leq \delta$. Since $f_a(x)$ is a solution to the Poisson equation (\ref{CA:poisson}), we see that for any $x, y \in \R$ with $y \neq a$, 
\begin{align*}
f_a''(y)-f_a''(x-) = \frac{1}{\mu} \big[ 1_{(-\infty, a]}(x) - 1_{(-\infty, a]}(y) + b(x)f_a'(x) - b(y)f_a'(y) \big].
\end{align*}
Therefore, for any $y \in [x, x + \delta]$  with $y \neq a$,
\begin{align}
&\ \abs{ f_a''(y)-f_a''(x-)} \notag \\
\le &\  \frac{1}{\mu} \big[ 1_{(a-\delta, a]}(x) + \abs{b(x)}\abs{f_a'(x) -f_a'(y)}
+ \abs{b(x)-b(y))}\abs{f_a'(y)} \big] \notag \\
 & \le  \frac{1}{\mu} \big[ 1_{(a-\delta, a]}(x) + \delta \abs{b(x)}\norm{f''} + \abs{b(x)-b(y))}\abs{f_a'(y)} \big], \label{eq:eps1b2}
\end{align}
and likewise, for any $y \in [x-\delta, x]$ with $y \neq a$, 
\begin{align}
&\ \abs{ f_a''(y)-f_a''(x-)} \notag \\
\le &\ \frac{1}{\mu} \big[ 1_{(a, a + \delta]}(x) + \abs{b(x)}\abs{f_a'(x) -f_a'(y)}
+ \abs{b(x)-b(y))}\abs{f_a'(y)} \big] \notag \\
 & \le  \frac{1}{\mu} \big[ 1_{(a, a + \delta]}(x) + \delta \abs{b(x)}\norm{f''} + \abs{b(x)-b(y))}\abs{f_a'(y)} \big].\label{eq:eps2b2}
\end{align}
The inequalities above contain the indicators $1_{(a-\delta, a]}(x)$ and $1_{(a, a + \delta]}(x)$. When we consider the upper bound in \eqref{eq:third_bounds}, these indicators will manifest themselves as probabilities $\Prob(a - \delta < \tilde X(\infty) \leq a)$ and $\Prob(a  < \tilde X(\infty) \leq a+\delta)$. To this end we present the following lemma, which will be used in the proof of Theorem~\ref{thm:erlangCK}.
\begin{lemma}\label{lem:kolmfixC}
Consider the Erlang-C model ($\alpha = 0$). Let $W$ be an arbitrary random variable with cumulative distribution function $F_W:\R \to [0,1]$. Let $\omega(F_W)$ be the modulus of continuity of $F_W$, defined as 
\begin{align*}
\omega(F_W) = \sup_{\substack{x, y \in \R \\ x \neq y}} \frac{\abs{F_W(x)-F_W(y)}}{\abs{x-y}}.
\end{align*}
Recall that $d_K(\tilde X(\infty), W)$ is the Kolmogorov distance between $X(\infty)$ and $W$. Then for any $a \in \R$, $n \geq 1$, and $0 < R< n$,  
\begin{align*}
\Prob( a - \delta < \tilde X(\infty) \leq a + \delta) \leq  \omega(F_W)2\delta  + d_K(\tilde X(\infty), W) + 9\delta^2 + 8\delta^4.
\end{align*}
\end{lemma}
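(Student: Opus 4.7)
The plan is to write the probability as a CDF increment and interpose $W$ in the natural way:
\begin{align*}
\Prob(a-\delta < \tilde X(\infty) \leq a+\delta)
&= F_{\tilde X}(a+\delta) - F_{\tilde X}(a-\delta) \\
&= [F_W(a+\delta) - F_W(a-\delta)] + [F_{\tilde X}(a+\delta) - F_W(a+\delta)] \\
&\quad - [F_{\tilde X}(a-\delta) - F_W(a-\delta)].
\end{align*}
The first bracket is at most $\omega(F_W)\cdot 2\delta$ by the definition of the modulus of continuity, and each of the remaining brackets is at most $d_K(\tilde X(\infty), W)$ in absolute value. A naive triangle inequality yields the bound $\omega(F_W)\cdot 2\delta + 2\,d_K(\tilde X(\infty), W)$, so the main task of the proof is to halve the coefficient on $d_K$ at the cost of an $O(\delta^2)$ correction.

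To achieve this, I would exploit two features of the Erlang-C model. First, $\tilde X(\infty)$ is supported on the $\delta$-spaced grid $\{\delta(k-R) : k \in \Z_+\}$, so the half-open interval $(a-\delta, a+\delta]$ contains exactly two adjacent atoms $\pi_k, \pi_{k+1}$ with $\Prob(a-\delta < \tilde X(\infty) \leq a+\delta) = \pi_k + \pi_{k+1}$. Second, detailed balance $\lambda \pi_k = d(k+1)\pi_{k+1}$, with $d(k) = \mu(k\wedge n)$, lets us collapse the sum to a single atom. Applying BAR (Lemma~\ref{lem:gz}) to the test function $f(x) = (x - x_{k+1})^+$ produces the single-tail identity
\begin{align*}
\pi_{k+1} = -\frac{\delta}{\mu}\,\E\!\left[b(\tilde X(\infty))\,1(\tilde X(\infty) > x_{k+1})\right],
\end{align*}
where we used $\lambda\delta^2 = \mu$. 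Since the right-hand side involves probabilities of upper-tail events $\{\tilde X(\infty) > y\}$ rather than CDF differences, comparing to the corresponding $W$-expectation introduces only one factor of $d_K(\tilde X(\infty), W)$ rather than two.

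The additive $9\delta^2 + 8\delta^4$ correction arises from three pieces: (i) the deviation of the detailed-balance ratio $d(k+1)/\lambda$ from unity, which equals $\delta x_{k+1}$ when $k+1\leq n$ and $-\delta\zeta$ when $k+1 > n$; (ii) the tail expectation of $b(\tilde X(\infty))$ that must be bounded using the Erlang-C moment bounds \eqref{CW:xminusdelta}--\eqref{CW:idle_prob} of Lemma~\ref{lem:moment_bounds_C}; and (iii) the universal absorption of $|\zeta|$-factors via $\abs{\zeta}\Prob(\tilde X(\infty)\ge -\zeta)\leq 7/4$ from \eqref{eq:xplusbound}. The main obstacle will be the piecewise nature of the drift $b(x)$ and the death rate $d(k)$, which change form at $x = -\zeta$ (equivalently $k = n$). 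This forces a case split depending on whether $a$ lies above or below $-\zeta$ and whether $x_{k+1}$ crosses the staffing boundary; in the below-$(-\zeta)$ case, the BAR identity involves an additional integral of $\Prob(\tilde X(\infty) > y)$ over $(x_{k+1}, -\zeta)$ that must be evaluated carefully to recover the precise constants $9$ and $8$ while preserving universality in the offered load.
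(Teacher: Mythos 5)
Your approach is genuinely different from the paper's, and it has a gap that I do not see how to close.

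The paper's proof does not use the BAR tail identity at all. Instead it first reduces to the mode of the stationary distribution: letting $k^* = \arg\max_k \pi_k$, it bounds $\Prob(a-\delta < \tilde X(\infty) \le a+\delta) \le 2\pi_{k^*}$, since the interval contains at most two atoms and each is at most $\pi_{k^*}$. This reduction is the crucial step, because at the mode the flow-balance relation $\pi_k = \frac{d(k+1)}{\lambda}\pi_{k+1}$ is usable: $k^*$ being the maximizer forces $d(k^*) \le \lambda \le d(k^*+1)$, so the ratios $d(k^*+i)/\lambda$ for $i=1,2,3$ differ from $1$ by at most $i\mu/\lambda = i\delta^2$, giving $\pi_{k^*} \le \pi_{k^*+1} + \delta^2$, $\pi_{k^*} \le \pi_{k^*+2} + 3\delta^2 + 2\delta^4$, and $\pi_{k^*+1} \le \pi_{k^*+3} + 5\delta^2 + 6\delta^4$. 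The ``one $d_K$ instead of two'' saving is then a pure CDF sign-comparison argument around $\tilde k^*$: if $F_W - F_{\tilde X}$ has the favorable sign at both $\tilde k^*\pm\delta$, no $d_K$ is needed; if one sign fails you pay one $d_K$; and if both fail you shift two bins to the right (using the above $O(\delta^2)$ atom comparisons), where the signs are then forced to be favorable or at worst mixed. The $9\delta^2 + 8\delta^4$ is exactly the worst-case accumulation $\delta^2 + (8\delta^2 + 8\delta^4)$ from that shift.

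Your plan skips the reduction to $k^*$ and works with the $k$ determined by $a$. You correctly compute that the detailed-balance ratio deviation is $d(k+1)/\lambda - 1 = \delta x_{k+1}$ (for $k+1\le n$), but this is $O(\delta\cdot|x_{k+1}|)$, not $O(\delta^2)$, and $|x_{k+1}|$ is not universally bounded; without the maximizer sandwich there is nothing that forces this deviation to be small, so the claimed $O(\delta^2)$ additive correction is unjustified. Relatedly, your BAR tail identity $\pi_{k+1} = -\frac{\delta}{\mu}\E[b(\tilde X(\infty))1(\tilde X(\infty)>x_{k+1})]$ is correct, but when $x_{k+1} < -\zeta$ the right-hand side is a truncated first moment $-\mu\E[\tilde X(\infty)1(x_{k+1}<\tilde X(\infty)\le -\zeta)] + \mu\zeta\Prob(\tilde X(\infty)>-\zeta)$, not a single tail probability; comparing the first piece with its $W$-analogue via Fubini costs something of order $|{-\zeta} - x_{k+1}|\,d_K$, which is again not universal. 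And even in the clean case $x_{k+1}\ge -\zeta$ the factor is $\delta|\zeta|\Prob(W>x_{k+1})$, where $W$ is an arbitrary random variable with no a priori tail control, so there is no term of the form $+d_K$ with a universal coefficient emerging from this comparison. The mode reduction and the four-case sign argument are what make the paper's proof work; neither appears in your sketch, and I do not see how to recover them from the BAR route you propose.
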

\noindent This lemma is proved in Section~\ref{app:kolmfixC}. We will apply Lemma~\ref{lem:kolmfixC} with $W = Y(\infty)$ in the proof of Theorem~\ref{thm:erlangCK} that follows. The following lemma guarantees that the modulus of continuity of the cumulative distribution function of $Y(\infty)$ is bounded by a constant independent of $\lambda, n$, and $\mu$. Its proof is provided in Section~\ref{app:densboundC}.
\begin{lemma} \label{lem:densboundC}
Consider the Erlang-C model ($\alpha = 0$), and let $\nu(x)$ be the density of $Y(\infty)$, defined in \eqref{eq:stddenC}. Then for for all $n \geq 1, \lambda > 0$, and $\mu>0$ satisfying $0 < R<n $,
\begin{align*}
\abs{\nu(x)} \leq \sqrt{\frac{2}{\pi}} , \quad x \in \R.
\end{align*}
\end{lemma}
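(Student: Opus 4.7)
The plan is to write $\nu(x)$ in explicit piecewise form, identify the location of its maximum, and bound the normalizing constant $\kappa$ from above. Since this is the Erlang-C case with $R<n$, we have $\zeta=(R-n)/\sqrt R<0$, so $-\zeta>0$. From \eqref{eq:bandz},
\[
b(y)=\begin{cases}-\mu y,& y\leq -\zeta,\\ \mu\zeta,& y\geq -\zeta,\end{cases}
\]
so direct integration from $0$ gives
\[
\frac{1}{\mu}\int_0^x b(y)\,dy=\begin{cases}-x^2/2,& x\leq -\zeta,\\ \zeta^2/2+\zeta x,& x\geq -\zeta.\end{cases}
\]
Hence
\[
\nu(x)=\kappa\cdot\begin{cases}e^{-x^2/2},& x\leq -\zeta,\\ e^{\zeta^2/2+\zeta x},& x\geq -\zeta,\end{cases}
\]
which is continuous at $x=-\zeta$ (both pieces equal $\kappa e^{-\zeta^2/2}$ there).

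Next I would locate the maximum. On $(-\infty,-\zeta]$ the exponential factor $e^{-x^2/2}$ is maximized at $x=0\in(-\infty,-\zeta]$ (since $-\zeta>0$), with value $1$. On $[-\zeta,\infty)$ the exponent $\zeta^2/2+\zeta x$ is decreasing in $x$ (as $\zeta<0$), so the piece is maximized at $x=-\zeta$, with value $e^{-\zeta^2/2}\leq 1$. Therefore
\[
\sup_{x\in\R}\nu(x)=\kappa.
\]

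It remains to show $\kappa\leq\sqrt{2/\pi}$, equivalently $1/\kappa\geq\sqrt{\pi/2}$. Split the normalizing integral along $-\zeta$:
\[
\frac{1}{\kappa}=\int_{-\infty}^{-\zeta}e^{-x^2/2}\,dx+\int_{-\zeta}^{\infty}e^{\zeta^2/2+\zeta x}\,dx=\sqrt{2\pi}\,\Phi(-\zeta)+\frac{1}{|\zeta|}e^{-\zeta^2/2},
\]
where the second integral is a standard exponential integral (finite because $\zeta<0$), and $\Phi$ is the standard normal CDF. Since $-\zeta>0$, we have $\Phi(-\zeta)>\Phi(0)=1/2$, so
\[
\frac{1}{\kappa}\geq \sqrt{2\pi}\,\Phi(-\zeta)\geq\sqrt{2\pi}\cdot\tfrac{1}{2}=\sqrt{\pi/2},
\]
yielding $\nu(x)\leq\kappa\leq\sqrt{2/\pi}$ for all $x\in\R$.

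The proof is essentially a calculation; there is no real obstacle. The only mild subtlety is noting that $\zeta<0$ forces $-\zeta$ to lie strictly to the right of $0$, which is what puts the density's maximum in the Gaussian (rather than exponential) branch and gives the clean $\Phi(-\zeta)>1/2$ bound. The constant $\sqrt{2/\pi}$ is, notably, the maximum value of the standard normal density, which is the expected answer given that the left tail of $\nu$ is proportional to the standard normal density.
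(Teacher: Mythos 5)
Your proof is correct and follows essentially the same route as the paper's: both write $\nu$ piecewise (Gaussian on $(-\infty,-\zeta]$, exponential on $[-\zeta,\infty)$), drop the exponential piece of the normalizing integral, and shrink the Gaussian piece to $(-\infty,0]$ to get the bound $\kappa\leq\sqrt{2/\pi}$. Your presentation is slightly tidier in that you first observe the maximum sits at $x=0$ (so only $\kappa$ itself needs bounding), whereas the paper bounds the two constants $a_-$ and $a_+$ separately and then bounds $\nu$ on each piece; but the computations are the same.
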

\noindent Lemmas~\ref{lem:kolmfixC} and \ref{lem:densboundC} are stated for the Erlang-C model, but one can easily repeat the arguments in the proofs of those lemmas to prove analogues for the Erlang-A model. Therefore, we state the following lemmas without proof.
\begin{lemma}\label{lem:kolmfixA}
Consider the Erlang-A model ($\alpha > 0$). Let $W$ be an arbitrary random variable with cumulative distribution function $F_W:\R \to [0,1]$. Let $\omega(F_W)$ be the modulus of continuity of $F_W$. Then for any $a \in \R$, $\alpha > 0$, $n \geq 1$, and $R > 0$,  
\begin{align*}
&\ \Prob( a - \delta < \tilde X(\infty) \leq a + \delta) \\
\leq&\  \omega(F_W)2\delta  + d_K(\tilde X(\infty), W) + 9 \Big( \frac{\alpha}{\mu } \vee 1 \Big)\delta^2 + 8\Big( \frac{\alpha}{\mu } \vee 1 \Big)^2\delta^4.
\end{align*}
\end{lemma}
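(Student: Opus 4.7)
The plan is to imitate the proof of Lemma~\ref{lem:kolmfixC}, tracking how the abandonment rate $\alpha$ propagates through each step. I would first write
\begin{align*}
\Prob(a - \delta < \tilde X(\infty) \leq a + \delta) &= [F_W(a+\delta) - F_W(a-\delta)] \\
&\quad + [F_{\tilde X(\infty)}(a+\delta) - F_W(a+\delta)] \\
&\quad - [F_{\tilde X(\infty)}(a-\delta) - F_W(a-\delta)],
\end{align*}
so that the first bracket is immediately bounded by $\omega(F_W) \cdot 2\delta$. A naive bound on the remaining two brackets would give $2 d_K(\tilde X(\infty), W)$, but the Erlang-C argument uses the BAR identity $\E[G_{\tilde X} f(\tilde X(\infty))] = 0$, applied to a test function $f$ whose forward and backward differences yield a small-interval indicator, to consolidate those two deviations into a single copy of $d_K(\tilde X(\infty), W)$ plus $W$-independent Taylor-expansion residuals. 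Those residuals are what produce the $9\delta^2$ and $8\delta^4$ constants in Lemma~\ref{lem:kolmfixC}.

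The only structural difference for the Erlang-A system is in the death rate of $\tilde X$,
\[
G_{\tilde X} f(x) = \lambda(f(x+\delta) - f(x)) + d(k)(f(x-\delta) - f(x)), \qquad d(k) = \mu(k \wedge n) + \alpha(k - n)^+.
\]
Repeating the Taylor expansion from the Erlang-C proof, remainder terms that were previously controlled by a factor of $\mu$ must now be controlled by a factor of $\mu \vee \alpha$, since $d(k)$ can grow at rate $\alpha$ once $k > n$. Normalising against the Erlang-C reference scale $\mu$, this inflates each first-order mismatch between $G_{\tilde X}$ and $G_Y$ by a factor $(\alpha/\mu \vee 1)$, and each quadratic-in-mismatch term by $(\alpha/\mu \vee 1)^2$. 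These are precisely the factors appearing on the $\delta^2$ and $\delta^4$ terms of the claimed bound, so the constants $9$ and $8$ carry over verbatim from Lemma~\ref{lem:kolmfixC}.

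The main obstacle I anticipate lies in the overloaded regime $R \geq n$, where the abandonment contribution $\alpha(k - n)^+$ dominates $d(k)$ and the scaled queue length $(\tilde X(\infty) + \zeta)^+$ is of order $\sqrt{\mu/\alpha}$ rather than $O(1)$. One must verify that, when bounding the residuals stemming from the abandonment piece of $d(k)$, the expectations of the form $\alpha^j\, \E[((\tilde X(\infty) + \zeta)^+)^j\, 1(\tilde X(\infty) \geq -\zeta)]$ for $j \in \{1,2\}$ do not exceed the stated $(\alpha/\mu \vee 1)^j$ budget. The overloaded moment bounds in Lemma~\ref{lem:moment_bounds_A_under}, specifically \eqref{eq:mwo2}--\eqref{eq:mwo5}, are calibrated precisely so that the products $\alpha^j \cdot (\mu/\alpha)^{j/2}$ stay within this budget after the same grouping used in the Erlang-C proof. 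Once this bookkeeping is completed, the remainder of the argument is a direct transcription of the Erlang-C proof, yielding the stated inequality.
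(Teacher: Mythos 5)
Your instinct about \emph{where} the factor $(\alpha/\mu \vee 1)$ enters is correct, but you have misidentified the mechanism of the proof: the proof of Lemma~\ref{lem:kolmfixC} that you are asked to transcribe does not involve Taylor expansion, generator comparison with $G_Y$, or any of the moment bounds from Lemma~\ref{lem:moment_bounds_A_under}. It is a self-contained combinatorial argument about the birth--death stationary pmf. Specifically, one bounds $F_{\tilde X}(\tilde a+\delta)-F_{\tilde X}(\tilde a-\delta)$ by $2\pi_{k^*}$ where $k^*$ is the mode of the pmf $\{\pi_k\}$, then uses the flow-balance relation $\pi_k = \tfrac{d(k+1)}{\lambda}\pi_{k+1}$ together with $d(k^*)\le\lambda$ and the fact that the departure rate increases by at most $\mu$ per step to push $\pi_{k^*}$ onto $\pi_{k^*+1},\pi_{k^*+2},\pi_{k^*+3}$ at a cost of $\delta^2$ and $\delta^4$ error terms, and finally runs a four-case comparison between $F_{\tilde X}$ and $F_W$ at $\tilde k^*\pm\delta$. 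At no point does $G_Y$, a test function, or a moment bound appear; the ``residuals'' you attribute to a Taylor expansion are simply the cross terms in products like $(1+\tfrac{\mu}{\lambda})(1+\tfrac{2\mu}{\lambda})$.

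The correct adaptation for Erlang-A is therefore purely local: in the estimate $d(k^*+i)\le\lambda+i\mu$, the increment $\mu$ must be replaced by $\mu\vee\alpha$, because $d(k)=\mu(k\wedge n)+\alpha(k-n)^+$ increases by $\mu$ for $k<n$ and by $\alpha$ for $k\ge n$. Since $\mu/\lambda=\delta^2$, the ratio $(\mu\vee\alpha)/\lambda$ becomes $(\alpha/\mu\vee 1)\delta^2$, and the quadratic cross terms pick up $(\alpha/\mu\vee 1)^2\delta^4$; the arithmetic $1+3+5=9$ and $2+6=8$ then carries over unchanged. So your predicted constants are correct, but they arise from the flow-balance pmf manipulation, not from a generator-level Taylor expansion, and invoking \eqref{eq:mwo2}--\eqref{eq:mwo5} is off the mark: those moment bounds have no role in this lemma. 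If you had actually tried to carry out the Taylor-expansion / moment-bound route you describe, you would have been building a different (and far more delicate) argument, whereas the intended proof is elementary and bypasses all of that machinery.
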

\begin{lemma} \label{lem:densboundA}
Consider the Erlang-A model ($\alpha > 0$), and let $\nu(x)$ be the density of $Y(\infty)$. Fix $n \geq 1, \lambda > 0, \mu>0$, and $\alpha > 0$. If $0 < R \leq n $, then
\begin{align*}
\abs{\nu(x)} \leq \sqrt{\frac{2}{\pi}} , \quad x \in \R,
\end{align*}
and  if $n  \leq R$, then
\begin{align*}
\abs{\nu(x)} \leq \sqrt{\frac{2}{\pi}} \sqrt{\frac{\alpha}{\mu } }, \quad x \in \R.
\end{align*}
\end{lemma}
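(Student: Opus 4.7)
The plan is to follow the same strategy used for the Erlang-C analogue (Lemma~\ref{lem:densboundC}), adapted to account for the extra drift contribution $\alpha$ coming from abandonment. Since $\nu(x) = \kappa \exp\bigl((1/\mu)\int_0^x b(y)\,dy\bigr)$, the task splits into two independent subproblems: (i) a uniform upper bound on the exponential factor, and (ii) a lower bound on $1/\kappa = \int_\R \exp\bigl((1/\mu)\int_0^x b(y)\,dy\bigr)\,dx$. Multiplying these yields $\nu(x) \leq \kappa$ followed by the two stated estimates.

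For step (i), I would first show directly from \eqref{eq:b-erlang-A} that $b(0)=0$, $b(y)\geq 0$ for $y\leq 0$, and $b(y)\leq 0$ for $y\geq 0$, in both loading regimes. This is a short case check: in the underloaded regime $\zeta\leq 0$ (so $-\zeta\geq 0$), and on $(-\infty,-\zeta]$ the drift reduces to $b(y)=-\mu y$, while on $[-\zeta,\infty)$ it becomes $b(y)=\mu\zeta-\alpha(y+\zeta)$; in the overloaded regime $\zeta\geq 0$, and on $[-\zeta,\infty)$ the drift is $b(y)=-\alpha y$, while on $(-\infty,-\zeta]$ it is $b(y)=-\mu(y+\zeta)+\zeta\alpha$. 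In every sub-case the sign of $b(y)$ agrees with the sign of $-y$. Consequently $\int_0^x b(y)\,dy \leq 0$ for every $x\in\R$, so
\begin{equation*}
\sup_{x\in\R}\exp\Bigl((1/\mu)\textstyle\int_0^x b(y)\,dy\Bigr) = 1,
\end{equation*}
attained at $x=0$.

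For step (ii), I would restrict the normalizing integral to the half-line on which the drift is linear, producing an explicit Gaussian tail to integrate. In the underloaded case, the inclusion $(-\infty,0]\subseteq(-\infty,-\zeta]$ (because $-\zeta\geq 0$) gives $(1/\mu)\int_0^x b(y)\,dy = -x^2/2$ for every $x\leq 0$, so
\begin{equation*}
1/\kappa \;\geq\; \int_{-\infty}^{0} e^{-x^2/2}\,dx \;=\; \sqrt{\pi/2},
\end{equation*}
yielding $\kappa \leq \sqrt{2/\pi}$. In the overloaded case, the inclusion $[0,\infty)\subseteq[-\zeta,\infty)$ gives $(1/\mu)\int_0^x b(y)\,dy = -(\alpha/(2\mu))x^2$ for every $x\geq 0$, so
\begin{equation*}
1/\kappa \;\geq\; \int_0^{\infty} e^{-\alpha x^2/(2\mu)}\,dx \;=\; \sqrt{\pi\mu/(2\alpha)},
\end{equation*}
yielding $\kappa \leq \sqrt{(2/\pi)(\alpha/\mu)}$. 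Combining (i) and (ii) gives both density bounds.

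The main obstacle is purely bookkeeping, namely tracking the piecewise form of $b$ across the four combinations of loading regime and side of $-\zeta$, and checking in each sub-case that the chosen half-line lies entirely within the region where the drift is purely linear. There is no substantive analytical difficulty beyond what already appears in the Erlang-C proof; the key structural observation that makes the argument work in both regimes is simply that the sign of $b$ matches that of $-y$, a feature inherited from the fluid equilibrium being at $x=0$ in the rescaled coordinates.
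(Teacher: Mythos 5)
Your proof is correct and follows essentially the same strategy as the paper's proof of the Erlang-C analogue (Lemma~\ref{lem:densboundC}), which the paper says should simply be replicated for the Erlang-A case. The organizational refinement you use — noting that $b(y)$ always has the sign of $-y$ so the un-normalized density is globally maximized at $x=0$, and then lower-bounding $1/\kappa$ by a single half-line Gaussian integral — collapses the paper's two-constant $a_-$, $a_+$ bookkeeping into one clean step, but the underlying bounds are identical.
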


\subsection{Proof of Theorem~\ref{thm:erlangCK} (Erlang-C Kolmogorov)}
\label{sec:kproof}
\begin{proof}[Proof of Theorem~\ref{thm:erlangCK}] 
Throughout the proof we assume that $R \geq 1$, or equivalently, $\delta \leq 1$. For $h(x) = 1_{(-\infty, a]}(x)$, we let $f_a(x)$ be a solution the Poisson equation \eqref{CA:poisson} with parameter $a_2 = 0$. In this proof we will show that for all $a \in \R$, 
\begin{align}
&\ \abs{\Prob(\tilde X(\infty) \leq a) - \Prob(Y(\infty) \leq a)} \leq \frac{1}{2}\Prob(a - \delta < \tilde X(\infty) \leq a + \delta) + 59\delta, \label{eq:intermproofKC}
\end{align}
The upper bound in \eqref{eq:intermproofKC} is similar to \eqref{eq:intermproofWC}, however \eqref{eq:intermproofKC} has the extra term
\begin{align}
\frac{1}{2}\Prob(a - \delta < \tilde X(\infty) \leq a + \delta). \label{eq:extraterm}
\end{align}
The reason this term appears in the Kolmogorov setting but not in the Wasserstein setting is because $f_a''(x)$ is discontinuous in the Kolmogorov case, as opposed to the Wasserstein case where $f_h''(x)$ is continuous. Applying Lemmas~\ref{lem:kolmfixC} and \ref{lem:densboundC} to the right hand side of \eqref{eq:intermproofKC}, and taking the supremum over all $a \in \R$ on both sides, we see that 
\begin{align*}
&\ d_K(\tilde X(\infty), Y(\infty)) \leq  \frac{1}{2}d_K(\tilde X(\infty), Y(\infty)) + 2\sqrt{\frac{2}{\pi}} \delta  + 9\delta^2 + 8\delta^4 +  59\delta,
\end{align*}
or 
\begin{align*}
d_K(\tilde X(\infty), Y(\infty)) \leq 156\delta.
\end{align*}
We want to add that Lemma~\ref{lem:kolmfixC} makes heavy use of the birth-death structure of the Erlang-C model, and that it is not obvious how to handle \eqref{eq:extraterm} more generally.

To prove Theorem~\ref{thm:erlangCK} it remains to verify \eqref{eq:intermproofKC}, which we now do. The argument we will use is similar to the argument used to prove \eqref{eq:intermproofWC} in Theorem~\ref{thm:erlangCW}. We will bound each of the terms in \eqref{eq:third_bounds}, which we recall here as 
\begin{align*}
&\ \Big| \Prob(\tilde X(\infty) \leq a) - \Prob(Y(\infty) \leq a) \Big| \notag \\
 \leq&\ \frac{1}{2} \delta \E \Big[ \big|f_a''(\tilde X(\infty)-)b(\tilde X(\infty)) \big| \Big] + \lambda \E \Big[ \big| \epsilon_1(\tilde X(\infty))\big| \Big] \notag \\
& + \lambda \E \Big[ \big| \epsilon_2(\tilde X(\infty))\big| \Big] + \frac{1}{\delta} \E \Big[ \big| b(\tilde X(\infty))\epsilon_2(\tilde X(\infty))\big| \Big].
\end{align*}
We also recall the form of $b(x)$ from \eqref{eq:bexpand}. We use the moment bounds \eqref{CW:xminusdelta} and \eqref{eq:xplusbound}, and the gradient bound \eqref{eq:KCder2} to see that
\begin{align}
&\ \E \Big[ \big|f_a''(\tilde X(\infty)-)b(\tilde X(\infty)) \big|\Big]  \notag \\
\leq&\ \frac{2}{\mu}\E \Big[\big|b(\tilde X(\infty)) \big| \Big] \notag \\
=&\ 2  \E \Big[\big|\tilde X(\infty) 1(\tilde X(\infty) \leq  -\zeta - \delta)\big| \Big]+ 2 \abs{\zeta} \Prob (\tilde X(\infty) \geq -\zeta) \notag  \\
\leq&\ 2\sqrt{\frac{4}{3} + \frac{2\delta^2}{3}} + 2\Big(\abs{\zeta}\wedge \E \Big[ \big|\tilde X(\infty)\big| 1(\tilde X(\infty) \geq -\zeta) \Big]\Big) \notag \\
\leq&\ 2\sqrt{2} + \frac{14}{4} \leq 7. \label{eq:term1kolmc}
\end{align}
Next, we use \eqref{eq:eps1b2}, \eqref{eq:term1kolmc}, and the gradient bound \eqref{eq:KCder1} to get
\begingroup
\allowdisplaybreaks
\begin{align*}
&\ \lambda \E \Big[ \big|\epsilon_1(\tilde X(\infty))\big| \Big] \\
=&\ \lambda \E \bigg[\int_{\tilde X(\infty)}^{\tilde X(\infty)+\delta} (\tilde X(\infty)+\delta -y) \big|f_a''(y)-f_a''(\tilde X(\infty)-)\big|dy\bigg] \\
\leq&\ \frac{\lambda}{\mu} \E \bigg[ 1_{(a-\delta, a]}(\tilde X(\infty))\int_{\tilde X(\infty)}^{\tilde X(\infty)+\delta} (\tilde X(\infty)+\delta -y)dy\bigg] \\
&+ \frac{\lambda}{\mu}  \delta^3 \E \Big[ \big| b(\tilde X(\infty)) \big| \Big]\norm{f_a''} + \frac{\lambda}{\mu} \delta \E \bigg[\int_{\tilde X(\infty)}^{\tilde X(\infty)+\delta} \big|b(\tilde X(\infty))-b(y))\big| \big|f_a'(y)\big|dy \bigg] \\
\leq&\ \frac{1}{2} \Prob( a - \delta < \tilde X(\infty) \leq a) + 7 \delta +  4\delta \\
=&\ \frac{1}{2} \Prob( a - \delta < \tilde X(\infty) \leq a) + 11\delta,
\end{align*}%
\endgroup
where in the last inequality we used the fact that for $y \in [\tilde X(\infty), \tilde X(\infty) + \delta]$, 
\begin{align*}
b(\tilde X(\infty)) - b(y) = \mu \delta 1(\tilde X(\infty) \leq -\zeta - \delta).
\end{align*}
By a similar argument, one can check that
\begin{align*}
\lambda \E \Big[ \big|\epsilon_2(\tilde X(\infty))\big| \Big]\leq&\ \frac{1}{2} \Prob( a  < \tilde X(\infty) \leq a+\delta) +  11\delta,
\end{align*}
with the only difference in the argument being that we consider the cases when $\tilde X(\infty) \leq -\zeta$ and $\tilde X(\infty) \geq -\zeta + \delta$, instead of $\tilde X(\infty) \leq -\zeta -\delta$ and $\tilde X(\infty) \geq -\zeta$.
Lastly, we use the first inequality in \eqref{eq:eps2b2} to see that
\begin{align*}
&\ \frac{1}{\delta} \E \Big[ \big| b(\tilde X(\infty))\epsilon_2(\tilde X(\infty))\big| \Big]\\
\leq&\ \frac{1}{\mu} \E \bigg[ \big| b(\tilde X(\infty))\big| \int_{\tilde X(\infty)-\delta}^{\tilde X(\infty)}  \Big[ 1_{(a,a+\delta]}(\tilde X(\infty)) \\
& \hspace{5cm}+\big|b(\tilde X(\infty))\big|\Big(\big|f_a'(\tilde X(\infty))\big| + \big|f_a'(y)\big| \Big) \\
& \hspace{5cm} + \big|b(\tilde X(\infty))-b(y))\big|\big|f_a'(y)\big| \Big]dy \bigg]\\
\leq&\  \delta \frac{1}{\mu} \E \Big[ \big|b(\tilde X(\infty))\big| \Big] + \delta\frac{1}{\mu} \E \Big[ \big|b^2(\tilde X(\infty))f_a'(\tilde X(\infty))\big| \Big] \\
&+  \frac{1}{\mu}\E \bigg[ \big| b^2(\tilde X(\infty))\big| \int_{\tilde X(\infty)-\delta}^{\tilde X(\infty)} \abs{f_a'(y)}dy\bigg] +4\delta^2 \E \Big[  \big| \tilde X(\infty)1(\tilde X(\infty) \leq -\zeta)\big| \Big]\\
\leq&\  \frac{7}{2} \delta  + \delta\frac{1}{\mu} \E \Big[ \big|b^2(\tilde X(\infty))f_a'(\tilde X(\infty))\big| \Big] \\
&+  \frac{1}{\mu}\E \bigg[ \big| b^2(\tilde X(\infty))\big| \int_{\tilde X(\infty)-\delta}^{\tilde X(\infty)} \abs{f_a'(y)}dy\bigg] +4 \sqrt{2}\delta^2,
\end{align*}
where in the last inequality we used \eqref{eq:term1kolmc} and the moment bound \eqref{CW:xminusdelta}. Now by \eqref{CW:xsquaredelta} and \eqref{eq:xplusbound},
\begin{align*}
&\ \delta\frac{1}{\mu} \E \Big[ \big| b^2(\tilde X(\infty))f_a'(\tilde X(\infty))\big| \Big] \\
\leq&\ 4\delta  \E \big[ \tilde X^2(\infty)1( \tilde X(\infty) \leq -\zeta ) \big] + \delta \abs{\zeta} \Prob(\tilde X(\infty) \geq -\zeta + \delta)\\
\leq&\ 8 \delta + \delta \frac{7}{4} \leq 10\delta,
\end{align*}
and similarly,
\begin{align*}
\frac{1}{\mu}\E \bigg[ \big| b^2(\tilde X(\infty))\big| \int_{\tilde X(\infty)-\delta}^{\tilde X(\infty)} \abs{f_a'(y)}dy\bigg]\leq&\ 10 \delta.
\end{align*}
Therefore,
\begin{align*}
\frac{1}{\delta} \E \Big[ \big| b(\tilde X(\infty))\epsilon_2(\tilde X(\infty))\big| \Big] \leq \frac{7}{2}\delta  + 20\delta + 4\sqrt{2}\delta^2 \leq 30 \delta.
\end{align*}
This verifies \eqref{eq:intermproofKC} and concludes the proof of Theorem~\ref{thm:erlangCK}.
\end{proof}

\subsection{Outline for Theorem~\ref{thm:erlangAK} (Erlang-A Kolmogorov)}\label{app:AKoutline}

The proof of Theorem~\ref{thm:erlangAK} is nearly identical to the proof of Theorem~\ref{thm:erlangCK}. Therefore, we only outline the key steps and differences. The goal is to obtain a version of \eqref{eq:intermproofKC}, from which the theorem follows by applying Lemmas~\ref{lem:kolmfixA} and \ref{lem:densboundA}. To get a version of \eqref{eq:intermproofKC}, we bound each of the terms in \eqref{eq:third_bounds}, just like we did in the proof of Theorem~\ref{thm:erlangCK}. The proof varies between the underloaded and overloaded cases.

We begin with the underloaded case ($1 \leq R \leq n$). To bound the first term in \eqref{eq:third_bounds}, we use moment bounds \eqref{eq:mwu1}, \eqref{eq:mwu3}, and \eqref{eq:mwu4}, together with gradient bound \eqref{eq:ACder2}. For the second and third terms in \eqref{eq:third_bounds} we use the gradient bound in \eqref{eq:ACuder1}. For the fourth error term, we use gradient bound \eqref{eq:ACuder1}, and moment bounds \eqref{eq:mwuK1}, \eqref{eq:mwu3}, and
\begin{align*}
&\ \E \Big[ \big( b(\tilde X(\infty))\big)^2 1( \tilde X(\infty) \geq -\zeta) \Big] \\
=&\ \alpha^2\E \Big[ \big( \tilde X(\infty)+ \zeta\big)^2 1( \tilde X(\infty) \geq -\zeta) \Big]+ \mu^2 \zeta^2\Prob(\tilde X(\infty) \geq -\zeta) \\
& + 2\alpha \mu \abs{\zeta}\E \Big[ (\tilde X(\infty) + \zeta)  1( \tilde X(\infty) \geq -\zeta)\Big] \\
\leq&\ \alpha^2 \frac{1}{3}\Big(\frac{\mu }{\alpha}\delta^2  + \frac{\mu }{\alpha}4 + \delta^2\Big)+ \mu^2 \zeta^2\Prob(\tilde X(\infty) \geq -\zeta) \\
&+ 2\alpha \mu \Big( \frac{\delta^2}{4}\frac{\alpha}{\mu }+ \frac{\delta^2}{4} + 1 \Big),
\end{align*}
where the last inequality follows from moment bounds \eqref{eq:mwuK2} and \eqref{eq:mwu5}.

In the overloaded case ($n \leq R$), to bound the first term in \eqref{eq:third_bounds} we use moment bounds \eqref{eq:mwo7}, \eqref{eq:mwo1}, and \eqref{eq:mwo4} with gradient bound \eqref{eq:ACder2}. To bound the second and third terms in \eqref{eq:third_bounds} we use gradient bound \eqref{eq:ACoder1}. To bound the fourth term in \eqref{eq:third_bounds}, we use gradient bound \eqref{eq:ACder2}, with moment bounds \eqref{eq:mwo2} and
\begin{align*}
&\ \E \Big[ \big( b(\tilde X(\infty))\big)^2 1( \tilde X(\infty) \leq -\zeta) \Big] \\
=&\ \mu^2\E \Big[ \big( \tilde X(\infty)+ \zeta\big)^2 1( \tilde X(\infty) \leq -\zeta) \Big]+ \alpha^2 \zeta^2\Prob(\tilde X(\infty) \leq -\zeta) \\
& + 2\alpha \mu \zeta\E \Big[\big| (\tilde X(\infty) + \zeta)  1( \tilde X(\infty) \leq -\zeta)\big|\Big] \\
\leq&\ \mu^2 \Big(\frac{\delta^2}{4}\frac{\alpha}{\mu }+1\Big) + \alpha^2 \Big(\frac{\delta^2}{4}+\frac{\mu}{\alpha}\Big) + 2\alpha \mu \Big(\frac{\delta^2}{4}+1\Big),
\end{align*}
where the last inequality follows from moment bounds \eqref{eq:mwo8}, \eqref{eq:mwo3}, and \eqref{eq:mwoK1}.

\section{Extension: Erlang-C Higher Moments}
\label{sec:CAexten}
In this section we consider the approximation of higher moments for the Erlang-C model. We begin with the following result.

\begin{theorem}\label{thm:poly}
Consider the Erlang-C system ($\alpha = 0$), and fix an integer $m > 0$. There exists a constant $C = C(m)$, such that for all $n \geq 1, \lambda > 0$, and $\mu>0$ satisfying $1 \leq R < n$,
\begin{equation}
  \label{eq:highmomCW}
\big|\E (\tilde X(\infty))^m - \E (Y(\infty))^m\big|\leq (1+1/\abs{\zeta}^{m-1})C(m)\delta,
\end{equation} 
where $\zeta$ is defined in \eqref{eq:bandz}.
\end{theorem}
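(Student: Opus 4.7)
The plan is to apply the Stein framework developed earlier in this chapter with the unbounded test function $h(x) = x^m$. First I would verify that Lemma~\ref{lem:solution} still applies: since $b(x) = \mu\zeta$ for $x \ge -\zeta$ with $\zeta < 0$ and $b(x) = -\mu x$ for $x \le -\zeta$, the density of $Y(\infty)$ decays exponentially on the right and like a Gaussian on the left, so $\E|Y(\infty)|^m < \infty$ and the Poisson equation
\begin{equation*}
\mu f_m''(x) + b(x) f_m'(x) = \E(Y(\infty))^m - x^m
\end{equation*}
admits a solution $f_m(x)$ given by the explicit formulas \eqref{eq:fprimenegv1}--\eqref{eq:fppv2}. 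Since $b$ is Lipschitz and $h$ is smooth, \eqref{eq:fppp} shows that $f_m''$ is absolutely continuous, so the Taylor expansion leading to \eqref{eq:first_bounds} remains valid. I would also verify that $f_m(x)$ is dominated by a polynomial, so Lemma~\ref{lem:gz} still gives $\E G_{\tilde X} f_m(\tilde X(\infty)) = 0$ (this requires a routine extension of Lemma~\ref{lem:gz} to polynomially bounded test functions, which follows by standard truncation).

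The key technical step is to derive the analogue of the Wasserstein gradient bounds in Lemma~\ref{lem:gradboundsCW} when $h(x) = x^m$. I would analyze \eqref{eq:fprimenegv1}--\eqref{eq:fppp} region by region using the explicit piecewise form of $b(x)$. On the right region $x \ge -\zeta$, the drift is the constant $\mu\zeta$, so $f_m'(x)$, $f_m''(x)$, $f_m'''(x)$ can be bounded by polynomials in $|x|$ of degrees $m-1$, $m-1$, and $m-2$ respectively, with coefficients of the form $C(m)(1+|\zeta|^{-1})$; these bounds reflect the fact that $\E(Y(\infty))^{m-1}$ itself scales like $|\zeta|^{-(m-1)}$ on this region. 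On the left region $x \le -\zeta$, the Gaussian-type factor $\exp(-x^2)$ dominates any polynomial, yielding bounds depending only on $m$.

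For moment bounds, I would extend Lemma~\ref{lem:moment_bounds_C} to higher orders by the same Foster--Lyapunov/BAR approach used in Appendix~\ref{app:momCproof}: applying \eqref{eq:bar} with $f(x) = x^{2m}$ and using the strong mean reversion of $b(x)$ on the left region yields universal bounds of the form
\begin{equation*}
\E\bigl[|\tilde X(\infty)|^{k}\,1(\tilde X(\infty)\le -\zeta)\bigr]\le C(k),\qquad \E\bigl[|\tilde X(\infty)|^{k}\,1(\tilde X(\infty)\ge -\zeta)\bigr]\le \tfrac{C(k)}{|\zeta|^{k}}\wedge C(k)
\end{equation*}
for each $k\le m$. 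Combining these moment bounds with the region-wise gradient bounds and inserting them into \eqref{eq:first_bounds} exactly as in the proof of Theorem~\ref{thm:erlangCW}, every one of the four error terms is bounded by $C(m)(1+|\zeta|^{-(m-1)})\delta$; the cancellation that produces the final exponent $m-1$ (rather than $m$) comes from pairing the factor $|b(x)|\le\mu|\zeta|$ on the right region with the $|\zeta|^{-1}$ prefactor of $f_m''$, $f_m'''$ there.

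The main obstacle will be the right region $x\ge-\zeta$, where the unbounded test function $x^m$ interacts with the vanishing drift $b(x)=\mu\zeta$; naive bounds pick up a spurious $1/|\zeta|^m$ factor that would destroy the stated rate. Carefully pairing each $1/|\zeta|$ from the gradient bound with a matching $|\zeta|$-weighted moment or with a factor of $|b|$ is the heart of the argument, and is exactly what makes the exponent $m-1$ (rather than $m$) in the final bound. An alternative route that would simplify book-keeping is induction on $m$: once $|\E(\tilde X(\infty))^k - \E(Y(\infty))^k|\le C(k)(1+|\zeta|^{-(k-1)})\delta$ is known for all $k<m$, moment bounds on $\tilde X(\infty)$ of order $k<m$ transfer from the corresponding moments of $Y(\infty)$ up to an additive $O(\delta)$ error, and the Taylor remainder only needs the $(m-1)$-st derivative of $f_m$ multiplied by moments already controlled by the inductive hypothesis.
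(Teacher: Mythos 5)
The paper does not actually present a proof of Theorem~\ref{thm:poly}; it states only that ``the proof of this theorem follows the standard Stein framework in Section~\ref{sec:CAroadmap}'' and omits the details. Your outline is consistent with that framework, and the observation you identify as the heart of the argument---pairing each $1/\abs{\zeta}$ coming from a gradient bound on the region $x\geq -\zeta$ with the factor $\abs{b(x)}\leq\mu\abs{\zeta}$ already present in the Taylor-remainder terms of \eqref{eq:first_bounds}, so that the exponent in the final bound is $m-1$ rather than $m$---is precisely the cancellation one would need. The inductive alternative you sketch is a legitimate way to organize the bookkeeping, and plausibly cleaner.

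Two technical points you should correct before executing the plan. First, the right-region moment bound should read $\E\bigl[\abs{\tilde X(\infty)}^{k}\,1(\tilde X(\infty)\ge -\zeta)\bigr]\le C(k)\bigl(1\vee \abs{\zeta}^{-k}\bigr)$ (or $C(k)(1+\abs{\zeta}^{-k})$), not the minimum $\wedge$: the conditional moment grows without bound as $\abs{\zeta}\downarrow 0$, so it cannot be dominated by a universal constant. Compare \eqref{CW:xplus}, which for $k=1$ gives $O(\abs{\zeta}^{-1})$ when $\abs{\zeta}$ is small and $O(1)$ when $\abs{\zeta}\geq 1$. Second, on the right region $f_m'(x)$ is degree $m$ in $x$ (with leading coefficient of order $\abs{\zeta}^{-1}$), not degree $m-1$: compare \eqref{eq:WCder1}, where $h$ is Lipschitz (degree $1$) and $f_h'$ grows linearly, so the degree of $f_h'$ matches that of $h$. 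The degrees of $f_m''$ and $f_m'''$ are then $m-1$ and $m-2$, after the leading terms cancel via the Poisson equation. Neither slip changes the strategy---once the degree and exponent bookkeeping is redone, the insertion into \eqref{eq:first_bounds} and the cancellation proceed as you describe---but both will matter when tallying the powers of $\abs{\zeta}$ that actually appear.
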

The proof of this theorem follows the standard Stein framework in Section~\ref{sec:CAroadmap}, but we do not provide it in this document. The most interesting aspect of \eqref{eq:highmomCW} is the appearance of $1/\abs{\zeta}^{m-1}$ in the bound on the right hand side, which of course only matters when $\abs{\zeta}$ is small. To check whether the bound is sharp, we performed some numerical experiments illustrated in Table~\ref{tab3}. The results suggest that the approximation error does indeed grow like $1/\abs{\zeta}^{m-1}$.

A better way to understand the growth parameter $1/\abs{\zeta}^{m-1}$ is through its relationship with $\E (\tilde X(\infty))^{m-1}$. We claim that  $\E (\tilde X(\infty))^{m-1} \approx 1/\abs{\zeta}^{m-1}$  for small values of $\abs{\zeta}$. The following lemma, which is proved in Section~\ref{app:order_mag},  is needed.
\begin{lemma} \label{lem:order_mag}
For any integer $m \geq 1$, and all $n \geq 1, \lambda > 0$, and $\mu>0$ satisfying $R < n$,
\begin{align}
\lim_{\zeta \uparrow 0} \abs{\zeta}^{m}\E (Y(\infty))^{m} = m!.
\end{align}
\end{lemma}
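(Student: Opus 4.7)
The plan is to exploit the fact that the density $\nu(x)$ of $Y(\infty)$ given in \eqref{CA:stdden} depends on $\lambda, n, \mu$ only through $\zeta$, and to compute $\E (Y(\infty))^m$ directly by integrating against an explicit piecewise density. In the Erlang-C case ($\alpha = 0$) with $R < n$ we have $\zeta < 0$ and $\zeta^- = \abs{\zeta}$, so the drift \eqref{eq:bandz} reduces to $b(x) = -\mu x$ on $x \leq \abs{\zeta}$ and $b(x) = -\mu \abs{\zeta}$ on $x \geq \abs{\zeta}$. Substituting into \eqref{CA:stdden}, the factor $1/\mu$ cancels the $\mu$ in the drift and the density simplifies to
\begin{align*}
\nu(x) =
\begin{cases}
\kappa\, e^{-x^2/2}, & x \leq \abs{\zeta}, \\
\kappa\, e^{\abs{\zeta}^2/2 - \abs{\zeta}\, x}, & x \geq \abs{\zeta},
\end{cases}
\end{align*}
where $\kappa = \kappa(\zeta)$ is the normalizing constant. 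In particular, the limit statement in the lemma is genuinely a statement about the one-parameter family indexed by $\zeta$.

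Next I would pin down the asymptotics of the normalizer. A direct computation of the right tail integral gives
\begin{align*}
\frac{1}{\kappa} = \int_{-\infty}^{\abs{\zeta}} e^{-x^2/2}\,dx + \frac{e^{-\abs{\zeta}^2/2}}{\abs{\zeta}},
\end{align*}
so that $\abs{\zeta}/\kappa = \abs{\zeta}\int_{-\infty}^{\abs{\zeta}} e^{-x^2/2}\,dx + e^{-\abs{\zeta}^2/2} \to 1$ as $\abs{\zeta} \downarrow 0$, since the first integral is bounded. For the $m$th moment, splitting at $\abs{\zeta}$ and applying the change of variables $u = \abs{\zeta}\,x$ on the right piece yields
\begin{align*}
\E (Y(\infty))^m = \kappa \int_{-\infty}^{\abs{\zeta}} x^m e^{-x^2/2}\,dx + \frac{\kappa\, e^{\abs{\zeta}^2/2}}{\abs{\zeta}^{m+1}} \int_{\abs{\zeta}^2}^{\infty} u^m e^{-u}\,du.
\end{align*}

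Multiplying through by $\abs{\zeta}^m$, the first term vanishes in the limit because $\kappa \to 0$ while $\int_{-\infty}^0 \abs{x}^m e^{-x^2/2}\,dx < \infty$, and the second term equals $\tfrac{\kappa}{\abs{\zeta}}\,e^{\abs{\zeta}^2/2} \int_{\abs{\zeta}^2}^{\infty} u^m e^{-u}\,du$; the prefactor tends to $1$ by the normalizer asymptotics, while the integral converges to $\Gamma(m+1) = m!$ by dominated convergence. Combining these gives $\lim_{\zeta \uparrow 0}\abs{\zeta}^m\E (Y(\infty))^m = m!$. No serious obstacle arises here; the only delicate point to flag is recognizing that the normalizer decays at \emph{exactly} rate $\abs{\zeta}$, which is what makes the limit the finite nonzero value $m!$ rather than $0$ or $\infty$, and which explains why the growth rate $1/\abs{\zeta}^{m-1}$ appearing in Theorem~\ref{thm:poly} matches the order of magnitude of $\E(\tilde X(\infty))^{m-1}$ in the near-critical regime.
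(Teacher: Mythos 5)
Your proof is correct and arrives at the result by the same basic plan as the paper---compute $\E(Y(\infty))^m$ directly against the explicit piecewise density, split at $-\zeta = \abs{\zeta}$, show the Gaussian-tail piece is negligible, and identify the exponential-tail piece as the leading contribution---but with a different and somewhat cleaner technique on the dominant piece. The paper evaluates $\int_{-\zeta}^{\infty} y^m e^{-\abs{\zeta}y}\,dy$ by repeated integration by parts, yielding the explicit finite sum $e^{-\zeta^2}\sum_{j=0}^{m}\tfrac{m!}{(m-j)!}\abs{\zeta}^{-(j+1)}\abs{\zeta}^{m-j}$ (an incomplete-gamma identity), and then isolates the $j=m$ term $e^{-\zeta^2}m!/\abs{\zeta}^{m+1}$ as leading order; the reader is then left to check that $\abs{\zeta}^m a_+ e^{-\zeta^2}/\abs{\zeta}^{m+1} \to 1$. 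You instead substitute $u = \abs{\zeta}x$ to reduce the integral immediately to $\abs{\zeta}^{-(m+1)}e^{\abs{\zeta}^2/2}\int_{\abs{\zeta}^2}^{\infty} u^m e^{-u}\,du$ and invoke dominated (or monotone) convergence to get $\Gamma(m+1) = m!$ in one step, and you make the normalizer asymptotic $\kappa/\abs{\zeta}\to 1$ explicit rather than implicit. The paper's route is more elementary and actually gives the subleading expansion for free (useful if one wanted a quantitative error in the limit); your route is shorter, avoids bookkeeping the finite sum, and makes the crucial observation that $\kappa$ decays at \emph{exactly} rate $\abs{\zeta}$ stand out as the reason the limit is finite and nonzero. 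Either way, the argument is sound.
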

Multiplying both sides of \eqref{eq:highmomCW} by $\abs{\zeta}^{m}$ and applying Lemma~\ref{lem:order_mag}, we see that for all $n \geq 1, \lambda > 0$, and $\mu>0$ satisfying $1 \leq R < n$,
\begin{align*}
\lim_{\zeta \uparrow 0} \abs{\zeta}^{m}\E (\tilde X(\infty))^{m} = m!.
\end{align*}
In other words,  we can rewrite \eqref{eq:highmomCW} as 
\begin{align*}
&\ \big|\E (\tilde X(\infty))^m - \E (Y(\infty))^m\big|\\
\leq&\ \Big(1+\frac{1}{\abs{\zeta}^{m-1} \big|\E (\tilde X(\infty))^{m-1}\big|}\big|\E (\tilde X(\infty))^{m-1}\big|\Big)C(m)\delta \\
\leq&\ \Big(1+\big|\E (\tilde X(\infty))^{m-1}\big|\Big)\tilde C(m)\delta,
\end{align*}
where $\tilde C(m)$ is a redefined version of $C(m)$. That the approximation error in Table~\ref{tab3} increases is then attributed to the fact that $\E \tilde X(\infty)$ increases as $\zeta \uparrow 0$. As we mentioned before, the appearance of the $(m-1)$th moment in the approximation error of the $m$th moment was also observed recently in \cite{GurvHuan2016} for the virtual waiting time in the $M/GI/1+GI$ model, potentially suggesting a general trend.
\begin{table}[h!]
  \begin{center}
  \resizebox{\columnwidth}{!}{
  \begin{tabular}{rccc | ccc }
$R$ &$\abs{\zeta}$ & $\E (\tilde X(\infty))^2$ & Error & $\abs{\zeta}\times $Error & $\abs{\zeta}^{0.5}\times $Error & $\abs{\zeta}^{1.5}\times $Error \\
\hline
 499  & $4.48 \times 10^{-2}$  &  $9.47\times 10^{2}$  & 1.59 & $7.10\times 10^{-2}$ & 0.34 & $1.50\times 10^{-2}$ \\
 499.9  & $4.50 \times 10^{-3}$   &  $9.94\times 10^{4}$  & 16.50 & $7.38\times 10^{-2}$ & 1.10 & $4.94\times 10^{-3}$\\
 499.95 &  $2.20 \times 10^{-3}$ &  $3.99\times 10^{5}$  & 33.08 & $7.40\times 10^{-2}$ & 1.56& $3.50\times 10^{-3}$ \\
 499.99 & $4.47 \times 10^{-4}$  &  $9.99\times 10^{6}$  & 165.67 & $7.41\times 10^{-2}$ & 3.50 & $1.57\times 10^{-3}$\\
  \end{tabular}
  }
  \end{center}
  \caption{The error term above equals $\big| \E (\tilde X(\infty))^2 - \E (Y(\infty))^2 \big|$ and grows as $R \to n$. The error term still grows when multiplied by $\abs{\zeta}^{0.5}$, and the error term shrinks to zero when multiplied by $\abs{\zeta}^{1.5}$. However, when multiplied by $\abs{\zeta}$, the error term appears to converge to some limiting value, suggesting that the error does indeed grow at a rate of $1/\abs{\zeta}$. We observed consistent behavior for higher moments of $\tilde X(\infty)$ as well. \label{tab3}}
\end{table}

\section{Chapter Appendix}

\subsection{Miscellaneous Lemmas}  \label{app:misc}
In this section we prove Lemmas~\ref{lem:gz}, \ref{lem:kolmfixC},  \ref{lem:densboundC}, and \ref{lem:order_mag}. 

\subsubsection{Proof of Lemma~\ref{lem:gz}}
\label{app:gz}
\begin{proof}[Proof of Lemma~\ref{lem:gz}]
Let  $f(x): \R \to \R$ satisfy $\abs{f(x)} \leq C(1+\abs{x})^3$. A sufficient condition to ensure that
\begin{align*}
\EE  \big[ G_{\tilde X} f(\tilde X(\infty)) \big] = 0
\end{align*}
is given by \cite[Proposition 1.1]{Hend1997} (alternatively, see \cite[Proposition 3]{GlynZeev2008}). Namely, we require that 
\begin{align}
\EE \Big[\big| G_{\tilde X} (\tilde X(\infty), \tilde X(\infty)) f(\tilde X(\infty))\big| \Big] < \infty, \label{eq:gzcond}
\end{align}
where $G_{\tilde X} (x,x)$ is the diagonal entry of the generator matrix $G_{\tilde X}$ corresponding to state $x$. 

In the Erlang-C model, the transition rates of $\tilde X$ are bounded by $\lambda + n\mu$. Since $\abs{f(x)} \leq C(1+\abs{x})^3$, it suffices to show that $\EE (\tilde X(\infty))^3 < \infty$, or that $\EE (X(\infty))^3 < \infty$, where $X(\infty)$ has the stationary distribution of the CTMC $X$. Consider the function $V(k) = k^4$, where $k \in \Z_+$. Let $G_{X}$ be the generator of $X$, which is a simple birth death process with constant birth rate $\lambda$ and departure rate $\mu (k \wedge n)$ in state $k \in \Z_+$. Then for $k \geq n$, 
\begin{align}
G_{X}V(k) =&\ \lambda ( (k + 1)^4 - k^4) + n\mu ((k-1)^4 - k^4)  \notag \\
=&\ \lambda (4k^3 + 6k^2 + 4k + 1) + n\mu  (-4k^3 + 6k^2 - 4k + 1) \notag \\
=&\ -4k^3 ( n\mu - \lambda) +6k^2 ( n\mu + \lambda) -4k( n \mu - \lambda ) + (\lambda + n\mu). \label{eq:gz1}
\end{align}
It is not hard to see that there exists some $k_0 \in \Z_+$, and a constant $c > 0$ (that depends on $\lambda, n$, and $\mu$), such that for all $k \geq k_0$,
\begin{align}
-4k^3 ( n\mu - \lambda) +6k^2 ( n\mu + \lambda) -4k( n \mu - \lambda ) \leq -ck^3.\label{eq:gz3}
\end{align} 
We combine \eqref{eq:gz1}--\eqref{eq:gz3} to conclude that there exists some constant $d > 0$ (that depends on $\lambda, n$, and $\mu$) satisfying 
\begin{align*}
G_{X} V(k)  \leq -c k^3 + d 1(k < (k_0 \vee n)),
\end{align*}
and invoking \cite[Theorem 4.3]{MeynTwee1993b}, we see that $\EE (X(\infty))^3 < \infty$.

The case of the Erlang-A model is not very different. When $\alpha > 0$, the transition rates of the CTMC depend linearly on its state. Hence, to satisfy \eqref{eq:gzcond} we need to show that $\E (X(\infty))^4 < \infty$. This is readily proven by repeating the procedure above with the Lyapunov function $V(k) = k^5$, and we omit the details.
\end{proof}

\subsubsection{Proof of Lemma~\ref{lem:kolmfixC}}
\label{app:kolmfixC}
\begin{proof}[Proof of Lemma~\ref{lem:kolmfixC}]
We let $F_W(w)$ and $F_{\tilde X}(x)$ be the distribution functions of $W$ and $\tilde X(\infty)$, respectively. For any $a \in \R$, let  $\tilde a = \delta(a - x(\infty))$. We want to show that
\begin{align}
\Prob( \tilde a - \delta <  \tilde X(\infty) \leq \tilde a + \delta) =&\ F_{\tilde X}(\tilde a + \delta) - F_{\tilde X}(\tilde a - \delta) \notag \\
\leq&\ 2\delta \omega(F_W)  + d_K(\tilde X(\infty), W) + 9\delta^2 + 8\delta^4. \label{eq:fix_result}
\end{align}
Let $\{\pi_k\}_{k=0}^{\infty}$ be the distribution of $X(\infty)$, and  
\begin{align*}
k^* = \inf \{k \geq 0 : \pi_k \geq \nu_j, \text{ for all $j \neq k$}\}.
\end{align*}
Then for any $\tilde a \in \R$, 
\begin{align*}
F_{\tilde X}(\tilde a + \delta) - F_{\tilde X}(\tilde a - \delta) \leq 2\pi_{k^*},
\end{align*}
because $\tilde X(\infty)$ takes at most two values in the interval $(\tilde a - \delta, \tilde a + \delta]$. Observe that by the flow balance equations, we know that for any $k \in \Z_+$, 
\begin{align*}
 \pi_k=  \frac{d(k+1)}{\lambda}\pi_{k+1},
\end{align*}
where $d(k)$ is defined in \eqref{eq:deathrate}. Since $k^*$ is the maximizer of $\{\pi_k\}$, we know that 
\begin{align*}
d(k^*) \leq \lambda \leq d(k^*+1) \leq \lambda + \mu,
\end{align*}
where in the last inequality we have used the fact that the increase in departure rate between state $k^*$ and $k^*+1$ is at most $\mu$. Likewise, $d(k^*+i) \leq \lambda + i \mu$ for $i = 2,3$. Hence,
\begin{align*}
\pi_{k^*}=&\  \frac{d(k^*+1)}{\lambda}\pi_{k^*+1} \leq \Big(1 + \frac{\mu}{\lambda}\Big)\pi_{k^*+1} \leq  \pi_{k^*+1} + \delta^2,\\
\pi_{k^*}=&\  \frac{d(k^*+1)}{\lambda}\frac{d(k^*+2)}{\lambda} \pi_{k^*+2} \\
\leq&\ (1 + \delta^2)(1 + 2\delta^2)\pi_{k^*+2} \leq \pi_{k^*+2} + 3\delta^2  + 2\delta^4,\\
\pi_{k^*+1} =&\ \frac{d(k^*+2)}{\lambda}\frac{d(k^*+3)}{\lambda} \pi_{k^*+3} \\
\leq&\ (1 + 2\delta^2)(1 + 3\delta^2)\pi_{k^*+3} \leq \pi_{k^*+3} + 5\delta^2 + 6\delta^4,
\end{align*}
which implies that for any $\tilde a \in \R$, 
\begin{align*}
F_{\tilde X}(\tilde a + \delta) - F_{\tilde X}(\tilde a - \delta) \leq 2\pi_{k^*} \leq&\ \pi_{k^*} + \pi_{k^* + 1} + \delta^2\\
 =&\ F_{\tilde X}(\tilde k^* + \delta) - F_{\tilde X}(\tilde k^* - \delta) + \delta^2.
\end{align*}

There are now 4 cases to consider, with the first three being simple to handle. Recall that $\omega(F_W)$ is the modulus of continuity of $F_W(w)$. 
\begin{enumerate}
\item  \label{case:1} If $F_W(\tilde k^* - \delta) \leq F_{\tilde X}(\tilde k^* - \delta)$ and $F_W(\tilde k^* + \delta) \geq F_{\tilde X}(\tilde k^* + \delta)$, then 
\begin{align}
F_{\tilde X}(\tilde k^* + \delta) - F_{\tilde X}(\tilde k^* - \delta) \leq F_{W}(\tilde k^* + \delta) - F_{W}(\tilde k^* - \delta) \leq 2\delta \omega(F_W). \label{eq:case1}
\end{align}
\item  \label{case:2} If $F_W(\tilde k^* - \delta) \leq F_{\tilde X}(\tilde k^* - \delta)$ but $F_W(\tilde k^* + \delta) < F_{\tilde X}(\tilde k^* + \delta)$, then 
\begin{align}
&\ F_{\tilde X}(\tilde k^* + \delta) - F_{\tilde X}(\tilde k^* - \delta) \notag \\
 \leq&\  F_{\tilde X}(\tilde k^* + \delta) - F_{W}(\tilde k^* + \delta) +  F_{W}(\tilde k^* + \delta) - F_{W}(\tilde k^* - \delta) \notag \\
 \leq&\  2\delta \omega(F_W)  + d_K(\tilde X(\infty), W). \label{eq:case2}
\end{align}
\item \label{case:3} Similarly, if $F_W(\tilde k^* - \delta) > F_{\tilde X}(\tilde k^* - \delta)$ and $F_W(\tilde k^* + \delta) \geq F_{\tilde X}(\tilde k^* + \delta)$, then 
\begin{align}
&\ F_{\tilde X}(\tilde k^* + \delta) - F_{\tilde X}(\tilde k^* - \delta) \notag  \\
\leq&\  F_{W}(\tilde k^* + \delta) - F_{W}(\tilde k^* - \delta) + F_{W}(\tilde k^* - \delta) - F_{\tilde X}(\tilde k^* - \delta)  \notag \\
 \leq&\  2\delta \omega(F_W)  + d_K(\tilde X(\infty), W). \label{eq:case3}
\end{align}
\item \label{case:4}  Suppose $F_W(\tilde k^* - \delta) > F_{\tilde X}(\tilde k^* - \delta)$ and $F_W(\tilde k^* + \delta) < F_{\tilde X}(\tilde k^* + \delta)$, then we need to use a different approach. We know that
\begin{align*}
F_{\tilde X}(\tilde k^* + \delta) - F_{\tilde X}(\tilde k^* - \delta) =&\ \pi_{k^*} + \pi_{k^*+1} \\
\leq&\ \pi_{k^*+2} + \pi_{k^* + 3} + 8\delta^2 + 8\delta^4 \\
=&\ F_{\tilde X}(\tilde k^* + 3\delta) - F_{\tilde X}(\tilde k^* + \delta)+ 8\delta^2 + 8\delta^4.
\end{align*}
Since  $F_{W}(\tilde k^* + \delta) \leq F_{\tilde X}(\tilde k^* + \delta)$, we are either in  case~\ref{case:1}  or \ref{case:2} for the difference  $F_{\tilde X}(\tilde k^* + 3\delta) - F_{\tilde X}(\tilde k^* + \delta)$, and hence we have
\begin{align*}
F_{\tilde X}(\tilde k^* + 3\delta) - F_{\tilde X}(\tilde k^* + \delta) \leq 2\delta \omega(F_W)  + d_K(\tilde X(\infty), W).
\end{align*}
\end{enumerate}
This proves \eqref{eq:fix_result}, concluding the proof of this lemma.

\end{proof}

\subsubsection{Proof of Lemma~\ref{lem:densboundC} }
\label{app:densboundC}
\begin{proof}[Proof of Lemma~\ref{lem:densboundC}]
In the Erlang-C model,
\begin{align}
\nu(x) = 
\begin{cases}
a_{-} e^{-\frac{1}{2}x^2}, \quad x \leq - \zeta,\\
a_{+} e^{-\abs{\zeta} x}, \quad x \geq -\zeta.
\end{cases} \label{eq:densEC}
\end{align}
To bound this density, we need to bound $a_-$ and $a_+$. We know that $\nu(x)$ must integrate to one, which implies that 
\begin{align*}
a_- \int_{-\infty}^{-\zeta} e^{-\frac{1}{2}y^2} dy + a_+ \int_{-\zeta}^{\infty} e^{-\abs{\zeta} y} dy = 1
\end{align*}
Furthermore, since $\nu(x)$ is continuous at $x = -\zeta$, 
\begin{align*}
a_- e^{-\frac{1}{2}\zeta^2} = a_{+} e^{-\zeta^2}.
\end{align*}
Combining these two facts, we see that 
\begin{align}
 a_- = \frac{1}{\int_{-\infty}^{-\zeta} e^{-\frac{1}{2}y^2} dy + e^{\frac{1}{2}\zeta^2} \int_{-\zeta}^{\infty} e^{-\abs{\zeta} y} dy} \leq \frac{1}{\int_{-\infty}^{0} e^{-\frac{1}{2}y^2} dy} = \sqrt{\frac{2}{\pi}}, \label{eq:aminus}
\end{align}
and 
\begin{align}
a_+ = \frac{1}{e^{-\frac{1}{2}\zeta^2}\int_{-\infty}^{-\zeta} e^{-\frac{1}{2}y^2} dy +  \int_{-\zeta}^{\infty} e^{-\abs{\zeta} y} dy} \leq \frac{1}{e^{-\frac{1}{2}\zeta^2}\int_{-\infty}^{0} e^{-\frac{1}{2}y^2} dy} = e^{\frac{1}{2}\zeta^2}\sqrt{\frac{2}{\pi}}. \label{eq:aplus}
\end{align}
Therefore, for $x \leq -\zeta$, 
\begin{align*}
\abs{\nu(x)} \leq a_- \leq  \sqrt{\frac{2}{\pi}},
\end{align*}
and for $x \geq -\zeta$, we recall that $\zeta < 0$ to see that
\begin{align*}
\abs{\nu(x)} \leq a_+ e^{-\abs{\zeta} x} \leq \sqrt{\frac{2}{\pi}}e^{\frac{1}{2}\zeta^2}e^{-\abs{\zeta} x} \leq \sqrt{\frac{2}{\pi}}.
\end{align*}
\end{proof}

\subsubsection{Proof of Lemma~\ref{lem:order_mag} } \label{app:order_mag}
\begin{proof}[Proof of Lemma~\ref{lem:order_mag} ]
The density of $Y(\infty)$ is given in \eqref{eq:densEC}, and so
\begin{align*}
\E (Y(\infty))^m = a_- \int_{-\infty}^{-\zeta} y^{m}e^{-\frac{1}{2}y^2} dy + a_+ \int_{-\zeta}^{\infty} y^{m}e^{-\abs{\zeta} y} dy,
\end{align*}
where $a_-$ and $a_+$ are as in \eqref{eq:aminus} and \eqref{eq:aplus}. In particular, 
\begin{align*}
 a_- = \frac{1}{\int_{-\infty}^{-\zeta} e^{-\frac{1}{2}y^2} dy + e^{\frac{1}{2}\zeta^2} \int_{-\zeta}^{\infty} e^{-\abs{\zeta} y} dy}  = \frac{1}{\int_{-\infty}^{-\zeta} e^{-\frac{1}{2}y^2} dy + \frac{1}{\abs{\zeta}} e^{-\frac{1}{2}\zeta^2}},
\end{align*}
which implies that 
\begin{align*}
\lim_{\zeta \uparrow 0} \abs{\zeta}^{m} a_- \int_{-\infty}^{-\zeta} y^{m}e^{-\frac{1}{2}y^2} dy = 0.
\end{align*}
Furthermore, 
\begin{align*}
a_+ = \frac{1}{e^{-\frac{1}{2}\zeta^2}\int_{-\infty}^{-\zeta} e^{-\frac{1}{2}y^2} dy +  \int_{-\zeta}^{\infty} e^{-\abs{\zeta} y} dy} = \frac{1}{e^{-\frac{1}{2}\zeta^2}\int_{-\infty}^{-\zeta} e^{-\frac{1}{2}y^2} dy + \frac{1}{\abs{\zeta}} e^{-\zeta^2}},
\end{align*}
and using integration by parts,
\begin{align*}
\int_{-\zeta}^{\infty} y^{m}e^{-\abs{\zeta} y} dy =&\ e^{-\zeta^2}\sum_{j=0}^{m} \frac{m!}{(m-j)!} \frac{1}{\abs{\zeta}^{j+1}}\abs{\zeta}^{m-j} \\
=&\ e^{-\zeta^2}\sum_{j=0}^{m-1} \frac{m!}{(m-j)!} \frac{1}{\abs{\zeta}^{j+1}}\abs{\zeta}^{m-j} + 
e^{-\zeta^2} \frac{m!}{\abs{\zeta}^{m+1}}.
\end{align*}
Hence, 
\begin{align*}
\lim_{\zeta \uparrow 0} \abs{\zeta}^{m} a_+ \int_{-\zeta}^{\infty} y^{m}e^{-\abs{\zeta} y} dy = 
m!.
\end{align*}

\end{proof}

\chapter{State Dependent Diffusion Coefficient: Faster Convergence Rates} \label{chap:dsquare}

Choosing a diffusion approximation  involves selecting a drift $\bar b(x)$ and a diffusion coefficient $\bar a(x)$.  When choosing a diffusion approximation of a Markov chain, one would think that best course of action would be to choose $\bar b(x)$ and $\bar a(x)$ based on the infinitesimal drift and variance of the Markov chain, respectively. While the drift of the diffusion $\bar b(x)$ is usually matched exactly to the infinitesimal drift of the Markov chain, the diffusion coefficient $\bar a(x)$ is often taken to be a constant, even when the infinitesimal variance of the Markov chain is state dependent; see \cite{HalfWhit1981, Atar2012, Ward2012, GurvHuanMand2014} just to name a few. However, not everyone uses a constant $\bar a(x)$. State-dependent diffusion coefficients are used for example in strong approximation theorems in \cite{MandMassReim1998}; see \cite[Remark 2.2]{GurvHuanMand2014} for further discussion.  In \cite[p. 116]{GlynWard2003},  the authors compare two diffusion approximations, one  with constant and one with state-dependent $\bar a(x)$. Numerically, they find that the latter does perform a little better, but overall they are unenthusiastic about promoting its use. The main reason being that a state-dependent diffusion coefficient makes the transient behavior of the diffusion process more difficult to compute, and their observed accuracy gains are not sufficient to justify this extra difficulty.

The purpose of this chapter is to strongly promote the use of state-dependent diffusion coefficients $\bar a(x)$ that more accurately capture the infinitesimal variance of the Markov chain. Working in the setting of the Erlang-C model, we prove in Theorem~\ref{thm:w2} that the error from an approximation with a state-dependent diffusion coefficient goes to zero an order of magnitude faster than the error from an approximation with a constant diffusion coefficient. We will also see that a state-dependent diffusion coefficient does not increase the difficulty of computing the stationary distribution of the diffusion. 

Going forward, the reader is assumed to be familiar with the content of Chapter~\ref{chap:erlangAC}. In particular, we assume familiarity with the Stein framework from Section~\ref{sec:CAroadmap}. We begin the chapter with Section~\ref{sec:DSmain}, where we present Theorem~\ref{thm:w2} and some numerical results that go along with it. In Section~\ref{sec:DSroadmap}, we present the ingredients needed to prove Theorem~\ref{thm:w2} and carry out the proof in Section~\ref{sec:DSproofW}. Section~\ref{sec:DSappendix} is a short  appendix for the chapter.

\section{Main Result}
\label{sec:DSmain}
We adopt the notation of Chapter~\ref{chap:erlangAC}, which we recall briefly below. The Erlang-C system has $n$ servers, arrival rate $\lambda$, and service rate $\mu$. The quantity $R = \lambda/\mu$ is known as the offered load, and we set $\delta = 1/\sqrt{R}$ for convenience. The customer count process is $X = \{X(t), t \geq 0\}$ and the scaled and centered process is $\tilde X = \{\delta(X(t) - R), t \geq 0\}$. When $R < n$, these processes are positive recurrent, and  $X(\infty)$ and $\tilde X(\infty)$ are the random variables having the respective stationary distributions. The process $\tilde X$ has generator 
\begin{align}
G_{\tilde X} f(x) = \lambda (f(x + \delta) - f(x)) + d(k) (f(x-\delta) - f(x)), \label{DS:GX}
\end{align}
where $k \in \Z_+$, $x = x_k = \delta(k - x(\infty))$, and
\begin{align*}
d(k) = \mu (k \wedge n),
\end{align*}
is the departure rate corresponding to the system having $k$ customers.  We also recall $\zeta = \delta(R - n)$, which was defined in \eqref{CA:zeta}. The approximation to $\tilde X(\infty)$ was $Y(\infty)$, a continuous random variable with density $\nu(x)$ given in \eqref{CA:stdden}. The random variable $Y(\infty)$ corresponds to a diffusion process with drift
\begin{align}
b(x) = 
\begin{cases}
-\mu x, \quad x \leq -\zeta,\\
\mu \zeta, \quad x \geq -\zeta,
\end{cases} \label{DS:bdef}
\end{align}
and diffusion coefficient $2\mu$.

In this chapter, we propose a different diffusion approximation. Namely, let $Y_S(\infty)$ be the continuous random variable with density 
\begin{equation}
  \label{eq:stdden}
  \nu_S(x)= \frac{\kappa}{a(x)} \exp\Big({\int_0^x \frac{2b(y)}{a(y)}dy}\Big), \quad x \in \R,
\end{equation}
where $\kappa > 0$ is a normalization constant, and 
\begin{align}
&a(x)  = 
\begin{cases}
\mu , \quad x \leq -1/\delta, \\
\mu (2 + \delta x), \quad x \in [-1/\delta, -\zeta], \\
\mu (2 + \delta \abs{\zeta}), \quad x \geq -\zeta,
\end{cases}.  \label{DS:adef}
\end{align}

One may check that for $k \in \Z_+$ and $x = \delta(k-R)$, 
\begin{align}
b(x) = \delta( \lambda - d(k)), \quad \text{ and } \quad a(x) = \delta(\lambda + d(k) 1(k > 0)). \label{DS:abk}
\end{align}
The random variable $Y_S(\infty)$ has the stationary distribution of a diffusion process on the real line with drift $b(x)$ and \emph{state dependent} diffusion coefficient $a(x)$. In contrast, in Chapter~\ref{chap:erlangAC} we used a constant diffusion coefficient of $2\mu$. The following is the main result of this chapter.
\begin{theorem}
\label{thm:w2}
There exists a constant $C > 0$ (independent of $\lambda, n$, and $\mu$), such that for all $n \geq 1, \lambda > 0$, and $\mu > 0$ satisfying $1 \leq R < n $,
\begin{equation}
  \label{eq:newmain}
  d_{W_2}(\tilde X(\infty), Y_S(\infty)) := \sup_{h(x) \in W_2} \big| \EE h(\tilde X(\infty)) - \EE h(Y_S(\infty)) \big| \le \frac{C}{R},
\end{equation}
where 
\begin{align}
W_2 = \big\{h: \R \to \R\ \big|\  h(x), h'(x) \in \lipone \big\}. \label{eq:spacew2}
\end{align}
\end{theorem}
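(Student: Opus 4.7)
Following the Stein framework of Section~\ref{sec:CAroadmap}, for each $h \in W_2$ I first set up the Poisson equation
\begin{equation*}
G_{Y_S} f_h(x) = b(x) f_h'(x) + \tfrac{1}{2} a(x) f_h''(x) = \E h(Y_S(\infty)) - h(x),
\end{equation*}
and invoke Lemma~\ref{lem:solution} with $\bar b = b$, $\bar a = a$ to produce an explicit integral representation for $f_h'$ and $f_h''$. Provided $f_h$ has the polynomial growth needed by Lemma~\ref{lem:gz} (verified a posteriori from the gradient bounds below), the generator-comparison identity
\begin{equation*}
\E h(\tilde X(\infty)) - \E h(Y_S(\infty)) = \E \big[ G_{\tilde X} f_h(\tilde X(\infty)) - G_{Y_S} f_h(\tilde X(\infty))\big]
\end{equation*}
holds, so the whole task reduces to bounding the right-hand side by $C/R$ uniformly in $\lambda,n,\mu$.

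\textbf{The key Taylor expansion.} The design of $a(x)$ is arranged so that, at every lattice point $x = x_k = \delta(k-R)$, the relations $b(x) = \delta(\lambda - d(k))$ and $a(x) = \delta^2(\lambda + d(k))$ reproduce the first two infinitesimal moments of $\tilde X$. Plugging
\begin{equation*}
f_h(x \pm \delta) = f_h(x) \pm \delta f_h'(x) + \tfrac{\delta^2}{2} f_h''(x) \pm \tfrac{\delta^3}{6} f_h'''(x) + R_\pm(x)
\end{equation*}
(with $R_\pm$ the integral fourth-order remainders) into the definition of $G_{\tilde X}$ yields, after cancellation,
\begin{equation*}
G_{\tilde X} f_h(x) - G_{Y_S} f_h(x) = \tfrac{\delta^2}{6}\, b(x)\, f_h'''(x) + \lambda R_+(x) + d(k)\, R_-(x).
\end{equation*}
The $O(\delta)$ error term that appears in \eqref{CW:second_bounds} is gone: one power of $\delta$ is gained precisely because $a(x)$ now matches the CTMC variance. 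The first summand carries an explicit $\delta^2$, and since $\lambda \delta^4 = \mu \delta^2$ and $d(k)\delta^4 \leq \mu\delta^2 + \delta^3|b(x)|$, the remainders contribute at most $O(\delta^2)$ times quantities controlled by $\|f_h^{(4)}\|$-type norms.

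\textbf{Bounds and the main obstacle.} Carrying this out requires gradient bounds on $f_h^{(j)}$ up to $j=4$ --- one order higher than Chapter~\ref{chap:erlangAC} --- which is exactly why the test-function class is enlarged from $\lipone$ to $W_2$: Lipschitzness of $h'$ bounds $h''$ a.e., and then differentiating the Poisson equation once more (cf.\ the remark after Lemma~\ref{lem:solution} and the explicit formulas \eqref{eq:fprimenegv1}--\eqref{eq:fppp}) yields an expression for $f_h^{(4)}$ in terms of $h,h',h''$ and the lower derivatives of $f_h$. Combined with the moment bounds of Lemma~\ref{lem:moment_bounds_C} --- in particular \eqref{CW:xminuszeta} and \eqref{eq:xplusbound}, which absorb the $1/|\zeta|$ factors that inevitably appear in the Stein factors --- each term in $\E|G_{\tilde X} f_h(\tilde X(\infty)) - G_{Y_S} f_h(\tilde X(\infty))|$ should be bounded by a universal constant times $\delta^2$, in direct parallel with the bookkeeping of Section~\ref{sec:CAproofW}. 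I expect the principal technical difficulty to lie in the \emph{universal} Stein factor bounds for $f_h^{(4)}$: both $b(x)$ and $a(x)$ are only piecewise smooth, with kinks at $x = -\zeta$ and $x = -1/\delta$, so repeated differentiation of the integral representation introduces boundary contributions that must be carefully accounted for, and the new linear-in-$x$ behavior of $a(x)$ on $[-1/\delta, -\zeta]$ forces one to split the integrals in \eqref{eq:fprimenegv1}--\eqref{eq:fppv2} into more regions than in the constant-$\bar a$ analysis of Chapter~\ref{chap:erlangAC}, while keeping the resulting constants free of $R$, $n$, and $\zeta$.
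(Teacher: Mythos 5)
Your high-level strategy --- matching $a(x)$ to the CTMC's infinitesimal variance so that both the $f'$ and $f''$ coefficients cancel exactly in the generator difference, leaving an $O(\delta^2)$ remainder --- is exactly what the paper does. The Taylor expansion you write down (minor typo aside: the variance match is $a(x)=\delta^2(\lambda+d(k))$, the paper's \eqref{DS:abk} has a dropped exponent) and the resulting leading term $\tfrac{\delta^2}{6}b(x)f_h'''(x)$ agree with the paper's \eqref{DS:taylor}. So the plan is right.

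There is a genuine gap, though, and it is precisely at the step you flag as the ``principal technical difficulty.'' You propose to control the fourth-order Taylor remainders $R_\pm(x)$ by gradient bounds on $f_h^{(4)}$. But because $a(x)$ and $b(x)$ have kinks at $x=-1/\delta$ and $x=-\zeta$, the third derivative $f_h'''$ is \emph{discontinuous} at those two points (see Lemma~\ref{lem:gb}), so $f_h^{(4)}$ simply does not exist there, and an a.e.\ bound on $f_h^{(4)}$ cannot control integrals across those points. The paper instead bounds the increments $\abs{f_h'''(x-)-f_h'''(y)}$ directly (Lemma~\ref{lem:eterm}), and these increments are bounded by $C\delta(\cdots)$ everywhere \emph{except} at the two lattice points $x=-1/\delta$ and $x=-\zeta$, where a jump of size $O(1+1/\abs{\zeta})$ survives; equivalently, the increment bound \eqref{eq:eboundright} carries a term $\tfrac{1}{\delta}(1+\tfrac{1}{\abs{\zeta}})1(x\in\{-1/\delta,-\zeta\})$. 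After taking expectations this produces error contributions of the form $C(\pi_0+\pi_n)$ (with $\pi_k=\Prob(X(\infty)=k)$) that are not automatically $O(\delta^2)$ and that your moment bounds from Lemma~\ref{lem:moment_bounds_C} do not touch. Closing this requires the separate estimate $\pi_0,\pi_n\le C\delta$, which the paper proves via the Kolmogorov-distance Theorem~\ref{thm:erlangCK} together with the density bound in Lemma~\ref{lem:densboundC} (the step around \eqref{eq:pikolmbound}), plus dedicated moment bounds (Lemmas~\ref{lem:lastbounds} and \ref{lem:pibounds}) sharp enough to absorb the new $1/\abs{\zeta}$-weighted atom probabilities. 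Your proposal is silent on both the switch from $f_h^{(4)}$ to $f_h'''$-increments and on the $\pi_0,\pi_n$ control, and without them the argument does not close.
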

\noindent Theorem~\ref{thm:w2} should be compared with Theorem~\ref{thm:erlangCW} of Chapter~\ref{chap:erlangAC}. The former has a convergence rate of $1/R$ versus the $1/\sqrt{R}$ rate of the latter. The class of functions $W_2$ in \eqref{eq:newmain} is not significantly smaller than $\lipone$, meaning that the two statements are comparable. We will see in Section~\ref{sec:DSappendix} that $W_2$ is a rich enough class of functions to imply convergence in distribution.

 Theorem~\ref{thm:w2} can also be compared to the results in \cite{GurvHuanMand2014, Gurv2014} and Chapter~\ref{chap:phasetype} (which is based in \cite{BravDai2017}), all of which study convergence rates for steady-state diffusion approximations of various models. A rate of $1/R$ is an order of magnitude better than the rates in any of the previously mentioned papers, whose rates are equivalent to $1/\sqrt{R}$ in our model.

\subsection{Numerical Study}
\label{sec:numeric}
Before moving on to the proof of Theorem~\ref{thm:w2}, we present some numerical results to complement the theorem. The results in this section show that $Y_S(\infty)$ consistently outperforms $Y(\infty)$. In Table~\ref{tabbenefit} we see that for large or heavily loaded systems, i.e. when $R$ is either large or close to $n$, the approximation $Y(\infty)$ performs reasonably well, and the accuracy gained from using $Y_S(\infty)$ is not as impressive. However, the accuracy gain of $Y_S(\infty)$ is much more significant for smaller systems with lighter loads. In Table~\ref{tabrates} we see that the errors of $Y(\infty)$ and $Y_S(\infty)$ indeed decrease at a rate of $1/\sqrt{R}$ and $1/R$, respectively. Furthermore, the table suggests that the approximation error of the second moment also decreases at a rate of $1/R$, even though \eqref{eq:newmain} does not guarantee this. Numerically, we observed a rate of $1/R$ for higher moments as well. This is not surprising, as there is nothing preventing us from repeating the analysis in this chapter for higher moments. 
\begin{table}[h]
  \begin{center}
  \resizebox{\columnwidth}{!}{
   \begin{tabular}{rc|cc|cc }
\multicolumn{6}{c}{$n=5$} \\
$R$ & $\EE \tilde X(\infty)$ & $\big| \EE Y(\infty) - \EE \tilde X(\infty)\big|$  &Relative Error & $\big| \EE Y_S(\infty) - \EE \tilde X(\infty)\big|$& Relative Error\\
\hline
 3        &   0.20  &$5.87\times 10^{-2}$  & 28.69\% & $9.34\times 10^{-3}$ & 4.57\% \\
 4        &  1.11  & $9.91\times 10^{-2}$  & 8.95\% & $1.12\times 10^{-2}$ & 1.08\% \\
 4.9        &  21.04  &$1.28\times 10^{-1}$ & 0.61\% & $1.29\times 10^{-2}$ & 0.06\% \\
 4.95        &  43.39  & $1.29\times 10^{-1}$ & 0.30\% & $1.29\times 10^{-2}$ & 0.03\% \\
 4.99        & 222.26  & $1.30\times 10^{-1}$& 0.06\% & $1.29\times 10^{-2}$ & 0.006\% \\
  \end{tabular}}
  \\~\\
  \resizebox{\columnwidth}{!}{
  \begin{tabular}{rc|cc|cc }
\multicolumn{6}{c}{$n=100$} \\
$R$ & $\EE \tilde X(\infty)$ & $\big| \EE Y(\infty) - \EE \tilde X(\infty)\big|$  &Relative Error & $\big| \EE Y_S(\infty) - \EE \tilde X(\infty)\big|$& Relative Error\\
\hline
 60 & $2.97\times 10^{-7}$ & $2.73\times 10^{-7}$ & 91.83\% & $5.11\times 10^{-8}$ &17.24\% \\
 80 & $8.79\times 10^{-3}$  & $2.25\times 10^{-3}$ & 25.60\%& $1.03\times 10^{-4}$ & 1.17\% \\
 98 & $3.84$ & $2.85\times 10^{-2}$ & 0.74\% & $7.00\times 10^{-4}$ & 0.02\% \\
99 & $8.78$ & $3.04\times 10^{-2}$ & 0.35\% & $7.26\times 10^{-4}$ & 0.008\% \\
99.8 & $48.74$ & $3.19\times 10^{-2}$ & 0.07\% & $7.46\times 10^{-4}$ & 0.002\% \\
  \end{tabular}
  }
  \end{center}
  \caption{The new approximation $Y_S(\infty)$ consistently outperforms $Y(\infty)$. \label{tabbenefit}}
\end{table}

\begin{table}[h!]
\begin{center}\setlength\tabcolsep{3pt}
\begin{tabular}{rc | c | c | c }
 $n$ & $R$ &$\EE \tilde X(\infty)$ & $\big| \EE \tilde X(\infty) - \EE Y(\infty)\big|$ & $\big| \EE \tilde X(\infty) - \EE Y_S(\infty)\big|$ \\
\hline
5    & 4       & 1.11 & $9.9 \times 10^{-2}$ & $1.2 \times 10^{-2}$ \\
50   & 46.59   & 1.04 & $3.2 \times 10^{-2}$ & $1.2 \times 10^{-3}$ \\
500  & 488.94  & 1.02 & $1.0 \times 10^{-2}$ & $1.2 \times 10^{-4}$  \\
5000 &  4965   & 1.01 & $3.3 \times 10^{-3}$ & $1.2 \times 10^{-5}$  \\
\end{tabular}
\end{center}

\begin{center}\setlength\tabcolsep{3pt}
\begin{tabular}{rc | c | c | c }
 $n$ & $R$ &$\EE (\tilde X(\infty))^2$ & $\big| \EE (\tilde X(\infty))^2 - \EE (Y(\infty))^2\big|$ & $\big| \EE (\tilde X(\infty))^2 - \EE (Y_S(\infty))^2\big|$ \\
\hline
5    & 4       & 6.54 & 1.00  & $6 \times 10^{-2}$ \\
50   & 46.59   & 5.84 &  0.30 & $5.7 \times 10^{-3}$ \\
500  & 488.94  & 5.63 & 0.092 &  $5.6 \times 10^{-4}$  \\
5000 &  4965   & 5.57 & 0.029 &  $5.5 \times 10^{-5}$  \\
\end{tabular}
\end{center}

\caption{As the offered load increases by a factor of $10$, the approximation error of $Y(\infty)$, derived with a \emph{constant} diffusion coefficient, shrinks at a rate of $\sqrt{10}$, whereas the approximation error of $Y_S(\infty)$,
derived with a \emph{state-dependent} diffusion coefficient, shrinks at a rate of $10$. Similar experiments for moments higher than the second yield consistent results. \label{tabrates}}
\end{table} 

Furthermore, although Theorem~\ref{thm:w2} is only stated in the context of the $W_2$ metric, we show that $Y_S(\infty)$ is a superior approximation to $Y(\infty)$ when it comes to estimating the both the probability mass function (PMF), and cumulative distribution function (CDF). Let $\{\pi_k\}_{k=0}^{\infty}$ be the distribution of $X(\infty)$. For $k \in \Z_+$ define 
\begin{align*}
\pi^{Y}_k =& \Prob\Big(Y(\infty) \in \big[\delta(k - R)-\delta/2, \delta (k - R)+\delta/2\big]\Big),\\
\pi^{Y_S}_k =& \Prob\Big(Y_S(\infty) \in \big[\delta(k - R)-\delta/2, \delta (k - R)+\delta/2\big]\Big).
\end{align*}
Results for the PMF are displayed in Figure~\ref{fig1} and Table~\ref{tabpmf}, and results for the CDF are in Table~\ref{tabkolm}. We observe numerically that the Kolmogorov distance converges to zero at a rate of $1/\sqrt{R}$ as opposed to $1/R$. However, $Y_S(\infty)$ still performs better. 
 \begin{figure}[h]
\centerline{\includegraphics[width=90mm,keepaspectratio]{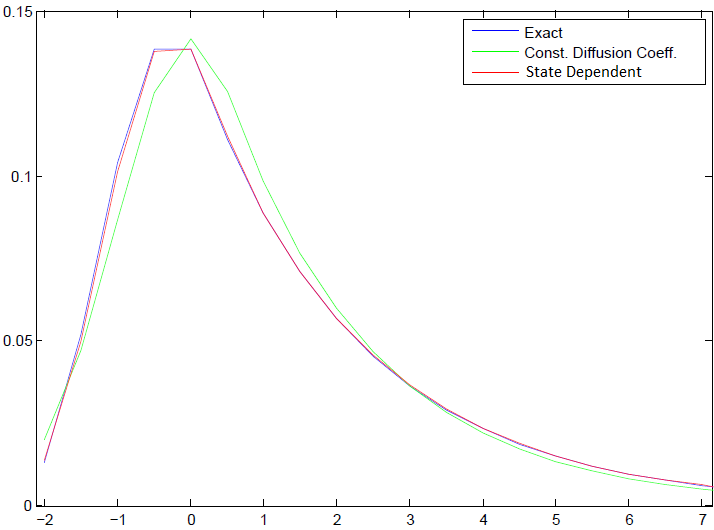}}
 \caption{The plot above corresponds to a small system with $n = 5$ and $R = 4$. The blue, green and red lines are $\pi_k, \pi^{Y}_k$, and $\pi^{Y_S}_k$, respectively. \label{fig1}}
  \end{figure}
\begin{table}[h]
  \begin{center}
  \resizebox{\columnwidth}{!}{
   \begin{tabular}{rcc|ccc }
\multicolumn{3}{c}{$n=5$}& \multicolumn{3}{c}{$n=100$} \\
$R$ & $\sup_{k \in \Z_+} \big|\pi_k - \pi^{Y}_k \big|$ & $\sup_{k \in \Z_+} \big|\pi_k - \pi^{Y_S}_k \big|$   &R & $\sup_{k \in \Z_+} \big|\pi_k - \pi^{Y}_k \big|$ & $\sup_{k \in \Z_+} \big|\pi_k - \pi^{Y_S}_k \big|$  \\
\hline
 3        &   $2.72\times 10^{-2}$  &$5.84\times 10^{-3}$  & 60 & $1.59\times 10^{-3}$ & $2.95\times 10^{-5}$ \\
 4        &  $1.72\times 10^{-2}$  & $2.67\times 10^{-3}$  & 80  & $1.16\times 10^{-3}$ & $1.92\times 10^{-5}$ \\
 4.9        & $2.51\times 10^{-3}$  &$3.54\times 10^{-4}$  & 98  & $3.59\times 10^{-4}$ & $9.81\times 10^{-6}$ \\
 4.95        &  $1.28\times 10^{-3}$  & $1.78\times 10^{-4}$ & 99  & $2.07\times 10^{-4}$ & $5.80\times 10^{-6}$ \\
 4.99        & $2.61\times 10^{-4}$  & $3.62\times 10^{-5}$& 99.98 & $4.71\times 10^{-5}$ & $1.34\times 10^{-6}$ 
  \end{tabular}}
  \begin{tabular}{rc | c | c  }
 $n$ & $R$ &$\sup_{k \in \Z_+} \big|\pi_k - \pi^{Y}_k \big|$ & $\sup_{k \in \Z_+} \big|\pi_k - \pi^{Y_S}_k \big|$\\
\hline
5    & 4       &  $1.72 \times 10^{-2}$ & $2.67 \times 10^{-3}$ \\
50   & 46.59   &  $1.41 \times 10^{-3}$ & $4.78 \times 10^{-5}$ \\
500  & 488.94  &  $1.38 \times 10^{-4}$ & $1.27 \times 10^{-6}$  \\
5000 &  4965   &  $1.37 \times 10^{-5}$ & $3.81 \times 10^{-8}$ 
\end{tabular}
\end{center}
  \caption{Approximating the probability mass function of $\tilde X(\infty)$. \label{tabpmf}}
\end{table}

\begin{table}[h]
  \begin{center}
  \resizebox{\columnwidth}{!}{
   \begin{tabular}{rcc|ccc }
\multicolumn{3}{c}{$n=5$}& \multicolumn{3}{c}{$n=100$} \\
$R$ & $d_K(\tilde X(\infty), Y(\infty))$ & $d_K(\tilde X(\infty), Y_S(\infty))$   &R & $d_K(\tilde X(\infty), Y(\infty))$ & $d_K(\tilde X(\infty), Y_S(\infty))$ \\
\hline
 3        &   $1.32\times 10^{-1}$  &$9.27\times 10^{-2}$  & 60 & $3.43\times 10^{-2}$ & $2.58\times 10^{-2}$ \\
 4        &  $8.76\times 10^{-2}$  & $6.41\times 10^{-2}$  & 80  & $2.93\times 10^{-2}$ & $2.23\times 10^{-2}$ \\
 4.9        & $1.32\times 10^{-2}$  &$9.48\times 10^{-3}$  & 98  & $1.03\times 10^{-2}$ & $8.10\times 10^{-3}$ \\
 4.95        &  $6.84\times 10^{-3}$  & $4.84\times 10^{-3}$ & 99  & $5.86\times 10^{-3}$ & $4.53\times 10^{-3}$ \\
 4.99        & $1.41\times 10^{-3}$  & $9.84\times 10^{-4}$& 99.98 & $1.31\times 10^{-3}$ & $9.93\times 10^{-4}$ 
  \end{tabular}}

\begin{tabular}{rc | c | c  }
 $n$ & $R$ &$d_K(\tilde X(\infty), Y(\infty))$ & $d_K(\tilde X(\infty), Y_S(\infty))$\\
\hline
5    & 4       &  $8.76 \times 10^{-2}$ & $6.41 \times 10^{-2}$ \\
50   & 46.59   &  $2.60 \times 10^{-2}$ & $2.11 \times 10^{-2}$ \\
500  & 488.94  &  $7.98 \times 10^{-3}$ & $6.48 \times 10^{-3}$  \\
5000 &  4965   &  $2.50 \times 10^{-3}$ & $2.03 \times 10^{-3}$ 
\end{tabular}
  \end{center}
  \caption{Approximating the cumulative distribution function of $\tilde X(\infty)$. In the second table, as $R$ increases by a factor of $10$, both $d_K(\tilde X(\infty), Y(\infty))$ and $d_K(\tilde X(\infty), Y_S(\infty))$ decrease by a factor of $\sqrt{10}$, and not $10$. \label{tabkolm}}
\end{table}
\clearpage

\section{Proof Components}
\label{sec:DSroadmap}
The proof of Theorem~\ref{thm:w2} uses the Stein framework developed in Section~\ref{sec:CAroadmap}. We assume familiarity with that section, and now state the main ingredients needed to prove Theorem~\ref{thm:w2}. As we mentioned,  the random variable $Y_S(\infty)$ is associated to a diffusion process with generator
\begin{equation}
  \label{eq:GY}
G_{Y_S} f(x) =  b(x)f'(x) +  \frac{1}{2}a(x) f''(x) \text{ \quad for $x \in \R$}, \ f \in C^2(\R),
\end{equation}
where $b(x)$ and $a(x)$ are defined in \eqref{DS:bdef} and \eqref{DS:adef}, respectively. Fix $h(x) \in W_2$ with $h(0) = 0$, and consider the Poisson equation 
\begin{align} \label{DS:poisson}
G_{Y_S} f_h(x)  = \E h(Y(\infty)) - h(x), \quad  x\in \R.
\end{align}
We use the Lipschitz property of $h(x)$ to see that
\begin{align*}
\abs{\E h(Y_S(\infty))} \leq  \E \big|Y_S(\infty)\big| < \infty, 
\end{align*}
where the finiteness of $\EE \big|Y_S(\infty)\big|$ will be proved in \eqref{DS:fbound7}. Just as was done in \eqref{eq:gen_bound}, we can take expected values on both sides of (\ref{DS:poisson}) with respect to $\tilde X(\infty)$ and apply Lemma~\ref{lem:gz} to get
\begin{align}
\big| \EE h(\tilde X(\infty)) - \EE h(Y_S(\infty)) \big| =&\ \big| \EE G_{Y_S} f_h(\tilde X(\infty)) \big| \notag \\
=&\ \big| \EE G_{\tilde X} f_h(\tilde X(\infty)) - \EE G_{Y_S} f_h(\tilde X(\infty)) \big| \notag \\
\leq&\ \EE \big| G_{\tilde X} f_h(\tilde X(\infty)) -  G_{Y_S} f_h(\tilde X(\infty)) \big|. \label{DS:poissonright}
\end{align}
We will shortly see in Lemma~\ref{lem:gb} that there is indeed a  solution $f_h(x)$ to the Poisson equation \eqref{DS:poisson} with a bounded second derivative. This means that it can be bounded by a quadratic polynomial, and hence satisfy the conditions of Lemma~\ref{lem:gz}. The following section presents the necessary moment and gradient bounds.

\subsection{Moment Bounds and Gradient Bounds}
\label{sec:momentbounds}
Recall that $\zeta < 0$. We begin with several moment bounds.
\begin{lemma}
\label{lem:lastbounds}
For all $n \geq 1, \lambda > 0$, and $\mu > 0$ satisfying $1 \leq R < n $,
\begin{align}
\Big(1+\frac{1}{\abs{\zeta}}\Big)\EE \Big[\big|\tilde X(\infty) 1(\tilde X(\infty) \leq -\zeta) \big| \Big] \leq&\ \sqrt{2} + 2, \label{eq:lb1}\\
\Big(1+\frac{1}{\abs{\zeta}}\Big)\EE \Big[(\tilde X(\infty))^2  1(\tilde X(\infty) \leq -\zeta) \Big] \leq&\ 9,\label{eq:lb2}\\
\abs{\zeta}\Prob(\tilde X(\infty) \geq -\zeta) \leq&\ 2,\label{eq:lb3}\\
\zeta^2\Prob(\tilde X(\infty) \geq -\zeta) \leq&\ 20, \label{eq:lb4}
\end{align}  
and if $0 < R < n$, then
\begin{align}
&\Prob(\tilde X(\infty) \leq -\zeta) \leq (2+\delta)\abs{\zeta}. \label{eq:idle_prob_v2}
\end{align}
\end{lemma}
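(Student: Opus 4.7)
The plan is as follows. Inequality \eqref{eq:idle_prob_v2} is literally \eqref{CW:idle_prob} of Lemma~\ref{lem:moment_bounds_C}, so it requires no new work. Inequality \eqref{eq:lb1} follows immediately by splitting $(1+1/\abs{\zeta})\EE[\,\cdot\,] = \EE[\,\cdot\,] + \EE[\,\cdot\,]/\abs{\zeta}$ and applying \eqref{CW:xminusdelta} (using $\delta\le 1$, which is $R\ge 1$) to the first summand and \eqref{CW:xminuszeta} to the second. Inequality \eqref{eq:lb3} follows from the fact that $\tilde X(\infty)\ge\abs{\zeta}$ on $\{\tilde X(\infty)\ge -\zeta\}$, so $\abs{\zeta}\Prob(\tilde X(\infty)\ge -\zeta)\le \EE[\tilde X(\infty)1(\tilde X(\infty)\ge -\zeta)]$, after which \eqref{CW:xplus} handles $\abs{\zeta}\ge 1$ and the trivial bound $\abs{\zeta}$ handles $\abs{\zeta}\le 1$.

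The key new ingredient is a stationary identity obtained from $\EE[G_X V(X(\infty))]=0$ (valid by Lemma~\ref{lem:gz}) with Lyapunov function $V(k)=(k-R)^2$. Computing $G_X V$ separately on $\{k\le n-1\}$ (where $d(k)=\mu k$) and $\{k\ge n\}$ (where $d(k)=\mu n$), and using the throughput identity $\lambda=\mu\,\EE[X(\infty)\wedge n]$ to cancel the lower-order terms, yields
\begin{equation*}
\EE[\tilde X(\infty)^2 1(\tilde X(\infty)\le -\zeta-\delta)] + \abs{\zeta}\,\EE[\tilde X(\infty)\,1(\tilde X(\infty)\ge -\zeta)] = 1.
\end{equation*}
Both summands are nonnegative, so $\abs{\zeta}\EE[\tilde X(\infty)1(\tilde X(\infty)\ge -\zeta)]\le 1$; using $\tilde X(\infty)\ge \abs{\zeta}$ on the event then gives $\zeta^2\Prob(\tilde X(\infty)\ge -\zeta)\le 1$, which is much stronger than \eqref{eq:lb4}.

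For \eqref{eq:lb2}, write $\sigma:=\EE[\tilde X(\infty)^2 1(\tilde X(\infty)\le -\zeta)] = \EE[\tilde X(\infty)^2 1(\tilde X(\infty)\le -\zeta-\delta)] + \zeta^2\pi_n$, where $\pi_n=\Prob(X(\infty)=n)$. The geometric structure of $X(\infty)-n$ conditional on $X(\infty)\ge n$ (with parameter $1-\rho$) gives $\EE[\tilde X(\infty)\,1(\tilde X(\infty)\ge -\zeta)] = p(1/\abs{\zeta}+\abs{\zeta})$, where $p=\Prob(X(\infty)\ge n)$. Substituting into the identity yields $\EE[\tilde X(\infty)^2 1(\tilde X(\infty)\le -\zeta-\delta)] = (1-p) - p\zeta^2 \le 1-p = \Prob(\tilde X(\infty)\le -\zeta-\delta) \le 3\abs{\zeta}$ by \eqref{eq:idle_prob_v2}. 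For the remaining piece, $\pi_n=(1-\rho)p$ together with $\zeta^2 p\le 1$ and $1-\rho = \abs{\zeta}\sqrt R/n \le \abs{\zeta}$ (since $\sqrt R\le n$ when $R<n$) give $\zeta^2\pi_n\le \abs{\zeta}$. Hence $\sigma\le 4\abs{\zeta}$, and combining with the bound $\sigma\le 2$ from \eqref{CW:xsquaredelta} (using $\delta\le 1$) produces $(1+1/\abs{\zeta})\sigma = \sigma+\sigma/\abs{\zeta}\le 2+4=6\le 9$.

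The main obstacle is identifying the stationary identity that produces the $O(\abs{\zeta})$ scaling of $\sigma$ when $\abs{\zeta}$ is small; the bounds in Lemma~\ref{lem:moment_bounds_C} alone give only $\sigma=O(1)$, which cannot absorb the $1/\abs{\zeta}$ prefactor in \eqref{eq:lb2}. The refinement comes from combining the quadratic drift generated by $V(k)=(k-R)^2$ with the explicit geometric tail of the Erlang-C stationary distribution above $n$; once this identity is in hand, the remaining estimates are elementary.
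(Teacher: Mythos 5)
Your proof is correct, and your treatment of \eqref{eq:lb2} and \eqref{eq:lb4} takes a genuinely different route from the paper's. The paper first proves an intermediate Lemma~\ref{lem:xtramom}: the $O(\zeta^2+\abs{\zeta})$ bound \eqref{eq:xsquarezeta} on $\E[\tilde X(\infty)^2 1(\tilde X(\infty)\le-\zeta)]$ comes from applying Lemma~\ref{lem:gz} to the truncated test function $f(x)=\big[(x+\zeta)^-\big]^2$ together with \eqref{eq:idle_expect} and \eqref{CW:idle_prob}, while the bound \eqref{eq:xsquareplus} on $\E[\tilde X(\infty)^2 1(\tilde X(\infty)\ge-\zeta)]$ comes from the cubic test function $f(x)=x^3 1(x\ge a+\delta)$; the paper then deduces \eqref{eq:lb2} and \eqref{eq:lb4} by taking minima of these with the $O(1)$ bounds of Lemma~\ref{lem:moment_bounds_C}, covering the small- and large-$\abs{\zeta}$ regimes separately. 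You instead push the untruncated quadratic $V(k)=(k-R)^2$ through Lemma~\ref{lem:gz}, and cancel the lower-order terms via the throughput identity $\lambda=\mu\E[X(\infty)\wedge n]$, to obtain the exact balance relation $\E[\tilde X(\infty)^2 1(\tilde X(\infty)\le-\zeta-\delta)]+\abs{\zeta}\,\E[\tilde X(\infty)1(\tilde X(\infty)\ge-\zeta)]=1$, and you then evaluate the second summand in closed form using the geometric tail $\pi_{n+j}=\rho^j\pi_n$ of the Erlang-C stationary distribution, giving $\E[\tilde X(\infty)1(\tilde X(\infty)\ge-\zeta)]=p(1/\abs{\zeta}+\abs{\zeta})$ and $\pi_n=(1-\rho)p$ with $p=\Prob(\tilde X(\infty)\ge-\zeta)$. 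This is shorter, avoids both of Lemma~\ref{lem:xtramom}'s test functions, and yields a much sharper constant in \eqref{eq:lb4} (one in place of twenty). The trade-off is that your derivation leans on the explicit product form of the stationary law above $n$, whereas the paper's route stays entirely within the Lyapunov-drift framework used elsewhere in the chapter; for the Erlang-C model this is harmless, and the paper itself uses the closed form in Lemma~\ref{lem:pibounds}. Your treatments of \eqref{eq:lb1}, \eqref{eq:lb3}, and \eqref{eq:idle_prob_v2} coincide with the paper's.
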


\begin{lemma}
\label{lem:pibounds}
Let $\{\pi_k\}_{k=0}^{\infty}$ be the distribution of $X(\infty)$. For all $n \geq 1, \lambda > 0$, and $\mu > 0$ satisfying $0 < R < n $,
\begin{align}
\pi_0 \leq&\ 4(2+\delta)\delta^2\abs{\zeta}, \quad \text{ when } \abs{\zeta} \leq 1, \label{DS:pi0}
\end{align}
and 
\begin{align}
\pi_n \leq \delta \abs{\zeta}. \label{DS:pin}
\end{align}
\end{lemma}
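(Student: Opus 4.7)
Both bounds reduce to exploiting the birth-death structure of $X$ together with the flow-balance equations $\lambda \pi_{k-1} = d(k)\pi_k$ for $k \geq 1$. Since $d(k) = k\mu$ for $k \leq n$ and $d(k) = n\mu$ for $k \geq n$, iterating yields the two familiar relations
\begin{align*}
\pi_k &= \frac{R^k}{k!}\,\pi_0, \qquad 0 \le k \le n,\\
\pi_k &= \Big(\frac{R}{n}\Big)^{k-n}\pi_n, \qquad k \ge n,
\end{align*}
which I will use as the sole structural inputs, combined with the already-established estimate \eqref{eq:idle_prob_v2}.

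For \eqref{DS:pi0}, I sum the first identity over $k = 0, \ldots, n$:
\begin{equation*}
\pi_0 \sum_{k=0}^n \frac{R^k}{k!} \;=\; \Prob(X(\infty) \le n) \;=\; \Prob(\tilde X(\infty) \le -\zeta) \;\le\; (2+\delta)\abs{\zeta},
\end{equation*}
where the last inequality is \eqref{eq:idle_prob_v2} (valid since $0<R<n$, no restriction on $\abs{\zeta}$ needed here). The sum $\sum_{k=0}^n R^k/k!$ is bounded below by $\max\{1, R\}$: the $k=0$ term gives $\ge 1$, and the $k=1$ term gives $\ge R$ (using $n \ge 1$). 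If $R \ge 1$ this yields $\pi_0 \le (2+\delta)\abs{\zeta}/R = (2+\delta)\delta^2\abs{\zeta}$; if $R < 1$ then $\delta^2 > 1$ and $\pi_0 \le (2+\delta)\abs{\zeta} \le (2+\delta)\delta^2\abs{\zeta}$. In either case the claimed bound holds (with the factor $4$ providing slack). The hypothesis $\abs{\zeta}\le 1$ is not used in the derivation itself; it simply ensures the right-hand side of \eqref{DS:pi0} is a meaningful probability bound.

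For \eqref{DS:pin}, I evaluate the geometric tail generated by the second identity:
\begin{equation*}
\Prob(X(\infty) \ge n) \;=\; \sum_{k=n}^\infty \pi_k \;=\; \pi_n \sum_{j=0}^\infty (R/n)^j \;=\; \frac{n}{n-R}\,\pi_n,
\end{equation*}
so that $\pi_n = \frac{n-R}{n}\Prob(X(\infty)\ge n) \le (n-R)/n$. Since $R \le n$, one has $(n-R)/n \le (n-R)/R = \delta^2(n-R) = \delta\abs{\zeta}$, where the last equality uses $\abs{\zeta} = \delta(n-R)$.

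Neither step presents a substantive obstacle: the birth-death structure gives closed-form ratios of the $\pi_k$, and the only nontrivial external input is \eqref{eq:idle_prob_v2}. The one thing to watch is the $R<1$ regime in \eqref{DS:pi0}, where the bound $\sum R^k/k! \ge R$ is weaker than $\sum R^k/k! \ge 1$; splitting on whether $R \ge 1$ handles this cleanly.
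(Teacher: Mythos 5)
Your proof is correct, and both bounds are obtained by the same basic route as the paper: exploit the birth--death flow-balance structure together with the pre-established estimate \eqref{eq:idle_prob_v2}. For \eqref{DS:pin} the two derivations are essentially identical (both reach $\pi_n \le (n-R)/n$ and then convert to $\delta\abs{\zeta}$). For \eqref{DS:pi0}, however, your argument is genuinely cleaner than the paper's: the paper bounds $\sum_{k=0}^{\lfloor R\rfloor}\pi_k \ge \pi_0\lfloor R\rfloor$ using monotonicity of the $\pi_k$ up to the mode, then needs the hypothesis $\abs{\zeta}\le 1$ to relate $R$ and $n$, followed by a case split on $n\ge 4$ versus $n<4$ to absorb the floor function into the constant $4$. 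You instead write $\Prob(X(\infty)\le n) = \pi_0\sum_{k=0}^n R^k/k!$ and lower-bound the sum by $\max\{1,R\}$, which sidesteps the floor entirely and yields the sharper bound $\pi_0 \le (2+\delta)\delta^2\abs{\zeta}$ without any restriction on $\abs{\zeta}$; your observation that the hypothesis $\abs{\zeta}\le 1$ is not actually needed is correct. The only minor thing to keep in mind is that your stronger constant (1 instead of 4) means your lemma statement is slightly tighter than what the paper proves, which is fine since the chapter only consumes the $O(\delta^2\abs{\zeta})$ rate.
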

\noindent Lemmas~\ref{lem:lastbounds} and \ref{lem:pibounds} are proved in Section~\ref{app:DSmoment}. Next we present the gradient bounds, which are proved in Section~\ref{app:DSgradbounds}.
\begin{lemma} \label{lem:gb}
Fix $h(x) \in W_2$ with $h(0) = 0$ and consider the Poisson equation \eqref{DS:poisson}. There exists a solution $f_h(x)$ such that $f_h''(x)$ is absolutely continuous, $f_h'''(x)$ exists and is continuous everywhere except the points $x = -1/\delta$ and $x=-\zeta$, and $\lim_{u \uparrow x} f_h'''(u)$ and $\lim_{u \downarrow x} f_h'''(u)$ both exist at those two points. Moreover, there exists a constant $C > 0$ independent of $\lambda, n$, and $\mu$, such that for all $n \geq 1, \lambda > 0$, and $\mu > 0$ satisfying $1 \leq R < n $,
\begin{align}
\abs{f_h'(x)} 
\leq&\
\begin{cases}
\frac{C}{\mu }\Big(1 +  \frac{1}{\abs{\zeta}}\Big), \quad x \leq -\zeta,\\
\frac{C}{\mu \abs{\zeta}}\Big(x + 1 + \frac{1}{\abs{\zeta}}\Big), \quad x \geq -\zeta,
\end{cases}\label{DS:WCder1} \\
\abs{f_h''(x)} \leq&\ 
\begin{cases}
\frac{C}{\mu }\Big(1 +  \frac{1}{\abs{\zeta}}\Big), \quad x \leq -\zeta,\\
\frac{C}{\mu \abs{\zeta}}, \quad x \geq -\zeta,
\end{cases} \label{DS:WCder2}
\end{align}
and 
\begin{align}
 \abs{f_h'''(x)} \leq&\ 
\begin{cases}
\frac{C}{\mu }\Big(1 + \frac{1}{\abs{\zeta}}\Big), \quad x \leq -\zeta,\\
\frac{C}{\mu }, \quad x > -\zeta,
\end{cases} \label{DS:WCder3}
\end{align}
where $f_h'''(x)$ is interpreted as the left derivative at the points $x = -1/\delta$ and $x = -\zeta$.
\end{lemma}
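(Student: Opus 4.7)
My plan is to invoke Lemma~\ref{lem:solution} with $\bar b(x)=b(x)$ and $\bar a(x)=a(x)$ to obtain explicit integral representations for $f_h'$ and $f_h''$, and then to estimate these representations region by region. First I would verify the hypotheses of Lemma~\ref{lem:solution}: $a(x)>0$ by inspection, both $a$ and $b$ are continuous (piecewise linear), and the normalization integral is finite because the associated invariant density $\nu_S$ has Gaussian decay on $(-\infty,-1/\delta]$ and exponential decay on $[-\zeta,\infty)$. Since $h\in\lipone$, $\E|h(Y_S(\infty))|<\infty$ (via the moment bound \eqref{DS:fbound7} promised in the chapter), so Lemma~\ref{lem:solution} produces a solution $f_h$. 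Regularity follows from the smoothness of $b$, $a$, and $h$: $f_h'$ is continuous from \eqref{eq:fprimenegv1}, $f_h''$ is absolutely continuous via \eqref{eq:fppv2}, and differentiating \eqref{eq:fppv2} as in \eqref{eq:fppp} shows that $f_h'''$ exists and is continuous wherever $a'$, $b'$, and $h'$ are. Because $a$ has kinks only at $\{-1/\delta,-\zeta\}$ and $b$ has a single kink at $-\zeta$, one-sided limits of $f_h'''$ exist at those two points.

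The workhorse of the pointwise bounds is the integral representation of $f_h'$. For $x\geq -\zeta$ I would use \eqref{eq:fprimeposv1}: on this region $a(y)=\mu(2+\delta|\zeta|)$ is constant and $2b(y)/a(y)=2\zeta/(2+\delta|\zeta|)$ is a negative constant, so the integrating factor $\exp(\int_0^y 2b(u)/a(u)\,du)$ decays in $y$ at a rate bounded below by $|\zeta|/2$. Integrating against $\E h(Y_S(\infty))-h(y)$, whose modulus grows at most linearly in $|y|$ by the Lipschitz property, produces a geometric series contribution of order $1/|\zeta|$ and yields the stated bound $(C/(\mu|\zeta|))(x+1+1/|\zeta|)$. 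For $x\leq -\zeta$ I would use \eqref{eq:fprimenegv1}: direct computation gives $\int_0^x 2b(u)/a(u)\,du$ equal to a linear plus logarithmic expression on $[-1/\delta,-\zeta]$ and continues as $-x^2$ on $(-\infty,-1/\delta]$, producing the required Gaussian-type decay of the integrand. Combining these estimates with $\E|Y_S(\infty)|<\infty$ and the linear growth of $\E h(Y_S(\infty))-h(y)$ gives the bound $(C/\mu)(1+1/|\zeta|)$.

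With $f_h'$ in hand, the bound on $f_h''$ follows by substitution into \eqref{eq:fppv2}, using that $|b(x)/a(x)|$ is bounded by a constant on each region and that $|\E h(Y_S(\infty))-h(x)|/a(x)\leq C(1+|x|)/\mu$, with the linear growth absorbed either into $f_h'$ on $[-\zeta,\infty)$ or into the Gaussian envelope on $(-\infty,-\zeta]$. For $f_h'''$ I would differentiate \eqref{eq:fppv2} once more to obtain
\begin{equation*}
f_h'''(x)=-\Bigl(\tfrac{2b(x)}{a(x)}\Bigr)'f_h'(x)-\tfrac{2b(x)}{a(x)}f_h''(x)-\tfrac{2}{a(x)}h'(x)-\tfrac{2a'(x)}{a^2(x)}\bigl(\E h(Y_S(\infty))-h(x)\bigr),
\end{equation*}
and then invoke $|h'(x)|\leq 1$ (Lipschitz-1 of $h$), $|a'(x)|\leq \mu\delta$, $a(x)\geq \mu$, and region-dependent bounds on $(b/a)'$, together with the previously established bounds on $f_h'$ and $f_h''$, to recover the claimed bound.

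The main obstacle will be the intermediate region $[-1/\delta,-\zeta]$, where the exponent in $\nu_S$ is neither purely quadratic nor purely linear and does not factor cleanly. Extracting the crisp $1+1/|\zeta|$ dependence requires a careful comparison of the masses of $\nu_S$ across the three regions in both the regime $|\zeta|$ small (where the exponential tail on the right dominates) and the regime $|\zeta|$ of order one or larger (where the Gaussian tail on the left dominates). A closely related difficulty is that the normalization constant $\kappa$ in \eqref{eq:stdden} is implicit; writing $f_h'$ via \eqref{eq:fprimenegv1}--\eqref{eq:fprimeposv1} as a ratio of partial integrals of $\nu_S$ sidesteps this, but one must still carefully track how the two tails balance in order to match the sharp $1/|\zeta|$ factor asserted in the lemma.
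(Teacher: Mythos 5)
Your overall strategy---invoke Lemma~\ref{lem:solution} for an explicit $f_h'$, estimate region by region, then chain upward to $f_h''$ and $f_h'''$---is the right skeleton, and your treatment of $f_h'$ via \eqref{eq:fprimenegv1}--\eqref{eq:fprimeposv1} is essentially what the paper does. But there is a real gap at the step from $f_h'$ to $f_h''$: you propose to ``substitute into \eqref{eq:fppv2}'' and then claim the linear growth in $x$ is absorbed. Look at what \eqref{eq:fppv2} actually produces. For $x\le -1/\delta$ you have $a(x)=\mu$ and $b(x)=-\mu x$, so the first term $-\tfrac{2b(x)}{a(x)}f_h'(x)$ has magnitude comparable to $|x|\,|f_h'(x)|$, and the second term $\tfrac{2}{a(x)}\bigl(\E h(Y_S(\infty))-h(x)\bigr)$ has magnitude comparable to $1+|x|$. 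The target bound for $f_h''$ on $x\le -\zeta$ is the \emph{constant} $(C/\mu)(1+1/|\zeta|)$, with no compensating envelope. Neither term in \eqref{eq:fppv2} is individually $O(1)$ as $x\to-\infty$; the boundedness of $f_h''$ comes only from cancellation between them, and your sketch gives no mechanism for extracting that cancellation. The same problem recurs on $[-\zeta,\infty)$, where $|f_h'(x)|$ itself grows like $x/(\mu|\zeta|)$ but the claimed bound on $f_h''$ is the constant $C/(\mu|\zeta|)$. The phrase ``absorbed into the Gaussian envelope'' does not repair this, because the target estimate has no envelope.

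The paper's proof resolves exactly this difficulty by not using \eqref{eq:fppv2} to bound $f_h''$ at all. Instead it appeals to Lemma~\ref{lem:handle}, which---after verifying \eqref{eq:mlimits} and \eqref{eq:plimits}---gives the alternative integral representations \eqref{eq:hf21} and \eqref{eq:hf22} for $f_h''$. These have an integrating factor $e^{\int_0^y 2b/a}$ against an integrand involving $h'$, $a'$, and $(2b/a)'f_h'$, each of which is genuinely bounded (or can be paired with the integrating factor and Lemma~\ref{lem:main}). That is where the constant bound comes from; it is an integration-by-parts identity, not a term-by-term estimate of \eqref{eq:fppv2}. The same device recurs in the bound on $f_h'''$: when you differentiate \eqref{eq:fppv2} you inherit a term $\tfrac{2b(x)}{a(x)}f_h''(x)$, and naively multiplying your constant bound on $f_h''$ by $|b(x)/a(x)|\sim |x|$ again gives linear growth. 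The paper instead multiplies the \emph{integral} form of $f_h''$ from \eqref{eq:fppboundprocessed1}--\eqref{eq:fppboundprocessed2} by $|2b(x)/a(x)|$ and applies \eqref{eq:main1}--\eqref{eq:main2}, which is how the $|x|$ factor is absorbed. You need this same maneuver; without it your proof as sketched cannot deliver the stated bounds.

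A smaller but real issue is that you also need to verify the hypotheses of Lemma~\ref{lem:handle}, namely the vanishing of $\tfrac{2b(x)}{a(x)}f_h'(x)e^{\int_0^x 2b/a}$ at $\pm\infty$; the paper uses \eqref{DS:ab}, \eqref{DS:pdef} and the already-established bound \eqref{eq:inlineder1} on $f_h'$ to check this. Once you adopt the Lemma~\ref{lem:handle} route that verification is a necessary precondition, not an optional aside.
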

\noindent The gradient bounds in Lemma~\ref{lem:gb} involve only the first three derivatives of $f_h(x)$, and are not sufficient for us. We require the following bounds on the fourth derivative (when it exists). These are proved in Appendix~\ref{app:eterm}.
\begin{lemma}
\label{lem:eterm}
Fix $h(x) \in W_2$ with $h(0)=0$, and let $f_h(x)$ be the solution to the Poisson equation \eqref{eq:poisson} from Lemma~\ref{lem:gb}. Consider only those $x \in \R$ such that $x = \delta (k - R)$ for some $k \in \Z_+$. Then there exists a constant $C>0$ independent of $\lambda, n$, and $\mu$, such that for all $n \geq 1, \lambda > 0$, and $\mu > 0$ satisfying $1 \leq R < n$,
\begin{align}
\abs{f_h'''(x-) - f_h'''(y)} \leq&\ \frac{C\delta}{\mu } \bigg[ 1(x \leq -\zeta)(1 + \abs{x})\Big(1 + \frac{1}{\abs{\zeta}}\Big)\notag \\
& \hspace{1cm} + 1(x \geq -\zeta+\delta) (1+ \abs{\zeta})\bigg], \quad  y \in (x-\delta, x) \label{eq:eboundleft}
\end{align}
and 
\begin{align}
&\ \abs{f_h'''(x-) - f_h'''(y)}  \notag \\
\leq&\ \frac{C\delta}{\mu } \bigg[ 1(x \leq -\zeta-\delta)(1 + \abs{x})\Big(1 + \frac{1}{\abs{\zeta}}\Big) + 1(x \geq -\zeta) (1+ \abs{\zeta}) \notag \\
& \hspace{1cm}+ \frac{1}{\delta}\Big(1 + \frac{1}{\abs{\zeta}}\Big)1(x\in \{-1/\delta, -\zeta\}) \bigg], \quad y \in (x, x + \delta). \label{eq:eboundright}
\end{align}
\end{lemma}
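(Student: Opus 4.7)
The plan is to start from the explicit formula \eqref{eq:fppp} for the third derivative, which in our setting reads
\begin{align*}
f_h'''(z) =&\ -\Big(\frac{2b(z)}{a(z)}\Big)' f_h'(z) - \frac{2b(z)}{a(z)} f_h''(z) \\
&- \frac{2}{a(z)}h'(z) - \frac{2a'(z)}{a^2(z)}\big(\EE h(Y_S(\infty)) - h(z)\big),
\end{align*}
and then to exploit the piecewise-affine structure of $b, a$: their only breakpoints are at $z = -1/\delta$ and $z = -\zeta$, and on each of the three resulting intervals all coefficients are smooth with $a(z) \geq \mu$ bounded away from zero. Inside a smooth interval, $f_h'''$ is itself Lipschitz — obtained by differentiating the above identity once more and invoking Lemma~\ref{lem:gb} for $f_h', f_h'', f_h'''$ together with the Lipschitz hypotheses $h, h' \in \lipone$. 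At the two breakpoints, $f_h'''$ has a pure jump inherited from the discontinuities of $(2b/a)'$ and $2a'/a^2$, since both $2b/a$ and $2/a$ can be checked to be continuous there from \eqref{DS:bdef}--\eqref{DS:adef}.

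For \eqref{eq:eboundleft}, the interval $(x-\delta, x)$ never crosses a breakpoint: the breakpoints are themselves grid points, and $f_h'''(x-)$ denotes the left limit. Hence the bound reduces to $\delta \cdot \sup_{z \in (y, x)} |(d/dz) f_h'''(z)|$, and I would estimate this term by term. In the region $z \leq -\zeta$ the dominant contribution is $(d/dz)[(2b/a)\,f_h'']$, of size $|b'|\,|f_h''| + |b|\,|f_h'''| \lesssim (1+|x|)(1+1/|\zeta|)/\mu$ by Lemma~\ref{lem:gb}; in the region $z \geq -\zeta$ the drift $b \equiv \mu\zeta$ is constant and $a' \equiv 0$, so only the $h'$ piece and the $|b||f_h'''|$ piece survive, producing the $(1+|\zeta|)/\mu$ shape. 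The factor $\EE h(Y_S(\infty)) - h(z)$ is of order $1 + |z| + 1/|\zeta|$ via Lipschitz continuity of $h$ and a moment bound on $Y_S(\infty)$ (cf.\ \eqref{DS:fbound7}); since $a'(z) = 0$ outside $(-1/\delta, -\zeta)$, the $1/|\zeta|$ piece only appears in the middle region, where it is absorbed into the $(1+|x|)(1+1/|\zeta|)$ prefactor.

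The bound \eqref{eq:eboundright} follows from the same smooth-regime estimate on $(x, y)$ combined with the splitting $|f_h'''(x-) - f_h'''(y)| \leq |f_h'''(x-) - f_h'''(x+)| + |f_h'''(x+) - f_h'''(y)|$, where the first piece vanishes unless $x \in \{-1/\delta, -\zeta\}$. At those two breakpoints the jump equals $[(2b/a)']_{x-}^{x+} f_h'(x) + [2a'/a^2]_{x-}^{x+}\,(\EE h(Y_S(\infty)) - h(x))$, and a direct algebraic computation from \eqref{DS:bdef}--\eqref{DS:adef} shows that each bracketed discontinuity is $O(1)$; combined with $|f_h'(x)| \leq C(1+1/|\zeta|)/\mu$ from Lemma~\ref{lem:gb}, this gives a jump bound of $C(1+1/|\zeta|)/\mu$, which matches the $\tfrac{C\delta}{\mu}\cdot\tfrac{1}{\delta}(1+1/|\zeta|)$ indicator term in \eqref{eq:eboundright}. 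The main obstacle is the bookkeeping in the smooth regime: differentiating the identity for $f_h'''$ expands into many terms, each changing size between the three regions, and one must carefully check that the products of coefficient growths with the Lemma~\ref{lem:gb} bounds deliver exactly the claimed $(1+|x|)(1+1/|\zeta|)$ and $(1+|\zeta|)$ prefactors with no stray factors; the breakpoint-jump computation itself is short algebra once that has been done.
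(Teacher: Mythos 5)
Your approach is essentially the one the paper takes, just packaged a bit differently: the paper writes $f_h'''$ via \eqref{eq:fppp} as a sum of products, telescopes each product difference $|u(y)v(y)-u(x)v(x)|\le |u(y)-u(x)||v(y)|+|u(x)||v(y)-v(x)|$ in \eqref{eq:lipf}, and then proves the two auxiliary Lemmas~\ref{lem:llminus} and~\ref{lem:llplus} bounding each pair of terms by pulling a factor of $\delta$ out of an integral of the relevant derivative. Your version differentiates the whole expression once to get $f_h''''$ on the smooth pieces, and the product rule reproduces the same terms; the key shared observation is that for grid points $x$ the interval $(x-\delta,x)$ cannot contain a breakpoint (so only the smooth estimate appears in \eqref{eq:eboundleft}), while $(x,x+\delta)$ abuts a possible breakpoint at $x$ itself, yielding the extra $\frac{1}{\delta}(1+1/|\zeta|)\,1(x\in\{-1/\delta,-\zeta\})$ term in \eqref{eq:eboundright}. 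Both routes rest on the same ingredients: the gradient bounds of Lemma~\ref{lem:gb}, the explicit forms of $r=2b/a$, $r'$, $r''$, $a$, $a'$, and the moment bound \eqref{DS:fbound7}.

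One small caution: your remark that ``each bracketed discontinuity is $O(1)$'' is not literally correct for $[2a'/a^2]_{x-}^{x+}$. That jump is of order $\delta/\mu$ (times a $1/(2+\delta x)^2$ factor), not $O(1)$, and it is paired against $|\EE h(Y_S(\infty))-h(x)|$ which grows like $|x|$ at $x=-1/\delta$ (size $1/\delta$) and like $|\zeta|$ at $x=-\zeta$. The bound $C(1+1/|\zeta|)/\mu$ for the jump only emerges after the cancellation $\delta|x|/(2+\delta x)\le 1$, which is precisely \eqref{eq:ap1} and is also what the paper invokes at the corresponding spot in the proof of \eqref{eq:epbound5}. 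Since you defer to ``direct algebraic computation'' and the computation does close, this is a wording imprecision rather than a gap, but it is the one place where the accounting cannot be waved through with crude $O(1)$ estimates.
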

\begin{remark}
The upper bound in \eqref{eq:eboundright} has an extra term compared to the bound in \eqref{eq:eboundleft}. This terms is the result of the discontinuity of $f_h'''(x)$ at $x = -1/\delta$ and $x = -\zeta$.
\end{remark}

\subsection{Taylor Expansion}
\label{sec:DStaylor}
In this section we perform a Taylor expansion on $G_{\tilde X} f_h(x)$ to get a handle on the difference $G_{\tilde X} f_h(x) - G_{Y_S} f_h(x)$. The Taylor expansion here is similar to the one in Section~\ref{sec:ktaylor}, except that now we expand to four terms, whereas the expansion in Section~\ref{sec:ktaylor} was done only up to three terms. Lemma~\ref{lem:gb} guarantees that $f_h''(x)$ is absolutely continuous, and that $f_h'''(x)$ is continuous everywhere except the points $x = -1/\delta$, and $x = -\zeta$. We write $f_h'''(x-)$ to denote $\lim_{u \uparrow x}f_h'''(u)$. We first define 
\begin{align}
\tilde \epsilon_1(x) =&\ \frac{1}{2} \int_x^{x+\delta} (x+\delta -y)^2(f_h'''(y)-f_h'''(x-))dy, \quad x \in \R, \label{DS:eps1def} \\
\tilde \epsilon_2(x) =&\  -\frac{1}{2} \int_{x-\delta}^{x} (y-(x-\delta))^2(f_h'''(y)-f_h'''(x-))dy, \quad x \in \R. \label{DS:eps2def}
\end{align}
Now observe that
\begin{align*}
f_h(x+\delta) -f_h(x) =&\  f_h'(x) \delta  +  \frac{1}{2}\delta^2 f_h''(x) +  \int_x
^{x+\delta} (x+\delta -y)(f_h''(y)-f_h''(x))dy  \\
=&\  f_h'(x) \delta  +  \frac{1}{2}\delta^2 f_h''(x) + \frac{1}{6}\delta^3 f_h'''(x-) \\
&+ \frac{1}{2} \int_x
^{x+\delta}  (x+\delta -y)^2(f_h'''(y)-f_h'''(x-))dy  \\
=&\  f_h'(x) \delta  + \frac{1}{2}\delta^2 f_h''(x) + \frac{1}{6}\delta^3 f_h'''(x-)+  \tilde \epsilon_1(x),
\end{align*}
where the first equality is the same as in \eqref{CA:ktaylor1}. 
Similarly, one can check that
\begin{align*}
 (f_h(x-\delta) -f_h(x))
&= -f_h'(x) \delta  +  \frac{1}{2}\delta^2 f_h''(x)-\frac{1}{6}\delta^3 f_h'''(x-)  + \tilde \epsilon_2(x).
\end{align*}
Recall from \eqref{DS:abk}  that for any $k \in \Z_+$, and $x = x_k = \delta(k -
R)$,  $b(x) = \delta(\lambda - d(k))$ and $a(x) = \delta(\lambda + d(k) 1(k > 0))$. Therefore, 
\begin{align}
  G_{\tilde X}f_h(x) =&\ \lambda  \delta f_h'(x) + \lambda \frac{1}{2}\delta^2 f_h''(x) + \frac{1}{6}\lambda \delta^3 f_h'''(x-) + \lambda \tilde \epsilon_1(x) \notag \\
  & - d(k) \delta f_h'(x) + d(k) \frac{1}{2} \delta^2 f_h''(x) -d(k) \frac{1}{6}\delta^3 f_h'''(x-) + d(k)
  \tilde \epsilon_2(x) \notag \\
=&\ b(x) f_h'(x) + (\lambda +d(k)1(x \geq -1/\delta)) \frac{1}{2} \delta^2 f_h''(x) \notag \\
& + \frac{1}{6}\delta^3(\lambda - d(k))f_h'''(x-) + \lambda \tilde \epsilon_1(x)+ (\lambda-\frac{1}{\delta}b(x) )
  \tilde \epsilon_2(x) \notag \\
=&\ G_Y f_h(x) + \frac{1}{6}\delta^2b(x)f_h'''(x-) + \lambda(\tilde \epsilon_1(x)+\tilde \epsilon_2(x)) - \frac{1}{\delta}b(x)\tilde \epsilon_2(x), \label{DS:taylor}
\end{align}
and
\begin{align}
\Big| \E h(\tilde X(\infty)) - \E h(Y_S(\infty)) \Big| \leq&\ \frac{1}{6} \delta^2 \E \Big[ \big|f_h'''(\tilde X(\infty)-)b(\tilde X(\infty)) \big| \Big] + \lambda \E\Big[ \big| \tilde \epsilon_1(\tilde X(\infty))\big|\Big] \notag \\
&+ \lambda \E \Big[ \big| \tilde \epsilon_2(\tilde X(\infty))\big|\Big] +  \frac{1}{\delta}  \E\Big[ \big|b(\tilde X(\infty)) \tilde \epsilon_2(\tilde X(\infty))\big|\Big]. \label{DS:first_bounds}
\end{align}
The Taylor expansion in \eqref{DS:taylor} reveals the reason this approximation is better than the one in Chapter~\ref{chap:erlangAC}. This approximation is able to capture the entire second order term in the Taylor expansion of $G_{\tilde X} f_h(x)$ (i.e.\ all the terms that correspond to $f_h'(x)$ and $f_h''(x)$). In contrast, the constant diffusion coefficient approximation in Chapter~\ref{chap:erlangAC} uses a $a(0) = 2\mu$ for the diffusion coefficient. Comparing \eqref{DS:first_bounds} to \eqref{eq:first_bounds}, we see that there is an extra error term of the form 
\begin{align*}
\frac{1}{2}\delta^2\EE \Big[ \big| f_h''(\tilde X(\infty)) \big( a(\tilde X(\infty)) - a(0) \big)   \big| \Big] = \frac{1}{2}\delta^2\EE \Big[ \big| f_h''(\tilde X(\infty)) b(\tilde X(\infty))  \big| \Big],
\end{align*}
which turns out to be on the order of $\delta$, not $\delta^2$. We are now ready to prove Theorem~\ref{thm:w2}.
\section{Proof of Theorem~\ref{thm:w2} (Faster convergence rates)}
\label{sec:DSproofW}
Fix $h(x) \in {W_2}$ with $h(0) = 0$, and let $f_h(x)$ be as in Lemma~\ref{lem:gb}. We will focus on bounding \eqref{DS:first_bounds}, which we recall here as
\begin{align}
\Big| \E h(\tilde X(\infty)) - \E h(Y_S(\infty)) \Big| \leq&\ \frac{1}{6} \delta^2 \E \Big[ \big|f_h'''(\tilde X(\infty)-)b(\tilde X(\infty)) \big| \Big] + \lambda \EE\Big[ \big| \tilde \epsilon_1(\tilde X(\infty))\big|\Big] \notag \\
&+ \lambda \E\Big[ \big| \tilde \epsilon_2(\tilde X(\infty))\big|\Big] +  \frac{1}{\delta}  \E\Big[ \big|b(\tilde X(\infty)) \tilde \epsilon_2(\tilde X(\infty))\big|\Big], \label{eq:second_bounds}
\end{align}
where
\begin{align*}
\tilde \epsilon_1(x) =&\ \frac{1}{2} \int_x^{x+\delta} (x+\delta -y)^2(f_h'''(y)-f_h'''(x-))dy, \\
\tilde \epsilon_2(x) =&\  -\frac{1}{2} \int_{x-\delta}^{x} (y-(x-\delta))^2(f_h'''(y)-f_h'''(x-))dy.
\end{align*}
\begin{proof}[Proof of Theorem~\ref{thm:w2}]
Throughout the proof we assume that $R \geq 1$, or equivalently, $\delta \leq 1$. We will use $C > 0$ to denote a generic constant that may change from line to line, but does not depend of $\lambda,n$, and $\mu$. Suppose we know that for some positive constants $c_1, \ldots, c_4 > 0$ independent of $\lambda, n$, and $\mu$, 
\begin{align}
&\ \Big| \E h(\tilde X(\infty)) - \E h(Y_S(\infty)) \Big| \leq  \delta^2 (c_1 + c_2  +c_3+ \delta c_4)+ C\delta(\pi_0+\pi_n), \label{inl:thm}
\end{align}
where $\{\pi_k\}_{k=0}^{\infty}$ is the distribution of $X(\infty)$. Then to prove the theorem we would only need to show that 
\begin{align*}
 \pi_0, \pi_n \leq C \delta.
\end{align*}
One way to prove this is to appeal to Theorem~\ref{thm:erlangCK}, which states that the Kolmogorov distance 
\begin{align*}
d_K(\tilde X(\infty), Y(\infty)) = \sup_{a \in \R} \big| \Prob( \tilde X(\infty) \leq a) - \Prob( Y(\infty) \leq a) \big| \leq 156 \delta
\end{align*}
for all $n \geq 1$ and $1 \leq R < n$, where $Y(\infty)$ is the random variable with density $\nu(x)$ defined in \eqref{CA:stdden}. We would then have that 
\begin{align}
\pi_n =&\ \Prob( -\zeta - \delta/2 \leq \tilde X(\infty) \leq  -\zeta + \delta/2) \notag \\
=&\ \Prob( -\zeta - \delta/2 \leq Y(\infty) \leq  -\zeta + \delta/2)  \notag \\
&+ \Prob( -\zeta - \delta/2 \leq \tilde X(\infty) \leq  -\zeta + \delta/2) - \Prob( -\zeta - \delta/2 \leq Y(\infty) \leq  -\zeta + \delta/2)  \notag  \\
\leq&\ \delta \norm{\nu} + 2d_K(\tilde X(\infty), Y(\infty)) \leq \delta C, \label{eq:pikolmbound}
\end{align}
where in the last inequality we apply Lemma~\ref{lem:densboundC}, which states that $\nu(x)$ is always bounded by $\sqrt{2/\pi}$. The same argument can be used to bound $\pi_0$. 

To conclude the theorem it remains to verify \eqref{inl:thm}, which we do by bounding each of the terms on the right side of \eqref{eq:second_bounds} individually. We recall here that the support of $\tilde X(\infty)$ is a $\delta$-spaced grid, and in particular this grid contains the points $-1/\delta$ and $-\zeta$. In the bounds that follow, we will often consider separately the cases where $\tilde X(\infty) \leq -\zeta$, and $\tilde X(\infty) \geq -\zeta+\delta$. We recall that 
\begin{align*}
b(x) = \mu \big((x+\zeta)^- + \zeta\big),
\end{align*}
and apply the gradient bound \eqref{DS:WCder3} together with \eqref{eq:lb1} and \eqref{eq:lb3} of Lemma~\ref{lem:lastbounds} to see that
\begin{align*}
\EE \Big[ \big|f_h'''(\tilde X(\infty)-)b(\tilde X(\infty)) \big| \Big] \leq &\ C\Big(1 + \frac{1}{\abs{\zeta}}\Big)\EE \Big[\big|\tilde X(\infty) 1(\tilde X(\infty) \leq -\zeta) \big| \Big] \\
&+ C\abs{\zeta}\Prob(\tilde X(\infty) \geq -\zeta+\delta) \\
\leq&\ C(\sqrt{2} + 2) + 2C =: c_1.
\end{align*}
To bound the next term, we use \eqref{eq:eboundright} from Lemma~\ref{lem:eterm} to see that
\begin{align*}
\lambda \E\Big[ \big| \tilde \epsilon_1(\tilde X(\infty))\big|\Big] \leq&\ \frac{\mu }{2} \EE \bigg[\int_{\tilde X(\infty)}^{\tilde X(\infty)+\delta} \abs{f_h'''(\tilde X(\infty)-)-  f_h'''(y)} dy \bigg]\\
\leq&\ C\delta^2 \EE \bigg[ 1(\tilde X(\infty) \leq -\zeta -\delta)\Big(1 + \big|\tilde X(\infty))\big|\Big)\Big(1 + \frac{1}{\abs{\zeta}}\Big)  \\
&\hspace{1cm}+ 1(\tilde X(\infty) \geq -\zeta) (1+ \abs{\zeta})\\
&\hspace{1cm}+ \frac{1}{\delta}\Big(1 + \frac{1}{\abs{\zeta}}\Big)1(\tilde X(\infty)\in \{-1/\delta, -\zeta\})\bigg]\\
\leq&\ C\delta^2 \bigg[\EE \Big[ \big|\tilde X(\infty)1(\tilde X(\infty) \leq -\zeta -\delta) \big|\Big]\Big(1 + \frac{1}{\abs{\zeta}}\Big)  \\
&\hspace{1cm} + \Prob(\tilde X(\infty) \leq -\zeta -\delta)\Big(1 + \frac{1}{\abs{\zeta}}\Big) \\
&\hspace{1cm} + \Prob(\tilde X(\infty) \geq -\zeta) (1+ \abs{\zeta})+\frac{1}{\delta}\Big(1 + \frac{1}{\abs{\zeta}}\Big)(\pi_0 + \pi_n)\bigg],
\end{align*}
where in the last inequality we used the fact that $\Prob(\tilde X(\infty)=-1/\delta)$ and $\Prob(\tilde X(\infty)=-\zeta)$ equal $\pi_0$ and $\pi_n$, respectively. We first use \eqref{eq:lb1}, \eqref{eq:lb3}, and \eqref{eq:idle_prob_v2} to see that
\begin{align*}
&\lambda \EE\Big[ \big| \tilde \epsilon_1(\tilde X(\infty))\big|\Big]\\
 \leq&\ C\delta^2 \Big((\sqrt{2} + 2) + (1 + 3) +(1 + 2)\Big)+ C\delta\Big(1 + \frac{1}{\abs{\zeta}}\Big)(\pi_0+\pi_n) \\ 
\leq&\ C\delta^2 + C\delta(\pi_0+\pi_n) + \frac{C\delta}{\abs{\zeta}} (\pi_0+\pi_n)\\
=&\ C\delta^2 + C\delta(\pi_0+\pi_n) + \frac{C\delta}{\abs{\zeta}} \pi_0 1(\abs{\zeta} \geq 1) + \frac{C\delta}{\abs{\zeta}} \pi_0 1(\abs{\zeta} \leq 1) + \frac{C\delta}{\abs{\zeta}} \pi_n \\
\leq&\ C\delta^2 + C\delta(\pi_0+\pi_n) + \frac{C\delta}{\abs{\zeta}} \pi_0 1(\abs{\zeta} \leq 1) + \frac{C\delta}{\abs{\zeta}} \pi_n.
\end{align*}
Next, we apply the bounds on $\pi_0$ and $\pi_n$ from \eqref{DS:pi0} and \eqref{DS:pin} to conclude that
\begin{align*}
\lambda \EE\Big[ \big| \tilde \epsilon_1(\tilde X(\infty))\big|\Big] \leq&\  c_2\delta^2 + C\delta(\pi_0+\pi_n).
\end{align*}
We move on to bound the next term in \eqref{eq:second_bounds}. Using \eqref{eq:eboundleft} from Lemma~\ref{lem:eterm},
\begin{align*}
\lambda \EE\Big[ \big| \tilde \epsilon_2(\tilde X(\infty))\big|\Big] \leq&\ \frac{\mu }{2} \EE \bigg[\int_{\tilde X(\infty)-\delta}^{\tilde X(\infty)} \abs{f_h'''(\tilde X(\infty)-)-  f_h'''(y)} dy \bigg]\\
\leq&\ C\delta^2 \EE\Big[ 1(\tilde X(\infty) \leq -\zeta)\Big(1 + \big|\tilde X(\infty)\big|\Big)\Big(1+\frac{1}{\abs{\zeta}}\Big) \\
&\hspace{1.5cm}+ 1(\tilde X(\infty) \geq -\zeta+\delta) (1+ \abs{\zeta})\Big]\\
\leq&\ C\delta^2 \bigg[\Prob(\tilde X(\infty) \leq -\zeta)\Big(1+\frac{1}{\abs{\zeta}}\Big) \\
&\hspace{1.5cm} + \E\Big[\big|\tilde X(\infty)1(\tilde X(\infty) \leq -\zeta)\big| \Big]\Big(1+\frac{1}{\abs{\zeta}}\Big) \\
& \hspace{1.5cm}+ \Prob(\tilde X(\infty) \geq -\zeta+\delta) (1+ \abs{\zeta})\bigg].
\end{align*}
Now \eqref{eq:lb1}, \eqref{eq:lb3}, and \eqref{eq:idle_prob_v2} imply that
\begin{align*}
\lambda \EE\Big[ \big| \tilde \epsilon_2(\tilde X(\infty))\big|\Big] \leq&\ C\delta^2 \big(4 + (\sqrt{2}+2)+ (1+2)\big) =: c_3 \delta^2.
\end{align*}
For the last term in \eqref{eq:second_bounds}, we use the form of $b(x)$ together with \eqref{eq:eboundleft} from Lemma~\ref{lem:eterm} to see that
\begin{align*}
&\frac{1}{\delta} \EE\Big[ \big|b(\tilde X(\infty)) \tilde \epsilon_2(\tilde X(\infty))\big|\Big]\\
 \leq&\ \frac{\delta}{2} \EE \bigg[\big|b(\tilde X(\infty))\big|\int_{\tilde X(\infty)-\delta}^{\tilde X(\infty)} \abs{f_h'''(\tilde X(\infty)-)-  f_h'''(y)} dy \bigg]\\
\leq&\ C\delta^3 \bigg[ \EE \Big[\big|\tilde X(\infty)(1 + \big|\tilde X(\infty)\big|) 1(\tilde X(\infty) \leq -\zeta)\big|\Big] \Big(1+\frac{1}{\abs{\zeta}}\Big) \notag \\
& \hspace{1cm} + \Prob(\tilde X(\infty) \geq -\zeta+\delta)\abs{\zeta} (1+ \abs{\zeta})\bigg]\\
\leq&\ C\delta^3 \bigg[ \EE \Big[\big|\tilde X(\infty) 1(\tilde X(\infty) \leq -\zeta)\big|\Big]\Big(1+\frac{1}{\abs{\zeta}}\Big) \\
&\hspace{1cm}+ \EE \Big[\big(\tilde X(\infty)\big)^21(\tilde X(\infty) \leq -\zeta)\big|\Big] \Big(1+\frac{1}{\abs{\zeta}}\Big) \notag \\
& \hspace{1cm} + \Prob(\tilde X(\infty) \geq -\zeta+\delta)(\abs{\zeta}+\abs{\zeta}^2 )\bigg].
\end{align*}
We apply \eqref{eq:lb1}--\eqref{eq:lb4} from Lemma~\ref{lem:lastbounds} to conclude that
\begin{align*}
\frac{1}{\delta} \EE\Big[ \big|b(\tilde X(\infty)) \tilde \epsilon_2(\tilde X(\infty))\big|\Big] \leq&\ C\delta^3 \big( (\sqrt{2}+2) + 9 +2 + 20\big) =: c_4\delta^3.
\end{align*}
Therefore, we have shown that for all $R \geq 1$, and $h(x) \in {W_2}$ with $h(0)=0$, \eqref{inl:thm} holds, concluding the proof of Theorem~\ref{thm:w2}.
\end{proof}

\section{Chapter Appendix}

\subsection{The $W_2$ Metric}
\label{sec:DSappendix}
Let $W_2$ be the class of functions defined in \eqref{eq:spacew2}, i.e.\ the class of differentiable functions $h(x): \R \to \R$ such that both $h(x)$ and $h'(x)$ belong to $\lipone$. 
For two random variables $U$ and $V$, define their $W_2$ distance to be 
\begin{equation}
  \label{eq:dW2}
  d_{W_2}(U, V) = \sup_{h\in {W_2}} \abs{\E[h(U)] -\EE[h(V)]},
\end{equation}
and recall the Kolmogorov distance $d_K(U, V)$ defined in \eqref{eq:kolmogorov}. In this section we prove the following relationship between the $W_2$ and Kolmogorov distances. This lemma is a modified version of \cite[Proposition 1.2]{Ross2011}. 
\begin{lemma}
\label{lem:pmetrics}
Let $U, V$ be two random variables, and assume that $V$ has a density bounded by some constant $C > 0$. If $d_{W_2}(U, V) < 4C$, then 
\begin{equation*}
  d_K(U, V) \le 5\Big(\frac{C}{2}\Big)^{2/3} d_{W_2}(U, V)^{1/3}.
\end{equation*}
\end{lemma}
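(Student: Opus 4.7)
The plan is to adapt the standard smoothing argument that passes from $d_W$ to $d_K$ (as in \cite[Proposition 1.2]{Ross2011}) to the richer metric $d_{W_2}$. The key new wrinkle is that the class $W_2$ demands both $h$ and $h'$ to lie in $\lipone$: this forces any length-$\alpha$ approximation to the indicator $\mathbf{1}_{(-\infty,a]}$ to have amplitude of order $\alpha^{2}$ rather than order $1$. This is precisely what replaces the $1/2$ exponent in Ross's bound by the $1/3$ exponent asserted here.

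First, I would fix a template $\chi\colon\R\to[0,1]$ with $\chi\equiv 1$ on $(-\infty,-1]$, $\chi\equiv 0$ on $[1,\infty)$, $\chi$ monotonically decreasing on $[-1,1]$, and with both $\chi$ and $\chi'$ in $\lipone$. A concrete piecewise-quadratic choice is $\chi(y)=1-(y+1)^{2}/2$ on $[-1,0]$ and $\chi(y)=(y-1)^{2}/2$ on $[0,1]$, which satisfies $|\chi'|\le 1$ and has $\chi'$ being $1$-Lipschitz on $\R$. For $a\in\R$ and $\alpha\in(0,1]$ set
\begin{equation*}
h_{a,\alpha}(x) \;=\; \alpha^{2}\,\chi\!\Big(\tfrac{x-a-\alpha}{\alpha}\Big), \qquad x\in\R.
\end{equation*}
The scaling gives $|h_{a,\alpha}'|\le \alpha\le 1$ and $|h_{a,\alpha}''|\le 1$, so $h_{a,\alpha}\in W_2$. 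Since $h_{a,\alpha}\equiv \alpha^{2}$ on $(-\infty,a]$, vanishes on $[a+2\alpha,\infty)$, and lies in $[0,\alpha^{2}]$ in between,
\begin{equation*}
\mathbf{1}_{(-\infty,a]}(x) \;\le\; \alpha^{-2}\,h_{a,\alpha}(x) \;\le\; \mathbf{1}_{(-\infty,a+2\alpha]}(x).
\end{equation*}

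Next I would chain this sandwich with the density bound on $V$: for any $\alpha\in(0,1]$,
\begin{align*}
\Prob(U\le a)
&\le \E\!\left[\alpha^{-2}h_{a,\alpha}(U)\right] \\
&\le \E\!\left[\alpha^{-2}h_{a,\alpha}(V)\right] + \alpha^{-2}d_{W_2}(U,V) \\
&\le \Prob(V\le a+2\alpha) + \alpha^{-2}d_{W_2}(U,V) \\
&\le \Prob(V\le a) + 2\alpha C + \alpha^{-2}d_{W_2}(U,V).
\end{align*}
Applying the symmetric construction $h_{a-2\alpha,\alpha}$ gives the reverse inequality, and taking the supremum over $a$ yields
\begin{equation*}
d_K(U,V) \;\le\; 2\alpha C + \alpha^{-2}d_{W_2}(U,V), \qquad \alpha\in(0,1].
\end{equation*}

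Finally I would optimize in $\alpha$. The unconstrained minimum is attained at $\alpha^{\star}=(d_{W_2}(U,V)/C)^{1/3}$, where the right hand side equals $3\,C^{2/3}d_{W_2}(U,V)^{1/3}$; the hypothesis $d_{W_2}(U,V)<4C$ keeps $\alpha^{\star}$ in the admissible range after a mild rescaling of the template. Since $3\cdot 2^{2/3}\le 5$, this rewrites as $3\,C^{2/3}d_{W_2}(U,V)^{1/3}\le 5\,(C/2)^{2/3}d_{W_2}(U,V)^{1/3}$, which is the claim. The main obstacle is numerical rather than conceptual: calibrating the template and its rescaling so that the admissible range of $\alpha$ reaches $\alpha^{\star}$ uniformly on the full range $d_{W_2}(U,V)<4C$ while keeping the multiplicative constant no larger than $5$. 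This is routine bookkeeping once the smoothing template is fixed.
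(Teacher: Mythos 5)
Your approach matches the paper's in spirit — smooth the indicator with a piecewise-quadratic bump whose amplitude is constrained by the $W_2$ derivative bounds, compare against the density, and optimize the bandwidth. In fact your template $\chi$ and the paper's $h_\epsilon$ are the same object under the reparametrization $\alpha=\epsilon/2$. However, the proposal as written has a genuine gap in the range $C < d_{W_2}(U,V) < 4C$.

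Two issues combine to cause the gap. First, you bound the density contribution crudely by $\Prob(V\le a+2\alpha)-\Prob(V\le a)\le 2\alpha C$, whereas the paper bounds it by $\E[h_\epsilon(V)-h(V)]\le C\int_a^{a+\epsilon}h_\epsilon = C\epsilon/2$ (which equals $C\alpha$ in your variables, using the symmetry $\int_{-1}^{1}\chi=1$). This costs you a factor of $2$ exactly where the constant is tight. Second, your unconstrained minimizer $\alpha^\star=(d_{W_2}/C)^{1/3}$ is admissible only when $d_{W_2}\le C$; for $C<d_{W_2}<4C$ the constrained minimum over $\alpha\in(0,1]$ is attained at $\alpha=1$, giving $2C+d_{W_2}$, which exceeds $5(C/2)^{2/3}d_{W_2}^{1/3}$ as soon as $d_{W_2}>C$ (at $d_{W_2}=4C$ one gets $6C$ versus $5C$). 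The ``mild rescaling'' you invoke does not rescue this: stretching the template to $\chi_s(y)=\chi(y/s)$ widens the admissible range to $\alpha\le s$ but also widens the support to $2s\alpha$, and with the crude density bound the optimized constant becomes $3s^{2/3}C^{2/3}d_{W_2}^{1/3}$; taking the smallest $s$ that covers $d_{W_2}<4C$ (namely $s=\sqrt{2}$) yields $6(C/2)^{2/3}d_{W_2}^{1/3}$, not $5$. The paper avoids both difficulties by \emph{not} optimizing: it sets $\epsilon=(2d_{W_2}/C)^{1/3}$, which lies in $(0,2)$ precisely because $d_{W_2}<4C$, and then the integral density bound makes the two resulting terms sum to exactly $5(C/2)^{2/3}d_{W_2}^{1/3}$. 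Switching to the integral bound and to the paper's explicit choice of bandwidth closes your gap.
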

\noindent We wish to combine Lemma~\ref{lem:pmetrics} with Theorem~\ref{thm:w2}, but to do so we need a bound on the density of $Y_S(\infty)$.
\begin{lemma} \label{lem:densbound}
Let $\nu_S(x) : \R \to \R$ be the density of $Y_S(\infty)$, whose form is given in \eqref{eq:stdden}. Then for all $n \geq 1, \lambda > 0$, and $\mu > 0$ satisfying $1  \geq R < n $, 
\begin{align*}
\nu_S(x) \leq 4, \quad x \in \R.
\end{align*}
\end{lemma}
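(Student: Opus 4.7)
The plan is to exploit unimodality: first show $\nu_S$ has a unique maximum and then bound it there. Computing
\begin{equation*}
\frac{\nu_S'(x)}{\nu_S(x)} = \frac{2b(x) - a'(x)}{a(x)}
\end{equation*}
on each of the three pieces in the definitions of $a$ and $b$, one checks that this ratio is positive on $(-\infty, -1/\delta]$ (where $2b(x) = -2\mu x > 0$ and $a' = 0$), negative on $[-\zeta, \infty)$ (where $2b = -2\mu|\zeta| < 0$ and $a' = 0$), and equal to $-(2x+\delta)/(2+\delta x)$ on the middle piece. Under the hypotheses $\delta \leq 1$ (from $R \geq 1$) and $\zeta \leq 0$ (from $R < n$), the point $x^* := -\delta/2$ satisfies $-1/\delta \leq x^* \leq 0 \leq -\zeta$, lies in the interior of the middle region, and is the unique maximizer of $\nu_S$.

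It therefore suffices to bound $\nu_S(x^*)$. Writing $\nu_S(x) = \kappa g(x)$ with $g(x) = a(x)^{-1}\exp\!\left(\int_0^x 2b(y)/a(y)\,dy\right)$, I would bound $g(x^*)$ from above and $\int_{\R} g$ from below. Since $b(y) \geq 0$ on $[-\delta/2, 0]$, the exponent at $x^*$ is nonpositive, so $g(x^*) \leq 1/a(x^*) = 1/(\mu(2 - \delta^2/2)) \leq 2/(3\mu)$ when $\delta \leq 1$. For the lower bound on the normalizing integral, I would restrict to $x \in [-1, 0]$, which is contained in the middle region precisely because $R \geq 1$ gives $-1/\delta \leq -1$ and $R < n$ gives $-\zeta \geq 0$. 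There $a(x) \leq 2\mu$, and an explicit computation yields the exponent as $(4/\delta^2)[\ln(1 + \delta x/2) - \delta x/2]$. The elementary inequality $\ln(1+u) - u \geq -2u^2$ for $u \in [-1/2, 0]$, obtained by bounding the Taylor tail $\sum_{k\geq 2} |u|^k/k \leq |u|^2/(1-|u|) \leq 2|u|^2$, shows this exponent is at least $-2x^2$. Hence $g(x) \geq (2\mu)^{-1} e^{-2x^2}$ on $[-1, 0]$, and using $x^2 \leq x$ on $[0, 1]$ to get $e^{-2x^2} \geq e^{-2x}$ there,
\begin{equation*}
\int_{\R} g \geq \frac{1}{2\mu}\int_{-1}^{0} e^{-2x^2}\,dx = \frac{1}{2\mu}\int_0^1 e^{-2x^2}\,dx \geq \frac{1}{2\mu}\int_0^1 e^{-2x}\,dx = \frac{1-e^{-2}}{4\mu}.
\end{equation*}

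Combining the two estimates gives $\nu_S(x^*) = g(x^*)/\int g \leq 8/(3(1 - e^{-2}))$, and since $e^2 > 3$ we have $1 - e^{-2} > 2/3$, so this is strictly less than $4$; unimodality then extends the bound to every $x \in \R$. The main obstacle is simply keeping the constants tight enough to land under $4$; if the routing above were too loose, one could shrink the integration interval to a sub-interval of $[-\delta/2, 0]$ and use that $\nu_S$ is nearly Gaussian there with $(\ln g)''(x^*)$ bounded away from zero, but the direct argument has more than enough room as written.
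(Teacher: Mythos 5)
Your proof is correct, and it takes a genuinely different route from the paper. The paper's proof works piecewise: it writes out $\nu_S$ explicitly on each of the three regions $(-\infty,-1/\delta]$, $[-1/\delta,-\zeta]$, $[-\zeta,\infty)$, bounds the normalizing constant for each piece separately (by lower-bounding the normalizing integral with a Taylor expansion of $\log(2+\delta y)$ carrying a Lagrange remainder), and concludes that the density is at most $2$, $4$, and $1$ on the three regions respectively. You instead argue globally: you show $\nu_S$ is log-unimodal by examining the sign of $(2b - a')/a$ on each piece, identify the unique maximizer $x^* = -\delta/2$ in the interior of the middle region, and then bound $\nu_S(x^*) = g(x^*)/\int g$ by an upper bound on $g(x^*)$ and a lower bound on $\int g$ restricted to $[-1,0]$. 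The underlying elementary estimate is similar (a lower bound on $\ln(1+u)-u$), but the unimodality reduction cleanly eliminates the case analysis over regions, at the cost of losing the finer piecewise information the paper's proof provides. Both arguments yield a constant comfortably below $4$, and both rely on $\delta \le 1$ (i.e.\ $R \ge 1$) to keep the relevant interval inside the middle region of $a(x)$.
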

\noindent Now combining Theorem~\ref{thm:w2} with Lemmas~\ref{lem:pmetrics} and \ref{lem:densbound} implies that $d_K\big(\tilde X(\infty), Y_S(\infty) \big)$ converges to zero at a rate of $1/R^{1/3}$. However, we believe this rate to be sub-optimal, and that $d_K\big(\tilde X(\infty), Y_S(\infty) \big)$ actually vanishes at a rate of $1/\sqrt{R}$. This is supported by numerical results in Section~\ref{sec:numeric}.

\begin{proof}[Proof of Lemma~\ref{lem:pmetrics} ]
Fix $a \in \R$ and let $h(x) = 1_{(-\infty, a]}(x)$. Now fix $\epsilon \in (0, 2)$ and define the smoothed version
\begin{align*}
h_{\epsilon}(x) = 
\begin{cases}
1, \quad &x \leq a, \\
-\frac{2}{\epsilon^2}(x-a)^2 + 1, \quad &x \in [a, a + \epsilon/2], \\
\frac{2}{\epsilon^2}\big[x - (a + \epsilon/2) \big]^2 - \frac{2}{\epsilon}(x-a) + \frac{3}{2}, \quad &x \in [a + \epsilon/2, a + \epsilon], \\
0, \quad &x \geq a + \epsilon.
\end{cases}
\end{align*}
Since we chose $\epsilon < 2$, it is not hard to see that 
\begin{align*}
\abs{h_{\epsilon}'(x)} \leq \frac{4}{\epsilon^2}, \quad \abs{h_{\epsilon}''(x)} \leq \frac{4}{\epsilon^2}, \quad x \in \R,
\end{align*}
where $h_{\epsilon}''(x)$ is interpreted as the left derivative of $h_{\epsilon}'(x)$ for $x \in \{a, a+\epsilon/2, a+\epsilon\}$. Therefore, $\frac{\epsilon^2}{4} h_{\epsilon}(x) \in W_2$. Then 
\begin{align*}
\EE h(U) - \EE h(V) =&\ \EE h(U) - \EE h_{\epsilon}(V) + \EE \big[ h_{\epsilon}(V) - h(V)\big]\\
\leq&\ \EE h_{\epsilon}(U) - \EE h_{\epsilon}(V) + C \int_{a}^{a+\epsilon} h_{\epsilon}(x) dx \\
=&\ \EE h_{\epsilon}(U) - \EE h_{\epsilon}(V) + C\epsilon/2 \\
\leq&\ \frac{4}{\epsilon^2} d_{W_2}(U,V) + C\epsilon /2,
\end{align*}
Choose $\epsilon = \Big(\frac{2d_{W_2}(U,V)}{C}\Big)^{1/3}$, which lies in $(0,2)$ by our assumption that $d_{W_2}(U,V) < 4C$. Then
\begin{align*}
\EE h(U) - \EE h(V) \leq 5\Big(\frac{C}{2}\Big)^{2/3} d_{W_2}(U, V)^{1/3}.
\end{align*}
Using the function $\tilde h_{\epsilon}(x) := h_{\epsilon}(x+\epsilon)$, a similar argument can be repeated to show that 
\begin{align*}
 \EE h(V) - \EE h(U) =&\ \EE h(V) - \EE \tilde h_{\epsilon}(V) + \EE \tilde h_{\epsilon}(V) - \EE h(U) \\ 
\leq&\ 5\Big(\frac{C}{2}\Big)^{2/3} d_{W_2}(U, V)^{1/3},
\end{align*}
concluding the proof.
\end{proof}

\begin{proof}[Proof of Lemma~\ref{lem:densbound}]
One can check (see also \eqref{DS:pdef} in Section~\ref{app:DSgradbounds})  that \eqref{eq:stdden} implies 
\begin{align*}
\nu_S(x) =
\begin{cases}
\frac{a_1}{\mu }e^{-x^2}, \quad x \leq -1/\delta,\\
\frac{a_2}{\mu(2 + \delta x)}e^{\frac{2}{\delta^2} [2\log(2 + \delta x) - \delta x]}, \quad x \in [-1/\delta, -\zeta], \\
\frac{a_3}{\mu(2 + \delta \abs{\zeta})}e^{\frac{-2\abs{\zeta} x }{2+\delta \abs{\zeta}}}, \quad x \geq -\zeta,
\end{cases}
\end{align*}
where the constants $a_1, a_2, a_3$ make the $\nu_S(x)$ continuous and integrate to one. To prove that $\nu_S(x)$ is bounded, we need to bound these three constants. We know that 
\begin{align}
&\ a_1 \int_{-\infty}^{-1/\delta} \frac{1}{\mu }e^{-y^2} dy + a_2 \int_{-1/\delta}^{-\zeta} \frac{1}{\mu(2 + \delta y)}e^{\frac{2}{\delta^2} [2\log(2 + \delta y) - \delta y]} dy  \notag \\
&+ a_3 \int_{-\zeta}^{\infty} \frac{1}{\mu(2 + \delta \abs{\zeta})}e^{\frac{-2\abs{\zeta} y }{2+\delta \abs{\zeta}}}dy = 1. \label{eq:densintegral}
\end{align}
We first bound $\nu_S(x)$ when $x \leq -1/\delta$. Since $a_1$ and $a_2$ are chosen to make $\nu_S(x)$ continuous at $x = -1/\delta$, we know that $a_1 e^{-1/\delta^2} = a_2 e^{2/\delta^2}$, or $a_2 = a_1 e^{-3/\delta^2}$. Substituting this into \eqref{eq:densintegral}, we see that

\begin{align*}
a_1 \leq \frac{1}{e^{-3/\delta^2}\int_{-1/\delta}^{-\zeta} \frac{1}{\mu(2 + \delta y)}e^{\frac{2}{\delta^2} [2\log(2 + \delta y) - \delta y]} dy} \leq \frac{2\mu }{e^{-3/\delta^2}\int_{-1/\delta}^{0} e^{\frac{2}{\delta^2} [2\log(2 + \delta y) - \delta y]} dy}.
\end{align*}
The derivative of $e^{\frac{2}{\delta^2} [2\log(2 + \delta y) - \delta y]} $ is positive on the interval $[-1/\delta, 0]$. Therefore, on the interval $[-1/\delta, 0]$, this function achieves its minimum at $y = -1/\delta$, implying that $e^{\frac{2}{\delta^2} [2\log(2 + \delta y) - \delta y]}\geq e^{2/\delta^2}$ for $y \in [-1/\delta, 0]$, and 
\begin{align*}
a_1 \leq \frac{2\mu }{e^{-3/\delta^2}\int_{-1/\delta}^{0} e^{\frac{2}{\delta^2} [2\log(2 + \delta y) - \delta y]} dy} \leq \frac{2\mu }{e^{-3/\delta^2}\int_{-1/\delta}^{0} e^{2/\delta^2} dy} = 2 \mu \delta e^{1/\delta^2}.
\end{align*}
Hence, for $x \leq -1/\delta$, 
\begin{align*}
\nu_S(x) \leq 2 \mu \delta e^{1/\delta^2} \frac{1}{\mu }e^{-x^2} \leq 2\delta  \leq 2,
\end{align*}
where in the last inequality we used the fact that $R \geq 1$, or $\delta \leq 1$. We now bound $\nu_S(x)$ when $x \in [-1/\delta, -\zeta]$. By \eqref{eq:densintegral}, 
\begin{align*}
a_2 \leq \frac{1}{\int_{-1/\delta}^{-\zeta} \frac{1}{\mu(2 + \delta y)}e^{\frac{2}{\delta^2} [2\log(2 + \delta y) - \delta y]} dy} \leq \frac{2\mu }{\int_{-1/\delta}^{0} e^{\frac{2}{\delta^2} [2\log(2 + \delta y) - \delta y]} dy}.
\end{align*}
Using the Taylor expansion 
\begin{align*}
2\log(2 + \delta y) = 2\log(2) + \frac{2}{2}\delta y - \frac{2}{(2 + \xi(\delta y))^2} \frac{(\delta y)^2}{2},
\end{align*}
where $\xi(\delta y) \in [\delta y, 0]$, we see that
\begin{align*}
e^{\frac{2}{\delta^2} [2\log(2 + \delta y) - \delta y]} =&\ e^{\frac{4}{\delta^2} \log(2)} e^{\frac{2}{\delta^2} \Big[ \frac{-\delta^2 y^2}{(2 + \xi(\delta y))^2} \Big] } \geq e^{\frac{4}{\delta^2} \log(2)}e^{-2y^2}, \quad y \in [-1/\delta, 0].
\end{align*}
Therefore, 
\begin{align*}
a_2 \leq \frac{2\mu }{\int_{-1/\delta}^{0} e^{\frac{2}{\delta^2} [2\log(2 + \delta y) - \delta y]} dy} \leq \frac{2\mu }{e^{\frac{4}{\delta^2} \log(2)} \int_{-1}^{0} e^{-2y^2}dy} = \frac{2\mu  e^{-\frac{4}{\delta^2} \log(2)}}{\int_{-1}^{0} e^{-2y^2}dy},
\end{align*}
where in the second inequality we used the fact that $\delta \leq 1$. We conclude that for $x \in [-1/\delta, -\zeta]$, 
\begin{align*}
\nu_S(x) = \frac{a_2}{\mu(2 + \delta x)}e^{\frac{2}{\delta^2} [2\log(2 + \delta x) - \delta x]} \leq \frac{2 e^{-\frac{4}{\delta^2} \log(2)}e^{\frac{2}{\delta^2} [2\log(2 + \delta x) - \delta x]}}{\int_{-1}^{0} e^{-2y^2}dy} \leq \frac{2}{\int_{-1}^{0} e^{-2y^2}dy} \leq 4,
\end{align*}
where in the second last inequality we used the fact that on the interval $[-1/\delta, -\zeta]$, the function $e^{\frac{2}{\delta^2} [2\log(2 + \delta x) - \delta x]}$ achieves its maximum at $x = 0$. This fact can be checked by differentiating the function.

Lastly, we bound $\nu_S(x)$ when $x \geq -\zeta$. By \eqref{eq:densintegral}, 
\begin{align*}
a_3 \leq \frac{1}{\int_{-\zeta}^{\infty} \frac{1}{\mu(2 + \delta \abs{\zeta})}e^{\frac{-2\abs{\zeta} y }{2+\delta \abs{\zeta}}}dy} = 2\mu \abs{\zeta}e^{\frac{2\zeta^2}{2+\delta \abs{\zeta}}},
\end{align*}
which means that for $x \geq -\zeta$,
\begin{align}
\nu_S(x) = \frac{a_3}{\mu(2 + \delta \abs{\zeta})}e^{\frac{-2\abs{\zeta} x }{2+\delta \abs{\zeta}}} \leq \frac{2\abs{\zeta}}{2 + \delta \abs{\zeta}} \leq \abs{\zeta}, \label{eq:nuboundzeta}
\end{align}
which a useful bound only when $\abs{\zeta}$ is small, say $\abs{\zeta} \leq 1$. Now suppose $\abs{\zeta} \geq 1$. Since $\nu_S(x)$ is continuous at $x = -\zeta$, we have
\begin{align*}
a_2 = a_3 e^{\frac{-2\zeta^2 }{2+\delta \abs{\zeta}}}e^{-\frac{2}{\delta^2} [2\log(2 + \delta \abs{\zeta}) - \delta \abs{\zeta}]}.
\end{align*}
We insert this into \eqref{eq:densintegral} to see that for $x \geq -\zeta$, 
\begin{align*}
a_3 \leq&\ \frac{e^{\frac{2\zeta^2 }{2+\delta \abs{\zeta}}}}{\int_{-1/\delta}^{-\zeta} \frac{1}{\mu(2 + \delta y)}e^{\frac{2}{\delta^2} [2\log(2 + \delta y) - \delta y]}e^{-\frac{2}{\delta^2} [2\log(2 + \delta \abs{\zeta}) - \delta \abs{\zeta}]} dy} \\
\leq&\ \frac{e^{\frac{2\zeta^2 }{2+\delta \abs{\zeta}}}}{\int_{0}^{-\zeta} \frac{1}{\mu(2 + \delta y)} dy} 
\leq \frac{e^{\frac{2\zeta^2 }{2+\delta \abs{\zeta}}}}{\int_{0}^{-\zeta} \frac{1}{\mu(2 + \delta \abs{\zeta})} dy}
= \mu \frac{2 + \delta \abs{\zeta}}{\abs{\zeta}}e^{\frac{2\zeta^2 }{2+\delta \abs{\zeta}}},
\end{align*}
where in the second inequality we used that 
\begin{align*}
e^{\frac{2}{\delta^2} [2\log(2 + \delta y) - \delta y]}e^{-\frac{2}{\delta^2} [2\log(2 + \delta \abs{\zeta}) - \delta \abs{\zeta}]} \geq 1 , \quad y \in [0,-\zeta],
\end{align*}
which is true because the derivative of the function $e^{\frac{2}{\delta^2} [2\log(2 + \delta y) - \delta y]}$ is negative on the interval $[0, -\zeta]$. Therefore, for $x \geq -\zeta$, 
\begin{align*}
\nu_S(x) = \frac{a_3}{\mu(2 + \delta \abs{\zeta})}e^{\frac{-2\abs{\zeta} x }{2+\delta \abs{\zeta}}} \leq \frac{1}{\abs{\zeta}} e^{\frac{2\zeta^2 }{2+\delta \abs{\zeta}}} e^{\frac{-2\abs{\zeta} x }{2+\delta \abs{\zeta}}} \leq \frac{1}{\abs{\zeta}}.
\end{align*}
Together with \eqref{eq:nuboundzeta}, this implies that $\nu_S(x) \leq 1$ for $x \geq -\zeta$. This concludes the proof of this lemma.
\end{proof}

\chapter{Moderate Deviations in the Erlang-C Model} \label{chap:md}
This chapter focuses again on the Erlang-C model. We adopt the notation from previous chapters, and refer the reader to Section~\ref{sec:DSmain} for a quick recap of the model and notation. In Theorem~\ref{thm:erlangCK} of Chapter~\ref{chap:erlangAC}, we proved a bound on the Kolmogorov distance between the steady-state customer count $\tilde X(\infty)$ and the diffusion approximation $Y(\infty)$. Namely, we showed that
\begin{align*}
d_K(\tilde X(\infty), Y(\infty)) = \sup_{z \in \R} \big| \Prob(\tilde X(\infty) \leq z) - \Prob(Y(\infty) \leq z) \big| \le \frac{156}{\sqrt{R}},
\end{align*}
where $R$ is the offered load to the system. The Kolmogorov distance represents the absolute error between the cumulative distribution functions (CDF). However, when $\Prob(\tilde X(\infty) \leq z)$ is small, the absolute error is a poor indicator of performance, and the relative error becomes more important. It turns out that Stein's method can also be used to prove error bounds on the relative error, and the goal of this chapter is to do this for the Erlang-C model. Our main result is Theorem~\ref{thm:MDmain} contained in Section~\ref{sec:MDmain}, which shows that there exists a constant $C>0$ independent of $\lambda, n$, and $\mu$, such that for $z = \frac{1}{\sqrt{R}}(k-R), k > n, k \in \Z_+$,
\begin{align}
&\bigg|\frac{\Prob(\tilde X(\infty) \geq z)}{\Prob(\tilde Y_S(\infty)\geq z)} - 1\bigg| \notag \\
\leq&\ \frac{(1-\rho)}{\rho}+ \frac{2(1-\rho)^2}{\rho} + \frac{1}{R} + Ce^{\zeta^2}\Big(\frac{1}{R}+ \frac{1}{\sqrt{R}} \frac{1-\rho}{\rho} + \frac{(1-\rho)^2}{\rho^2}\Big) \notag \\
&+ Ce^{2\zeta^2}\zeta^2\Big(\frac{1}{R}+\frac{1}{\sqrt{R}}\frac{1-\rho}{\rho} + \frac{(1-\rho)^2}{\rho^2}\Big) \min\Big\{(z\vee 1), R\Big(\frac{1}{\abs{\zeta}}+\frac{1}{\sqrt{R}}\Big)^3\Big\}, \label{MD:introresult}
\end{align}
where $n$ is the number of servers in the system,  $R$ is the offered load, $\rho = R/n$ is the system utilization, $\zeta = (R-n)/\sqrt{R}$, and  $Y_S(\infty)$ is the diffusion approximation defined in \eqref{eq:stdden} of Chapter~\ref{chap:dsquare}. 
In particular, the bound in \eqref{MD:introresult} says that in the quality-and-efficiency-driven (QED) regime where $n = \left \lceil R + \beta \sqrt{R}\right \rceil$ for some $\beta > 0$, 
\begin{align*}
&\bigg|\frac{\Prob(\tilde X(\infty) \geq z)}{\Prob(\tilde Y_S(\infty)\geq z)} - 1\bigg| \leq \frac{C(\beta)}{\sqrt{R}} + C(\beta) \min\Big\{\frac{1}{R}(z\vee 1), 1\Big\}.
\end{align*}

Stein's method has been used to prove bounds on the relative error of the CDF approximation in \cite{ChenFangShao2013a, ChenFangShao2013b, ShaoZhou2016, ChenShaoWuXu2016, ChangShaoZhou2016}. These results are referred to as moderate deviations results, which date back to Cram\'er  \cite{Cram1938}, who derived expansions for tail probabilities of sums of independent random variables in terms of the normal distribution. The  following is a typical moderate deviations result  \cite[Chapter 8, equation (2.41)]{Petr1975}. If $X_1, \ldots, X_m$ are i.i.d.\ random variables with $\E X_i = 0$, $\text{Var}(X_i) = 1$, and $\E e^{t_0 \abs{X_i}} < \infty$ for some $t_0 > 0$, then 
\begin{align*}
\frac{\Prob(X_1 + \ldots + X_m > z)}{1 - \Phi(z)} = 1+ O(1)\frac{1+z^3}{\sqrt{m}}, \quad 0 \leq z \leq a_0 m^{1/6},
\end{align*}
where $\Phi(z)$ is the CDF of the standard normal, $O(1)$ is bounded a constant, and both the bound on $O(1)$ and $a_0$ are independent of $m$. The name ``moderate'' deviations comes from the restriction $z \in [0, a_0 m^{1/6}]$, which makes the bound valid as long as $\Prob(X_1 + \ldots + X_m > z)$ is not too small. This type of range restriction on $z$ is always present in moderate deviations results. In contrast, \eqref{MD:introresult} does not have an upper bound on the value that $z$ can take.

The rest of this chapter is structured as follows. We state and prove our main results in Section~\ref{sec:MDmain}, and prove some auxiliary lemmas in Section~\ref{sec:MDaux}. The author would like to thank Xiao Fang, who provided him with a preliminary version of the moderate deviations result for the Erlang-C system. 
\section{Main Result} \label{sec:MDmain}
In this section we state and prove the main result of this chapter. We assume familiarity with the Stein framework introduced in Section~\ref{sec:CAroadmap}. We also refer the reader to Section~\ref{sec:DSmain} for a quick summary of notation. In addition to the notation used there, we let $\rho = \lambda/n\mu$ be the utilization in the Erlang-C system.
\begin{theorem} \label{thm:MDmain}
Recall that $\zeta = \delta(R-n)$.  There exists a constant $C > 0$ such that for any $k \in \Z_+$, $k > n$, $z = \delta (k-R)$, $\lambda > 0, \mu > 0$, and $n \geq 1$ satisfying $\rho \geq 1/2$,
\begin{align}
&\bigg|\frac{\Prob(\tilde X(\infty) \geq z)}{\Prob(Y_S(\infty) \geq z)} - 1\bigg| \notag \\
\leq&\ \frac{(1-\rho)}{\rho}+ \frac{2(1-\rho)^2}{\rho} + \delta^2 + Ce^{\zeta^2}\Big(\delta^2+\delta \frac{1-\rho}{\rho} + \frac{(1-\rho)^2}{\rho^2}\Big) \notag \\
&+ Ce^{2\zeta^2}\zeta^2\Big(\delta^2+\delta \frac{1-\rho}{\rho} + \frac{(1-\rho)^2}{\rho^2}\Big) \min\Big\{(z\vee 1), \frac{1}{\delta^2} \Big(\frac{1}{\abs{\zeta}}+\delta\Big)^3\Big\}.
\end{align}
\end{theorem}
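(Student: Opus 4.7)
The plan is to adapt the Stein framework of Sections~\ref{sec:CAroadmap} and \ref{sec:DSroadmap} to the indicator test function $h_z(x)=1(x\geq z)$, while aiming for a \emph{relative} rather than absolute error bound. Fix $k>n$ and $z=\delta(k-R)$, and let $f_z$ be the solution given by Lemma~\ref{lem:solution} to the Poisson equation $G_{Y_S}f_z(x)=\Prob(Y_S(\infty)\geq z)-1(x\geq z)$, with drift $b(x)$ and diffusion coefficient $a(x)$ as in \eqref{DS:bdef}--\eqref{DS:adef}. Since $f_z'$ grows at most linearly, Lemma~\ref{lem:gz} applies and gives
\begin{align*}
\Prob(\tilde X(\infty)\geq z)-\Prob(Y_S(\infty)\geq z)=\E\bigl[G_{\tilde X}f_z(\tilde X(\infty))-G_{Y_S}f_z(\tilde X(\infty))\bigr],
\end{align*}
so the task reduces to bounding the right-hand side and then dividing by $\Prob(Y_S(\infty)\geq z)$.

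The next step is to carry out exactly the Taylor expansion of Section~\ref{sec:DStaylor}, with the caveat that $f_z''$ has a jump at $x=z$ which must be handled as in Section~\ref{sec:ktaylor}. This produces an error decomposition of the same form as \eqref{DS:first_bounds}, augmented by a correction proportional to $\Prob(\tilde X(\infty)\in\{z-\delta,z,z+\delta\})$ coming from the $f_z''$ discontinuity at the on-grid point $z$. Because $z$ coincides with an atom of $\tilde X(\infty)$, this extra mass is controlled by a bound on $\pi_k$ in the spirit of \eqref{DS:pin}.

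The core of the proof is establishing \emph{normalized} gradient bounds, i.e.\ estimates for $\abs{f_z^{(j)}(x)}/\Prob(Y_S(\infty)\geq z)$ for $j=1,2$, together with analogous estimates for the one-sided jumps $\abs{f_z'''(x-)-f_z'''(y)}$ over intervals of length $\delta$. These follow from the explicit representation
\begin{align*}
f_z'(x)=\frac{2}{a(x)\nu_S(x)}\Prob(Y_S(\infty)\leq x\wedge z)\Prob(Y_S(\infty)\geq x\vee z),
\end{align*}
read off from Lemma~\ref{lem:solution} combined with $\int_\R \nu_S(y)\,dy=1$. On $x\geq z$ the factor $\Prob(Y_S(\infty)\geq z)$ cancels outright, while on $x<z$ the ratio $\Prob(Y_S(\infty)\leq x)/\Prob(Y_S(\infty)\geq z)$ is analyzed piece by piece using the three-region form of $\nu_S$ displayed in the proof of Lemma~\ref{lem:densbound}. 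The Gaussian piece produces the factor $e^{\zeta^2}$ through the density ratio $\nu_S(-\zeta)/\nu_S(0)$, and its square $e^{2\zeta^2}$ enters the third-derivative bound, which inherits a squared density ratio from the chain rule applied to $1/(a\nu_S)$. The $\min\{(z\vee 1),\delta^{-2}(\abs{\zeta}^{-1}+\delta)^3\}$ factor reflects two competing ways of bounding the residual moment term: either the direct estimate $\E[\abs{\tilde X(\infty)}^3 1(\tilde X(\infty)\geq z)]/\Prob(\tilde X(\infty)\geq z)\leq C(z\vee 1)^3$, or the overload moment bounds of Lemma~\ref{lem:lastbounds}, which are uniform in $z$ but heavier in $\abs{\zeta}^{-1}$.

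The principal obstacle will be these normalized gradient bounds: they must simultaneously absorb the $1/\Prob(Y_S(\infty)\geq z)$ factor, track how $z$ enters the various integrals of $\nu_S$, and remain sharp across the discontinuity at $x=z$ so that, combined with the moment bounds of Chapter~\ref{chap:dsquare}, only the stated $\min\{\cdot,\cdot\}$-shaped term multiplies $e^{2\zeta^2}\zeta^2$. Once these estimates are secured, assembly of the final bound follows the template of Section~\ref{sec:DSproofW}, with routine bookkeeping producing the leading $(1-\rho)/\rho=\delta\abs{\zeta}$ contributions from the drift part of $G_{\tilde X}-G_{Y_S}$.
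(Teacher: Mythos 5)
Your high-level framework — Stein's method with indicator test function, normalized gradient bounds for $f_z^{(j)}/\Prob(Y_S(\infty)\geq z)$, and the explicit formula $f_z'(x)=\frac{2}{a(x)\nu_S(x)}\Prob(Y_S(\infty)\leq x\wedge z)\Prob(Y_S(\infty)\geq x\vee z)$ — is the right starting point and matches the spirit of the paper's argument (Lemma~\ref{lem:MDgradient_bounds} gives essentially these normalized bounds, and the $e^{\zeta^2}$ factors do indeed come from density ratios). However, there are two genuine gaps that would prevent the argument from closing.

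First, the contribution from the jump of $f_z''$ at $x=z$ \emph{cannot} be absorbed by an absolute bound on $\pi_k$ in the spirit of \eqref{DS:pin}. After normalizing by $\Prob(Y_S(\infty)\geq z)$, that term is proportional to $\Prob(W=z)/\Prob(Y_S(\infty)\geq z)$; the bound $\pi_k\leq\delta\abs{\zeta}$ is useless here because the denominator $\Prob(Y_S(\infty)\geq z)$ is not bounded below uniformly in $z$. What the paper does instead is use the flow-balance identity $\Prob(W=z)=(1-\rho)\Prob(W\geq z)$ (equation \eqref{MD:tail}), which turns the jump term into $(1-\rho)/(1+\rho)$ times the \emph{target ratio} $\Prob(W\geq z)/\Prob(Y_S(\infty)\geq z)$. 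A second such self-referential contribution appears inside Lemma~\ref{lem:termbyterm}. The final inequality is therefore of the form $|a-1|\leq\psi+c\,a$ with $a=\Prob(W\geq z)/\Prob(Y_S(\infty)\geq z)$ and $c=\frac{1-\rho+2(1-\rho)^2}{1+\rho}<1$, and must be closed by a fixed-point argument. This is also exactly where the hypothesis $\rho\geq 1/2$ is used (to ensure $\rho-2(1-\rho)^2\geq 0$, i.e.\ $c<1$). Your outline never identifies this structure and so cannot produce a finite bound.

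Second, the $\min\{(z\vee 1),\,\delta^{-2}(\abs{\zeta}^{-1}+\delta)^3\}$ factor does not arise from comparing a $(z\vee 1)^3$-type polynomial moment estimate against the Chapter~\ref{chap:dsquare} moment bounds. In fact, the normalized second-derivative bound \eqref{MD:gb4} has the form $C\mu^{-1}e^{w\frac{2\abs{\zeta}}{2+\delta\abs{\zeta}}}e^{\zeta^2}(1+\abs{\zeta}+\zeta^2)$ on $w\in[-\zeta,z]$; the exponential factor $e^{w\frac{2\abs{\zeta}}{2+\delta\abs{\zeta}}}$ forces an estimate on a truncated moment generating function of $W$, namely $\E[e^{W\frac{2\abs{\zeta}}{2+\delta\abs{\zeta}}}1(W\in[-\zeta,z])]$. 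This is supplied by Lemma~\ref{lem:mgfbound}, which gives the two alternatives $\gamma Ce^{\zeta^2}$ (with $\gamma\approx z$) and $\frac{1}{\delta^2}(\frac{1}{\abs{\zeta}}+\delta)^3Ce^{\zeta^2}$ — exactly the two options inside the $\min$. The polynomial moment bounds of Chapter~\ref{chap:dsquare} do not control these exponential moments, and the claimed estimate $\E[\abs{\tilde X(\infty)}^3\,1(\tilde X(\infty)\geq z)]/\Prob(\tilde X(\infty)\geq z)\leq C(z\vee 1)^3$ neither gives the right power of $(z\vee 1)$ nor is it established in the paper. Without the MGF bound, your argument cannot produce the stated $\min$ structure.
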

\noindent  To supplement the theorem, we present some numerical results below. 
Recall that in addition to $Y_S(\infty)$, the diffusion approximation with state-dependent diffusion coefficient, we also have $Y(\infty)$, the approximation with constant diffusion coefficient; cf. \eqref{CA:stdden}. From the results in Chapter~\ref{chap:dsquare}, it is natural to anticipate that $Y_S(\infty)$ is a better approximation, and this is correct. Figure~\ref{fig:MDrelerror} displays the relative error of approximating $\Prob(\tilde X(\infty) \geq z)$ when $n = 100$ and $\rho = 0.9$. We see a \emph{qualitative} difference in the approximation quality of $Y(\infty)$ and $Y_S(\infty)$. The relative error of the former increases linearly in $z$, whereas the error of the latter is bounded no matter how large $z$ becomes. These results are consistent for other choices of $n$ and $\rho$. 

In contrast to the universal approximation results we saw in the previous chapters, the upper bound in Theorem~\ref{thm:MDmain} only decreases as $\rho \uparrow 1$. However, we believe that universality still holds, and that the current statement of Theorem~\ref{thm:MDmain} is simply a shortcoming of the author's proof. To support this, we present Table~\ref{tab:MDerror}, which shows the relative error when $n$ increases while $\rho$ is fixed at $0.6$. As we had hoped, the relative error of the approximation decreases as $n$ grows, which suggests that the current statement of Theorem~\ref{thm:MDmain} can be improved upon. 

\begin{figure}[h!]\centering
    \hspace{-1.5cm} \includegraphics[clip, trim=.8cm 5.5cm 0.5cm 4cm, width=90mm,keepaspectratio]{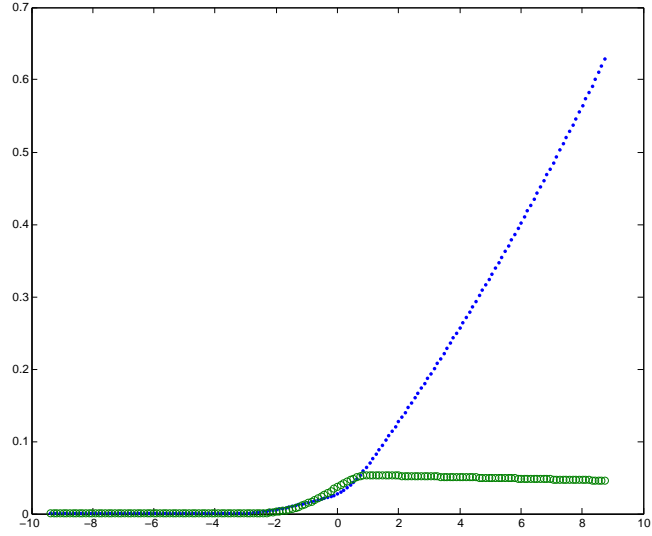}
   \caption{$n =100, \rho = 0.9$. The plot above shows the relative error of approximating $\Prob(\tilde X(\infty) \geq z)$. The x-axis displays the value of $z$; when $z = 8$, $\Prob(X(\infty) \geq z) \approx 10^{-4}$. The blue dots correspond to $\Big|\frac{\Prob(\tilde X(\infty) \geq z)}{\Prob(Y(\infty) \geq z)} - 1\Big|$, the error of the constant diffusion coefficient approximation. The green circles correspond to $\Big|\frac{\Prob(X(\infty) \geq z)}{\Prob(Y_S(\infty)\geq z)} - 1\Big|$, the error from the state-dependent coefficient approximation. \label{fig:MDrelerror}}
\end{figure}

\begin{table}[h]
  \begin{center}
  \resizebox{\columnwidth}{!}{
   \begin{tabular}{r|c| c |c }
$n$ & $\Prob(\tilde X(\infty) \geq 2.4)$ & $\big| \frac{\Prob(\tilde X(\infty) \geq 2.4)}{\Prob(Y_S(\infty) \geq 2.4)} - 1 \big|$ & $\big| \frac{\Prob(\tilde X(\infty) \geq 2.4)}{\Prob(Y(\infty) \geq 2.4)} - 1 \big|$\\
\hline
100 & 0.0146 & 0.1347 & 0.3090 \\
200 & 0.0117  & 0.1027 & 0.2473 \\
400 &  0.0104  & 0.0767 & 0.1890 \\
800 & 0.0102 &  0.0561 & 0.1386 \\
1600 &  0.0094 & 0.0409 &  0.1029 \\
  \end{tabular}}
  \end{center}
  \caption{ Approximating $\Prob(\tilde X(\infty) \geq z)$ with $\rho = 0.6$. The value $z = 2.4$ was chosen because $\Prob(\tilde X(\infty) \geq 2.4) \approx 0.01$. Even though Theorem~\ref{thm:MDmain} does not guarantee sharp error bounds for $\rho = 0.6$, the relative error of the diffusion approximation still shrinks. \label{tab:MDerror}}
\end{table} 
\clearpage

\subsection{Proof of the Main Result}

The rest of this section is dedicated to proving Theorem~\ref{thm:MDmain}. To reduce notational clutter, going forward we let
\begin{align}
W = \tilde X(\infty), \quad \text{ and } \quad Y_S = Y_S(\infty). \label{MD:notation}
\end{align}
The proof of Theorem~\ref{thm:MDmain} follows the standard Stein framework. We recall the generator $G_{Y_S}$ defined in \eqref{eq:GY}, as 
\begin{align*}
G_{Y_S} f(x) = b(x) f'(x) + \frac{1}{2}a(x) f''(x), 
\end{align*}
where $a(x)$ and $b(x)$ are as in \eqref{DS:bdef} and \eqref{DS:adef}, respectively.
%
Fix $z \in \R$ and suppose $f_z(w)$ satisfies the Poisson equation
\begin{align*}
b(w) f_z'(w) + \frac{1}{2}a(w) f_z''(w) =  \Prob(Y_S \geq z) - 1(w \geq z)
\end{align*}
Let $G_{\tilde X}$ be the generator of the CTMC associated to $W = \tilde X(\infty)$, whose form can be found in \eqref{DS:GX}. Using the Taylor expansion performed in Section~\ref{sec:ktaylor}, one can check that for $w = \delta(k-R)$, 
\begin{align*}
&G_{\tilde X} f_z(w) - G_{\tilde Y} f_z(w) \\
=&\ \lambda \int_{w}^{w+\delta} (w+\delta -y ) f_z''(y) dy + d(k) \int_{w-\delta}^{w} (y-(w-\delta))f_z''(y) dy - \frac{1}{2}a(w) f_z''(w) \\
=&\ \int_{0}^{\delta} f_z''(w+y) \lambda(\delta -y) dy + \int_{-\delta}^{0} f_z''(w+y)(y+\delta) (\lambda - b(w)/\delta) dy- \frac{1}{2}a(w) f_z''(w),
\end{align*}
where $d(k) = \mu(k \wedge n)$, and $f_z''(w)$ is understood to be the left derivative at the points $w = z$ and $w = -\delta R$. Lemma~\ref{lem:gz} tells us that $\E G_W f_z(w) = 0$, and we conclude that
\begin{align}
&\Prob(Y_S\geq z) - \Prob(W \geq z) \notag \\
=&\ \E G_{\tilde Y} f_z(W) - \E G_{\tilde X} f_z(W) \notag \\
=&\ \E \Big[\frac{1}{2}a(W) f_z''(W) \Big] \notag \\
&- \E \bigg[\int_{0}^{\delta} f_z''(W+y) \lambda(\delta -y) dy + \int_{-\delta}^{0}f_z''(W+y)(\lambda - b(W)/\delta)(y+\delta) dy \bigg]. \label{MD:exp1}
\end{align}
The proof of Theorem~\ref{thm:MDmain} revolves around bounding the right hand side above. Define
\begin{align*}
K_W(y) = 
\begin{cases}
(\lambda - b(W)/\delta) (y+\delta) \geq 0, \quad y \in [-\delta, 0], \\
\lambda(\delta-y) \geq 0, \quad y \in [0,\delta].
\end{cases}
\end{align*}
It can be checked that
\begin{align}
\int_{-\delta}^{0} K_W(y) dy =&\ \frac{1}{2} \delta^2 \lambda -\frac{1}{2}\delta b(W),\\
\int_{0}^{\delta} K_W(y) dy =&\ \frac{1}{2}\delta^2 \lambda,\\
\int_{-\delta}^{\delta} K_W(y) dy =&\ \frac{1}{2}a(W) \label{MD:k3}
\end{align}
Together with \eqref{MD:k3}, the expansion in \eqref{MD:exp1} then implies that
\begin{align}
&\Prob(Y_S\geq z) - \Prob(W \geq z) \notag \\
 =&\ -\E \bigg[\int_{-\delta}^{\delta} (f_z''(W+y)-f_z''(W)) K_W(y) dy  \bigg] \notag \\
=&\ \E \bigg[\int_{-\delta}^{\delta} \Big(\frac{2b(W+y)}{a(W+y)}f_z'(W+y) - \frac{2b(W)}{a(W)}f_z'(W) \Big) K_W(y) dy  \bigg]  \notag \\
&+\E \bigg[\int_{-\delta}^{\delta} \Big( \frac{2}{a(W)} 1(W\geq z) - \frac{2}{a(W+y)}1(W+y\geq z) \Big) K_W(y) dy  \bigg] \notag  \\
&+\Prob(Y_S \geq z)\E \bigg[\int_{-\delta}^{\delta} \Big(  \frac{2}{a(W+y)} -\frac{2}{a(W)} \Big) K_W(y) dy  \bigg], \label{MD:diff}
\end{align}
where we used $f_z''(w) = -\frac{2b(w)}{a(w)}f_z'(w) + \frac{2}{a(w)}\big(\Prob(Y_S \geq z) - 1(w\geq z) \big)$  in the last equation. The following lemma is assumed for now, and will be proved at the end of Section~\ref{sec:MDaux}.
\begin{lemma}\label{lem:termbyterm}
There exists a constant $C>0$, independent of $\lambda, \mu$ or $n$, such that
\begin{align}
& \bigg|\E\bigg[\int_{-\delta}^{\delta} \Big(\frac{2b(W+y)}{a(W+y)}f_z'(W+y) - \frac{2b(W)}{a(W)}f_z'(W) \Big) K_W(y) dy\bigg]\bigg|  \notag \\
\leq&\ Ce^{\zeta^2}\Big(\delta^2+\delta \frac{1-\rho}{\rho} + \frac{(1-\rho)^2}{\rho^2}\Big)  + \frac{2(1-\rho)^2}{1+\rho} \frac{\Prob(W\geq z)}{\Prob(Y_S \geq z)} \notag \\
&+ Ce^{2\zeta^2}\zeta^2\Big(\delta^2+\delta \frac{1-\rho}{\rho} + \frac{(1-\rho)^2}{\rho^2}\Big) \min\Big\{(z\vee 1), \frac{1}{\delta^2} \Big(\frac{1}{\abs{\zeta}}+\delta\Big)^3\Big\}.\label{eq:lem3}
\end{align} 
\end{lemma}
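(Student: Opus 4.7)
The plan is to split the integrand based on the three singular points of the integrand---the two breakpoints $-1/\delta$ and $-\zeta$ of the coefficient functions $a(x),b(x)$, and the point $z$ where $f_z''$ is discontinuous---and then to estimate the difference $\tfrac{2b(W+y)}{a(W+y)}f_z'(W+y)-\tfrac{2b(W)}{a(W)}f_z'(W)$ on each region by Taylor expanding and invoking moderate-deviation gradient bounds on $f_z$. First I would write the difference for $y$ on an interval of smoothness as
\begin{equation*}
\frac{2b(W+y)}{a(W+y)}f_z'(W+y)-\frac{2b(W)}{a(W)}f_z'(W)=\int_0^{y}\Bigl(\Bigl(\tfrac{2b}{a}\Bigr)'(W+u)f_z'(W+u)+\tfrac{2b(W+u)}{a(W+u)}f_z''(W+u)\Bigr)du,
\end{equation*}
and then use the Poisson equation to eliminate $f_z''$ in favor of $f_z'$, $\Prob(Y_S\ge z)$ and $1(W+u\ge z)$. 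Each appearance of the indicator will eventually contribute a $\Prob(W\ge z)$ after taking expectations; each appearance of $f_z'$ will be controlled by the explicit form of the Poisson solution from Lemma~\ref{lem:solution}.

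The key estimates are sharpened (moderate-deviation) bounds on $|f_z'(x)|$, $|f_z''(x)|$, and the jumps of $f_z''$ at the breakpoints. Using the explicit formulas in Lemma~\ref{lem:solution} with $\bar a=a$, $\bar b=b$, one gets the representation $f_z'(x)=\frac{2\Prob(Y_S\ge z)\Prob(Y_S<x)}{a(x)\nu_S(x)}$ for $x<z$ and $f_z'(x)=\frac{2\Prob(Y_S<z)\Prob(Y_S\ge x)}{a(x)\nu_S(x)}$ for $x\ge z$. The ratios $\Prob(Y_S\ge x)/\nu_S(x)$ and $\Prob(Y_S<x)/\nu_S(x)$ are estimated by splitting between the Gaussian-tailed region $x\le-\zeta$ and the exponentially-tailed region $x\ge-\zeta$ in the density formula~\eqref{eq:stdden}; it is precisely in matching these two regions that a factor of $e^{\zeta^2}$ (and, at one order higher, $e^{2\zeta^2}\zeta^2$) appears, giving bounds of the form $|f_z'|,|f_z''|\le C e^{\zeta^2}(\mu|\zeta|)^{-1}$ in the overloaded region and with extra $(z\vee 1)$ or polynomial-in-$\zeta$ factors in the underloaded region. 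These bounds are substituted into the Taylor expansion above.

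Next I would organize the resulting expression into the three pieces appearing in~\eqref{eq:lem3}. The ``smooth'' contribution from the underloaded region gives the $Ce^{\zeta^2}(\delta^2+\delta(1-\rho)/\rho+(1-\rho)^2/\rho^2)$ term after applying the moment bounds of Lemma~\ref{lem:lastbounds} (in particular~\eqref{eq:lb1}--\eqref{eq:lb4}, using the identities $\delta|\zeta|=(1-\rho)/\rho$ and $\zeta^2\delta^2=(1-\rho)^2/\rho^2$). The contribution from the overloaded region uses that $\tfrac{2b(x)}{a(x)}$ is the constant $-2|\zeta|\rho/(1+\rho)$ there; integrating $y$ against $K_W(y)$ over $[-\delta,\delta]$ with Taylor expansion of $f_z'$ produces the coefficient $\bigl(\tfrac{2|\zeta|\rho}{1+\rho}\bigr)^2$ times $f_z''(W)$ times a bounded moment of $K_W$, and the $f_z''$ contribution from the $\frac{2}{a(W)}1(W\ge z)$ piece of the Poisson equation, after dividing by $\Prob(Y_S\ge z)$ and re-normalizing, yields exactly the announced $\tfrac{2(1-\rho)^2}{1+\rho}\Prob(W\ge z)/\Prob(Y_S\ge z)$ term. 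The third, multiplicative-in-$z$ term $Ce^{2\zeta^2}\zeta^2(\cdot)\min\{z\vee 1,\delta^{-2}(1/|\zeta|+\delta)^3\}$ collects the higher-order error from iterating the Taylor expansion once more (so that the $f_z''$ is replaced by $f_z'''$ up to a boundary jump at $x=z$), and the $\min$ records that past $z$ the tail bound $\Prob(Y_S\ge x)/\Prob(Y_S\ge z)\le1$ gives a $(z\vee 1)$ cap while for bounded $z$ we can instead estimate by the support-size $\delta^{-2}(1/|\zeta|+\delta)^3$.

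The main obstacle will be the bookkeeping for the jumps of $f_z''$ at the three breakpoints $-1/\delta,-\zeta,z$. At $x=z$ the jump has size $\tfrac{2}{a(z)}$ and, after pairing with $K_W$, produces $\Prob(W\in[z-\delta,z+\delta])$; to absorb this cleanly into the advertised ratio $\Prob(W\ge z)/\Prob(Y_S\ge z)$ (rather than keeping it as an absolute probability) one has to argue, via the birth--death structure used in Lemma~\ref{lem:kolmfixC}, that $\Prob(W\in[z-\delta,z+\delta])$ is comparable to $\Prob(W\ge z)$ times a factor controlled by $1/(1+\rho)$. At the breakpoints of $a$ and $b$ the jumps of $f_z''$ are bounded by $C\delta$ times $|f_z'|$, and the corresponding expectations are controlled using $\pi_0$ and $\pi_n$ through~\eqref{DS:pi0}--\eqref{DS:pin} (together with the Kolmogorov-distance reduction in~\eqref{eq:pikolmbound}). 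Once those boundary contributions are subsumed, combining the three pieces yields the lemma.
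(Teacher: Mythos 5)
Your plan follows the same high-level strategy as the paper's proof: decompose the integrand around the breakpoints $-1/\delta$, $-\zeta$, $z$, Taylor-expand, use the Poisson equation to eliminate $f_z''$ in favor of $f_z'$, indicators, and $\Prob(Y_S\geq z)$, then bound term by term with gradient and moment estimates. Your observation that the $\Prob(W\geq z)/\Prob(Y_S\geq z)$ ratio comes from converting the indicator mass at $W=z$ via the birth--death structure is exactly right; the paper does this via the flow-balance identity $\Prob(W=z)=(1-\rho)\Prob(W\geq z)$. Your formula $f_z'(x)=2\Prob(Y_S\geq z)\Prob(Y_S\leq x)/(a(x)\nu_S(x))$ for $x<z$ is also correct and is the basis for Lemma~\ref{lem:MDgradient_bounds}.

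There is, however, a genuine gap in how you explain the factor $\min\{(z\vee 1),\ \delta^{-2}(|\zeta|^{-1}+\delta)^3\}$. You attribute it to the tail ratio $\Prob(Y_S\geq x)/\Prob(Y_S\geq z)\leq 1$ and to a ``support size,'' but neither gives the right quantity. The gradient bound for $f_z''(w)$ on $w\in[-\zeta,z]$ carries an exponentially growing factor $e^{w\cdot 2|\zeta|/(2+\delta|\zeta|)}$; once you take expectations, you must control the truncated moment generating function $\E\bigl[e^{W\cdot 2|\zeta|/(2+\delta|\zeta|)}1(W\in[-\zeta,z])\bigr]$ of the stationary CTMC variable $W$, not any ratio of $Y_S$ tail probabilities. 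This is exactly what Lemma~\ref{lem:mgfbound} supplies: \eqref{MD:mgfz} (taking $\gamma=z$) yields the $z\vee 1$ cap and \eqref{MD:mgf} yields the $\delta^{-2}(|\zeta|^{-1}+\delta)^3$ cap, each with the extra factor $Ce^{\zeta^2}$ that, combined with the $e^{\zeta^2}$ already in the $f_z''$ bound, produces the $e^{2\zeta^2}$ in the final estimate. Without a moment-generating-function estimate for $W$, your argument does not close.

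A smaller structural difference: you propose iterating the Taylor expansion so that $f_z''$ is replaced by $f_z'''$. The paper's Lemma~\ref{lem:error_expansion} deliberately avoids $f_z'''$, instead iterating the Fundamental Theorem of Calculus together with the Poisson-equation substitution so that everything stays at the level of $f_z'$, $f_z''$, $r'$, $r''$, the indicator and $1/a$. Your route is not inherently wrong---$f_z'''$ is expressible via the ODE as $-r'f_z'-rf_z''-\tfrac{2a'}{a^2}(\Prob(Y_S\geq z)-1(\cdot\geq z))$ away from the breakpoints---but it carries the breakpoint jump bookkeeping into one more derivative level and requires bounds on $f_z'''$ that Lemma~\ref{lem:MDgradient_bounds} does not provide; the paper's decomposition is cleaner here.
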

\noindent We now prove Theorem~\ref{thm:MDmain}.
\begin{proof}[Proof of Theorem~\ref{thm:MDmain} ]
Throughout the proof we will let $C>0$ be a positive constant that may change from line to line, but will always be independent of $\lambda, n$, and $\mu$. We begin by bounding the second and third terms on the right hand side of \eqref{MD:diff}. Since we assumed that $z = \delta(k-R)$ and $k > n$, this implies that $z \geq -\zeta + \delta$. Observe that
\begin{align*}
&\int_{-\delta}^{\delta} \Big( \frac{2}{a(W)} 1(W\geq z) - \frac{2}{a(W+y)}1(W+y\geq z) \Big) K_W(y) dy  \\ 
=&\ 1(W = z) - 1(W = z)\int_{0}^{\delta} \frac{2}{a(W+y)}1(W+y\geq z) K_W(y) dy\\
=&\ 1(W = z) - 1(W = z) \frac{2}{a(-\zeta)} \frac{1}{2}\lambda \delta^2 \\
=&\ 1(W = z)\frac{1+\delta\abs{\zeta}}{2+\delta\abs{\zeta}} \\
=&\ 1(W = z)\frac{1}{1+\rho},
\end{align*} 
where in the second equality we used the fact that $a(z+y) = a(-\zeta)$ for $y \in [0,\delta]$, and in the last equality we used the fact that $\delta \abs{\zeta} = \delta^2(n-R) = 1/\rho - 1$. The flow-balance equations of the  Erlang-C model imply that
\begin{align}
\Prob(W = z) = (1-\rho) \Prob(W \geq z). \label{MD:tail}
\end{align} 
 Therefore,
\begin{align}
&\ \E \bigg[\int_{-\delta}^{\delta} \Big( \frac{2}{a(W)} 1(W\geq z) - \frac{2}{a(W+y)}1(W+y\geq z) \Big) K_W(y) dy  \bigg] \notag \\
=&\  \Prob(W = z)\frac{1}{1+\rho} = \Prob(W \geq z)\frac{1-\rho}{1+\rho}. \label{eq:lem1}
\end{align} 
To bound the third term in \eqref{MD:diff}, observe that
\begin{align}
\bigg|\int_{-\delta}^{\delta} \Big(  \frac{2}{a(W+y)} -\frac{2}{a(W)} \Big) K_W(y) dy  \bigg| =&\ \bigg|2\int_{-\delta}^{\delta} \Big(  \frac{a(W)-a(W+y)}{a(W+y)a(W)}  \Big) K_W(y) dy \bigg| \notag  \\
\leq&\ 2\frac{\delta^2 \mu }{\mu a(W)} \int_{-\delta}^{\delta}  K_W(y) dy =\delta^2, \label{eq:lem2}
\end{align}
where in  inequality we used the fact that $K_W(y) \geq 0$, $1/a(w) \leq 1/\mu$, and  $a'(w) \leq \mu\delta$ for all $w \in \R$.
Applying the bounds in \eqref{eq:lem3}, \eqref{eq:lem1}, and \eqref{eq:lem2} to \eqref{MD:diff}, we arrive at
\begin{align*}
&\bigg|\frac{\Prob(W \geq z)}{\Prob(Y_S\geq z)} - 1\bigg|  \\
\leq&\ Ce^{\zeta^2}\Big(\delta^2+\delta \frac{1-\rho}{\rho} + \frac{(1-\rho)^2}{\rho^2}\Big)  + \frac{2(1-\rho)^2}{1+\rho} \frac{\Prob(W\geq z)}{\Prob(Y_S \geq z)} \notag \\
&+ Ce^{2\zeta^2}\zeta^2\Big(\delta^2+\delta \frac{1-\rho}{\rho} + \frac{(1-\rho)^2}{\rho^2}\Big) \min\Big\{(z\vee 1), \frac{1}{\delta^2} \Big(\frac{1}{\abs{\zeta}}+\delta\Big)^3\Big\} \\
&+ \delta^2 + \frac{\Prob(W \geq z)}{\Prob(Y_S \geq z)} \frac{1-\rho}{1+\rho}\\
=&\ \delta^2 + Ce^{\zeta^2}\Big(\delta^2+\delta \frac{1-\rho}{\rho} + \frac{(1-\rho)^2}{\rho^2}\Big) \notag \\
&+ Ce^{2\zeta^2}\zeta^2\Big(\delta^2+\delta \frac{1-\rho}{\rho} + \frac{(1-\rho)^2}{\rho^2}\Big) \min\Big\{(z\vee 1), \frac{1}{\delta^2} \Big(\frac{1}{\abs{\zeta}}+\delta\Big)^3\Big\} \\
&+ \frac{\Prob(W \geq z)}{\Prob(Y_S \geq z)} \frac{1-\rho + 2(1-\rho)^2}{1+\rho}.
\end{align*}
It remains to bound $\frac{\Prob(W \geq z)}{\Prob(Y_S \geq z)}$. For convenience, let us define 
\begin{align*}
\psi(z) =&\ \delta^2 + Ce^{\zeta^2}\Big(\delta^2+\delta \frac{1-\rho}{\rho} + \frac{(1-\rho)^2}{\rho^2}\Big) \notag \\
&+ Ce^{2\zeta^2}\zeta^2\Big(\delta^2+\delta \frac{1-\rho}{\rho} + \frac{(1-\rho)^2}{\rho^2}\Big) \min\Big\{(z\vee 1), \frac{1}{\delta^2} \Big(\frac{1}{\abs{\zeta}}+\delta\Big)^3\Big\}.
\end{align*}
 Rearranging the inequality above, we see that 
\begin{align*}
 1 +  \psi(z) \geq&\ \frac{\Prob(W \geq z)}{\Prob(Y_S \geq z)} \Big( 1 -  \frac{1-\rho + 2(1-\rho)^2}{1+\rho} \Big) \\
=&\ \frac{\Prob(W \geq z)}{\Prob(Y_S \geq z)}\Big( \frac{\rho}{1+\rho} +  \frac{\rho - 2(1-\rho)^2}{1+\rho} \Big) \\
\geq&\ \frac{\Prob(W \geq z)}{\Prob(Y_S \geq z)}\frac{\rho}{1+\rho},
\end{align*}
where in the last inequality we used the fact that $\rho - 2(1-\rho)^2 \geq 0$ for $\rho \in [1/2, 1]$. Therefore, 
\begin{align*}
\frac{\Prob(W \geq z)}{\Prob(Y_S \geq z)} \leq&\ \frac{1+\rho}{\rho} (1+\psi(z)),
\end{align*}
and we conclude that 
\begin{align*}
\bigg|\frac{\Prob(W \geq z)}{\Prob(Y_S\geq z)} - 1\bigg| \leq&\ \psi(z) + \frac{1+\rho}{\rho} (1+\psi(z))\frac{1-\rho + 2(1-\rho)^2}{1+\rho}\\
=&\ \frac{1-\rho + 2(1-\rho)^2}{\rho} +  C\psi(z).
\end{align*}
\end{proof}
\section{Auxiliary Proofs} \label{sec:MDaux}
 Having proved Theorem~\ref{thm:MDmain}, we now describe how to prove \eqref{eq:lem3}. Attempting to bound the left hand side of \eqref{eq:lem3} in its present form will not yield anything useful. This following lemma manipulates the left hand side into something more manageable using a combination of Taylor's theorem and the Poisson equation. 
\begin{lemma}\label{lem:error_expansion}
Assume $z \geq -\zeta + \delta$, and let $r(w) = 2b(w)/a(w)$. Then 
\begin{align}
&\int_{-\delta}^{\delta} \Big(\frac{2b(W+y)}{a(W+y)}f_z'(W+y) - \frac{2b(W)}{a(W)}f_z'(W) \Big) K_W(y) dy  \notag \\ 
=&\ \frac{1}{6}\delta^2 b(W)\frac{2b(W)}{a(W)}f_z''(W) + \Big(\frac{2b(W)}{a(W)}\Big)^2 \int_{-\delta}^{\delta}K_W(y)\int_{0}^{y}\int_{0}^{s} f_z''(W+u) dudsdy \notag \\
&+  \frac{2b(W)}{a(W)}f_z'(W)  \int_{-\delta}^{\delta}K_W(y)\int_{0}^{y}\int_{0}^{s} r'(W+u)duds dy \notag \\
&- 1(W=z) \frac{2b(W)}{a(W)}\frac{2}{a(W)}\int_{-\delta}^{0}yK_W(y) dy \notag \\
&+ \Prob(Y_S \geq z)\frac{2b(W)}{a(W)}\int_{-\delta}^{\delta}K_W(y)\int_{0}^{y}\Big(\frac{2}{a(W+s)} - \frac{2}{a(W)} \Big)dsdy \notag \\
&+ 1(W = -1/\delta)f_z'(W)\int_{0}^{\delta} K_W(y)\int_{0}^{y}r'(W+s)ds  dy \notag \\
&+ 1(W = -\zeta) f_z'(W)\int_{-\delta}^{0} K_W(y)\int_{0}^{y}r'(W+s)ds  dy \notag \\
&+ 1(W \in [-1/\delta + \delta, -\zeta - \delta]) f_z'(W) \int_{-\delta}^{\delta} K_W(y)\int_{0}^{y}\int_{0}^{s}r''(W+u)duds  dy \notag \\
&+ 1(W \in [-1/\delta + \delta, -\zeta - \delta]) f_z'(W) r'(W) \frac{1}{6}\delta^2b(W) \label{MD:verbose}
\end{align}
\end{lemma}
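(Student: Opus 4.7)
Write $g(w) := \frac{2b(w)}{a(w)} f_z'(w) = r(w) f_z'(w)$, so that the left-hand side of the identity equals $\int_{-\delta}^{\delta} K_W(y)(g(W+y)-g(W))\, dy$. The plan is a Taylor-style expansion of $g(W+y)-g(W)$ to second order, substituting the Poisson equation at the appropriate moment so that only $f_z'$ and $f_z''$ (never $f_z'''$) appear on the right side. The three discontinuities that must be handled separately are the jump of the indicator $1_{(-\infty,z]}$ at $w=z$ (which induces a jump of $f_z''$), and the jumps of $r'$ at $w=-1/\delta$ and $w=-\zeta$.

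The starting point is the discrete product-rule identity
\[
g(W+y)-g(W) = r(W)\int_0^y f_z''(W+s)\, ds + f_z'(W) \int_0^y r'(W+s)\,ds + \Big(\int_0^y r'(W+u)\,du\Big)\Big(\int_0^y f_z''(W+s)\,ds\Big),
\]
which after integration against $K_W(y)$ splits the task into $r(W)\mathcal{A} + f_z'(W)\mathcal{B} + \mathcal{C}$. I would handle $\mathcal{B}$ first. In the interior regime $W\in[-1/\delta+\delta,\, -\zeta-\delta]$, one further Taylor step $\int_0^y r'(W+s)\, ds = y\, r'(W) + \int_0^y\!\int_0^s r''(W+u)\, du\, ds$, combined with the elementary moment $\int_{-\delta}^{\delta} y K_W(y)\, dy = \delta^2 b(W)/6$, produces Terms~8 and~9. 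The two jump points $W\in\{-1/\delta,-\zeta\}$ are handled by exploiting two structural degeneracies: when $W=-1/\delta$ one has $d(0)=0$, hence $\lambda - b(W)/\delta = 0$ and $K_W(y)\equiv 0$ on $[-\delta,0]$, leaving only Term~6; when $W=-\zeta$ one has $r'(W+s)\equiv 0$ on $[0,\delta]$ since $r$ is constant to the right of $-\zeta$, leaving only Term~7.

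For $r(W)\mathcal{A}$ I would avoid $f_z'''$ by reinjecting the Poisson equation. Using $f_z''(w) = \frac{2}{a(w)}[\Prob(Y_S\geq z) - 1(w\geq z)] - g(w)$, the increment $f_z''(W+s)-f_z''(W)$ splits into a $\Prob(Y_S\geq z)$-weighted change in $2/a$, an indicator piece, and $-(g(W+s)-g(W))$. Substituting this into $\int_0^y f_z''(W+s)\, ds = y f_z''(W) + \int_0^y [f_z''(W+s)-f_z''(W)]\, ds$ and multiplying by $r(W)$ generates Term~1 from the $y f_z''(W)$ piece, Term~5 from the $\Prob(Y_S\geq z)$ piece, and Term~4 from the indicator piece; the last step uses crucially that $W$ and $z$ lie on a common $\delta$-grid, so that the integrand $1(W+s\geq z) - 1(W\geq z)$ has positive measure in $s\in[-\delta,\delta]$ only when $W=z$, which localizes the contribution. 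The remaining residual $-r(W)\int K_W(y) \int_0^y (g(W+s)-g(W))\, ds\, dy$ is then reduced by applying the original product-rule identity a second time: $g(W+s)-g(W) = r(W)\int_0^s f_z''(W+u)\,du + f_z'(W)\int_0^s r'(W+u)\,du + (\text{cross})$, which produces the coefficient $r(W)^2$ of Term~2 and the coefficient $r(W)f_z'(W)$ of Term~3, with the doubly-cross contributions collapsing into the $\mathcal{C}$ residual displayed in the statement.

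The main obstacle is bookkeeping rather than any single hard estimate. Each of the three discontinuities produces an indicator term in the final identity, and tracking how they interact with the integration variables $y$ and $s$ requires case analysis by the sign of $y$, the location of $W$ on the grid, and whether the integration path $\{W+s : s\in[0,y]\}$ crosses $z$, $-1/\delta$, or $-\zeta$. The two discrete-grid structural facts noted above---that $K_W$ vanishes on one side of $0$ at each of $W\in\{-1/\delta,-\zeta\}$, and that the $z$-indicator crossing contributes only when $W=z$---are what cleanly separate the nine summands of the statement; once these case splits are fixed, every remaining manipulation is an elementary use of the fundamental theorem of calculus together with the Poisson equation, and the cross terms of order $\delta^4$ are absorbed into the error bounds supplied in the subsequent estimate \eqref{eq:lem3} via the gradient bounds of Lemma~\ref{lem:gb} and the moment bounds of Lemma~\ref{lem:lastbounds}.
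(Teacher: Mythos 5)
The product-rule identity you open with,
\begin{align*}
g(W+y)-g(W) = r(W)\!\int_0^y\! f_z''(W+s)\, ds + f_z'(W)\!\int_0^y\! r'(W+s)\, ds + \Big(\!\int_0^y\! r'(W+u)\, du\Big)\Big(\!\int_0^y\! f_z''(W+s)\, ds\Big),
\end{align*}
is exact (it is just $(a+\alpha)(b+\beta)-ab=a\beta+b\alpha+\alpha\beta$ applied to $g=rf_z'$ and written via the Fundamental Theorem of Calculus), and it is sharper than the analogous intermediate step in the paper's own proof, which drops the bilinear cross term outright. But that sharpness is precisely where your sketch has a gap. Your decomposition $r(W)\mathcal{A}+f_z'(W)\mathcal{B}+\mathcal{C}$ carries the cross term $\mathcal{C}=\int_{-\delta}^{\delta}K_W(y)\big(\int_0^y r'\big)\big(\int_0^y f_z''\big)\,dy$ plus a nested cross term from the second iteration, yet the nine summands of \eqref{MD:verbose} are exactly those produced by $r(W)\mathcal{A}$ and $f_z'(W)\mathcal{B}$ under your own bookkeeping --- no cross-type residual is displayed. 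You assert the ``doubly-cross contributions collaps[e] into the $\mathcal{C}$ residual displayed in the statement,'' but there is no such residual in the statement, and $\mathcal{C}$ is \emph{not} subleading: $\int_0^y r'$ and $\int_0^y f_z''$ are each $O(\delta)$ while $\int_{-\delta}^{\delta} K_W\,dy = a(W)/2$, so $\mathcal{C}$ lives at the same $\delta$-order as Terms~2--4. It cannot be pushed into the estimate of \eqref{eq:lem3}, because the lemma claims an exact identity.

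There is also a sign-tracking issue. Reinjecting the Poisson equation gives $f_z''(W+s)-f_z''(W) = \Prob(Y_S\ge z)\,\Delta_C(s) - \Delta_B(s) - \big(g(W+s)-g(W)\big)$, and that minus sign propagates: the nested application of your identity yields $-r(W)^2\!\int\!\!\int\!\!\int f_z''$ and $-r(W)f_z'(W)\!\int\!\!\int\!\!\int r'$, and the $\Delta_B$ contribution comes out as $+r(W)\frac{2}{a(W)}\int_{-\delta}^{0}yK_W\,dy$, all opposite in sign to Terms~2, 3, and~4 as displayed. A quick self-test makes both problems visible: take a lattice point $W$ with $b(W)=0$, so $r(W)=0$ and Terms~1--5, 9 vanish (and Terms~6, 7 require $W\in\{-1/\delta,-\zeta\}$, also false). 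What survives on the right is only Term~8, which is $O(\delta^3)$ because $r''=O(\delta)$; but the left side retains the cross term $\int_{-\delta}^{\delta}K_W(y)\big(\int_0^y r'\big)\big(\int_0^y f_z''\big)\,dy$, which is $O(\delta^2)$. So the displayed nine-term right side cannot equal the left side without an explicit cross-term contribution. Before concluding, carry the two iterations through to the end with every sign and both cross terms written out; as sketched, the proposal does not close.
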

\noindent Examining the right hand side of \eqref{MD:verbose}, we see that we will again need moment and gradient bounds to bound its expected value. One of the moment bounds we will need is
\begin{align}
\E \Big[(\tilde X(\infty))^2 1(\tilde X(\infty) \leq -\zeta)\Big] \leq \frac{4}{3} + \frac{2\delta^2}{3}. \label{MD:xsquaredelta}
\end{align}
This was proved in \eqref{CW:xsquaredelta} of Chapter~\ref{chap:erlangAC}. The following lemma presents the necessary gradient bounds. It is proved in Section~\ref{app:MDgb}.
\begin{lemma} \label{lem:MDgradient_bounds}
There exists a constant $C > 0$ such that for any $\lambda > 0, \mu > 0$, and $n \geq 1$,
\begin{align}
\abs{f_z'(w)} \leq&\ \frac{1}{\mu }e^{\zeta^2}(3+\abs{\zeta}), \quad x \leq -\zeta, \label{MD:gb1} \\ 
f_z'(w) =&\ \frac{\Prob(Y_S \leq z)}{\mu \abs{\zeta}}, \quad w \geq -\zeta, \label{MD:gb2} \\
\frac{1}{\Prob(Y_S \geq z)} \abs{f_z''(w)}  \leq&\ \frac{C}{\mu}e^{\zeta^2}(1 + \abs{\zeta} + \zeta^2), \quad w \leq -\zeta, \label{MD:gb3}\\
\frac{1}{\Prob(Y_S \geq z)}\abs{f_z''(w)}  \leq&\ \frac{C}{\mu}e^{w\frac{2\abs{b(-\zeta)}}{a(-\zeta)}}e^{\zeta^2} (1+\abs{\zeta} + \zeta^2), \quad w \in [-\zeta, z], \label{MD:gb4}\\
f_z''(w)  =&\  0 ,\ \quad w \geq z. \label{MD:gb5}
\end{align}
\end{lemma}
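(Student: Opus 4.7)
The strategy is to exploit the explicit integral representations from Lemma~\ref{lem:solution} applied to $\bar a(w) = a(w)$ and $\bar b(w) = b(w)$, together with the Poisson equation itself to recover $f_z''(w)$ from $f_z'(w)$. Writing $p(w) = \exp(\int_0^w 2b(y)/a(y)\,dy)$ so that $\nu_S(w) = \kappa p(w)/a(w)$, the two equivalent forms of $f_z'$ rewrite as
\begin{align*}
f_z'(w) = \frac{2\,\Prob(Y_S \geq z)\Prob(Y_S \leq w)}{a(w)\nu_S(w)}\ (w \leq z), \qquad f_z'(w) = \frac{2\,\Prob(Y_S \leq z)\Prob(Y_S \geq w)}{a(w)\nu_S(w)}\ (w \geq z).
\end{align*}
First I would handle \eqref{MD:gb2} and \eqref{MD:gb5}. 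For $w \geq -\zeta$, $b$ and $a$ are both constant ($b \equiv \mu\zeta$, $a \equiv \mu(2+\delta\abs{\zeta})$), so on $[-\zeta,\infty)$ the density $\nu_S$ is a shifted exponential with rate $2\abs{\zeta}/(2+\delta\abs{\zeta})$; a direct computation then gives $2\Prob(Y_S \geq w)/(a(w)\nu_S(w)) = 1/(\mu\abs{\zeta})$, which yields \eqref{MD:gb2}. Substituting into the Poisson equation together with $b(w) = \mu\zeta$ for $w \geq z$ produces $f_z''(w) = 0$, which is \eqref{MD:gb5}.

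Next I would prove \eqref{MD:gb1}. On $w \leq -\zeta$ we have $1(y\geq z) = 0$ throughout the integration domain, so $f_z'(w) = \Prob(Y_S\geq z)\cdot 2\Prob(Y_S \leq w)/(a(w)\nu_S(w))$ and the task reduces to bounding the Mills-ratio quantity $\Prob(Y_S \leq w)/\nu_S(w)$. On the Gaussian tail region $w \leq -1/\delta$, where $\nu_S(w) \propto e^{-w^2}$, the classical estimate $\Prob(Y_S \leq w) \leq \nu_S(w)/(2\abs{w})$ applies. On $[-1/\delta,-\zeta]$, where $\nu_S(w) \propto (2+\delta w)^{-1}\exp\big[(2/\delta^2)(2\log(2+\delta w)-\delta w)\big]$, a second-order Taylor expansion of $2\log(2+\delta w)-\delta w$ around $\delta w = 0$ shows that this exponent is comparable to $-w^2$ up to absolute constants, so an analogous Mills-ratio estimate holds. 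Matching the pieces at $w = -1/\delta$ controls the normalization, and the factor $e^{\zeta^2}$ then enters through the single ratio $\Prob(Y_S \geq z)/\nu_S(-\zeta)$, since $\nu_S$ grows by at most $e^{\zeta^2/(2+\delta\abs{\zeta})}\leq e^{\zeta^2}$ between $w=0$ and $w=-\zeta$.

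Finally, \eqref{MD:gb3} and \eqref{MD:gb4} follow by rearranging the Poisson equation as
\begin{align*}
f_z''(w) = \frac{2}{a(w)}\big[\Prob(Y_S\geq z) - 1(w\geq z) - b(w)f_z'(w)\big]
\end{align*}
and substituting the bounds already derived. For $w \leq -\zeta$, $\abs{b(w)}\leq \mu(\abs{\zeta}+\abs{w+\zeta})$ combined with \eqref{MD:gb1} gives the polynomial-in-$\abs{\zeta}$ factor in \eqref{MD:gb3}. For \eqref{MD:gb4}, i.e.\ $w \in [-\zeta,z]$, one again uses the pre-$z$ form $f_z'(w) = 2\Prob(Y_S\geq z)\Prob(Y_S\leq w)/(a(w)\nu_S(w))$, and the explicit form of $\nu_S$ on $[-\zeta,\infty)$ gives $\nu_S(w) = \nu_S(-\zeta)\exp(-(w+\zeta)\cdot 2\abs{\zeta}/(2+\delta\abs{\zeta}))$, so dividing by $\nu_S(w)$ produces the exponential factor $e^{w\cdot 2\abs{b(-\zeta)}/a(-\zeta)}$ appearing in the bound.

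The main obstacle will be the transition across $w = -1/\delta$ between the Gaussian and log-linear pieces of $\nu_S$: one needs Mills-type estimates that hold uniformly in both $\delta$ and $\zeta$ so that no spurious $1/\delta$ factor survives, and so that the normalization ratios combine cleanly into a single $e^{\zeta^2}$ multiplier. Controlling this requires showing that on $[-1/\delta,-\zeta]$ the exponent $(2/\delta^2)(2\log(2+\delta w)-\delta w)$ is sandwiched between $-Cw^2$ and $-w^2/C$ for an absolute constant $C$, and then performing the integral $\int_{-\infty}^w \nu_S$ by splitting at $-1/\delta$ and estimating each piece against $\nu_S(w)/\abs{w}$.
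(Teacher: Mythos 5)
The overall strategy is right and matches the paper's: explicit integral representations from Lemma~\ref{lem:solution}, Mills-ratio-type estimates on those integrals (this is precisely what Lemma~\ref{lem:lowlevelbounds}'s \eqref{DS:fbound1} packages), and then recovery of $f_z''$ from $f_z'$ via the Poisson equation. Items \eqref{MD:gb2}, \eqref{MD:gb5}, \eqref{MD:gb1}, and \eqref{MD:gb4} are handled along essentially the same lines as the paper, even though you re-derive the integral estimates rather than cite them.

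There is, however, a genuine gap in your step for \eqref{MD:gb3}. You propose substituting the uniform bound \eqref{MD:gb1} for $\abs{f_z'(w)}$ together with $\abs{b(w)} \le \mu(\abs{\zeta}+\abs{w+\zeta})$ into $\abs{b(w)f_z'(w)}$. But for $w \le -\zeta$ one has $b(w) = -\mu w$, so $\abs{b(w)} = \mu\abs{w}$ is unbounded as $w\to-\infty$, and multiplying the uniform bound on $\abs{f_z'(w)}$ by an unbounded $\abs{b(w)}$ does not yield \eqref{MD:gb3}. On the subregion $w\in[0,-\zeta]$ your bound works (there $\abs{b(w)}\le\mu\abs{\zeta}$), but on $w\le 0$ it diverges. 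The resolution is the very cancellation you already use implicitly for \eqref{MD:gb1}: the Mills-ratio integral
\begin{align*}
e^{-\int_0^w \frac{2b}{a}}\int_{-\infty}^w \frac{2}{a(y)}e^{\int_0^y \frac{2b}{a}}\,dy \;\le\; \frac{1}{b(w)}, \qquad w<0,
\end{align*}
(which is \eqref{eq:main1} with $x_0 = 0$) gives $\abs{b(w)f_z'(w)}\le\Prob(Y_S\geq z)$ exactly; inserting this into the Poisson-equation rearrangement yields $\abs{f_z''(w)}/\Prob(Y_S\geq z)\le 2/a(w) \le 2/\mu$ for $w\le 0$. In other words, do not bound $\abs{b(w)}$ and $\abs{f_z'(w)}$ separately on $w\le 0$; instead keep the product and use the sharper $w$-dependent Mills estimate, which makes the $b(w)$ factor cancel rather than blow up.
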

Recall that $2\abs{b(-\zeta)}/a(-\zeta) = 2\abs{\zeta}/(2+\delta \abs{\zeta})$. The appearance of $e^{w\frac{2\abs{b(-\zeta)}}{a(-\zeta)}}$ in \eqref{MD:gb4} means that we require bounds on the moment generating function of $W$. The following lemma contains what we need, and is proved in Section~\ref{sec:MDmgfbound}.
\begin{lemma} \label{lem:mgfbound}
There exists a constant $C > 0$ such that for any $\lambda > 0, \mu > 0$, and $n \geq 1$ satisfying $\rho \geq 0.1$, and any $\gamma > \frac{2+\delta\abs{\zeta}}{2\abs{\zeta}}$, 
\begin{align}
&\E \Big(e^{\big(\frac{2\abs{\zeta}}{2+\delta\abs{\zeta}} - \frac{1}{\gamma}\big)W}1 (W \geq -\zeta )\Big) \leq \gamma C e^{\frac{2\zeta^2}{2+\delta\abs{\zeta}}}, \label{MD:mgfz} \\
&\E \Big(e^{\frac{2\abs{\zeta}}{2+\delta\abs{\zeta}}W}1 (W \geq -\zeta )\Big) \leq \frac{1}{\delta^2} \Big(\frac{1}{\abs{\zeta}}+\delta\Big)^3 C e^{\frac{2\zeta^2}{2+\delta\abs{\zeta}}}. \label{MD:mgf}
\end{align}
\end{lemma}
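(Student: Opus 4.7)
The plan is to exploit the closed-form geometric structure of the Erlang-C stationary distribution above level $n$, together with two complementary bounds on the mass $\pi_n$ at the threshold. Flow balance gives $\pi_k = \rho^{k-n}\pi_n$ for $k \geq n$, so writing $u = \delta\abs{\zeta} = 1/\rho - 1 \in (0, 9]$ (where the condition $\rho \geq 0.1$ enters), the geometric sum collapses whenever $\rho e^{\delta\beta} < 1$:
\begin{align*}
\E\big[e^{\beta W}\,1(W \geq -\zeta)\big] = \frac{\pi_n\,e^{\beta\abs{\zeta}}}{1-\rho\,e^{\delta\beta}}.
\end{align*}
With $\beta_0 = 2\abs{\zeta}/(2+u)$, direct algebra gives $\rho e^{\delta\beta_0} = g(u) := e^{2u/(2+u)}/(1+u)$ and $\beta_0\abs{\zeta} = 2\zeta^2/(2+u)$, so the task reduces to controlling $\pi_n$ together with the denominator $1-\rho e^{\delta\beta}$ and the residual exponential.

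The analytic core is a cubic lower bound $1-g(u) \geq c\,u^3$ on $(0, 9]$. Setting $h(u) = \log g(u) = 2u/(2+u) - \log(1+u)$, one computes $h'(u) = -u^2/[(2+u)^2(1+u)] \leq 0$, hence $g \leq 1$ and
\begin{align*}
-h(u) = \int_0^u \frac{s^2\,ds}{(2+s)^2(1+s)} \geq \frac{u^3}{3(2+u)^2(1+u)}.
\end{align*}
Since $-h(u) \leq 1$ on $(0, 9]$, combining with $1-e^x \geq (1-e^{-1})|x|$ for $x \in [-1, 0]$ extracts an absolute constant $c > 0$. I will also use two bounds on $\pi_n$: Lemma~\ref{lem:pibounds} directly gives $\pi_n \leq \delta\abs{\zeta}$, while the Kolmogorov-type comparison in~\eqref{eq:pikolmbound} (using Theorem~\ref{thm:erlangCK} and Lemma~\ref{lem:densboundC}) gives $\pi_n \leq C\delta$; together, $\pi_n \leq \delta\min(\abs{\zeta}, C)$.

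For~\eqref{MD:mgf}, taking $\beta = \beta_0$ makes the denominator equal to $1-g(u)$. Splitting into $u$ small (use the cubic bound $1-g(u) \geq cu^3$ and $\pi_n \leq \delta\abs{\zeta}$) versus $u$ bounded away from zero (where $1-g(u)$ is bounded below by a positive constant and $\pi_n \leq C\delta$ applies), the geometric formula produces $\pi_n e^{\beta_0\abs{\zeta}}/(1-g(u)) \leq C e^{\beta_0\abs{\zeta}}/(\delta^2\abs{\zeta}^3)$, which is dominated by $(1/\abs{\zeta}+\delta)^3 C\,e^{\beta_0\abs{\zeta}}/\delta^2$. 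For~\eqref{MD:mgfz}, write $\rho e^{\delta\beta} = g(u)e^{-\delta/\gamma}$ and lower-bound
\begin{align*}
1 - g(u)e^{-\delta/\gamma} = \frac{(e^{\delta/\gamma}-1)+(1-g(u))}{e^{\delta/\gamma}} \geq e^{-\delta/\gamma}\,\frac{\delta}{\gamma},
\end{align*}
with $\delta/\gamma \leq 2u/(2+u) \leq 9$ since $\gamma > 1/\beta_0$, so $e^{-\delta/\gamma}$ is bounded below. Inserting $\pi_n \leq C\delta$ and retaining the factor $e^{-\abs{\zeta}/\gamma}$ yields $C\gamma\,e^{(\delta-\abs{\zeta})/\gamma}\,e^{\beta_0\abs{\zeta}} \leq C\gamma\,e^{\beta_0\abs{\zeta}}$, where the last inequality uses $(\delta-\abs{\zeta})/\gamma \leq \delta/\gamma \leq 9$.

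The hard part will be the near-critical regime $u \downarrow 0$, where the geometric series is nearly divergent; matching the cubic gap $1-g(u) \geq cu^3$ against the linear factor $u$ in $\pi_n \leq \delta\abs{\zeta}$ is what produces the $\abs{\zeta}^{-3}$ prefactor in~\eqref{MD:mgf}. The complementary bound $\pi_n \leq C\delta$ is essential in the opposite regime, where $u$ is bounded away from zero forces $\abs{\zeta}$ to be of order $\sqrt{R}$, and the bare bound $\pi_n \leq \delta\abs{\zeta}$ is too loose to reproduce the $\delta$-size estimate claimed by the lemma.
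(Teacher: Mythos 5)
Your approach is genuinely different from the paper's and exploits structure the paper deliberately avoids. The paper bounds $\E G_W f(W) = 0$ with the Lyapunov-type test function $f(w) = e^{r\phi(w)}$ (with $\phi$ truncating at $-\zeta$ and $M$), and the crux is a fifth-order Taylor expansion of $\lambda(e^{\delta r}-1)+n\mu(e^{-\delta r}-1)$ (see \eqref{eq:bounds}), which requires care to control the drift but never touches the explicit stationary distribution. You instead use the geometric tail $\pi_k = \rho^{k-n}\pi_n$ for $k \geq n$, collapse the exponential moment to $\pi_n e^{\beta|\zeta|}/(1-\rho e^{\delta\beta})$, and reduce the problem to lower-bounding $1-g(u)$ with $g(u) = e^{2u/(2+u)}/(1+u)$. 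Your cubic bound $1-g(u) \geq cu^3$ for $u \in (0,9]$ is correct (the computation of $h'(u) = -u^2/[(2+u)^2(1+u)]$ and the monotone lower bound on the integral both check out, and $-h(9) \approx 0.67 < 1$ gives the concavity step). This route is more elementary and shorter, at the cost of depending on the explicit birth--death structure; the paper's method is closer in spirit to the generator-comparison framework used elsewhere in the thesis and would generalize more readily.

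There is one gap in your case split for \eqref{MD:mgf}. In the regime ``$u$ small,'' you propose to pair the cubic bound with $\pi_n \leq \delta|\zeta|$, which yields $\pi_n/(1-g(u)) \leq 1/(c\delta^2|\zeta|^2)$. But the target $(1/|\zeta|+\delta)^3/\delta^2 = (1+u)^3/(\delta^2|\zeta|^3)$ requires a factor $|\zeta|^{-3}$, and the deficit of one power of $|\zeta|$ is fatal because $u = \delta|\zeta|$ can be small while $|\zeta| = u/\delta$ is arbitrarily large (small $\delta$). The fix is simple and actually eliminates the case split entirely: use $\pi_n \leq C\delta$ (your bound from \eqref{eq:pikolmbound}, which relies on Theorem~\ref{thm:erlangCK}; the paper's proof invokes the same fact) together with the cubic bound in all regimes, giving $\pi_n/(1-g(u)) \leq C\delta/(c\delta^3|\zeta|^3) = C'/(\delta^2|\zeta|^3) \leq C'(1+u)^3/(\delta^2|\zeta|^3)$ directly. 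The bound $\pi_n \leq \delta|\zeta|$ from Lemma~\ref{lem:pibounds} turns out not to be needed here at all. Your argument for \eqref{MD:mgfz} is correct as written: $1-g(u)e^{-\delta/\gamma} \geq e^{-\delta/\gamma}\delta/\gamma$ from $e^{\delta/\gamma}-1 \geq \delta/\gamma$ and $g \leq 1$, then $\delta/\gamma < \delta\beta_0 = 2u/(2+u) \leq 2$ keeps $e^{(\delta-|\zeta|)/\gamma}$ bounded, and $\pi_n \leq C\delta$ absorbs the remaining $\delta/\gamma$.
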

We are now ready to prove Lemma~\ref{lem:termbyterm}.
\begin{proof}[Proof of Lemma~\ref{lem:termbyterm}]
We prove this lemma by taking expected values on both sides of \eqref{MD:verbose}, and bounding the terms on the right hand side one at a time. Namely, we will bound $1/\Prob(Y_S \geq z)$ times 
\begin{align}
&\ \frac{1}{6}\delta^2 \bigg|\E \bigg[b(W)\frac{2b(W)}{a(W)}f_z''(W)\bigg] \bigg| \notag \\
&+ \bigg|\E \bigg[\Big(\frac{2b(W)}{a(W)}\Big)^2 \int_{-\delta}^{\delta}K_W(y)\int_{0}^{y}\int_{0}^{s} f_z''(W+u) dudsdy \bigg]\bigg|\notag \\
&+ \bigg|\E \bigg[ \frac{2b(W)}{a(W)}f_z'(W)  \int_{-\delta}^{\delta}K_W(y)\int_{0}^{y}\int_{0}^{s} r'(W+u)duds dy\bigg]\bigg| \notag \\
&+ \bigg|\E \bigg[1(W=z) \frac{2b(W)}{a(W)}\frac{2}{a(W)}\int_{-\delta}^{0}yK_W(y) dy \bigg] \bigg| \notag \\
&+ \Prob(Y_S \geq z)\bigg| \E \bigg[\frac{2b(W)}{a(W)}\int_{-\delta}^{\delta}K_W(y)\int_{0}^{y}\Big(\frac{2}{a(W+s)} - \frac{2}{a(W)} \Big)dsdy \bigg]\bigg| \notag \\
&+ \bigg| \E \bigg[1(W = -1/\delta)f_z'(W)\int_{0}^{\delta} K_W(y)\int_{0}^{y}r'(W+s)ds  dy \bigg]\bigg| \notag \\
&+ \bigg| \E \bigg[1(W = -\zeta) f_z'(W)\int_{-\delta}^{0} K_W(y)\int_{0}^{y}r'(W+s)ds  dy \bigg]\bigg|\notag \\
&+ \bigg| \E \bigg[1(W \in [-1/\delta + \delta, -\zeta - \delta]) f_z'(W) \int_{-\delta}^{\delta} K_W(y)\int_{0}^{y}\int_{0}^{s}r''(W+u)duds  dy \bigg] \bigg|\notag \\
&+ \bigg|\E \bigg[ 1(W \in [-1/\delta + \delta, -\zeta - \delta]) f_z'(W) r'(W) \frac{1}{6}\delta^2b(W)\bigg]\bigg|, \label{MD:forlemma}
\end{align}
one line at a time. We begin with the first line in \eqref{MD:forlemma}:
\begin{align*}
& \frac{1}{\Prob(Y_S \geq z)}\bigg|\E \bigg[\frac{1}{6}\delta^2 b(W)\frac{2b(W)}{a(W)}f_z''(W) \bigg]\bigg| \\
\leq&\ \delta^2\frac{C}{\mu}e^{\zeta^2}(1 + \abs{\zeta} + \zeta^2)  \E \bigg[\frac{2b^2(W)}{a(W)} 1(W \leq -\zeta)\bigg] \\
&+ \delta^2 \frac{C}{\mu}e^{\zeta^2} (1+\abs{\zeta} + \zeta^2)\frac{2b^2(-\zeta)}{a(-\zeta)}\E \bigg[e^{W \frac{2\abs{\zeta}}{2+\delta\abs{\zeta}}} 1(W \in [-\zeta, z])\bigg] \\
\leq&\ C\delta^2 e^{\zeta^2}(1 + \abs{\zeta} + \zeta^2) \E \bigg[W^2 1(W \leq -\zeta)\bigg] \\
&+ \delta^2 \frac{C}{\mu}e^{\zeta^2} (1+\abs{\zeta} + \zeta^2)\frac{2b^2(-\zeta)}{a(-\zeta)}\E \bigg[e^{W \frac{2\abs{\zeta}}{2+\delta\abs{\zeta}}} 1(W \in [-\zeta, z])\bigg] \\
\leq&\ C\delta^2 e^{\zeta^2}(1 + \abs{\zeta} + \zeta^2)+ \delta^2 \frac{C}{\mu}e^{\zeta^2} (1+\abs{\zeta} + \zeta^2)\frac{2b^2(-\zeta)}{a(-\zeta)}\E \bigg[e^{W \frac{2\abs{\zeta}}{2+\delta\abs{\zeta}}} 1(W \in [-\zeta, z])\bigg]\\
\leq&\ C\delta^2 e^{\zeta^2}(1 + \abs{\zeta} + \zeta^2)+ \delta^2 C e^{\zeta^2} \zeta^2(1+\abs{\zeta} + \zeta^2)\E \bigg[e^{W \frac{2\abs{\zeta}}{2+\delta\abs{\zeta}}} 1(W \in [-\zeta, z])\bigg],
\end{align*}
where we used \eqref{MD:xsquaredelta} in the third inequality. If $z \leq \frac{2+\delta \abs{\zeta}}{2\abs{\zeta}}$, then 
\begin{align*}
\E \bigg[e^{W \frac{2\abs{\zeta}}{2+\delta\abs{\zeta}}} 1(W \in [-\zeta, z])\bigg] \leq 3.
\end{align*} 
If $z > \frac{2+\delta \abs{\zeta}}{2\abs{\zeta}}$, we use \eqref{MD:mgfz} with $z = \gamma$ there to see that
\begin{align*}
\E \bigg[e^{W \frac{2\abs{\zeta}}{2+\delta\abs{\zeta}}} 1(W \in [-\zeta, z])\bigg] =&\ \E \bigg[e^{W \big(\frac{2\abs{\zeta}}{2+\delta\abs{\zeta}}- 1/z\big)} e^{W/z}1(W \in [-\zeta, z])\bigg] \\
\leq&\ z C e^{\frac{2\zeta^2}{2+\delta\abs{\zeta}}} \leq z C e^{\zeta^2}.
\end{align*}
Using \eqref{MD:mgf},
\begin{align*}
\E \bigg[e^{W \frac{2\abs{\zeta}}{2+\delta\abs{\zeta}}} 1(W \in [-\zeta, z])\bigg] \leq&\  \frac{1}{\delta^2} \Big(\frac{1}{\abs{\zeta}}+\delta\Big)^3 C e^{\frac{2\zeta^2}{2+\delta\abs{\zeta}}} \leq  \frac{1}{\delta^2} \Big(\frac{1}{\abs{\zeta}}+\delta\Big)^3 C e^{\zeta^2}.
\end{align*}
Hence, 
\begin{align*}
&\frac{1}{\Prob(Y_S \geq z)}\bigg|\E \bigg[\frac{1}{6}\delta^2 b(W)\frac{2b(W)}{a(W)}f_z''(W) \bigg]\bigg| \\
\leq&\ C\delta^2 e^{\zeta^2}(1 + \abs{\zeta} + \zeta^2) + C\delta^2 e^{2\zeta^2}\zeta^2(1 + \abs{\zeta} + \zeta^2) \min\Big\{(z \vee 1), \frac{1}{\delta^2} \Big(\frac{1}{\abs{\zeta}}+\delta\Big)^3\Big\}.
\end{align*}
Moving on to the second line of \eqref{MD:forlemma}:
\begin{align*}
&\frac{1}{\Prob(Y_S \geq z)}\bigg|\E \bigg[\Big(\frac{2b(W)}{a(W)}\Big)^2 \int_{-\delta}^{\delta}K_W(y)\int_{0}^{y}\int_{0}^{s} f_z''(W+u) dudsdy \bigg]\bigg| \\
\leq&\ \Big(\frac{2b(-\zeta)}{a(-\zeta)}\Big)^2 \E\bigg[ \int_{-\delta}^{\delta} K_W(y)\int_{0}^{y} \int_{0}^{s} 1( W+u\geq -\zeta) f_z''(W+u) du dsdy\bigg] \\
&+  \E\bigg[ \Big(\frac{2b(W)}{a(W)}\Big)^2\int_{-\delta}^{\delta} K_W(y)\int_{0}^{y} \int_{0}^{s}  1( W+u\leq -\zeta)f_z''(W+u) du dsdy\bigg] \\
\leq&\ \Big(\frac{2b(-\zeta)}{a(-\zeta)}\Big)^2 \E\bigg[ \int_{-\delta}^{\delta} K_W(y)\int_{0}^{y} \int_{0}^{s}  1( W+u \in [-\zeta, z]) \\
& \hspace{6cm} \times  \frac{C}{\mu}e^{\zeta^2}(1 + \abs{\zeta} + \zeta^2)e^{(W+u) \frac{2\abs{\zeta}}{2+\delta\abs{\zeta}}} du dsdy\bigg] \\
&+  \E\bigg[ \Big(\frac{2b(W)}{a(W)}\Big)^2 \int_{-\delta}^{\delta} K_W(y)\int_{0}^{y} \int_{0}^{s}  1( W\leq -\zeta )\frac{C}{\mu}e^{\zeta^2}(1 + \abs{\zeta} + \zeta^2) du dsdy\bigg],
\end{align*}
where in the second inequality we used the gradient bounds from Lemma~\ref{lem:MDgradient_bounds}. To bound the first term, note that 
\begin{align*}
&\Big(\frac{2b(-\zeta)}{a(-\zeta)}\Big)^2 \E\bigg[ \int_{-\delta}^{\delta} K_W(y)\int_{0}^{y} \int_{0}^{s} 1( W+u \in [-\zeta, z]) \\
&\hspace{6cm} \times  \frac{C}{\mu}e^{\zeta^2}(1 + \abs{\zeta} + \zeta^2)e^{(W+u) \frac{2\abs{\zeta}}{2+\delta\abs{\zeta}}} du dsdy\bigg] \\
\leq&\ \Big(\frac{2b(-\zeta)}{a(-\zeta)}\Big)^2 \E\bigg[ 1( W \in[-\zeta, z])\int_{-\delta}^{\delta} C\delta^2 K_W(y) \frac{1}{\mu}e^{\zeta^2}(1 + \abs{\zeta} + \zeta^2)e^{(W+\delta) \frac{2\abs{\zeta}}{2+\delta\abs{\zeta}}}dy\bigg]\\
\leq&\ \Big(\frac{2b(-\zeta)}{a(-\zeta)}\Big)^2  \frac{C\delta^2}{\mu}e^{\zeta^2}(1 + \abs{\zeta} + \zeta^2)\E\bigg[ 1( W \in[-\zeta, z])e^{W \frac{2\abs{\zeta}}{2+\delta\abs{\zeta}}}\int_{-\delta}^{\delta}  K_W(y)dy\bigg]\\
=&\ \Big(\frac{2b(-\zeta)}{a(-\zeta)}\Big)^2  \frac{C\delta^2}{\mu}e^{\zeta^2}(1 + \abs{\zeta} + \zeta^2)\E\bigg[ 1( W \in[-\zeta, z])e^{W \frac{2\abs{\zeta}}{2+\delta\abs{\zeta}}} \frac{1}{2}	a(W)\bigg]\\
=&\ \frac{1}{2}	a(-\zeta)\Big(\frac{2b(-\zeta)}{a(-\zeta)}\Big)^2  \frac{C\delta^2}{\mu}e^{\zeta^2}(1 + \abs{\zeta} + \zeta^2)\E\bigg[ 1( W \in[-\zeta, z])e^{W \frac{2\abs{\zeta}}{2+\delta\abs{\zeta}}} \bigg]\\
\leq&\ C\delta^2 e^{2\zeta^2}(1 + \abs{\zeta} + \zeta^2) \frac{2b^2(-\zeta)}{\mu a(-\zeta)} \min\Big\{z, \frac{1}{\delta^2} \Big(\frac{1}{\abs{\zeta}}+\delta\Big)^3\Big\}\\
\leq&\ C\delta^2 e^{2\zeta^2}(1 + \abs{\zeta} + \zeta^2) \zeta^2 \min\Big\{z, \frac{1}{\delta^2} \Big(\frac{1}{\abs{\zeta}}+\delta\Big)^3\Big\}.
\end{align*}
For the second term, 
\begin{align*}
& \E\bigg[ \Big(\frac{2b(W)}{a(W)}\Big)^21( W\leq -\zeta )\int_{-\delta}^{\delta} K_W(y)\int_{0}^{y} \int_{0}^{s}  \frac{C}{\mu}e^{\zeta^2}(1 + \abs{\zeta} + \zeta^2) du dsdy\bigg] \\ 
\leq&\ C\delta^2 e^{\zeta^2}(1 + \abs{\zeta} + \zeta^2) \E\bigg[ \Big(\frac{2b(W)}{a(W)}\Big)^21( W\leq -\zeta )\frac{1}{\mu}\int_{-\delta}^{\delta} K_W(y)dy\bigg] \\ 
=&\ C\delta^2 e^{\zeta^2}(1 + \abs{\zeta} + \zeta^2) \E\bigg[ \Big(\frac{2b(W)}{a(W)}\Big)^21( W\leq -\zeta )\frac{a(W)}{2\mu}\bigg] \\ 
=&\ C\delta^2e^{\zeta^2}(1 + \abs{\zeta} + \zeta^2)\E \bigg[\frac{2b^2(W)}{\mu a(W)} 1(W \leq -\zeta)\bigg] \\ 
\leq&\ C\delta^2 e^{\zeta^2}(1 + \abs{\zeta} + \zeta^2).
\end{align*}
Hence, 
\begin{align*}
&\frac{1}{\Prob(Y_S \geq z)}\bigg|\E \bigg[\Big(\frac{2b(W)}{a(W)}\Big)^2 \int_{-\delta}^{\delta}K_W(y)\int_{0}^{y}\int_{0}^{s} f_z''(W+u) dudsdy \bigg]\bigg| \\ 
\leq&\  C\delta^2 e^{\zeta^2}(1 + \abs{\zeta} + \zeta^2) + C\delta^2 e^{2\zeta^2}\zeta^2(1 + \abs{\zeta} + \zeta^2) \min\Big\{(z \vee 1), \frac{1}{\delta^2} \Big(\frac{1}{\abs{\zeta}}+\delta\Big)^3\Big\}.
\end{align*}
We now bound the third line in \eqref{MD:forlemma}:
\begin{align*}
&\frac{1}{\Prob(Y_S \geq z)}\bigg|\E \bigg[\frac{2b(W)}{a(W)}f_z'(W)  \int_{-\delta}^{\delta}K_W(y)\int_{0}^{y}\int_{0}^{s} g'(W+u)duds dy\bigg]\bigg| \\
\leq&\ \frac{1}{\Prob(Y_S \geq z)}\bigg|\E \bigg[\frac{2b(W)}{a(W)}f_z'(W)  \int_{-\delta}^{\delta}K_W(y)\int_{0}^{y}\int_{0}^{s} 4\times 1(W+u \leq -\zeta) duds dy\bigg]\bigg| \\
\leq&\ \frac{1}{\Prob(Y_S \geq z)}\bigg|\E \bigg[\frac{2b(W)}{a(W)}f_z'(W) 1(W\leq -\zeta)C\delta^2 \int_{-\delta}^{\delta}K_W(y)dy\bigg]\bigg| \\
\leq&\ C\delta^2 \E \bigg[\abs{b(W)f_z'(W)} 1(W\leq -\zeta)\bigg] \\
\leq&\ C\delta^2 \E \bigg[ \mu \abs{W} \frac{1}{\mu } e^{\zeta^2}(1+\abs{\zeta}) 1(W\leq -\zeta)\bigg] \\
\leq&\ C\delta^2  e^{\zeta^2}(1+\abs{\zeta}),
\end{align*}
where in the second last inequality we used \eqref{MD:gb1}, and in the last inequality we used \eqref{MD:xsquaredelta}. We now bound the fourth line in \eqref{MD:forlemma}:
\begin{align*}
&\frac{1}{\Prob(Y_S \geq z)}\bigg|\E \bigg[1(W=z) \frac{2b(W)}{a(W)}\frac{2}{a(W)}\int_{-\delta}^{0}yK_W(y) dy\bigg]\bigg| \\ 
\leq&\ \frac{\Prob(W=z)}{\Prob(Y_S \geq z)}\bigg| \frac{2b(z)}{a(z)}\frac{2}{a(z)}\delta\int_{-\delta}^{\delta}K_W(y) dy\bigg| \\
=&\ \frac{\Prob(W=z)}{\Prob(Y_S \geq z)}\delta\bigg|\frac{2b(z)}{a(z)}\bigg| \\
=&\ \delta(1-\rho) \frac{\Prob(W\geq z)}{\Prob(Y_S \geq z)}\frac{2|b(-\zeta)|}{a(-\zeta)} \\
=&\ \delta(1-\rho)\frac{2\abs{\zeta}}{2+\delta \abs{\zeta}} \frac{\Prob(W\geq z)}{\Prob(Y_S \geq z)},
\end{align*}
where in the second last equality we used \eqref{MD:tail}.  We now bound the fifth line in \eqref{MD:forlemma}:
\begin{align*}
&\frac{1}{\Prob(Y_S \geq z)}\bigg|\E \bigg[\Prob(Y_S \geq z)\frac{2b(W)}{a(W)}\int_{-\delta}^{\delta}K_W(y)\int_{0}^{y}\Big(\frac{2}{a(W+s)} - \frac{2}{a(W)} \Big)dsdy\bigg]\bigg| \\ 
=&\ \E \bigg[\frac{2b(W)}{a(W)}\int_{-\delta}^{\delta}K_W(y)\int_{0}^{y}2\Big|  \frac{a(W)-a(W+s)}{a(W+s)a(W)}  \Big|dsdy\bigg]\\
\leq&\ \E \bigg[\frac{2b(W)}{a(W)}\int_{-\delta}^{\delta}K_W(y)\int_{0}^{y}  \frac{2\mu \delta \abs{s}}{a(W+s)a(W)} dsdy\bigg]\\
\leq&\ \E \bigg[\frac{2b(W)}{a(W)}\int_{-\delta}^{\delta}K_W(y)\int_{0}^{y}  \frac{2\mu \delta^2}{\mu a(W)} dsdy\bigg]\\
=&\ \delta^3\E \bigg[\frac{2b(W)}{a(W)}\frac{2}{ a(W)}\int_{-\delta}^{\delta}K_W(y) dy\bigg]\\
=&\ \delta^3\E \bigg[\frac{2b(W)}{a(W)}\bigg]\\
\leq&\ \delta^3C(1+\abs{\zeta}),
\end{align*}
where in the first inequality we used the fact that $a'(w) \leq \mu\delta$ for all $w \in \R$, and in the last inequality we used \eqref{MD:xsquaredelta}. 
 We now bound the sixth line in \eqref{MD:forlemma}:
\begin{align*}
&\frac{1}{\Prob(Y_S \geq z)}\bigg|\E \bigg[1(W = -1/\delta)f_z'(W)\int_{0}^{\delta} K_W(y)\int_{0}^{y}g'(W+s)ds  dy\bigg]\bigg| \\ 
\leq&\ \Prob(W = -1/\delta)\frac{3}{\mu} \int_{0}^{\delta} K_W(y)\int_{0}^{y}|g'(-1/\delta+s)|ds  dy\\ 
\leq&\ \Prob(W = -1/\delta)\frac{3}{\mu} \int_{0}^{\delta} 4\delta K_W(y)dy\\
=&\ \Prob(W = -1/\delta)\frac{3}{\mu} 4\delta\frac{\lambda \delta^2 }{2}\\
\leq&\ C\delta\Prob(W = -1/\delta) \\
\leq&\ C\delta^2,
\end{align*}
where we obtained the first inequality from \eqref{MD:gb1}. The term in the seventh line is bounded similarly:
\begin{align*}
&\frac{1}{\Prob(Y_S \geq z)}\bigg|\E \bigg[1(W = -\zeta) f_z'(W)\int_{-\delta}^{0} K_W(y)\int_{0}^{y}g'(W+s)ds  dy\bigg]\bigg| \\ 
\leq&\ \Prob(W = -\zeta)\frac{e^{\zeta^2}(3+\abs{\zeta})}{\mu} \int_{-\delta}^{0} 4\delta K_W(y)dy\\
=&\ \Prob(W = -\zeta)\frac{e^{\zeta^2}(3+\abs{\zeta})}{\mu} 4\delta\frac{1}{2}(\delta^2 \lambda - \delta b(-\zeta))\\
\leq&\ C\delta\Prob(W = -\zeta)e^{\zeta^2}(1+\abs{\zeta}) \\
\leq&\ C\delta^2e^{\zeta^2}(1+\abs{\zeta}).
\end{align*}
We now bound the eighth line in \eqref{MD:forlemma}:
\begin{align*}
&\frac{1}{\Prob(Y_S \geq z)}\bigg|\E \bigg[1(W \in [-1/\delta + \delta, -\zeta - \delta])  f_z'(W) \\
&\hspace{5cm} \times \int_{-\delta}^{\delta} K_W(y)\int_{0}^{y}\int_{0}^{s}g''(W+u)duds  dy \bigg]\bigg| \\ 
\leq&\ \frac{e^{\zeta^2}(3+\abs{\zeta})}{\mu} \E \bigg[1(W \in [-1/\delta + \delta, -\zeta - \delta])  \int_{-\delta}^{\delta} 8\delta^2 K_W(y)dy \bigg]\\
=&\ \frac{e^{\zeta^2}(3+\abs{\zeta})}{\mu}4\delta^2 \E \bigg[1(W \in [-1/\delta + \delta, -\zeta - \delta])  a(W) \bigg]\\
\leq&\ \frac{e^{\zeta^2}(3+\abs{\zeta})}{\mu}4\delta^2 \E \bigg[1(W \leq -\zeta ) \mu (2+\delta\abs{W}) \bigg]\\
\leq&\ C\delta^2e^{\zeta^2}(1+\abs{\zeta}).
\end{align*}
Finally, we bound the ninth line in \eqref{MD:forlemma}:
\begin{align*}
&\frac{1}{\Prob(Y_S \geq z)}\bigg|\E \bigg[1(W \in [-1/\delta + \delta, -\zeta - \delta])  f_z'(W)g'(W) \frac{1}{6}\delta^2b(W)\Big)\bigg]\bigg| \\ 
\leq&\ C\frac{e^{\zeta^2}(3+\abs{\zeta})}{\mu} \E \big[1(W \leq -\zeta)  \delta^2 \abs{b(W)}\big]\\
\leq&\ C\delta^2e^{\zeta^2}(1+\abs{\zeta}).
\end{align*}
Combining these nine bounds together, we arrive at the final bound of 
\begin{align*}
&C\delta^2e^{\zeta^2}(1+\abs{\zeta} + \zeta^2)  +\delta(1-\rho)\frac{2\abs{\zeta}}{2+\delta \abs{\zeta}} \frac{\Prob(W\geq z)}{\Prob(Y_S \geq z)} \frac{\Prob(W\geq z)}{\Prob(Y_S \geq z)}\\
&+ C\delta^2 e^{2\zeta^2}\zeta^2(1 + \abs{\zeta} + \zeta^2) \min\Big\{(z \vee 1), \frac{1}{\delta^2} \Big(\frac{1}{\abs{\zeta}}+\delta\Big)^3\Big\}.
\end{align*}
Combining the above with the fact that $\delta\abs{\zeta} = \delta^2(n-R) = 1/\rho - 1$ concludes the proof.
\end{proof}

\subsection{Proof of Lemma~\ref{lem:error_expansion} (Error term)}
Recall that $r(w) = \frac{2b(w)}{a(w)}$. Using the forms of $a(w)$ and $b(w)$ in \eqref{DS:adef} and \eqref{DS:bdef}, it is not hard to check that 
\begin{align*}
r'(w) = \begin{cases}
2, \quad w \leq -1/\delta, \\
\frac{-4}{(2+\delta w)^2}, \quad w \in (-1/\delta, -\zeta], \\
0, \quad w > -\zeta,
\end{cases}
\end{align*}
where $r'(w)$ is understood to be the left derivative at the points $w = -1/\delta$ and $w = -\zeta$. Assume for now that for all $y \in (-\delta, \delta)$, 
\begin{align}
&\frac{2b(W+y)}{a(W+y)}f_z'(W+y) - \frac{2b(W)}{a(W)}f_z'(W) \notag \\
=&\ y \frac{2b(W)}{a(W)}f_z''(W) +  \frac{2b(W)}{a(W)}\int_{0}^{y}\int_{0}^{s} \Big(\frac{2b(W)}{a(W)} f_z''(W+u) + r'(W+u)f_z'(W) \Big)duds  \notag  \\
&+ \frac{2b(W)}{a(W)}\int_{0}^{y}\Big(\frac{2}{a(W+s)} 1(W+s \geq z) - \frac{2}{a(W)} 1(W \geq z) \Big)ds \notag \\
&+ \Prob(Y_S \geq z)\frac{2b(W)}{a(W)}\int_{0}^{y}\Big(\frac{2}{a(W+s)} - \frac{2}{a(W)} \Big)ds + f_z'(W) \int_{0}^{y}r'(W+s)ds. \label{eq:interm}
\end{align}
We postpone verifying \eqref{eq:interm} to the end of this proof. Since $z \geq -\zeta + \delta$ and $a(w) = a(-\zeta)$ for $w \geq -\zeta$, we see that
\begin{align}
&\frac{2b(W)}{a(W)}\int_{0}^{y}\Big(\frac{2}{a(W+s)} 1(W+s \geq z) - \frac{2}{a(W)} 1(W \geq z) \Big)ds \notag \\
=&\ 1(W=z) \frac{2b(W)}{a(W)}\frac{2}{a(W)}\int_{0}^{y} \big(1(s \geq 0) - 1 \big)ds\\
=&\ 1(W=z) \frac{2b(W)}{a(W)}\frac{2}{a(W)}\big(-y1(y \geq 0)\big). \label{eq:interm2}
\end{align}
Combining \eqref{eq:interm}--\eqref{eq:interm2} with the fact that $\int_{-\delta}^{\delta}yK_W(y)dy = \frac{1}{6}\delta^2 b(W)$, we arrive at
\begin{align*}
&\int_{-\delta}^{\delta} \Big(\frac{2b(W+y)}{a(W+y)}f_z'(W+y) - \frac{2b(W)}{a(W)}f_z'(W) \Big) K_W(y) dy  \notag \\ 
=&\ \frac{1}{6}\delta^2 b(W)\frac{2b(W)}{a(W)}f_z''(W) + \Big(\frac{2b(W)}{a(W)}\Big)^2 \int_{-\delta}^{\delta}K_W(y)\int_{0}^{y}\int_{0}^{s} f_z''(W+u) dudsdy \\
&+  \frac{2b(W)}{a(W)}f_z'(W)  \int_{-\delta}^{\delta}K_W(y)\int_{0}^{y}\int_{0}^{s} r'(W+u)duds dy \\
&- 1(W=z) \frac{2b(W)}{a(W)}\frac{2}{a(W)}\int_{-\delta}^{0}yK_W(y) dy\\
&+ \Prob(Y_S \geq z)\frac{2b(W)}{a(W)}\int_{-\delta}^{\delta}K_W(y)\int_{0}^{y}\Big(\frac{2}{a(W+s)} - \frac{2}{a(W)} \Big)dsdy \\
&+ f_z'(W)\int_{-\delta}^{\delta}K_W(y) \int_{0}^{y}r'(W+s)dsdy
\end{align*}
We are almost done, but the last term on the right hand side above requires some additional manipulations. Since $r'(w) = 0$ for $w \geq -\zeta$ and $K_W(y)=0$ for $W = -1/\delta$ and $y \in [-\delta, 0]$,
\begin{align*}
&\int_{-\delta}^{\delta} K_W(y)\int_{0}^{y}r'(W+s)ds  dy \\
 =&\ 1(W = -1/\delta)\int_{0}^{\delta} K_W(y)\int_{0}^{y}r'(W+s)ds  dy\\
&+ 1(W = -\zeta) \int_{-\delta}^{0} K_W(y)\int_{0}^{y}r'(W+s)ds  dy \\
&+ 1(W \in [-1/\delta + \delta, -\zeta - \delta])  \int_{-\delta}^{\delta} K_W(y)\int_{0}^{y}r'(W+s)ds  dy,
\end{align*}
and for $W \in [-1/\delta + \delta, -\zeta - \delta]$, 
\begin{align*}
&\int_{-\delta}^{\delta} K_W(y)\int_{0}^{y}r'(W+s)ds  dy \\
=&\  \int_{-\delta}^{\delta} K_W(y)\int_{0}^{y} \big(r'(W+s) - r'(W)\big)ds  dy +r'(W) \int_{-\delta}^{\delta} yK_W(y)dy \\
=&\  \int_{-\delta}^{\delta} K_W(y)\int_{0}^{y} \int_{0}^{s}r''(W+u)duds  dy + r'(W) \frac{1}{6}\delta^2b(W).
\end{align*}

To conclude the proof, we verify \eqref{eq:interm}. Since
\begin{align*}
\frac{d}{dx} \big(r(x) f_z'(x)\big) = r(x) f_z''(x) +  r'(x) f_z'(x), 
\end{align*}
it follows from the Fundamental Theorem of Calculus that 
\begin{align}
&\frac{2b(W+y)}{a(W+y)}f_z'(W+y) - \frac{2b(W)}{a(W)}f_z'(W)  \notag \\
=&\ \int_{0}^{y} \Big(\frac{2b(W)}{a(W)} f_z''(W+s) + r'(W+s)f_z'(W)\Big)ds. \label{eq:byparts}
\end{align}
Now 
\begin{align*}
\int_{0}^{y}  f_z''(W+s)ds =&\  yf_z''(W) + \int_{0}^{y}  \big(f_z''(W+s) - f_z''(W) \big)ds \\
=&\ y f_z''(W) +  \int_{0}^{y}\Big(\frac{2b(W+s)}{a(W+s)}f_z'(W+s) - \frac{2b(W)}{a(W)}f_z'(W) \Big)ds   \\
&+ \int_{0}^{y}\Big(\frac{2}{a(W+s)} 1(W+s \geq z) - \frac{2}{a(W)} 1(W \geq z) \Big)ds\\
&+ \Prob(Y_S \geq z)\int_{0}^{y}\Big(\frac{2}{a(W+s)} - \frac{2}{a(W)} \Big)ds,
\end{align*}
and applying \eqref{eq:byparts} once again, we see that this equals 
\begin{align*}
&y f_z''(W) +  \int_{0}^{y}\int_{0}^{s} \Big(\frac{2b(W)}{a(W)} f_z''(W+u) + r'(W+u)f_z'(W) \Big)duds   \\
&+ \int_{0}^{y}\Big(\frac{2}{a(W+s)} 1(W+s \geq z) - \frac{2}{a(W)} 1(W \geq z) \Big)ds\\
&+ \Prob(Y_S \geq z)\int_{0}^{y}\Big(\frac{2}{a(W+s)} - \frac{2}{a(W)} \Big)ds,
\end{align*}
thus proving \eqref{eq:interm}.

\subsection{Moment Generating Function Bound} \label{sec:MDmgfbound}
\begin{proof}[Proof of Lemma~\ref{lem:mgfbound}]
Throughout the proof we will let $C>0$ be a positive constant that may change from line to line, but will always be independent of $\lambda, n$, and $\mu$. Recall that  $\zeta = \delta(R - n)$ and that the random variable $W$ lives on the lattice $\delta(\Z_+ - R)$. Fix $r > 0$ and $M \in \{\delta(k-R) :\ k \geq n\}$. Consider the test function $f(w) = e^{r \phi(w)}$,
where 
\begin{align*}
\phi(w) = 
\begin{cases}
-\zeta, \quad &w \leq -\zeta, \\
w, \quad &w \in [-\zeta, M], \\
M, \quad &w \geq M.
\end{cases}
\end{align*}
For $w = \delta(k-R)$, we have
\begin{align*}
G_{W} f(w) =&\ \lambda(f(w+\delta) - f(w))1(w \in [-\zeta, M-\delta]) \\
&+ \mu(k \wedge n) (f(w-\delta) - f(w))1(w \in [-\zeta+\delta, M])\\
=&\ \lambda f(w) (e^{\delta r}-1)1(w \in [-\zeta, M-\delta]) \\
&+ n\mu f(w) (e^{-\delta r}-1)1(w \in [-\zeta+\delta, M]) \\
=&\ \lambda f(w) (e^{\delta r}-1)1(w \in [-\zeta, M]) - \lambda f(M) (e^{\delta r}-1)1(w = M) \\
&+ n\mu f(w) (e^{-\delta r}-1)1(w \in [-\zeta, M]) - \mu n f(-\zeta) (e^{-\delta r}-1)1(w=-\zeta),
\end{align*}
Since $\E G_{W} f(W) = 0$, we take the expectation in the equation above to see that
\begin{align}
& - \big(\lambda (e^{\delta r}-1) + n\mu (e^{-\delta r}-1)\big) \E \big(f(W) 1(W \in [-\zeta, M])\big) \notag \\
=&\  - \lambda f(M) (e^{\delta r}-1)\Prob(W = M) +  n \mu f(-\zeta) (1 - e^{-\delta r})\Prob(W=-\zeta). \label{eq:mgfbar}
\end{align}
First, note that the right hand side is bounded by
\begin{align}
n \mu f(-\zeta) (1 - e^{-\delta r})\Prob(W=-\zeta) =&\ \lambda f(-\zeta) (1 - e^{-\delta r})\Prob(W=-\zeta-\delta) \notag \\
\leq&\ \lambda f(-\zeta) \delta r \Prob(W=-\zeta-\delta)\notag \\
\leq&\ \lambda f(-\zeta) \delta r C\delta \notag \\
=&\ rf(-\zeta) C\mu , \label{eq:rhs}
\end{align}
where the first equality follows from the flow-balance equations of the CTMC corresponding to $W$, and the last inequality follows from the same logic used to prove \eqref{eq:pikolmbound} of Section~\ref{sec:DSproofW}. Now let $\gamma > \frac{a(-\zeta)}{2\abs{b(-\zeta)}}$ and set $r = \frac{2\abs{b(-\zeta)}}{a(-\zeta)} - \frac{1}{\gamma}$. Assume we can prove that
\begin{align}
-\big(\lambda (e^{\delta r}-1) + n\mu (e^{-\delta r}-1)\big) \geq 
\mu \Big(\frac{r}{\gamma}\frac{1+\rho}{2\rho} + \frac{r^4 \delta^2}{120}\Big), \quad \rho \geq 0.1. \label{eq:bounds}
\end{align}
Then using \eqref{eq:mgfbar} and \eqref{eq:rhs} we get
\begin{align*}
&\E \big(f(W) 1(W \in [-\zeta, M])\big) \leq  \gamma\frac{2\rho}{1+\rho} f(-\zeta) C \leq \gamma C e^{\frac{2\abs{b(-\zeta)}}{a(-\zeta)} \abs{\zeta}}, \quad r = \frac{2\abs{b(-\zeta)}}{a(-\zeta)} - \frac{1}{\gamma}, \\
&\E \big(f(W) 1(W \in [-\zeta, M])\big) \leq  \frac{1}{r^3\delta^2} C e^{\frac{2\abs{b(-\zeta)}}{a(-\zeta)} \abs{\zeta}}, \quad r = \frac{2\abs{b(-\zeta)}}{a(-\zeta)},
\end{align*}
and taking $M \to \infty$ then establishes the claim in the lemma. 

We now verify \eqref{eq:bounds}. Using the Taylor expansions
\begin{align*}
e^{\delta r}-1 =&\ r\delta + \frac{1}{2}(r\delta)^2 + \frac{1}{6}(r\delta)^3 + \frac{1}{24}(r\delta)^4 +  \frac{1}{120}(r\delta)^5 e^{\xi(\delta r)} \\
e^{-\delta r} - 1 =&\ -r\delta + \frac{1}{2}(r\delta)^2 - \frac{1}{6}(r\delta)^3 + \frac{1}{24}(r\delta)^4 - \frac{1}{120}(r\delta)^5 e^{\eta(-\delta r)},
\end{align*}
where $\xi(\delta r) \in [0,\delta r]$ and $\eta (-\delta r) \in [-\delta r, 0]$ (the fifth order expansion is necessary), we rewrite the left side of \eqref{eq:mgfbar} as 
\begin{align*}
& -\Big((\lambda - n\mu ) \big(r \delta +\frac{1}{6}(r\delta)^3 \big) +( \lambda + n\mu)\big(\frac{1}{2}(r\delta)^2 + \frac{1}{24}(r\delta)^4 \big)\Big)\E \big(f(W) 1(W \in [-\zeta, M])\big)\\ 
&- \frac{1}{120}(r\delta)^5 \big( \lambda e^{\xi(\delta r)} - n\mu e^{\eta(-\delta r)}   \big)\E \big(f(W) 1(W \in [-\zeta, M])\big).
\end{align*}
Recalling that $\delta(\lambda - n\mu) = \mu \zeta$, $\lambda \delta^2 = \mu$, and $n\delta ^2 = 1/\rho$, the quantity above becomes 
\begin{align}
& \mu \Big(\abs{\zeta}\big(r +\frac{1}{6}r^3\delta^2 \big) -( 1+ \frac{1}{\rho})\big(\frac{1}{2}r^2 + \frac{1}{24}r^4\delta^2 \big)\Big)\E \big(f(W) 1(W \in [-\zeta, M])\big)\notag  \\ 
&+ \frac{\mu}{120}r^5\delta^3 \big( \frac{1}{\rho} e^{\eta(-\delta r)}  - e^{\xi(\delta r)} \big)\E \big(f(W) 1(W \in [-\zeta, M])\big) \notag \\ 
\geq&\ \mu \Big(\abs{\zeta}\big(r +\frac{1}{6}r^3\delta^2 \big) +( 1+ \frac{1}{\rho})\big(\frac{1}{2}r^2 + \frac{1}{24}r^4\delta^2 \big)- \frac{1}{120}r^5\delta^3  e^{\delta r} \Big)\notag \\
&\hspace{7cm} \times \E \big(f(W) 1(W \in [-\zeta, M])\big) \notag \\
=&\ \mu \Big(r\big(\abs{\zeta} - (1+\frac{1}{\rho}) \frac{1}{2}r\big) +  \frac{1}{6}r^3\delta^2\big(\abs{\zeta} - (1+\frac{1}{\rho}) \frac{1}{4}r\big)- \frac{1}{120}r^5\delta^3  e^{\delta r} \Big)\notag \\
&\hspace{7cm} \times \E \big(f(W) 1(W \in [-\zeta, M])\big) \label{eq:lhs}
\end{align}
Now if $r = \frac{2\abs{b(-\zeta)}}{a(-\zeta)} - \frac{1}{\gamma}$ for some $\gamma > \frac{a(-\zeta)}{2\abs{b(-\zeta)}}$, then 
\begin{align*}
 \abs{\zeta} - (1+\frac{1}{\rho}) \frac{1}{2}r =&\ \abs{\zeta} - (1+\frac{1}{\rho}) \frac{1}{2}\big(\frac{2\abs{\zeta}}{2+\delta \abs{\zeta}} - \frac{1}{\gamma} \big) \\
 =&\ \abs{\zeta}\frac{2+\delta\abs{\zeta} - 1-\frac{1}{\rho}}{2+\delta \abs{\zeta}} +   (1+\frac{1}{\rho}) \frac{1}{2} \frac{1}{\gamma} \\
 =&\ \abs{\zeta}\frac{2+(\frac{1}{\rho}-1) - 1-\frac{1}{\rho}}{2+\delta \abs{\zeta}} +   (1+\frac{1}{\rho}) \frac{1}{2} \frac{1}{\gamma} \\
 =&\ \frac{1}{\gamma}\frac{1+\rho}{2\rho},
\end{align*}
where in the third equality we used the fact that $\delta \abs{\zeta} = \delta^2(n-R) = \frac{1}{\rho}-1$. The right hand side of \eqref{eq:lhs}
then equals 
\begin{align*}
&\mu \Big(\frac{r}{\gamma}\frac{1+\rho}{2\rho}+  \frac{1}{6}r^3\delta^2\big(\frac{1}{2}\abs{\zeta} + \frac{r}{z}\frac{1+\rho}{4\rho} \big) - \frac{1}{120} r^5\delta^3 e^{\delta r} \Big)\E \big(f(W) 1(W \in [-\zeta, M])\big) \\
\geq&\ \mu \Big(\frac{r}{\gamma}\frac{1+\rho}{2\rho}+  \frac{1}{12}r^3\delta^2\abs{\zeta}  - \frac{1}{120} r^5\delta^3 e^{\delta r} \Big)\E \big(f(W) 1(W \in [-\zeta, M])\big) \\
\geq&\ \mu \Big(\frac{r}{\gamma}\frac{1+\rho}{2\rho}+  \frac{1}{12}r^4\delta^2  - \frac{1}{120} r^5\delta^3 e^{\delta r} \Big)\E \big(f(W) 1(W \in [-\zeta, M])\big),
\end{align*}
where in the last inequality we used the fact that $\abs{\zeta} \geq \frac{2\abs{\zeta}}{2 + \delta \abs{\zeta}} \geq r$. Now 
\begin{align*}
r\delta \leq \frac{2\delta\abs{\zeta}}{2+\delta \abs{\zeta}} = \frac{2(1-\rho)}{\rho(2+\delta \abs{\zeta})} = \frac{2(1-\rho)}{2\rho + (1-\rho)} = \frac{2(1-\rho)}{1 + \rho},
\end{align*}
and so it can be checked that 
\begin{align*}
 \frac{1}{12}r^4\delta^2  - \frac{1}{120} r^5\delta^3 e^{\delta r} = \frac{r^4 \delta^2}{12} \big(1 - \frac{1}{10} r\delta e^{\delta r}\big) \geq&\ \frac{r^4 \delta^2}{12} \big(1 - \frac{1}{10} \frac{2(1-\rho)}{1 + \rho}e^{\frac{2(1-\rho)}{1 + \rho}}\big)\\
  \geq&\ \frac{r^4 \delta^2}{12} \frac{1}{10}
\end{align*}
whenever $\rho \geq 0.1$.

\end{proof}

\chapter{Steady-State Diffusion Approximation of the $M/P{h}/{n}+M$ Model} \label{chap:phasetype}
This chapter is based on \cite{BravDai2017}. We ignore any notation defined in previous chapters, and start fresh with notation (although much of the notation will be similar to the previous chapters). In this chapter, we apply the Stein framework introduced in Chapter~\ref{chap:erlangAC} to the $M/Ph/n+M$ system, which serves as a building
block to model large-scale service systems such as customer contact
centers \cite{GansKoolMand2003,AksiArmoMehr2007} and hospital
operations \cite{Armoetal2011,Shietal2015}. In such a system, there
are $n$ identical servers, the arrival process is Poisson (the symbol
$M$) with rate $\lambda$, the service times are i.i.d.\ having a
phase-type distribution (the symbol $Ph$) with $d$ phases and mean $1/\mu$, the
patience times of customers are i.i.d.\ having an exponential
distribution (the symbol $+M$) with mean $1/\alpha<\infty$. When the
waiting time of a customer in queue exceeds her patience time, the
customer abandons the system without service; once the service of a
customer is started, the customer does not abandon.

Let $X_i(t)$ be the number of customers in phase $i$ at time $t$ for
$i=1, \ldots, d$, where $d$ is the number of phases in the service
time distribution. Let $X(t)$ be the corresponding vector. Then the
system size process $X=\{X(t), t\ge 0\}$ has a
unique stationary distribution for any arrival rate $\lambda$ and any
server number $n$ due to customer abandonment; although $X$ is not a Markov chain, it is a function of a Markov chain with a unique stationary distribution,  see Section~\ref{sec:CTMCrep} for details.  In Theorem \ref{thm:main} of this chapter, we
prove that
\begin{equation} \label{eq:introresult}
\sup \limits_{h \in \mathcal{H}}  \abs{\E \big[h(\tilde X^{(\lambda)}(\infty))\big] - \E \big[h(Y(\infty))\big]} \leq \frac{C}{\sqrt{\lambda}} \quad \text{for any } \lambda > 0 \text{ and } n \ge 1 
\end{equation}
satisfying
\begin{equation}
  \label{eq:square-root}
  n \mu = \lambda + \beta \sqrt{\lambda},
\end{equation}
where $\beta\in \R$ is some constant and $\mathcal{H}$ is some class
of functions $h:\R^d\to \R$. This is known as the Halfin-Whitt, or quality- and efficiency-driven (QED) regime \cite{HalfWhit1981}. In (\ref{eq:introresult}), $\tilde
X^{(\lambda)}(\infty)$ is a random vector having the stationary distribution of
a properly scaled version of $X = X^{(\lambda)}$ that depends on the arrival rate $\lambda$, number of servers $n$, the service time distribution, and the abandonment rate $\alpha$, and $Y(\infty)$ is a random vector
having the stationary distribution of a $d$-dimensional piecewise Ornstein-Uhlenbeck
(OU) process $Y=\{Y(t), t\ge 0\}$. The stationary distribution of $X^{(\lambda)}$ exists even when $\beta$ is negative because $\alpha$ is assumed to be positive. The constant $C$ depends on the
service time distribution, abandonment rate $\alpha$, the  constant $\beta$ in (\ref{eq:square-root}), and the choice of ${\cal
  H}$, but $C$ is independent of the arrival rate $\lambda$ and the number of servers $n$.  Unlike the results in Chapters~\ref{chap:erlangAC} and \ref{chap:dsquare}, which were universal and did not rely on any particular parameter regime, we do require the QED regime to prove the result in \eqref{eq:introresult}. The reason for this is the additional difficulty in establishing gradient and moments bounds due to the multi-dimensional nature of $\tilde X^{(\lambda)}(\infty)$ and the approximation $Y(\infty)$.  
  
  Two different classes $\mathcal{H}$ will used in our Theorem
\ref{thm:main}. First, we take $\cal{H}$ to be the class of polynomials up to a certain
order. In this case, (\ref{eq:introresult}) provides rates of
convergence for steady-state moments. Second, ${\cal H}$ is taken to be ${\cal W}^{(d)}$, the class of all $1$-Lipschitz functions
\begin{equation}
  \label{eq:Wd}
     {\cal W}^{(d)} = \{h: \R^d\to\R: \abs{h(x)-h(y)}\le \abs{x-y}\}.
\end{equation}
In this case, (\ref{eq:introresult}) provides rates of convergence for
stationary distributions under the Wasserstein metric;
convergence under Wasserstein metric implies the convergence in
distribution \cite{GibbSu2002}.

As previously mentioned in Section~\ref{chap:intro}, the authors of \cite{DaiHe2013} develop an algorithm to compute the
distribution of $Y(\infty)$.  The algorithm is more computationally efficient, in terms of both time and memory, than computing the distribution of $\tilde X^{(\lambda)}(\infty)$. For example, in an $M/H_2/500+M$
system studied in \cite{DaiHe2013}, where the system has $500$ servers
and a hyper-exponential service time distribution, it took around 1
hour and peak memory usage of 5 GB to compute the distribution of
$X^{(\lambda)}$.  On the same computer, it took less
than 1 minute to compute the
distribution of $Y(\infty)$,  and peak memory usage was less than 200 MB. Theorem \ref{thm:main} quantifies the steady-state diffusion approximations developed in \cite{DaiHe2013}.

 In
\cite{DaiDiekGao2014}, the authors prove the convergence of
distribution $\tilde X^{(\lambda)}(\infty)$ to that of $Y(\infty)$ by proving an
interchange of limits.  The proof technique follows that of the
seminal paper \cite{GamaZeev2006}, where the authors prove an
interchange of limits for generalized Jackson networks of
single-server queues.  The results in \cite{GamaZeev2006} were
improved and extended by various authors for networks of
single-servers \cite{BudhLee2009, ZhanZwar2008, Kats2010}, for
bandwidth sharing networks~\cite{YaoYe2012}, and for many-server
systems \cite{Tezc2008, GamaStol2012, Gurv2014a}.  These ``interchange
limits theorems'' are qualitative and thus do not provide rates of
convergence as in (\ref{eq:introresult}).

 Our use of Stein's method in this chapter has two important features that were not present in the previous chapters. Unlike the Erlang-A and Erlang-C models, which are relatively simple one-dimensional birth death processes, the $M/Ph/n+M$ model is a multi-dimensional Markov chain, and the corresponding diffusion approxmiation is also multi-dimensional. This means that our usual approach for deriving gradient bounds does not hold anymore, and we rely on ideas from \cite{Gurv2014} to solve this problem. The second feature of this chapter is state-space collapse (SSC). We will see that the Markov chain representing the $M/Ph/n+M$ system lives in a higher dimensional space than the diffusion approximation. Therefore, certain SSC error bounds need to be established in order for us to carry out Stein's method. 

In Chapter~\ref{chap:dsquare} we discussed the benefits of using a diffusion approximation with a state-dependent diffusion coefficient. The approximation $Y(\infty)$ in  \eqref{eq:introresult} is based on a diffusion process with a constant diffusion coefficient. Nothing is proved about the approximation with state-dependent diffusion coefficient, because the multi-dimensional nature of the $M/Ph/n+M$ model makes this task much more difficult. However, this does not prevent us from evaluating the approximation numerically, which we do in Section~\ref{sec:PHstatedep}. Our observations depend on the type of service-time distribution we use. Namely, we observe a difference between the cases when the first service phase is deterministic or random. In the former case, no SSC is required, and the state-dependent coefficient approximation performs better. Namely, we observe the phenomenon of faster convergence rates of $1/\lambda$,  analogous to what was proved in Chapter~\ref{chap:dsquare}. In the latter case, SSC is required, and we do not have faster convergence rates. This is because the SSC error is of order $1/\sqrt{\lambda}$ and does not vanish with the use of a state-dependent diffusion coefficient.

The rest of the chapter is structured as
follows. We begin with Section~\ref{sec:model}, where we formally define the $M/Ph/n+M$
system as well as the diffusion process whose steady-state
distribution will approximate the system. Section~\ref{sec:mainresult}
states our main results. Section~\ref{sec:CTMCrep} describes the continuous-time Markov chain (CTMC)
representation of the $M/Ph/n+M$ system. Section~\ref{sec:mainproof} sets up the Poisson equation, gradient bounds, and Taylor expansion of the CTMC generator. Section~\ref{sec:state-space-collapse} deals with SSC. Moment bounds and the proof of our main
result can be found in Section~\ref{sec:thmpf}. Section~\ref{sec:PHstatedep} contains numerical results evaluating the performance of an approximation with state-dependent diffusion coefficient.

\section{Models} \label{sec:model} 

In this section, we give additional description of the 
$M/Ph/n+M$ system and the corresponding diffusion model.

\subsection{The $M/Ph/n+M$ System}
The basic description of the $M/Ph/n+M$ queueing system was given
  in the first paragraph of the introduction. Here, we describe the
  dynamics of the system.  Upon arrival to the system with idle
servers, a customer begins service immediately. Otherwise, if all
servers are busy, the customer enters an infinite capacity queue to
wait for service. When a server completes serving a customer,
the server becomes idle if the queue is empty, or takes a customer
from the queue under the first-come-first-served service policy if
it is nonempty. Recall that the $Ph$ indicates that customer service times are
i.i.d.\ following a phase-type distribution. We shall provide a
definition of a phase-type distribution shortly below. The phase-type distribution
can approximate any positive-valued distribution \cite[Theorem
III.4.2]{Asmu2003}.


\subsubsection*{Phase-type Service Time  Distribution}\label{sec:phase-type-distr}
A phase-type distribution is assumed to have $d \geq 1$ phases. Each phase-type distribution is determined by the tuple $(p,\nu,P)$, where $p \in \R^d$ is a vector of non-negative entries whose sum is equal to one, $\nu \in \R^d$ is a vector of positive entries and $P$ is a $d \times d$ sub-stochastic matrix. We assume that $P$ is transient, i.e. 
\begin{equation}
(I-P)^{-1} \text{ \quad exists,} \label{eq:transience}
\end{equation}
and without loss of generality, we also assume that the diagonal entries of $P$ are zero ($P_{ii}=0$).

A random variable is said to have a phase-type distribution with parameters $(p,\nu,P)$ if it is equal to the absorption time of the following CTMC. The state space of the CTMC is $\{1, ... ,d+1\}$, with $d+1$ being the absorbing state. The CTMC starts off in one of the states in $\{1,...,d\}$ according to distribution $p$. For $i = 1, ... ,d $, the time spent in state $i$ is exponentially distributed with mean $1/\nu_i$. Upon leaving state $i$, the CTMC transitions to state $j= 1, ... ,d $ with probability $P_{ij}$, or gets absorbed into state $d+1$ with probability $1- \sum_{j=1}^d P_{ij}$.


The CTMC above is a useful way to describe the service times in the
$M/Ph/n+M$ system. Upon arrival to the system, a customer is assigned
her first service phase according to distribution $p$. If the customer is forced to wait in queue because all servers are busy, she is still assigned a first service phase, but this phase of service will not start until a server takes on this customer for service. Once a customer with initial phase $i$ enters service, her service time is the time until absorption to state $d+1$ by the CTMC. We
assume without loss of generality that for each service phase $i$,
either
\begin{equation} \label{eq:noredundancy}
p_i > 0 \text{ or } P_{ji}> 0 \text{ for some $j$}.
\end{equation}
This simply means that there are no redundant phases. 

We now define some useful quantities for future use. Define 
\begin{equation}
R = (I-P^T)\text{diag}(\nu) \quad  \text{and} \quad \gamma = \mu R^{-1}p, \label{eq:defph}
\end{equation}
where the matrix $\text{diag}(\nu)$ is the $d\times d$ diagonal matrix 
with diagonal entries given by the components of $\nu$. One may verify that $\sum \limits_{i=1}^d \gamma_i = 1$. One can interpret $\gamma_i$ to be the fraction of phase $i$ service load on the $n$ servers.

For concreteness, we provide two examples of phase-type distributions when $d=2$. The first example is the two-phase hyper-exponential distribution, denoted by $H_2$. The corresponding tuple of parameters is $(p,\nu,P)$, where
\begin{displaymath}
p = (p_1, p_2)^T, \quad \nu = (\nu_1,\nu_2)^T, \text{ \quad and \quad } P = 0.
\end{displaymath}
Therefore, with probability $p_i$, the service time follows an exponential distribution with mean $1/\nu_i$.

The second example is the Erlang-$2$ distribution, denoted by $E_2$. The corresponding tuple of parameters is $(p,\nu,P)$, where
\begin{displaymath}
p = (1,0)^T, \quad \nu = (\theta, \theta)^T, \text{ \quad and \quad } P = \begin{pmatrix}
0 & 1\\ 
0 & 0
\end{pmatrix}.
\end{displaymath}
An $E_2$ random variable is a sum of two i.i.d.\ exponential random variables, each having mean $1/\theta$.

\subsection{System Size Process and Diffusion Model}
\label{sec:diffusion}

Before we state the main results, we introduce the process we wish to approximate, as well as the approximating diffusion process -- the piecewise OU process.
Recall that  $X = \{X(t) \in \R^d, t \geq 0\}$ is the system size process, where 
\begin{displaymath}
X(t) = (X_1(t), ... , X_d(t))^T,
\end{displaymath}
and $X_i(t)$ is the number of customers of phase $i$ in the system (queue + service) at time $t$. We emphasize that $X$ is not a CTMC, but it is a deterministic function of a higher-dimensional CTMC, which will be described in Section \ref{sec:CTMCrep}. 

The process $X$ depends on $\lambda, n, \alpha, p, P$, and $\nu$. However, in this chapter we keep $\alpha, p, P$, and $\nu$ fixed, and allow $\lambda$ and $n$ to vary according to \eqref{eq:square-root}. For the remainder of the chapter we write $X^{(\lambda)}$ to emphasize the dependence of $X$ on $\lambda$; the dependence of $X^{(\lambda)}$ on $n$ is implicit through \eqref{eq:square-root}.

Recall the definition of $\gamma$ in (\ref{eq:defph}) and define the
scaled random variable
\begin{equation}
  \label{eq:scaledctmcsd}
\tilde X^{(\lambda)}(\infty) = \delta (X^{(\lambda)}(\infty) - \gamma n),
\end{equation}
where, for convenience, we let 
\begin{equation}
  \label{eq:delta}
\delta = 1/\sqrt{\lambda}.  
\end{equation}
To approximate $\tilde X^{(\lambda)}(\infty)$, we introduce the piecewise OU process $Y =
\{Y(t), t\geq 0\}$. This is a $d$-dimensional diffusion process satisfying
\begin{equation}
Y(t) = Y(0) - p \beta t - R \int_0^t{\big(Y(s) -p(e^{T}Y(s))^+\big) ds} - \alpha p \int_0^t {(e^{T}Y(s))^+ds} + \sqrt{\Sigma} B(t). \label{eq:defou}
\end{equation}
Above, $B(t)$ is the $d$-dimensional standard Brownian motion and $\sqrt{\Sigma}$ is any $d\times d$ matrix satisfying
\begin{equation} \label{eq:OUdiffusioncoeff}
\sqrt{\Sigma}\sqrt{\Sigma}^T= \Sigma = \text{diag}(p) + \sum_{k=1}^d \gamma_k \nu_k H^k + (I-P^T)\text{diag}(\nu) \text{diag}(\gamma)(I-P),
\end{equation}
where the matrix $H^k$ is defined as 
\begin{displaymath}
H^k_{ii} = P_{ki}(1-P_{ki}), \quad H^k_{ij} = -P_{ki}P_{kj} \quad \text{ for $j\neq i$}.
\end{displaymath} 
Comparing the form of $\Sigma$ above to (2.24) of \cite{DaiHe2013} confirms that it is positive definite. Thus $\sqrt{\Sigma}$ exists. Observe that $Y$ depends only on $\beta, \alpha, p, P$, and $\nu$, all of which are held constant throughout this chapter.


The diffusion process in (\ref{eq:defou}) has been studied by \cite{DiekGao2013}. They prove that $Y$ is positive recurrent by finding an appropriate Lyapunov function. In particular, this means that $Y$ admits a stationary distribution.

\section{Main Results} \label{sec:mainresult}

We now state our main results.

\begin{theorem} \label{thm:main}
For every integer $m > 0$, there exists a constant $C_m = C_m(\beta,\alpha,p,\nu,P)>0$ such that for all locally Lipschitz functions $h : \R^d \to \R$ satisfying 
\begin{displaymath}
\abs{h(x)} \leq \abs{x}^{2m} \quad \text{  for  } x\in \R^d,
\end{displaymath}
we have
\begin{displaymath} \label{eq:thm1}
\abs{ \E h(\tilde X^{(\lambda)}(\infty)) - \E h(Y(\infty))} \leq \frac{C_m}{\sqrt{\lambda}}
\quad \text{for all } \lambda >0
\end{displaymath} 
satisfying \eqref{eq:square-root}, which we recall below as
\begin{displaymath}
 n \mu = \lambda + \beta \sqrt{\lambda}.
\end{displaymath}
\end{theorem}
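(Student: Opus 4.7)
The strategy is to execute the Stein framework of Chapter~\ref{chap:erlangAC} in the multi-dimensional setting, adding the state-space collapse (SSC) component announced in Section~\ref{sec:INoutline}. First I would set up the Poisson equation
\begin{equation*}
G_Y f_h(x) = \E h(Y(\infty)) - h(x), \qquad x\in\R^d,
\end{equation*}
for the generator $G_Y$ of the piecewise OU process \eqref{eq:defou}. Since $Y$ is positive recurrent by \cite{DiekGao2013} and $\abs{h(x)}\le\abs{x}^{2m}$, the standard representation $f_h(x) = \int_0^\infty (\E[h(Y(t))\mid Y(0)=x] - \E h(Y(\infty)))\,dt$ yields a $C^2$ solution; the goal is to bound the first three partial derivatives of $f_h$ by polynomials of degree at most $2m$ in $\abs{x}$. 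Because $Y$ does not depend on $\lambda$ (all dependence is absorbed into $\beta$ via \eqref{eq:square-root}), the bounds are automatically uniform in $\lambda$, which sidesteps the delicate universality issues seen in Lemma~\ref{lem:gradboundsAWunder}. Given the piecewise-affine drift, I would follow the PDE/Schauder route used by \cite{Gurv2014} rather than couplings.

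Since $X^{(\lambda)}$ itself is not a Markov chain, I would work with the higher-dimensional CTMC $U^{(\lambda)}$ tracking the service phase of every customer in service together with the initial phase of every customer in queue (as introduced in Section~\ref{sec:CTMCrep}), with generator $G_U$. Writing $x\in\R^d$ for the projection of $u$ and lifting $F_h(u)=f_h(x)$ as in \eqref{eq:lifting}, the multi-dimensional analogue of Lemma~\ref{lem:gz} gives $\E G_U F_h(U^{(\lambda)}(\infty))=0$ once a Foster--Lyapunov moment bound on $U^{(\lambda)}(\infty)$ is available. A second-order Taylor expansion of the jump terms in $G_U F_h(u)$ about $x$, exactly analogous to Section~\ref{sec:CAtaylor}, then yields
\begin{equation*}
\E h(\tilde X^{(\lambda)}(\infty)) - \E h(Y(\infty)) = \E\bigl[G_U F_h(U^{(\lambda)}(\infty)) - G_Y f_h(\tilde X^{(\lambda)}(\infty))\bigr],
\end{equation*}
which decomposes into a Taylor remainder of order $\delta=1/\sqrt{\lambda}$ (controlled by the third-order gradient bounds and uniform moment bounds) plus a first-order mismatch between the drift of $G_U$ and the drift of $G_Y$. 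The latter arises because $G_Y$ implicitly assumes that queued customers hold phases distributed according to $p$, whereas $G_U$ uses their true phases.

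The main obstacle will be making that mismatch small, i.e.\ the quantitative SSC step: one must show that for each phase $i$ the number of queued customers with initial phase $i$ differs from $p_i\bigl(e^T X^{(\lambda)}(\infty) - n\bigr)^+$ by at most $O(\sqrt{\lambda})$ in an $L^q$ sense, uniformly in $\lambda$ satisfying \eqref{eq:square-root}. Since arrivals inject phases i.i.d.\ from $p$ and the queue is served in FCFS order, the queue composition should relax on an $O(1)$ timescale even as the queue length grows like $\sqrt{\lambda}$; I would quantify this by choosing a Lyapunov function on $U^{(\lambda)}$ that measures phase imbalance, applying BAR ($\E G_U V(U^{(\lambda)}(\infty))=0$) to that function, and combining the resulting identity with the uniform moment bounds. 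The required moment bounds on $\abs{\tilde X^{(\lambda)}(\infty)}^{2m}$ of arbitrary order would themselves be obtained by iterating Foster--Lyapunov inequalities on $G_U$ applied to powers of $\abs{x}$, in the spirit of the proof of Lemma~\ref{lem:gz} in Section~\ref{app:gz} but now exploiting the stabilizing abandonment drift $-\alpha p (e^T x)^+$ to close the recursion uniformly in $\lambda$. Combining SSC, moment bounds, and gradient bounds in the master identity then yields the $C_m/\sqrt{\lambda}$ rate.
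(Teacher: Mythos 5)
Your overall architecture matches the paper's: Poisson equation for $G_Y$, Gurvich-style PDE gradient bounds uniform in $\lambda$ (helped by the fact that $Y$ depends only on $\beta,\alpha,p,\nu,P$), BAR for the infinite-dimensional CTMC $U^{(\lambda)}$, a Taylor expansion of $G_{U^{(\lambda)}} A f_h - G_Y f_h$, uniform moment bounds via Foster--Lyapunov, and a quantitative SSC step to kill the drift mismatch. However, your proposed \emph{execution} of the SSC step has a genuine gap and would not deliver the $1/\sqrt{\lambda}$ rate.

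You propose bounding the SSC deviation in an $L^q$ sense by applying BAR to a Lyapunov function that measures phase imbalance, and then feeding that bound into the linear SSC error term. Two problems. First, the magnitude you state, $O(\sqrt{\lambda})$ for $|Q_i - p_i(e^T X - n)^+|$, is trivially true (both terms are themselves $O(\sqrt{\lambda})$) and yields no rate; the correct order is $O(\lambda^{1/4})$ since the deviation of a binomial from its mean scales like the square root of the number of trials. Second, and more seriously, even the correct $L^q$ rate plugged linearly into $\E\bigl[\partial_i f_h(\tilde X^{(\lambda)}(\infty))\,(\delta Q_i(\infty) - p_i(e^T\tilde X^{(\lambda)}(\infty))^+)\bigr]$ gives only $O(\lambda^{-1/4})$, a factor $\lambda^{1/4}$ short of the claim. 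The paper closes this gap by a structural observation you do not make: queued customers have been assigned phases i.i.d.\ from $p$ at arrival and those phases never change before service begins, so \emph{conditioned on the total queue length $\ell$}, the vector $Q^{(\lambda)}(\infty)$ is exactly multinomial $(\ell; p_1,\dots,p_d)$, and moreover $Q^{(\lambda)}(\infty)$ is conditionally independent of $Z^{(\lambda)}(\infty)$ (phases in service) given $\ell$. This is an exact distributional fact at every time $t$ and in stationarity, not a relaxation/ergodicity statement, and it yields two things a Lyapunov-on-phase-imbalance argument does not: (i) $\E[\delta Q^{(\lambda)}(\infty) - p(e^T\tilde X^{(\lambda)}(\infty))^+ \mid (e^T\tilde X^{(\lambda)}(\infty))^+] = 0$ exactly, and (ii) binomial central moment bounds $\E|B(\ell,p_i)-p_i\ell|^{2m} \le C\ell^m$. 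Using (i) together with the conditional independence, the first-order SSC term vanishes identically after Taylor-expanding $\partial_i f_h$ around $\delta(Z^{(\lambda)}(\infty)-\gamma n) + p(e^T\tilde X^{(\lambda)}(\infty))^+$, and what survives is a second-order term involving a \emph{product} of two SSC deviations, which after Cauchy--Schwarz and (ii) is $O(\delta)=O(1/\sqrt{\lambda})$. Without the zero conditional mean you cannot get past $\lambda^{-1/4}$. Also note the uniform moment bound on $\abs{\tilde X^{(\lambda)}(\infty)}^{2m}$ itself uses the SSC moment estimate to close the Foster--Lyapunov recursion, so the exact multinomial structure is load-bearing there too; a circular dependence between ``moment bounds'' and a Lyapunov-based SSC estimate would be hard to untangle.
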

Theorem~\ref{thm:main} will be proved in  Section~\ref{sec:thmpf}. As a consequence of the theorem, we immediately have the following corollary.
\begin{corollary} \label{corol:main}
There exists a constant $C_1 = C_1(\beta,\alpha,p,\nu,P)>0$ such that

\begin{displaymath}
  \sup \limits_{h \in \mathcal{W}^{(d)}}  \abs{\E h(\tilde X^{(\lambda)}(\infty)) - \E h(Y(\infty))} \leq \frac{C_1}{\sqrt{\lambda}} \quad \text{for all } \lambda >0 
\end{displaymath}
satisfying \eqref{eq:square-root}, where $W^{(d)}$ is defined in \eqref{eq:Wd}.
In particular, 
\begin{displaymath}
\tilde X^{(\lambda)}(\infty) \Rightarrow Y(\infty) \text{ \quad as  \quad } \lambda \rightarrow \infty. 
\end{displaymath}
\end{corollary}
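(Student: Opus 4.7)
The corollary has two assertions: a uniform Wasserstein error bound of order $1/\sqrt{\lambda}$, and weak convergence $\tilde X^{(\lambda)}(\infty) \Rightarrow Y(\infty)$. My plan is to deduce the Wasserstein bound directly from Theorem~\ref{thm:main} applied with $m = 1$, and then obtain weak convergence as a standard consequence, citing the fact mentioned earlier in the paper (see \cite{GibbSu2002}) that convergence under $d_W$ implies convergence in distribution.

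For the Wasserstein bound, fix an arbitrary $h \in \mathcal{W}^{(d)}$. First I would observe that the quantity $|\E h(\tilde X^{(\lambda)}(\infty)) - \E h(Y(\infty))|$ is invariant under adding a constant to $h$, so without loss of generality I may replace $h$ by $h - h(0)$ and assume $h(0) = 0$. The $1$-Lipschitz property then gives $|h(x)| \leq |x|$ for all $x \in \R^d$. The cleanest next step is to apply Theorem~\ref{thm:main} with $m = 1$, but this requires $|h(x)| \leq |x|^{2}$, which fails on the unit ball $\{|x| < 1\}$; instead we only have $|h(x)| \leq 1 + |x|^{2}$.

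This mismatch is the only real obstacle, and I see two routes around it. Route (a): inspect the proof of Theorem~\ref{thm:main} to confirm that the estimate in fact holds for every locally Lipschitz $h$ satisfying $|h(x)| \leq C(1 + |x|^{2m})$, with the constant $C$ absorbed into $C_m$ (the generator-comparison argument used to prove Theorem~\ref{thm:main} combines gradient bounds and moment bounds of order $2m$, and adding a constant to $h$ contributes nothing to $\E h(\tilde X^{(\lambda)}(\infty)) - \E h(Y(\infty))$). Route (b): decompose the Lipschitz function $h$ into a piece supported where $|x| \leq 1$, whose contribution is controlled by the uniform bounds $\Prob(|\tilde X^{(\lambda)}(\infty)| \leq 1)$ and $\Prob(|Y(\infty)| \leq 1)$ (themselves controlled by applying Theorem~\ref{thm:main} to suitable polynomial test functions), and a piece supported where $|x| \geq 1$, which satisfies $|h(x)| \leq |x|^{2}$ and hence is covered directly by Theorem~\ref{thm:main} with $m = 1$. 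Either route yields a uniform bound $|\E h(\tilde X^{(\lambda)}(\infty)) - \E h(Y(\infty))| \leq C_1/\sqrt{\lambda}$ independent of $h \in \mathcal{W}^{(d)}$, which upon taking the supremum proves the first claim.

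Once the Wasserstein bound is in hand, the second claim is immediate: the right-hand side $C_1/\sqrt{\lambda}$ tends to $0$ along any sequence $\lambda \to \infty$ (with $n$ adjusted according to \eqref{eq:square-root}), so $d_W(\tilde X^{(\lambda)}(\infty), Y(\infty)) \to 0$, and convergence in distribution follows. The main technical point of the whole argument is therefore the growth-condition reconciliation described above; the rest is essentially bookkeeping once Theorem~\ref{thm:main} is available.
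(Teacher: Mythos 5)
Your proposal follows the same overall route as the paper (apply Theorem~\ref{thm:main} with $m=1$, then invoke the standard fact that convergence in $d_W$ implies weak convergence), but you have correctly spotted a small gap that the paper's own proof glosses over: after normalizing $h(0)=0$, the Lipschitz bound gives $\abs{h(x)} \leq \abs{x}$, which does \emph{not} imply the hypothesis $\abs{h(x)} \leq \abs{x}^{2}$ of Theorem~\ref{thm:main} with $m=1$, since $\abs{x} > \abs{x}^2$ on the unit ball. The paper simply asserts ``the result follows from Theorem~\ref{thm:main} with $m=1$'' without addressing this.

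Your Route~(a) is the correct repair and matches what the proof of Theorem~\ref{thm:main} actually uses. In the proof of Lemma~\ref{lemma:poisson} the growth hypothesis on $h$ enters only through the estimate $\abs{h(x) - \E h(Y(\infty))} \leq C_m(1+V(x))^m$, which (via the comparison $\abs{x}^2 \leq C(1+V(x))$ from \eqref{eq:lyaplowbound}) holds equally well under the weaker assumption $\abs{h(x)} \leq C'(1+\abs{x}^{2m})$, with $C'$ absorbed into the constant. Since the gradient bounds, Taylor-expansion estimate, SSC bound, and moment bounds in the proof of Theorem~\ref{thm:main} all scale linearly in these constants, the theorem's conclusion extends verbatim (up to enlarging $C_m$) to $h$ with $\abs{h(x)} \leq 1+\abs{x}^{2}$, which covers $\abs{h(x)}\leq\abs{x}$. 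Route~(b) would also work, but involves the extra bookkeeping of controlling $\Prob(\abs{\tilde X^{(\lambda)}(\infty)}\le 1)$ and $\Prob(\abs{Y(\infty)}\le 1)$ and is less in the spirit of the paper. The weak convergence then follows exactly as you state, since the $d_W$ bound tends to zero as $\lambda\to\infty$ along \eqref{eq:square-root}.
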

\begin{proof}
Suppose $h \in \mathcal{W}^{(d)}$. Without loss of generality, we may assume that $h(0) = 0$, otherwise we may simply consider $h(x) - h(0)$. By definition of $\mathcal{W}^{(d)}$, 
\begin{displaymath}
\abs{h(x)} \leq \abs{x} \quad \text{ for } x\in \R^d
\end{displaymath}
and the result follows \text{from Theorem~\ref{eq:thm1} with $m=1$}.
\end{proof}

\begin{remark}\label{rem:lambdarange}
  For any fixed $\beta\in \R$, there are only finitely many
  combinations of $\lambda \in (0, 4)$ and integer $n\ge 1$ satisfying
  \eqref{eq:square-root}. Therefore, it suffices to prove Theorem
  \ref{thm:main} by restricting $\lambda\ge 4$, a convenience for technical purposes.
\end{remark}

\section{Markov Representation} \label{sec:CTMCrep}
The $M/Ph/n+M$ system can be represented as a CTMC
\begin{displaymath}
U^{(\lambda)} = \{ U^{(\lambda)}(t), t \geq 0\}
\end{displaymath} taking values in $\mathcal{U}$, the set of finite sequences $\{u_1, ... , u_k\}$ . The sequence $u=\{u_1, ... , u_k\}$ encodes the service phase of each customer and their order of arrival to the system. For example, the sequence $\{5, 1, 4\}$ corresponds to $3$ customers in the system, with the service phases of the first, second and third customers (in the order of their arrival to the system) being $5$, $1$ and $4$, respectively. We use $\abs{u}$ to denote the length of the sequence $u$. The irreducibility of the CTMC $U^{(\lambda)}$ is guaranteed by
(\ref{eq:transience}) and (\ref{eq:noredundancy}).

 We remark here that $U^{(\lambda)}$ is not the simplest Markovian representation of the $M/Ph/n+M$ system. Another way to represent this system would be to consider a $d+1$ dimensional CTMC that keeps track of the total number of customers in the system, as well as the total number of customers in each phase that are currently in service; this $d+1$ dimensional CTMC is used in \cite{DaiHeTezc2010}. In this chapter we use the infinite dimensional CTMC $U^{(\lambda)}$ because the system size process $X^{(\lambda)}$ cannot be recovered sample path wise from the $d+1$ dimensional CTMC, it can only be recovered from $U^{(\lambda)}$. Also, the CTMC $U^{(\lambda)}$ will play an important role in our SSC argument in Section~\ref{sec:state-space-collapse}.

In addition to the system size process $X^{(\lambda)}$, we define the queue size process $Q^{(\lambda)} = \{Q^{(\lambda)}(t) \in \Z^d_+, t \geq 0\}$, where 
\begin{displaymath}
Q^{(\lambda)}(t) = (Q^{(\lambda)}_1(t), ... , Q^{(\lambda)}_d(t))^T,
\end{displaymath}
and $Q^{(\lambda)}_i(t)$ is the number of customers of phase $i$ in the queue at time $t$.
Then $X^{(\lambda)}_i(t) - Q^{(\lambda)}_i(t) \geq 0$ is the number phase $i$ customers in service at time $t$.

To recover $X^{(\lambda)}(t)$ and $Q^{(\lambda)}(t)$ from $U^{(\lambda)}(t)$, we define the projection functions $\Pi_X : \mathcal{U} \to \R^d$  and $\Pi_Q : \mathcal{U} \to \R^d$. For each $u\in {\cal U}$ and each phase $i\in \{1, \ldots, d\}$,
\begin{displaymath}
\left(\Pi_X(u)\right)_{i} = \sum_{k=1}^{\abs{u}} 1_{\{ u_k = i\}}
\quad \text{ and }
\quad 
\left(\Pi_Q(u)\right)_{i} = \sum_{k = n+1}^{\abs{u}} 1_{\{ u_k = i\}}.
\end{displaymath}
It is clear that on each sample path
\begin{equation} \label{eq:projectionCTMC}
X^{(\lambda)}(t) = \Pi_X(U^{(\lambda)}(t)) \text{\quad and \quad} Q^{(\lambda)}(t) = \Pi_Q(U^{(\lambda)}(t)) \quad \text{ for } t\ge 0.
\end{equation} 
Because there is customer abandonment the Markov chain $U^{(\lambda)}$ can be
proved to be positive recurrent with a unique stationary
distribution \cite{DaiDiekGao2014}. We use $U^{(\lambda)}(\infty)$ to denote the random element that has
the stationary distribution. It follows that $X^{(\lambda)}(\infty)=\Pi_X(U^{(\lambda)}(\infty))$ has the stationary distribution of $X^{(\lambda)}$, and  $\tilde X^{(\lambda)}(\infty)$ in \eqref{eq:scaledctmcsd} is given by
\begin{equation}
  \label{eq:Xinfty}
  \tilde X^{(\lambda)}(\infty) = \delta (\Pi_X(U^{(\lambda)}(\infty))-\gamma n).
\end{equation}

For $u \in \mathcal{U}$, we define 
\begin{equation}\label{eq:projlittle}
x = \delta(\Pi_X(u) - \gamma n), \quad  q = \Pi_Q(u) \text{ \quad and \quad} z = \Pi_X(u) - q.
\end{equation}
When the CTMC is in state $u$, we interpret $(\Pi_X(u))_i$, $q_i$, and $z_i$ as the number of the
  phase $i$ customers in system, in queue, and in service,
  respectively. It follows that $z\ge 0$. 
  
Let $G_{U^{(\lambda)}}$ be the generator of the CTMC $U^{(\lambda)}$. To describe it, we introduce the lifting operator $A$. For any function $f: \R^d \to \R$, we define $Af: \mathcal{U} \to \R$ by 
\begin{equation} \label{eq:lifter}
Af(u) = f(\delta (\Pi_X(u)-\gamma n)) = f(x).
\end{equation}
Hence, for any function $f: \R^d \to \R$, the generator acts on the lifted version $Af$ as follows:
\begin{eqnarray}
G_{U^{(\lambda)}} Af(u) &=& \sum \limits_{i=1}^d \lambda p_i( f(x + \delta e^{(i)}) - f(x)) + \sum \limits_{i=1}^d \alpha q_i (f(x - \delta e^{(i)}) - f(x)) \nonumber\\
&& + \sum \limits_{i=1}^d \nu_i z_i \Big{[} \sum \limits_{j=1}^d P_{ij}f(x+\delta e^{(j)}-\delta e^{(i)}) \nonumber\\
&& {} + (1-\sum \limits_{j=1}^d P_{ij})f(x-\delta e^{(i)}) -f(x)  \Big{]}.
\label{eq:tildeG}
\end{eqnarray}
Observe that $G_{U^{(\lambda)}} Af(u)$ does not depend on the entire sequence $u$;
it depends on $x$, $q$, and the function $f$ only.

\section{Applying Stein's  Method} \label{sec:mainproof} 
In this section, we prepare the ingredients needed to prove Theorem~\ref{thm:main} using the Stein framework introduced in Section~\ref{sec:CAroadmap}. We prove Theorem~\ref{thm:main} in Section \ref{sec:thmpf}.
\subsection{Poisson Equation}
Consider the Poisson equation 
\begin{equation} \label{eq:poisson}
G_Y f_h(x) = \E h(Y(\infty)) - h(x),
\end{equation}
where the generator $G_Y$ of the diffusion process $Y$, applied to a function $f(x) \in C^2(\R^d)$, is given by 
\begin{eqnarray}
&G_Y f(x) \notag \\
&= \sum \limits_{i=1}^d \partial_i f(x) \Big{[} p_i \beta - \nu_i(x_i - p_i(e^Tx)^+) - \alpha p_i (e^Tx)^+ + \sum \limits_{j=1}^d P_{ji}\nu_j(x_j-p_j(e^Tx)^+)\Big{]} \notag \\
&+ \frac{1}{2}\sum \limits_{i,j=1}^d \Sigma _{ij} \partial_{ij} f(x)
\quad \text{ for } x\in \R^d. \label{eq:DMgen}
\end{eqnarray}
Taking expected values in \eqref{eq:poisson} with respect to $\tilde X^{(\lambda)}(\infty)$, we focus on bounding the left hand side 
\begin{equation}\label{eq:poissonLHS}
\E G_Y f_h(\tilde X^{(\lambda)}(\infty)).
\end{equation}

The following lemma, based on the results of \cite{Gurv2014},  guarantees the existence of a solution to (\ref{eq:poisson}) and provides gradient bounds for it. The proof of this lemma is given in Section~\ref{app:PHgradbounds}.

\begin{lemma} \label{lemma:poisson}
For any locally Lipschitz function $h: \R^d \to \R$ satisfying $\abs{h(x)} \leq \abs{x}^{2m}$, equation (\ref{eq:poisson})
has a solution $f_h(x)$. Moreover, there exists a constant $C(m,1)>0$ (depending only on $(\beta,\alpha,p,\nu,P)$) such that  for $x\in \R^d$
\begin{eqnarray}
\abs{f_h(x)} &\leq & C(m,1)(1+\abs{x}^2)^m, \label{eq:gradbound1}\\
\abs{\partial_i f_h(x)} &\leq & C(m,1)(1+\abs{x}^2)^m(1+\abs{x}), \label{eq:gradbound2} \\
\abs{\partial_{ij} f_h(x)} &\leq & C(m,1)(1+\abs{x}^2)^m(1+\abs{x})^2, \label{eq:gradbound3} \\
\sup \limits_{y\in \R^d:\abs{y-x} < 1} \frac{\abs{\partial_{ij} f_h(y)-\partial_{ij}f_h(x)}}{\abs{y-x}} &\leq &C(m,1) (1+\abs{x}^2)^m(1+\abs{x})^3. \label{eq:gradbound4}
\end{eqnarray}
\end{lemma}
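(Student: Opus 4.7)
The plan is to construct $f_h$ via the relative value function representation
\begin{equation*}
f_h(x) = -\int_0^\infty \Big(\E\big[h(Y(t))\,\big|\,Y(0)=x\big] - \E h(Y(\infty))\Big)\,dt,
\end{equation*}
which is the natural candidate identified by \eqref{eq:relval} in Section~\ref{sec:CAwassergrad_bounds}, and then to read off the four estimates \eqref{eq:gradbound1}--\eqref{eq:gradbound4} from the quantitative framework developed in \cite{Gurv2014}. The first step is to verify that the integral converges and that the resulting $f_h$ solves \eqref{eq:poisson} in the classical sense. For this we need a polynomial Lyapunov function and geometric ergodicity of $Y$. A natural choice is $V(x)=(1+\abs{x}^2)^{m+1}$, and a direct computation using \eqref{eq:DMgen} shows that $G_Y V(x) \leq -c V(x) + K$ outside a compact set, the key point being that the drift contributed by $-R(y-p(e^Ty)^+)-\alpha p(e^Ty)^+$ is strictly dissipative on the hyperplane complement of the kernel of $R$, while $\alpha>0$ controls the direction $p(e^Ty)^+$. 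This drift-condition is essentially the one established in \cite{DiekGao2013} for positive recurrence, strengthened to a polynomial Lyapunov inequality. Standard Meyn-Tweedie theory then yields geometric ergodicity in a $V$-weighted total variation norm, so the integrand decays exponentially in $t$ uniformly in $x$ up to a factor $V(x)$, giving both convergence of the integral and the pointwise bound \eqref{eq:gradbound1}.

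To obtain the first and second order gradient bounds \eqref{eq:gradbound2}--\eqref{eq:gradbound3}, the plan is to follow Gurvich's modular recipe: couple two copies of $Y$ driven by the same Brownian motion and started from $x$ and $x+\epsilon e^{(i)}$, and use monotonicity of the piecewise-linear drift to bound the divergence $\abs{Y^{x+\epsilon e^{(i)}}(t)-Y^x(t)}$ exponentially in $t$. Differentiating the integral representation and using the Lipschitz control on $h$ on level sets of $V$ then produces the estimate $\abs{\partial_i f_h(x)}\le C(1+\abs{x}^2)^m(1+\abs{x})$. For the second derivatives, the cleanest route is to apply interior Schauder estimates to the PDE $G_Y f_h = \E h(Y(\infty))-h$ on balls contained in the smoothness region $\R^d\setminus\{e^Ty=0\}$: on any such ball the coefficients of $G_Y$ are linear (hence H{\"o}lder of every order) and the right-hand side is locally Lipschitz, so Schauder estimates produce the bound \eqref{eq:gradbound3}. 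Iterating the Schauder estimate one step further, or differentiating the PDE and again coupling, delivers the local Lipschitz bound \eqref{eq:gradbound4} on $\partial_{ij}f_h$.

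The main obstacle is the kink of the drift of $Y$ across the hyperplane $\{x\in\R^d: e^Tx=0\}$, where $(e^Tx)^+$ fails to be differentiable. Interior Schauder theory does not apply across this hyperplane, and a direct coupling only gives Lipschitz control, not $C^2$ control, on $f_h$ there. Gurvich's remedy, which we would follow, is a patching argument: obtain the Schauder bounds on the two open half-spaces $\{e^Tx>0\}$ and $\{e^Tx<0\}$ separately in terms of the Lyapunov function $V$, then match across the hyperplane by exploiting the continuity of $G_Y f_h$ and the fact that the only discontinuity in the coefficients is in the first-order term, which still produces a continuous second derivative of $f_h$ in the tangential directions. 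The genuine technical work, and the step I expect to be the most delicate, is verifying that the Schauder constants can be tracked explicitly in $x$ so that the powers of $(1+\abs{x}^2)^m(1+\abs{x})^k$ in \eqref{eq:gradbound2}--\eqref{eq:gradbound4} come out with the correct exponents; this is where the uniformity of the Lyapunov function established in the first step is indispensable.
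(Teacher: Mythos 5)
Your overall strategy---build $f_h$ from the relative value representation, supply a polynomial Lyapunov function, then invoke Gurvich's framework for the pointwise and gradient estimates---is the right one and matches the paper's in spirit. However, there is a concrete gap in the Lyapunov step that would sink the argument as written, and a secondary issue with the coupling step.

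The Lyapunov function you propose, $V(x)=(1+\abs{x}^2)^{m+1}$, does not satisfy $G_Y V \le -cV + K$ for the piecewise OU process $Y$. The drift of $Y$ outside the wedge $\{e^Ty>0\}$ is $-p\beta - Ry$ with $R = (I-P^T)\mathrm{diag}(\nu)$, which is \emph{not} symmetric and whose symmetric part need not be positive definite; consequently $x^T R x$ has no sign in general and the radial quadratic form $\abs{x}^2$ is not a Lyapunov function for $Y$. The paper instead uses the non-radial quadratic form from \cite{DiekGao2013},
\[
V(x) = (e^Tx)^2 + \kappa\,[x - p\,\phi(e^Tx)]^T M\, [x - p\,\phi(e^Tx)],
\]
with a carefully chosen positive-definite $M$ and a smoothed positive-part $\phi$; the two pieces align with the SSC structure (total count plus deviation from the SSC manifold), which is precisely what makes the drift condition go through. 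Once $1+V$ is shown to satisfy Assumption~3.1 of \cite{Gurv2014}, the four estimates \eqref{eq:gradbound1}--\eqref{eq:gradbound4} follow by directly citing equations (22) and (40) there, with no need to re-derive coupling or Schauder estimates. You should also be careful with your coupling step: ``monotonicity of the piecewise-linear drift'' is a property of the Erlang-A drift in one dimension but fails for general $R$, so the synchronous coupling does not yield the claimed contraction; Gurvich's actual route is via interior Schauder estimates tracked uniformly using the Lyapunov function, as you correctly describe in the second half of your sketch. In short, the modular plan is sound, but the specific Lyapunov function must be the \cite{DiekGao2013} one, and the first-derivative bound should come from the Gurvich machinery rather than from a monotone coupling.
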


\subsection{Comparing Generators}
%
The following is an analogue of Lemma~\ref{lem:gz}.
\begin{lemma} \label{lemma:CTMCbar}
Let $h: \R^d \to \R$ satisfy $\abs{h(x)} \leq \abs{x}^{2m}$. The function $f_h(x)$ given by (\ref{eq:poisson}) satisfies
\begin{equation} \label{eq:CTMCbar}
\E G_{U^{(\lambda)}} Af_h(U^{(\lambda)}(\infty)) = 0.
\end{equation}
\end{lemma}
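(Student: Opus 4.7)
My plan is to mirror the strategy used in the proof of Lemma~\ref{lem:gz}, adapting it to the multi-dimensional and set-valued state space $\mathcal{U}$. The key tool is a standard sufficient condition (see \cite[Proposition~1.1]{Hend1997} or \cite[Proposition~3]{GlynZeev2008}): for a positive recurrent CTMC with generator $G$ and stationary distribution $\pi$, one has $\E_{\pi}[Gf] = 0$ provided
\begin{equation*}
\E_\pi \big[\abs{G(u,u) \cdot f(u)}\big] < \infty,
\end{equation*}
where $G(u,u)$ denotes the diagonal entry, i.e.\ the total rate out of state $u$. Since $U^{(\lambda)}$ is positive recurrent \cite{DaiDiekGao2014}, it only remains to verify this integrability.

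First I would control the two factors separately. The total exit rate at state $u$ is $\lambda + \alpha \abs{q} + \sum_{i=1}^d \nu_i z_i \leq \lambda + \big(\alpha \vee \max_i \nu_i\big) \abs{u}$, which is linear in $\abs{u} := \sum_i (\Pi_X(u))_i$. For the function value, recall $Af_h(u) = f_h(x)$ with $x = \delta(\Pi_X(u) - \gamma n)$, so $\abs{x} \le \delta \abs{u} + \delta n$. Combining this with the polynomial growth bound \eqref{eq:gradbound1} from Lemma~\ref{lemma:poisson}, namely $\abs{f_h(x)} \leq C(m,1)(1+\abs{x}^2)^m$, yields a pointwise estimate
\begin{equation*}
\abs{G_{U^{(\lambda)}}(u,u) \cdot Af_h(u)} \leq C'(1+\abs{u})^{2m+1}
\end{equation*}
for some constant $C'$ depending on $\lambda, n, m$, and the system parameters. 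Hence it suffices to show $\E \abs{U^{(\lambda)}(\infty)}^{2m+1} < \infty$.

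The final step is a Foster--Lyapunov argument with test function $V(u) = \abs{u}^{2m+2}$, analogous to the one used in the proof of Lemma~\ref{lem:gz}. The crucial structural fact is that abandonment gives a strong negative drift once $\abs{u} > n$: the abandonment rate is $\alpha \abs{q} = \alpha(\abs{u} - \abs{z}) \geq \alpha(\abs{u} - n)$, which eventually dominates the constant arrival rate $\lambda$. A direct computation, expanding $(\abs{u} \pm 1)^{2m+2} - \abs{u}^{2m+2}$ via the binomial theorem, shows that the leading term in $G_{U^{(\lambda)}} V(u)$ is $-\big(\alpha \abs{q} + \sum_i \mu_i z_i - \lambda\big) \cdot (2m+2)\abs{u}^{2m+1}$ plus lower-order corrections (service phase transitions preserve $\abs{u}$ and hence do not contribute to $V$ directly). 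Thus there exist constants $c, d > 0$ and a finite set $F \subset \mathcal{U}$ such that $G_{U^{(\lambda)}} V(u) \leq -c \abs{u}^{2m+1} + d \cdot \mathbf{1}_{F}(u)$, and \cite[Theorem~4.3]{MeynTwee1993b} then gives $\E \abs{U^{(\lambda)}(\infty)}^{2m+1} < \infty$.

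The main obstacle is purely bookkeeping in the Lyapunov computation: care is needed because service transitions in $P$ move customers between phases without altering $\abs{u}$, so one must verify that these ``internal'' jumps do not upset the drift estimate -- they don't, since $V$ depends only on the total count $\abs{u}$. Everything else is a routine combination of the polynomial gradient bounds already established in Lemma~\ref{lemma:poisson} and standard stability machinery.
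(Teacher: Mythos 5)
Your proposal follows the paper's strategy at the top level: both reduce to the integrability condition $\E\big[\abs{G_{U^{(\lambda)}}(U^{(\lambda)}(\infty),U^{(\lambda)}(\infty))}\,\abs{Af_h(U^{(\lambda)}(\infty))}\big] < \infty$ via \cite[Proposition~1.1]{Hend1997}, bound the diagonal rate linearly in the total customer count, and invoke the polynomial growth bound \eqref{eq:gradbound1} on $f_h$. The difference is in how the needed moment bound on $U^{(\lambda)}(\infty)$ is obtained. You propose a fresh Foster--Lyapunov argument with the polynomial test function $V(u) = \abs{u}^{2m+2}$ to establish that the $(2m+1)$-th moment is finite. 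The paper instead proves the \emph{exponential} moment bound $\E \exp\big(e^T\Pi_X(U^{(\lambda)}(\infty))\big) < \infty$ once and for all (Lemma~\ref{lem:finteexpmoment}), using the test function $L(u)=\exp(e^T\Pi_X(u))$; since this dominates every polynomial, it settles the integrability for every $m$ simultaneously, and that lemma is reused in other places in the paper. Your polynomial Lyapunov route is sound and a bit more elementary, but it produces a weaker conclusion than the paper's single exponential-moment lemma. One small slip: the departure rate due to service completions is $\sum_i \nu_i z_i\big(1-\sum_j P_{ij}\big)$, not $\sum_i \nu_i z_i$ (and you wrote $\mu_i$ where the paper's notation is $\nu_i$). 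This doesn't damage the argument since, as you correctly note, the dominant negative drift for $\abs{u}>n$ comes from the abandonment term $\alpha\abs{q}\geq\alpha(\abs{u}-n)$, but the imprecision should be cleaned up in a final write-up.
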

\noindent To prove the lemma, we need finite moments of the steady-state system size.

\begin{lemma}\label{lem:finteexpmoment}
(a) Let  $L(u) =\exp(e^T \Pi_X(u))$ for $u \in \mathcal{U}$. Then 
\begin{equation} \label{eq:CTMCmgfexist}
\E L(U^{(\lambda)}(\infty))<\infty.
\end{equation}
(b) all moments of $e^TX^{(\lambda)}(\infty)$ are finite.
\end{lemma}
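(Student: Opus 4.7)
The plan is to establish a Foster--Lyapunov drift inequality with $V(u) := L(u) = \exp(e^T \Pi_X(u))$ itself serving as the Lyapunov function. Writing $N := |u| = e^T\Pi_X(u)$, the only transitions of $U^{(\lambda)}$ that change $N$ are arrivals ($N\to N+1$ at total rate $\lambda$), true service completions ($N\to N-1$ at rate $\sum_i \nu_i z_i(1-\sum_j P_{ij})$), and abandonments ($N\to N-1$ at total rate $\alpha(N-n)^+$). Service-phase transitions $i\to j$ leave $N$ unchanged and therefore contribute nothing to $G_{U^{(\lambda)}} V$. First I would compute, directly from the jump description,
\begin{equation*}
G_{U^{(\lambda)}}V(u) = e^N\Big[\lambda(e-1) - (1-e^{-1})\Big(\alpha(N-n)^+ + \sum_{i=1}^d \nu_i z_i\big(1-\sum_{j=1}^d P_{ij}\big)\Big)\Big].
\end{equation*}

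Because $\alpha>0$, the bracketed quantity is eventually linearly decreasing in $N$, so there is an integer $N_0$ (depending on $\lambda,n,\alpha$) such that $G_{U^{(\lambda)}}V(u) \leq -V(u)$ whenever $|u| \geq N_0$, while the crude bound $G_{U^{(\lambda)}}V(u) \leq \lambda(e-1)e^{N_0-1}$ holds on $\{|u| < N_0\}$. These combine into a drift inequality of the form
\begin{equation*}
G_{U^{(\lambda)}} V(u) \leq -V(u) + K\,\mathbf{1}_{\{|u| < N_0\}}
\end{equation*}
for a finite constant $K$. Since $\mathcal{U} = \bigcup_{k\ge 0}\{1,\dots,d\}^k$, the sublevel set $\{|u| < N_0\}$ is a finite (hence petite) set; applying Theorem~4.3 of \cite{MeynTwee1993b}, as was done already in the proof of Lemma~\ref{lem:gz}, yields
\begin{equation*}
\E V(U^{(\lambda)}(\infty)) = \E L(U^{(\lambda)}(\infty)) \leq K < \infty,
\end{equation*}
which is part (a). Part (b) is then immediate: for every integer $m\geq 0$, $(e^TX^{(\lambda)}(\infty))^m = N^m \leq m!\,e^N = m!\,L(U^{(\lambda)}(\infty))$, so taking expectations and invoking (a) gives $\E (e^TX^{(\lambda)}(\infty))^m \leq m!\,\E L(U^{(\lambda)}(\infty)) < \infty$.

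The main (and essentially only) technical point is identifying a Lyapunov function whose negative drift dominates the positive drift from arrivals. This is where the positive abandonment rate $\alpha$ plays the decisive role: the term $\alpha(N-n)^+$ grows linearly in $N$ and overwhelms the bounded positive contribution $\lambda(e-1)$ coming from arrivals, which is what permits control of an \emph{exponential} moment rather than merely a polynomial one. Without abandonment the same $V$ would not produce a usable drift inequality uniformly in the QED parameters \eqref{eq:square-root}, and a more delicate, traffic-intensity-tuned Lyapunov function would be needed.
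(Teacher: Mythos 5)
Your proof is correct and takes essentially the same approach as the paper: both use $L(u)=\exp(e^T\Pi_X(u))$ as the Lyapunov function, exploit the linear negative drift from abandonment ($\alpha(N-n)^+$) to beat the bounded positive drift $\lambda(e-1)$, and then invoke a Foster--Lyapunov theorem from \cite{MeynTwee1993b} to conclude. You spell out the generator computation and the bound $N^m \leq m!\,e^N$ for part (b) a bit more explicitly than the paper does, but the argument is the same.
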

\begin{proof}
One may verify that
\begin{displaymath}
G_{U^{(\lambda)}} L(u) \leq \lambda(\exp(1)-1)L(u) - \alpha (e^T \Pi_X(u) -n)^+(1- \exp(-1))L(u).
\end{displaymath}
It follows that there exist a positive constant $C= C(\lambda, n, \alpha)$ such that,  whenever $e^T \Pi_X(u)$ is large enough,
\begin{equation} \label{eq:CTMCfosterlyap}
G_{U^{(\lambda)}} L(u) \leq -CL(u) + 1.
\end{equation}
Part (a) follows from \cite[Theorem 4.2]{MeynTwee1993b}. Part (b)
follows from \eqref{eq:CTMCmgfexist} and the equality $e^T
\Pi_X(U^{(\lambda)}(\infty)) = e^T X^{(\lambda)}(\infty)$.
\end{proof}
\noindent The function $L(u)$ is said to be a Lyapunov function.
Inequality (\ref{eq:CTMCfosterlyap}) is known as a Foster-Lyapunov
condition and guarantees that the CTMC is positive recurrent; see, for example, 
\cite{MeynTwee1993b}.

\begin{proof}[Proof of Lemma~\ref{lemma:CTMCbar}]
A sufficient condition for (\ref{eq:CTMCbar}) to hold is given by \cite[Proposition 1.1]{Hend1997} (alternatively, see \cite[Proposition 3]{GlynZeev2008}), namely
\begin{equation}
  \label{eq:glynnzeevi}
  \E\Big[ \abs{G_{U^{(\lambda)}}(U^{(\lambda)}(\infty),U^{(\lambda)}(\infty))} \abs{Af_h(U^{(\lambda)}(\infty))}\Big]<\infty.
\end{equation}
Above, $G_{U^{(\lambda)}}(u,u)$ is the $u$th diagonal entry of the generator matrix $G_{U^{(\lambda)}}$.
 In our case, the left side of (\ref{eq:glynnzeevi}) is equal to 
\begin{eqnarray*}
& =& \E\Big[ \abs{G_{U^{(\lambda)}}(U^{(\lambda)}(\infty),U^{(\lambda)}(\infty))} \abs{f_h(\tilde X^{(\lambda)}(\infty))}\Big]\\
&=&\E \abs{\lambda + \alpha(e^TX^{(\lambda)}(\infty) - n)^+ + \sum \limits_{i=1}^d \nu_i (X^{(\lambda)}_i(\infty) - Q^{(\lambda)}_i(\infty)} \abs{ f_h(\tilde X^{(\lambda)}(\infty))} \notag \\
&\leq & \E \abs{\lambda + (\alpha \vee \max_i \{\nu_i\}) e^TX^{(\lambda)}(\infty)} \abs{ f_h(\tilde X^{(\lambda)}(\infty))},
\end{eqnarray*}
where the first equality follows from \eqref{eq:Xinfty} and \eqref{eq:lifter}.
One may apply (\ref{eq:gradbound1}) and (\ref{eq:CTMCmgfexist}) to see that the quantity above is finite.
\end{proof}
\subsection{Taylor Expansion}
To prove 
that
\begin{equation*}
  \label{eq:compareGener}
\abs{\E h(\tilde X^{(\lambda)}(\infty))) - \E h(Y(\infty))}  =\abs{ \E G_{U^{(\lambda)}} Af_h(U^{(\lambda)}(\infty)) - G_Y f_h(\tilde X^{(\lambda)}(\infty))}
\end{equation*}
is small, we perform Taylor expansion on $G_{U^{(\lambda)}} Af_h(u)$, which is defined in (\ref{eq:tildeG}):
\begin{eqnarray}
&& G_{U^{(\lambda)}} Af_h(u) \nonumber \\
&=& \sum \limits_{i=1}^d \lambda p_i\big( \delta \partial_i f_h(x) + \frac{\delta^2}{2} \partial_{ii}f_h(\xi_{i}^+)\big) +\alpha q_i\big(-\delta \partial_i f_h(x)+  \frac{\delta^2}{2}\partial_{ii}f_h(\xi_{i}^-)\big)\nonumber\\
&& + \sum \limits_{i=1}^d \nu_i z_i(1-\sum \limits_{j=1}^d P_{ij})\big(-\delta \partial_i f_h(x)+ \frac{\delta^2}{2}\partial_{ii}f_h(\xi_{i}^-)\big)
\nonumber \\
&& +\sum \limits_{i=1}^d \sum \limits_{j=1}^d \nu_i z_i P_{ij}\Big(-\delta \partial_i f_h(x)  + \delta \partial_j f_h(x) + \frac{\delta^2}{2}\partial_{ii}f_h(\xi_{ij}) \nonumber \\
&& \hspace{3.5cm}+ \frac{\delta^2}{2}\partial_{jj}f_h(\xi_{ij})- \delta^2 \partial_{ij}f_h(\xi_{ij})\Big), \label{eq:GXtaylor}
\end{eqnarray}
where $\xi_{i}^+ \in [x, x+\delta e^{(i)}]$, $\xi_{i}^-\in
[x-\delta e^{(i)}, x] $ and $\xi_{ij}$ lies somewhere between $x$ and
$x-\delta e^{(i)}+\delta e^{(j)}$.  Using the gradient bounds in Lemma~\ref{lemma:poisson}, we have the following lemma, which will be proved 
in Section \ref{app:taylorexp}.

\begin{lemma}\label{lemma:diffbound} There exists a constant
  $C(m,2)>0$ (depending only on $(\beta,\alpha,p,\nu,P)$) such that for any $u \in \mathcal{U}$,
  \begin{eqnarray}
    \label{eq:diffbound}
&&G_{U^{(\lambda)}} Af_h(u) - G_Y f_h(x) \nonumber\\
&=&  \sum \limits_{i=1}^d \partial_i f_h(x)\Big{[}(\nu_i - \alpha-\sum \limits_{j=1}^d P_{ji}\nu_j)( \delta q_i - p_i(e^T x)^+)\Big{]} + E(u),
  \end{eqnarray}
  where $q$ and $x$ are as in (\ref{eq:projlittle}), $\delta$ as in \eqref{eq:delta}, and $E(u)$ is an error term that satisfies 

\begin{displaymath}
\abs{E(u)} \leq \delta\, C(m,2) (1+\abs{x}^2)^m (1+\abs{x})^4.
\end{displaymath}
\end{lemma}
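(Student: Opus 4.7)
My plan is to directly exploit the Taylor expansion in \eqref{eq:GXtaylor} and compare it term by term against $G_Y f_h(x)$ given in \eqref{eq:DMgen}. I would split the difference $G_{U^{(\lambda)}}Af_h(u)-G_Y f_h(x)$ into three groups: the first-order (gradient) Taylor terms, the second-order (Hessian) Taylor terms frozen at $x$, and the Taylor remainders $\partial_{ij}f_h(\xi)-\partial_{ij}f_h(x)$, where each intermediate point $\xi$ satisfies $|\xi-x|\le\sqrt{2}\,\delta$. The first and second groups must, up to the explicit residual displayed in \eqref{eq:diffbound}, reproduce the drift and diffusive parts of $G_Y f_h(x)$ exactly, while the remainders are bundled into $E(u)$.

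For the first-order piece, I would collect the coefficient of $\delta\,\partial_i f_h(x)$ in \eqref{eq:GXtaylor}, which equals $\lambda p_i - \alpha q_i - \nu_i z_i + \sum_j P_{ji}\nu_j z_j$. Substituting $z_i=(x_i/\delta)+\gamma_i n-q_i$ from \eqref{eq:projlittle}, the $x/\delta$-pieces multiplied by $\delta$ recover the $-\nu_i x_i+\sum_j P_{ji}\nu_j x_j$ terms inside $G_Y f_h(x)$, while the deterministic constants cancel because the flow-balance identity $(I-P^T)\mathrm{diag}(\nu)\gamma=\mu p$ (an immediate consequence of $\gamma=\mu R^{-1}p$) combined with the QED scaling $n\mu=\lambda+\beta\sqrt{\lambda}$ and $\delta\sqrt{\lambda}=1$ reduces $\delta(\lambda-n\mu)p_i$ to exactly the $\beta p_i$ contribution in $G_Y$. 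The remaining terms all carry a factor of either $\delta q_k$ or $p_k(e^Tx)^+$; they reorganize algebraically into the explicit residual $\sum_i\partial_i f_h(x)(\nu_i-\alpha-\sum_j P_{ji}\nu_j)(\delta q_i-p_i(e^Tx)^+)$. This step is a pointwise identity and contributes nothing to $E(u)$.

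For the second-order piece, I would freeze each Hessian at $x$ and collect the coefficients of $\delta^2\,\partial_{ij}f_h(x)$. Substituting once more $z_i=(x_i/\delta)+\gamma_i n-q_i$ together with $\delta^2 n=(1+\beta\delta)/\mu$, the leading $\delta$-independent part of these coefficients equals $\tfrac{1}{2}\sum_{i,j}\Sigma_{ij}\,\partial_{ij}f_h(x)$ by the definition of $\Sigma$ in \eqref{eq:OUdiffusioncoeff}: the arrival rates contribute $\mathrm{diag}(p)$, the cross-terms $-\delta^2\sum_{ij}\nu_i z_i P_{ij}\,\partial_{ij}f_h$ combine with the matching diagonal pieces to yield $\sum_k\gamma_k\nu_k H^k$, and the service-completion rates yield $(I-P^T)\mathrm{diag}(\nu)\mathrm{diag}(\gamma)(I-P)$. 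Every subleading residual takes the form $\delta\cdot(\text{polynomial in }x_k,\,\delta q_k)$, and since $0\le \delta q_i\le \delta e^T\Pi_X(u)\le |x|+\delta\gamma_i n\le |x|+C$ each such residual is bounded by $\delta\cdot C(1+|x|)$; combined with the Hessian bound \eqref{eq:gradbound3} these contributions fit inside the $E(u)$ envelope.

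The only remaining obstacle is the Taylor remainder $\partial_{ij}f_h(\xi)-\partial_{ij}f_h(x)$, but here the H\"older bound \eqref{eq:gradbound4} does the job: since $|\xi-x|\le\sqrt{2}\,\delta$, we obtain $|\partial_{ij}f_h(\xi)-\partial_{ij}f_h(x)|\le \sqrt{2}\,C(m,1)\,\delta\,(1+|x|^2)^m(1+|x|)^3$. Multiplying by the transition-rate prefactors (namely $\delta^2\lambda=1$, $\delta^2\alpha q_i\le \delta\alpha(|x|+C)$, and similarly $\delta^2\nu_i z_i$ and $\delta^2\nu_iz_iP_{ij}$, each controlled by an affine function of $|x|$), summing over $i,j$, and absorbing universal constants into $C(m,2)$, yields a total remainder bounded by $\delta\cdot C(m,2)(1+|x|^2)^m(1+|x|)^4$. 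The principal difficulty is the bookkeeping in the second-order match: keeping track of every $\delta$, every $q_i$, and every routing contribution so that the structural identity \eqref{eq:OUdiffusioncoeff} for $\Sigma$ emerges cleanly at leading order. Once that is carried out, the error control follows immediately from the gradient bounds supplied by Lemma~\ref{lemma:poisson}.
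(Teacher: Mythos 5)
Your proposal matches the paper's proof in both structure and substance: the paper likewise regroups the Taylor expansion \eqref{eq:GXtaylor} by derivative order, substitutes $z_i = x_i/\delta + \gamma_i n - q_i$, invokes the flow-balance identity $-\nu_i\gamma_i n + \sum_j P_{ji}\nu_j\gamma_j n = -n p_i$ together with the QED scaling to match the drift and diffusion coefficient of $G_Y$ up to the displayed residual, and then controls the Taylor remainders $\partial_{ij}f_h(\xi) - \partial_{ij}f_h(x)$ via the local H\"older bound \eqref{eq:gradbound4} and the moment-in-$x$ bounds on $\delta q_i$, $\delta(z_i-\gamma_i n)$, and $\delta^2 z_i$. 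No essential difference in approach.
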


\section{State Space Collapse}\label{sec:state-space-collapse}

One of the challenges we face comes from the fact that our CTMC $U^{(\lambda)}$ is infinite-dimensional, while the approximating diffusion process is only $d$-dimensional. 
Recall the process $(X^{(\lambda)},Q^{(\lambda)})$ defined in (\ref{eq:projectionCTMC}) and the lifting operator $A$ acting on functions $f:\R^d \to \R$, as defined in (\ref{eq:lifter}). When acting on the lifted functions $Af(U^{(\lambda)}(\infty))$, the CTMC generator $G_{U^{(\lambda)}}$ depends on both $\tilde X^{(\lambda)}(\infty)$ and $Q^{(\lambda)}(\infty)$, but its approximation $G_Y f(\tilde X^{(\lambda)}(\infty))$ only depends on $\tilde X^{(\lambda)}(\infty)$. This is captured in \eqref{eq:diffbound} by the term
\begin{displaymath}
\sum \limits_{i=1}^d \partial_i f_h(x)\Big{[}(\nu_i - \alpha-\sum \limits_{j=1}^d P_{ji}\nu_j)( \delta q_i - p_i(e^T x)^+)\Big{]}.
\end{displaymath}
To bound this term, observe that for any $1 \leq i \leq d$, 
\begin{eqnarray}
&&\Big(\nu_i - \alpha-\sum \limits_{j=1}^d P_{ji}\nu_j\Big)\partial_i f_h(x)\big( \delta q_i - p_i(e^T x)^+\big) \notag \\
 &=& \Big(\nu_i - \alpha-\sum \limits_{j=1}^d P_{ji}\nu_j\Big)\Big(\partial_i f_h(x)- \partial_i f_h\big(x - \delta q + p(e^T x)^+\big) \Big)\big( \delta q_i - p_i(e^T x)^+\big) \notag  \\
 &&+\ \Big(\nu_i - \alpha-\sum \limits_{j=1}^d P_{ji}\nu_j\Big)\partial_i f_h\big(x - \delta q + p(e^T x)^+\big)\big( \delta q_i - p_i(e^T x)^+\big) \notag \\
 &=& \Big(\nu_i - \alpha-\sum \limits_{j=1}^d P_{ji}\nu_j\Big)\sum_{k=1}^{d}\partial_{ik} f_h(\xi)( \delta q_k - p_k(e^T x)^+)\big( \delta q_i - p_i(e^T x)^+\big)  \notag \\
 &&+\ \Big(\nu_i - \alpha-\sum \limits_{j=1}^d P_{ji}\nu_j\Big)\partial_i f_h\big(\delta (z - \gamma n) + p(e^T x)^+\big)\big( \delta q_i - p_i(e^T x)^+\big), \label{eq:interm_ssc}
\end{eqnarray}
where $z$, defined in \eqref{eq:projlittle}, is a vector that represents the number of customers of each type in service, and $\xi$ is some point between $x$ and $x - \delta q + p(e^T x)^+$. In particular, there exists some constant $C$ that doesn't depend on $\lambda$ and $n$, such that
\begin{equation} \label{eq:xi_ineq}
\abs{\xi} \leq \abs{x}  + \delta \abs{q} + \abs{p}(e^T x)^+ \leq C \abs{x},
\end{equation}
because $\delta q_i \leq (e^T x)^+$ for each $1 \leq i \leq d$ (i.e.\ the number of phase $i$ customers in queue can never exceed the queue size).

In order to bound the expected value of \eqref{eq:interm_ssc}, we must prove a
relationship between $\tilde X^{(\lambda)}(\infty)$ and $Q^{(\lambda)}(\infty)$. Intuitively, the number of customers of phase $i$
waiting in the queue should be approximately equal to a fraction $p_i$
of the total queue size. The following two lemmas bound
the error caused by the SSC approximation. They are proved at the end of this section.

\begin{lemma}
\label{lemma:SSC}
Let $Z^{(\lambda)}(\infty) = X^{(\lambda)}(\infty) - Q^{(\lambda)}(\infty)$  be the vector representing the number of customers of each type in service in steady-state. Then conditioned on $(e^T \tilde X^{(\lambda)}(\infty))^+$, the random vectors $Q^{(\lambda)}(\infty)$ and $Z^{(\lambda)}(\infty)$ are independent.
Furthermore,
\begin{equation} \label{eq:multinom}
\E \Big [\delta Q^{(\lambda)}(\infty) - p(e^T \tilde X^{(\lambda)}(\infty))^+ \Big|\ (e^T \tilde X^{(\lambda)}(\infty))^+ \Big] = 0,
\end{equation}
and for any integer $m>0$, there exists $C(m, 3)>0$ (depending only on $(\beta,\alpha,p,\nu,P)$) such that for all $\lambda>0$ and $n\ge 1$ satisfying \eqref{eq:square-root},
\begin{equation}
  \label{eq:sscscaled}
  \E \Big [\abs{\delta Q^{(\lambda)}(\infty) - p(e^T \tilde X^{(\lambda)}(\infty))^+}^{2m}\Big ] \leq \delta^m\, C(m, 3)\E [(e^T \tilde X^{(\lambda)}(\infty))^+]^m,
\end{equation}
where $\delta=1/\sqrt{\lambda}$ as in \eqref{eq:delta}. 
\end{lemma}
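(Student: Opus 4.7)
The plan is to exploit the following sample-path observation: when a customer arrives to the $M/Ph/n+M$ system, the initial service phase is drawn independently from $p$, and this phase is not consulted until the customer enters service. Consequently, the phases of the customers currently in the queue are simply the most recent i.i.d.\ $p$-distributed phase assignments attached to the arrival stream, and they are independent of all service-completion and abandonment clocks.

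To make this precise, I would realize the CTMC $U^{(\lambda)}$ on a common probability space that contains an i.i.d.\ sequence $\{\phi_j\}_{j=1}^{\infty}$ of $p$-distributed phases, together with independent Poisson clocks driving arrivals, phase transitions, and abandonments. At each arrival epoch, the next unused $\phi_j$ is appended to the tail of the state $u$, and the phase entries in queue positions $n+1,\ldots,\abs{u}$ are never modified until the corresponding customer enters service. Then, at steady state, conditional on the queue size $K := e^T Q^{(\lambda)}(\infty) = \sqrt{\lambda}\,(e^T\tilde X^{(\lambda)}(\infty))^+$, the vector $Q^{(\lambda)}(\infty)$ is the vector of counts of a multinomial$(K,p)$ sample drawn from $\{\phi_j\}$, and this sample is independent of $Z^{(\lambda)}(\infty)$, since the server state is determined by phases of \emph{earlier} customers who have already entered service. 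This yields the conditional independence of $Q^{(\lambda)}(\infty)$ and $Z^{(\lambda)}(\infty)$ given $(e^T \tilde X^{(\lambda)}(\infty))^+$.

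The two quantitative claims then follow from standard multinomial moment computations. Since $\E[Q^{(\lambda)}(\infty)\mid K]=Kp$ and $\delta K=(e^T\tilde X^{(\lambda)}(\infty))^+$ (both sides vanish on the empty-queue event), the mean-zero identity \eqref{eq:multinom} is immediate. For the bound \eqref{eq:sscscaled}, each centered coordinate $Q^{(\lambda)}_i(\infty)-Kp_i$ is a sum of $K$ bounded, centered, i.i.d.\ Bernoulli variables, so a Rosenthal-type inequality gives $\E[\abs{Q^{(\lambda)}(\infty)-Kp}^{2m}\mid K]\leq C(m,3) K^m$. Multiplying by $\delta^{2m}$, using $\delta K=(e^T\tilde X^{(\lambda)}(\infty))^+$ to rewrite $\delta^{2m} K^m = \delta^m\,((e^T\tilde X^{(\lambda)}(\infty))^+)^m$, and averaging over $K$ then produces the stated inequality.

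The main obstacle will be formalizing the coupling: the CTMC $U^{(\lambda)}$ is infinite-dimensional, the positions in the sequence get reshuffled whenever a server completes a service or a customer abandons, and the identity of the customers currently in the queue at a stationary time is itself random. A clean workaround is to augment each coordinate of $u$ with a label recording the \emph{original} arrival phase, and to observe that the projection $Q^{(\lambda)}(\infty)$ reads only these original labels in positions $k>n$. By construction of the augmented chain, these labels are i.i.d.\ $p$-distributed and are independent of every randomization used to evolve the queue length $K$ and the in-service vector $Z^{(\lambda)}(\infty)$, which reduces the conditional independence assertion to an elementary check on the augmented probability space.
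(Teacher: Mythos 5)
Your proposal takes essentially the same approach as the paper: realize the process so that the service-phase assignment of each arriving customer is an i.i.d.\ $p$-distributed mark on the arrival stream, observe that those marks are not read until the customer enters service, conclude that conditionally on the queue length the queue composition is multinomial$(K,p)$ and independent of the in-service vector, and then finish with a standard centered-multinomial moment bound and the scaling identity $\delta K = (e^T\tilde X^{(\lambda)}(\infty))^+$. The one place where the paper is somewhat more careful than your sketch is that it does not argue directly about the stationary object: it starts from an empty system at time $0$, proves the conditional multinomial structure and the moment bound at each finite time $t$ (this is where the ``phases of earlier customers determine the server state'' argument is made fully concrete, with explicit indices $B(t)<j\le A(t)$ and abandonment indicators $\zeta_j(t)$), and only then passes to $t\to\infty$ by dominated/monotone convergence. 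Your proposed label-augmentation workaround is a legitimate way to make the stationary statement rigorous, but the finite-time-then-limit route avoids having to specify a stationary coupling at all, which is cleaner given that the state space $\mathcal U$ is infinite-dimensional.
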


\begin{lemma}
\label{lemma:SSC_bound}
For any integer $m>0$, there exists $C(m, 4)>0$ (depending only on $(\beta,\alpha,p,\nu,P)$) such that for any locally Lipschitz function $h: \R^d \to \R$ satisfying $\abs{h(x)} \leq \abs{x}^{2m}$, and all $\lambda>0$ and $n\ge 1$ satisfying \eqref{eq:square-root}
\begin{eqnarray}
&&\abs{\sum \limits_{i=1}^d  \E \bigg[\partial_i f_h(\tilde X^{(\lambda)}(\infty))\Big{[}(\nu_i - \alpha-\sum \limits_{j=1}^d P_{ji}\nu_j)( \delta Q^{(\lambda)}_i(\infty) - p_i(e^T \tilde X^{(\lambda)}(\infty))^+)\Big{]}\bigg]} \notag \\
&\leq & \delta C(m, 4)\E \Big[\big((e^T \tilde X^{(\lambda)}(\infty))^+\big)^2\Big]\sqrt{\E \Big[1 + \abs{\tilde X^{(\lambda)}(\infty)}^8\Big]}
\end{eqnarray}
where $f_h(x)$ is the solution to the Poisson equation \eqref{eq:poisson}.
\end{lemma}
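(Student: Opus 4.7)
The plan is to peel off a state--space--collapse correction from $\tilde{X}^{(\lambda)}(\infty)$ via a first order Taylor expansion, exactly as in the manipulation preceding \eqref{eq:interm_ssc}. Let
$$\hat{X} := \delta(Z^{(\lambda)}(\infty) - \gamma n) + p\,(e^T\tilde{X}^{(\lambda)}(\infty))^+,$$
so that $\tilde X^{(\lambda)}(\infty) - \hat X = \delta Q^{(\lambda)}(\infty) - p(e^T\tilde X^{(\lambda)}(\infty))^+$ and, crucially, $\hat X$ is measurable with respect to $\sigma\!\left(Z^{(\lambda)}(\infty), (e^T\tilde X^{(\lambda)}(\infty))^+\right)$. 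A Taylor expansion writes $\partial_i f_h(\tilde X^{(\lambda)}(\infty)) = \partial_i f_h(\hat X) + \sum_k \partial_{ik} f_h(\xi)\bigl(\delta Q_k^{(\lambda)}(\infty) - p_k(e^T\tilde X^{(\lambda)}(\infty))^+\bigr)$ for some $\xi$ on the segment between $\hat X$ and $\tilde X^{(\lambda)}(\infty)$. Multiplying by $\delta Q_i^{(\lambda)}(\infty) - p_i(e^T\tilde X^{(\lambda)}(\infty))^+$ and summing over $i$ splits the left side of the lemma into a ``linear'' term and a ``quadratic'' remainder, which I treat separately.

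The linear term vanishes in expectation. I would condition on $\sigma\!\left(Z^{(\lambda)}(\infty),(e^T\tilde X^{(\lambda)}(\infty))^+\right)$ and pull the $\hat X$--measurable factor $\partial_i f_h(\hat X)$ outside. Lemma~\ref{lemma:SSC} then does all the work: conditional on $(e^T\tilde X^{(\lambda)}(\infty))^+$, the vectors $Q^{(\lambda)}(\infty)$ and $Z^{(\lambda)}(\infty)$ are independent, so the inner expectation of $\delta Q_i^{(\lambda)}(\infty) - p_i(e^T\tilde X^{(\lambda)}(\infty))^+$ reduces to the conditional expectation given $(e^T\tilde X^{(\lambda)}(\infty))^+$ alone, which vanishes by \eqref{eq:multinom}.

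For the quadratic remainder, the bound $|\delta Q_i^{(\lambda)}(\infty)| \le \delta e^T Q^{(\lambda)}(\infty) = (e^T\tilde X^{(\lambda)}(\infty))^+ \le \sqrt{d}|\tilde X^{(\lambda)}(\infty)|$ implies that $|\xi| \le C|\tilde X^{(\lambda)}(\infty)|$, so the Hessian bound \eqref{eq:gradbound3} applied at $\xi$ yields $|\partial_{ik} f_h(\xi)| \le C(1+|\tilde X^{(\lambda)}(\infty)|)^{2m+2}$. I would then apply Cauchy--Schwarz to separate the polynomial factor in $|\tilde X^{(\lambda)}(\infty)|$ from $|\delta Q^{(\lambda)}(\infty) - p(e^T\tilde X^{(\lambda)}(\infty))^+|^2$, and finish by invoking the fourth--moment SSC estimate \eqref{eq:sscscaled} with $m=2$, giving $\E|\delta Q^{(\lambda)}(\infty) - p(e^T\tilde X^{(\lambda)}(\infty))^+|^4 \le \delta^2 C(2,3)\,\E[((e^T\tilde X^{(\lambda)}(\infty))^+)^2]$. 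Combining these estimates and summing over $i$ produces a bound of the desired form, with the $\sqrt{\E[1+|\tilde X^{(\lambda)}(\infty)|^8]}$ factor arising naturally for $m=1$ and the appropriate higher moments replacing it for general $m$ (absorbed into $C(m,4)$).

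The main technical obstacle is extracting the full factor of $\delta$, rather than only $\sqrt{\delta}$, from the quadratic remainder. A direct application of Cauchy--Schwarz with the second--moment SSC bound \eqref{eq:sscscaled} at $m=1$ would leave only $\sqrt{\delta}$ outside, which is inadequate. The resolution is precisely the higher--moment bound \eqref{eq:sscscaled} at $m=2$: it upgrades an $O(\delta)$ estimate for $\E|\delta Q^{(\lambda)}(\infty) - p(e^T\tilde X^{(\lambda)}(\infty))^+|^2$ to an $O(\delta^2)$ estimate for the fourth moment, whose square root supplies the correct rate. This is the one place where the stronger form of Lemma~\ref{lemma:SSC} is genuinely needed; the conditional mean--zero identity \eqref{eq:multinom} alone handles the linear term, but the quadratic remainder requires the $L^4$ tail control on the queue composition.
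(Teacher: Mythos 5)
Your proposal reproduces the paper's proof essentially line for line: the same decomposition of $\partial_i f_h(\tilde X^{(\lambda)}(\infty))$ via a first-order Taylor expansion around $\hat X = \delta(Z^{(\lambda)}(\infty) - \gamma n) + p(e^T\tilde X^{(\lambda)}(\infty))^+$, the same use of the conditional independence of $Q^{(\lambda)}(\infty)$ and $Z^{(\lambda)}(\infty)$ together with \eqref{eq:multinom} to kill the linear term, and the same Cauchy--Schwarz plus fourth-moment SSC bound \eqref{eq:sscscaled} at $m=2$ for the quadratic remainder. One small caveat: the phrase ``absorbed into $C(m,4)$'' for general $m$ is loose since an expectation of $|\tilde X^{(\lambda)}(\infty)|$ to a higher power is a priori random, not a constant; the correct statement for $m>1$ is that the factor $\sqrt{\E[1+|\tilde X^{(\lambda)}(\infty)|^8]}$ should be replaced by a moment of order roughly $4m+4$ (as you note), which is then harmless in the downstream application because Lemma~\ref{lemma:CTMCmoments} bounds all such moments uniformly in $\lambda$.
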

\begin{proof}[Proof of Lemma~\ref{lemma:SSC}]
We begin by proving \eqref{eq:sscscaled}, for which it suffices to show that for all $\lambda>0$ and $n\ge 1$
    satisfying \eqref{eq:square-root}
\begin{displaymath}
\E \Big [\abs{Q^{(\lambda)}(\infty) - p(e^T X^{(\lambda)}(\infty) - n)^+}^{2m}\Big ] \leq C(m, 3)\E [(e^TX^{(\lambda)}(\infty) - n)^+]^m.
\end{displaymath}
  We first prove a version of \eqref{eq:sscscaled} for any finite time $t \geq 0$. Then,
  $(e^TX^{(\lambda)}(t) - n)^+$ is the total number of customers waiting in queue
  at time $t$. Assume that the system is empty at time $t = 0$, i.e.\ $X^{(\lambda)}(0) = 0$. Fix a phase $i$.  Upon arrival to the system, a
  customer is assigned to service phase $i$ with probability $p_i$. Consider the sequence $\{\xi_j:j=1, 2, \ldots\}$, where $\xi_j$ is one if the $j$th customer to enter the system was assigned to phase $i$, and zero otherwise. Then $\{\xi_j:j=1, 2, \ldots\}$ is a sequence of iid Bernoulli random
  variables with $\Prob(\xi_j=1)=p_i$. For $t > 0$, define $A(t)$ and $B(t)$ to be the total number of customers to have entered the system, and entered service by time $t$, respectively. Also let $\zeta_j(t)$ be the indicator of whether customer $j$ is still waiting in queue at time $t$. Then 
  \begin{eqnarray}
    \label{eq:rigL}
 &&    (e^TX^{(\lambda)}(t) - n)^+ = \sum_{j=B(t)+1}^{A(t)} \zeta_j(t), \\
 &&   Q^{(\lambda)}_i(t) = \sum_{j= B(t) + 1}^{A(t)} \xi_j \zeta_j(t).
    \label{eq:rigQ}
  \end{eqnarray}
Let $Z^{(\lambda)}(t) = X^{(\lambda)}(t) - Q^{(\lambda)}(t)$ be the vector keeping track of the customer types in service at time $t$ and let $B(\ell,p_i)$ be a binomial random variable with $\ell\in \Z_+$ trials and success probability $p_i$. Assuming $X^{(\lambda)}(0)=0$, by a sample path construction of the process $U^{(\lambda)}$ one can verify that for any time $t \geq 0$, the following three properties hold.
First, for any $z \in \Z_+^d$, $a,b \in \Z_+$ \text{with $a\ge 1$}, and $x_1, \ldots, x_a, y_1, \ldots, y_a \in \{0,1\}$,
\begin{eqnarray}
&&
\Prob\big(\xi_{b+1} = x_1, \ldots, \xi_{b+a}=x_{a}\ |\ A(t)= b+a, B(t)=b, Z^{(\lambda)}(t) = z, \notag\\
&& {} \hspace{2in} \zeta_{b+1}=y_1, \ldots, \zeta_{b+a}=y_{a} \big) \notag \\
& =&\ \Prob\big(\xi_{1} = x_1\big)\Prob\big(\xi_{2} = x_2\big)\ldots \Prob\big(\xi_{a}=x_{a} \big)\notag \\
& =& \ p_i^{\sum_{i=1}^a x_i}(1- p_i)^{a-\sum_{i=1}^a x_i}. \label{eq:rig1}
\end{eqnarray}
The right side of \eqref{eq:rig1} is independent of $b$, $z$, $y_1,
\ldots, y_a$.  It then follows from \eqref{eq:rigL}, \eqref{eq:rigQ} 
and \eqref{eq:rig1} that  for any integer $\ell\ge 1$, $q_i\in \Z_+$, and $z \in \Z_+^d$,
\begin{align}
&\Prob\big(Q^{(\lambda)}_i(t) = q_i\ |\ (e^TX^{(\lambda)}(t) - n)^+ = \ell, Z^{(\lambda)}(t) = z\big) \notag \\
=&\ \Prob\big(Q^{(\lambda)}_i(t) = q_i\ |\ (e^TX^{(\lambda)}(t) - n)^+ = \ell\big) \notag \\
=&  \Prob\big(B(\ell,p_i) =q_i \big). \label{eq:rig3}
\end{align}
Since \eqref{eq:rig3} holds for all $t \geq 0$, it holds in stationarity as well.

We now say a few words about how to construct $U^{(\lambda)}$ and
argue \eqref{eq:rig1}--\eqref{eq:rig3}. One would start with four
primitive sequences: a sequence of inter-arrival times, potential
service times, patience times, and routing decisions. The sequence of
potential service times would hold all the service information about
each customer provided they were patient enough to get into
service. The routing sequence would represent the phase each customer
is assigned upon entering the system.

To see why \eqref{eq:rig1} is true, we first observe that at any time
$t> 0$, the random variable $A(t)$ depends only on the inter-arrival
time primitives; in particular, it is independent of the routing
  sequence $\{\xi_j, j\ge 1\}$. Second, any customer to arrive after
customer number $B(t)=b$ has no impact on any of the servers at
any point in time during $[0,t]$. In particular, the primitives
including $\{\xi_{b+j}, j\ge 1\}$ associated to those customers
are independent of $B(t)=b$ and $Z^{(\lambda)}(t)$. Lastly, the
decisions of those customers whether to abandon or not
by time $t$ depends only on their arrival times, patience times, and
the service history in the interval $[0,t]$. In particular,
  the sequence $\{\zeta_{b+j}(t), j\ge 1\}$ is independent of
  $\{\xi_{b+j}, j\ge 1\}$. This proves the 
the first equality in \eqref{eq:rig1}.



We now move on to complete the proof of this lemma. We use \eqref{eq:rig3} to see that for any positive integer $N$, 
\begin{eqnarray}
&& \E\Big( [Q^{(\lambda)}_i(t) - p_i(e^TX^{(\lambda)}(t) - n)^+]^{2m}1_{\{ (e^T X^{(\lambda)}(t) - n)^+ \le N\}}\Big)\nonumber\\
&=& \sum \limits_{\ell=1}^{N}\E \Big{[}\big(B(\ell,p_i) - p_i \ell\big)^{2m}\Big{]}\Prob((e^TX^{(\lambda)}(t) - n) = \ell)\nonumber\\
&\leq&\sum \limits_{\ell=1}^{N}C(m,6) \ell^m\Prob((e^TX^{(\lambda)}(t) - n) = \ell)\nonumber\\
&=& C(m,6) \E \Big([(e^TX^{(\lambda)}(t) - n)^+]^m1_{\{ (e^T X^{(\lambda)}(t) - n)^+ \le N\}}\Big),\label{eq:sscproof}
\end{eqnarray}
where we have used the fact that there is a constant $C(m,6)>0$ such that
\begin{displaymath}
\E \Big{[}\big(B(\ell,p_i) - p_i \ell\big)^{2m}\Big{]} \le   
C(m,6) \ell^m \quad \text{ for all } \ell \ge 1;
\end{displaymath}
see, for example,  (4.10) of \cite{Knob2008}. Letting $t\to\infty$ in both sides of 
(\ref{eq:sscproof}), by the dominated convergence theorem, one has
\begin{eqnarray*}
&& \E\Big( [Q^{(\lambda)}_i(\infty) - p_i(e^TX^{(\lambda)}(\infty) - n)^+]^{2m}1_{\{ (e^T X^{(\lambda)}(\infty) - n)^+ \le N\}}\Big) \\
& &\le  C(m,6) \E \Big([(e^TX^{(\lambda)}(\infty) - n)^+]^m1_{\{ (e^T X^{(\lambda)}(\infty) - n)^+ \le N\}}\Big).
\end{eqnarray*}
Letting $N\to\infty$, by the monotone convergence theorem, one has 
\begin{eqnarray*}
\E (Q^{(\lambda)}_i(\infty) - p_i(e^TX^{(\lambda)}(\infty) - n)^+)^{2m} \leq C(m,6) \E \big[(e^TX^{(\lambda)}(\infty) - n)^+\big]^m.
\end{eqnarray*}
Then \eqref{eq:sscscaled} follows from this inequality for each $i$ and the fact that there is a constant $B_m>0$ such that  $\abs{x}^{2m}\le B_m \sum_{i=1}^d (x_i)^{2m}$ for all $x\in \R^d$. One can check that \eqref{eq:multinom} can be obtained by an argument very similar to the one used to prove \eqref{eq:sscscaled}.
\end{proof}

\begin{proof}[Proof of Lemma~\ref{lemma:SSC_bound}]
Recall that 
\begin{displaymath}
Z^{(\lambda)}(\infty) = X^{(\lambda)}(\infty) - Q^{(\lambda)}(\infty)
\end{displaymath} 
is the vector representing the number of customers of each type in service in steady-state. Then from \eqref{eq:interm_ssc} we have
\begin{eqnarray*}
&&\E \bigg[\partial_i f_h(\tilde X^{(\lambda)}(\infty))\big( \delta Q^{(\lambda)}_i(\infty) - p_i(e^T \tilde X^{(\lambda)}(\infty))^+\big) \bigg]\\
 &=& \sum_{k=1}^{d}\E \bigg[\partial_{ik} f_h(\xi)\big( \delta Q^{(\lambda)}_k(\infty) - p_k(e^T \tilde X^{(\lambda)}(\infty))^+\big)\big( \delta Q^{(\lambda)}_i(\infty) - p_i(e^T \tilde X^{(\lambda)}(\infty))^+\big)\bigg] \\
 &&+\ \E \bigg[\partial_i f_h\Big(\delta (Z^{(\lambda)}(\infty) - \gamma n) + p(e^T \tilde X^{(\lambda)}(\infty))^+\Big)\big( \delta Q^{(\lambda)}_i(\infty) - p_i(e^T \tilde X^{(\lambda)}(\infty))^+\big)\bigg].
\end{eqnarray*}
By Lemma~\ref{lemma:SSC}, the second expected value equals zero. For the first term, one can use the Cauchy-Schwarz inequality, together with the gradient bound \eqref{eq:gradbound3} and the SSC result \eqref{eq:sscscaled} to see that for all $1 \leq i,k \leq d$,
\begin{eqnarray*}
&&\E \bigg[\partial_{ik} f_h(\xi)\big( \delta Q^{(\lambda)}_k(\infty) - p_k(e^T \tilde X^{(\lambda)}(\infty))^+\big)\big( \delta Q^{(\lambda)}_i(\infty) - p_i(e^T \tilde X^{(\lambda)}(\infty))^+\big)\bigg] \\
&\leq & \bigg(\E \Big[\big(\partial_{ik} f_h(\xi)\big)^2\Big]\bigg)^{1/2} \bigg(\E \bigg[\Big( \delta Q^{(\lambda)}_k(\infty) - p_k(e^T \tilde X^{(\lambda)}(\infty))^+\Big)^4\bigg]\bigg)^{1/4} \\
&&\hspace{4cm} \times \bigg( \E \bigg[\Big( \delta Q^{(\lambda)}_i(\infty) - p_i(e^T \tilde X^{(\lambda)}(\infty))^+\Big)^4	\bigg]\bigg)^{1/4} \\
&\leq & \delta C(2, 3)\E \big[(e^T \tilde X^{(\lambda)}(\infty))^+\big]^2\sqrt{\E \Big[\big(\partial_{ik} f_h(\xi)\big)^2\Big]} \\
&\leq & \delta C(2, 3)\E \big[(e^T \tilde X^{(\lambda)}(\infty))^+\big]^2C(m,1)\sqrt{\E \Big[(1+\abs{\xi}^2)^2(1+\abs{\xi})^4\Big]}.
\end{eqnarray*}
We now combine everything together with the fact that $\xi$ satisfies \eqref{eq:xi_ineq} to conclude that there exists a constant $C(m,4)$ that does not depend on $\lambda$ or $n$, such that
\begin{eqnarray*}
&&\abs{\sum \limits_{i=1}^d \partial_i \E \bigg[f_h(\tilde X^{(\lambda)}(\infty))\Big{[}(\nu_i - \alpha-\sum \limits_{j=1}^d P_{ji}\nu_j)( \delta Q^{(\lambda)}_i(\infty) - p_i(e^T \tilde X^{(\lambda)}(\infty))^+)\Big{]}\bigg]} \notag \\
&\leq & \delta C(m, 4)\E \big[(e^T \tilde X^{(\lambda)}(\infty))^+\big]^2\sqrt{\E \Big[1 + \abs{\tilde X^{(\lambda)}(\infty)}^8\Big]},
\end{eqnarray*}
which concludes the proof of the lemma.
\end{proof}

\section{Proof of Theorem~\ref{thm:main}} \label{sec:thmpf}

To prove Theorem~\ref{thm:main}, we need an additional lemma on
uniform bounds for moments of scaled system size.  It will be proved
in Section~\ref{app:CTMCmomentsproof}.
\begin{lemma}
\label{lemma:CTMCmoments}
For any integer $m \geq 0$, there exists a constant  $C(m, 5)>0$ (depending only on $(\beta,\alpha,p,\nu,P)$) such that
\begin{equation} \label{eq:CTMCunifmombound}
\E \abs{\tilde X^{(\lambda)}(\infty)}^{m} \leq C(m, 5).
\end{equation}
\end{lemma}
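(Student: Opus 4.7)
The plan is a Foster--Lyapunov drift argument performed directly on the CTMC $U^{(\lambda)}$, with the main technical burden being to arrange for the drift inequality to hold uniformly in $\lambda$. Since $\abs{x}^m \leq 1 + \abs{x}^{2\lceil m/2\rceil}$, it suffices to handle even moments, and I would proceed by induction on the integer $k$ with the target $\E \abs{\tilde X^{(\lambda)}(\infty)}^{2k} \leq C(k,5)$, leveraging the decomposition $\tilde X^{(\lambda)}(\infty) = \delta(Z^{(\lambda)}(\infty) - \gamma n) + \delta Q^{(\lambda)}(\infty)$ to treat the in-service and in-queue contributions separately.

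As a first step, I would establish an auxiliary exponential tail bound in the spirit of Lemma~\ref{lem:finteexpmoment}: for a sufficiently small $\theta>0$ independent of $\lambda$,
\begin{align*}
\E \exp\bigl(\theta\, \delta\, (e^T X^{(\lambda)}(\infty) - n)^+\bigr) \leq C(\theta).
\end{align*}
This would follow by applying $G_{U^{(\lambda)}}$ to $L_\theta(u) = \exp\bigl(\theta \delta (e^T\Pi_X(u)-n)^+\bigr)$ and Taylor-expanding the jumps: in the overloaded region the arrival and service contributions combine to $[\lambda(e^{\theta\delta}-1) + n\mu(e^{-\theta\delta}-1)]L_\theta = [\theta\beta + O(\theta^2)] L_\theta$ by the Halfin--Whitt relation \eqref{eq:square-root}, while the abandonment contribution is at most $-\tfrac{1}{2}\alpha\theta\,\delta (e^T\Pi_X(u)-n)^+\,L_\theta$ for $\theta$ small enough. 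The resulting Foster inequality is uniform in $\lambda$ and yields all polynomial moments of $(e^T\tilde X^{(\lambda)}(\infty))^+$ at once. Combining this with the SSC estimate \eqref{eq:sscscaled} then yields a uniform-in-$\lambda$ bound on $\E \abs{\delta Q^{(\lambda)}(\infty)}^{2k}$ for every $k$.

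It then remains to control $\delta(Z^{(\lambda)}(\infty)-\gamma n)$, where $Z^{(\lambda)} = X^{(\lambda)} - Q^{(\lambda)}$ records the phases of the in-service customers. For this I would apply $G_{U^{(\lambda)}}$ to the lifted test function $V_k(u) = \abs{\delta(\Pi_X(u) - \Pi_Q(u) - \gamma n)}^{2k}$, which depends only on the in-service coordinates, and Taylor expand as in Lemma~\ref{lemma:diffbound}. The leading term matches the analogous piecewise-linear-drift expression for the diffusion, whose drift on the in-service coordinates has a uniformly negative-definite linear part driven by the transient matrix $R$ of \eqref{eq:defph}, with the only positive forcing coming from the abandonment feedback $-\alpha p(e^T\tilde x)^+$ and the queue-length term $\delta q$. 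This produces a pointwise inequality of the form
\begin{align*}
G_Y V_k(\tilde x) \leq -c\,\abs{\delta(z-\gamma n)}^{2k} + C\bigl(1+\abs{\delta(z-\gamma n)}^{2k-1}+\abs{\delta q}^{2k}+((e^T\tilde x)^+)^{2k}\bigr),
\end{align*}
and the Taylor remainder is controlled by $\delta$ times a polynomial of degree $2k+1$, which can be absorbed via Young's inequality together with the exponential tail bound already derived. Taking expectations under $U^{(\lambda)}(\infty)$, using $\E G_{U^{(\lambda)}} V_k(U^{(\lambda)}(\infty)) = 0$ (justified exactly as in Lemma~\ref{lemma:CTMCbar} via Lemma~\ref{lem:finteexpmoment}), and invoking the inductive hypothesis on lower-degree moments of $\delta(Z^{(\lambda)}(\infty)-\gamma n)$ together with the already-established uniform moments of $\delta Q^{(\lambda)}(\infty)$ and $(e^T\tilde X^{(\lambda)}(\infty))^+$ closes the recursion. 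The main obstacle will be tracking the constants carefully through this three-stage bootstrap so that they depend only on $(\beta,\alpha,p,\nu,P)$; this is precisely where the transience assumption \eqref{eq:transience} and the nondegeneracy condition \eqref{eq:noredundancy} enter, as together they guarantee the negative-definiteness of $R$ with a spectral gap that is independent of $\lambda$ and $n$.
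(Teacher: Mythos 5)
Your strategy is genuinely different from the paper's. The paper works with a single Lyapunov function pulled back from the diffusion, namely $V_m(x) = (1+V(x))^m$ with $V$ the quadratic function of \eqref{eq:deflyapou} taken from \cite{DiekGao2013}, and writes $G_{U^{(\lambda)}}AV_m = (G_{U^{(\lambda)}}AV_m - G_Y V_m) + G_Y V_m$. The generator difference is then controlled by the Taylor expansion of Lemma~\ref{lemma:diffbound}, with the unbounded SSC term $\sum_i \abs{\partial_i V_m}\,\abs{q_i - p_i(e^Tx)^+}$ isolated by an iterated Young's inequality (which forces the paper's restriction to $m=2^j$) and bounded using \eqref{eq:sscscaled}; the resulting recursion is anchored at the first-moment bound from \cite{DaiDiekGao2014}. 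You instead decompose $\tilde X^{(\lambda)}(\infty)$ into $\delta Q^{(\lambda)}(\infty)$ and $\delta(Z^{(\lambda)}(\infty)-\gamma n)$ and run a three-stage bootstrap beginning with a uniform exponential tail bound on the scaled queue.

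That first stage has a real gap. The service \emph{exit} rate is $\sum_i \nu_i z_i(1-\sum_j P_{ij})$, a function of the full in-service vector $z$ and not of $e^T\Pi_X(u)$ alone. It equals $n\mu$ when $z=\gamma n$, but over $\{z\ge 0,\ e^Tz = n\}$ it can be as small as $n\min_i \nu_i(1-\sum_j P_{ij})$, and transience \eqref{eq:transience} permits $\sum_j P_{i_0 j}=1$ for some phase $i_0$ (e.g.\ the Erlang-$2$ distribution in the paper has row $1$ of $P$ summing to $1$), which drives this minimum to zero. Without a uniform lower bound of order $n\mu$ on the exit rate, the drift of $L_\theta$ becomes negative only once $\alpha(e^TX^{(\lambda)}-n)^+ \gtrsim \lambda$, and the excluded region contributes a factor of order $\exp(\theta\sqrt{\lambda}/\alpha)$ to the Foster bound, which is not uniform in $\lambda$. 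Repairing this requires prior control of $\abs{z-\gamma n}$, and that is precisely the coupling the paper's $V$ resolves by penalizing the queue $(e^Tx)^2$ and the SSC gap $[x-p\phi(e^Tx)]'M[x-p\phi(e^Tx)]$ within one function. A secondary concern is your third stage: when a service exit occurs while the queue is nonempty, the jump of $z$ depends on the service phase of the head-of-queue customer, so $G_{U^{(\lambda)}}$ applied to a function of $z$ alone is not determined by $(z,q)$ and the Taylor expansion of Lemma~\ref{lemma:diffbound} does not carry over without new work; the conditional independence in Lemma~\ref{lemma:SSC} is a stationary statement and does not directly supply the state-by-state generator identity you would need.
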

\noindent We remark that in the special case when the service time distribution
is taken to be hyper-exponential, it is proved in \cite{GamaStol2012} that
\begin{displaymath}
\limsup \limits_{\lambda \rightarrow \infty} \E \exp\Big(\theta \abs{\tilde X^{(\lambda)} (\infty)}\Big) < \infty
\end{displaymath}
for $\theta$ in a neighborhood around zero. The proof relies on a result that allows one to compare the
system with an infinite-server system, whose stationary distribution is  known to be Poisson.


\begin{proof}[Proof of Theorem~\ref{thm:main}]
  
It follows from Lemmas~\ref{lemma:diffbound} and \ref{lemma:SSC_bound} that

\begin{eqnarray}
&&\abs{\E h(\tilde X^{(\lambda)}(\infty)) - \E h(Y(\infty))} =
\abs{\E G_{U^{(\lambda)}} Af_h(U^{(\lambda)}(\infty)) - \E G_Y f_h(\tilde X^{(\lambda)}(\infty))}  \nonumber \\
&& \leq \abs{\sum \limits_{i=1}^d  \E \bigg[\partial_i f_h(\tilde X^{(\lambda)}(\infty))\Big{[}(\nu_i - \alpha-\sum \limits_{j=1}^d P_{ji}\nu_j)( \delta Q^{(\lambda)}_i(\infty) - p_i(e^T \tilde X^{(\lambda)}(\infty))^+)\Big{]}\bigg]} \nonumber \\
&& \qquad \qquad +  \delta C(m,2)  \E \Big{[}(1+\abs{\tilde X^{(\lambda)}(\infty)}^2)^m(1 + \abs{\tilde X^{(\lambda)}(\infty)})^4\Big{]} \nonumber \\
&&\qquad  \leq \delta C(m, 4)\E \Big[\big((e^T \tilde X^{(\lambda)}(\infty))^+\big)^2\Big]\sqrt{\E \Big[1 + \abs{\tilde X^{(\lambda)}(\infty)}^8\Big]}\nonumber \\
&& \qquad \qquad +  \delta C(m,2)  \E \Big{[}(1+\abs{\tilde X^{(\lambda)}(\infty)}^2)^m(1 + \abs{\tilde X^{(\lambda)}(\infty)})^4\Big{]}
.\label{eq:proofbound}
\end{eqnarray}
By Lemma~\ref{lemma:CTMCmoments}, there are constants $B_1(m), B_2(m)>0$ (depending only on $(\beta,\alpha,p,\nu,P)$) such that 
\begin{eqnarray*}
&& \E \Big[\big((e^T \tilde X^{(\lambda)}(\infty))^+\big)^2\Big]\sqrt{\E \Big[1 + \abs{\tilde X^{(\lambda)}(\infty)}^8\Big]} \le B_1(m), \\
&& \E \Big{[}(1+\abs{\tilde X^{(\lambda)}(\infty)}^2)^m(1 + \abs{\tilde X^{(\lambda)}(\infty)})^4\Big{]}\le B_2(m).
\end{eqnarray*}
Therefore, the right side of (\ref{eq:proofbound}) is less than or equal to 
\begin{eqnarray*}
\lefteqn{ \delta C(m,4) B_1(m) + \delta C(m,2) B_2(m) }\\
&\leq&  \Big(C(m,4) B_1(m) +  C(m,2) B_2(m) \Bigr) \frac{1}{\sqrt{\lambda}} \quad \text{ for }\lambda > 0.
\end{eqnarray*}
 This concludes the proof of Theorem~\ref{thm:main}.
\end{proof}

\section{State Dependent Diffusion Coefficient} \label{sec:PHstatedep}
In Chapter~\ref{chap:dsquare}, we showed that using a state-dependent diffusion coefficient yields a much better approximation for the Erlang-C model. In this section we explore the use of a state-dependent diffusion coefficient for the $M/Ph/n+M$ model. We perform a numerical study, as the multi-dimensional nature of the $M/Ph/n+M$ model makes it difficult to prove any rigorous bounds.

To understand which diffusion approximation to use, we first group the terms on the right hand side of \eqref{eq:GXtaylor} by partial derivatives to see that 
\begin{align}
G_{U^{(\lambda)}} Af(u) \approx &\  \sum \limits_{i=1}^d \partial_i f(x)\delta \big(\lambda p_i - \alpha q_i - \nu_i z_i  \big) \notag \\
&+  \sum \limits_{i=1}^d \partial_{ii} f(x)\frac{1}{2}\delta^2 \big( \lambda p_{i}+ \alpha q_i  + \nu_i z_i + \sum_{j=1}^{d} P_{ji} \nu_j z_j \big) \notag \\
&- \sum_{i=1}^{d} \sum_{j=1}^{d} \partial_{ij} f(x) \delta^2 \nu_i z_i P_{ij}, \quad u \in \mathcal{U}, \label{PH:sdgenpart1}
\end{align}
where $z$, $q$  and $x$ are defined in \eqref{eq:projlittle}. We wish to replace $z$ and $q$ by functions of $x$. We know that 
\begin{align*}
x_i = \delta(z_i + q_i - \gamma_i n).
\end{align*}
 Lemma~\ref{lemma:SSC} tells us to use the approximation  
\begin{align}
\delta q_i \approx p_i(e^{T} x)^+ = p_{i} \big(\sum_{i}^{d} x_i\big)^+, \label{PH:qi}
\end{align} 
which suggests that 
\begin{align*}
z_i =  \frac{1}{\delta} (x_i - q_i) + \gamma_i n \approx  \frac{1}{\delta} \big(x_i - p_i(e^{T} x)^+\big) + \gamma_i n.
\end{align*}
The state space of the CTMC makes it so $z_i$ can never be negative, i.e.\ the number of customers in service is never negative. Therefore, 
\begin{align}
z_i = (z_i)^+  \approx  \Big( \frac{1}{\delta} \big(x_i - p_i(e^{T} x)^+\big) + \gamma_i n\Big)^+. \label{PH:zi}
\end{align}
We apply  \eqref{PH:qi} and \eqref{PH:zi} to \eqref{PH:sdgenpart1} to arrive at the diffusion approximation with generator 
\begin{align}
G f(x) =&\  \sum \limits_{i=1}^d \partial_i f(x) \Big(\delta\lambda p_i - \alpha p_i(e^{T} x)^+ - \nu_i \big(  x_i - p_i(e^{T} x)^+ + \delta\gamma_i n\big)^+  \Big)  \notag \\
&+ \sum \limits_{i=1}^d \partial_{ii} f(x)\frac{1}{2}\Big( \delta^2 \lambda p_{i}+ \delta\alpha p_i(e^{T} x)^+  + \delta\nu_i \big(  x_i - p_i(e^{T} x)^+ + \delta\gamma_i n\big)^+  \notag \\
& \hspace{3cm} + \delta\sum_{j=1}^{d} P_{ji} \nu_j\big(  x_j - p_j(e^{T} x)^+ + \delta\gamma_j n\big)^+ \Big) \notag \\
&- \sum_{i=1}^{d} \sum_{j=1}^{d} \partial_{ij} f(x) \delta \nu_i  P_{ij}\big(  x_i - p_i(e^{T} x)^+ + \delta\gamma_i n\big)^+. \label{PH:statedep}
\end{align}
Comparing the generator in \eqref{PH:statedep} to $G_Y$ in \eqref{eq:DMgen}, we see that the  coefficients of the second derivatives are state-dependent in the former, but constant in the latter. Although we are not guaranteed that the diffusion process with generator given by \eqref{PH:statedep} is positive recurrent, we assume it is, and use a modified version of the finite element algorithm in \cite{DaiHe2013} to compute its stationary distribution.

In the rest of this section, we will be interested in approximating the steady-state total customer count in the system. For convenience, we define
\begin{align}
T := X_1^{(\lambda)}(\infty) + X_2^{(\lambda)}(\infty) \quad \text{ and } \quad \tilde T := \tilde X_1^{(\lambda)}(\infty) + \tilde X_2^{(\lambda)}(\infty) = \delta(T-n), \label{PH:t}
\end{align}
where $\tilde X^{(\lambda)}(\infty)$ is defined in \eqref{eq:Xinfty}.
 We set 
\begin{align}
W := Y_1(\infty) + Y_2(\infty), \label{PH:w}
\end{align}
where $Y(\infty)$ has the steady-state distribution of the diffusion process with generator $G_Y$. The random variable $W$ is the constant diffusion coefficient approximation to $\tilde T$. Analogously to \eqref{PH:w}, we let $W_S$ be the approximation to $\tilde T$ based on the diffusion process with generator in \eqref{PH:statedep}. The code used in the following numerical study is publicly available at \url{https://github.com/anton0824/mphnplusm}.
  
\subsection{$M/C_2/n+M$ Model -- No State Space Collapse}
We first focus on the special case of the $M/C_2/n+M$ model. The $C_2$ stands for a $2$-phase Coxian distribution. The corresponding tuple of parameters is $(p,\nu,P)$, where
\begin{displaymath}
p = (1,0)^T, \quad \nu = (\nu_1, \nu_2)^T, \text{ \quad and \quad } P = \begin{pmatrix}
0 & P_{12}\\ 
0 & 0
\end{pmatrix}.
\end{displaymath}
It can be checked that in this case, $1/\mu = 1/\nu_1 + P_{12}/\nu_1$, $\gamma_1 = \mu/\nu_1$, and $\gamma_2 = \mu P_{12}/\nu_2$. 
All customers start out in phase 1, and after completing that phase they move on to phase 2 with probability $P_{12}$, or leave system with probability $1-P_{12}$. Choosing the parameters $\nu_1, \nu_2, P_{12}$ is often done by first choosing the desired mean $1/\mu$ and 
squared coefficient of  variation $c_s^2$; the squared coefficient of variation of a random variable $Z$ equals $\text{Var}(Z)/(\E(Z))^2$. After choosing $1/\mu$ and $c_s^2$, we then set $\nu_1 = 2\mu, P_{12}=1/(2c_s^2), \nu_2 = P_{12}\nu_1$. In the following example, we choose $\mu = 1$ and $c_s^2 = 24$. 

The algorithm of \cite{DaiHe2013} that we use to compute the density of $W$ and $W_S$ require choosing a reference density, truncation rectangle, and a mesh resolution. To generate Table~\ref{tab:PHstatedep}, and Figures~\ref{fig:PHdens} and \ref{fig:PHrelerr}, we used a truncation rectangle of $[-10, 35] \times [-10, 35]$, and a lattice mesh in which all finite elements are $0.5 \times 0.5$ squares. The reference density used is similar to (3.21) and (3.23) of \cite{DaiHe2013}, but with one exception. With a $C_2$ service time distribution, any customer in the buffer must be a type-$1$ customer, and therefore type-$2$ customers never abandon the system. Therefore, using the notation of \cite{DaiHe2013}, we choose 
\begin{align*}
r_2(z) = \exp\bigg( - \frac{z}{\mu(c_a^2 + c_s^2)}  - \frac{\gamma_j^2 \beta^2}{1 + c_a^2}\bigg), \quad \text{ for } z \geq 0 . 
\end{align*}

Since all customers start out in phase $1$ of service, the $M/C_2/n+M$ model can be represented by a $2$-dimensional CTMC. Namely, $\{(X_1(t), X_2(t)), t \geq 0\}$ is a CTMC. This fact is important, because the diffusion approximation is also $2$-dimensional, and no SSC is required. This means that \eqref{PH:qi} and \eqref{PH:zi} are actually equalities, not just approximations, and that the diffusion generator completely captures the first and second derivative terms of the Taylor expansion in \eqref{eq:GXtaylor}. We observed in Chapter~\ref{chap:dsquare} that capturing the first and second derivative terms in the generator of the Erlang-C model gave us faster convergence rates. By similar logic, we expect the approximation in \eqref{PH:statedep} to have a faster convergence rate of $1/\lambda$ as opposed to $1/\sqrt{\lambda}$. Table~\ref{tab:PHstatedep} is consistent with this expectation, and shows that when approximating $\E|\tilde T|$, the errors from using $W$ and $W_S$ shrink at rates $1/\sqrt{\lambda}$ and $1/\lambda$, respectively. Similar results were observed for higher moments of $\tilde T$ as well. 

\begin{table}[h]
  \begin{center}
  \resizebox{\columnwidth}{!}{
   \begin{tabular}{rc|cc|cc }
$n$ & $\E |\tilde T|$ & $\big| \EE |W| - \EE |\tilde T|\big|$  &Relative Error & $\big| \EE |W_S| - \EE |\tilde T|\big|$& Relative Error\\
\hline
 15        &   0.900  &$2.07\times 10^{-2}$  & 2.29\% & $2.31\times 10^{-3}$ & 0.26\% \\
 30        &  0.907  & $1.40\times 10^{-2}$  & 1.54\% & $1.16\times 10^{-3}$ & 0.13\% \\
 60        &  0.912  &$9.55\times 10^{-3}$ & 1.05\% & $5.68\times 10^{-4}$ & 0.06\% \\
 125        &  0.915  & $6.43\times 10^{-3}$ & 0.70\% & $2.49\times 10^{-4}$ & 0.03\% \\
 250        & 0.917  & $4.45\times 10^{-3}$& 0.49\% & $9.71\times 10^{-5}$ & 0.01\% \\
 500        & 0.918  & $3.09\times 10^{-3}$& 0.34\% & $1.95\times 10^{-5}$ & 0.002\% \\
 1000        & 0.919  & $2.15\times 10^{-3}$& 0.23\% & $2.03\times 10^{-5}$ & 0.002\% \\
  \end{tabular}}
  \end{center}
  \caption{Approximation error of $\E | \tilde T|$. In each row, $\mu = 1$ and $\lambda = n$. The approximation with the state-dependent diffusion coefficient outperforms the one with constant diffusion coefficient. We see that as $\lambda$ doubles, the error of $\E |W|$ decreases by a factor of $\sqrt{2}$, while the error of $\E |W_S|$ decreases by a factor of $2$. When $n=250,500$, and $1000$, the value of $\E|W_S|$ is so close to $\E | \tilde T|$ that the approximation error reported in the table is due to the numerical error in the finite element  algorithm used to compute $\E|W_S|$. \label{tab:PHstatedep}}
\end{table} 
Another criterion by which we evaluate the diffusion approximations is how well they approximate the probability mass function (pmf) of $T$, the unscaled total customer count. Figure~\ref{fig:PHdens} contains plots the pmf of $T$ together with the constant and state-dependent coefficient approximations. We see that the benefit of the latter approximation is more pronounced for the smaller-sized system. We refer the reader to Figure~\ref{fig:PHrelerr}, which plots the relative error of approximating $\Prob(T \geq k)$. We see from that figure that when approximating tail events, e.g.\ when $\Prob(T \geq k) \leq 0.05$,  the state-dependent coefficient approximation performs significantly better.

\begin{figure}[h]\centering
   \begin{minipage}{0.4\textwidth}
   \hspace{-1.5cm}
    \includegraphics[clip, trim=.5cm 6.5cm 0.5cm 7cm, width=75mm,keepaspectratio]{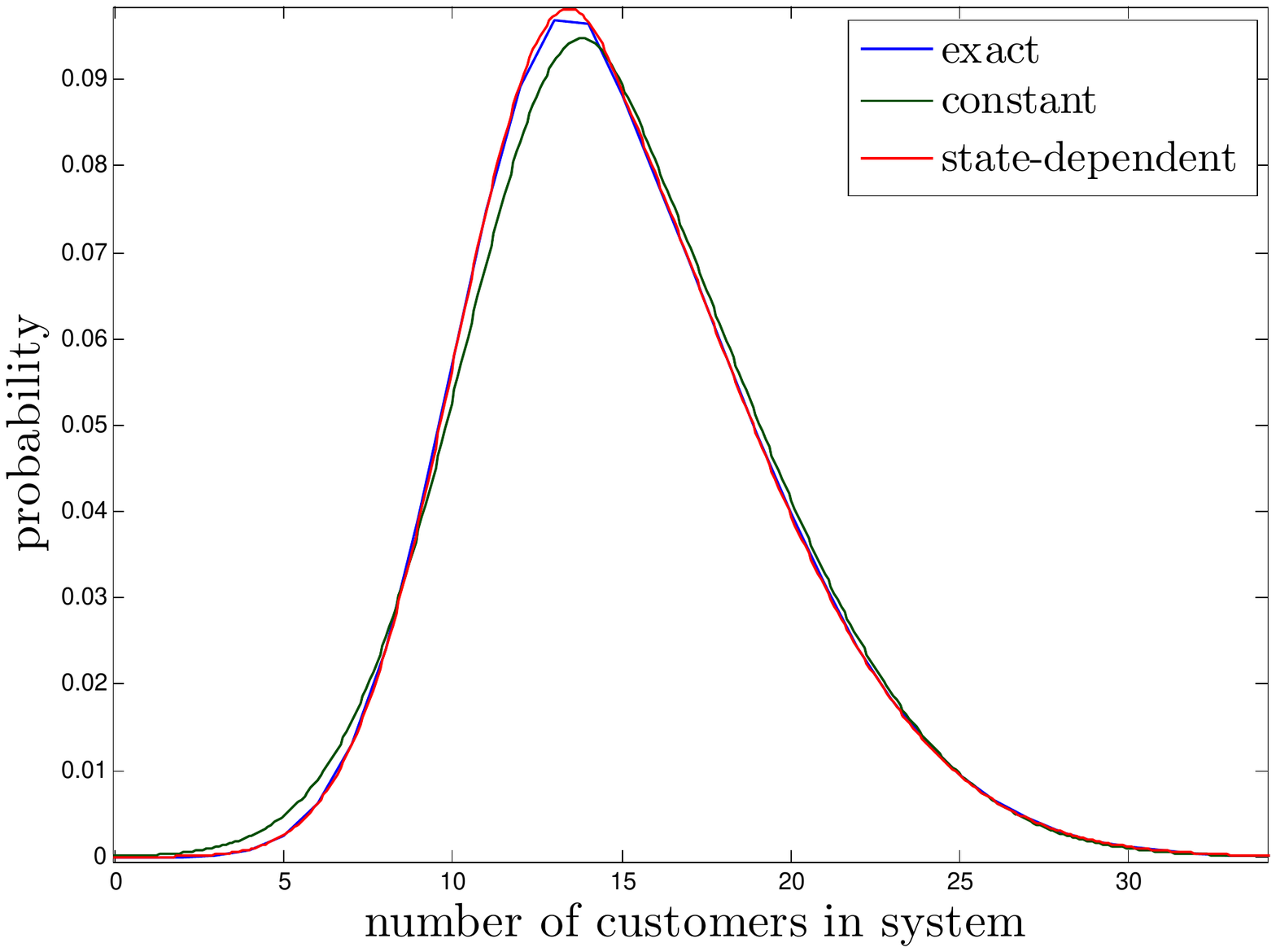}
   \end{minipage}
\begin{minipage}{0.4\textwidth}
    \includegraphics[clip, trim=0.5cm 6.5cm 0.5cm 6.5cm, width=70mm,keepaspectratio]{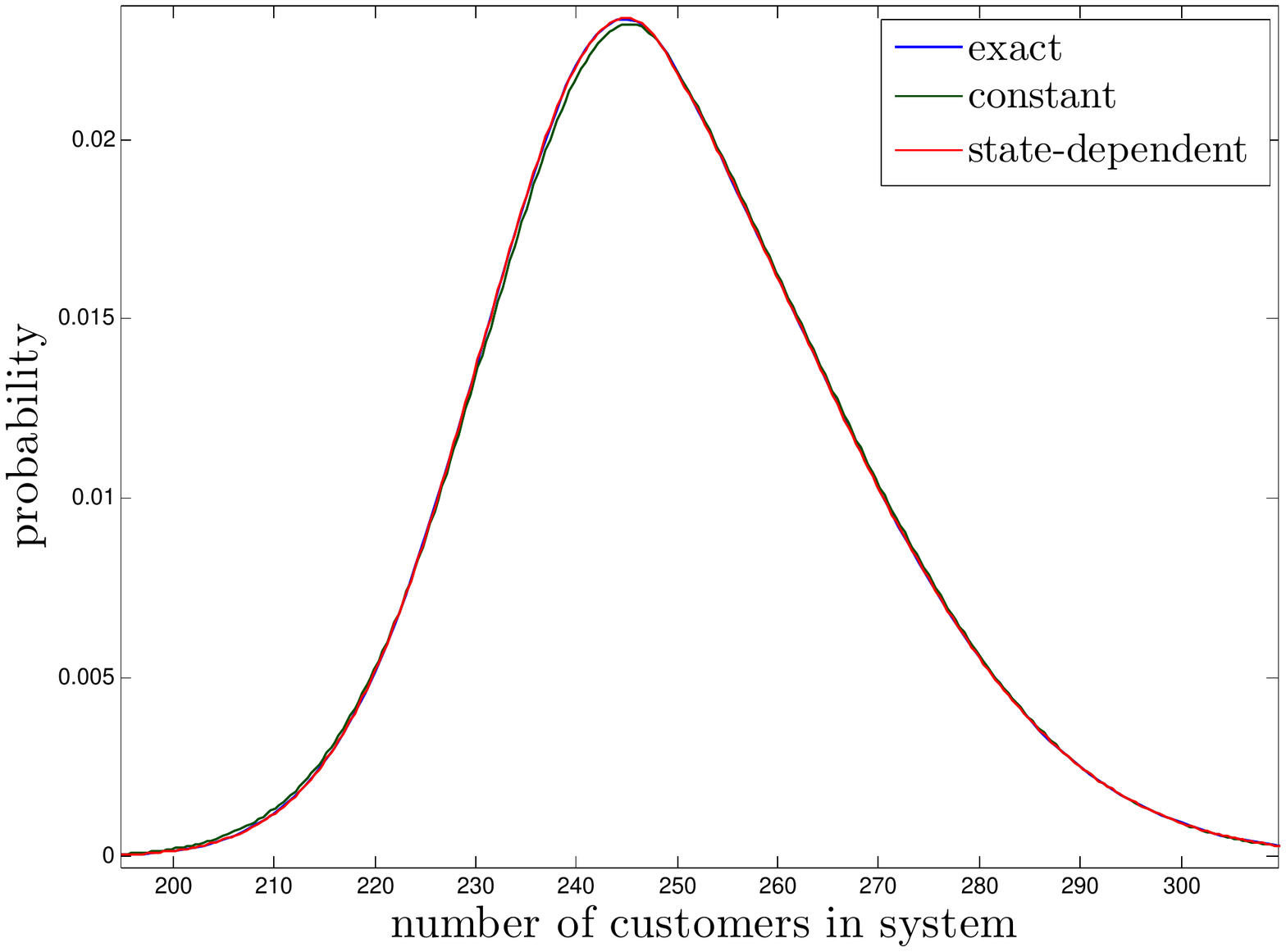}
   \end{minipage}
   \caption{In the plot on the left, $\lambda =n=15$, and $\mu=1$. In the plot on the right, $\lambda =n=250$, and $\mu=1$. In both plots, the blue line represents the probability mass function of $T$, i.e. $\Prob(T = k)$. The green and red lines plot the densities of the diffusion approximations with constant and state-dependent diffusion coefficients, respectively. The benefit of using a state-dependent diffusion coefficient is more pronounced for smaller-sized systems.}
   \label{fig:PHdens}
\end{figure}


\begin{figure}[h!]\centering
   \begin{minipage}{0.5\textwidth}
   \hspace{-1.5cm}
    \includegraphics[clip, trim=1.2cm 5.5cm 0cm 5cm, width=100mm,keepaspectratio]{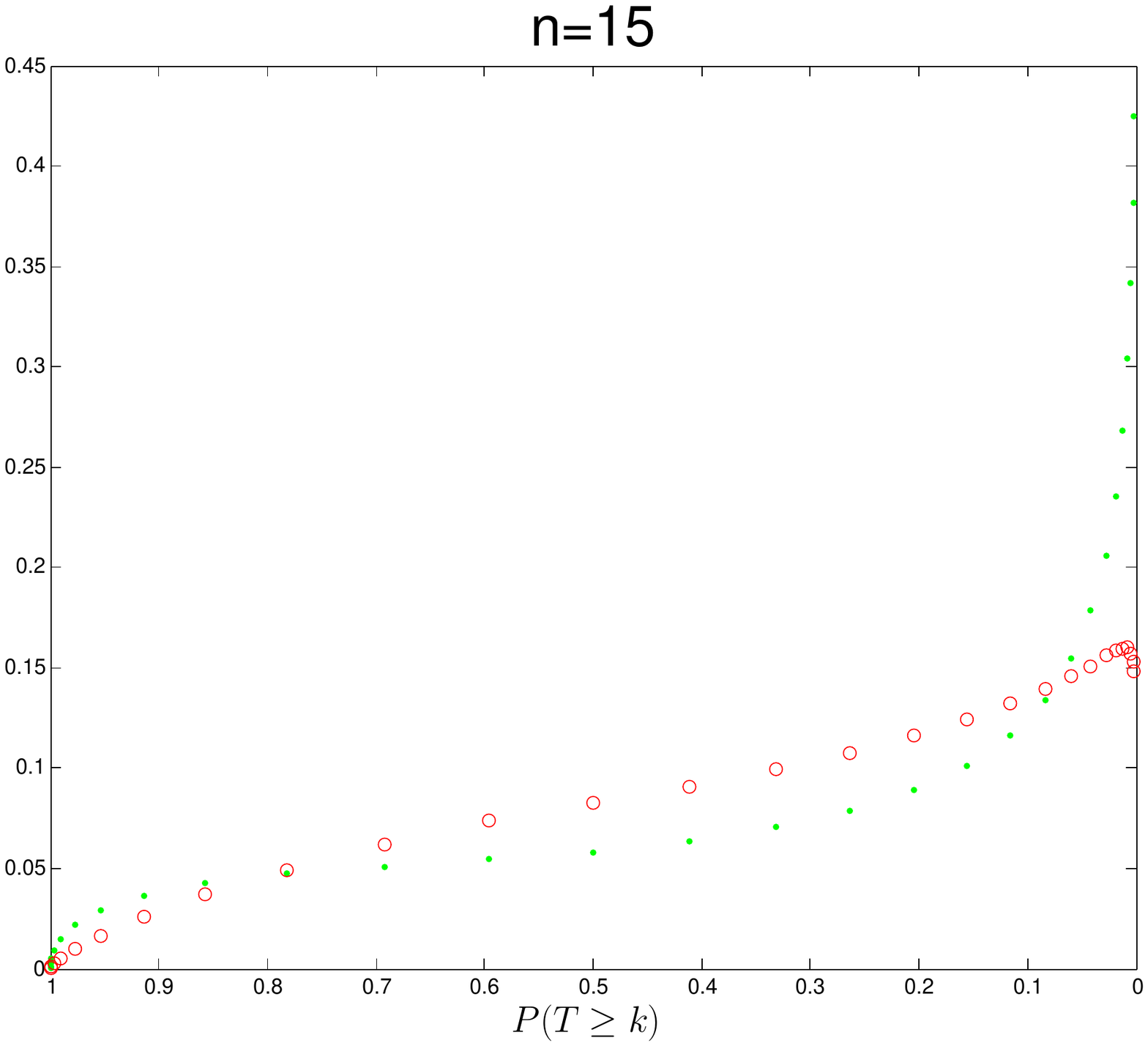}
   \end{minipage}
\begin{minipage}{0.5\textwidth}
    \hspace{-1.5cm} \includegraphics[clip, trim=.8cm 5.5cm 0.5cm 4cm, width=100mm,keepaspectratio]{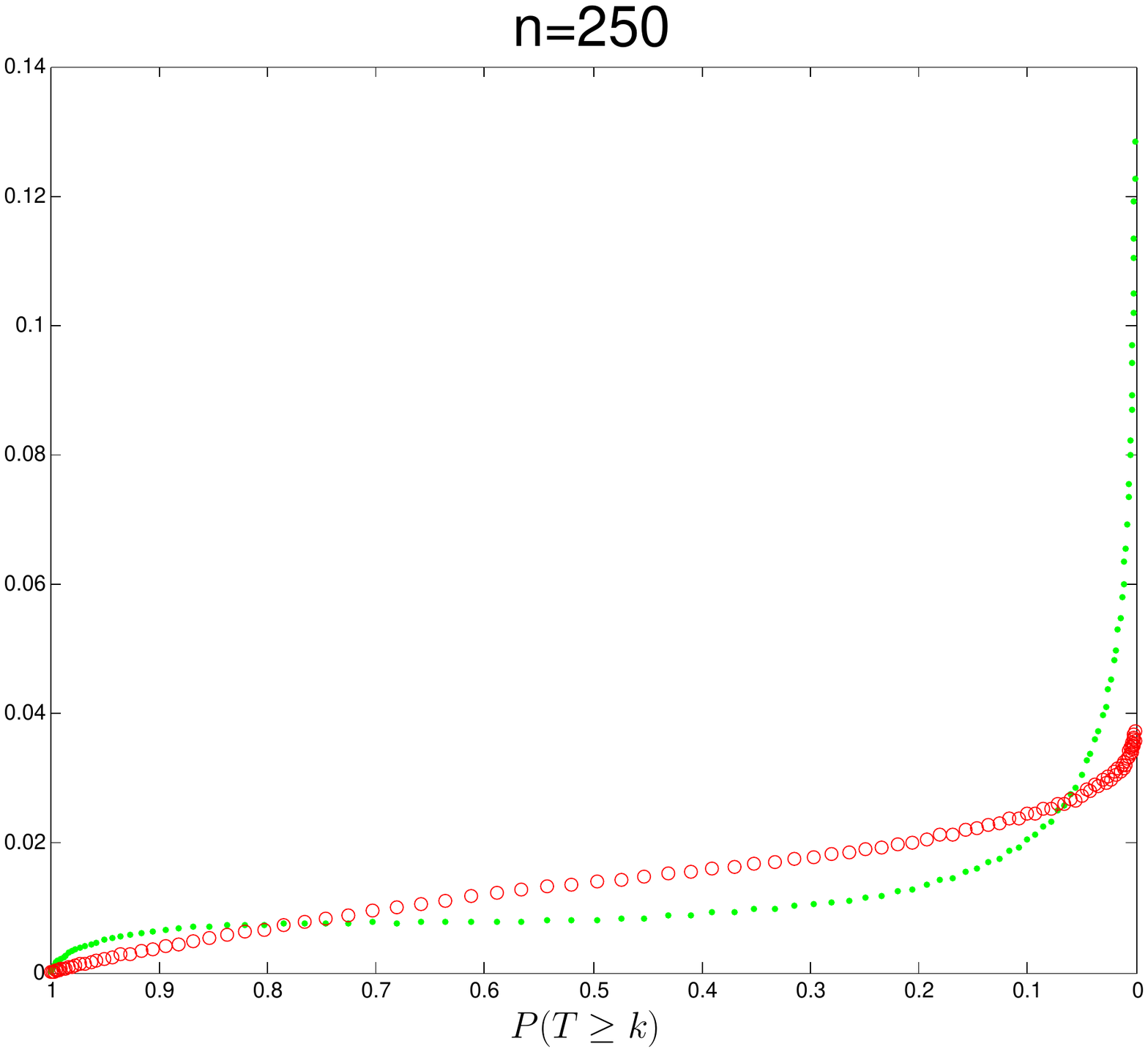}
   \end{minipage}
   \caption{In the top plot, $\lambda=n=15$, and $\mu = 1$. In the bottom plot, $\lambda=n=250$, and $\mu = 1$.  Both plots show the relative error of approximating the tail CDF $\Prob(T \geq k)$ by the two diffusion approximations. The x-axis contains $\Prob(T \geq k)$ as $k$ ranges from zero to some large value. The green dots correspond to $\frac{\Prob(W/\delta + n\geq k) - \Prob(T \geq k)}{\Prob(T\geq k)}$, the relative error of the constant diffusion coefficient approximation. The red circles correspond to $\frac{\Prob(W_S/\delta + n\geq k) - \Prob(T \geq k)}{\Prob(T\geq k)}$, the relative error from the state-dependent coefficient approximation. Observe that the state-dependent coefficient approximation is a much better choice for approximating tail events, e.g. events when $\Prob(T \geq k) \leq 0.05$.  \label{fig:PHrelerr}}
\end{figure}
\clearpage
\subsection{$M/H_2/n+M$ Model}
We now focus on the $M/H_2/n+M$ model, where the $H_2$ stands for a $2$-phase hyper-exponential distribution. The corresponding tuple of parameters $(p,\nu,P)$ is 
\begin{displaymath}
p = (p_1, p_2)^T, \quad \nu = (\nu_1,\nu_2)^T, \text{ \quad and \quad } P = 0.
\end{displaymath}
The starting service phase of each customer is random, and unlike how it was with the Coxian distribution, the process $\{X_1(t), X_2(t), t\geq 0\}$ is not a CTMC. In particular, this means that the approximation in \eqref{PH:qi} has non-zero approximation error. As a result, even though we use a state-dependent diffusion coefficient, we are unable to fully capture the first and second derivative terms in the Taylor expansion of $G_{U^{(\lambda)}}$. We also have no reason to expect faster convergence rates because the error terms corresponding to the first derivatives are a  bottleneck of order $1/\sqrt{\lambda}$. Figures~\ref{fig:PHdensH2} and \ref{fig:PHabserrH2} compare the two diffusion approximations for a system with $100$ servers. Due to the approximation error in \eqref{PH:qi}, using a state-dependent diffusion coefficient does not give us the improved accuracy we are accustomed to. In fact, we cannot conclude which approximation is better. 

To generate Figures~\ref{fig:PHdensH2} and \ref{fig:PHabserrH2}, we used the same reference density as in (3.21) and (3.23) of \cite{DaiHe2013}, a truncation rectangle of  $[-15, 40]\times [-15, 40]$, and a lattice mesh in which all finite elements are $0.5 \times 0.5$ squares; see \cite{DaiHe2013} for more details.

\begin{figure}[h]\centering
    \includegraphics[clip, trim=0.5cm 6.5cm 0.5cm 6cm, width=90mm,keepaspectratio]{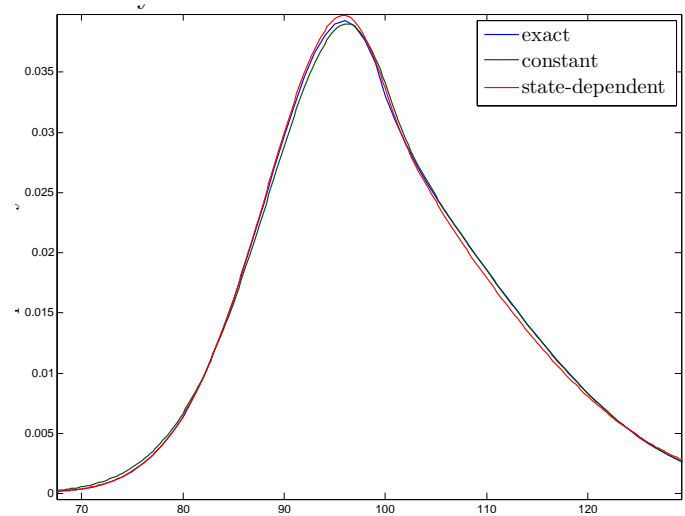}
   \caption{$2$-phase hyper-exponential distribution with $\lambda =n=100$, and $\mu=1$. The blue line represents the probability mass function of $T$, i.e. $\Prob(T = k)$. The green and red lines plot the densities of the diffusion approximations with constant and state-dependent diffusion coefficients, respectively. Using a state-dependent diffusion coefficient does not add any benefit.}
   \label{fig:PHdensH2}
\end{figure}

\begin{figure}[h!]\centering
    \hspace{-1.5cm} \includegraphics[clip, trim=.8cm 5.5cm 0.5cm 4cm, width=90mm,keepaspectratio]{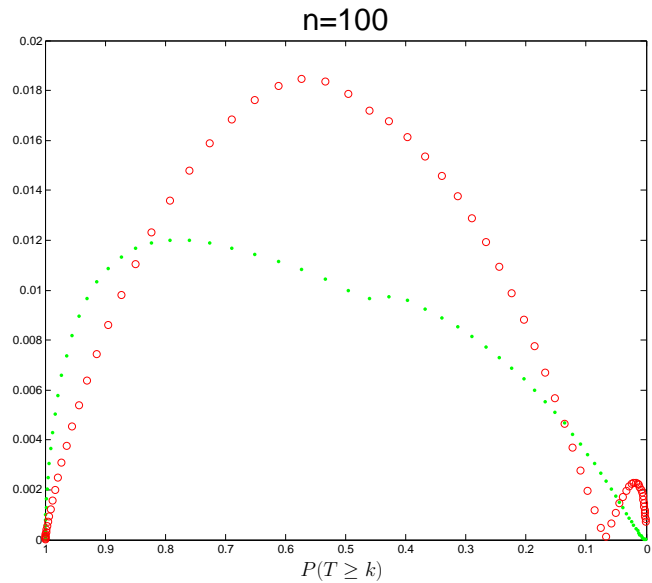}
   \caption{$2$-phase hyper-exponential distribution with $\lambda =n=100$, and $\mu=1$.  Both plots show the absolute error of approximating the tail CDF $\Prob(T \geq k)$ by the two diffusion approximations. The x-axis contains $\Prob(T \geq k)$ as $k$ ranges from zero to some large value. The green dots correspond to $\abs{\Prob(W/\delta + n\geq k) - \Prob(T \geq k)}$, the error of the constant diffusion coefficient approximation. The red circles correspond to $\abs{\Prob(W_S/\delta + n\geq k) - \Prob(T \geq k)}$, the error from the state-dependent coefficient approximation. \label{fig:PHabserrH2}}
\end{figure}

\clearpage

\section{Chapter Appendix} \label{app:proofs}

\subsection{Proof of Lemma~\ref{lemma:diffbound} (Generator Difference)} \label{app:taylorexp}
The main idea here is that $G_Y f_h(x)$ is
hidden within $G_{U^{(\lambda)}} Af_h(u)$, where the lifting operator $A$ is in (\ref{eq:lifter}). We algebraically manipulate the Taylor expansion of $G_{U^{(\lambda)}} Af_h(u)$ to make this evident. First, we first rearrange the terms in the Taylor expansion (\ref{eq:GXtaylor}) to group them by partial derivatives. Thus, $G_{U^{(\lambda)}} Af_h(u)$ equals 
\begin{eqnarray*}
&&\sum \limits_{i=1}^d \delta \partial_i f_h(x)\Big{[}p_i \lambda - \alpha q_i - \nu_i z_i + \sum \limits_{j=1}^d P_{ji}\nu_j z_j\Big{]} \\
&&+ \sum \limits_{i=1}^d \frac{\delta^2}{2}\partial_{ii}f_h(x)\Big{[}p_i \lambda + \alpha q_i + \nu_i z_i + \sum \limits_{j=1}^d P_{ji}\nu_j z_j\Big{]} - \sum \limits_{i \neq j}^d \delta^2\partial_{ij}f_h(x)\big{[}P_{ij}\nu_i z_i \big{]} \\
&&+ \sum \limits_{i=1}^d \frac{\delta^2}{2} \Big(\partial_{ii}f_h(\xi_{i}^-)-\partial_{ii}f_h(x)\Big)\Big{[}\alpha q_i + (1-\sum \limits_{j=1}^d P_{ij})\nu_i z_i\Big{]} \\
&&+ \sum \limits_{i=1}^d \frac{\delta^2}{2} \Big(\partial_{ii}f_h(\xi_{i}^+)-\partial_{ii}f_h(x)\Big)\big{[}\lambda p_i \big{]}- \sum \limits_{i \neq j}^d \delta^2 \Big(\partial_{ij}f_h(\xi_{ij})-\partial_{ij}f_h(x)\Big)\big{[}P_{ij}\nu_i z_i\big{]}\\
&&+ \sum \limits_{i=1}^d \sum \limits_{j=1}^d \frac{\delta^2}{2} \Big(\partial_{ii}f_h(\xi_{ij})-\partial_{ii}f_h(x)\Big)\Big{[} P_{ij}\nu_i z_i +  P_{ji}\nu_j z_j\Big{]} .
\end{eqnarray*}
To proceed we observe that (\ref{eq:defph}) gives us the identity
\begin{equation}\label{eq:taylorexpidentity}
-\nu_i \gamma_i n + \sum \limits_{j=1}^d P_{ji}\nu_j\gamma_j n  = -n p_i. 
\end{equation}
Recall the form of $G_Y f_h(x)$ from (\ref{eq:DMgen}). From the form of $\Sigma$ in (\ref{eq:OUdiffusioncoeff}), we see that
\begin{equation} \label{eq:OUaltdiffusioncoef}
\Sigma_{ii} = 2\Big(p_i + \sum \limits_{j=1}^d P_{ji}\gamma_j \nu_j\Big), \quad \Sigma_{ij} = -(P_{ij}\nu_i \gamma_i + P_{ji}\nu_j\gamma_j) \text{ for $j\neq i$}
\end{equation}
 using ($\ref{eq:DMgen}$), ($\ref{eq:taylorexpidentity}$) and ($\ref{eq:OUaltdiffusioncoef}$), the difference $G_{U^{(\lambda)}} Af_h(u) - G_Y f_h(x)$ becomes

\begin{eqnarray}
&&\sum \limits_{i=1}^d \partial_i f_h(x)\Big{[}(\nu_i - \alpha-\sum \limits_{j=1}^d P_{ji}\nu_j)( \delta q_i - p_i(e^T x)^+)\Big{]}  \label{eq:errorterm} \\
&&+ \sum \limits_{i=1}^d \partial_{ii}f_h(x)\Big{[}\sum \limits_{j=1}^d P_{ji}\nu_j\gamma_j \Big{]}(n \delta^2 - 1) - \sum \limits_{i \neq j}^d \partial_{ij}f_h(x)\Big{[}P_{ij}\nu_i\gamma_i + P_{ji} \nu_j\gamma_j\Big{]}(n\delta^2 - 1) \notag \\
&&- \sum \limits_{i=1}^d \frac{\delta^2}{2}\partial_{ii}f_h(x)\Big{[}p_i (\lambda - n)- \alpha  q_i - \nu_i (z_i- \gamma_i n) - \sum \limits_{j=1}^d P_{ji}\nu_j (z_j- \gamma_j n)\Big{]} \notag \\
&&- \sum \limits_{i \neq j}^d \frac{\delta^2}{2}\partial_{ij}f_h(x)\Big{[}P_{ij}\nu_i (z_i- \gamma_i n) + P_{ji} \nu_j (z_j- \gamma_j n)\Big{]} \notag \\
&&+ \sum \limits_{i=1}^d \frac{\delta^2}{2} (\partial_{ii}f_h(\xi_{i}^-)-\partial_{ii}f_h(x))\Big{[}\alpha q_i + (1-\sum \limits_{j=1}^d P_{ij})\nu_i z_i\Big{]}\notag \\
&&  + \sum \limits_{i=1}^d \frac{\delta^2}{2} (\partial_{ii}f_h(\xi_{i}^+)-\partial_{ii}f_h(x))\Big{[}\lambda p_i \Big{]}- \sum \limits_{i \neq j}^d \delta^2 (\partial_{ij}f_h(\xi_{ij})-\partial_{ij}f_h(x))\Big{[}P_{ij}\nu_i z_i\Big{]} \notag \\
&&+ \sum \limits_{i=1}^d \sum \limits_{j=1}^d \frac{\delta^2}{2} (\partial_{ii}f_h(\xi_{ij})-\partial_{ii}f_h(x))\Big{[} P_{ij}\nu_i z_i +  P_{ji}\nu_j z_j\Big{]} .  \notag 
\end{eqnarray}
We remind the reader that our target is to prove that
  \begin{eqnarray*}
&&G_{U^{(\lambda)}} Af_h(u) - G_Y f_h(x) \nonumber\\
&=&  \sum \limits_{i=1}^d \partial_i f_h(x)\Big{[}(\nu_i - \alpha-\sum \limits_{j=1}^d P_{ji}\nu_j)( \delta q_i - p_i(e^T x)^+)\Big{]} + E(u),
  \end{eqnarray*}
  where $E(u)$ is an error term that satisfies 
\begin{displaymath}
\abs{E(u)} \leq \delta\, C(m,2) (1+\abs{x}^2)^m (1+\abs{x})^4.
\end{displaymath}
%
We choose $E(u)$ to be all the terms in \eqref{eq:errorterm} except for the first line. We now describe how to bound $\abs{E(u)}$.
Most of the summands in (\ref{eq:errorterm}) look as follows: a term in large square brackets multiplied by some partial derivative of $f_h$. The partial derivatives are very easy to bound; we simply use (\ref{eq:gradbound2}) - (\ref{eq:gradbound4}). We wish to point out that $\xi_{i}^+$, $\xi_{i}^-$ and $\xi_{ij}$ lie within distance $2\delta$ of $x$. When $2\delta < 1$, (\ref{eq:gradbound4}) implies 
\begin{equation} \label{eq:lg4cond}
\abs{\partial_{ij}f_h(\xi)-\partial_{ij}f_h(x)} \leq 2\delta C (1+ \abs{x}^2)^m (1+ \abs{x})^3
\end{equation} 
for some constant $C>0$ (i.e.\ an extra $\delta$ term is gained). When $2\delta \geq 1$ (by Remark~\ref{rem:lambdarange} this occurs in finitely many cases), we may use (\ref{eq:gradbound3}) to obtain (\ref{eq:lg4cond}) with a redefined $C$. From here on out, we shall let $C>0$ be a generic positive constant that will change from line to line, but will always be independent of $\lambda$ and $n$.

Now we shall list the facts needed to bound all the square bracket terms in (\ref{eq:errorterm}) except for the very first one. Recall that we are operating in the Halfin-Whitt regime as defined by \eqref{eq:square-root}. Therefore, 
\begin{displaymath}
(n\delta^2 - 1) = \delta \beta \text{ and } \delta (\lambda - n) = -\beta.
\end{displaymath}
 Furthermore, it must be true that 
\begin{displaymath}
 \delta q_i \leq (e^T x)^+ \leq C \abs{x}, \label{eq:qibound}
\end{displaymath}
as the number of phase $i$ customers may never exceed the total queue size. Next, 
\begin{displaymath}
\abs{\delta(z_i - \gamma_i n)} = \abs{x_i - \delta q_i} \leq C \abs{x}
\end{displaymath}
and lastly, 
\begin{displaymath}
\abs{\delta^2 z_i} \leq \abs{\delta^2 \gamma_i n} + \abs{\delta^2 (z_i - \gamma_i n)} \leq C(1+ \abs{x}).
\end{displaymath}
It is now a simple matter to verify that the inequalities above, combined with the bounds on the partials of $f_h$ are all that it takes to achieve our desired upper bound.

\begin{addacknowledgements}
The author thanks Jim Dai, Jiekun Feng, Shuangchi He, Josh Reed and John Pike for stimulating discussions. He also thanks the participants of Applied Probability
\& Risk Seminar in Fall 2014 at Columbia University for their feedback
on this research, and the participants of the 2015 Workshop on New Directions in Stein's Method held at the Institute for Mathematical Sciences at the National University of Singapore and  they would like to thank the financial support from the Institute. This research is supported in part by NSF Grants CNS-1248117, CMMI-1335724, and
CMMI-1537795.
\end{addacknowledgements}

\appendix

\chapter{Moment Bounds} \label{app:MOMENTS}
This appendix proves all of the moment bounds used in this document. Bounds for Chapters~\ref{chap:erlangAC}, \ref{chap:dsquare} and \ref{chap:phasetype} are proved in Sections~\ref{app:CAmoment}, \ref{app:DSmoment}, and \ref{app:CTMCmomentsproof}, respectively. 

\section{Chapter~\ref{chap:erlangAC} Moment Bounds }
\label{app:CAmoment}
We first prove Lemma~\ref{lem:moment_bounds_C} in Section~\ref{app:momCproof}, establishing the moment bounds for Erlang-C model.
In Section~\ref{app:mom_A}, we  prove Lemma~\ref{lem:moment_bounds_A_under},  establishing the moment bounds for Erlang-A model.
\subsection{Erlang-C Moment Bounds}
\label{app:momCproof}
\begin{proof}[Proof of Lemma~\ref{lem:moment_bounds_C}]
We first prove \eqref{CW:xsquaredelta}, \eqref{CW:xminusdelta}, and \eqref{CW:xplus}.
Recalling the generator $G_{\tilde X}$ defined in \eqref{eq:GX}, we apply it to the function $V(x) = x^2$ to see that for $k \in \Z_+$ and $x = x_k = \delta(k - x(\infty))$, 
\begin{align}
G_{\tilde X} V(x) =&\ \lambda( 2x\delta + \delta^2) + \mu (k \wedge n)(-2x\delta + \delta^2) \notag \\
=&\ 2x\delta(\lambda - n\mu  + \mu (k - n)^-) + \mu + \delta^2 \mu (k \wedge n) \notag \\
=&\ 2x\mu ( \zeta + (x+\zeta)^-) +\mu  + \delta^2\mu (n - \frac{\lambda}{\mu} + \frac{\lambda}{\mu} - (k - n)^-) \notag \\
=&\ 2x\mu ( \zeta + (x+\zeta)^-) + \mu  -\delta \mu \zeta + \mu  - \delta\mu (x+\zeta)^- \notag \\
=&\ 1(x \leq -\zeta)\mu  \big(-2x^2 + \delta x \big) + 1(x > -\zeta) \mu \big(2x \zeta -\delta \zeta  \big) + 2\mu \notag  \\
\leq&\ 1(x \leq -\zeta) \mu \big(-\frac{3}{2}x^2 + \frac{\delta^2}{2} \big) + 1(x > -\zeta)\mu  \big(2x \zeta -\delta \zeta  \big) + 2\mu. \label{eq:gv1}
\end{align}
Instead of splitting the last two lines into the cases  $x \leq -\zeta$ and $x > -\zeta$, we could have also considered  $x < -\zeta$ and $x \geq -\zeta$ instead, and would have obtained
\begin{align}
G_{\tilde X} V(x)=&\ 1(x < -\zeta)\mu  \big(-2x^2 + \delta x \big) + 1(x \geq -\zeta) \mu \big(2x \zeta -\delta \zeta  \big) + 2\mu  \notag \\
\leq&\ 1(x < -\zeta) \mu \big(-\frac{3}{2}x^2 + \frac{\delta^2}{2} \big) + 1(x \geq -\zeta)\mu  \big(2x \zeta -\delta \zeta  \big) + 2\mu. \label{eq:gv2}
\end{align}
We take expected values on both sides of \eqref{eq:gv1} with respect to $\tilde X(\infty)$, and apply Lemma~\ref{lem:gz} to see that
\begin{align}
0 \leq& -\frac{3}{2}\mu \E \big[(\tilde X(\infty))^2 1(\tilde X(\infty) \leq -\zeta)\big] \notag \\
&+ \mu \abs{\zeta} \E \big[\big(-2\tilde X(\infty) +\delta \big)1(\tilde X(\infty) > -\zeta)  \big] + 2\mu  + \frac{\mu \delta^2}{2}.\label{eq:momineq1}
\end{align}
This implies that when $\abs{\zeta} > \delta/2$,
\begin{align*}
0 \leq& -\frac{3}{2}\mu \E \big[(\tilde X(\infty))^2 1(\tilde X(\infty) \leq -\zeta)\big] + 2\mu  + \frac{\mu \delta^2}{2},
\end{align*}
and when $\abs{\zeta} \leq \delta/2$,
\begin{align*}
0 \leq& -\frac{3}{2}\mu \E \big[(\tilde X(\infty))^2 1(\tilde X(\infty) \leq -\zeta)\big]+  2\mu  + \mu \delta^2.
\end{align*}
Therefore, 
\begin{align*}
&\E \big[(\tilde X(\infty))^2 1(\tilde X(\infty) \leq -\zeta)\big] \leq \frac{4}{3} + \frac{2\delta^2}{3}, 
\end{align*}
which proves \eqref{CW:xsquaredelta}. Jensen's inequality immediately gives us
\begin{align*}
\E \Big[\big|\tilde X(\infty)  1(\tilde X(\infty) \leq -\zeta)\big|\Big] \leq \sqrt{\E \big[(\tilde X(\infty))^2 1(\tilde X(\infty) \leq -\zeta)\big]},
\end{align*}
which proves \eqref{CW:xminusdelta}. Furthermore, \eqref{eq:momineq1} also gives us 
\begin{align*}
\E \Big[\big|\tilde X(\infty)1(\tilde X(\infty) > -\zeta)\big| \Big] \leq \frac{1}{\abs{\zeta}} + \frac{\delta^2}{4\abs{\zeta}} + \frac{\delta}{2},
\end{align*}
which is not quite \eqref{CW:xplus} because the inequality above has $1(\tilde X(\infty) > -\zeta)$ as opposed to $1(\tilde X(\infty) \geq -\zeta)$ as in \eqref{CW:xplus}. However, we can use \eqref{eq:gv2} to get the stronger bound 
\begin{align*}
\E \Big[\big|\tilde X(\infty)1(\tilde X(\infty) \geq -\zeta)\big| \Big] \leq \frac{1}{\abs{\zeta}} + \frac{\delta^2}{4\abs{\zeta}} + \frac{\delta}{2},
\end{align*}
which proves \eqref{CW:xplus}. We now prove \eqref{CW:xminuszeta}, or 
\begin{align} \label{eq:xminusproofpart1}
\E \Big[\big|\tilde X(\infty)  1(\tilde X(\infty) \leq -\zeta)\big|\Big] \leq 2\abs{\zeta}.
\end{align}
We use the triangle inequality to see that
\begin{align*}
\E \Big[\big|\tilde X(\infty)  1(\tilde X(\infty) \leq -\zeta) \big|\Big] \leq&\ \abs{\zeta} +  \E \Big[\big|\tilde X(\infty) +\zeta \big| 1(\tilde X(\infty) \leq -\zeta) \Big].
\end{align*}
The second term on the right hand side is just the expected number of idle servers, scaled by $\delta$. We now show that this expected value equals $\abs{\zeta}$. Applying the generator $G_{\tilde X}$ to the test function $f(x) = x$, one sees that for all $k \in \Z_+$ and $x = x_k = \delta(k-x(\infty))$, 
\begin{align*}
G_{\tilde X} f(x) = \delta\lambda - \delta\mu (k \wedge n) = \mu \big[\zeta + (x + \zeta)^-\big].
\end{align*}
Taking expected values with respect to $\tilde X(\infty)$ on both sides, and applying Lemma~\ref{lem:gz}, 
we arrive at
\begin{align} \label{eq:idle_expect}
\E \Big[\big|(\tilde X(\infty) +\zeta) 1(\tilde X(\infty) \leq -\zeta)\big| \Big] = \abs{\zeta},
\end{align}
which proves \eqref{CW:xminuszeta}.

We move on to prove \eqref{CW:idle_prob}, or 
\begin{align} \label{eq:inlineidleprob}
\Prob(\tilde X(\infty) \leq -\zeta) \leq (2+\delta)\abs{\zeta}.
\end{align}
Let $I$ be the unscaled expected number of idle servers. Then by \eqref{eq:idle_expect},
\begin{align*}
I = \E(X(\infty) - n)^- =  \frac{1}{\delta}\E \Big[\big|(\tilde X(\infty) +\zeta) 1(\tilde X(\infty) \leq -\zeta)\big| \Big]   = \frac{1}{\delta} \abs{\zeta}.
\end{align*}
Now let $\{\pi_k\}_{k=0}^{\infty}$ be the distribution of $X(\infty)$. We want to prove an upper bound on the probability
\begin{align*}
\Prob(\tilde X(\infty) \leq -\zeta) = \sum_{k=0}^{n} \pi_k \leq \sum_{k=0}^{\lfloor n - \sqrt{R} \rfloor} \pi_k +  \sum_{k= \lceil n - \sqrt{R}\rceil}^{n} \pi_k.
\end{align*}
Observe that 
\begin{align*}
I = \sum_{k=0}^{n} (n-k) \pi_k \geq \sqrt{R}\sum_{k=0}^{\lfloor n - \sqrt{R} \rfloor}  \pi_k.
\end{align*}
Now let $k^*$ be the first index that maximizes $\{\pi_k\}_{k=0}^{\infty}$, i.e.
\begin{align*}
k^* = \inf \{k \geq 0 : \pi_k \geq \nu_j, \text{ for all $j \neq k$}\}.
\end{align*}
Then
\begin{align} 
\Prob(\tilde X(\infty) \leq -\zeta) = \sum_{k=0}^{\lfloor n - \sqrt{R} \rfloor} \pi_k +  \sum_{k= \lceil n - \sqrt{R} \rceil}^{n} \pi_k \leq &\ \frac{I}{\sqrt{R}} +  (\sqrt{R}+1) \pi_{k^*}  \notag\\
=&\ \abs{\zeta} + (\sqrt{R}+1) \pi_{k^*}.\label{eq:prob_interm}
\end{align}
Applying $G_{\tilde X}$ to the test function $f(x) = (k \wedge k^*)$, we see that for all $k \in \Z_+$ and $x = x_k = \delta(k - x(\infty))$,
\begin{align*}
G_{\tilde X} f(x) = \delta \lambda 1(k < k^*) - \delta \mu(k \wedge n) 1(k \leq k^*).
\end{align*}
Taking expected values with respect to $X(\infty)$ on both sides and applying Lemma~\ref{lem:gz}, we see that
\begin{align*}
\Prob(X(\infty) \leq k^*) = \frac{\mu}{n\mu-\lambda} \E \big[(X(\infty)-n)^-1(X(\infty) \leq k^*) \big] - \pi_{k^*} \frac{\lambda}{n\mu-\lambda} \geq 0.
\end{align*}
Using the inequality above, together with the fact that $k^* \leq n$, we see that 
\begin{align*}
\pi_{k^*} \leq&\ \frac{\mu}{\lambda} \E \big[(X(\infty)-n)^-1(X(\infty) \leq k^*) \big] \\
\leq&\ \frac{\mu}{\lambda} \E \big[(X(\infty)-n)^-1(X(\infty) \leq n) \big] = \frac{I}{R} = \frac{\abs{\zeta}}{\sqrt{R}}.
\end{align*}
The fact that $k^* \leq n$ is a consequence of $\lambda < n\mu$, and can be verified through the flow balance equations of the CTMC X. We combine the bound above with \eqref{eq:prob_interm} to arrive at \eqref{CW:idle_prob}, which concludes the proof of this lemma.

\end{proof}

\subsection{Erlang-A Moment Bounds}
\label{app:mom_A}
Recall Lemma~\ref{lem:moment_bounds_A_under} stated in Section~\ref{sec:CAapplemmas}. We outline the proof of it below.

\subsubsection{Proof Outline for Lemma~\ref{lem:moment_bounds_A_under}: The Underloaded System}\label{app:momboundA_under}
The proof of the underloaded case of Lemma~\ref{lem:moment_bounds_A_under} is very similar to that of Lemma~\ref{lem:moment_bounds_C}. Therefore, we only outline some key intermediate steps needed to obtain the results. We remind the reader that when $R \leq n$, then $\zeta \leq 0$. We first show how to establish \eqref{eq:mwuK1}, which is proved in a similar fashion to \eqref{CW:xsquaredelta} of Lemma~\ref{lem:moment_bounds_C} -- by applying the generator $G_{\tilde X}$ to the Lyapunov function $V(x) = x^2$. The following are some useful intermediate steps for any reader wishing to produce a complete proof. The first step to prove \eqref{eq:mwuK1} is to get an analogue of \eqref{eq:gv1}. Namely, when $x \leq -\zeta$, 
\begin{align*}
G_{\tilde X} V(x) =&\ -2\mu x^2 + \mu\delta x + 2\mu \leq -\frac{3}{2}\mu x^2 + \mu \delta^2/2 + 2\mu ,
\end{align*}
and when $x \geq -\zeta$,
\begin{align}
G_{\tilde X} V(x) =&\ -2\alpha (x+\zeta)^2 + \alpha \delta(x+\zeta)  -2\mu \abs{\zeta}(x+\zeta) \notag  \\
&- 2\abs{\zeta}\alpha(x+\zeta) + \mu\abs{\zeta}(\delta - 2\abs{\zeta}) + 2\mu \notag \\
\leq &\ -\frac{3}{2} \alpha (x+\zeta)^2  -2\mu \abs{\zeta}(x+\zeta) + \delta^2\alpha/2 + \delta^2\mu/8 + 2\mu. \label{eq:genmwu1}
\end{align}
From here, we use Lemma~\ref{lem:gz} to get a statement similar to \eqref{eq:momineq1}, from which we can infer \eqref{eq:mwuK1} and by applying Jensen's inequality to \eqref{eq:mwuK1}, we get \eqref{eq:mwu1}. Observe that this procedure yields \eqref{eq:mwuK2}, \eqref{eq:mwu4}, and \eqref{eq:mwu5} as well. We now describe how to prove \eqref{eq:mwu3}, which requires only a slight modification of \eqref{eq:genmwu1}. Namely, for $x \geq -\zeta$,
\begin{align*}
G_{\tilde X} V(x) =&\ 2x\big(-\alpha (x +\zeta) + \mu \zeta \big) - \delta \big(-\alpha (x +\zeta) + \mu \zeta \big) + 2\mu.
\end{align*}
From this, we can deduce that since $x \geq -\zeta$, 
\begin{align*}
G_{\tilde X} V(x) \leq -2(\mu \wedge \alpha) x^2 - \delta \big(-\alpha (x +\zeta) + \mu \zeta \big) + 2\mu,
\end{align*}
and also
\begin{align*}
G_{\tilde X} V(x) \leq -2\mu \abs{\zeta} x - \delta \big(-\alpha (x +\zeta) + \mu \zeta \big) + 2\mu.
\end{align*}
Then Lemma~\ref{lem:gz} can be applied as before to see that both 
\begin{align}
2\mu \abs{\zeta}\E \Big[ \big|\tilde X(\infty)1(\tilde X(\infty)\geq -\zeta)\big|\Big] \text{ and }  2(\mu \wedge \alpha)\E \Big[ \big(\tilde X(\infty)\big)^2 1(\tilde X(\infty)\geq -\zeta)\Big] \label{eq:whatbounded}
\end{align}
are bounded by
\begin{align*}
2\mu + \mu \delta^2/2 - \delta \E \Big[ \big(-\alpha (\tilde X(\infty) +\zeta) + \mu \zeta \big)1(\tilde X(\infty) \geq -\zeta)\Big].
\end{align*}
Applying the generator $G_{\tilde X}$ to the test function $f(x) = x$ and taking expected values with respect to $\tilde X(\infty)$, we get $\E b(\tilde X(\infty)) = 0$, or
\begin{align} \label{eq:zerodrift}
\E \Big[ \big(-\alpha (\tilde X(\infty) +\zeta) + \mu \zeta \big)1(\tilde X(\infty) \geq -\zeta)\Big] = \mu \E \Big[\tilde X(\infty) 1(\tilde X(\infty) < -\zeta)\Big].
\end{align}
When combined with \eqref{eq:mwu1}, this implies that 
\begin{align*}
&\ 2\mu + \mu \delta^2/2 - \delta \E \Big[ \big(-\alpha (\tilde X(\infty) +\zeta) + \mu \zeta \big)1(\tilde X(\infty) \geq -\zeta)\Big] \\
\leq&\ 2\mu + \mu \delta^2/2 +\mu \delta\sqrt{\frac{1}{3}\Big(\frac{\alpha}{\mu }\delta^2 + \delta^2 + 4 \Big)},
\end{align*}
which proves \eqref{eq:mwu3}, because the quantity above is an upper bound for \eqref{eq:whatbounded}. To prove \eqref{eq:mwu2}, we manipulate \eqref{eq:zerodrift} to get 
\begin{align*} 
\E \Big[\big| (\tilde X(\infty) + \zeta) 1(\tilde X(\infty) \leq -\zeta)\big|\Big]  = \abs{\zeta} + \frac{\alpha}{\mu } \E \Big[\big| (\tilde X(\infty) + \zeta)  1(\tilde X(\infty) > -\zeta)\big|\Big],
\end{align*}
to which we can apply the triangle inequality and \eqref{eq:mwu4} to conclude \eqref{eq:mwu2}. Lastly, the proof of \eqref{eq:mwu6} is nearly identical to the proof of \eqref{CW:idle_prob} in Lemma~\ref{lem:moment_bounds_C}. The key step is to obtain an analogue of \eqref{eq:prob_interm}. 

\subsubsection{Proof Outline for Lemma~\ref{lem:moment_bounds_A_under}: The Overloaded System}\label{app:momboundA_over}
The proof of the overloaded case of Lemma~\ref{lem:moment_bounds_A_under} is also similar to that of Lemma~\ref{lem:moment_bounds_C}. Therefore, we only outline some key intermediate steps needed to obtain the results; the bounds in this lemma are not proved in the order in which they are stated. We remind the reader that when $R \geq n$, then $\zeta \geq 0$. We start by proving \eqref{eq:mwo2}. Although the left hand side of \eqref{eq:mwo2} is slightly different from \eqref{CW:xsquaredelta} of Lemma~\ref{lem:moment_bounds_C}, it is proved using the same approach -- by applying the generator $G_{\tilde X}$ to the Lyapunov function $V(x) = x^2$. The following are some useful intermediate steps for any reader wishing to produce a complete proof. The first step to prove \eqref{eq:mwo2} is to get analogue of \eqref{eq:gv1}. Namely, when  $x \leq -\zeta$,
\begin{align}
G_{\tilde X} V(x) =&\ -2\mu (x+\zeta)^2 + \mu \delta(x+\zeta) \notag \\
&+ 2(\mu +\alpha)\abs{\zeta}(x+\zeta) - 2\alpha \zeta^2 - \alpha \delta \zeta + 2\mu \notag \\
\leq &\ -2\mu (x+\zeta)^2  + 2(\mu +\alpha)\abs{\zeta}(x+\zeta)  + 2\mu, \label{eq:genmwo1}
\end{align} 
and when $x \geq -\zeta$, 
\begin{align*}
G_{\tilde X} V(x) =&\ -2\alpha x^2 + \alpha\delta x + 2\mu \leq -\frac{3}{2}\alpha x^2 + \alpha \delta^2/2 + 2\mu.
\end{align*}
From here, we use Lemma~\ref{lem:gz} to get a statement similar to \eqref{eq:momineq1}, which implies  \eqref{eq:mwo2}. Applying Jensen's inequality to \eqref{eq:mwo2} yields \eqref{eq:mwo1}. The procedure used to get \eqref{eq:mwo2} also yields \eqref{eq:mwo3}, \eqref{eq:mwoK1}, and \eqref{eq:mwo4}. 

We now describe how to prove \eqref{eq:mwo7} and \eqref{eq:mwo8}, which requires only a slight modification of \eqref{eq:genmwo1}. Namely, we use the fact that for $x \leq -\zeta$,
\begin{align*}
G_{\tilde X} V(x) =&\ 2x\big(-\mu (x +\zeta) + \alpha \zeta \big) - \delta \big(-\mu (x +\zeta) + \alpha \zeta \big) + 2\mu.
\end{align*}
From this, one can deduce that since $x \leq -\zeta$, 
\begin{align*}
G_{\tilde X} V(x) \leq -2(\mu \wedge \alpha) x^2+ 2\mu,
\end{align*}
and also
\begin{align*}
G_{\tilde X} V(x) \leq -2\alpha \abs{\zeta} \abs{x}  + 2\mu.
\end{align*}
Then Lemma~\ref{lem:gz} and Jensen's inequality can be applied as before to get both \eqref{eq:mwo7} and \eqref{eq:mwo8}.

We now prove \eqref{eq:mwo5}. Observe that 
\begin{align*}
&\E \Big[\big| \tilde X(\infty) 1(\tilde X(\infty) \geq -\zeta) \big| \Big] \\
=&\ \E \Big[\big| (\tilde X(\infty)+\zeta - \zeta )1(\tilde X(\infty) \geq -\zeta) \big| \Big] \\
\geq&\ \E \Big[\big| (\tilde X(\infty)+\zeta)1(\tilde X(\infty) > -\zeta) \big| - \zeta 1(\tilde X(\infty) > -\zeta) \Big] \\
\geq&\ \E \Big[\big| (\tilde X(\infty)+\zeta)1(\tilde X(\infty) > -\zeta) \big| \Big] - \zeta \\
=&\ \frac{\mu}{\alpha} \E \Big[\big| (\tilde X(\infty) + \zeta) 1(\tilde X(\infty) \leq -\zeta)\big|\Big],
\end{align*}
where the last equality comes from applying the generator $G_{\tilde X}$ to the function $f(x) = x$ and taking expected values with respect to $\tilde X(\infty)$ to see that $\E b(\tilde X(\infty)) = 0$, or
\begin{align} \label{eq:zerodrift2}
\E \Big[ \big(-\mu (\tilde X(\infty) +\zeta) + \alpha \zeta \big)1(\tilde X(\infty) \leq -\zeta)\Big] = \alpha \E \Big[\tilde X(\infty) 1(\tilde X(\infty) > -\zeta)\Big].
\end{align}
Therefore, 
\begin{align*}
\E \Big[\big| (\tilde X(\infty) + \zeta) 1(\tilde X(\infty) \leq -\zeta)\big|\Big] 
\leq \frac{\alpha}{\mu } \E \Big[\big| \tilde X(\infty) 1(\tilde X(\infty) \geq -\zeta) \big| \Big],
\end{align*}
and we can invoke  \eqref{eq:mwo1}  to conclude \eqref{eq:mwo5}.

We now prove \eqref{eq:mwo10}, which requires additional arguments that we have not used in the proof of Lemma~\ref{lem:moment_bounds_C}. We assume for now that 
\begin{align} \label{eq:temp_assumption}
\lambda \leq n\mu + \frac{1}{2}\sqrt n \mu.
\end{align} 
Fix $\gamma \in (0, 1/2)$, and define
\begin{align}
\label{eq:j12}
J_1=\sum_{k=0}^{\lfloor n-\gamma \sqrt R \rfloor}\pi_k, \quad J_2=\sum^n_{k=\lceil n-\gamma \sqrt R\rceil }\pi_k,
\end{align}
where $\{\pi_k\}_{k=0}^{\infty}$ is the distribution of $X(\infty)$. We note that by \eqref{eq:temp_assumption}, 
\begin{align*}
n/\sqrt{R} \geq \sqrt{R}-\frac{1}{2}\sqrt{n/R} \geq \sqrt{R}-1/2 \geq 1/2,
\end{align*}
which implies that $n-\gamma \sqrt R > 0$. Then 
\begin{align*}
\Prob(\tilde X(\infty) \leq -\zeta) = \Prob(X(\infty) \leq n) \leq J_1 + J_2.
\end{align*} 
To bound $J_1$ we observe that 
\begin{align*}
\E\left[\left|\widetilde X(\infty)+\zeta\right|1_{\{\widetilde X(\infty)\leq-\zeta\}}\right] = \frac{1}{\sqrt{R}} \sum_{k=0}^{n} (n-k) \pi_k \geq \gamma\sum_{k=0}^{\lfloor n-\gamma \sqrt R \rfloor}  \pi_k = \gamma J_1.
\end{align*}
Combining \eqref{eq:mwo3}--\eqref{eq:mwo5}, we conclude that
\begin{align}
J_1 \leq&\ \frac{1}{\gamma}\frac{2}{\sqrt 3}\Big(\frac{\delta^2}{4}+1\Big)\Big(\frac{1}{\zeta}\wedge \sqrt{\frac{\alpha}{\mu}\vee 1}\wedge \frac{\alpha}{\mu}
\sqrt{\frac{\mu}{\alpha}\vee 1}\Big)\nonumber \\
\leq&\ \frac{1}{\gamma}\frac{2}{\sqrt 3}\Big(\frac{\delta^2}{4}+1\Big)\Big(\frac{1}{\zeta}\wedge \sqrt{\frac{\alpha}{\mu}}\Big). \label{eq:J1}
\end{align}
Now to bound $J_2$, we apply $G_{\tilde X}$ to the test function $f(x) = k\wedge n$, where $x=\delta (k-x(\infty))$, and take the expectation with respect to $\tilde X(\infty)$ to see that 
\begin{gather*}
0 = -\lambda \pi_n+(\lambda - n\mu )\Prob(X(\infty)\leq n)+\mu \E \left[\left(X(\infty)-n\right)^-1_{\{X(\infty)\leq n\}}\right].
\end{gather*}
Noticing that 
\begin{align*}
\E \left[\left(X(\infty)-n\right)^-1_{\{X(\infty)\leq n\}}\right] = \frac{1}{\delta} \E\left[\left|\widetilde X(\infty)+\zeta\right|1_{\{\widetilde X(\infty)\leq-\zeta\}}\right],
\end{align*}
we arrive at
\begin{align}
\pi_n \leq  \delta \frac{2}{\sqrt 3}\Big(\frac{\delta^2}{4}+1\Big) \left(\frac{1}{\zeta}\wedge \sqrt{\frac{\alpha}{\mu}}\right)+\frac{\lambda -n\mu}{\lambda}\Prob(X(\infty)\leq n).\label{eq:pin}
\end{align}
The flow balance equations
\begin{gather*}
\lambda \pi_{k-1} = k\mu \pi_k,\quad k=1,2,\cdots,n
\end{gather*}
imply that $\pi_0<\pi_1<\cdots<\pi_{n-2}<\pi_{n-1}\leq \pi_n$, and therefore
\begin{align}
&\ J_2 \leq (\gamma \sqrt R + 1)\pi_n \notag  \\
\leq&\ (\gamma \sqrt R + 1)\Big[\delta \frac{2}{\sqrt 3}\Big(\frac{\delta^2}{4}+1\Big) \left(\frac{1}{\zeta}\wedge \sqrt{\frac{\alpha}{\mu}}\right)+\frac{\lambda -n\mu}{\lambda}\Prob(X(\infty)\leq n)\Big] \nonumber\\
=&\ (\gamma +\delta)\frac{2}{\sqrt 3}\Big(\frac{\delta^2}{4}+1\Big) \left(\frac{1}{\zeta}\wedge \sqrt{\frac{\alpha}{\mu}}\right)\notag\\
&+(\gamma \sqrt R + 1)\frac{\lambda -n\mu}{\lambda}J_1 +(\gamma \sqrt R + 1)\frac{\lambda -n\mu}{\lambda}J_2 \label{eq:J2}
\end{align}
We use \eqref{eq:temp_assumption}, the fact that $\gamma \in (0,1/2)$, and that $R \geq n \geq 1$ to see that
\begin{align*}
(\gamma \sqrt R + 1)\frac{\lambda -n\mu}{\lambda} \leq (\gamma \sqrt R + 1)\frac{\sqrt{n}}{2R} \leq \frac{1}{2}(\gamma + 1/\sqrt{R}) = \frac{1}{2}(\gamma + 1)< 3/4.
\end{align*} 
Then by rearranging terms in (\ref{eq:J2}) and applying \eqref{eq:J1} we conclude that
\begin{align*}
\frac{1}{4} J_2 \leq&\  (\gamma +\delta)\frac{2}{\sqrt 3}\Big(\frac{\delta^2}{4}+1\Big) \left(\frac{1}{\zeta}\wedge \sqrt{\frac{\alpha}{\mu}}\right)+\frac{3}{4} \frac{1}{\gamma}\frac{2}{\sqrt 3}\Big(\frac{\delta^2}{4}+1\Big)\Big(\frac{1}{\zeta}\wedge \sqrt{\frac{\alpha}{\mu}}\Big) \\
=&\ \Big(\gamma +\delta+\frac{3}{4} \frac{1}{\gamma} \Big) \frac{2}{\sqrt 3}\Big(\frac{\delta^2}{4}+1\Big) \left(\frac{1}{\zeta}\wedge \sqrt{\frac{\alpha}{\mu}}\right).
\end{align*}
Hence, we have just shown that under assumption \eqref{eq:temp_assumption}, 
\begin{align*}
\Prob(\tilde X(\infty)\leq -\zeta) \leq J_1 + J_2 \leq&\ \frac{1}{\gamma}\frac{2}{\sqrt 3}\Big(\frac{\delta^2}{4}+1\Big)\Big(\frac{1}{\zeta}\wedge \sqrt{\frac{\alpha}{\mu}}\Big) \notag \\
&+ 4\Big(\gamma +\delta+\frac{3}{4} \frac{1}{\gamma} \Big) \frac{2}{\sqrt 3}\Big(\frac{\delta^2}{4}+1\Big) \left(\frac{1}{\zeta}\wedge \sqrt{\frac{\alpha}{\mu}}\right) \notag \\
\leq&\ (3+\delta)\frac{8}{\sqrt 3}\Big(\frac{\delta^2}{4}+1\Big) \left(\frac{1}{\zeta}\wedge \sqrt{\frac{\alpha}{\mu}}\right),
\end{align*}
where to get the last inequality we fixed $\gamma \in (0,1/2)$ that solves $\gamma + 1/\gamma = 3$. 

We now wish to establish the same result without assumption \eqref{eq:temp_assumption}, i.e.\ when $\lambda > n\mu +\frac{1}{2} \sqrt{n}\mu$. For this, we rely on the following comparison result. Fix $n,\mu$ and $\alpha$ and let $X^{(\lambda)}(\infty)$ be the steady-state customer count in an Erlang-A system with arrival rate $\lambda$, service rate $\mu$, number of servers $n$, and abandonment rate $\alpha$. Then for any $0 < \lambda_1 < \lambda_2$, 
\begin{align}
\Prob(X^{(\lambda_2)}(\infty) \leq n) \leq \Prob(X^{(\lambda_1)}(\infty) \leq n). \label{eq:compare}
\end{align}
This says that with all other parameters being held fixed, an Erlang-A system with a higher arrival rate is less likely to have idle servers. For a simple proof involving a coupling argument, see page 163 of \cite{Lind1992}.


Therefore, for $\lambda > n\mu +\frac{1}{2} \sqrt{n}\mu$,
\begin{align*}
 &\ \Prob(X^{(\lambda)}(\infty)\leq n) \leq \Prob(X^{(n\mu +\frac{1}{2} \sqrt{n}\mu )}(\infty)\leq n) \\
 \leq&\ (3+\delta)\frac{8}{\sqrt 3}\Big(\frac{\delta^2}{4}+1\Big) \left(\frac{1}{\zeta^{(n\mu +\frac{1}{2} \sqrt{n}\mu )}}\wedge \sqrt{\frac{\alpha}{\mu}}\right)
\end{align*}
where $\zeta^{(n\mu +\frac{1}{2} \sqrt{n}\mu )}$ is the $\zeta$ corresponding to $X^{(n\mu +\frac{1}{2} \sqrt{n}\mu )}(\infty)$, and satisfies 
\begin{align*}
\frac{1}{\zeta^{(n\mu +\frac{1}{2} \sqrt{n}\mu )}} =   \frac{2\alpha}{\mu }\sqrt{\frac{n + \sqrt{n}/2}{n} } \leq  \frac{2\alpha}{\mu }\sqrt{\frac{3}{2}}.
\end{align*}
This concludes the proof of \eqref{eq:mwo10}.

\section{Chapter~\ref{chap:dsquare} Moment Bounds}
\label{app:DSmoment}
In this section we prove Lemmas~\ref{lem:lastbounds} and \ref{lem:pibounds}. To do so, we rely on the moment bounds in Lemma~\ref{lem:moment_bounds_C} (from Section~\ref{sec:CAmomentbounds}). However, the bounds from that lemma are not sufficient, and the following additional bounds are needed.
\begin{lemma}
\label{lem:xtramom}
 For all $n \geq 1, \lambda > 0$, and $\mu > 0$ satisfying $0 < R < n $,
\begin{align}
\EE \Big[(\tilde X(\infty))^2 1(\tilde X(\infty) \leq -\zeta) \Big] \leq&\  \big( 5 + \delta (1+\delta/2) \big) \zeta^2 + (2+\delta)\abs{\zeta} \label{eq:xsquarezeta} \\
\EE \Big[(\tilde X(\infty))^2 1(\tilde X(\infty) \geq -\zeta) \Big] \leq&\  \delta^2 +8 + \frac{4}{\abs{\zeta}} \Big( \frac{1}{\abs{\zeta}} + \frac{\delta^2}{4\abs{\zeta}} + \frac{\delta}{2}\Big) +\frac{2(2\delta + \delta^3)}{3\abs{\zeta}}. \label{eq:xsquareplus}
\end{align}
\end{lemma}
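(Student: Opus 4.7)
The plan is to follow the Lyapunov/generator template already used throughout Section~\ref{app:momCproof} to prove Lemma~\ref{lem:moment_bounds_C}: apply Lemma~\ref{lem:gz} to a well-chosen test function $V$, use the piecewise form of the drift ($b(x) = -\mu x$ on $\{x\leq -\zeta\}$ and $b(x) = \mu\zeta$ on $\{x\geq -\zeta\}$) to expand $G_{\tilde X}V(x)$, and then combine the resulting identity with the already-established bounds in Lemma~\ref{lem:moment_bounds_C}, together with the sign information $\zeta<0$, $(\tilde X+\zeta)\,1(\tilde X\geq -\zeta)\geq 0$, and $(\tilde X+\zeta)\,1(\tilde X\leq -\zeta)\leq 0$.

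For \eqref{eq:xsquarezeta}, the natural test function is $V(x) = (x+\zeta)^2$. On $\{x\leq -\zeta\}$ one obtains $G_{\tilde X}V(x) = -2\mu x(x+\zeta) + 2\mu + \delta\mu x$, and on $\{x\geq -\zeta\}$ one obtains $G_{\tilde X}V(x) = 2\mu\zeta(x+\zeta) + 2\mu - \delta\mu\zeta$. After taking expectation and rewriting $\tilde X(\tilde X+\zeta) = (\tilde X+\zeta)^2 - \zeta(\tilde X+\zeta)$, Lemma~\ref{lem:gz} isolates $2\mu\,\E[(\tilde X(\infty)+\zeta)^2 1(\tilde X(\infty)\leq -\zeta)]$ on the left; the remainder involves $\Prob(\tilde X(\infty)\leq -\zeta)$, $\E[(\tilde X(\infty)+\zeta)\,1(\tilde X(\infty)\geq -\zeta)]$, and the constant $2\mu$, all controlled by \eqref{CW:idle_prob}, by the sign $\zeta(\tilde X+\zeta)1(\tilde X\geq -\zeta)\leq 0$, and trivially. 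Substituting the resulting estimate into the algebraic identity
\begin{equation*}
\tilde X^2\,1(\tilde X\leq -\zeta) = (\tilde X+\zeta)^2\,1(\tilde X\leq -\zeta) - 2\zeta(\tilde X+\zeta)\,1(\tilde X\leq -\zeta) + \zeta^2\,1(\tilde X\leq -\zeta),
\end{equation*}
using \eqref{eq:idle_expect} to evaluate the middle term exactly and \eqref{CW:idle_prob} to bound the last, and absorbing the resulting $2\zeta^2$ from $\tilde X^2 \leq 2(\tilde X+\zeta)^2 + 2\zeta^2$, gives \eqref{eq:xsquarezeta} after collecting constants.

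For \eqref{eq:xsquareplus}, the analogous choice is the cubic $V(x) = ((x+\zeta)^+)^3$. It vanishes on $\{x\leq -\zeta-\delta\}$, so the only nontrivial contributions to $G_{\tilde X}V(\tilde X(\infty))$ come from the boundary grid point $\tilde X=-\zeta$ (contributing $\lambda\delta^3 = \mu\delta$ times $\Prob(\tilde X=-\zeta)$) and from the region $\tilde X\geq -\zeta+\delta$, where a direct Taylor expansion gives
\begin{equation*}
G_{\tilde X}V(x) = 3\mu\zeta(x+\zeta)^2 + 6\mu(x+\zeta) - 3\mu\delta\zeta(x+\zeta) + \delta^2\mu\zeta.
\end{equation*}
Because $\zeta<0$, the coefficient $3\mu\zeta$ on the quadratic term has stabilizing sign, so Lemma~\ref{lem:gz} rearranges to
\begin{equation*}
3|\zeta|\,\E[((\tilde X+\zeta)^+)^2] \leq \delta\,\Prob(\tilde X=-\zeta) + 6\,\E[(\tilde X+\zeta)^+] + 3\delta|\zeta|\,\E[(\tilde X+\zeta)^+].
\end{equation*}
The linear moment is bounded by \eqref{CW:xplus} (using $\E[(\tilde X+\zeta)^+]\leq \E[|\tilde X|\,1(\tilde X\geq -\zeta)]$); the boundary probability is controlled via $\zeta^2\,\Prob(\tilde X=-\zeta) \leq \E[\tilde X^2\,1(\tilde X\leq -\zeta)]$ together with \eqref{CW:xsquaredelta}, which is exactly where the factor $\frac{2(2\delta+\delta^3)}{3|\zeta|}$ in the statement arises. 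Substituting into the algebraic identity
\begin{equation*}
\tilde X^2\,1(\tilde X\geq -\zeta) = ((\tilde X+\zeta)^+)^2 + 2|\zeta|(\tilde X+\zeta)^+ + \zeta^2\,1(\tilde X\geq -\zeta),
\end{equation*}
bounding $\zeta^2\,\Prob(\tilde X\geq -\zeta)$ by a constant via \eqref{eq:xplusbound}, and regrouping yields \eqref{eq:xsquareplus}.

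The main obstacle is bookkeeping rather than analysis: both test functions have either a kink or a first-derivative discontinuity at $x=-\zeta$, so $G_{\tilde X}V(x)$ must be computed separately at the boundary grid point $x=-\zeta$ and on each side of it, and the boundary contributions must be assigned to the correct indicator ($1(\tilde X=-\zeta)$, $1(\tilde X\geq -\zeta)$, or $1(\tilde X\geq -\zeta+\delta)$). No new analytical idea beyond the framework of Lemma~\ref{lem:moment_bounds_C} is needed; the constants in the stated bound arise from routine propagation of \eqref{CW:xsquaredelta}, \eqref{CW:xplus}, \eqref{CW:idle_prob}, \eqref{eq:idle_expect}, and \eqref{eq:xplusbound} through the two decomposition identities above.
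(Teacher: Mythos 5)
Your proposal has genuine gaps in both halves, each traceable to a wrong choice of test function.

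For \eqref{eq:xsquarezeta}, the smooth test function $V(x)=(x+\zeta)^2$ cannot produce the stated bound. Its generator carries the term $\delta^2(\lambda+d(k))$ over the \emph{entire} lattice, and the contribution from $\{x>-\zeta\}$ is essentially $2\mu\,\Prob(\tilde X(\infty)>-\zeta)$, which does not vanish as $|\zeta|\to 0$. Pushing your computation through (using $\E[(\tilde X+\zeta)1(\tilde X\leq-\zeta)]=\zeta$ and $\E[\tilde X\,1(\tilde X\leq-\zeta)]=\zeta\,\Prob(\tilde X>-\zeta)$) actually yields the \emph{exact} identity
\begin{equation*}
\E\big[(\tilde X(\infty)+\zeta)^2\,1(\tilde X(\infty)\leq -\zeta)\big] \;=\; \zeta^2 + 1 - |\zeta|\,\E\big[(\tilde X(\infty)+\zeta)^+\big].
\end{equation*}
Dropping the non-positive term $-|\zeta|\,\E[(\tilde X+\zeta)^+]$, as your plan prescribes, leaves an irreducible $+1$; but the target $(5+\delta(1+\delta/2))\zeta^2+(2+\delta)|\zeta|$ tends to $0$ as $|\zeta|\to 0$, so no regrouping or use of $\tilde X^2\leq 2(\tilde X+\zeta)^2+2\zeta^2$ can recover it. Closing the gap would require a lower bound of order $1/|\zeta|$ on $\E[\tilde X(\infty)]$, which none of the cited lemmas supply. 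The paper instead takes the truncated function $f(x)=\big[(x+\zeta)^-\big]^2$, which vanishes on $\{x>-\zeta\}$, so that $\delta^2(\lambda+\mu k)$ appears multiplied by $1(x\leq-\zeta)$ and the expectation is damped by $\Prob(\tilde X(\infty)\leq-\zeta)\leq(2+\delta)|\zeta|$ via \eqref{CW:idle_prob} — exactly the vanishing prefactor the bound needs.

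For \eqref{eq:xsquareplus}, the cubic $V(x)=((x+\zeta)^+)^3$ gives a valid generator inequality, but the decomposition $\tilde X^2 1(\tilde X\geq-\zeta)=((\tilde X+\zeta)^+)^2 + 2|\zeta|(\tilde X+\zeta)^+ + \zeta^2\,1(\tilde X\geq-\zeta)$ then needs a constant bound on $\zeta^2\,\Prob(\tilde X(\infty)\geq-\zeta)$. You cite \eqref{eq:xplusbound}, but that gives only $|\zeta|\Prob(\tilde X(\infty)\geq-\zeta)\leq 7/4$, hence $\zeta^2\Prob(\tilde X(\infty)\geq-\zeta)\leq\tfrac74|\zeta|$, which is unbounded in $|\zeta|$. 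Any genuinely uniform estimate of $\zeta^2\Prob(\tilde X(\infty)\geq-\zeta)$ is, by Chebyshev on $\{\tilde X\geq-\zeta\}\subset\{\tilde X^2\geq\zeta^2\}$, equivalent to \eqref{eq:xsquareplus} itself — and indeed the constant bound $\zeta^2\Prob(\tilde X(\infty)\geq-\zeta)\leq 20$ appears only in \eqref{eq:lb4} of Lemma~\ref{lem:lastbounds}, whose proof relies on the present lemma. Your argument is therefore circular. Your account of the factor $\tfrac{2(2\delta+\delta^3)}{3|\zeta|}$ is also off: bounding $\Prob(\tilde X=-\zeta)$ through $\zeta^2\Prob(\tilde X=-\zeta)\leq\E[\tilde X^21(\tilde X\leq-\zeta)]$ and \eqref{CW:xsquaredelta} yields a $|\zeta|^{-3}$ term, not $|\zeta|^{-1}$. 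The paper avoids both problems by taking $f(x)=x^3\,1(x\geq a+\delta)$ with $a=\delta(\lfloor R\rfloor-R)$, i.e.\ a cubic truncated near the origin rather than at $-\zeta$; its generator drift produces $6\mu\,\E[|\tilde X(\infty)|]$, controlled non-circularly by \eqref{CW:xminuszeta} and \eqref{CW:xplus}, and the $|\zeta|^{-1}$ factor arises from the boundary terms $\lambda(a+\delta)^3\leq\mu\delta$, which are bounded absolutely without ever invoking $\Prob(\tilde X=-\zeta)$.
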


\begin{proof}[Proof of Lemma~\ref{lem:xtramom}]
We first prove \eqref{eq:xsquarezeta}, or 
\begin{align*}
\EE \Big[(\tilde X(\infty))^2 1(\tilde X(\infty) \leq -\zeta) \Big] \leq  \big( 5 + \delta (1+\delta/2) \big) \zeta^2 + (2+\delta)\abs{\zeta}.
\end{align*}
Let $x$ be of the form $x = \delta(k - R)$, where $k \in \Z_+$. Applying $G_{\tilde X}$ to the function $f(x) = \big[ \delta(k-n)^-\big]^2 =  \big[ (x+\zeta)^-\big]^2$, and observing that $1(k \leq n) = 1(x \leq -\zeta)$,  we get
\begin{align*}
G_{\tilde X} f(x) =&\ \lambda 1(x \leq -\zeta - \delta)\big( 2\delta(x + \zeta) + \delta^2\big) \notag \\
&+ \mu (k \wedge n)1(x \leq -\zeta)\big( -2\delta(x + \zeta)  + \delta^2\big) \notag \\
=&\ \lambda 1(x \leq -\zeta)\big( 2\delta(x + \zeta) + \delta^2\big) + \mu k 1(x \leq -\zeta)\big( -2\delta(x + \zeta)  + \delta^2\big) \notag \\
&- \delta^2\lambda 1(x = -\zeta) \notag \\
=&\ 1(x \leq -\zeta)\Big( 2\delta(x + \zeta)(\lambda - \mu k) + \delta^2(\lambda + \mu k)  \Big) - \delta^2\lambda 1(x = -\zeta)  \notag \\
=&\ 1(x \leq -\zeta)\Big( 2\delta(x + \zeta)(\lambda -  \mu n) + 2\mu \delta(x + \zeta)(n-k) + \delta^2(\lambda +\mu k)  \Big) \\
& - \delta^2\lambda 1(x = -\zeta) \\
=&\ 1(x \leq -\zeta)\Big( 2\mu (x + \zeta)\zeta - 2\mu (x + \zeta)^2 + \delta^2(\lambda + \mu k)  \Big)  - \delta^2\lambda 1(x = -\zeta).
\end{align*}
Taking expected values on both sides and applying Lemma~\ref{lem:gz}, we see that 
\begin{align*}
&\ \EE \big[(\tilde X(\infty)+\zeta)^2 1(\tilde X(\infty) \leq -\zeta) \big]\\
\leq&\ \zeta \EE \big[(\tilde X(\infty)+\zeta) 1(\tilde X(\infty) \leq -\zeta) \big] +  \frac{\delta^2}{2\mu}\Prob(\tilde X(\infty) \leq -\zeta)( \lambda +  \mu n)\\
=&\ \zeta \EE \big[(\tilde X(\infty)+\zeta) 1(\tilde X(\infty) \leq -\zeta) \big] + \frac{1}{2}\Prob(\tilde X(\infty) \leq -\zeta)(1 + \delta^2 n).
\end{align*} 
Recall \eqref{eq:idle_expect}, which tells us that $\EE \big[(\tilde X(\infty)+\zeta) 1(\tilde X(\infty) \leq -\zeta) \big] = \zeta$, to see that
\begin{align*}
\EE \big[(\tilde X(\infty)+\zeta)^2 1(\tilde X(\infty) \leq -\zeta) \big] \leq&\  \zeta^2 + \frac{1}{2}\Prob(\tilde X(\infty) \leq -\zeta)(1 + \delta^2 n)\\
\leq&\ \zeta^2 + \frac{2+\delta}{2} \abs{\zeta}(1 + \delta^2 n),
\end{align*}
where we used \eqref{CW:idle_prob} to get the last inequality. Since $\abs{\zeta} = \delta (n - \frac{\lambda}{\mu })$,
\begin{align*}
\delta^2 n  =  \delta^2 \frac{\abs{\zeta}}{\delta} + \delta^2 \frac{\lambda}{\mu } = \delta \abs{\zeta} + 1,
\end{align*}
and hence,
\begin{align*}
\EE \big[(\tilde X(\infty) + \zeta)^2 1(\tilde X(\infty) \leq -\zeta) \big] \leq&\  \zeta^2 + \frac{2+\delta}{2}\abs{\zeta}(2 + \delta \abs{\zeta}).
\end{align*}
By expanding the square inside the expected value on the left hand side and using \eqref{CW:xminuszeta}, we see that
\begin{align*}
& \EE \big[(\tilde X(\infty))^2 1(\tilde X(\infty) \leq -\zeta) \big] \\
\leq&\  \zeta^2 + \frac{2+\delta}{2}\abs{\zeta}(2 + \delta \abs{\zeta}) + 2\abs{\zeta}\EE\Big[ \big| \tilde X(\infty) 1(\tilde X(\infty) \leq -\zeta)\big| \Big]\\
\leq&\ 5\zeta^2 + \frac{2+\delta}{2}\abs{\zeta}(2 + \delta \abs{\zeta}) \\
=&\ \big( 5 + \delta (1+\delta/2) \big) \zeta^2 + (2+\delta)\abs{\zeta}.
\end{align*}
This proves \eqref{eq:xsquarezeta}. Now we prove \eqref{eq:xsquareplus}, or 
\begin{align*}
\EE \Big[(\tilde X(\infty))^2 1(\tilde X(\infty) \geq -\zeta) \Big] \leq \delta^2 +8 + \frac{4}{\abs{\zeta}} \Big( \frac{1}{\abs{\zeta}} + \frac{\delta^2}{4\abs{\zeta}} + \frac{\delta}{2}\Big) +\frac{2(2\delta + \delta^3)}{3\abs{\zeta}}.
\end{align*}
Let $x$ be of the form $x = \delta(k - R)$, where $k \in \Z_+$. 
Recall from \eqref{DS:abk} that 
\begin{align*}
b(x) = \mu \big[\zeta + (x+\zeta)^- \big] = \delta (\lambda - \mu(k \wedge n)).
\end{align*}
Set $a = \delta\big(\lfloor R \rfloor - R\big) < 0$, and consider the function $f(x) = x^3 1( x \geq a+\delta)$. Then 
\begin{align}
G_{\tilde X} f(x) =&\ \lambda 1( x \geq a+\delta) \big((x+\delta)^3 - x^3 \big) +\lambda 1( x = a) (x+\delta)^3  \notag \\
&+ \mu (k \wedge n)1( x > a+\delta) \big((x-\delta)^3 - x^3\big) + \mu (k \wedge n)1( x = a+\delta) ( - x^3) \notag \\
=&\ 1( x \geq a+\delta) \Big[ \lambda  \big((x+\delta)^3 - x^3 \big) + \mu (k \wedge n) \big((x-\delta)^3 - x^3\big)\Big] \notag \\
&+ \lambda 1( x = a) (x+\delta)^3 - \mu (k \wedge n)1( x = a+\delta) (x-\delta)^3. \label{eq:gfcube}
\end{align}
Suppose $x \geq a+\delta$. Using the fact that $1( x = a+\delta) = 1( k = \lfloor R \rfloor + 1)$, we see that
\begin{align}
&G_{\tilde X} f(x) \notag \\
=&\ \lambda (3\delta x^2 + 3\delta^2 x + \delta^3) + \mu (k \wedge n) (-3\delta x^2 + 3\delta^2 x - \delta^3) - a^3 \mu (k \wedge n)1( x = a+\delta) \notag \\
=&\ 3\delta x^2(\lambda - \mu(k \wedge n)) + 3\delta^2 x(\lambda + \mu(k \wedge n)) + \delta^3(\lambda - \mu(k \wedge n))  \notag \\ 
& - a^3 \mu \big((\lfloor R \rfloor + 1) \wedge n\big) 1( x = a+\delta) \notag  \\
=&\ 3x^2b(x) + 3\delta^2 x\big(2\lambda - (\lambda - \mu(k \wedge n))\big) + \delta^2b(x) \notag \\
&- a^3 \mu \big((\lfloor R \rfloor + 1) \wedge n\big) 1( x = a+\delta) \notag  \\
=&\ 3x^2b(x) + 6\mu x - 3\delta xb(x) + \delta^2b(x)- a^3 \mu \big((\lfloor R \rfloor + 1) \wedge n\big) 1( x = a+\delta). \label{eq:gtildx}
\end{align}
When $x \in [a+\delta,  -\zeta)$ (which is the empty interval if $\lfloor R \rfloor + 1 = n$), then $b(x) = -\mu x$, and
\begin{align}
G_{\tilde X} f(x) =&\ -3\mu x^3 + 6\mu x + 3\delta \mu x^2 - \delta^2\mu x - a^3 \mu \big((\lfloor R \rfloor + 1) \wedge n\big) 1( x = a+\delta) \notag \\
\leq&\ -3\mu \big(x^3 -\delta x^2 + \frac{1}{3}\delta^2 x \big) + 6\mu x + \delta^3 \mu (\lfloor R \rfloor + 1) \notag  \\
\leq&\ -3\mu \big(x^3 -\delta x^2 + \frac{1}{3}\delta^2 x \big) + 6\mu x + \delta \mu  + \delta^3 \mu  \notag \\
\leq&\ 6\mu x+ \delta \mu  + \delta^3 \mu, \label{eq:gtildx1}
\end{align}
where in the first inequality we used the fact that $\abs{a} \leq \delta$, and in the last inequality we used the fact that $g(x) := x^3 -\delta x^2 + \frac{1}{3}\delta^2 x \geq 0$ for all $x \geq 0$, which is true because $g(0) = 0$ and $g'(x) \geq 0$ for all $x \in \R$. Now when $x \geq -\zeta$, then $b(x) = -\mu \abs{\zeta}$, and using \eqref{eq:gtildx} we see that
\begin{align}
G_{\tilde X} f(x) =&\ -3x^2\mu \abs{\zeta} + 6\mu x + 3\delta x\mu \abs{\zeta} - \delta^2\mu \abs{\zeta} - a^3 \mu \big((\lfloor R \rfloor + 1) \wedge n\big) 1( x = a+\delta) \notag \\
\leq&\ -3\mu \abs{\zeta}\big(x^2-\delta x\big) + 6\mu x - a^3 \mu \big((\lfloor R \rfloor + 1) \wedge n\big) 1( x = a+\delta)  \notag \\
\leq&\ -3\mu \abs{\zeta}\big(x^2-\delta x\big) + 6\mu x + \delta \mu  + \delta^3 \mu  \notag \\
\leq&\ -3\mu \abs{\zeta}\big(\frac{1}{2}x^2-\frac{1}{2}\delta^2\big) + 6\mu x + \delta \mu  + \delta^3 \mu , \label{eq:gtildx2}
\end{align}
Combining \eqref{eq:gtildx1} and \eqref{eq:gtildx2} with \eqref{eq:gfcube}, we have just shown that 
\begin{align*}
G_{\tilde X} f(x) \leq&\ -\frac{3}{2}\mu \abs{\zeta}\big(x^2-\delta^2\big)1(x \geq -\zeta)+ 6\mu x 1( x \geq a+\delta) \\
&+ \delta \mu  + \delta^3 \mu + \lambda 1( x = a) (x+\delta)^3.
\end{align*}
Taking expected values on both sides above, and applying Lemma~\ref{lem:gz}, we see that 
\begin{align*}
&\frac{3}{2}\mu \abs{\zeta} \EE\Big[ (\tilde X(\infty))^2 1(\tilde X(\infty) \geq -\zeta) \Big] \\
\leq&\ \frac{3}{2}\mu \abs{\zeta}\delta^2 + 6\mu \EE \Big[\big| \tilde X(\infty) \big| \Big]+ \delta \mu  + \delta^3 \mu + \lambda (a+\delta)^3,
\end{align*}
and since $\lambda (a+\delta)^3 \leq \lambda \delta^3 = \mu \delta$, we have
\begin{align*}
\EE\Big[ (\tilde X(\infty))^2 1(\tilde X(\infty) \geq -\zeta) \Big] \leq&\ \delta^2 + \frac{4}{\abs{\zeta}} \EE \Big[\big| \tilde X(\infty) \big| \Big] +\frac{2(2\delta + \delta^3)}{3\abs{\zeta}}.
\end{align*}
Using the moment bounds in \eqref{CW:xminuszeta} and \eqref{CW:xplus}, we conclude that 
\begin{align*}
\EE\Big[ (\tilde X(\infty))^2 1(\tilde X(\infty) \geq -\zeta) \Big] \leq&\ \delta^2 +8 + \frac{4}{\abs{\zeta}} \Big( \frac{1}{\abs{\zeta}} + \frac{\delta^2}{4\abs{\zeta}} + \frac{\delta}{2}\Big) +\frac{2(2\delta + \delta^3)}{3\abs{\zeta}},
\end{align*}
which proves \eqref{eq:xsquareplus}.
\end{proof}
\noindent We now prove Lemma~\ref{lem:lastbounds}, followed by a proof of Lemma~\ref{lem:pibounds}.

\begin{proof}[Proof of Lemma~\ref{lem:lastbounds}]
Observe that \eqref{CW:idle_prob} is identical to \eqref{eq:idle_prob_v2}. Now assume that $\delta \leq 1$. We begin by proving \eqref{eq:lb1}. Using the moment bounds in \eqref{CW:xminusdelta} and \eqref{CW:xminuszeta}, we see that
\begin{align*}
(1 + 1/\abs{\zeta})\EE \Big[\big|\tilde X(\infty)  1(\tilde X(\infty) \leq -\zeta) \big|\Big]\leq &\ (1 + 1/\abs{\zeta})\bigg(2\abs{\zeta} \wedge \sqrt{\frac{4}{3} + \frac{2\delta^2}{3} }\bigg) \\
\leq &\ \Big(\sqrt{\frac{4}{3} + \frac{2\delta^2}{3} } + 2\Big) \leq \big(\sqrt{2} + 2\big).
\end{align*}
Next we prove \eqref{eq:lb2}. Using the moment bounds in \eqref{CW:xsquaredelta} and \eqref{eq:xsquarezeta}, we see that
\begin{align*}
&\ (1 + 1/\abs{\zeta})\EE \Big[(\tilde X(\infty))^2  1(\tilde X(\infty) \leq -\zeta) \Big]\\
\leq &\ (1 + 1/\abs{\zeta})\bigg(\Big(\big( 5 + \delta (1+\delta/2) \big) \zeta^2 + (2+\delta)\abs{\zeta} \Big)\wedge \Big(\frac{4}{3} + \frac{2\delta^2}{3}\Big)\bigg) \\
\leq &\ 2 + \bigg(\big(6.5\abs{\zeta} +  3\big) \wedge \frac{2}{\abs{\zeta}}\bigg) \leq 2 + 7,
\end{align*}
where to get the last inequality we considered separately the cases when $\abs{\zeta} \leq 1/2$ and $\abs{\zeta} \geq 1/2$. To prove \eqref{eq:lb3}, we use the moment bound \eqref{CW:xplus} to get
\begin{align*}
\abs{\zeta} \Prob( \tilde X(\infty) \geq -\zeta) \leq&\ \abs{\zeta} \wedge \EE \Big[\big| \tilde X(\infty) 1(\tilde X(\infty) \geq -\zeta)\big| \Big]\notag \\
 \leq&\ \abs{\zeta} \wedge \Big(\frac{1}{\abs{\zeta}} + \frac{\delta^2}{4\abs{\zeta}} + \frac{\delta}{2} \Big) \notag \\
\leq&\ 2, 
\end{align*}
where to get the last inequality we considered separately the cases where $\abs{\zeta} \leq 1$ and $\abs{\zeta} \geq 1$. The proof of \eqref{eq:lb4} is similar. We use the moment bound \eqref{eq:xsquareplus} to see that 
\begin{align*}
\zeta^2 \Prob( \tilde X(\infty) \geq -\zeta) \leq&\ \zeta^2 \wedge \EE \Big[( \tilde X(\infty))^2 1(\tilde X(\infty) \geq -\zeta) \Big]\\
 \leq&\ \zeta^2 \wedge \Big(\delta^2 +8 + \frac{4}{\abs{\zeta}} \Big( \frac{1}{\abs{\zeta}} + \frac{\delta^2}{4\abs{\zeta}} + \frac{\delta}{2}\Big) +\frac{2(2\delta + \delta^3)}{3\abs{\zeta}} \Big)  \\
 \leq&\ \zeta^2 \wedge \Big(1 +8 + \frac{4}{\abs{\zeta}} \Big( \frac{1}{\abs{\zeta}} + \frac{1}{4\abs{\zeta}} + \frac{1}{2}\Big) +\frac{2}{\abs{\zeta}} \Big)  \\
\leq&\ 20,
\end{align*}
where to get the last inequality we considered separately the cases where $\abs{\zeta} \leq 1$ and $\abs{\zeta} \geq 1$.  This concludes the proof of Lemma~\ref{lem:lastbounds}.
\end{proof}

\begin{proof}[Proof of Lemma~\ref{lem:pibounds}]
We first prove \eqref{DS:pi0}. From \eqref{eq:idle_prob_v2}, we know that 
\begin{align*}
\Prob(\tilde X(\infty) \leq -\zeta) = \Prob(X(\infty) \leq n) = \sum_{k=0}^{n} \pi_k  \leq (2+\delta)\abs{\zeta}.
\end{align*}
From the flow balance equations, one can see that $\pi_{\lfloor R \rfloor}$ maximizes $\{\pi_k\}_{k=0}^{\infty}$. Now when  $\abs{\zeta} \leq 1$,
\begin{align*}
\abs{\zeta} = \delta (n - R) = \frac{1}{\sqrt{R}}(n - R) \leq 1,
\end{align*}
which implies that 
\begin{align*}
R \geq n - \sqrt{R} \geq n - \sqrt{n},
\end{align*}
where in the last inequality we used $R < n$. We use this inequality together with the fact that $\pi_0 \leq \pi_1 \leq \ldots \leq \pi_{\lfloor R \rfloor}$, which can be verified from the flow balance equations, to see that
\begin{align*}
(2+\delta)\abs{\zeta} \geq \sum_{k=0}^{n} \pi_k \geq \sum_{k=0}^{\lfloor R \rfloor} \pi_k \geq \pi_{0}\lfloor R \rfloor \geq \pi_{0}\lfloor n-\sqrt{n} \rfloor.
\end{align*}
Hence, for $n \geq 4$, 
\begin{align*}
\pi_0 \leq \frac{n}{\lfloor n-\sqrt{n} \rfloor} \frac{(2+\delta)\abs{\zeta}}{n} \leq \frac{n}{n-\sqrt{n} - 1} \frac{(2+\delta)\abs{\zeta}}{R} \leq  \frac{4(2+\delta)\abs{\zeta}}{R} = 4(2+\delta)\delta^2 \abs{\zeta}.
\end{align*}
To conclude the proof of \eqref{DS:pi0} we need to verify the bound above holds for $n < 4$, but this is simple to do. Observe that for $n < 4$, 
\begin{align*}
\pi_0 \leq P(X(\infty) \leq n) \leq (2+\delta)\abs{\zeta}  = (2+\delta)R\delta^2\abs{\zeta}  \leq (2+\delta)n \delta^2 \abs{\zeta} \leq 4(2+\delta)\delta^2\abs{\zeta}.
\end{align*}
This proves \eqref{DS:pi0}, and we move on to prove \eqref{DS:pin}. From the flow balance equations corresponding to the CTMC $X$, it is easy to see that 
\begin{align*}
\pi_n = \frac{\frac{R^n}{n!}}{\sum_{k=0}^{n-1} \frac{R^k}{k!} + \frac{R^n}{n!} \frac{1}{1-R/n}} \leq 1 - \frac{R}{n} = \frac{n-R}{n} = \frac{R}{n} \frac{n-R}{R} = \frac{R}{n}\delta \abs{\zeta} \leq \delta \abs{\zeta}.
\end{align*}
This concludes the proof of the lemma.
\end{proof}

\section{Chapter~\ref{chap:phasetype} Moment Bounds} \label{app:CTMCmomentsproof}
This section uses notation from Chapter~\ref{chap:phasetype}.
\begin{proof}[Proof of Lemma~\ref{lemma:CTMCmoments} ]
We first provide an intuitive roadmap for the proof. The goal is to show that a Lyapunov function for the diffusion process is also a Lyapunov function for the CTMC; this has two parts to it. In the first part of this proof, we compare how the two generators $G_{U^{(\lambda)}}$ and $G_Y$ act on this Lyapunov function, obtaining an upper bound for the difference $G_{U^{(\lambda)}} - G_Y$ in (\ref{eq:CTMCmomlemmaeq2}).
One notes  that the right hand side of (\ref{eq:CTMCmomlemmaeq2}) is unbounded. This is due to the difference in dimensions of the CTMC and diffusion process. To overcome this difficulty, we move on to the second part of the proof, which exploits our SSC result in Lemma~\ref{lemma:SSC} to bound the expectation of the right hand side of (\ref{eq:CTMCmomlemmaeq2}). We end up with a recursive relationship that guarantees the $2m$th moment is bounded (uniformly in $\lambda$ and $n$ satisfying \eqref{eq:square-root}) provided that the $m$th moment is. Finally, we rely on prior results obtained in \cite{DaiDiekGao2014} for a uniform bound on the first moment.

We remark that a version of this lemma was already proved \cite[Theorem 3.3]{Gurv2014} for the case where the dimension of the CTMC equals the dimension of the diffusion process. However, the difference in dimensions poses an additional technical challenge, which is overcome in the second part of this proof.

Its enough to prove (\ref{eq:CTMCunifmombound}) for the cases when $m=2^j$ for some $j\geq 0$. Furthermore, we may assume that $\lambda \geq 4$ because by Remark~\ref{rem:lambdarange}, there are only finitely many cases when $\lambda < 4$. In all those cases, $\E \abs{\tilde X^{(\lambda)}(\infty)}^m < \infty$ by (\ref{eq:CTMCmgfexist}). Throughout the proof, we shall use $C,C_1,C_2,C_3,C_4$ to denote generic positive constants that may change from line to line. They may depend on $(m,\beta,\alpha,p,\nu,P)$, but will be independent of both $\lambda$ and $n$. Define
\begin{displaymath}
V_m(x) = (1+V(x))^m,
\end{displaymath}
where $V$ is as in (\ref{eq:deflyapou}). By \cite[Remark 3.4]{Gurv2014}, $V_m$ also satisfies
\begin{displaymath}
G_Y V_m(x) \leq -C_1 V_m(x) + C_2
\end{displaymath}
as long as $V \in C^3(\R^d)$ and satisfies condition (30) of \cite{Gurv2014}, which is easy to verify. To prove the lemma, we will show that for large enough $\lambda$, $V$ satisfies 
\begin{displaymath}
\E G_{U^{(\lambda)}} AV_m(U^{(\lambda)}(\infty)) \leq - C_1 \E  V_m(\tilde X^{(\lambda)}(\infty)) + C_2,
\end{displaymath}
where $A$ is the lifting operator defined in (\ref{eq:lifter}). 
 We begin by observing
\begin{equation} \label{eq:CTMCmomlemmaeq1}
G_{U^{(\lambda)}} AV_m \leq G_{U^{(\lambda)}}AV_m - G_Y V_m + G_Y V_m \leq G_{U^{(\lambda)}}AV_m - G_Y V_m -C_1 V_m + C_2.
\end{equation}
Using (\ref{eq:errorterm}), we write $G_{U^{(\lambda)}}AV_m - G_Y V_m$ as
\begin{eqnarray*}
&&\sum \limits_{i=1}^d \partial_i V_m(x)\Big{[}(\nu_i - \alpha-\sum \limits_{j=1}^d P_{ji}\nu_j)( \delta q_i - p_i(e^T x)^+)\Big{]} \\
&& + \sum \limits_{i=1}^d \partial_{ii}V_m(x)\Big{[}\sum \limits_{j=1}^d P_{ji}\nu_j\gamma_j \Big{]}(n \delta^2 - 1) - \sum \limits_{i \neq j}^d \partial_{ij}V_m(x)\Big{[}P_{ij}\nu_i\gamma_i + P_{ji} \nu_j\gamma_j\Big{]}(n\delta^2 - 1) \notag \\
&&- \sum \limits_{i=1}^d \frac{\delta^2}{2}\partial_{ii}V_m(x)\Big{[}p_i (\lambda - n)- \alpha  q_i - \nu_i (z_i- \gamma_i n) - \sum \limits_{j=1}^d P_{ji}\nu_j (z_j- \gamma_j n)\Big{]} \notag \\
&&- \sum \limits_{i \neq j}^d \frac{\delta^2}{2}\partial_{ij}V_m(x)\Big{[}P_{ij}\nu_i (z_i- \gamma_i n) + P_{ji} \nu_j (z_j- \gamma_j n)\Big{]} \notag \\
&&+ \sum \limits_{i=1}^d \frac{\delta^2}{2} (\partial_{ii}V_m(\xi_{i}^-)-\partial_{ii}V_m(x))\Big{[}\alpha q_i + (1-\sum \limits_{j=1}^d P_{ij})\nu_i z_i\Big{]}  \notag \\
&&+ \sum \limits_{i=1}^d \frac{\delta^2}{2} (\partial_{ii}V_m(\xi_{i}^+)-\partial_{ii}V_m(x))\Big{[}\lambda p_i \Big{]}- \sum \limits_{i \neq j}^d \delta^2 (\partial_{ij}V_m(\xi_{ij})-\partial_{ij}V_m(x))\Big{[}P_{ij}\nu_i z_i\Big{]} \notag \\
&&+ \sum \limits_{i=1}^d \sum \limits_{j=1}^d \frac{\delta^2}{2} (\partial_{ii}V_m(\xi_{ij})-\partial_{ii}V_m(x))\Big{[} P_{ij}\nu_i z_i +  P_{ji}\nu_j z_j\Big{]} .  \notag 
\end{eqnarray*}
Now we wish to bound the derivatives of $V_m$. By \cite[Remark 3.4]{Gurv2014}, $V_m$ satisfies (16) and (30) of \cite{Gurv2014}, namely

\begin{equation}
\sup \limits_{\abs{y} \leq 1} \frac{V_m(x+y)}{V_m(x)} \leq C \label{eq:Vsubexp}
\end{equation}
and
\begin{equation} \label{eq:cond30gurvich}
(\abs{\partial_i V_m(x)} + \abs{\partial_{ij} V_m(x)} +\abs{\partial_{ijk} V_m(x)})(1+\abs{x}) \leq C V_m(x).
\end{equation}
For $\xi$ being one of $\xi_{i}^+$, $\xi_{i}^-$ or $\xi_{ij}$,
\begin{equation} \label{eq:VD3bound}
\abs{\partial_{ij}V_m(\xi) - \partial_{ij}V_m(x)}(1+\abs{x}) \leq \delta \abs{\partial_{iji} V_m(\eta) +\partial_{ijj}V_m(\eta) }(1+\abs{x}) \leq C \delta V_m(x),
\end{equation}
where the first inequality comes from a Taylor expansion and the second inequality follows by (\ref{eq:cond30gurvich}), the fact that $\abs{\eta - x} \leq 2\delta < 1$ and by (\ref{eq:Vsubexp}).
Following the exact same argument that we used to bound (\ref{eq:errorterm}) in the proof of Lemma~\ref{lemma:diffbound} (with (\ref{eq:cond30gurvich}) and (\ref{eq:VD3bound}) replacing the gradient bounds of $f_h$ there), we get 
\begin{displaymath}
G_{U^{(\lambda)}}AV_m - G_Y V_m \leq  C \delta V_m(x) +  C\sum \limits_{i=1}^d \abs{\partial_i V_m(x)}\Big{[}\abs{ q_i - p_i(e^T x)^+}\Big{]}.
\end{displaymath}
Differentiating $V$, we see that
\begin{displaymath}
(\nabla V(x))^T = 2(e^T x)e^T + 2 \kappa (x^T - p^T \phi(e^T x)) \tilde Q (I - p e^T \phi'(e^Tx)).
\end{displaymath} 
Combined with the fact that $0 \leq \phi'(x)\leq 1$, it is clear that
\begin{displaymath}
\abs{\partial_i V(x)} \leq C(1+\abs{x}).
\end{displaymath}
Therefore,
\begin{equation}\label{eq:CTMCmomlemmaeq2}
G_{U^{(\lambda)}}AV_m - G_Y V_m \leq  C \delta V_m(x) +  C\sum \limits_{i=1}^d mV_{m-1}(x)(1+\abs{x})\Big{[}\abs{ q_i - p_i(e^T x)^+}\Big{]} .
\end{equation}
It remains to find an appropriate bound for 
\begin{displaymath}
V_{m-1}(x) (1+\abs{x})\Big{[}\abs{ q_i - p_i(e^T x)^+}\Big{]} = \delta V_{m-1}(x)(1+\abs{x})\Bigg{[}\frac{\abs{q_i - p_i(e^T x)^+}}{\delta}\Bigg{]}.
\end{displaymath}
We have
\begin{eqnarray}
&&\delta V_{m-1}(x) (1+\abs{x})\Bigg{[}\frac{\abs{q_i - p_i(e^T x)^+}}{\delta}\Bigg{]} \notag \\
&\leq & \sqrt{\delta}V_{m-1}(x)(1+\abs{x})^2 + \sqrt{\delta}V_{m-1}(x)\Bigg{[}\frac{\abs{q_i - p_i(e^T x)^+}^2}{\delta}\Bigg{]} \notag \\
&\leq & C\sqrt{\delta}V_{m}(x) + \sqrt{\delta}V_{m-2}(x)V_2(x)+ \sqrt{\delta}V_{m-2}(x)\Bigg{[}\frac{\abs{q_i - p_i(e^T x)^+}^2}{\delta}\Bigg{]}^2 \notag \\
&\leq & C\sqrt{\delta}V_{m}(x) + \sqrt{\delta}V_{m}(x)+ \sqrt{\delta}V_{m-4}(x)V_4(x)+ \sqrt{\delta} V_{m-4}(x)\Bigg{[}\frac{\abs{q_i - p_i(e^T x)^+}^2}{\delta}\Bigg{]}^4 \notag \\ 
&\leq & \ldots  \notag \\
&\leq & C\sqrt{\delta}V_{m}(x) + \sqrt{\delta}\Bigg{[}\frac{\abs{q_i - p_i(e^T x)^+}^2}{\delta}\Bigg{]}^m, \label{eq:CTMCmomlemmaeq3}
\end{eqnarray}
where in the last inequality, we used the fact that $m = 2^j$. Using (\ref{eq:CTMCmomlemmaeq1}), (\ref{eq:CTMCmomlemmaeq2}) and (\ref{eq:CTMCmomlemmaeq3}), 
\begin{displaymath}
G_{U^{(\lambda)}} AV_m(u) \leq - V_m(x)(C_1 - \sqrt{\delta}C_3) + C_2 + \sqrt{\delta}C_4\sum \limits_{i=1}^d \Bigg{[}\frac{\abs{q_i - p_i(e^T x)^+}^2}{\delta}\Bigg{]}^m,
\end{displaymath}
where $x$ and $q$ are related to $u$ by (\ref{eq:projlittle}).
The arguments in the proof of Lemma~\ref{lemma:CTMCbar} can be used to show 
\begin{displaymath}
\E G_{U^{(\lambda)}} A V_m(U^{(\lambda)}(\infty)) = 0.
\end{displaymath} 
Therefore, for $\delta$ small enough, 
\begin{eqnarray*}
&&E\abs{\tilde X^{(\lambda)}(\infty)}^{2m}  \\
&\leq & C\E V_m(\tilde X^{(\lambda)}(\infty))\\
 &\leq & \frac{C}{(C_1 - \sqrt{\delta}C_3)}\Bigg{(}C_2 + \sqrt{\delta}C_4\sum \limits_{i=1}^d \frac{\E \abs{\delta Q^{(\lambda)}_i(\infty) - p_i(e^T \tilde X^{(\lambda)}(\infty))^+}^{2m}}{\delta^m}\Bigg{)}.
\end{eqnarray*}
By (\ref{eq:sscscaled}), it follows that 
\begin{displaymath}
\E \abs{\tilde  X^{(\lambda)}(\infty)}^{2m} \leq \frac{C}{C_1 - \sqrt{\delta}C_3} \Bigg{(}1 + \sqrt{\delta}\E [(e^T \tilde X^{(\lambda)}(\infty))^+]^{m}\Bigg{)}.
\end{displaymath}
Hence, we have a recursive relationship that guarantees 
\begin{displaymath}
\sup \limits_{\lambda > 0} \E \abs{\tilde  X^{(\lambda)}(\infty)}^{2m} < \infty
\end{displaymath}
whenever 
\begin{displaymath}
\sup \limits_{\lambda > 0} \E[(e^T\tilde  X^{(\lambda)}(\infty))^+]^m < \infty.
\end{displaymath}
To conclude, we need to verify that
\begin{displaymath}
\sup \limits_{\lambda > 0} \E[(e^T\tilde  X^{(\lambda)}(\infty))^+] < \infty,
\end{displaymath}
but this was proved in equation (5.2) of \cite{DaiDiekGao2014}. 

\end{proof}

\chapter{Gradient Bounds} \label{app:GRADIENTS}
This appendix proves all of the gradient bounds used in this document. Section~\ref{app:gradboundsgeneric} provides some generic tools for establishing gradient bounds in the case of a one-dimensional diffusion approximation, i.e.\ when the Poisson equation is an ordinary differential equation. Bounds for Chapters~\ref{chap:erlangAC}, \ref{chap:dsquare} and \ref{chap:phasetype} are proved in Sections~\ref{app:gradbounds}, \ref{app:DSgradbounds}, and \ref{app:PHgradbounds}, respectively. 

\section{The Poisson Equation for Diffusion Processes} \label{app:gradboundsgeneric}
To make this section self-contained, we begin by repeating Lemma~\ref{lem:solution}. Let $\bar a:\R \to \R_+$ and $\bar b: \R \to \R$ be continuous functions, and assume that 
\begin{align*}
\inf_{x \in \R} \bar a(x) > 0.
\end{align*} 
Assume that 
\begin{align}
\int_{-\infty}^{\infty}\frac{2}{\bar a(x)} \exp \Big({\int_{0}^{x} \frac{2 \bar b(u)}{\bar a(u)} du} \Big)dx < \infty, \label{eq:densintegrable}
\end{align}
and let $V$ be a continuous random variable with density 
\begin{align}
\frac{\frac{2}{\bar a(x)} \exp \Big({\int_{0}^{x} \frac{2 \bar b(u)}{\bar a(u)} du} \Big)}{\int_{-\infty}^{\infty}\frac{2}{\bar a(x)} \exp \Big({\int_{0}^{x} \frac{2 \bar b(u)}{\bar a(u)} du} \Big)dx}, \quad x \in \R. \label{eq:gendens}
\end{align}
\begin{lemma} \label{lem:solutionv2}
Fix $h:\R \to \R$ satisfying $\E |h(V)| < \infty$, and consider the Poisson equation 
\begin{align}
\frac{1}{2} \bar a(x) f_h''(x) + \bar b(x) f_h'(x) = \E h(V) - h(x), \quad x \in \R. \label{eq:genericpoissonv2}
\end{align}
There exists a solution $f_h(x)$ to this equation satisfying 
\begin{align}
f_h'(x) =&\ e^{-\int_{0}^{x} \frac{2\bar b(u)}{\bar a(u)}du}\int_{-\infty}^{x} \frac{2}{\bar a(y)} (\E h(V) - h(y)) e^{\int_{0}^{y} \frac{2\bar b(u)}{\bar a(u)}du} dy  \label{eq:fprimeneg} \\
=&\ -e^{-\int_{0}^{x} \frac{2\bar b(u)}{\bar a(u)}du} \int_{x}^{\infty} \frac{2}{\bar a(y)} (\E h(V) - h(y)) e^{\int_{0}^{y} \frac{2\bar b(u)}{\bar a(u)}du} dy \label{eq:fprimepos}, \\
f_h''(x) =&\  - \frac{2\bar b(x)}{\bar a(x)} f_h'(x) +  \frac{2}{\bar a(x)}\big(\E h(V) - h(x) \big). \label{eq:fpp}
\end{align}
\end{lemma}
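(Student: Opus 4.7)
The plan is to recognize the Poisson equation \eqref{eq:genericpoissonv2} as a first-order linear ODE in the unknown $g(x) := f_h'(x)$, and then solve it explicitly by the integrating-factor method. Dividing \eqref{eq:genericpoissonv2} by $\bar a(x)/2$ (which is bounded away from zero by assumption) yields
\begin{equation*}
g'(x) + \frac{2\bar b(x)}{\bar a(x)}\, g(x) \;=\; \frac{2}{\bar a(x)}\bigl(\E h(V) - h(y)\bigr)\Big|_{y=x},
\end{equation*}
so the natural integrating factor is $\mu(x) := \exp\bigl(\int_0^x \frac{2\bar b(u)}{\bar a(u)}du\bigr)$. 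Multiplying through, the left-hand side becomes $\frac{d}{dx}[\mu(x) g(x)]$, and integrating from $-\infty$ to $x$ produces exactly formula \eqref{eq:fprimeneg}, provided that the boundary term $\mu(y)g(y)$ vanishes as $y \downarrow -\infty$; this choice of lower limit is the specific selection that picks out the solution quoted in the lemma.

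The key verification, which is also the step that does real work, is finiteness of the integrand in \eqref{eq:fprimeneg}. I would split the integrand as
\begin{equation*}
\frac{2}{\bar a(y)}\,\mu(y)\,\bigl(\E h(V) - h(y)\bigr),
\end{equation*}
and observe that by the explicit density \eqref{eq:gendens} one has $\frac{2}{\bar a(y)}\mu(y) = Z \cdot \nu_V(y)$, where $\nu_V$ is the density of $V$ and $Z$ is the normalizing constant from \eqref{eq:densintegrable}. Hence the integrand is $Z(\E h(V) - h(y))\nu_V(y)$, and the hypothesis $\E|h(V)| < \infty$ together with \eqref{eq:densintegrable} immediately gives absolute integrability on all of $\R$. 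Moreover, the full integral over $\R$ equals $Z(\E h(V) - \E h(V)) = 0$; this orthogonality relation is what lets me rewrite $\int_{-\infty}^x (\cdot)\,dy = -\int_x^\infty (\cdot)\,dy$, which is precisely the equivalence of \eqref{eq:fprimeneg} and \eqref{eq:fprimepos}.

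Having produced a continuously differentiable $f_h'(x)$ satisfying the ODE almost everywhere, I would then set $f_h(x) = \int_0^x f_h'(s)\,ds$ to get an actual $f_h$. The formula \eqref{eq:fpp} for $f_h''(x)$ is then nothing more than algebraic rearrangement of \eqref{eq:genericpoissonv2}, using that $\bar a(x) > 0$; continuity of $f_h''(x)$ follows from continuity of $\bar a, \bar b, h$ on $\R$ (or at the points where $h$ is continuous, with the understanding that $f_h''$ is defined by this formula wherever the right-hand side makes sense, which is all that is needed in the applications).

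The only genuine obstacle is the orthogonality/integrability step, which requires carefully identifying the weight $\frac{2}{\bar a(y)}\mu(y)$ with the (unnormalized) stationary density of the associated diffusion; once that identification is made, everything else is bookkeeping. I would not attempt to prove uniqueness of the solution, since the statement only claims existence of one solution with the stated representation, and the general solution differs by $f_h'(x) = C\mu(x)^{-1}$ which corresponds to adding a constant and a $\mu^{-1}$-type term to $f_h$ --- irrelevant for the applications in the paper.
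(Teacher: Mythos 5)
Your proposal is correct and follows essentially the same route as the paper's proof: solve (or verify) the first-order linear ODE for $f_h'$ via an integrating factor, deduce integrability from $\E|h(V)|<\infty$ by recognizing $\tfrac{2}{\bar a(y)}e^{\int_0^y 2\bar b/\bar a}$ as an unnormalized copy of the density of $V$, and obtain \eqref{eq:fpp} by rearranging the Poisson equation. The one thing you spell out that the paper leaves implicit is the orthogonality identity $\int_{\R}\tfrac{2}{\bar a(y)}(\E h(V)-h(y))e^{\int_0^y 2\bar b/\bar a}\,dy=0$, which is exactly what makes \eqref{eq:fprimeneg} and \eqref{eq:fprimepos} agree.
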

\noindent Provided $h(x)$, $\bar a(x)$, and $\bar b(x)/\bar a(x)$ are sufficiently differentiable, $f_h(x)$ can have more than two derivatives. For example, 
\begin{align*}
f_h'''(x) =&\ -\Big(\frac{2\bar b(x)}{\bar a(x)}\Big)' f_h'(x) - \frac{2\bar b(x)}{\bar a(x)} f_h''(x) - \frac{2}{\bar a(x)}h'(x) - \frac{2\bar a'(x)}{a^2(x)}\big( \E h(V) - h(x)\big). 
\end{align*}
The biggest source of difficulty in bounding $f_h'(x)$, $f_h''(x)$, and $f_h'''(x)$, are the integrals in \eqref{eq:fprimeneg} and \eqref{eq:fprimepos}. Before describing how to bound them, we give an alternative representation of $f_h''(x)$, which is taken from the proof of \cite[Lemma 13.1]{ChenGoldShao2011}. The assumptions of the following lemma are  only slightly stronger than those in Lemma~\ref{lem:solution}. 
\begin{lemma}\label{lem:handle}
Fix $h:\R \to \R$ satisfying $\E |h(V)| < \infty$, and let $f_h'(x)$ be as in \eqref{eq:fprimeneg}--\eqref{eq:fprimepos}. Assume $\bar a(x), \bar b(x)/\bar a(x)$, and $h(x)$ are absolutely continuous. If
\begin{align}
\lim_{x \to -\infty}  \frac{2\bar b(x)}{\bar a(x)} f_h'(x)e^{\int_{0}^{x} \frac{2\bar b(u)}{\bar a(u)}du} = 0, \label{eq:mlimits}
\end{align}
then 
\begin{align}
f_h''(x) =&\ e^{-\int_{0}^{x} \frac{2\bar b(u)}{\bar a(u)}du} \int_{-\infty}^{x} \Big(\frac{2}{\bar a(y)} h'(y) - \frac{2\bar a'(y)}{a^2(y)}\big(\E h(V) - h(y) \big) \notag\\
&\hspace{5cm} - \Big(\frac{2\bar b(y)}{\bar a(y)}\Big)' f_h'(y)\Big)e^{\int_{0}^{y} \frac{2\bar b(u)}{\bar a(u)}du} dy. \label{eq:hf21}
\end{align}
Similarly, if 
\begin{align}
\lim_{x \to \infty} \frac{2\bar b(x)}{\bar a(x)} f_h'(x)e^{\int_{0}^{x} \frac{2\bar b(u)}{\bar a(u)}du} = 0, \label{eq:plimits}
\end{align}
then 
\begin{align}
f_h''(x) =&\  -e^{-\int_{0}^{x} \frac{2\bar b(u)}{\bar a(u)}du} \int_{x}^{\infty} \Big(\frac{2}{\bar a(y)} h'(y) - \frac{2\bar a'(y)}{a^2(y)}\big(\E h(V) - h(y) \big) \notag \\
& \hspace{5cm} - \Big(\frac{2\bar b(y)}{\bar a(y)}\Big)' f_h'(y)\Big)e^{\int_{0}^{y} \frac{2\bar b(u)}{\bar a(u)}du} dy. \label{eq:hf22}
\end{align}
\end{lemma}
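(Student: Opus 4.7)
The plan is to derive both integral representations by treating \eqref{eq:fppv2} as a starting point: differentiate it once to obtain a first-order linear ODE for $f_h''$ in which the only inhomogeneous terms involve $h'$, $\bar a'$, $(2\bar b/\bar a)'$, and $f_h'$ (the last of which is already known explicitly from \eqref{eq:fprimeneg}--\eqref{eq:fprimepos}). Then integrate against the same integrating factor $e^{\int_0^x 2\bar b(u)/\bar a(u)\,du}$ that appeared in Lemma~\ref{lem:solutionv2}. The absolute-continuity hypotheses on $\bar a$, $\bar b/\bar a$, and $h$ are exactly what is needed to justify this one further differentiation almost everywhere.

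More concretely, first I would differentiate \eqref{eq:fppv2} to obtain (a.e.)
\[
f_h'''(x)+\frac{2\bar b(x)}{\bar a(x)} f_h''(x)
= -\Big(\frac{2\bar b(x)}{\bar a(x)}\Big)' f_h'(x)-\frac{2\bar a'(x)}{\bar a^2(x)}\bigl(\E h(V)-h(x)\bigr)-\frac{2}{\bar a(x)}h'(x),
\]
and then observe that the left-hand side is precisely $e^{-\int_0^x 2\bar b/\bar a}\,\tfrac{d}{dx}\bigl[f_h''(x)e^{\int_0^x 2\bar b/\bar a}\bigr]$. Multiplying through by $e^{\int_0^x 2\bar b/\bar a}$ and integrating from $-\infty$ to $x$ yields a formula of the same shape as \eqref{eq:hf21} once one divides back by $e^{\int_0^x 2\bar b/\bar a}$. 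For \eqref{eq:hf22} I would integrate from $x$ to $\infty$ instead. The consistency of the two representations reflects the fact that the total integral over $\R$ of the derivative vanishes under both \eqref{eq:mlimits} and \eqref{eq:plimits}.

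The main technical issue is verifying that the boundary term produced by the integration vanishes at the appropriate infinity. Writing the boundary term as $\lim_{z\to-\infty} f_h''(z)e^{\int_0^z 2\bar b/\bar a}$ and substituting \eqref{eq:fppv2}, it splits into $-\tfrac{2\bar b(z)}{\bar a(z)}f_h'(z)e^{\int_0^z 2\bar b/\bar a}$, which is $0$ by the hypothesis \eqref{eq:mlimits}, and $\tfrac{2}{\bar a(z)}\bigl(\E h(V)-h(z)\bigr)e^{\int_0^z 2\bar b/\bar a}$. The second piece is controlled using \eqref{eq:densintegrable}: the unnormalized density $\tfrac{2}{\bar a(z)}e^{\int_0^z 2\bar b/\bar a}$ is a continuous integrable function on $\R$, so it tends to $0$ at $-\infty$ (via monotone subsequence arguments along with continuity), and one can couple this with $\E|h(V)|<\infty$ to force the contribution of $h(z)$ to vanish as well. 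The symmetric argument at $+\infty$ handles \eqref{eq:hf22}.

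The hardest conceptual part is not the algebra, which is routine integration-by-parts once the ODE viewpoint is taken, but rather the rigorous justification of the boundary-vanishing step --- this is exactly where the unwieldy technical hypotheses \eqref{eq:mlimits} and \eqref{eq:plimits} enter, and it is what makes the two integral representations \eqref{eq:hf21} and \eqref{eq:hf22} require separate (though symmetric) hypotheses. All other steps are direct consequences of the fundamental theorem of calculus and the explicit form of $f_h'$ supplied by Lemma~\ref{lem:solutionv2}.
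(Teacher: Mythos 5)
Your proposal is correct and follows essentially the same route as the paper: differentiate the Poisson equation once (equivalently, differentiate \eqref{eq:fppv2}), recognize the resulting expression as the derivative of $f_h''(x)e^{\int_0^x 2\bar b/\bar a}$ up to the integrating factor, integrate out to $\pm\infty$, and kill the boundary term by splitting it via \eqref{eq:fppv2} into the piece controlled by \eqref{eq:mlimits}/\eqref{eq:plimits} and the piece controlled by \eqref{eq:densintegrable} together with $\E|h(V)|<\infty$. The paper writes the step as a direct fundamental-theorem-of-calculus identity on $[\ell,x]$ and then sends $\ell\to\mp\infty$, while you phrase it as an integrating-factor calculation, but these are the same computation; you are also slightly more careful than the paper in noting that integrability alone does not give the pointwise decay of the density and that continuity must be invoked.
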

\noindent This lemma is proved at the end of this section.  In practice, working with the representation in Lemma~\ref{lem:handle} often yields better bounds on $f_h''(x)$ than using \eqref{eq:fpp}.
Again, we see that both \eqref{eq:hf21} and \eqref{eq:hf22} contain integral term involving $e^{\int_{0}^{y} \frac{2\bar b(u)}{\bar a(u)}du}$. To  help bound these integrals, we make several assumptions on $\bar b(x)$. Assume that 
\begin{itemize}
\item[(a1)] \phantomsection \label{eq:a1} $\bar b(x)$ is a non-increasing function of $x$ 
\item[(a2)] \phantomsection  \label{eq:a2} $\bar b(x)$ has at most one zero, 
\end{itemize}
and define 
\begin{align}
x_0 = 
\begin{cases}
&-\infty, \quad \bar b(x) < 0, x \in \R \\
&\infty, \quad \bar b(x) > 0, x \in \R \\
&\text{ the zero of $\bar b(x)$}, \quad \text{ otherwise}.
\end{cases} \label{eq:xnot}
\end{align}
It may be helpful to the reader to pretend that $x_0 = 0$, which will always be the case in this thesis. The following two lemmas present some useful inequalities that will be very helpful in getting the gradient bounds that we require. Due to their generality, they may also be of independent interest. We discuss assumptions (\hyperref[eq:a1]{a1}) and (\hyperref[eq:a2]{a2}) after their statements and proofs.

\begin{lemma} \label{lem:main}
Suppose $\bar b(x)$ satisfies both  (\hyperref[eq:a1]{a1}) and (\hyperref[eq:a2]{a2}), and let $x_0$ be as in \eqref{eq:xnot}. Then 
\begin{align}
e^{-\int_{0}^{x} \frac{2\bar b(u)}{\bar a(u)}du}\int_{-\infty}^{x} \frac{2}{\bar a(y)} e^{\int_{0}^{y} \frac{2\bar b(u)}{\bar a(u)}du} dy \leq \frac{1}{\bar b(x)}, \quad x < x_0, \label{eq:main1} \\
e^{-\int_{0}^{x} \frac{2\bar b(u)}{\bar a(u)}du} \int_{x}^{\infty} \frac{2}{\bar a(y)} e^{\int_{0}^{y} \frac{2\bar b(u)}{\bar a(u)}du} dy \leq \frac{1}{\abs{\bar b(x)}}, \quad x > x_0, \label{eq:main2}
\end{align}
where we adopt the convention that $\{x : x  > \infty\} = \{x : x < -\infty\} = \emptyset$. Furthermore, if $x_0$ is finite, then for any $c_1 \in (-\infty, x_0)$ and $c_2 \in (x_0, \infty)$, 
\begin{align}
e^{-\int_{0}^{x} \frac{2\bar b(u)}{\bar a(u)}du}\int_{-\infty}^{x} \frac{2}{\bar a(y)} e^{\int_{0}^{y} \frac{2\bar b(u)}{\bar a(u)}du} dy \leq \frac{1}{\bar b(c_1)}  +  2\abs{c_1 - x_0}\sup_{y \in [c_1,x_0]} \frac{1}{\bar a(y)} , \quad x < x_0, \label{eq:near_orig_left} \\
e^{-\int_{x_0}^{x} \frac{2\bar b(u)}{\bar a(u)}du} \int_{x}^{\infty} \frac{2}{\bar a(y)} e^{\int_{x_0}^{y} \frac{2\bar b(u)}{\bar a(u)}du} dy \leq \frac{1}{\abs{\bar b(c_2)}} +  2\abs{c_2-x_0}\sup_{y \in [x_0,c_2]} \frac{1}{\bar a(y)}, \quad x > x_0. \label{eq:near_orig_right}
\end{align}
\end{lemma}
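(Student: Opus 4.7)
The plan rests on a simple identity. Write $\psi(x) := \int_0^x \frac{2\bar b(u)}{\bar a(u)}\,du$, so that $\frac{d}{dy}e^{\psi(y)} = \frac{2\bar b(y)}{\bar a(y)}e^{\psi(y)}$, and wherever $\bar b(y)\neq 0$ the integrand appearing in \eqref{eq:main1}--\eqref{eq:main2} can be rewritten as $\frac{2}{\bar a(y)}e^{\psi(y)} = \frac{1}{\bar b(y)}\frac{d}{dy}e^{\psi(y)}$. Assumptions (a1)--(a2) then control the factor $1/\bar b(y)$ by monotonicity, while the remaining integral telescopes.

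To prove \eqref{eq:main1}, fix $x<x_0$. For every $y\le x<x_0$ we have $\bar b(y)\ge \bar b(x)>0$ (non-increasingness of $\bar b$ and the fact that $x_0$ is the unique zero rule out $\bar b(x)=0$). Hence
\begin{align*}
\int_{-\infty}^{x}\frac{2}{\bar a(y)}e^{\psi(y)}\,dy
 = \int_{-\infty}^{x}\frac{1}{\bar b(y)}\,\frac{d}{dy}e^{\psi(y)}\,dy
 \le \frac{1}{\bar b(x)}\Bigl(e^{\psi(x)}-\lim_{y\to-\infty}e^{\psi(y)}\Bigr)
 \le \frac{e^{\psi(x)}}{\bar b(x)},
\end{align*}
and multiplying through by $e^{-\psi(x)}$ gives the bound. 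Inequality \eqref{eq:main2} is obtained by the symmetric argument on $[x,\infty)$: for $y\ge x>x_0$ one has $|\bar b(y)|\ge |\bar b(x)|>0$, and $\frac{d}{dy}e^{\psi(y)}$ again telescopes to at most $e^{\psi(x)}$.

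For the near-origin bounds \eqref{eq:near_orig_left} and \eqref{eq:near_orig_right}, the only obstacle is that $1/\bar b$ blows up as the argument approaches $x_0$; the cure is to split the range of integration at the given point $c_1$ (respectively $c_2$) so that the telescoping trick is applied only away from $x_0$. In the case $x\le c_1$, estimate \eqref{eq:main1} and monotonicity give directly $1/\bar b(x)\le 1/\bar b(c_1)$. In the case $c_1<x<x_0$, split $\int_{-\infty}^x = \int_{-\infty}^{c_1}+\int_{c_1}^x$: the first piece is bounded by $e^{\psi(c_1)}/\bar b(c_1)\le e^{\psi(x)}/\bar b(c_1)$ via the argument for \eqref{eq:main1} applied with upper limit $c_1$ (using that $\psi$ is non-decreasing on $(-\infty,x_0)$), and the second piece is handled by the crude estimate $\int_{c_1}^x \frac{2}{\bar a(y)}e^{\psi(y)}\,dy\le 2|c_1-x_0|\sup_{y\in[c_1,x_0]}(1/\bar a(y))\,e^{\psi(x)}$, obtained by pulling out the supremum of $1/\bar a$ and bounding $e^{\psi(y)}$ by $e^{\psi(x)}$. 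Dividing by $e^{\psi(x)}$ yields \eqref{eq:near_orig_left}; \eqref{eq:near_orig_right} follows by the mirror argument, noting that the apparent change of normalization $e^{-\int_{x_0}^x}$ versus $e^{-\int_0^x}$ is cosmetic, since the factors $e^{\pm\int_0^{x_0}}$ cancel between prefactor and integrand. The one bookkeeping point is that $\lim_{y\to\pm\infty}e^{\psi(y)}$ need not vanish, but monotonicity of $\psi$ on each side of $x_0$ guarantees the limits exist and are non-negative, which is all the argument requires.
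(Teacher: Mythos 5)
Your proof is correct and follows essentially the same strategy as the paper's: rewrite the integrand as $\frac{1}{\bar b(y)}\frac{d}{dy}e^{\psi(y)}$, exploit monotonicity of $\bar b$ to pull the factor $1/\bar b(x)$ out, and telescope, then handle the near-$x_0$ estimates by splitting the range of integration at $c_1$ (resp.\ $c_2$). The one cosmetic difference — you factor out $\sup 1/\bar b(y)$, while the paper multiplies and divides by $\bar b(y)/\bar b(x)$ — amounts to the same inequality, and your observation that the $e^{-\int_{x_0}^x}$ versus $e^{-\int_0^x}$ normalizations cancel is exactly what the paper exploits.
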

\begin{remark}
Suppose $x_0$ is finite. Since $\bar b(x_0) = 0$, the bounds in \eqref{eq:main1} and \eqref{eq:main2} lose relevance for $x$ near $x_0$. This is the reason for having \eqref{eq:near_orig_left} and \eqref{eq:near_orig_right}.
\end{remark}

\begin{proof}[Proof of Lemma~\ref{lem:main}]
We first prove \eqref{eq:main1}. The assumption that $\bar b(x)$ is decreasing implies that $\bar b(y)/\bar b(x) \geq 1$ for $y \leq x < x_0$. Therefore,
\begin{align}
e^{-\int_{0}^{x} \frac{2\bar b(u)}{\bar a(u)}du} \int_{-\infty}^{x} \frac{2}{\bar a(y)}e^{\int_{0}^{y} \frac{2\bar b(u)}{\bar a(u)}du} dy =&\  e^{-\int_{x_0}^{x} \frac{2\bar b(u)}{\bar a(u)}du} \int_{-\infty}^{x} \frac{2 }{\bar a(y)} e^{\int_{x_0}^{y} \frac{2\bar b(u)}{\bar a(u)}du} dy \notag \\
 \leq&\  e^{-\int_{x_0}^{x} \frac{2\bar b(u)}{\bar a(u)}du} \int_{-\infty}^{x} \frac{2 \bar b(y)}{\bar a(y)} \frac{1}{\bar b(x)} e^{\int_{x_0}^{y} \frac{2\bar b(u)}{\bar a(u)}du} dy \notag \\
=&\  e^{-\int_{x_0}^{x} \frac{2\bar b(u)}{\bar a(u)}du} \frac{1}{\bar b(x)} \Big(e^{\int_{x_0}^{x} \frac{2\bar b(u)}{\bar a(u)}du} - e^{-\int_{x_0}^{-\infty} \frac{2\bar b(u)}{\bar a(u)}du}\Big)  \notag \\
\leq&\ \frac{1}{\bar b(x)}. \label{eq:negpart}
\end{align}
One can justify \eqref{eq:main2} using a symmetric argument. We now prove \eqref{eq:near_orig_left}. Fix $c_1 < x_0$ and suppose $x \leq c_1$. Then \eqref{eq:negpart}, together with the fact that $\bar b(x) \geq \bar b(c_1)$ implies 
\begin{align*}
e^{-\int_{0}^{x} \frac{2\bar b(u)}{\bar a(u)}du} \int_{-\infty}^{x} \frac{2}{\bar a(y)}e^{\int_{0}^{y} \frac{2\bar b(u)}{\bar a(u)}du} dy \leq \frac{1}{\bar b(c_1)}.
\end{align*}
Now when $x \in [c_1,x_0]$,
\begin{align*}
&\ e^{-\int_{0}^{x} \frac{2\bar b(u)}{\bar a(u)}du} \int_{-\infty}^{x} \frac{2}{\bar a(y)}e^{\int_{0}^{y} \frac{2\bar b(u)}{\bar a(u)}du} dy \notag \\
=&\ \frac{e^{\int_{x_0}^{c_1} \frac{2\bar b(u)}{\bar a(u)}du}}{e^{\int_{x_0}^{x} \frac{2\bar b(u)}{\bar a(u)}du}} e^{-\int_{x_0}^{c_1} \frac{2\bar b(u)}{\bar a(u)}du} \int_{-\infty}^{c_1} \frac{2}{\bar a(y)}e^{\int_{x_0}^{y} \frac{2\bar b(u)}{\bar a(u)}du} dy + e^{-\int_{x_0}^{x} \frac{2\bar b(u)}{\bar a(u)}du} \int_{c_1}^{x} \frac{2}{\bar a(y)}e^{\int_{x_0}^{y} \frac{2\bar b(u)}{\bar a(u)}du} dy\notag  \\
\leq&\ e^{-\int_{c_1}^{x} \frac{2\bar b(u)}{\bar a(u)}du} \frac{1}{\bar b(c_1)} +  \int_{c_1}^{x} \frac{2}{\bar a(y)}e^{-\int_{y}^{x} \frac{2\bar b(u)}{\bar a(u)}du} dy \notag \\
\leq&\ \frac{1}{\bar b(c_1)} +  \sup_{y \in [c_1,x_0]} \frac{2\abs{c_1 - x_0}}{\bar a(y)},
\end{align*}
where in the last inequality we used the fact that $\bar b(u) \geq 0$ for $u \in [c_1, x]$. This proves \eqref{eq:near_orig_left}, and a symmetric argument can be used to prove \eqref{eq:near_orig_right}.
\end{proof}
\begin{remark}
The first inequality of \eqref{eq:negpart} was inspired by the well-known bound on the CDF of the normal distribution 
\begin{align*}
\int_{-\infty}^{x} e^{-y^2/2} dy \leq \int_{-\infty}^{x} \frac{y}{x} e^{-y^2/2} dy  = \frac{1}{x} e^{-x^2/2}, \quad x <  0,
\end{align*}
which is used to prove gradient bounds for the normal distribution \cite{ChenGoldShao2011, Ross2011}.
\end{remark}

\begin{lemma} \label{lem:poly}
Suppose $\bar b(x)$ satisfies both  (\hyperref[eq:a1]{a1}) and (\hyperref[eq:a2]{a2}), and $x_0$, as defined in \eqref{eq:xnot}, is finite. Suppose also that there exist $\ell \leq x_0$ and $r \geq x_0$ such that $\bar a(x) = a_{\ell}$ for $x < \ell$ and $\bar a(x) = a_{r}$ for $x > r$. Then for any $k \in \mathbb{N}$, 
\begin{align}
&e^{-\int_{0}^{x} \frac{2\bar b(u)}{\bar a(u)}du}\int_{-\infty}^{x} \frac{2\abs{y}^k}{\bar a(y)} e^{\int_{0}^{y} \frac{2\bar b(u)}{\bar a(u)}du} dy \leq \sum_{j=0}^{k} \frac{k!}{(k-j)!} \Big(\frac{a_{\ell}}{2\bar b(x)}\Big)^{j}\frac{1}{\bar b(x)} \abs{x}^{k-j}, \quad x < \ell, \label{eq:m1} \\
&e^{-\int_{0}^{x} \frac{2\bar b(u)}{\bar a(u)}du} \int_{x}^{\infty} \frac{2\abs{y}^k}{\bar a(y)} e^{\int_{0}^{y} \frac{2\bar b(u)}{\bar a(u)}du} dy \leq \sum_{j=0}^{k} \frac{k!}{(k-j)!} \Big(\frac{a_{r}}{2\abs{\bar b(x)}}\Big)^{j}\frac{1}{\abs{\bar b(x)}} x^{k-j}, \quad x > r. \label{eq:m2}
\end{align}
\end{lemma}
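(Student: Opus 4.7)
The plan is to prove \eqref{eq:m1} by induction on $k$, with \eqref{eq:m2} following from an entirely symmetric argument. Throughout, write $\phi(y) = \int_0^y \frac{2\bar b(u)}{\bar a(u)} du$, so that $\frac{d}{dy} e^{\phi(y)} = \frac{2\bar b(y)}{\bar a(y)} e^{\phi(y)}$, and set $I_k := \int_{-\infty}^x \frac{2|y|^k}{\bar a(y)} e^{\phi(y)} dy$. The base case $k=0$ is precisely inequality \eqref{eq:main1} of Lemma~\ref{lem:main}, which gives $e^{-\phi(x)} I_0 \le 1/\bar b(x)$.

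For the inductive step, fix $x < \ell \le x_0$. Assumptions (\hyperref[eq:a1]{a1})--(\hyperref[eq:a2]{a2}) imply $\bar b(y) \ge \bar b(x) > 0$ for all $y \le x$, so the same monotonicity trick used in the proof of \eqref{eq:main1} gives
\[
I_k \;=\; \int_{-\infty}^x \frac{|y|^k}{\bar b(y)} \, d(e^{\phi(y)}) \;\le\; \frac{1}{\bar b(x)} \int_{-\infty}^x |y|^k \, d(e^{\phi(y)}).
\]
An integration by parts yields
\[
\int_{-\infty}^x |y|^k \, d(e^{\phi(y)}) \;=\; |x|^k e^{\phi(x)} \;-\; \int_{-\infty}^x k\,\mathrm{sgn}(y)\,|y|^{k-1} e^{\phi(y)}\, dy,
\]
provided the boundary term $|y|^k e^{\phi(y)}$ vanishes as $y \to -\infty$. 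Since $\bar a(y) = a_\ell$ on $(-\infty, x]$, bounding $|\mathrm{sgn}(y)| \le 1$ converts the remaining integral into a copy of $I_{k-1}$:
\[
-\int_{-\infty}^x k\,\mathrm{sgn}(y)\,|y|^{k-1} e^{\phi(y)}\, dy \;\le\; \frac{k a_\ell}{2}\int_{-\infty}^x \frac{2|y|^{k-1}}{\bar a(y)} e^{\phi(y)}\, dy \;=\; \frac{k a_\ell}{2} I_{k-1}.
\]
Multiplying through by $e^{-\phi(x)}$, I obtain the recursion
\[
e^{-\phi(x)} I_k \;\le\; \frac{|x|^k}{\bar b(x)} \;+\; \frac{k a_\ell}{2\bar b(x)}\, e^{-\phi(x)} I_{k-1}.
\]
Unrolling this recursion and plugging in the base case at step $k$ produces exactly the sum $\sum_{j=0}^k \frac{k!}{(k-j)!} \bigl(\frac{a_\ell}{2\bar b(x)}\bigr)^{j} \frac{|x|^{k-j}}{\bar b(x)}$ appearing on the right-hand side of \eqref{eq:m1}.

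The main obstacle I anticipate is the rigorous justification of the vanishing boundary term $\lim_{y \to -\infty} |y|^k e^{\phi(y)} = 0$. The plan is to use (\hyperref[eq:a1]{a1})--(\hyperref[eq:a2]{a2}) to get \emph{strict} positivity of $\bar b$ on $(-\infty, x_0)$: non-increasing with a unique zero at $x_0$ forces $\bar b(M) > 0$ for every $M < x_0$, since otherwise $\bar b \equiv 0$ on a subinterval and (\hyperref[eq:a2]{a2}) is violated. Picking any $M < \ell \le x_0$, the bound $\phi'(y) = 2\bar b(y)/a_\ell \ge 2\bar b(M)/a_\ell > 0$ for $y \le M$ gives linear decay $\phi(y) \le \phi(M) - \frac{2\bar b(M)}{a_\ell}(M - y)$, hence exponential decay of $e^{\phi(y)}$ as $y \to -\infty$, which dominates any polynomial factor. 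Once this is established, the rest of the argument is a clean induction, and the symmetric case \eqref{eq:m2} follows by reversing the roles of $-\infty$ and $+\infty$, using \eqref{eq:main2} as the base case and noting that $\bar b(y) \le \bar b(x) < 0$ for $y \ge x > r \ge x_0$.
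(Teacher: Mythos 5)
Your proof is correct and takes essentially the same approach as the paper: both exploit monotonicity of $\bar b$ on $(-\infty,x_0)$ to pull out a factor of $1/\bar b(x)$, use the constancy of $\bar a$ on the relevant half-line, and then iterate integration by parts. The only difference is organizational---you package the iterated integration by parts as a recursion $e^{-\phi(x)}I_k \le |x|^k/\bar b(x) + \bigl(k a_{\ell}/2\bar b(x)\bigr)\, e^{-\phi(x)}I_{k-1}$ with base case \eqref{eq:main1} and unroll it, whereas the paper applies the monotonicity bound once to replace the exponent by $-\bar b(x)\int_y^x \tfrac{2}{\bar a(u)}du$ and then integrates by parts repeatedly on the resulting explicit integrand.
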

\begin{proof}
Fix $x < \ell \leq x_0$, then
\begin{align*}
&\ e^{-\int_{0}^{x} \frac{2\bar b(u)}{\bar a(u)}du}\int_{-\infty}^{x} \frac{2\abs{y}^k}{\bar a(y)} e^{\int_{0}^{y} \frac{2\bar b(u)}{\bar a(u)}du} dy\\
 =&\ \int_{-\infty}^{x} \frac{2\abs{y}^k}{\bar a(y)} e^{-\int_{y}^{x} \frac{2\bar b(u)}{\bar a(u)}du} dy\\
\leq&\ \int_{-\infty}^{x} \frac{2\abs{y}^k}{\bar a(y)} e^{-\bar b(x) \int_{y}^{x} \frac{2}{\bar a(u)}du} dy\\
=&\ \frac{\abs{y}^k}{\bar b(x)} e^{-\bar b(x) \int_{y}^{x} \frac{2}{\bar a(u)}du}\Big{|}_{y=-\infty}^{x} + \frac{1}{\bar b(x)}\int_{-\infty}^{x}ky^{k-1} e^{-\bar b(x) \int_{y}^{x} \frac{2}{\bar a(u)}du}dy.
\end{align*}
In the first inequality we used the fact that $\bar b(y) \geq 0$ for $y \leq x_0$, and the last equality was obtained using integration by parts. At this point we invoke the assumption that $\bar a(x) = a_{\ell}$ for $x < \ell$ to see that 
\begin{align}
&\ \frac{\abs{y}^k}{\bar b(x)} e^{-\bar b(x) \int_{y}^{x} \frac{2}{\bar a(u)}du}\Big{|}_{y=-\infty}^{x} + \frac{1}{\bar b(x)}\int_{-\infty}^{x}ky^{k-1} e^{-\bar b(x) \int_{y}^{x} \frac{2}{\bar a(u)}du}dy \notag \\
=&\ \frac{\abs{y}^k}{\bar b(x)} e^{-\frac{2\bar b(x)}{a_{\ell}} (x-y)}\Big{|}_{y=-\infty}^{x} + \frac{1}{\bar b(x)}\int_{-\infty}^{x}ky^{k-1} e^{-\frac{2\bar b(x)}{a_{\ell}} (x-y)}dy \notag \\
=&\ \frac{\abs{x}^k}{\bar b(x)} + \frac{1}{\bar b(x)}\int_{-\infty}^{x}k\abs{y}^{k-1} e^{-\frac{2\bar b(x)}{a_{\ell}} (x-y)}dy. \label{eq:assumption}
\end{align}
Continuing to use integration by parts, we arrive at \eqref{eq:m1}. The case when $x > r$ is handled symmetrically.
\end{proof}
\begin{remark}
In practice, the assumption that $\bar a(x)$ is constant for $x < \ell$ may be relaxed if we can establish some control over $\int_{y}^{x} \frac{2}{\bar a(u)}du$ in order to bound the left hand side of \eqref{eq:assumption}. Same goes for the case when $x > r$.
\end{remark}

Assumption (\hyperref[eq:a2]{a2}) is made mostly for technical convenience. It is not hard to adapt the results above to the case when $\bar b(x)$ equals zero at more than one point. Assumption  (\hyperref[eq:a1]{a1}) is quite reasonable when $\bar b(x)$ is the drift of a positive recurrent diffusion process on the real line. For the diffusion process to be positive recurrent, we expect its drift to be negative when the process is far to the right of zero, and to be positive when the diffusion is far to the left of zero; cf. the requirement in \eqref{eq:densintegrable}. To further match this intuition, assumption  (\hyperref[eq:a1]{a1}) can actually be weakened to say that $\bar b(x)$ is a non-increasing function outside some compact interval around zero, and the lemmas above could be modified accordingly to deal with this. One may compare assumption  (\hyperref[eq:a1]{a1}), and its proposed relaxation, to the assumptions in \cite[Proposition 2]{KusuTudo2012}. We conclude this section with the proof of Lemma~\ref{lem:handle}.

\begin{proof}[Proof of Lemma~\ref{lem:handle}]
Differentiating both sides of the Poisson equation \eqref{eq:genericpoissonv2} yields
\begin{align*}
f_h'''(x) = -\Big(\frac{2\bar b(x)}{\bar a(x)}\Big)' f_h'(x) - \frac{2\bar b(x)}{\bar a(x)} f_h''(x) - \frac{2}{\bar a(x)}h'(x) - \frac{2\bar a'(x)}{a^2(x)}\big( \E h(V) - h(x)\big).
\end{align*}
The derivative above exists almost everywhere  because we assumed that $\bar a(x), \bar b(x)/\bar a(x)$, and $h(x)$ are all absolutely continuous. The latter assumption also implies that $f_h''(x)e^{\int_{0}^{x} \frac{2\bar b(u)}{\bar a(u)}du}$ is absolutely continuous. Hence, for any $x, \ell \in \R$,
\begin{align*}
& f_h''(x)e^{\int_{0}^{x} \frac{2\bar b(u)}{\bar a(u)}du} - f_h''(\ell)e^{\int_{0}^{\ell} \frac{2\bar b(u)}{\bar a(u)}du} \\
=&\  \int_{\ell}^{x} \frac{2\bar b(y)}{\bar a(y)} f_h''(y)e^{\int_{0}^{y} \frac{2\bar b(u)}{\bar a(u)}du} + f_h'''(y)e^{\int_{0}^{y} \frac{2\bar b(u)}{\bar a(u)}du} dy \\
=&\ \int_{\ell}^{x} \Big(-\frac{2}{\bar a(y)} h'(y) - \frac{2\bar a'(y)}{a^2(y)}\big[\E h(V) - h(y) \big] - \Big(\frac{2\bar b(y)}{\bar a(y)}\Big)' f_h'(y)\Big)e^{\int_{0}^{y} \frac{2\bar b(u)}{\bar a(u)}du} dy.
\end{align*}
To conclude \eqref{eq:hf21},  we wish to take $\ell \to -\infty$ and show that 
\begin{align*}
\lim_{\ell \to -\infty} f_h''(\ell)e^{\int_{0}^{\ell} \frac{2\bar b(u)}{\bar a(u)}du} = 0.
\end{align*}
Observe that
\begin{align*}
f_h''(\ell)e^{\int_{0}^{\ell} \frac{2\bar b(u)}{\bar a(u)}du} = \frac{2\bar b(\ell)}{\bar a(\ell)}f_h'(\ell)e^{\int_{0}^{\ell} \frac{2\bar b(u)}{\bar a(u)}du} + (\E h(V) - h(\ell))\frac{2}{\bar a(\ell)}e^{\int_{0}^{\ell} \frac{2\bar b(u)}{\bar a(u)}du}.
\end{align*}
By assumption, $\lim_{\ell\to -\infty} \frac{2\bar b(\ell)}{\bar a(\ell)}f_h'(\ell)e^{\int_{0}^{\ell} \frac{2\bar b(u)}{\bar a(u)}du}= 0$. Furthermore, \eqref{eq:densintegrable} implies that $\lim_{\ell\to -\infty}\frac{2}{\bar a(\ell)}e^{\int_{0}^{\ell} \frac{2\bar b(u)}{\bar a(u)}du} = 0$. Lastly, our assumption that $\E |h(Y)| < \infty$ means that
\begin{align*}
\int_{-\infty}^{\infty} |h(y)| \frac{2}{\bar a(y)}e^{\int_{0}^{y} \frac{2\bar b(u)}{\bar a(u)}du},
\end{align*}
which implies that $\lim_{\ell\to -\infty}h(\ell)\frac{2}{\bar a(\ell)}e^{\int_{0}^{\ell} \frac{2\bar b(u)}{\bar a(u)}du} = 0$. This proves \eqref{eq:hf21}. To prove \eqref{eq:hf22}, we take $\ell \to \infty$ and repeat the above arguments. 
\end{proof}

\section{Gradient Bounds for Chapter~\ref{chap:erlangAC}}
\label{app:gradbounds}
In Section~\ref{app:wgradient}, we first prove Lemma~\ref{lem:gradboundsCW}, establishing the Wasserstein gradient bounds for Erlang-C model.
In Section~\ref{app:grad_A}, we state and prove Lemma~\ref{lem:gradboundsAWunder},  establishing the Wasserstein gradient bounds for Erlang-A model. In Section~\ref{app:kgradient} we prove Lemmas~\ref{lem:gradboundsCK} and \ref{lem:gradboundsAK}, establishing the Kolmogorov gradient bounds for both Erlang-C and Erlang-A models. 


\subsection{Erlang-C Wasserstein Gradient Bounds} \label{app:wgradient}
Recall $b(x)$ defined in \eqref{eq:b-erlang-A}. For the remainder of Section~\ref{app:wgradient}, we set
\begin{align}
\bar a(x) = 2\mu, \quad \text{ and } \quad
\bar b(x)  = b(x) = 
\begin{cases}
-\mu x, \quad x \leq -\zeta,\\
\mu \zeta, \quad x \geq -\zeta,
\end{cases} \label{CW:ab}
\end{align} 
where $\zeta = \delta(R - n) < 0$. Observe that this $\bar b(x)$ satisfies both  (\hyperref[eq:a1]{a1}) and (\hyperref[eq:a2]{a2}), and that $x_0$ from \eqref{eq:xnot} equals zero. Furthermore,
\begin{align}\label{eq:pic}
\exp \Big({\int_{0}^{x} \frac{2 \bar b(u)}{\bar a(u)} du} \Big) = 
\begin{cases}
 e^{-\frac{1}{2}x^2}, \quad x \leq - \zeta,\\
e^{-\abs{\zeta} (x+\zeta)}e^{-\frac{1}{2}\zeta^2}, \quad x \geq -\zeta.
\end{cases} 
\end{align}
Fix $h(x) \in \lipone$; without loss of generality we assume that $h(0) = 0$. 

The following lemma presents several bounds that will be used to prove Lemma~\ref{lem:gradboundsCW}.

\begin{lemma}
\label{lem:lowlevelCW}
Let $\bar a(x)$ and $\bar b(x)$ be as in \eqref{CW:ab}. Then
\allowdisplaybreaks
\begin{align}
&e^{-\int_{0}^{x} \frac{2\bar b(u)}{\bar a(u)}du}\int_{-\infty}^{x} \frac{2}{\bar a(y)} e^{\int_{0}^{y} \frac{2\bar b(u)}{\bar a(u)}du} dy\leq 
\begin{cases}
\frac{2}{\mu }, \quad x \leq 0, \label{CW:fbound1} \\
\frac{1}{\mu} e^{\zeta^2/2}(2 + \abs{\zeta}), \quad x \in [0,-\zeta],
\end{cases}\\
&e^{-\int_{0}^{x} \frac{2\bar b(u)}{\bar a(u)}du}\int_{x}^{\infty} \frac{2}{\bar a(y)} e^{\int_{0}^{y} \frac{2\bar b(u)}{\bar a(u)}du} dy \leq 
\begin{cases}
\frac{2}{\mu} + \frac{1}{\mu \abs{\zeta}}, \quad x \in [0,-\zeta], \\
\frac{1}{\mu \abs{\zeta}}, \quad x \geq -\zeta,
\end{cases} \label{CW:fbound2} \\
&e^{-\int_{0}^{x} \frac{2\bar b(u)}{\bar a(u)}du}\int_{-\infty}^{x} \frac{2\abs{y}}{\bar a(y)} e^{\int_{0}^{y} \frac{2\bar b(u)}{\bar a(u)}du} dy\leq 
\begin{cases}
\frac{1}{\mu }, \quad x \leq 0,\\
\frac{1}{\mu }( 2e^{\frac{1}{2}\zeta^2} -1), \quad x \in [0,-\zeta],
\end{cases} \label{CW:fbound3} \\
&e^{-\int_{0}^{x} \frac{2\bar b(u)}{\bar a(u)}du}\int_{x}^{\infty} \frac{2\abs{y}}{\bar a(y)} e^{\int_{0}^{y} \frac{2\bar b(u)}{\bar a(u)}du}dy \leq 
\begin{cases}
\frac{2}{\mu} + \frac{1}{\mu \zeta^2}, \quad x \in [0,-\zeta],\\
\frac{x}{\mu \abs{\zeta}} + \frac{1}{\mu \zeta^2}, \quad x \geq -\zeta \label{CW:fbound4},
\end{cases}\\
&\E \abs{Y(\infty)} \leq \frac{1}{\abs{\zeta}} + 1. \label{CW:fbound7}
\end{align}
\end{lemma}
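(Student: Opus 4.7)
\textbf{Proof proposal for Lemma~\ref{lem:lowlevelCW}.} The strategy is to exploit the explicit form of $\exp\big(\int_0^x \tfrac{2\bar b(u)}{\bar a(u)}du\big)$ given in \eqref{eq:pic}, which is Gaussian on $(-\infty,-\zeta]$ and exponential on $[-\zeta,\infty)$. Note that $\bar b(x)$ satisfies (\hyperref[eq:a1]{a1})–(\hyperref[eq:a2]{a2}) with $x_0=0$, $\bar a$ is the constant $2\mu$, and $\bar b$ equals $-\mu x$ on $(-\infty,-\zeta]$ and the constant $\mu\zeta$ on $[-\zeta,\infty)$. Therefore Lemmas~\ref{lem:main} and \ref{lem:poly} will apply directly in the regions where $\bar b(x)$ is bounded away from $0$; the difficulty is confined to neighborhoods of $x=0$ and of the kink $x=-\zeta$, where the generic bounds blow up.

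For \eqref{CW:fbound1}–\eqref{CW:fbound2}, when $x\le 0$ or $x\ge -\zeta$ the bound $1/|\bar b(x)|$ from Lemma~\ref{lem:main} degenerates near $0$ or is exactly what we want at the far end, so the key input will be the classical Mill's-ratio inequality $e^{x^2/2}\int_x^\infty e^{-y^2/2}dy\le \sqrt{\pi/2}$ for $x\ge 0$ (and its mirror). First I would compute $e^{-\int_0^x \tfrac{2\bar b(u)}{\bar a(u)}du}\int_{-\infty}^x \tfrac{2}{\bar a(y)} e^{\int_0^y \tfrac{2\bar b(u)}{\bar a(u)}du}dy$ explicitly as $\tfrac{1}{\mu}e^{x^2/2}\int_{-\infty}^x e^{-y^2/2}dy$, bound this by $\tfrac{1}{\mu}\sqrt{\pi/2}\le 2/\mu$ for $x\le 0$ via Mill's ratio, and for $x\in[0,-\zeta]$ split the inner integral at $0$, handling the left piece by monotonicity of $e^{x^2/2}\int_{-\infty}^0 e^{-y^2/2}dy\le e^{\zeta^2/2}\sqrt{\pi/2}$ and the right piece by the crude bound $\int_0^x e^{-y^2/2}dy\le x\le |\zeta|$, combined with $e^{x^2/2}\le e^{\zeta^2/2}$. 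A symmetric analysis, splitting the outer integral at $-\zeta$ and using Mill's ratio $e^{x^2/2}\int_x^{-\zeta}e^{-y^2/2}dy\le e^{x^2/2}\int_x^\infty e^{-y^2/2}dy\le \sqrt{\pi/2}\le 2$, together with the elementary computation $\int_{-\zeta}^\infty e^{-|\zeta|(y+\zeta)}dy=1/|\zeta|$, delivers \eqref{CW:fbound2}.

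For the weighted integrals \eqref{CW:fbound3}–\eqref{CW:fbound4}, the extra factor $|y|$ invites the identity $|y|e^{-y^2/2}=-\tfrac{d}{dy}e^{-y^2/2}$ on $y\le 0$ (and its analogue on $[0,-\zeta]$), giving closed forms for the Gaussian parts after integration by parts. On the exponential region $y\ge-\zeta$, Lemma~\ref{lem:poly} with $k=1$ (or an equivalent direct computation) yields the factor $x/(\mu|\zeta|)+1/(\mu\zeta^2)$ appearing in \eqref{CW:fbound4}. The region $x\in[0,-\zeta]$ is handled by splitting at $0$ and at $-\zeta$ respectively, bounding each piece by a Mill's-ratio argument and matching continuity at $-\zeta$ with the factor $e^{-\zeta^2/2}$ that appears in \eqref{eq:pic}.

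Finally, \eqref{CW:fbound7} is a direct computation against the explicit density \eqref{CA:stdden}: write $\E|Y(\infty)|=\kappa\int_{-\infty}^{-\zeta}|x|e^{-x^2/2}dx+\kappa e^{\zeta^2/2}\int_{-\zeta}^\infty |x|e^{-|\zeta|(x+\zeta)}dx$, evaluate the Gaussian piece as $\kappa\big(e^{-\zeta^2/2}+\sqrt{2\pi}\Phi(\zeta)\big)$ and the exponential piece exactly, then divide by $\kappa^{-1}=\int e^{\int_0^x b/\mu}dx$. Bounding the resulting ratio, and using that $\kappa^{-1}\ge \tfrac{1}{|\zeta|}e^{-\zeta^2/2}$ (the exponential-tail contribution alone) to absorb the exponential terms, produces the clean estimate $1/|\zeta|+1$. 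I expect the Mill's-ratio step in bounding $\tfrac{1}{\mu}e^{x^2/2}\int_{-\infty}^x e^{-y^2/2}dy$ uniformly on $[0,-\zeta]$ by $\tfrac{1}{\mu}(2+|\zeta|)e^{\zeta^2/2}$ to be the most delicate part, since naive bounds yield an extra multiplicative $\sqrt{2\pi}$ that must be traded against the linear $|\zeta|$ factor by a careful split at $x=0$.
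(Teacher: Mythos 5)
Your treatment of \eqref{CW:fbound1}--\eqref{CW:fbound4} is essentially the paper's: you split the integrals at $0$ and $-\zeta$, invoke Lemma~\ref{lem:poly} (equivalently, an integration by parts) for the exponential tail, and apply a pointwise bound in the problematic interval $[0,-\zeta]$. Substituting the Mill's-ratio inequality $e^{t^2/2}\int_t^\infty e^{-y^2/2}\,dy \le \sqrt{\pi/2}$ where the paper invokes \eqref{eq:near_orig_left}/\eqref{eq:near_orig_right} of Lemma~\ref{lem:main} is a cosmetic, not structural, change; both produce the stated constants.

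Where you genuinely depart from the paper is \eqref{CW:fbound7}, and that is also where there is a real gap. The paper does not compute $\E\abs{Y(\infty)}$ against the density at all: it applies $G_Y$ to $V(x)=x^2$, obtains a Foster--Lyapunov inequality of the form $G_YV(x)\le -2\mu x^2\,1(x<-\zeta)-2\mu\abs{\zeta}x\,1(x\ge-\zeta)+2\mu$, and reads off the two tail moments from $\E G_YV(Y(\infty))=0$ plus Jensen. This is shorter, does not require the explicit form of $\nu$, and transfers verbatim to Erlang-A and to state-dependent diffusion coefficients, which is why the same device reappears in Lemmas~\ref{lem:lowlevWunder}, \ref{lem:lowlevWover}, and \ref{lem:lowlevelbounds}.

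Your direct computation can in principle give the same bound, but as written it does not close. Two concrete problems. First, the evaluation of the Gaussian piece is wrong: $\int_{-\infty}^{-\zeta}\abs{x}e^{-x^2/2}\,dx = 2-e^{-\zeta^2/2}$ (split at $0$ and integrate $\mp x\,e^{-x^2/2}$ exactly), not $e^{-\zeta^2/2}+\sqrt{2\pi}\Phi(\zeta)$. Second, and more fundamentally, you propose to ``absorb the exponential terms'' with the lower bound $\kappa^{-1}\ge \tfrac{1}{\abs{\zeta}}e^{-\zeta^2/2}$ and then handle the remaining terms separately. This strategy cannot yield $1/\abs{\zeta}+1$: the Gaussian contribution to $\E\abs{Y(\infty)}$ equals $2\kappa$, and the best uniform lower bound on $\kappa^{-1}$ coming from the Gaussian part alone is $\kappa^{-1}\ge\int_{-\infty}^0e^{-x^2/2}\,dx=\sqrt{\pi/2}$, which gives $2\kappa\le 2\sqrt{2/\pi}\approx 1.6$, not $\le 1$. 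To actually reach $1/\abs{\zeta}+1$ you must keep both summands of $\kappa^{-1}$ together and verify the scalar inequality
\[
2s \;\le\; (1+s)\int_{-\infty}^{s}e^{-x^2/2}\,dx + e^{-s^2/2}, \qquad s=\abs{\zeta}>0,
\]
which is true (the right-hand side has derivative $\int_{-\infty}^{s}e^{-x^2/2}\,dx+e^{-s^2/2}\ge \sqrt{\pi/2}+\,$something $>2$, and equality fails strictly at $s=0$) but is an extra argument that your sketch omits. Either supply that inequality, or switch to the paper's Lyapunov argument, which avoids the issue entirely.
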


\begin{proof}[Proof of Lemma~\ref{lem:lowlevelCW} ]
We first prove \eqref{CW:fbound1}. When $x \leq 0$, we can choose $c_1 = -1$ in  \eqref{eq:near_orig_left} to see that
\begin{align*}
e^{-\int_{0}^{x} \frac{2\bar b(u)}{\bar a(u)}du}\int_{-\infty}^{x} \frac{2}{\bar a(y)} e^{\int_{0}^{y} \frac{2\bar b(u)}{\bar a(u)}du} dy \leq \frac{1}{\bar b(-1)} +  \sup_{y \in [-1,0]} \frac{2}{\bar a(y)} = \frac{2}{\mu}.
\end{align*}
For $x \in [0,-\zeta]$, 
\begin{align*}
&e^{-\int_{0}^{x} \frac{2\bar b(u)}{\bar a(u)}du}\int_{-\infty}^{x} \frac{2}{\bar a(y)} e^{\int_{0}^{y} \frac{2\bar b(u)}{\bar a(u)}du} dy \\
=&\ e^{-\int_{0}^{x} \frac{2\bar b(u)}{\bar a(u)}du}\int_{-\infty}^{0} \frac{2}{\bar a(y)} e^{\int_{0}^{y} \frac{2\bar b(u)}{\bar a(u)}du} dy + e^{-\int_{0}^{x} \frac{2\bar b(u)}{\bar a(u)}du}\int_{0}^{x} \frac{2}{\bar a(y)} e^{\int_{0}^{y} \frac{2\bar b(u)}{\bar a(u)}du} dy \\
=&\ \frac{e^{x^2/2}}{e^{-\int_{0}^{0} \frac{2\bar b(u)}{\bar a(u)}du}} e^{-\int_{0}^{0} \frac{2\bar b(u)}{\bar a(u)}du} \int_{-\infty}^{0} \frac{2}{\bar a(y)} e^{\int_{0}^{y} \frac{2\bar b(u)}{\bar a(u)}du} dy + e^{x^2/2}\int_{0}^{x} \frac{1}{\mu} e^{-y^2/2} dy \\
\leq&\ e^{\zeta^2/2} \frac{2}{\mu } + \frac{1}{\mu} e^{\zeta^2/2}\abs{\zeta}\\
=&\ \frac{1}{\mu} e^{\zeta^2/2}(2 + \abs{\zeta}).
\end{align*}
We now prove \eqref{CW:fbound2}. When $x \geq -\zeta$, we use \eqref{eq:main2} to see that
\begin{align*}
e^{-\int_{0}^{x} \frac{2\bar b(u)}{\bar a(u)}du}\int_{x}^{\infty} \frac{2}{\bar a(y)} e^{\int_{0}^{y} \frac{2\bar b(u)}{\bar a(u)}du} dy \leq  \frac{1}{\bar b(-\zeta)} =  \frac{1}{\mu \abs{\zeta}}.
\end{align*}
When $x \in [0,-\zeta]$, 
\begin{align}
&e^{-\int_{0}^{x} \frac{2\bar b(u)}{\bar a(u)}du}\int_{x}^{\infty} \frac{2}{\bar a(y)} e^{\int_{0}^{y} \frac{2\bar b(u)}{\bar a(u)}du} dy \notag \\
=&\ e^{-\int_{0}^{x} \frac{2\bar b(u)}{\bar a(u)}du}\int_{x}^{-\zeta} \frac{2}{\bar a(y)} e^{\int_{0}^{y} \frac{2\bar b(u)}{\bar a(u)}du} dy + \frac{e^{-\int_{0}^{x} \frac{2\bar b(u)}{\bar a(u)}du}}{e^{-\int_{0}^{-\zeta} \frac{2\bar b(u)}{\bar a(u)}du}} e^{-\int_{0}^{-\zeta} \frac{2\bar b(u)}{\bar a(u)}du} \int_{-\zeta}^{\infty} \frac{2}{\bar a(y)} e^{\int_{0}^{y} \frac{2\bar b(u)}{\bar a(u)}du} dy \notag \\
\leq&\ e^{-\int_{0}^{x} \frac{2\bar b(u)}{\bar a(u)}du}\int_{x}^{-\zeta} \frac{2}{\bar a(y)} e^{\int_{0}^{y} \frac{2\bar b(u)}{\bar a(u)}du} dy + e^{-\int_{0}^{-\zeta} \frac{2\bar b(u)}{\bar a(u)}du} \int_{-\zeta}^{\infty} \frac{2}{\bar a(y)} e^{\int_{0}^{y} \frac{2\bar b(u)}{\bar a(u)}du} dy \notag \\
\leq&\ e^{-\int_{0}^{x} \frac{2\bar b(u)}{\bar a(u)}du}\int_{x}^{-\zeta} \frac{2}{\bar a(y)} e^{\int_{0}^{y} \frac{2\bar b(u)}{\bar a(u)}du} dy + \frac{1}{\mu \abs{\zeta}}. \label{CW:same}
\end{align}
We now bound the first term on the right hand side above. When $\abs{\zeta} \geq 1$, we use \eqref{eq:near_orig_right} with $c_2 = 1$ to see that 
\begin{align*}
e^{-\int_{0}^{x} \frac{2\bar b(u)}{\bar a(u)}du}\int_{x}^{-\zeta} \frac{2}{\bar a(y)} e^{\int_{0}^{y} \frac{2\bar b(u)}{\bar a(u)}du} dy \leq&\ e^{-\int_{0}^{x} \frac{2\bar b(u)}{\bar a(u)}du}\int_{x}^{\infty} \frac{2}{\bar a(y)} e^{\int_{0}^{y} \frac{2\bar b(u)}{\bar a(u)}du} dy \\
\leq&\ \frac{1}{\abs{\bar b(1)}} + \frac{1}{\mu } = \frac{2}{\mu }.
\end{align*}
When $\abs{\zeta} \leq 1$, 
\begin{align*}
e^{-\int_{0}^{x} \frac{2\bar b(u)}{\bar a(u)}du}\int_{x}^{-\zeta} \frac{2}{\bar a(y)} e^{\int_{0}^{y} \frac{2\bar b(u)}{\bar a(u)}du} dy \leq&\ e^{-\int_{0}^{1} \frac{2\bar b(u)}{\bar a(u)}du}\int_{0}^{1} \frac{2}{\bar a(y)}dy \leq  e^{1/2} \frac{1}{\mu } \leq \frac{2}{\mu }.
\end{align*}
Therefore, for $x \in [0,-\zeta]$,
\begin{align*}
e^{-\int_{0}^{x} \frac{2\bar b(u)}{\bar a(u)}du}\int_{x}^{\infty} \frac{2}{\bar a(y)} e^{\int_{0}^{y} \frac{2\bar b(u)}{\bar a(u)}du} dy \leq \frac{2}{\mu} + \frac{1}{\mu \abs{\zeta}}.
\end{align*}
To prove \eqref{CW:fbound3}, observe that when $x \leq 0$,
\begin{align*}
e^{-\int_{0}^{x} \frac{2\bar b(u)}{\bar a(u)}du}\int_{-\infty}^{x} \frac{2\abs{y}}{\bar a(y)} e^{\int_{0}^{y} \frac{2\bar b(u)}{\bar a(u)}du} dy =&\ e^{\frac{1}{2}x^2} \int_{-\infty}^{x}\frac{-y}{\mu} e^{-\frac{1}{2}y^2} dy = \frac{1}{\mu },
\end{align*}
and when $x \in [0,-\zeta]$,
\begin{align*}
e^{-\int_{0}^{x} \frac{2\bar b(u)}{\bar a(u)}du}\int_{-\infty}^{x} \frac{2\abs{y}}{\bar a(y)} e^{\int_{0}^{y} \frac{2\bar b(u)}{\bar a(u)}du} dy =&\ e^{\frac{1}{2}x^2} \int_{-\infty}^{0}\frac{-y}{\mu } e^{-\frac{1}{2}y^2} dy + e^{\frac{1}{2}x^2} \int_{0}^{x}\frac{y}{\mu} e^{-\frac{1}{2}y^2} dy\\
=&\ \frac{1}{\mu } e^{\frac{1}{2}x^2} + \frac{1}{\mu } e^{\frac{1}{2}x^2}(1 - e^{-\frac{1}{2}x^2}).
\end{align*}
We now prove \eqref{CW:fbound4}. Since $\bar a(x) \equiv 2\mu$, we can use \eqref{eq:m2} to see that for $x \geq -\zeta$,
\begin{align*}
e^{-\int_{0}^{x} \frac{2\bar b(u)}{\bar a(u)}du}\int_{x}^{\infty} \frac{2\abs{y}}{\bar a(y)} e^{\int_{0}^{y} \frac{2\bar b(u)}{\bar a(u)}du}dy \leq \frac{x}{\abs{\bar b(x)}} + \frac{2\mu}{2\abs{\bar b(x)}} \frac{1}{\abs{\bar b(x)}} =  \frac{x}{\mu \abs{\zeta}} + \frac{1}{\mu \zeta^2}.
\end{align*}
Furthermore, for $x \in [0,-\zeta]$,
\begin{align*}
& e^{-\int_{0}^{x} \frac{2\bar b(u)}{\bar a(u)}du}\int_{x}^{\infty} \frac{2\abs{y}}{\bar a(y)} e^{\int_{0}^{y} \frac{2\bar b(u)}{\bar a(u)}du}dy \\
=&\ e^{-\int_{0}^{x} \frac{2\bar b(u)}{\bar a(u)}du}\int_{x}^{-\zeta} \frac{2\abs{y}}{\bar a(y)} e^{\int_{0}^{y} \frac{2\bar b(u)}{\bar a(u)}du}dy + \frac{e^{-\int_{0}^{x} \frac{2\bar b(u)}{\bar a(u)}du}}{e^{-\int_{0}^{-\zeta} \frac{2\bar b(u)}{\bar a(u)}du}} e^{-\int_{0}^{-\zeta} \frac{2\bar b(u)}{\bar a(u)}du}\int_{-\zeta}^{\infty} \frac{2\abs{y}}{\bar a(y)} e^{\int_{0}^{y} \frac{2\bar b(u)}{\bar a(u)}du}dy \\
=&\  e^{\frac{1}{2}x^2} \int_{x}^{-\zeta} \frac{y}{\mu} e^{-\frac{1}{2}y^2} dy + \frac{e^{-\int_{0}^{x} \frac{2\bar b(u)}{\bar a(u)}du}}{e^{-\int_{0}^{-\zeta} \frac{2\bar b(u)}{\bar a(u)}du}} e^{-\int_{0}^{-\zeta} \frac{2\bar b(u)}{\bar a(u)}du}\int_{-\zeta}^{\infty} \frac{2\abs{y}}{\bar a(y)} e^{\int_{0}^{y} \frac{2\bar b(u)}{\bar a(u)}du}dy\\
=&\ \frac{1}{\mu } e^{\frac{1}{2}x^2}(e^{-\frac{1}{2}x^2} - e^{-\frac{1}{2}\zeta^2}) + \frac{e^{-\int_{0}^{x} \frac{2\bar b(u)}{\bar a(u)}du}}{e^{-\int_{0}^{-\zeta} \frac{2\bar b(u)}{\bar a(u)}du}} e^{-\int_{0}^{-\zeta} \frac{2\bar b(u)}{\bar a(u)}du}\int_{-\zeta}^{\infty} \frac{2\abs{y}}{\bar a(y)} e^{\int_{0}^{y} \frac{2\bar b(u)}{\bar a(u)}du}dy\\
\leq&\ \frac{1}{\mu } + \frac{e^{-\int_{0}^{x} \frac{2\bar b(u)}{\bar a(u)}du}}{e^{-\int_{0}^{-\zeta} \frac{2\bar b(u)}{\bar a(u)}du}}\Big( \frac{\abs{\zeta}}{\mu \abs{\zeta}} + \frac{1}{\mu \zeta^2}\Big) \\
\leq&\ \frac{1}{\mu }+\Big( \frac{1}{\mu } + \frac{1}{\mu \zeta^2}\Big),
\end{align*}
where in the last inequality, we used the fact that $ e^{-\int_{0}^{x} \frac{2\bar b(u)}{\bar a(u)}du} \leq e^{-\int_{0}^{-\zeta} \frac{2\bar b(u)}{\bar a(u)}du}$.  This proves \eqref{CW:fbound4}, and we move on to prove \eqref{CW:fbound7}. Letting $V(x) = x^2$, and recalling the form of $G_Y$ from \eqref{CA:GY}, we consider 
\begin{align}
G_Y V(x) =&\ 2x\mu (\zeta + (x + \zeta)^-) + 2\mu \notag \\
=&\ -2\mu x^2 1(x < -\zeta) - 2x\mu \abs{\zeta} 1(x \geq -\zeta) + 2\mu. \label{eq:cgradlyap}
\end{align}
By the standard Foster-Lyapunov condition (see  \cite[Theorem 4.3]{MeynTwee1993b} for example), this implies that
\begin{align*}
2\E \big[ (Y(\infty))^2 1(Y(\infty) < -\zeta)\big] + 2\abs{\zeta} \E \big[Y(\infty)1(Y(\infty) \geq -\zeta)\big] \leq 2,
\end{align*}
and in particular, 
\begin{align*}
&\E \big[Y(\infty)1(Y(\infty) \geq -\zeta)\big] \leq \frac{1}{\abs{\zeta}}, \\
& \E \Big[\big|Y(\infty)1(Y(\infty) < -\zeta)\big|\Big] \leq  \sqrt{\E \big[ (Y(\infty))^2 1(Y(\infty) < -\zeta)\big]} \leq 1,
\end{align*}
where we applied Jensen's inequality in the second set of inequalities. This concludes the proof of Lemma~\ref{lem:lowlevelCW}. 
\end{proof}

\begin{proof}[Proof of Lemma~\ref{lem:gradboundsCW}]
Let $\bar b(x)$ and $\bar a(x)$ be as in \eqref{CW:ab}. We begin by bounding $f_h'(x)$. Observe that since $h(x) \in \lipone$ and $h(0) = 0$, then \eqref{eq:fprimeneg} and \eqref{eq:fprimepos} imply that 
\begin{align*}
f_h'(x) \leq&\ e^{-\int_{0}^{x} \frac{2\bar b(u)}{\bar a(u)}du}\int_{-\infty}^{x} \frac{2}{\bar a(y)} ( |y| + \E \abs{Y(\infty)}) e^{\int_{0}^{y} \frac{2\bar b(u)}{\bar a(u)}du} dy, \\
f_h'(x) \leq &\ e^{-\int_{0}^{x} \frac{2\bar b(u)}{\bar a(u)}du} \int_{x}^{\infty} \frac{2}{\bar a(y)} ( |y| + \E \abs{Y(\infty)}) e^{\int_{0}^{y} \frac{2\bar b(u)}{\bar a(u)}du} dy.
\end{align*}
For $x \leq -\zeta$, we apply \eqref{CW:fbound1}, \eqref{CW:fbound3}, and \eqref{CW:fbound7} to the first inequality above, and for $x \geq 0$, we apply \eqref{CW:fbound2}, \eqref{CW:fbound4}, and \eqref{CW:fbound7} to the second inequality above to see that
\begin{align}
\mu \abs{f_h'(x)} \leq&\ 1 + 2 \Big( 1 + \frac{1}{\abs{\zeta}} \Big) \leq 3 + 2/\abs{\zeta}, \quad x \leq 0, \notag \\
\mu \abs{f_h'(x)} \leq&\ \min\bigg\{ 2e^{\frac{1}{2}\zeta^2} -1 + e^{\frac{1}{2}\zeta^2}(2+\abs{\zeta})\Big( 1 + \frac{1}{\abs{\zeta}}\Big), \notag \\
&\ \hspace{1.5cm} 2 + \frac{1}{\zeta^2} + \Big(2 + \frac{1}{\abs{\zeta}} \Big)\Big( 1 + \frac{1}{\abs{\zeta}} \Big) \bigg\}, \quad x \in [0,-\zeta], \notag \\
\mu \abs{f_h'(x)} \leq&\ \frac{x}{\abs{\zeta}} + \frac{1}{\zeta^2} + \frac{1}{\abs{\zeta}} \Big( 1 + \frac{1}{\abs{\zeta}} \Big) \leq \frac{1}{\abs{\zeta}}(x + 1 + 2/\abs{\zeta}), \quad x \geq -\zeta. \label{eq:cgrad1}
\end{align}
For $x \in [0, -\zeta]$, observe that when $\abs{\zeta} \leq 1$, then 
\begin{align*}
2e^{\frac{1}{2}\zeta^2} -1 + (2+\abs{\zeta})e^{\frac{1}{2}\zeta^2}\Big( 1 + \frac{1}{\abs{\zeta}}\Big) \leq 3.3 - 1 + 5\Big( 1 + \frac{1}{\abs{\zeta}}\Big) = 7.5 + 5/\abs{\zeta},
\end{align*}
and when $\abs{\zeta} \geq 1$, then 
\begin{align*}
2 + \frac{1}{\zeta^2} + \Big(2 + \frac{1}{\abs{\zeta}} \Big)\Big( 1 + \frac{1}{\abs{\zeta}} \Big) \leq 3 + 3\Big( 1 + \frac{1}{\abs{\zeta}} \Big) = 6 + 3/\abs{\zeta}.
\end{align*}
Therefore, 
\begin{align} \label{eq:dsimple}
\abs{f_h'(x)} \leq  
\begin{cases}
\frac{1}{\mu }(7.5 + 5/\abs{\zeta}), \quad x \leq -\zeta,\\
\frac{1}{\mu }\frac{1}{\abs{\zeta}}(x + 1 + 2/\abs{\zeta}), \quad x \geq -\zeta.
\end{cases}
\end{align}
Before proceeding to bound $\abs{f_h''(x)}$ and $\abs{f_h'''(x)}$, we first note that both \eqref{eq:mlimits} and \eqref{eq:plimits} are satisfied. This is because $\bar a(x)$ is constant, $\bar b(x)$ is piecewise linear,  $f_h'(x)$ is bounded as in \eqref{eq:dsimple}, but $e^{\int_{0}^{x} \frac{2\bar b(u)}{\bar a(u)}du}$ decays exponentially fast as $x \to \infty$, and decays even faster as $x \to -\infty$. To bound $\abs{f_h''(x)}$, we use \eqref{eq:hf21} and \eqref{eq:hf22}, together with the facts that $\bar a(x)$ is constant, $h(x) \in \lipone$,  and
\begin{align*}
\bar b'(x) = -\mu 1(x < -\zeta), \quad x \in \R,
\end{align*}
to see that 
\begin{align}
\abs{f_h''(x)} \leq&\ e^{-\int_{0}^{x} \frac{2\bar b(u)}{\bar a(u)}du}\int_{-\infty}^{x} \frac{2}{\bar a(y)}\big(1 + \mu \abs{f_h'(y)} 1(y < -\zeta)\big) e^{\int_{0}^{y} \frac{2\bar b(u)}{\bar a(u)}du} dy \label{eq:d21bound}\\
\abs{f_h''(x)} \leq&\ e^{-\int_{0}^{x} \frac{2\bar b(u)}{\bar a(u)}du} \int_{x}^{\infty} \frac{2}{\bar a(y)}\big(1 + \mu \abs{f_h'(y)} 1(y < -\zeta)\big) e^{\int_{0}^{y} \frac{2\bar b(u)}{\bar a(u)}du} dy \label{eq:d22bound}.
\end{align}
We know $\abs{f_h'(x)}$ is bounded as in \eqref{eq:dsimple}. For $x \leq -\zeta$, we apply \eqref{CW:fbound1} to \eqref{eq:d21bound} and for $x \geq 0$ we apply \eqref{CW:fbound2} to \eqref{eq:d22bound} to conclude that
\begin{align} \label{eq:der2_final}
\mu \abs{f_h''(x)} \leq 
\begin{cases}
2 \big( 1 + 7.5 + 5/\abs{\zeta}\big), \quad x \leq 0,\\
\min \big\{(2+\abs{\zeta})e^{\frac{1}{2}\zeta^2}, 2 + \frac{1}{\abs{\zeta}}\big\} \big( 1 + 7.5 + 5/\abs{\zeta}\big), \quad x \in [0,-\zeta],\\
\frac{1}{\abs{\zeta}}, \quad x \geq -\zeta.
\end{cases}
\end{align}
By considering separately the cases when $\abs{\zeta} \leq 1/2$ and $\abs{\zeta} \geq 1/2$, we see that
\begin{align}
\min \Big\{(2+\abs{\zeta})e^{\frac{1}{2}\zeta^2},2 + \frac{1}{\abs{\zeta}}\Big\} \leq 4, \label{eq:arithmetic}
\end{align}
and therefore,
\begin{align} \label{eq:der2_simple_bound}
\abs{f_h''(x)} \leq 
\begin{cases}
\frac{34}{\mu }( 1 + 1/\abs{\zeta}), \quad x \leq -\zeta,\\
\frac{1}{\mu \abs{\zeta}}, \quad x \geq -\zeta.
\end{cases}
\end{align}
Lastly, we bound $\abs{f_h'''(x)}$, which exists for all $x \in \R$ where $h'(x)$ and $\bar b'(x)$ exist. Since $\bar a(x)$ is a constant and $h(x) \in \lipone$, we know from \eqref{eq:fppp}  that 
\begin{align*}
\abs{f_h'''(x)} \leq&\ \frac{1}{\mu } \big(1 + \abs{f_h''(x)\bar b(x)} + \abs{f_h'(x) \bar b'(x)}\big).
\end{align*} 
For $x \geq -\zeta$, we use the forms of $\bar b(x)$ and $\bar b'(x)$ together with the bounds on $\abs{f_h'(x)}$ and $\abs{f_h''(x)}$ in \eqref{eq:dsimple} and \eqref{eq:der2_simple_bound} to see that 
\begin{align*}
\abs{f_h'''(x)} \leq&\ \frac{1}{\mu } \Big( 1 + \mu \abs{\zeta} \frac{1}{\mu \abs{\zeta}}\Big).
\end{align*}
Although tempting, it is not sufficient to use the bound on $\abs{f_h''(x)}$ in \eqref{eq:der2_final} and the form of $\bar b(x)$  to bound $\abs{f_h''(x)\bar b(x)}$ for all $x \leq -\zeta$. Instead, we multiply both sides of \eqref{eq:d21bound} and \eqref{eq:d22bound} by $\abs{\bar b(x)}$ to see that
\begin{align}
&\norm{f_h''(x) \bar b(x)} \notag \\
 \leq&\ \big(1 + \sup_{y \leq x} \mu \abs{f_h'(y)}\big)\abs{\bar b(x)}e^{-\int_{0}^{x} \frac{2\bar b(u)}{\bar a(u)}du}\int_{-\infty}^{x} \frac{2}{\bar a(y)} e^{\int_{0}^{y} \frac{2\bar b(u)}{\bar a(u)}du} dy, \quad x \leq 0, \notag \\
&\norm{f_h''(x) \bar b(x)} \notag  \\
\leq&\ \big(1 + \sup_{y \in [x,-\zeta]} \mu \abs{f_h'(y)}\big)\abs{\bar b(x)}e^{-\int_{0}^{x} \frac{2\bar b(u)}{\bar a(u)}du}\int_{x}^{\infty} \frac{2}{\bar a(y)} e^{\int_{0}^{y} \frac{2\bar b(u)}{\bar a(u)}du} dy, \quad x \in [0,-\zeta]. \label{eq:cgradfb}
\end{align}
By invoking \eqref{eq:main1} and \eqref{eq:main2}, together with the bound on $\abs{f_h'(x)}$ from \eqref{eq:dsimple}, we conclude that
\begin{align*}
\abs{f_h'''(x)}\leq&\ \frac{1}{\mu }\big(  1 + (1 +  7.5 + 5/\abs{\zeta}) + 7.5 + 5/\abs{\zeta}\big), \quad x \leq -\zeta.
\end{align*}
Therefore, for those $x \in \R$ where $h'(x)$ and $\bar b'(x)$ exist,
\begin{align*}
\abs{f_h'''(x)} \leq
\begin{cases}
\frac{1}{\mu }( 17 + 10/\abs{\zeta}) , \quad x \leq -\zeta,\\
\frac{2}{\mu }, \quad x \geq -\zeta.
\end{cases}
\end{align*}
This concludes the proof of Lemma~\ref{lem:gradboundsCW}.
\end{proof}

\subsection{Erlang-A Wasserstein Gradient Bounds} \label{app:grad_A}
Below we prove the Erlang-A gradient bounds, which were stated in Lemma~\ref{lem:gradboundsAWunder} of Section~\ref{sec:CAapplemmas}. Their proof is similar to that of Lemma~\ref{lem:gradboundsCW}. We only outline the necessary steps needed for a proof, and emphasize all the differences with the proof of Lemma~\ref{lem:gradboundsCW}.

\subsubsection{Proof Outline for Lemma~\ref{lem:gradboundsAWunder}: The Underloaded System} \label{app:gradAWunder}
In the Erlang-A model, 
\begin{align*}
\bar b(x) = 
\begin{cases}
-\mu x, \quad x \leq -\zeta,\\
-\alpha(x+\zeta)+ \mu \zeta, \quad x \geq -\zeta,
\end{cases}
\end{align*}
and $\bar a(x) = 2\mu$. To prove Lemma~\ref{lem:gradboundsAWunder}, we need the following version of Lemma~\ref{lem:lowlevelCW}. 

\begin{lemma} \label{lem:lowlevWunder}
Consider the Erlang-A model ($\alpha > 0$) with $0 < R \leq n$.  Then there exists a constant $C$, independent of $\lambda,\mu,n$, and $\alpha$, such that
\allowdisplaybreaks
\begin{align}
&e^{-\int_{0}^{x} \frac{2\bar b(u)}{\bar a(u)}du}\int_{-\infty}^{x} \frac{2}{\bar a(y)} e^{\int_{0}^{y} \frac{2\bar b(u)}{\bar a(u)}du} dy \leq
\begin{cases}
\frac{C}{\mu }, \quad x \leq 0 , \\
\frac{C}{\mu }e^{\frac{\zeta^2}{2}}, \quad x \in [0,-\zeta],
\end{cases}\label{eq:ingredient1}\\
&e^{-\int_{0}^{x} \frac{2\bar b(u)}{\bar a(u)}du}\int_{x}^{\infty} \frac{2}{\bar a(y)} e^{\int_{0}^{y} \frac{2\bar b(u)}{\bar a(u)}du} dy \leq
\begin{cases}
\frac{C}{\mu }\big(1  +\sqrt{\frac{\mu}{\alpha}}\wedge \frac{1}{\abs{\zeta}}\big), \quad x \in [0,-\zeta], \\
\frac{C}{\mu }\big(\sqrt{\frac{\mu}{\alpha}}\wedge \frac{1}{\abs{\zeta}}\big), \quad x \geq -\zeta,
\end{cases} \label{eq:ingredient2} \\
&e^{-\int_{0}^{x} \frac{2\bar b(u)}{\bar a(u)}du}\int_{-\infty}^{x} \frac{2\abs{y}}{\bar a(y)} e^{\int_{0}^{y} \frac{2\bar b(u)}{\bar a(u)}du} dy \leq
\begin{cases}
\frac{C}{\mu }, \quad x \leq 0,\\
\frac{C}{\mu }e^{\frac{\zeta^2}{2}}, \quad x \in [0,-\zeta],
\end{cases} \label{eq:ingredient3} \\
&e^{-\int_{0}^{x} \frac{2\bar b(u)}{\bar a(u)}du}\int_{x}^{\infty} \frac{2\abs{y}}{\bar a(y)} e^{\int_{0}^{y} \frac{2\bar b(u)}{\bar a(u)}du}dy \leq
\begin{cases}
\frac{C}{\mu }\big(1+\frac{1}{\zeta^2}\big), \quad x \in [0,-\zeta],\\
\frac{C}{\mu }\big(1+\frac{\mu}{\alpha}\big), \quad x \geq -\zeta \label{eq:ingredient4},
\end{cases}\\
&\E \abs{Y(\infty)} \leq 1+\sqrt{\frac{\mu}{\alpha}}\wedge \frac{1}{\abs{\zeta}} . \label{eq:ingredient5}
\end{align}
\end{lemma}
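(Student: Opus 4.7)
The plan is to mirror the proof of Lemma~\ref{lem:lowlevelCW} from the previous subsection, exploiting the fact that $\bar a(x)\equiv 2\mu$ is constant and that $\bar b(x)$ satisfies the monotonicity/single-zero assumptions (\hyperref[eq:a1]{a1})--(\hyperref[eq:a2]{a2}) with $x_0=0$. The crucial observation is that on the region $x\le -\zeta$ the drift $\bar b(x)=-\mu x$ coincides exactly with the Erlang-C drift, so the three bounds \eqref{eq:ingredient1}, \eqref{eq:ingredient3}, and the $x\in[0,-\zeta]$ part of \eqref{eq:ingredient2} and \eqref{eq:ingredient4} can be obtained by repeating verbatim the arguments used to establish \eqref{CW:fbound1}--\eqref{CW:fbound4} in Lemma~\ref{lem:lowlevelCW}, using Lemma~\ref{lem:main} with the choice $c_1=-1$ when $x\le 0$ and splitting the integral at $0$ and at $-\zeta$ when $x\in[0,-\zeta]$.

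The only genuinely new work occurs for the $x\ge -\zeta$ estimates in \eqref{eq:ingredient2} and \eqref{eq:ingredient4}, where the drift $\bar b(x)=-\alpha(x+\zeta)+\mu\zeta$ is linear with slope $-\alpha$, giving a Gaussian-type tail with variance of order $\mu/\alpha$ rather than $1$. I would produce two separate upper bounds and take their minimum. The $1/|\zeta|$ piece comes from Lemma~\ref{lem:main}\,\eqref{eq:main2} applied at the boundary: since $|\bar b(-\zeta)|=\mu|\zeta|$ and the integrand is non-increasing in $x$ in the relevant sense, one gets the $\frac{1}{\mu|\zeta|}$ factor. For the $\sqrt{\mu/\alpha}$ piece, substitute $v=y+\zeta$ in
\[
e^{-\int_0^x\frac{2\bar b(u)}{\bar a(u)}\,du}\int_x^\infty\frac{2}{\bar a(y)}e^{\int_0^y\frac{2\bar b(u)}{\bar a(u)}\,du}\,dy,
\]
complete the square in the exponent $-\tfrac{\alpha}{2\mu}v^2+\zeta v = -\tfrac{\alpha}{2\mu}(v-\tfrac{\mu\zeta}{\alpha})^2+\tfrac{\mu\zeta^2}{2\alpha}$, and bound the resulting Gaussian integral by $\sqrt{2\pi\mu/\alpha}$. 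For \eqref{eq:ingredient4} the same change of variables reduces the $|y|$-weighted integral to a first-moment-of-Gaussian computation, yielding the $\mu/\alpha$ factor.

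Finally, the moment bound \eqref{eq:ingredient5} is obtained by the standard Foster--Lyapunov argument with $V(x)=x^2$. Applying $G_Y$ as in \eqref{eq:cgradlyap} and using the Erlang-A drift yields
\[
G_YV(x)\le -2\mu x^2\mathbf{1}(x<-\zeta)-2\alpha x^2\mathbf{1}(x\ge -\zeta)+2\mu|\zeta|\,|x|\mathbf{1}(x\ge -\zeta)+2\mu,
\]
and taking expectations with respect to $Y(\infty)$ gives control of $\E[Y(\infty)^2\mathbf{1}(Y(\infty)<-\zeta)]$ and $\E[|Y(\infty)|\mathbf{1}(Y(\infty)\ge -\zeta)]$, each by a constant depending only on $\mu/\alpha$ and $|\zeta|$; Jensen then converts the second-moment bound into an $L^1$ bound, and taking the minimum over the two ways of bounding the right tail produces the $\sqrt{\mu/\alpha}\wedge 1/|\zeta|$ factor.

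The main technical obstacle I anticipate is keeping the constants $C$ genuinely independent of $\lambda,\mu,n,\alpha$ while taking the minimum in \eqref{eq:ingredient2}: the Gaussian estimate is naturally expressed in terms of $\sqrt{\mu/\alpha}$ and the direct drift estimate in terms of $1/|\zeta|$, and one must verify that in the crossover regime $|\zeta|\sqrt{\alpha/\mu}\asymp 1$ neither bound blows up. This is handled by the same case-splitting trick used in \eqref{eq:arithmetic}, considering $|\zeta|\sqrt{\alpha/\mu}\le 1$ and $|\zeta|\sqrt{\alpha/\mu}\ge 1$ separately.
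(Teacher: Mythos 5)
Your proposal tracks the paper's outline quite closely: the paper also reduces \eqref{eq:ingredient1} and \eqref{eq:ingredient3} to Lemma~\ref{lem:lowlevelCW} by observing that the density on $\{x\le-\zeta\}$ agrees with Erlang-C, obtains \eqref{eq:ingredient2} and \eqref{eq:ingredient4} by a change of variables, completion of the square and the observation that $e^{\frac{\alpha}{2\mu}c^2}\int_c^\infty e^{-\frac{\alpha}{2\mu}v^2}dv$ is maximized at $c=0$, and gets the $\frac{1}{\mu\abs{\zeta}}$ alternative from Lemma~\ref{lem:main}\,\eqref{eq:main2} since $\abs{\bar b(x)}\ge\mu\abs{\zeta}$ for $x\ge-\zeta$.

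There is, however, a sign error in your Foster--Lyapunov step for \eqref{eq:ingredient5}. For $x\ge-\zeta\ge 0$ one has $\bar b(x)=-\alpha(x+\zeta)+\mu\zeta$ with $\zeta\le 0$, hence $2x\bar b(x)=-2\alpha x(x+\zeta)-2\mu\abs{\zeta}x$, so the correct statement is
\begin{align*}
G_YV(x)\big|_{x\ge-\zeta}=-2\alpha x(x+\zeta)-2\mu\abs{\zeta}x+2\mu\le-2\mu\abs{\zeta}x+2\mu ,
\end{align*}
with the cross term $-2\mu\abs{\zeta}x$ \emph{negative}. You wrote $+2\mu\abs{\zeta}\abs{x}$, which (i) is not a valid upper bound on $G_YV$ when $\alpha>2\mu$, and (ii) even when valid puts the $\abs{\zeta}\abs{x}$ term on the wrong side of the inequality, so that taking expectations does not isolate $\E[\abs{Y(\infty)}\mathbf{1}(Y(\infty)\ge-\zeta)]$. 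The paper instead works with \emph{two separate} Lyapunov upper bounds: keeping $-2(\alpha\wedge\mu)x^2\mathbf{1}(x\ge-\zeta)$ and dropping the cross term gives the $\sqrt{\mu/\alpha}$ bound via Jensen, while keeping $-2\mu\abs{\zeta}x\mathbf{1}(x\ge-\zeta)$ and dropping $-2\alpha x(x+\zeta)\le 0$ gives the $1/\abs{\zeta}$ bound directly. Your plan goes through once the sign is fixed and the two bounds are derived from the two corresponding Lyapunov inequalities rather than from a single combined one.
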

\noindent To prove this lemma, we first observe that 
\begin{align}
e^{\int_{0}^{x} \frac{2\bar b(u)}{\bar a(u)}du} =
\begin{cases}
e^{-\frac{1}{2}x^2},\quad x\leq -\zeta,\\
e^{-\frac{1}{2}\zeta^2} e^{\frac{\mu}{2\alpha}\zeta^2} e^{-\frac{\alpha}{2\mu}\left(x+\zeta-\frac{\mu}{\alpha}\zeta\right)^2},\quad x\geq -\zeta,
\end{cases} \label{eq:densAunder}
\end{align}
By comparing \eqref{eq:densAunder} to \eqref{eq:pic} for the region $x \leq -\zeta$, we immediately see that \eqref{eq:ingredient1} and  \eqref{eq:ingredient3} are restatements of \eqref{CW:fbound1} and  \eqref{CW:fbound3}, from Lemma~\ref{lem:lowlevelCW}, and hence have already been established. The proof of \eqref{eq:ingredient5} involves applying $G_Y$ to the Lyapunov function $V(x) = x^2$ to see that
\begin{align*}
G_Y V(x) =&\ -2\mu x^2 1(x < -\zeta) + 2\big(-\alpha x^2 + x\zeta(\mu - \alpha) \big)1(x \geq -\zeta) + 2\mu \\
\leq&\ -2\mu x^2 1(x < -\zeta) -2 (\alpha \wedge\mu ) x^2 1(x \geq -\zeta) + 2\mu,  
\end{align*}
and 
\begin{align*}
G_Y V(x) =&\ -2\mu x^2 1(x < -\zeta) + 2\big(-\alpha x(x+\zeta) - \mu \abs{\zeta}x \big)1(x \geq -\zeta) + 2\mu \\
\leq&\ -2\mu x^2 1(x < -\zeta) -2\mu \abs{\zeta} x1(x \geq -\zeta) + 2\mu.
\end{align*}
One can compare these inequalities to \eqref{eq:cgradlyap} in the proof of Lemma~\ref{lem:lowlevelCW} to see that \eqref{eq:ingredient5} follows by the Foster-Lyapunov condition.

We now go over the proofs of \eqref{eq:ingredient2} and \eqref{eq:ingredient4}. We first prove \eqref{eq:ingredient2} when $x \in [0,-\zeta]$. Just like in \eqref{CW:same} and the displays right below it, we can show that
\begin{align*}
&e^{-\int_{0}^{x} \frac{2\bar b(u)}{\bar a(u)}du}\int_{x}^{\infty} \frac{2}{\bar a(y)} e^{\int_{0}^{y} \frac{2\bar b(u)}{\bar a(u)}du} dy \notag \\
\leq&\ e^{-\int_{0}^{x} \frac{2\bar b(u)}{\bar a(u)}du}\int_{x}^{-\zeta} \frac{2}{\bar a(y)} e^{\int_{0}^{y} \frac{2\bar b(u)}{\bar a(u)}du} dy + e^{-\int_{0}^{-\zeta} \frac{2\bar b(u)}{\bar a(u)}du} \int_{-\zeta}^{\infty} \frac{2}{\bar a(y)} e^{\int_{0}^{y} \frac{2\bar b(u)}{\bar a(u)}du} dy \notag \\
\leq&\ \frac{2}{\mu } + \frac{1}{\mu \abs{\zeta}}.
\end{align*}
It can also be checked that 
\begin{align*}
e^{-\int_{0}^{-\zeta} \frac{2\bar b(u)}{\bar a(u)}du} \int_{-\zeta}^{\infty} \frac{2}{\bar a(y)} e^{\int_{0}^{y} \frac{2\bar b(u)}{\bar a(u)}du} dy =&\ e^{\frac{1}{2}(x^2-\zeta^2)}e^{\frac{\alpha}{2\mu}\left(\frac{\mu}{\alpha}\zeta\right)^2}\int^\infty_{\abs{\zeta}}\frac{1}{\mu } e^{-\frac{\alpha}{2\mu}\left(y+\zeta-\frac{\mu}{\alpha}\zeta\right)^2}dy \\
=&\ e^{\frac{1}{2}(x^2-\zeta^2)}e^{\frac{\alpha}{2\mu}\left(\frac{\mu}{\alpha}\zeta\right)^2}\int^\infty_{\frac{\mu}{\alpha}\abs{\zeta}}\frac{1}{\mu } e^{-\frac{\alpha}{2\mu}y^2}dy \\
\leq&\ e^{\frac{\alpha}{2\mu}\left(\frac{\mu}{\alpha}\zeta\right)^2}\int^\infty_{\frac{\mu}{\alpha}\abs{\zeta}}\frac{1}{\mu } e^{-\frac{\alpha}{2\mu}y^2}dy \\
\leq&\ \int^\infty_{0}\frac{1}{\mu } e^{-\frac{\alpha}{2\mu}y^2}dy \\
=&\ \frac{1}{\mu } \sqrt{\frac{\pi}{2}\frac{\mu}{\alpha}},
\end{align*}
where in the last inequality, we used the fact that for $x \geq 0$,  the function $e^{\frac{\alpha}{2\mu}x^2}\int^\infty_{x}\frac{1}{\mu } e^{-\frac{\alpha}{2\mu}y^2}dy$ is maximized at $x = 0$ (this can be checked by differentiating the function). 
This proves the part of \eqref{eq:ingredient2} when $x \in [0,-\zeta]$. The case when $x \geq -\zeta$ is handled similarly.  We now prove \eqref{eq:ingredient4}. When $x \in [0,-\zeta]$, 
\begin{align*}
&\ e^{-\int_{0}^{x} \frac{2\bar b(u)}{\bar a(u)}du}\int_{x}^{\infty} \frac{2\abs{y}}{\bar a(y)} e^{\int_{0}^{y} \frac{2\bar b(u)}{\bar a(u)}du}dy \\
=&\ \frac{1}{\mu } e^{\frac{1}{2}x^2} \int_{x}^{-\zeta} y e^{-\frac{1}{2}y^2}dy + \frac{1}{\mu } e^{\frac{\alpha}{2\mu } (\frac{\mu }{\alpha}\zeta)^2} e^{\frac{1}{2}(x^2-\zeta^2)}\int_{-\zeta}^{\infty} ye^{-\frac{\alpha}{2\mu}(y+\zeta-\frac{\mu}{\alpha}\zeta)^2} dy \\
=&\ \frac{1}{\mu }(1 - e^{\frac{1}{2}(x^2-\zeta^2)}) +  \frac{1}{\mu }e^{\frac{\alpha}{2\mu } (\frac{\mu }{\alpha}\zeta)^2} e^{\frac{1}{2}(x^2-\zeta^2)}\int_{-\zeta}^{\infty} ye^{-\frac{\alpha}{2\mu}(y+\zeta-\frac{\mu}{\alpha}\zeta)^2} dy \\
\leq&\ \frac{1}{\mu } + \frac{1}{\mu }e^{\frac{\alpha}{2\mu } (\frac{\mu }{\alpha}\zeta)^2}\int_{-\zeta}^{\infty} ye^{-\frac{\alpha}{2\mu} \big( (y+\zeta)^2 -2\frac{\mu }{\alpha} (y+\zeta)\zeta + (\frac{\mu }{\alpha}\zeta)^2\big)} dy \\
\leq&\ \frac{1}{\mu } + \frac{1}{\mu }\int_{-\zeta}^{\infty} ye^{(y+\zeta)\zeta} dy = \frac{1}{\mu } + \frac{1}{\mu } + \frac{1}{\mu \zeta^2},
\end{align*}
and when $x \geq -\zeta$,
\begin{align}
&e^{-\int_{0}^{x} \frac{2\bar b(u)}{\bar a(u)}du}\int_{x}^{\infty} \frac{2\abs{y}}{\bar a(y)} e^{\int_{0}^{y} \frac{2\bar b(u)}{\bar a(u)}du}dy  \notag \\
=&\  \frac{1}{\mu }e^{\frac{\alpha}{2\mu}(x+\zeta-\frac{\mu}{\alpha}\zeta)^2}\int_{x}^{\infty} ye^{-\frac{\alpha}{2\mu}(y+\zeta-\frac{\mu}{\alpha}\zeta)^2} dy  \notag \\
=&\  \frac{1}{\mu }e^{\frac{\alpha}{2\mu}(x+\zeta-\frac{\mu}{\alpha}\zeta)^2}\int_{x+\zeta-\frac{\mu}{\alpha}\zeta}^{\infty} ye^{-\frac{\alpha}{2\mu}y^2} dy  \notag \\
&+ \frac{1}{\mu }(1-\mu/\alpha)\abs{\zeta} e^{\frac{\alpha}{2\mu}(x+\zeta-\frac{\mu}{\alpha}\zeta)^2}\int_{x+\zeta-\frac{\mu}{\alpha}\zeta}^{\infty} e^{-\frac{\alpha}{2\mu}y^2} dy \notag  \\
\leq&\ \frac{1}{\mu } \frac{\mu }{\alpha} +\frac{1}{\mu } \abs{\zeta} e^{\frac{\alpha}{2\mu}(x+\zeta-\frac{\mu}{\alpha}\zeta)^2}\int_{x+\zeta-\frac{\mu}{\alpha}\zeta}^{\infty} \frac{y}{ (x + \zeta - \frac{\mu }{\alpha}\zeta)} e^{-\frac{\alpha}{2\mu}y^2} dy  \notag \\
=&\ \frac{1}{\mu }\Big(\frac{\mu }{\alpha} + \abs{\zeta} \frac{1}{\frac{\alpha}{\mu } (x + \zeta - \frac{\mu }{\alpha}\zeta)}\Big)  \leq \frac{1}{\mu }\Big( \frac{\mu }{\alpha} + 1\Big). \label{AWU:intermed}
\end{align}
We now describe how to prove Lemma~\ref{lem:gradboundsAWunder}. To prove \eqref{eq:gwu1}, we repeat the procedure used to get \eqref{eq:cgrad1}, except this time using the bounds in Lemma~\ref{lem:lowlevWunder} instead of those in Lemma~\ref{lem:lowlevelCW}. Using the resulting bounds on $f_h'(x)$, we argue that \eqref{eq:mlimits} and \eqref{eq:plimits} are true, just like we did in the proof of Lemma~\ref{lem:gradboundsCW}. We now describe how to prove \eqref{eq:gwu2}. When $x \leq 0$,  we apply \eqref{eq:gwu1} and \eqref{eq:ingredient1} to \eqref{eq:hf21}, and when $x \geq -\zeta$ we apply \eqref{eq:gwu1} and \eqref{eq:ingredient2} to \eqref{eq:hf22}. The last region, when $x \in [0,-\zeta]$, has to be handled differently depending on the size of $\abs{\zeta}$. When $\abs{\zeta} \leq 1$, we just apply \eqref{eq:gwu1} and \eqref{eq:ingredient1} to \eqref{eq:hf21}. However, when $\abs{\zeta} \geq 1$, we manipulate \eqref{eq:hf22} to see that
\begin{align}
f_h''(x) =&\ -e^{-\int_{0}^{x} \frac{2\bar b(u)}{\bar a(u)}du} \int_{x}^{-\zeta} \frac{1}{\mu }(-h'(y) + \mu f_h'(y)) e^{-\int_{0}^{y} \frac{2\bar b(u)}{\bar a(u)}du}dy  \notag  \\
&-\frac{e^{-\int_{0}^{x} \frac{2\bar b(u)}{\bar a(u)}du}}{e^{-\int_{0}^{-\zeta} \frac{2\bar b(u)}{\bar a(u)}du}} e^{-\int_{0}^{-\zeta} \frac{2\bar b(u)}{\bar a(u)}du} \int_{-\zeta}^{\infty} \frac{1}{\mu }(-h'(y) + \alpha f_h'(y)) e^{-\int_{0}^{y} \frac{2\bar b(u)}{\bar a(u)}du} dy. \label{eq:der2manip}
\end{align}
We then apply \eqref{eq:gwu1}, \eqref{eq:ingredient2}, and the fact that $\frac{e^{-\int_{0}^{x} \frac{2\bar b(u)}{\bar a(u)}du}}{e^{-\int_{0}^{-\zeta} \frac{2\bar b(u)}{\bar a(u)}du}} \leq 1$ to conclude \eqref{eq:gwu2}. The proof of \eqref{eq:gwu3} relies on \eqref{eq:fppp}, which tells us that 
\begin{align*}
\abs{f_h'''(x)} \leq&\ \frac{1}{\mu } \big[ 1 + \abs{f_h''(x)\bar b(x)} + \abs{f_h'(x) \bar b'(x)}\big].
\end{align*}
Bounding $\abs{f_h'(x) \bar b'(x)}$  only relies on \eqref{eq:gwu1}. The term $\abs{f_h''(x)\bar b(x)}$ is bounded similarly to the way it is done in Lemma~\ref{lem:gradboundsCW}; see for instance \eqref{eq:cgradfb}.  This concludes the proof outline for Lemma~\ref{lem:gradboundsAWunder} when the system is underloaded.

\subsubsection{Proof Outline for Lemma~\ref{lem:gradboundsAWunder}: The Overloaded System} \label{app:gradAWover}
For the overloaded case in Lemma~\ref{lem:gradboundsAWunder}, we again need the following version of Lemma~\ref{lem:lowlevelCW}.

\begin{lemma} \label{lem:lowlevWover}
Consider the Erlang-A model ($\alpha > 0$) with $0 < R \leq n$.  Then there exists a constant $C$, independent of $\lambda,\mu,n$, and $\alpha$, such that
\allowdisplaybreaks
\begin{align}
&e^{-\int_{0}^{x} \frac{2\bar b(u)}{\bar a(u)}du}\int_{-\infty}^{x} \frac{2}{\bar a(y)} e^{\int_{0}^{y} \frac{2\bar b(u)}{\bar a(u)}du} dy \leq 
\begin{cases}
\frac{C}{\mu }\big(1\wedge \frac{\mu}{\alpha \zeta}\big), \quad x \leq -\zeta , \\
\frac{C}{\mu }\big(1+\sqrt{\frac{\mu}{\alpha}}\wedge \zeta\big), \quad x \in [-\zeta,0],
\end{cases}\label{eq:oingredient1}\\
&e^{-\int_{0}^{x} \frac{2\bar b(u)}{\bar a(u)}du}\int_{x}^{\infty} \frac{2}{\bar a(y)} e^{\int_{0}^{y} \frac{2\bar b(u)}{\bar a(u)}du} dy  \leq
\begin{cases}
\frac{C}{\mu }\sqrt{\frac{\mu}{\alpha}} e^{\frac{\alpha}{2\mu}\zeta^2}, \quad x \in [-\zeta,0], \\
\frac{C}{\mu }\sqrt{\frac{\mu}{\alpha}}, \quad x \geq 0,
\end{cases} \label{eq:oingredient2} \\
&e^{-\int_{0}^{x} \frac{2\bar b(u)}{\bar a(u)}du}\int_{-\infty}^{x} \frac{2\abs{y}}{\bar a(y)} e^{\int_{0}^{y} \frac{2\bar b(u)}{\bar a(u)}du} dy \leq
\begin{cases}
\frac{C}{\mu }\big(1 +\zeta \wedge \frac{\mu}{\alpha}\big), \quad x \leq -\zeta,\\
\frac{C}{\mu }\big(\frac{\mu}{\alpha}+1\big), \quad x \in [-\zeta,0],
\end{cases} \label{eq:oingredient3} \\
&e^{-\int_{0}^{x} \frac{2\bar b(u)}{\bar a(u)}du}\int_{x}^{\infty} \frac{2\abs{y}}{\bar a(y)} e^{\int_{0}^{y} \frac{2\bar b(u)}{\bar a(u)}du}dy \leq
\begin{cases}
\frac{C}{\mu }\frac{\mu}{\alpha}e^{\frac{\alpha}{2\mu}\zeta^2}, \quad x \in [-\zeta,0],\\
\frac{C}{\mu }\frac{\mu}{\alpha}, \quad x \geq 0 \label{eq:oingredient4},
\end{cases}\\
&\E \abs{Y(\infty)} \leq  \sqrt{\frac{\mu}{\alpha}} + 1. \label{eq:oingredient5}
\end{align}
\end{lemma}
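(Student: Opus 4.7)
\subsection*{Proof proposal for Lemma~\ref{lem:lowlevWover}}

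The plan is to mirror the proof outline of the underloaded case in Section~\ref{app:gradAWunder}, adapting each step to the geometry of the overloaded regime ($\zeta\ge 0$). The starting point will be an explicit formula for the density $e^{A(x)}$, where $A(x)=\int_0^x 2\bar b(u)/\bar a(u)\,du$. Using $\bar a(x)=2\mu$, $\bar b(x)=-\alpha x$ on $[-\zeta,\infty)$ and $\bar b(x)=-\mu(x+\zeta)+\alpha\zeta$ on $(-\infty,-\zeta]$, a direct computation (completing the square on the left branch) gives
\begin{align*}
e^{A(x)}=
\begin{cases}
\exp\!\Big(-\tfrac{\alpha}{2\mu}\zeta^2+\tfrac{\alpha^2\zeta^2}{2\mu^2}\Big)\,\exp\!\Big(-\tfrac12\big(x+\zeta-\tfrac{\alpha\zeta}{\mu}\big)^2\Big), & x\le -\zeta,\\[2pt]
\exp\!\big(-\tfrac{\alpha}{2\mu}x^{2}\big), & x\ge -\zeta.
\end{cases}
\end{align*}
Unlike the underloaded case, both tails are now Gaussian, and $x_0=0$ lies to the right of the transition point $-\zeta\le 0$. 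This picture dictates how to split the integrals.

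For the left-integral bounds \eqref{eq:oingredient1} and \eqref{eq:oingredient3}, I would proceed as follows. When $x\le -\zeta$, assumptions (\hyperref[eq:a1]{a1})--(\hyperref[eq:a2]{a2}) hold for $\bar b$, so Lemma~\ref{lem:main} applies directly and yields $e^{-A(x)}\int_{-\infty}^{x}\tfrac{2}{\bar a(y)}e^{A(y)}dy\le 1/\bar b(x)\le 1/(\alpha\zeta)$ (using $\bar b(x)\ge\alpha\zeta$ on this region); the alternative $1/\mu$-bound comes from a direct computation where the change of variable $t=y+\zeta-\alpha\zeta/\mu$ reduces the integral to a standard Gaussian tail, bounded by $\sqrt{2\pi}$ uniformly. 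For $x\in[-\zeta,0]$, I would split the integral at $-\zeta$, apply the just-derived bound to the piece over $(-\infty,-\zeta]$, and for the remaining piece $e^{-A(x)}\int_{-\zeta}^{x}\tfrac{1}{\mu}e^{A(y)}dy$ use $e^{-A(x)}/e^{-A(-\zeta)}\le 1$ and then dominate by $\int_{-\zeta}^{0}\tfrac{1}{\mu}e^{-\alpha y^2/(2\mu)}dy\le\sqrt{\pi\mu/(2\alpha)}/\mu\wedge\zeta/\mu$. Bound \eqref{eq:oingredient3} is obtained the same way, except Lemma~\ref{lem:poly} (specialised to $k=1$) replaces Lemma~\ref{lem:main}, which is what produces the $\zeta\wedge(\mu/\alpha)$ and $\mu/\alpha$ contributions.

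For the right-integral bounds \eqref{eq:oingredient2} and \eqref{eq:oingredient4}, the density is the pure Gaussian $e^{-\alpha y^2/(2\mu)}$ on $[-\zeta,\infty)$. For $x\ge 0$, the monotonicity trick from Section~\ref{app:gradAWunder} (the map $x\mapsto e^{\alpha x^2/(2\mu)}\int_x^\infty e^{-\alpha y^2/(2\mu)}dy$ is decreasing on $[0,\infty)$, maximised at $0$) gives $e^{-A(x)}\int_x^\infty \tfrac{1}{\mu}e^{A(y)}dy\le \tfrac{1}{\mu}\sqrt{\pi\mu/(2\alpha)}$. For $x\in[-\zeta,0]$, write $e^{-A(x)}=e^{\alpha x^2/(2\mu)}\le e^{\alpha\zeta^2/(2\mu)}$ and bound $\int_x^\infty\le\int_{-\infty}^\infty=\sqrt{2\pi\mu/\alpha}$, producing the $e^{\alpha\zeta^2/(2\mu)}$ factor. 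Bound \eqref{eq:oingredient4} is obtained the same way, with the integration-by-parts identity $\int_x^\infty y e^{-\alpha y^2/(2\mu)}dy=\tfrac{\mu}{\alpha}e^{-\alpha x^2/(2\mu)}$ providing the $\mu/\alpha$ factor (compare \eqref{AWU:intermed}).

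Finally, \eqref{eq:oingredient5} follows from the Foster--Lyapunov argument used to prove \eqref{eq:ingredient5}. Applying $G_Y$ to $V(x)=x^2$ in the overloaded regime and using the form of $\bar b$ yields the two inequalities $G_YV(x)\le -2\alpha x^2\mathbf 1(x\ge -\zeta)-2\mu(x+\zeta)^2\mathbf 1(x\le -\zeta)+2\mu\zeta^2$ (plus lower-order terms) and $G_YV(x)\le -2\mu\zeta|x|\mathbf 1(x\le -\zeta)+2\mu$, from which bounds of order $\sqrt{\mu/\alpha}$ and $1$ on the two tail contributions to $\E|Y(\infty)|$ follow by \cite[Theorem 4.3]{MeynTwee1993b} and Jensen's inequality. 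The main obstacle I expect is bookkeeping of the constants at the boundary $x=-\zeta$ so that the final $C$ is genuinely independent of $\lambda,n,\mu,\alpha$; in particular one must check that the $e^{\alpha\zeta^2/(2\mu)}$-growth on $[-\zeta,0]$ and the $\sqrt{\mu/\alpha}$-scaling on $[0,\infty)$ combine in exactly the stated form (not, e.g., producing a spurious $\abs{\zeta}$ prefactor). This is handled by always splitting the integral at $-\zeta$ and using the ratio $e^{-A(x)}/e^{-A(-\zeta)}\le 1$ rather than bounding $e^{-A(x)}$ itself.
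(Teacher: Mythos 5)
Your approach for the four integral bounds \eqref{eq:oingredient1}--\eqref{eq:oingredient4} is essentially the paper's: compute $e^{A(x)}$ explicitly (your formula, equivalently \eqref{eq:densAover}, is correct), split at $-\zeta$, and control each piece by Mills-type estimates or the explicit primitives of $ye^{-cy^2}$. One small caveat on \eqref{eq:oingredient3}: Lemma~\ref{lem:poly} with $k=1$ produces a term $\mu/\bar b(x)^2$, and since $\bar b(-\zeta)=\alpha\zeta$ this is $\mu/(\alpha^2\zeta^2)$ near $x=-\zeta$, which is \emph{not} bounded by $\frac{C}{\mu}(1+\zeta\wedge\frac{\mu}{\alpha})$ when $\zeta$ is small. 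The paper handles \eqref{eq:oingredient3} by the direct shift $-y = -(y+\zeta-\tfrac{\alpha}{\mu}\zeta) + (1-\tfrac{\alpha}{\mu})\zeta$ and then bounding the Gaussian tail both by a constant and by Mills (yielding the $\zeta\wedge(\mu/\alpha)$); Lemma~\ref{lem:poly} alone does not suffice there.

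The genuine gap is in your Foster--Lyapunov step for \eqref{eq:oingredient5}. Your second intermediate inequality, $G_Y V(x)\le -2\mu\zeta|x|\,1(x\le -\zeta)+2\mu$, is false when $\alpha<\mu$: at $x=-\zeta$ one has $G_Y V(-\zeta)=-2\alpha\zeta^2+2\mu$, while the claimed right side is $-2\mu\zeta^2+2\mu$, and $-2\alpha\zeta^2>-2\mu\zeta^2$. Your first intermediate inequality is arithmetically correct, but the constant $2\mu\zeta^2$ on the right is \emph{not} uniformly bounded, so it cannot deliver the $\zeta$-free bound $\E|Y(\infty)|\le\sqrt{\mu/\alpha}+1$. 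What is needed is the single cleaner inequality the paper uses,
\begin{align*}
G_Y V(x) \ \leq\ -2\alpha x^2\, 1(x>-\zeta)\ -\ 2(\alpha\wedge\mu) x^2\, 1(x\leq -\zeta)\ +\ 2\mu,
\end{align*}
which follows on the left branch from writing $2xb(x)=-2\mu x^2+2(\alpha-\mu)\zeta x$ and noting that $(\alpha-\mu)(x^2+\zeta x)=(\alpha-\mu)\,x(x+\zeta)\le 0$ for $x\le -\zeta$, because $x(x+\zeta)\ge 0$ on that region. From this one reads off $\E[Y(\infty)^2]\le\mu/\alpha$ when $\alpha\le\mu$ and $\E[Y(\infty)^2]\le 1+\mu/\alpha$ when $\alpha>\mu$, and Jensen gives $\E|Y(\infty)|\le\sqrt{\mu/\alpha}+1$ in both cases, with no trace of $\zeta$. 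Replace your two inequalities by this one and the proof closes.
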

\noindent To prove this lemma, we first observe that $\bar a(x) = 2\mu$, 
\begin{align*}
\bar b(x) = 
\begin{cases}
-\mu (x+\zeta)+ \alpha\zeta, \quad x \leq -\zeta,\\
-\alpha x, \quad x \geq -\zeta,
\end{cases}
\end{align*}
and 
\begin{align}
e^{-\int_{0}^{x} \frac{2\bar b(u)}{\bar a(u)}du} =
\begin{cases}
e^{\frac{1}{2}\left(\frac{\alpha}{\mu}\zeta\right)^2}e^{-\frac{\alpha}{2\mu}\zeta^2}e^{-\frac{1}{2}\left(x+\zeta-\frac{\alpha}{\mu}\zeta\right)^2},\quad x\leq -\zeta,\\
e^{-\frac{\alpha}{2\mu}x^2},\quad x\geq -\zeta.
\end{cases} \label{eq:densAover}
\end{align}
Observe that  in the region $x \geq -\zeta$, the form of \eqref{eq:densAover} is very similar to the  \eqref{eq:pic} in the region $x \leq -\zeta$. Hence, one can check that the arguments needed to prove Lemma~\ref{lem:lowlevWover}'s \eqref{eq:oingredient2} and \eqref{eq:oingredient4} are nearly identical to the arguments used to prove Lemma~\ref{lem:lowlevelCW}'s \eqref{CW:fbound1} and \eqref{CW:fbound3}.

The proof of \eqref{eq:oingredient5} involves applying $G_Y$, where 
\begin{align*}
G_Y f(x) = \frac{1}{2}\bar a(x) f''(x) + \bar b(x) f'(x)
\end{align*}
to the Lyapunov function $V(x) = x^2$ to see that
\begin{align*}
G_Y V(x) =&\ -2\alpha x^2 1(x > -\zeta) + 2\big(-\mu x^2 + x\zeta(\alpha - \mu ) \big)1(x \leq -\zeta) + 2\mu \\
\leq&\ -2\alpha x^2 1(x > -\zeta) -2 (\alpha \wedge\mu ) x^2 1(x \leq -\zeta) + 2\mu.  
\end{align*}
One can compare this inequality to \eqref{eq:cgradlyap} in the proof of Lemma~\ref{lem:lowlevelCW} to see that \eqref{eq:oingredient5} follows by the Foster-Lyapunov condition. 

We now describe how to prove \eqref{eq:oingredient1} and  \eqref{eq:oingredient3}. The proof of \eqref{eq:oingredient1} uses a series of arguments similar to those in the proof of \eqref{eq:ingredient2} of Lemma~\ref{lem:lowlevWunder}. We now prove \eqref{eq:oingredient3}. When $x \leq -\zeta$, 
\begin{align}
&e^{-\int_{0}^{x} \frac{2\bar b(u)}{\bar a(u)}du}\int_{-\infty}^{x} \frac{2\abs{y}}{\bar a(y)} e^{\int_{0}^{y} \frac{2\bar b(u)}{\bar a(u)}du} dy \notag \\
 =&\  \frac{1}{\mu } e^{\frac{1}{2}(x+\zeta-\frac{\alpha}{\mu}\zeta)^2}\int_{-\infty}^{x} -ye^{-\frac{1}{2}(y+\zeta-\frac{\alpha}{\mu}\zeta)^2} dy  \notag \\
=&\  \frac{1}{\mu }e^{\frac{1}{2}(x+\zeta-\frac{\alpha}{\mu}\zeta)^2}\int_{-\infty}^{x+\zeta-\frac{\alpha}{\mu}\zeta} -ye^{-\frac{1}{2}y^2} dy \notag \\
&+\frac{1}{\mu } (1-\alpha/\mu )\zeta e^{\frac{1}{2}(x+\zeta-\frac{\alpha}{\mu}\zeta)^2}\int_{-\infty}^{x+\zeta-\frac{\alpha}{\mu}\zeta} e^{-\frac{1}{2}y^2} dy \notag  \\
\leq&\ \frac{1}{\mu } + \frac{\zeta }{\mu }\Big( \sqrt{\frac{\pi}{2}} \wedge  \frac{1}{ \frac{\alpha}{\mu }\zeta -x - \zeta } \Big) \leq 1 +  \sqrt{\frac{\pi}{2}}\zeta  \wedge \frac{\mu }{\alpha}, \label{eq:overgrad1}
\end{align}
where the second last inequality uses logic similar to what was used in \eqref{AWU:intermed}. For $x \in [-\zeta, 0]$, 
\begin{align*}
&\ e^{-\int_{0}^{x} \frac{2\bar b(u)}{\bar a(u)}du}\int_{-\infty}^{x} \frac{2\abs{y}}{\bar a(y)} e^{\int_{0}^{y} \frac{2\bar b(u)}{\bar a(u)}du} dy \\
=&\  \frac{1}{\mu } e^{-\frac{\alpha}{2\mu } \zeta^2} e^{\frac{1}{2} (\frac{\alpha}{\mu }\zeta)^2} e^{\frac{\alpha}{2\mu }x^2}\int_{-\infty}^{-\zeta} -ye^{-\frac{1}{2}(y+\zeta-\frac{\alpha}{\mu}\zeta)^2} dy +  \frac{1}{\mu }e^{\frac{\alpha}{2\mu }x^2}\int_{-\zeta}^{x} -ye^{\frac{\alpha}{2\mu }y^2} dy.
\end{align*}
Repeating arguments from \eqref{eq:overgrad1}, we can show that the first term above satisfies 
\begin{align*}
\frac{1}{\mu } e^{-\frac{\alpha}{2\mu } \zeta^2} e^{\frac{1}{2} (\frac{\alpha}{\mu }\zeta)^2} e^{\frac{\alpha}{2\mu }x^2}\int_{-\infty}^{-\zeta} -ye^{-\frac{1}{2}(y+\zeta-\frac{\alpha}{\mu}\zeta)^2} dy \leq \frac{1}{\mu }e^{\frac{\alpha}{2\mu } (x^2-\zeta^2)}\Big(1 + \frac{\mu }{\alpha}\Big),
\end{align*}
and by computing the second term explicitly, we conclude that 
\begin{align*}
e^{-\int_{0}^{x} \frac{2\bar b(u)}{\bar a(u)}du}\int_{-\infty}^{x} \frac{2\abs{y}}{\bar a(y)} e^{\int_{0}^{y} \frac{2\bar b(u)}{\bar a(u)}du} dy  \leq&\ \frac{1}{\mu }e^{\frac{\alpha}{2\mu } (x^2-\zeta^2)}\Big(1 + \frac{\mu }{\alpha}\Big) + \frac{1}{\mu }\frac{\mu }{\alpha} \big( 1 - e^{\frac{\alpha}{2\mu } (x^2-\zeta^2)}\big) \\
\leq&\ \frac{1}{\mu }\big(1 + \frac{\mu }{\alpha}\big),
\end{align*}
which proves \eqref{eq:oingredient3}.

Having argued Lemma~\ref{lem:lowlevWover}, we now use it to prove the bounds in \eqref{eq:gwo1}--\eqref{eq:gwo42}. To prove \eqref{eq:gwo1}, we repeat the procedure used to get \eqref{eq:cgrad1}, except this time using the bounds in Lemma~\ref{lem:lowlevWover} instead of those in Lemma~\ref{lem:lowlevelCW}. Using the resulting bounds on $f_h'(x)$, we argue that \eqref{eq:mlimits} and \eqref{eq:plimits} are true, just like we did in the proof of Lemma~\ref{lem:gradboundsCW}.  We now describe how to prove \eqref{eq:gwo2}. When $x \leq -\zeta$,  we apply \eqref{eq:gwo1} and \eqref{eq:oingredient1} to \eqref{eq:hf21}.  When $x \geq -\zeta$, instead of using the expressions for $f_h''(x)$ in \eqref{eq:hf21} and \eqref{eq:hf22} like we would usually do, we instead apply \eqref{eq:gwo1} to the bound 
\begin{align*}
\abs{f_h''(x)} \leq \frac{1}{\mu }\abs{f_h'(x)} \abs{\bar b(x)} + \frac{1}{\mu }\big(\abs{x} + \E \abs{Y(\infty)} \big), \quad x \in \R,
\end{align*}
which follows by rewriting the Poisson equation \eqref{CA:poisson} and using the Lipschitz property of $h(x)$. We now prove \eqref{eq:gwo3}--\eqref{eq:gwo42}. We recall \eqref{eq:fppp} to see that
\begin{align*}
\abs{f_h'''(x)} \leq&\ \frac{1}{\mu } \big[ 1 + \abs{f_h''(x)\bar b(x)} + \abs{f_h'(x) \bar b'(x)}\big].
\end{align*}
Bounding $\abs{f_h'(x) \bar b'(x)}$ is simple, and only relies on \eqref{eq:gwo1}. The other term, $\abs{f_h''(x)\bar b(x)}$, is bounded as follows. To prove \eqref{eq:gwo3}, i.e.\ when $x \leq -\zeta$, the term $\abs{f_h''(x)\bar b(x)}$ is bounded similarly to the way it is done in Lemma~\ref{lem:gradboundsCW}; see for instance \eqref{eq:cgradfb}. When $x \geq -\zeta$ then 
\begin{align*}
\abs{f_h''(x)b(x)} = \alpha \abs{x}\abs{f_h''(x)},
\end{align*}
and the difference between \eqref{eq:gwo41} and \eqref{eq:gwo42} lies in the way that the quantity above is bounded. To get \eqref{eq:gwo41}, we simply apply the bounds on $f_h''(x)$ from \eqref{eq:gwo2} to the right hand side above. 

To prove \eqref{eq:gwo42}, we will first argue that
\begin{align}
\abs{f_h'''(x)} \leq &\ 
\begin{cases}
\frac{C}{\mu}\Big(\frac{\alpha}{\mu}+\sqrt{\frac{\alpha}{\mu}}+1\Big)
+ \frac{C}{\mu}\Big(\frac{\alpha}{\mu}+\sqrt{\frac{\alpha}{\mu}}+1\Big)^2 \abs{x},\quad x\in [-\zeta,0],\\
\frac{C}{\mu}\Big(\frac{\alpha}{\mu}+\sqrt{\frac{\alpha}{\mu}}+1\Big),\quad x\geq 0,
\end{cases} \label{eq:altern_third}
\end{align}
where $C$ is some positive constant independent of everything else; this will imply \eqref{eq:gwo42}. The only difference between the proof of \eqref{eq:altern_third} and the  bound on $f_h'''(x)$ in \eqref{eq:gwo41} is in how $\abs{f_h''(x)b(x)}$ is bounded; we now describe the different way to bound $\abs{f_h''(x)b(x)}$. When $x \geq 0$, we bound $\abs{f_h''(x)\bar b(x)}$ just like we did in Lemma~\ref{lem:gradboundsCW}; see for instance \eqref{eq:cgradfb}. When $x \in [-\zeta,0]$, we want to prove that
\begin{align}
\abs{f_h''(x)} \leq&\ \frac{C}{\mu}\Big(\frac{\alpha}{\mu}+\sqrt{\frac{\alpha}{\mu}}+1\Big)\Big(1+\sqrt{\frac{\mu}{\alpha}}\Big) + \frac{C}{\mu } \Big( \zeta \wedge \frac{\mu^2}{\alpha^2	\zeta}\Big), \label{eq:alternativesgb}
\end{align}
which, after considering separately the cases when $\zeta \leq \mu /\alpha$ and $\zeta \geq \mu /\alpha$, implies that 
\begin{align*}
\abs{f_h''(x)} \leq&\ \frac{C}{\mu}\Big(\frac{\alpha}{\mu}+\sqrt{\frac{\alpha}{\mu}}+1\Big)\Big(1+\sqrt{\frac{\mu}{\alpha}}\Big)
+ \frac{C}{\alpha}.
\end{align*}
We can then use this fact to bound $\abs{f_h''(x)b(x)} =  \alpha \abs{x}\abs{f_h''(x)}$. To prove \eqref{eq:alternativesgb} for $\zeta \leq \sqrt{\mu/\alpha}$, we bound \eqref{eq:hf22} using \eqref{eq:gwo1} and \eqref{eq:oingredient2}. To prove \eqref{eq:alternativesgb} for $\zeta \geq \sqrt{\mu/\alpha}$, we bound \eqref{eq:hf21} using \eqref{eq:gwo1} and \eqref{eq:oingredient1}. We point out that to bound \eqref{eq:hf21} we need to perform a manipulation similar to the one in \eqref{eq:der2manip}.  This concludes the proof outline for the overloaded case.

\subsection{Kolmogorov Gradient Bounds: Proof of Lemmas~\ref{lem:gradboundsCK} and \ref{lem:gradboundsAK}} \label{app:kgradient}
Let $\bar a(x)$ and $\bar b(x)$ be as in \eqref{CW:ab}. Fix $a \in \R$ and let $h(x) = 1_{(-\infty,a]}(x)$. The the  Poisson equation is 
\begin{align*}
\bar b(x) f_a'(x) + \frac{1}{2}\bar a(x) f_a''(x) = F_Y(a) - 1_{(-\infty,a]}(x),
\end{align*}
where $F_Y(x) = \Prob(Y(\infty) \leq x)$. Since $1_{(-\infty,a]}(x)$ is discontinuous, any solution to the Poisson equation will have a discontinuity in its second derivative, which makes the gradient bounds for it differ from the Wasserstein setting. 

 Together, \eqref{eq:fprimeneg} and \eqref{eq:fprimepos} both imply that 
\begin{align*}
\abs{f_a'(x)} \leq  e^{-\int_{0}^{x} \frac{2\bar b(u)}{\bar a(u)}du}\min \Big\{ \int_{-\infty}^{x} \frac{2}{\bar a(y)}e^{\int_{0}^{y} \frac{2\bar b(u)}{\bar a(u)}du} dy, \int_{x}^{\infty}\frac{2}{\bar a(y)} e^{\int_{0}^{y} \frac{2\bar b(u)}{\bar a(u)}du} dy \Big\}.  
\end{align*}
Furthermore, 
\begin{align*}
f_a''(x) = \frac{1}{\mu } \big( F_Y(a) - 1_{(-\infty,a]}(x) - \bar b(x) f_a'(x)\big).
\end{align*}
We now prove the Kolmogorov gradient bounds for the Erlang-C model.
\begin{proof}[Proof of Lemma~\ref{lem:gradboundsCK}]
First of all, by \eqref{CW:fbound1} and \eqref{CW:fbound2}, 
\begin{align}
\mu \abs{f_a'(x)} \leq 
\begin{cases}
2, \quad x \leq 0, \\
\min \big\{(2+\abs{\zeta})e^{\frac{1}{2}\zeta^2},2 + \frac{1}{\abs{\zeta}} \big\}, \quad x \in [0,-\zeta], \\
\frac{1}{\abs{\zeta}}, \quad x \geq -\zeta,
\end{cases} \label{eq:kolmfp}
\end{align}
and \eqref{eq:arithmetic} implies that
\begin{align*}
\min\Big\{(2+\abs{\zeta})e^{\frac{1}{2}\zeta^2},2 + \frac{1}{\abs{\zeta}} \Big\} \leq 4,
\end{align*}
which proves the bounds for $f_a'(x)$. Second, \eqref{eq:main1} and \eqref{eq:main2} imply that for all $x \in \R$, 
\begin{align}
&\abs{f_a''(x)}\notag  \\
\leq&\ \frac{1}{\mu } \bigg( 1 + \abs{\bar b(x)} e^{-\int_{0}^{x} \frac{2\bar b(u)}{\bar a(u)}du}\min \Big\{ \int_{-\infty}^{x} \frac{2}{\bar a(y)}e^{\int_{0}^{y} \frac{2\bar b(u)}{\bar a(u)}du} dy, \int_{x}^{\infty}\frac{2}{\bar a(y)} e^{\int_{0}^{y} \frac{2\bar b(u)}{\bar a(u)}du} dy \Big\}\bigg) \notag \\
 \leq&\ 2/\mu, \label{eq:kolmfpp}
\end{align}
where $f_a''(x)$ is understood to be the left derivative at the point $x = a$.

\end{proof}

\begin{proof}[Proof of Lemma~\ref{lem:gradboundsAK}]
The proof of this lemma is almost identical to the proof of Lemma~\ref{lem:gradboundsCK}. Its not hard to check that \eqref{eq:kolmfpp} holds for the Erlang-A model as well. To prove the bounds on $f_a'(x)$, we obtain inequalities similar to \eqref{eq:kolmfp} by using analogues of \eqref{CW:fbound1} and \eqref{CW:fbound2} from Lemmas~\ref{lem:lowlevWunder} and \ref{lem:lowlevWover}.  These inequalities will imply \eqref{eq:ACuder1} and \eqref{eq:ACoder1} once we consider in them separately the cases when $\abs{\zeta} \leq 1$ and $\abs{\zeta} \geq 1$.

\end{proof}

\section{Gradient Bounds for Chapters~\ref{chap:dsquare} and \ref{chap:md}}
\label{app:DSgradbounds}
In the setting of Chapter~\ref{chap:dsquare} and \ref{chap:md},
\begin{align}
&\bar a(x)  = 
\begin{cases}
\mu , \quad x \leq -1/\delta, \\
\mu (2 + \delta x), \quad x \in [-1/\delta, -\zeta], \\
\mu (2 + \delta \abs{\zeta}), \quad x \geq -\zeta,
\end{cases}
\quad \text{ and } \quad
&\bar b(x) = 
\begin{cases}
-\mu x, \quad x \leq -\zeta,\\
\mu \zeta, \quad x \geq -\zeta,
\end{cases} \label{DS:ab}
\end{align} 
where $\zeta = \delta(R - n) < 0$. Observe that $\bar b(x)$ satisfies both  (\hyperref[eq:a1]{a1}) and (\hyperref[eq:a2]{a2}), and that $x_0$ from \eqref{eq:xnot} equals zero. Furthermore, 
\begin{align}
&\exp \Big({\int_{0}^{x} \frac{2 \bar b(u)}{\bar a(u)} du} \Big) \notag  \\
=& 
\begin{cases}
\exp\big(\frac{1}{\delta^2}+ \frac{2}{\delta^2} - \frac{4}{\delta^2} \log(2)\big)\exp(-x^2), \quad x \leq -1/\delta,\\
\exp\big(- \frac{4}{\delta^2} \log(2)\big)\exp\Big[\frac{4}{\delta^2}\log(2 + \delta x) - \frac{2\delta x}{\delta^2}\Big], \quad x \in [-1/\delta, -\zeta], \\
\exp\big(- \frac{4}{\delta^2} \log(2) + \frac{2}{\delta^2}(2\log(2 + \delta \abs{\zeta}) - \delta \abs{\zeta} ) + \frac{2\zeta^2}{2 + \delta \abs{\zeta}} \big)\exp\big(\frac{-2\abs{\zeta} x }{2+\delta \abs{\zeta}}\big), \quad x \geq -\zeta.
\end{cases}\label{DS:pdef}
\end{align}
The following lemma presents several bounds that will be used to prove Lemma~\ref{lem:gb} and \ref{lem:MDgradient_bounds}.
\begin{lemma} \label{lem:lowlevelbounds}
\allowdisplaybreaks
Let $\bar a(x)$ and $\bar b(x)$ be as in \eqref{DS:ab}. Then
\begin{align}
&e^{-\int_{0}^{x} \frac{2\bar b(u)}{\bar a(u)}du}\int_{-\infty}^{x} \frac{2}{\bar a(y)} e^{\int_{0}^{y} \frac{2\bar b(u)}{\bar a(u)}du} dy\leq 
\begin{cases}
\frac{3}{ \mu }, \quad x \leq 0,\\
\frac{1}{\mu}e^{\zeta^2} (3 + \abs{\zeta}), \quad x \in [0,-\zeta],
\end{cases} \label{DS:fbound1}\\
&e^{-\int_{0}^{x} \frac{2\bar b(u)}{\bar a(u)}du}\int_{x}^{\infty} \frac{2}{\bar a(y)} e^{\int_{0}^{y} \frac{2\bar b(u)}{\bar a(u)}du} dy \leq 
\begin{cases}
\frac{1}{\mu } \Big( 2 + \frac{1}{\abs{\zeta}} \Big), \quad x \in [0,-\zeta], \\
\frac{1}{\mu \abs{\zeta}}, \quad x \geq -\zeta,
\end{cases} \label{DS:fbound2} \\
&e^{-\int_{0}^{x} \frac{2\bar b(u)}{\bar a(u)}du}\int_{-\infty}^{x} \frac{2\abs{y}}{\bar a(y)} e^{\int_{0}^{y} \frac{2\bar b(u)}{\bar a(u)}du} dy\leq 
\begin{cases}
\frac{1}{\mu }, \quad x \leq 0,\\
\frac{2}{\mu } e^{\frac{\zeta^2}{2}}, \quad x \in [0,-\zeta],
\end{cases} \label{DS:fbound3} \\
&e^{-\int_{0}^{x} \frac{2\bar b(u)}{\bar a(u)}du}\int_{x}^{\infty} \frac{2\abs{y}}{\bar a(y)} e^{\int_{0}^{y} \frac{2\bar b(u)}{\bar a(u)}du}dy \leq 
\begin{cases}
\frac{2}{\mu} + \frac{1}{\mu \zeta^2} + \frac{\delta}{2\mu \abs{\zeta}}, \quad x \in [0,-\zeta],\\
\frac{x}{\mu \abs{\zeta} } + \frac{1}{\mu \zeta^2} + \frac{\delta}{2\mu \abs{\zeta}}, \quad x \geq -\zeta,
\end{cases} \label{DS:fbound4} \\
&\EE \big|Y(\infty)\big| \leq \sqrt{\delta^2 + 2} + \sqrt{2\delta^2 + 4} + \frac{2+\delta^2}{\abs{\zeta}} + \delta. \label{DS:fbound7}
\end{align}
\end{lemma}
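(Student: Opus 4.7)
The proof strategy parallels that of Lemma~\ref{lem:lowlevelCW}, since the drift $\bar b(x)$ in \eqref{DS:ab} is identical to the Chapter~\ref{chap:erlangAC} Erlang-C drift \eqref{CW:ab}; what is new is the state-dependent diffusion coefficient $\bar a(x)$. I would first record that $\bar b(x)$ satisfies (\hyperref[eq:a1]{a1})--(\hyperref[eq:a2]{a2}) with $x_0 = 0$, so the generic framework of Lemmas~\ref{lem:main} and \ref{lem:poly} applies. The coarse two-sided bound $\mu \leq \bar a(x) \leq \mu(2+\delta|\zeta|)$, in particular $2/\bar a(x) \leq 2/\mu$, then lets me recover direct analogues of Lemma~\ref{lem:lowlevelCW} up to the explicit constants stated in \eqref{DS:fbound1}--\eqref{DS:fbound4}.

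For \eqref{DS:fbound1} with $x \leq 0$, I would invoke \eqref{eq:near_orig_left} with $c_1 = -1$ and the bound $\sup_{y \in [-1,0]} 2/\bar a(y) \leq 2/\mu$, yielding $3/\mu$. For $x \in [0,-\zeta]$, I would split the integral at $0$: the $(-\infty,0]$ piece is handled as above and multiplied by $e^{-\int_0^x 2\bar b(u)/\bar a(u)du}$, while the $[0,x]$ piece is bounded by $\frac{2|\zeta|}{\mu}$ times a density ratio. Both ratios need to be controlled by $e^{\zeta^2}$, which is where the non-Gaussian form of the density on $[-1/\delta, -\zeta]$ enters. For \eqref{DS:fbound2} I would apply \eqref{eq:main2} directly on $x \geq -\zeta$ to obtain $1/(\mu|\zeta|)$; for $x \in [0,-\zeta]$ I would split at $-\zeta$, bound the right-tail ratio by $1$ (since $2\bar b/\bar a \geq 0$ there), and handle the middle interval $[x,-\zeta]$ with \eqref{eq:near_orig_right} at $c_2 = 1 \wedge |\zeta|$, mirroring the proof of Lemma~\ref{lem:lowlevelCW}. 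The bounds \eqref{DS:fbound3}--\eqref{DS:fbound4} follow by analogous splits; for \eqref{DS:fbound4} on $x \geq -\zeta$, I would apply Lemma~\ref{lem:poly} using that $\bar a$ is constant equal to $\mu(2+\delta|\zeta|)$ on that half-line, which produces the extra $\frac{\delta}{2\mu|\zeta|}$ term absent from the Chapter~\ref{chap:erlangAC} analogue. Finally, \eqref{DS:fbound7} is obtained by a Foster--Lyapunov argument with $V(x) = x^2$: computing $G_{Y_S}V(x) = \tfrac{1}{2}\bar a(x)\cdot 2 + 2x\bar b(x)$ and splitting into the regions $x \leq -\zeta$ and $x \geq -\zeta$ gives an inequality of the form $G_{Y_S}V(x) \leq -c x^2 1(x\leq -\zeta) - c|\zeta| x 1(x \geq -\zeta) + \mu(2+\delta|\zeta|)$; the standard argument from \cite{MeynTwee1993b} combined with Jensen's inequality then delivers \eqref{DS:fbound7}, the $\delta$-dependent constants arising precisely from the state-dependent $\bar a(x)$.

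The main obstacle will be controlling the density on the middle region $[-1/\delta,-\zeta]$, where from \eqref{DS:pdef} it has the non-Gaussian form $\exp\!\big(\tfrac{4}{\delta^2}[\log(2+\delta y) - \log 2] - \tfrac{2y}{\delta}\big)$. The comparison to the Chapter~\ref{chap:erlangAC} Gaussian density $e^{-y^2/2}$ uniformly in $\delta$ rests on a second-order Taylor expansion of $\log(2+\delta y)$ at $y = 0$ with Lagrange-form remainder, exactly as executed in the proof of Lemma~\ref{lem:densbound} in Section~\ref{sec:DSappendix}; this produces a pointwise inequality $e^{\int_0^y 2\bar b(u)/\bar a(u)du} \leq C e^{-y^2/2}$ on the relevant range, with $C$ a universal constant. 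Once this density comparison is established, the remainder of the argument is systematic bookkeeping: splitting each integral at the transition points $\{-1/\delta, 0, -\zeta\}$, applying the generic Lemmas~\ref{lem:main} and \ref{lem:poly} on each subinterval, and collecting the explicit constants displayed in the statement.
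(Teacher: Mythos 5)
Your proposal follows the paper's approach almost exactly: the decomposition of each integral at the transition points $0$, $-\zeta$, the invocation of Lemmas~\ref{lem:main} and \ref{lem:poly} with the same choices of auxiliary points, the use of the exact anti-derivative identity $2|y|/\bar a(y) = \pm\frac{1}{\mu}\frac{2\bar b(y)}{\bar a(y)}$ on $y \leq -\zeta$ for \eqref{DS:fbound3} (implicit in your ``analogous to Lemma~\ref{lem:lowlevelCW}'' remark), the direct application of \eqref{eq:m2} on the constant-coefficient half-line $x \geq -\zeta$ to produce the extra $\frac{\delta}{2\mu|\zeta|}$ term in \eqref{DS:fbound4}, and the Foster--Lyapunov argument with $V(x) = x^2$ for \eqref{DS:fbound7}. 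The second-order Taylor expansion of $\log(2+\delta y)$ with Lagrange remainder is indeed the key technical device, and you correctly single it out.

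One direction needs fixing. You write that the Taylor expansion ``produces a pointwise inequality $e^{\int_0^y 2\bar b(u)/\bar a(u)du} \leq C e^{-y^2/2}$,'' i.e.\ a Gaussian \emph{upper} bound on the density. This is not the inequality the proof needs, and in fact it fails to hold with a universal $C$: on $[0,-\zeta]$ the Lagrange remainder gives $\exp\!\big(\int_0^x \frac{2\bar b}{\bar a}\big) = \exp\!\big(-\tfrac{2x^2}{(2+\xi(\delta x))^2}\big)$ with $\xi(\delta x) \in [0,\delta x]$, so $(2+\xi)^2 \geq 4$ gives the \emph{lower} bound $e^{\int_0^x 2\bar b/\bar a} \geq e^{-x^2/2}$, while $(2+\xi)^2 \leq (2+\delta|\zeta|)^2$ is unbounded over the parameter range. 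The inequality the proof actually uses, as in \eqref{DS:taylorbound}, is $e^{-\int_0^x 2\bar b(u)/\bar a(u)\,du} \leq e^{x^2/2}$ for $x \in [0,-\zeta]$, i.e.\ a Gaussian lower bound on the unnormalized density; this is what makes the $e^{-\int_0^x}$ prefactors in the split integrals controllable by $e^{\zeta^2/2}$. (The upper bound you cite from the proof of Lemma~\ref{lem:densbound} is likewise a lower bound on the density, established on $[-1/\delta,0]$, and is not what you need here.) Once the direction is corrected the rest of your outline goes through as stated.
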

\begin{proof}[Proof of Lemma~\ref{lem:lowlevelbounds}]
To prove this lemma we verify \eqref{DS:fbound1}--\eqref{DS:fbound7} one at a time. We now prove \eqref{DS:fbound1}. Using \eqref{eq:near_orig_left}  with $c_1 = -1$, we see that for $x \leq 0$, 
\begin{align}
e^{-\int_{0}^{x} \frac{2\bar b(u)}{\bar a(u)}du}\int_{-\infty}^{x} \frac{2}{\bar a(y)} e^{\int_{0}^{y} \frac{2\bar b(u)}{\bar a(u)}du} dy\leq \frac{1}{\bar b(-1)}  +  \sup_{y \in [-1, 0]} \frac{2}{\bar a(y)} \leq \frac{1}{\mu } + \frac{2}{\mu} = \frac{3}{\mu}. \label{DS:interm0}
\end{align}
For  $x \in [0,-\zeta]$,
\begin{align}
& e^{-\int_{0}^{x} \frac{2\bar b(u)}{\bar a(u)}du}\int_{-\infty}^{x} \frac{2}{\bar a(y)} e^{\int_{0}^{y} \frac{2\bar b(u)}{\bar a(u)}du} dy \notag  \\
=&\  \frac{e^{-\int_{0}^{x} \frac{2\bar b(u)}{\bar a(u)}du}}{e^{-\int_{0}^{0} \frac{2\bar b(u)}{\bar a(u)}du}} e^{-\int_{0}^{0} \frac{2\bar b(u)}{\bar a(u)}du}\int_{-\infty}^{0} \frac{2}{\bar a(y)} e^{\int_{0}^{y} \frac{2\bar b(u)}{\bar a(u)}du} dy + e^{-\int_{0}^{x} \frac{2\bar b(u)}{\bar a(u)}du}\int_{0}^{x} \frac{2}{\bar a(y)} e^{\int_{0}^{y} \frac{2\bar b(u)}{\bar a(u)}du} dy \notag \\
\leq&\ e^{-\int_{0}^{x} \frac{2\bar b(u)}{\bar a(u)}du}\frac{3}{\mu } +  e^{-\int_{0}^{x} \frac{2\bar b(u)}{\bar a(u)}du}\int_{0}^{-\zeta} \frac{2}{\bar a(y)} e^{\int_{0}^{y} \frac{2\bar b(u)}{\bar a(u)}du} dy \notag  \\
\leq&\ e^{-\int_{0}^{x} \frac{2\bar b(u)}{\bar a(u)}du}\frac{3}{\mu } +  e^{-\int_{0}^{x} \frac{2\bar b(u)}{\bar a(u)}du}\abs{\zeta} \frac{1}{\mu }, \label{DS:interm1}
\end{align}
where in the second last inequality we used \eqref{DS:interm0}, and in the last inequality we used the fact that $e^{\int_{0}^{y} \frac{2\bar b(u)}{\bar a(u)}du} \leq 1$ and $\bar a(y) \geq 2\mu$ for $y \in [0,-\zeta]$. From \eqref{DS:pdef}, we know that 
\begin{align*}
e^{-\int_{0}^{x} \frac{2\bar b(u)}{\bar a(u)}du} =&\  \exp\Big( -\frac{4}{\delta^2} \big(\log(2 + \delta x) -\log(2) -  \delta x/2 \big)  \Big), \quad x \in [0,-\zeta].
\end{align*}
Using Taylor expansion,
\begin{align}
\log(2 + y) = \log(2) + \frac{1}{2}y - \frac{1}{2} \frac{y^2}{(2 + \xi(y))^2}, \quad y \in (-2, \infty), \label{eq:logtaylor}
\end{align}
where $\xi(y)$ is some point between $0$ and $y$. Therefore, for $x \in [0,-\zeta]$,
\begin{align}
e^{-\int_{0}^{x} \frac{2\bar b(u)}{\bar a(u)}du} =&\ \exp\Big( -\frac{4}{\delta^2} \big(\log(2 + \delta x) -\log(2) -  \delta x/2 \big)  \Big)  \notag \\
=&\  \exp\Big( \frac{4}{\delta^2} \frac{1}{2} \frac{\delta^2 x^2}{(2 + \xi(\delta x))^2}  \Big)  \notag \\
\leq&\ \exp\Big( \frac{ x^2}{2}  \Big), \label{DS:taylorbound}
\end{align}
where in the last inequality we used the fact that $\xi(\delta x) \geq 0$ for $x \geq 0$. Combining this with \eqref{DS:interm1}, we conclude that 
\begin{align*}
e^{-\int_{0}^{x} \frac{2\bar b(u)}{\bar a(u)}du}\int_{-\infty}^{x} \frac{2}{\bar a(y)} e^{\int_{0}^{y} \frac{2\bar b(u)}{\bar a(u)}du} dy\leq&\ \frac{e^{\zeta^2} }{\mu}(3 + \abs{\zeta}), \quad x \in [0,-\zeta],
\end{align*}
which proves \eqref{DS:fbound1}. We now prove \eqref{DS:fbound2}. When $x \geq -\zeta$, \eqref{eq:main2} implies that 
\begin{align*}
e^{-\int_{0}^{x} \frac{2\bar b(u)}{\bar a(u)}du}\int_{x}^{\infty} \frac{2}{\bar a(y)} e^{\int_{0}^{y} \frac{2\bar b(u)}{\bar a(u)}du} dy \leq \frac{1}{\abs{\bar b(x)}} =  \frac{1}{\mu \abs{\zeta}}.
\end{align*}
When $x \in [0,-\zeta]$, we can repeat the procedure in \eqref{CW:same} to see that 
\begin{align*}
e^{-\int_{0}^{x} \frac{2\bar b(u)}{\bar a(u)}du}\int_{x}^{\infty} \frac{2}{\bar a(y)} e^{\int_{0}^{y} \frac{2\bar b(u)}{\bar a(u)}du} dy \leq&\ e^{-\int_{0}^{x} \frac{2\bar b(u)}{\bar a(u)}du}\int_{x}^{-\zeta} \frac{2}{\bar a(y)} e^{\int_{0}^{y} \frac{2\bar b(u)}{\bar a(u)}du} dy + \frac{1}{\mu \abs{\zeta}}.
\end{align*}
We now bound the first term on the right hand side above. When $\abs{\zeta} \geq 1$, we use \eqref{eq:near_orig_right} with $c_2 = 1$ to see that 
\begin{align*}
e^{-\int_{0}^{x} \frac{2\bar b(u)}{\bar a(u)}du}\int_{x}^{-\zeta} \frac{2}{\bar a(y)} e^{\int_{0}^{y} \frac{2\bar b(u)}{\bar a(u)}du} dy \leq&\ e^{-\int_{0}^{x} \frac{2\bar b(u)}{\bar a(u)}du}\int_{x}^{\infty} \frac{2}{\bar a(y)} e^{\int_{0}^{y} \frac{2\bar b(u)}{\bar a(u)}du} dy \\
\leq&\ \frac{1}{\abs{\bar b(1)}} + \frac{1}{\mu } = \frac{2}{\mu }.
\end{align*}
When $\abs{\zeta} \leq 1$, \eqref{DS:taylorbound} implies that 
\begin{align*}
e^{-\int_{0}^{x} \frac{2\bar b(u)}{\bar a(u)}du}\int_{x}^{-\zeta} \frac{2}{\bar a(y)} e^{\int_{0}^{y} \frac{2\bar b(u)}{\bar a(u)}du} dy \leq&\ e^{-\int_{0}^{-\zeta} \frac{2\bar b(u)}{\bar a(u)}du}\int_{0}^{1} \frac{2}{\bar a(y)}dy \leq  e^{\zeta^2/2} \frac{1}{\mu } \leq \frac{2}{\mu }.
\end{align*}
Therefore, for $x \in [0,-\zeta]$,
\begin{align*}
e^{-\int_{0}^{x} \frac{2\bar b(u)}{\bar a(u)}du}\int_{x}^{\infty} \frac{2}{\bar a(y)} e^{\int_{0}^{y} \frac{2\bar b(u)}{\bar a(u)}du} dy \leq \frac{2}{\mu} + \frac{1}{\mu \abs{\zeta}},
\end{align*}
which proves \eqref{DS:fbound2}. We now prove \eqref{DS:fbound3}. For $x \leq 0$,
 \begin{align*}
 e^{-\int_{0}^{x} \frac{2\bar b(u)}{\bar a(u)}du}\int_{-\infty}^{x} \frac{2\abs{y}}{\bar a(y)} e^{\int_{0}^{y} \frac{2\bar b(u)}{\bar a(u)}du} dy =&\  e^{-\int_{0}^{x} \frac{2\bar b(u)}{\bar a(u)}du}\int_{-\infty}^{x}\frac{1}{\mu } \frac{2\bar b(y)}{\bar a(y)} e^{\int_{0}^{y} \frac{2\bar b(u)}{\bar a(u)}du} dy \\
=&\ \frac{1}{\mu }\Big(1 - e^{-\int_{-\infty}^{x} \frac{2\bar b(u)}{\bar a(u)}du}\Big)\\
\leq&\  \frac{1}{\mu}.
 \end{align*}
When $x \in [0,-\zeta]$,  
\begin{align*}
&e^{-\int_{0}^{x} \frac{2\bar b(u)}{\bar a(u)}du}\int_{-\infty}^{x} \frac{2\abs{y}}{\bar a(y)} e^{\int_{0}^{y} \frac{2\bar b(u)}{\bar a(u)}du} dy \\
=&\  e^{-\int_{0}^{x} \frac{2\bar b(u)}{\bar a(u)}du}\int_{-\infty}^{0}\frac{-2y}{\bar a(y)} e^{\int_{0}^{y} \frac{2\bar b(u)}{\bar a(u)}du} dy + e^{-\int_{0}^{x} \frac{2\bar b(u)}{\bar a(u)}du}\int_{0}^{x}\frac{2y}{\bar a(y)} e^{\int_{0}^{y} \frac{2\bar b(u)}{\bar a(u)}du} dy \\
=&\  e^{-\int_{0}^{x} \frac{2\bar b(u)}{\bar a(u)}du}\int_{-\infty}^{0}\frac{1}{\mu } \frac{2\bar b(y)}{\bar a(y)} e^{\int_{0}^{y} \frac{2\bar b(u)}{\bar a(u)}du} dy - e^{-\int_{0}^{x} \frac{2\bar b(u)}{\bar a(u)}du}\int_{0}^{x}\frac{1}{\mu } \frac{2\bar b(y)}{\bar a(y)} e^{\int_{0}^{y} \frac{2\bar b(u)}{\bar a(u)}du} dy \\
=&\ \frac{1}{\mu }e^{-\int_{0}^{x} \frac{2\bar b(u)}{\bar a(u)}du} \bigg( \Big(e^{\int_{0}^{0} \frac{2\bar b(u)}{\bar a(u)}du} - e^{\int_{0}^{-\infty} \frac{2\bar b(u)}{\bar a(u)}du}\Big) - \Big(e^{\int_{0}^{x} \frac{2\bar b(u)}{\bar a(u)}du} - e^{\int_{0}^{0} \frac{2\bar b(u)}{\bar a(u)}du}\Big) \bigg) \\
\leq&\ \frac{2}{\mu }e^{-\int_{0}^{x} \frac{2\bar b(u)}{\bar a(u)}du}  \leq  \frac{2}{\mu}e^{\zeta^2/2},
\end{align*}
where in the last inequality we used \eqref{DS:taylorbound}. This proves \eqref{DS:fbound3}, and now we prove \eqref{DS:fbound4}. Fix $x \in [0,-\zeta]$.

We now prove \eqref{CW:fbound4}. Since $\bar a(x) = \mu(2+\delta \abs{\zeta})$ for $x \geq -\zeta$, we can use \eqref{eq:m2} to see that for $x \geq -\zeta$,
\begin{align*}
e^{-\int_{0}^{x} \frac{2\bar b(u)}{\bar a(u)}du}\int_{x}^{\infty} \frac{2\abs{y}}{\bar a(y)} e^{\int_{0}^{y} \frac{2\bar b(u)}{\bar a(u)}du}dy \leq&\ \frac{x}{\abs{\bar b(x)}} + \frac{\mu (2 + \delta \abs{\zeta})}{2\abs{\bar b(x)}} \frac{1}{\abs{\bar b(x)}} \\
=&\  \frac{x}{\mu\abs{\zeta}} +   \frac{2 + \delta\abs{\zeta}}{2\abs{\zeta}}\frac{1}{\mu\abs{\zeta}}.
\end{align*}
Furthermore, for $x \in [0,-\zeta]$,
\begin{align*}
& e^{-\int_{0}^{x} \frac{2\bar b(u)}{\bar a(u)}du}\int_{x}^{\infty} \frac{2\abs{y}}{\bar a(y)} e^{\int_{0}^{y} \frac{2\bar b(u)}{\bar a(u)}du}dy \\
=&\  - e^{-\int_{0}^{x} \frac{2\bar b(u)}{\bar a(u)}du}\int_{x}^{-\zeta}\frac{1}{\mu } \frac{2\bar b(y)}{\bar a(y)} e^{\int_{0}^{y} \frac{2\bar b(u)}{\bar a(u)}du} dy \\
&+ \frac{e^{-\int_{0}^{x} \frac{2\bar b(u)}{\bar a(u)}du}}{e^{-\int_{0}^{-\zeta} \frac{2\bar b(u)}{\bar a(u)}du}} e^{-\int_{0}^{-\zeta} \frac{2\bar b(u)}{\bar a(u)}du}\int_{-\zeta}^{\infty} \frac{2\abs{y}}{\bar a(y)} e^{\int_{0}^{y} \frac{2\bar b(u)}{\bar a(u)}du}dy \\
\leq&\  - e^{-\int_{0}^{x} \frac{2\bar b(u)}{\bar a(u)}du}\int_{x}^{-\zeta}\frac{1}{\mu } \frac{2\bar b(y)}{\bar a(y)} e^{\int_{0}^{y} \frac{2\bar b(u)}{\bar a(u)}du} dy + e^{-\int_{0}^{-\zeta} \frac{2\bar b(u)}{\bar a(u)}du}\int_{-\zeta}^{\infty} \frac{2\abs{y}}{\bar a(y)} e^{\int_{0}^{y} \frac{2\bar b(u)}{\bar a(u)}du}dy\\
=&\ \frac{1}{\mu }e^{-\int_{0}^{x} \frac{2\bar b(u)}{\bar a(u)}du} \Big(-e^{\int_{0}^{-\zeta} \frac{2\bar b(u)}{\bar a(u)}du} + e^{\int_{0}^{x} \frac{2\bar b(u)}{\bar a(u)}du}\Big) + e^{-\int_{0}^{-\zeta} \frac{2\bar b(u)}{\bar a(u)}du}\int_{-\zeta}^{\infty} \frac{2\abs{y}}{\bar a(y)} e^{\int_{0}^{y} \frac{2\bar b(u)}{\bar a(u)}du}dy\\
\leq&\ \frac{1}{\mu } + \Big( \frac{\abs{\zeta}}{\mu \abs{\zeta}} +  \frac{2 + \delta\abs{\zeta}}{2\abs{\zeta}}\frac{1}{\mu\abs{\zeta}}\Big),
\end{align*}
where in the first inequality, we used the fact that $ e^{-\int_{0}^{x} \frac{2\bar b(u)}{\bar a(u)}du} \leq e^{-\int_{0}^{-\zeta} \frac{2\bar b(u)}{\bar a(u)}du}$. 
This proves \eqref{DS:fbound4}, and we move on to verify \eqref{DS:fbound7}. Consider the Lyapunov function $V(x) = x^2$, and recall the form of $G_Y$ from \eqref{eq:GY} to see that 
\begin{align*}
G_Y V(x) =&\ 2x\mu (\zeta + (x + \zeta)^-) + 2\mu \Big(1 + 1(x > -1/\delta)\big(1 -  \delta(\zeta + (x + \zeta)^-)\big)\Big).
\end{align*}
Now when $x < -\zeta$, 
\begin{align*}
G_Y V(x) =&\ -2\mu x^2 + 2\mu \big(1 + 1(x > -1/\delta)(1 +\delta x) \big)\\
\leq&\ -2\mu x^2  + 2\mu \delta x 1\big( x \in [0, -\zeta)\big)+ 4\mu \\
=&\ -2\mu x^2 1(x < 0) - 2\mu \big(x^2 - \delta x \big) 1\big( x \in [0, -\zeta)\big) + 4\mu \\
\leq&\ -2\mu x^2 1(x < 0) - \mu \big(x^2 - \delta^2 \big) 1\big( x \in [0, -\zeta)\big) + 4\mu \\
\leq&\ -2\mu x^2 1(x < 0) - \mu x^2 1\big( x \in [0, -\zeta)\big) + \mu \delta^2 + 4\mu,
\end{align*}
and when $x \geq -\zeta$, 
\begin{align*}
G_Y V(x) =&\ - 2x\mu \abs{\zeta} + 2 \delta \mu \abs{\zeta} + 4\mu \\
=&\ - 2\mu \abs{\zeta} ( x - \delta) 1( \abs{\zeta} < \delta) - 2\mu \abs{\zeta} ( x - \delta) 1( \abs{\zeta} \geq \delta)  + 4\mu\\
\leq&\ - 2\mu \abs{\zeta}x 1( \abs{\zeta} < \delta)   + 2\mu \delta^21( \abs{\zeta} < \delta) - 2\mu \abs{\zeta} ( x - \delta) 1( \abs{\zeta} \geq \delta) +  4\mu.
\end{align*}
Therefore, 
\begin{align*}
G_Y V(x) \leq&\ -2\mu x^2 1(x < 0) -\mu x^2 1(x \in [0,-\zeta)) \\
  & - 2\mu \abs{\zeta}x 1( \abs{\zeta} < \delta)1(x \geq -\zeta) - 2\mu \abs{\zeta} ( x - \delta) 1( \abs{\zeta} \geq \delta)1(x \geq -\zeta)\\
  & + 2\mu \delta^21( \abs{\zeta} < \delta) 1(x \geq -\zeta)  + \mu \delta^2 1(x < -\zeta) + 4\mu,
\end{align*}
i.e.\ $G_Y V(x)$ satisfies 
\begin{align*}
G_Y V(x) \leq -f(x) + g(x),
\end{align*}
where $f(x)$ and $g(x)$ are functions from $\R \to \R_+$. By the standard Foster-Lyapunov condition (see for example \cite[Theorem 4.3]{MeynTwee1993b}), this implies that
\begin{align*}
\EE f(Y(\infty)) \leq \EE g(Y(\infty)),
\end{align*}
or
\begin{align*}
&\ 2\EE \big[ (Y(\infty))^2 1(Y(\infty) < 0)\big] + \EE \big[ (Y(\infty))^2 1(Y(\infty) \in [0,-\zeta))\big] \\
&+ 2\abs{\zeta} \EE \big[Y(\infty)1(Y(\infty) \geq -\zeta)\big] 1( \abs{\zeta} < \delta)\\
&+ 2\abs{\zeta} \EE \big[(Y(\infty)-\delta )1(Y(\infty) \geq -\zeta)\big] 1(\abs{\zeta} \geq \delta) \\
 \leq&\ 2\delta^2 + 4,
\end{align*}
from which we can see that
\begin{align*}
\EE \big[Y(\infty)1(Y(\infty) \geq -\zeta)\big] \leq&\ \frac{\delta^2}{\abs{\zeta}} +\frac{2}{\abs{\zeta}} +  \delta.
\end{align*}
Furthermore, by invoking Jensen's inequality we see that
\begin{align*}
\EE \Big[ \big|Y(\infty) 1(Y(\infty) < 0)\big|\Big] \leq&\  \sqrt{\EE \big[ (Y(\infty))^2 1(Y(\infty) < 0)\big]} \\
\leq&\ \sqrt{\delta^2 + 2}, \\
\EE \Big[ \big|Y(\infty) 1(Y(\infty) \in [0,-\zeta))\big|\Big] \leq&\  \sqrt{\EE \big[ (Y(\infty))^2 1(Y(\infty) \in [0,-\zeta))\big]} \\
\leq&\ \sqrt{2\delta^2 + 4}.
\end{align*}
Hence 
\begin{align*}
\EE \big[ \big|Y(\infty)\big|\big] =&\ \EE \Big[ \big|Y(\infty) 1(Y(\infty) < 0)\big|\Big] + \EE \Big[ \big|Y(\infty) 1(Y(\infty) \in [0,-\zeta))\big|\Big] \\
&+ \EE \big[Y(\infty)1(Y(\infty) \geq -\zeta)\big]\\
\leq&\ \sqrt{\delta^2 + 2} + \sqrt{2\delta^2 + 4} + \frac{2+\delta^2}{\abs{\zeta}} + \delta.
\end{align*}
This proves \eqref{DS:fbound7} and concludes the proof of this lemma.
\end{proof}

\noindent We are now ready to prove Lemma~\ref{lem:gb} and \ref{lem:MDgradient_bounds}.
\subsection{Proof of Lemma~\ref{lem:gb} ($W_2$ Bounds)}
\begin{proof}[Proof of Lemma~\ref{lem:gb} ]
Recall our assumption that $R \geq 1$, or equivalently, $\delta \leq 1$. Throughout the proof we use $C > 0$ to denote a generic constant that does not depend on $\lambda,n$, and $\mu$, and may change from line to line. We begin by bounding $f_h'(x)$. Observe that since $h(x) \in W_2$ and $h(0) = 0$, then \eqref{eq:fprimeneg} and \eqref{eq:fprimepos} imply that 
\begin{align*}
f_h'(x) \leq&\ e^{-\int_{0}^{x} \frac{2\bar b(u)}{\bar a(u)}du}\int_{-\infty}^{x} \frac{2}{\bar a(y)} ( |y| + \E \abs{Y(\infty)}) e^{\int_{0}^{y} \frac{2\bar b(u)}{\bar a(u)}du} dy,  \\
f_h'(x) \leq &\ e^{-\int_{0}^{x} \frac{2\bar b(u)}{\bar a(u)}du} \int_{x}^{\infty} \frac{2}{\bar a(y)} ( |y| + \E \abs{Y(\infty)}) e^{\int_{0}^{y} \frac{2\bar b(u)}{\bar a(u)}du} dy.
\end{align*}
We apply \eqref{DS:fbound1}, \eqref{DS:fbound3}, and \eqref{DS:fbound7} to the first inequality above when $x \leq -\zeta$ to see that
\begin{align*}
\mu \abs{f_h'(x)} \leq&\ C\Big( 1 +\frac{1}{\abs{\zeta}} \Big), \quad x \leq 0,\notag \\
\mu \abs{f_h'(x)} \leq&\  2e^{\frac{1}{2}\zeta^2} + e^{\zeta^2} (3 + \abs{\zeta})\EE \big|Y(\infty)\big|, \quad x \in [0,-\zeta],
\end{align*}
and apply \eqref{DS:fbound2}, \eqref{DS:fbound4}, and \eqref{DS:fbound7} to the second inequality when $x \geq 0$ to see that
\begin{align*}
\mu \abs{f_h'(x)} \leq&\ 2 + \frac{1}{\zeta^2} + \frac{\delta}{2\abs{\zeta}} + \Big(2 +  \frac{1}{\abs{\zeta}} \Big)\EE \big|Y(\infty)\big|, \quad x \in [0,-\zeta], \notag \\
\mu \abs{f_h'(x)} \leq&\ \frac{C}{\abs{\zeta}} \Big(x + 1 + \frac{1}{\abs{\zeta}}\Big), \quad x \geq -\zeta. 
\end{align*}
Above, there are two possible bounds on $\mu \abs{f_h'(x)}$ when $x \in [0, -\zeta]$. By considering separately the cases when $\abs{\zeta} \leq 1$ and $\abs{\zeta} \geq 1$, and using \eqref{DS:fbound7} to bound $\EE \big| Y(\infty) \big|$, we conclude that
\begin{align*}
\mu \abs{f_h'(x)} \leq&\ C\Big( 1 +\frac{1}{\abs{\zeta}} \Big), \quad x \in [0,-\zeta].
\end{align*}
Therefore,
\begin{align}
\abs{f_h'(x)} 
\leq
\begin{cases}
\frac{C}{\mu }\Big( 1 +\frac{1}{\abs{\zeta}} \Big), \quad x \leq -\zeta,\\
\frac{C}{\mu \abs{\zeta}}\Big(x + 1 + \frac{1}{\abs{\zeta}}\Big), \quad x \geq -\zeta,
\end{cases} \label{eq:inlineder1}
\end{align}
which proves \eqref{DS:WCder1}. Using \eqref{DS:ab}, \eqref{DS:pdef}, and \eqref{eq:inlineder1}, the reader can verify that \eqref{eq:mlimits} and \eqref{eq:plimits} are satisfied, which allows us to use the two forms of $f_h''(x)$ in \eqref{eq:hf21} and \eqref{eq:hf22}. We now bound $\abs{f_h''(x)}$. Since $h(0) = 0$ and $h(x) \in W_2$, we know that $\abs{h(x)} \leq \abs{x}$ and $\abs{h'(x)} \leq 1$ for all $x \in \R$. From \eqref{eq:hf21} and \eqref{eq:hf22}, it follows that
\begin{align}
\abs{f_h''(x)} \leq&\ e^{-\int_{0}^{x} \frac{2\bar b(u)}{\bar a(u)}du} \int_{-\infty}^{x} \Big(\frac{2}{\bar a(y)} +  \frac{2\abs{\bar a'(y)y}}{a^2(y)}+ \frac{2\abs{\bar a'(y)}}{a^2(y)}\EE \big| Y(\infty)\big| \notag  \\
& \hspace{5cm} + \abs{\Big(\frac{2\bar b(y)}{\bar a(y)}\Big)' f_h'(y)}\Big)e^{\int_{0}^{y} \frac{2\bar b(u)}{\bar a(u)}du}dy, \label{eq:der2bound1} \\
\abs{f_h''(x)} \leq&\ e^{-\int_{0}^{x} \frac{2\bar b(u)}{\bar a(u)}du} \int_{x}^{\infty} \Big(\frac{2}{\bar a(y)} +  \frac{2\abs{\bar a'(y)y}}{a^2(y)} + \frac{2\abs{\bar a'(y)}}{a^2(y)}\EE \big| Y(\infty)\big| \notag \\
& \hspace{5cm} + \abs{\Big(\frac{2\bar b(y)}{\bar a(y)}\Big)' f_h'(y)}\Big)e^{\int_{0}^{y} \frac{2\bar b(u)}{\bar a(u)}du} dy. \label{eq:der2bound2}
\end{align}
We now bound the terms inside the integrals above. By definition of $\bar a(x)$ in \eqref{DS:ab}, we see that
\begin{align}
\bar a'(x) = \mu \delta 1(x \in (-1/\delta, -\zeta]), \label{eq:ap}
\end{align}
where $\bar a'(x)$ is interpreted as the left derivative for $x = -1/\delta$ and $x = -\zeta$. Therefore,
\begin{align}
\frac{\abs{\bar a'(x)x}}{\bar a(x)} =&\ \frac{\mu \delta \abs{x}}{\mu (2 + \delta x)}1(x \in (-1/\delta, -\zeta]) \leq 1(x \in (-1/\delta, -\zeta]), \label{eq:ap1} \\
\EE \big| Y(\infty) \big| \frac{\abs{\bar a'(x)}}{\bar a(x)} =&\ \EE \big| Y(\infty) \big|\frac{\mu \delta}{\mu (2 + \delta x)}1(x \in (-1/\delta, -\zeta]) \notag \\
 \leq&\ \delta C\Big(1 + \frac{1}{\abs{\zeta}}\Big) 1(x \in (-1/\delta, -\zeta]), \label{eq:ap2}
\end{align}
where in the last inequality we used \eqref{DS:fbound7} and the fact that $\delta \leq 1$ to bound $\EE \big| Y(\infty) \big|$. Furthermore, 
\begin{align}
\frac{2\bar b(x)}{\bar a(x)} = 
\begin{cases}
-2x, \quad x \leq -1/\delta,\\
\frac{-2x}{2 +\delta x}, \quad x \in [-1/\delta, -\zeta], \\
\frac{2\zeta}{2 + \delta\abs{\zeta}}, \quad x \geq -\zeta,
\end{cases}
\quad
\Big(\frac{2\bar b(x)}{\bar a(x)}\Big)' = 
\begin{cases}
-2, \quad x \leq -1/\delta,\\
\frac{-4}{(2+\delta x)^2}, \quad x \in (-1/\delta, -\zeta], \\
0, \quad x > -\zeta,
\end{cases} \label{eq:rform}
\end{align}
where $\Big(\frac{2\bar b(x)}{\bar a(x)}\Big)'$ is interpreted as the left derivative at the points $x = -1/\delta$ and $x = -\zeta$. Combining \eqref{eq:rform} with the bound on $f_h'(x)$ in  \eqref{eq:inlineder1}, we get
\begin{align}
\abs{\Big(\frac{2\bar b(x)}{\bar a(x)}\Big)'f_h'(x)} =&\ 2\abs{f_h'(x)}1(x \leq  -1/\delta) + \frac{4}{(2+\delta x)^2}\abs{f_h'(x)}1(x \in  (-1/\delta, -\zeta]) \notag \\
\leq&\ 2\abs{f_h'(x)}1(x \leq  -1/\delta) + \frac{4}{2+\delta x}\abs{f_h'(x)}1(x \in  (-1/\delta, -\zeta]) \notag \\
\leq&\  \frac{C}{1 + 1(x \in  (-1/\delta, -\zeta])(1+ \delta x  )} \frac{1}{\mu } \Big(1 +  \frac{1}{\abs{\zeta}}\Big) 1(x \leq -\zeta)\notag \\
=&\  \frac{C}{\bar a(x)} \Big(1 +  \frac{1}{\abs{\zeta}}\Big) 1(x \leq -\zeta). \label{eq:sabound2}
\end{align}
Therefore, when $x \leq -\zeta$ we apply the bounds in \eqref{eq:ap1}, \eqref{eq:ap2}, and \eqref{eq:sabound2} to \eqref{eq:der2bound1} to see that
\begin{align}
\abs{f_h''(x)} \leq&\ Ce^{-\int_{0}^{x} \frac{2\bar b(u)}{\bar a(u)}du} \int_{-\infty}^{x} \frac{1}{\bar a(y)} \Big(1 +  1(y \in (-1/\delta, -\zeta]) \notag \\
& \hspace{5cm} + \delta \Big(1 + \frac{1}{\abs{\zeta}}\Big) 1(y \in (-1/\delta, -\zeta]) \notag \\
& \hspace{5cm} + \Big(1 +  \frac{1}{\abs{\zeta}}\Big) 1(y \leq -\zeta)\Big)e^{\int_{0}^{y} \frac{2\bar b(u)}{\bar a(u)}du}dy \notag  \\
\leq&\ Ce^{-\int_{0}^{x} \frac{2\bar b(u)}{\bar a(u)}du}\int_{-\infty}^{x} \frac{1}{\bar a(y)}\Big(1 +  \frac{1}{\abs{\zeta}}\Big)e^{\int_{0}^{y} \frac{2\bar b(u)}{\bar a(u)}du} dy, \quad x \leq -\zeta \label{eq:fppboundprocessed1}
\end{align}
and when $x \geq 0$ we apply the same bounds to \eqref{eq:der2bound2} to see that 
\begin{align}
\abs{f_h''(x)} \leq&\ Ce^{-\int_{0}^{x} \frac{2\bar b(u)}{\bar a(u)}du} \int_{x}^{\infty} \frac{1}{\bar a(y)} \Big(1 +  1(y \in (-1/\delta, -\zeta]) \notag \\
& \hspace{5cm} + \delta \Big(1 + \frac{1}{\abs{\zeta}}\Big) 1(y \in (-1/\delta, -\zeta]) \notag \\
& \hspace{5cm} + \Big(1 +  \frac{1}{\abs{\zeta}}\Big) 1(y \leq -\zeta)\Big)e^{\int_{0}^{y} \frac{2\bar b(u)}{\bar a(u)}du}dy \notag  \\
\leq&\ Ce^{-\int_{0}^{x} \frac{2\bar b(u)}{\bar a(u)}du}\int_{x}^{\infty} \frac{1}{\bar a(y)}\Big(1 +  \frac{1}{\abs{\zeta}}\Big)e^{\int_{0}^{y} \frac{2\bar b(u)}{\bar a(u)}du} dy, \quad x \geq 0. \label{eq:fppboundprocessed2}
\end{align} 
We apply \eqref{DS:fbound1} to \eqref{eq:fppboundprocessed1} and \eqref{DS:fbound2} to \eqref{eq:fppboundprocessed2} to get
\begin{align*}
\abs{f_h''(x)} \leq&\ 
\begin{cases}
\frac{C}{\mu }\Big(1 +  \frac{1}{\abs{\zeta}}\Big), \quad x \leq 0,\\
\min \Big\{e^{\zeta^2/2} (3 + \abs{\zeta}) , 2 + \frac{1}{\abs{\zeta}} \Big\}\frac{C}{\mu }\Big(1 +  \frac{1}{\abs{\zeta}}\Big), \quad x \in [0,-\zeta],\\
\frac{C}{\mu \abs{\zeta}}, \quad x \geq -\zeta,
\end{cases} 
\end{align*}
and by considering separately the cases when $\abs{\zeta} \leq 1$ and $\abs{\zeta} \geq 1$, we conclude that
\begin{align}
\abs{f_h''(x)} \leq&\ 
\begin{cases}
\frac{C}{\mu }\Big(1 +  \frac{1}{\abs{\zeta}}\Big), \quad x \leq -\zeta,\\
\frac{C}{\mu \abs{\zeta}}, \quad x \geq -\zeta,
\end{cases} \label{eq:inlineder2}
\end{align}
which proves \eqref{DS:WCder2}. 

Now we prove \eqref{DS:WCder3}. Recall the form of $f_h'''(x)$ from \eqref{eq:fppp}, which together with the facts that $\abs{h(x)} \leq \abs{x}$ and $\abs{h'(x)} \leq 1$ implies that for all $x \in \R$, 
\begin{align*}
\abs{f_h'''(x)} \leq&\ \abs{\Big(\frac{2\bar b(x)}{\bar a(x)}\Big)' f_h'(x)} + \abs{\frac{2\bar b(x)}{\bar a(x)} f_h''(x)} + \frac{2}{\bar a(x)} + \frac{2\abs{\bar a'(x)}}{a^2(x)}\Big( \abs{x} + \EE \big| Y(\infty)\big| \Big),
\end{align*}
where $f_h'''(x)$ is interpreted as the left derivative at the points $x = -1/\delta$ and $x = -\zeta$. We apply the bound on $\abs{\Big(\frac{2\bar b(x)}{\bar a(x)}\Big)' f_h'(x)}$ from \eqref{eq:sabound2}, the bounds on $\abs{\bar a'(x) x}/\bar a(x)$ and $\EE \big| Y(\infty)\big|\abs{\bar a'(x)}/\bar a(x)$ from \eqref{eq:ap1} and \eqref{eq:ap2}, and the fact that $1/\bar a(x) \leq 1/\mu$ for all $x \in \R$ to see that 
\begin{align*}
\abs{f_h'''(x)} \leq&\ \frac{C}{\mu }\Big( 1 + \frac{1}{\abs{\zeta}} \Big) 1(x \leq -\zeta) + \frac{C}{\mu }1(x > -\zeta)  + \abs{\frac{2\bar b(x)}{\bar a(x)} f_h''(x)}.
\end{align*}
It remains to bound $\abs{\frac{2\bar b(x)}{\bar a(x)} f_h''(x)}$, but this term does not pose much added difficulty. Indeed, one can multiply both sides of \eqref{eq:fppboundprocessed1} and \eqref{eq:fppboundprocessed2} by $\abs{\frac{2\bar b(x)}{\bar a(x)}}$ and invoke \eqref{eq:main1} and \eqref{eq:main2} to arrive at 
\begin{align*}
\abs{\frac{2\bar b(x)}{\bar a(x)} f_h''(x)}  \leq&\ 
\begin{cases}
\frac{C}{\mu }\Big(1 +  \frac{1}{\abs{\zeta}}\Big), \quad x \leq -\zeta,\\
\frac{C}{\mu }, \quad x \geq -\zeta.
\end{cases}
\end{align*}
This proves \eqref{DS:WCder3} and concludes the proof of this lemma.
\end{proof}

\subsection{Proof of Lemma~\ref{lem:eterm} ($W_2$ Fourth Derivative)}
\label{app:eterm}
This section is devoted to proving Lemma~\ref{lem:eterm}.  In this entire section, we reserve the variable $x$ to be of the form $x = x_k = \delta (k - R)$, where $k \in \Z_+$. Let $a(x)$ and $b(x)$ be as in \eqref{DS:adef} and \eqref{DS:bdef}, respectively, and let $r(x) = 2b(x)/a(x)$, whose form can be found in \eqref{eq:rform}. The form of $f_h'''(x)$ in \eqref{eq:fppp} implies that for any $y \in \R$, 
\begin{align}
&\ \abs{f_h'''(y)-f_h'''(x-)} \notag \\
\leq&\ \abs{r'(y)-r'(x-)} \abs{f_h'(y)} + \abs{r'(x-)}\abs{f_h'(x)-f_h'(y)}  \notag \\
&+ \abs{r(y)-r(x)} \abs{f_h''(y)} + \abs{r(x)}\abs{f_h''(x)-f_h''(y)}  \notag \\
&+ \abs{2/a(x) - 2/a(y)} \abs{h'(y)} + \abs{2/a(x)}\abs{h'(x-)-h'(y)} \notag \\
&+ \abs{\frac{2a'(x-)}{a^2(x)} - \frac{2a'(y)}{a^2(y)}} \Big(\abs{h(y)} + \abs{\EE h(Y(\infty))}\Big) + \abs{\frac{2a'(x-)}{a^2(x)}}\abs{h(x)-h(y)}. \label{eq:lipf}
\end{align}
We first state a few auxiliary lemmas that will help us prove Lemma~\ref{lem:eterm}. These lemmas are proved at the end of this section. The first lemma deals with the case when $y \in (x-\delta, x)$.

\begin{lemma} \label{lem:llminus}
Fix $h(x) \in W_2$ with $h(0)=0$, and let $f_h(x)$ be the solution to the Poisson equation \eqref{eq:poisson} that satisfies the conditions of Lemma~\ref{lem:gb}. There exists a constant $C>0$ (independent of $\lambda, n$, and $\mu$), such that for all $x = x_k = \delta (k - R)$ with $k \in \Z_+$, all $y \in (x-\delta, x)$, and all $n \geq 1, \lambda > 0$, and $\mu > 0$ satisfying $1 \leq R < n $,
\begingroup
\allowdisplaybreaks
\begin{align}
&\ \abs{r'(y)-r'(x-)}\abs{f_h'(y)} + \abs{r'(x-)}\abs{f_h'(x)-f_h'(y)} \leq \frac{C\delta}{\mu }  \Big(1 +  \frac{1}{\abs{\zeta}}\Big)1(x \leq -\zeta), \label{eq:ebound1} \\
&\ \abs{r(y)-r(x)} \abs{f_h''(y)} + \abs{r(x)}\abs{f_h''(x)-f_h''(y)} \notag \\
&\hspace{0.5cm} \leq \frac{C\delta }{\mu }\bigg[(1+\abs{x})\Big(1 +  \frac{1}{\abs{\zeta}}\Big)  1( x\leq -\zeta) + \abs{\zeta}1(x \geq -\zeta + \delta) \bigg], \label{eq:ebound2} \\
&\ \abs{2/a(x) - 2/a(y)} \abs{h'(y)} + \abs{2/a(x)}\abs{h'(x-)-h'(y)}\leq  \frac{C\delta}{\mu}, \label{eq:ebound3} \\
&\ \abs{\frac{2a'(x-)}{a^2(x)} - \frac{2a'(y)}{a^2(y)}} \abs{\EE h(Y(\infty))} +  \abs{\frac{2a'(x-)}{a^2(x)}}\abs{h(x)-h(y)} \notag \\
&\hspace{0.5cm} \leq \frac{C\delta}{\mu }\Big(1 + \frac{1}{\abs{\zeta}} \Big) 1(x \in [-1/\delta + \delta, -\zeta]) \label{eq:ebound4}\\
&\ \abs{\frac{2a'(x-)}{a^2(x)} - \frac{2a'(y)}{a^2(y)}} \abs{h(y)} \leq \frac{C\delta}{\mu } 1(x \in [-1/\delta + \delta, -\zeta])  \label{eq:ebound5}
\end{align}
\endgroup
\end{lemma}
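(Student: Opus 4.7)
The plan is to prove \eqref{eq:ebound1}--\eqref{eq:ebound5} by a direct case analysis on the lattice position of $x$. The key structural observation is that both $-1/\delta$ (which is $x_0$) and $-\zeta$ (which is $x_n$) are themselves lattice points, and the lattice spacing is $\delta$; consequently the open interval $(x-\delta, x)$ never straddles a breakpoint of $a$ or $r$ in its interior. Thus three regimes exhaust the possibilities: (i) $x = -1/\delta$ with $y \in (-1/\delta - \delta, -1/\delta)$; (ii) $x \in [-1/\delta + \delta, -\zeta]$ with $y \in (-1/\delta, -\zeta]$; and (iii) $x \geq -\zeta + \delta$ with $y \in (-\zeta, x)$.

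In each regime, $r$, $r'$, $a$ and $a'$ are smooth and have explicit, easy-to-bound derivatives. For \eqref{eq:ebound1}, $r'$ is constant ($=-2$) on $(-\infty,-1/\delta]$ and zero on $(-\zeta,\infty)$, so both terms vanish in regimes (i) (first term) and (iii) (both terms); in regime (ii), $r''(u) = 8\delta/(2+\delta u)^3$ is bounded by $8\delta$ and $|r'(x-)| \leq 4$, and I apply Lemma~\ref{lem:gb}'s bound $|f_h''| \leq (C/\mu)(1+1/|\zeta|)$ on $x \leq -\zeta$ to pick up a factor of $\delta$. For \eqref{eq:ebound2}, $|r(y) - r(x)| \leq 4\delta$ on $(-\infty,-\zeta]$ and vanishes on $(-\zeta,\infty)$, while $|r(x)| \leq 2|x|$ on $(-\infty,-\zeta]$ and $|r(x)| \leq |\zeta|$ on $[-\zeta,\infty)$; combined with $|f_h''(x)-f_h''(y)| \leq \delta \sup|f_h'''|$ (integrating the piecewise bound from Lemma~\ref{lem:gb}), this produces the stated bound. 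Bounds \eqref{eq:ebound3}--\eqref{eq:ebound5} are essentially elementary: $|h'| \leq 1$, $|h(u)| \leq |u|$, $h' \in \lipone$, and $a(u) \geq \mu$ with $|a'| \leq \mu\delta$ give $|2/a(x)-2/a(y)| \leq 2\delta^2/\mu$. For the $a'$-differences, $a'(y)$ and $a'(x-)$ agree in each regime (both equal $0$ in (i) and (iii) and both equal $\mu\delta$ in (ii)), so only the denominator $a^2$ varies; writing $g(u) = 1/(2+\delta u)^2$ with $|g'| \leq 2\delta$ on $[-1/\delta, -\zeta]$ gives $|2a'(x-)/a^2(x) - 2a'(y)/a^2(y)| \leq C\delta^3/\mu$ in regime (ii) and zero otherwise, and \eqref{DS:fbound7} supplies $|\EE h(Y(\infty))| \leq C(1+1/|\zeta|)$.

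The main delicate point is the bookkeeping at $x = -1/\delta$ in \eqref{eq:ebound2}: there $|r(x)| = 2/\delta$ is as large as can occur, but $|x| = 1/\delta$, so $\delta|r(x)| \leq C\delta(1+|x|)$ is precisely what the stated bound permits. Similarly in \eqref{eq:ebound5}, $|h(y)|$ may be as large as $1/\delta + \delta$, but the two extra factors of $\delta$ coming from $|g'| \leq 2\delta$ and from $|x-y| \leq \delta$ (equivalently, from the two derivatives of $a$ that appear in $a'/a^2$) absorb this into the desired $O(\delta/\mu)$. Once these two pressure points are handled by invoking the $(1+|x|)$ factor on the right-hand side of \eqref{eq:ebound2} and the cubic decay $C\delta^3/\mu$ in \eqref{eq:ebound5}, the remaining inequalities follow by routine triangle-inequality estimates and the gradient bounds of Lemma~\ref{lem:gb}.
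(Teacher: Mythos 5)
Your proposal is correct and follows essentially the same route as the paper's proof: a direct, term-by-term verification using the gradient bounds of Lemma~\ref{lem:gb}, the explicit forms and Lipschitz constants of $r$, $r'$, $a$, and $a'/a^2$ on the three pieces of the state space, and the key structural fact that $-1/\delta$ and $-\zeta$ are lattice points so that $(x-\delta,x)$ never straddles a breakpoint. The only cosmetic divergence is in \eqref{eq:ebound4}--\eqref{eq:ebound5}, where you observe $a'(x-)=a'(y)$ inside each regime and bound the difference of $1/a^2$ directly (gaining a harmless extra $\delta$), whereas the paper uses the triangle inequality and an integral estimate against $(2a'(u)/a^2(u))'$ to reach the same $O(\delta/\mu)$ conclusion.
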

The second lemma deals with the case when $y \in (x, x+\delta)$.
\begin{lemma} \label{lem:llplus}
Consider the same setup as in Lemma~\ref{lem:llminus}, but this time let $y \in (x, x + \delta)$. Then
\begin{align}
&\ \abs{r'(y)-r'(x-)}\abs{f_h'(y)} + \abs{r'(x-)}\abs{f_h'(x)-f_h'(y)} \notag \\
& \hspace{0.5cm} \leq \frac{C\delta}{\mu } \Big[ \Big(1 +  \frac{1}{\abs{\zeta}}\Big) 1(x \leq -\zeta-\delta)+ \frac{1}{\delta} \Big(1 +  \frac{1}{\abs{\zeta}}\Big)1(x\in \{-1/\delta, -\zeta\}) \Big], \label{eq:epbound1} \\
&\ \abs{r(y)-r(x)} \abs{f_h''(y)} + \abs{r(x)}\abs{f_h''(x)-f_h''(y)} \notag \\
&\hspace{0.5cm} \leq \frac{C\delta }{\mu }\bigg[(1+\abs{x})\Big(1 +  \frac{1}{\abs{\zeta}}\Big)  1( x\leq -\zeta-\delta) + \abs{\zeta}1(x \geq -\zeta ) \bigg], \label{eq:epbound2} \\
&\ \abs{2/a(x) - 2/a(y)} \abs{h'(y)} + \abs{2/a(x)}\abs{h'(x-)-h'(y)}\leq  \frac{C\delta}{\mu}, \label{eq:epbound3} \\
&\ \abs{\frac{2a'(x-)}{a^2(x)} - \frac{2a'(y)}{a^2(y)}} \abs{\EE h(Y(\infty))} +  \abs{\frac{2a'(x-)}{a^2(x)}}\abs{h(x)-h(y)} \notag \\
&\hspace{0.5cm} \leq \frac{C\delta}{\mu }\Big(1 + \frac{1}{\abs{\zeta}} \Big)1(x \in [-1/\delta, -\zeta]) \label{eq:epbound4}\\
&\ \abs{\frac{2a'(x-)}{a^2(x)} - \frac{2a'(y)}{a^2(y)}} \abs{h(y)} \leq \frac{C\delta}{\mu} \Big[ 1(x \in [-1/\delta + \delta, -\zeta - \delta]) + \frac{1}{\delta} 1(x \in \{-1/\delta, -\zeta\})\Big]  \label{eq:epbound5}
\end{align}
\end{lemma}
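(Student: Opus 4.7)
The plan is to mirror the proof of Lemma~\ref{lem:llminus}, bounding each of the five quantities in \eqref{eq:epbound1}--\eqref{eq:epbound5} separately by examining the terms appearing in \eqref{eq:lipf}. The decisive difference from the $y \in (x-\delta, x)$ case is that now the interval $(x, x+\delta)$ may straddle one of the two points where $r'(\cdot)$ and $a'(\cdot)$ jump, namely $-1/\delta$ and $-\zeta$; since $x = \delta(k-R)$ lives on the CTMC lattice and the lattice contains both of these points, such a crossing occurs precisely when $x \in \{-1/\delta, -\zeta\}$. This is exactly what produces the extra indicator terms in \eqref{eq:epbound1} and \eqref{eq:epbound5} that are absent in Lemma~\ref{lem:llminus}. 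I would make the following case split throughout: either $(x, x+\delta)$ lies entirely inside one of the three smooth pieces $(-\infty, -1/\delta]$, $[-1/\delta, -\zeta]$, $[-\zeta, \infty)$ (in which case Taylor-type arguments analogous to Lemma~\ref{lem:llminus} apply), or $x \in \{-1/\delta, -\zeta\}$ (in which case the ``jump'' at the discontinuity contributes an $O(1)$ rather than $O(\delta)$ difference, absorbed into the $\frac{1}{\delta}(\cdot)1(x \in \{-1/\delta, -\zeta\})$ terms).

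For \eqref{eq:epbound1}, I read off from \eqref{eq:rform} that $r''(\cdot)$ is uniformly bounded by a constant $C$ on each smooth piece, so when $(x, x+\delta)$ avoids the discontinuities $\abs{r'(y) - r'(x-)} \leq C\delta$; combined with the bound $\abs{f_h'(y)} \leq \frac{C}{\mu}(1 + 1/\abs{\zeta})1(y \leq -\zeta)$ from \eqref{DS:WCder1} this handles the first summand away from $\{-1/\delta,-\zeta\}$. At the two exceptional $x$ values the jump of $r'$ is $O(1)$, and multiplying by the gradient bound produces the $\frac{1}{\delta}(1+1/\abs{\zeta})1(x\in\{-1/\delta,-\zeta\})$ contribution. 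The second summand $\abs{r'(x-)}\abs{f_h'(x)-f_h'(y)}$ is controlled by writing $\abs{f_h'(x)-f_h'(y)} \leq \delta \sup_{[x,y]}\abs{f_h''}$ and invoking \eqref{DS:WCder2}, noting that $\abs{r'(x-)}$ is uniformly bounded and vanishes on $x > -\zeta$. For \eqref{eq:epbound2}, since $r(\cdot)$ is continuous and piecewise smooth with bounded derivative on each region, $\abs{r(y)-r(x)} \leq C\delta$, and the estimate on $f_h''$ in \eqref{DS:WCder2} combined with $\abs{r(x)} \leq C\mu(\abs{x}1(x\leq -\zeta) + \abs{\zeta})$ gives the first summand on the right of \eqref{eq:epbound2}; the second summand $\abs{r(x)}\abs{f_h''(x)-f_h''(y)}$ is bounded using \eqref{DS:WCder3} applied pointwise away from the two jump points of $f_h'''$, multiplied by $r(x)$, which produces the $(1+\abs{x})(1+1/\abs{\zeta})$ and $\abs{\zeta}$ factors on the two indicated regions.

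For \eqref{eq:epbound3}, I use that $a(\cdot)$ is globally Lipschitz with constant $\mu\delta$, hence $\abs{2/a(x)-2/a(y)} \leq C\delta^2/\mu \leq C\delta/\mu$, and the Lipschitz property of $h' \in \lipone$ gives $\abs{h'(x-)-h'(y)} \leq \delta$; combining with $\abs{h'(y)} \leq 1$ and $2/a(x) \leq 1/\mu$ yields the claimed $C\delta/\mu$ bound. The bounds \eqref{eq:epbound4} and \eqref{eq:epbound5} are where the second discontinuity phenomenon appears: $a'(\cdot)$ jumps by $\mu\delta$ at $-1/\delta$ and by $-\mu\delta$ at $-\zeta$, and the ratio $2a'(x-)/a^2(x)$ is $O(\delta/\mu)$ on $[-1/\delta, -\zeta]$ and zero elsewhere. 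When both $x$ and $y$ sit in the same smooth region the difference $\abs{2a'(x-)/a^2(x) - 2a'(y)/a^2(y)}$ is $O(\delta^2/\mu)$ (the derivative of the ratio is $O(\delta/\mu)$ and $\abs{y-x}<\delta$), while when $x\in\{-1/\delta, -\zeta\}$ the discontinuity makes this difference $O(\delta/\mu)$. For \eqref{eq:epbound4} the factor $\abs{\EE h(Y(\infty))} + \abs{h(x)-h(y)}$ is bounded by $C(1+1/\abs{\zeta})$ via \eqref{DS:fbound7} and the Lipschitz property of $h$, restricting to $x\in[-1/\delta,-\zeta]$ (the support of $a'$). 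For \eqref{eq:epbound5} the bound $\abs{h(y)}\leq\abs{y}$ contributes no $\abs{x}$ factor (since $\abs{y}\leq\abs{x}+\delta$), and the jump case at $x\in\{-1/\delta,-\zeta\}$ produces the extra $\frac{1}{\delta}1(x\in\{-1/\delta,-\zeta\})$ term after multiplying the $O(\delta/\mu)$ jump by $\abs{h(y)} = O(1/\delta)$ in the case $x = -1/\delta$.

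The main obstacle will be the bookkeeping: each of the five estimates requires a clean two-case split between ``the interval avoids the discontinuity points'' and ``$x$ sits exactly at a discontinuity point,'' and in each case the indicators must match precisely those claimed in the lemma statement. I do not expect any conceptual difficulty beyond what is already present in Lemma~\ref{lem:llminus}; the calculations are routine but must be executed with care to track which region $y$ lies in (in particular, the asymmetry that produces $1(x \leq -\zeta - \delta)$ in \eqref{eq:epbound2} versus $1(x \leq -\zeta)$ in \eqref{eq:ebound2} is a direct consequence of $y > x$).
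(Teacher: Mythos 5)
Your proposal follows the same route as the paper's proof: decompose each of the five estimates via a case split according to whether the interval $(x,x+\delta)$ lies in a single smooth piece of $r$, $a$ or sits at one of the discontinuity points $x\in\{-1/\delta,-\zeta\}$, apply the gradient bounds of Lemma~\ref{lem:gb} on the smooth pieces, and absorb the $O(1)$ jumps into the $\frac{1}{\delta}1(x\in\{-1/\delta,-\zeta\})$ terms. You correctly identify why the extra indicator terms appear relative to Lemma~\ref{lem:llminus} and why the indicators shift from $1(x\leq-\zeta)$ to $1(x\leq-\zeta-\delta)$.

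There is, however, a gap in your treatment of \eqref{eq:epbound5}. You only work out the jump case $x=-1/\delta$, where $|h(y)|\leq|y|=O(1/\delta)$ makes the product with the $O(\delta/\mu)$ jump come out $O(1/\mu)$. At $x=-\zeta$ this argument does not apply: for $y\in(-\zeta,-\zeta+\delta)$ we have $|h(y)|\leq|\zeta|+\delta$, and $|\zeta|$ is \emph{not} $O(1/\delta)$ in general (indeed $\delta|\zeta| = (n-R)/R$, which is unbounded when $n\gg R$). Multiplying the generic $O(\delta/\mu)$ bound on the jump by $|\zeta|$ produces $O(\delta|\zeta|/\mu)$, which can be arbitrarily large. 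To close the case $x=-\zeta$ one must use the explicit form $a(-\zeta)=\mu(2+\delta|\zeta|)$, giving $|2a'(x-)/a^2(x)| = \frac{2\delta}{\mu(2+\delta|\zeta|)^2}$, and then invoke the cancellation $\frac{\delta|\zeta|}{2+\delta|\zeta|}\leq 1$ (precisely the content of the paper's \eqref{eq:ap1}) so that the product of the jump with $|\zeta|+\delta$ is bounded by $C/\mu$. This cancellation is what the paper's proof uses at $x=-\zeta$ and it is never invoked in your argument.

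A secondary slip: you claim the derivative of $2a'(u)/a^2(u)$ on $(-1/\delta,-\zeta)$ is $O(\delta/\mu)$; in fact it equals $\frac{4\delta^2}{\mu(2+\delta u)^3}=O(\delta^2/\mu)$. This turns out to be harmless for \eqref{eq:epbound5} because the support restriction $x\in[-1/\delta+\delta,-\zeta-\delta]$ forces $|\zeta|<1/\delta$ and hence $\delta|x|\leq 1$, but the estimate should be stated correctly.
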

With these two lemmas, the proof of Lemma~\ref{lem:eterm} becomes trivial.
\begin{proof}[Proof of Lemma~\ref{lem:eterm}]
When $y \in (x-\delta, x)$, we just apply \eqref{eq:ebound1}--\eqref{eq:ebound5} from Lemma~\ref{lem:llminus} to \eqref{eq:lipf} to get \eqref{eq:eboundleft}. Similarly, for $y \in (x,x+\delta)$ we apply \eqref{eq:epbound1}--\eqref{eq:epbound5} of Lemma~\ref{lem:llplus} to \eqref{eq:lipf} to get \eqref{eq:eboundright}. This concludes the proof of Lemma~\ref{lem:eterm}.
\end{proof}

\subsubsection{Proof of Lemma~\ref{lem:llminus}}
\begin{proof}[Proof of Lemma~\ref{lem:llminus}]
Fix $k \in \Z_+$, let $x = x_k = \delta(k - R)$, and fix $y \in (x-\delta, x)$. Throughout the proof we use $C> 0$ to denote a generic constant that may change from line to line, but does not depend on $\lambda, n$, and $\mu$. To prove this lemma we verify \eqref{eq:ebound1}--\eqref{eq:ebound5}, starting with \eqref{eq:ebound1}. Using the form of $r'(x) = \Big(\frac{2b(x)}{a(x)}\Big)' $ in \eqref{eq:rform}, we see that
\begin{align*}
\abs{r'(y)-r'(x-)} = \abs{r'(y)-r'(x-)} 1(x \in [-1/\delta + \delta, -\zeta]).
\end{align*} 
Furthermore, $r''(u)$ exists for all $u \in (-1/\delta, -\zeta)$, and from \eqref{eq:rform} one can see that
\begin{align*}
r''(u) = \frac{8\delta}{(2 + \delta u)^3} \leq 8\delta, \quad u \in (-1/\delta, -\zeta).
\end{align*}
Therefore, 
\begin{align}
\abs{r'(y)-r'(x-)}\abs{f_h'(y)} \leq&\ \abs{f_h'(y)}1(x \in [-1/\delta + \delta, -\zeta]) \int_{x-\delta}^{x} \abs{r''(u)} du \notag\\
 \leq&\ \frac{C\delta^2}{\mu }\Big(1 +  \frac{1}{\abs{\zeta}}\Big) 1(x \in [-1/\delta + \delta, -\zeta ]), \label{inl:e1}
\end{align}
where in the last inequality we used the gradient bound \eqref{eq:WCder1}. Furthermore, we observe that
\begin{align*}
\abs{r'(x-)} \leq&\ 4 \times 1(x \leq -\zeta),\\
\abs{f_h'(x) - f_h'(y)} \leq&\ \int_{x-\delta}^{x} \abs{f_h''(u)} du \leq \frac{C\delta}{\mu }\Big[\Big(1 +  \frac{1}{\abs{\zeta}}\Big)1(x \leq -\zeta) + \frac{1}{\abs{\zeta}} 1(x \geq -\zeta+\delta)\Big],
\end{align*}
where in the first line we used the form of $r'(x)$ from \eqref{eq:rform}, and in the second line we used the gradient bound \eqref{eq:WCder2}. Recalling that $\delta \leq 1$, we conclude that
\begin{align*}
&\ \abs{r'(y)-r'(x-)}\abs{f_h'(y)} + \abs{r'(x-)}\abs{f_h'(x)-f_h'(y)} \leq \frac{C\delta}{\mu }  \Big(1 +  \frac{1}{\abs{\zeta}}\Big)1(x \leq -\zeta).
\end{align*}
This proves \eqref{eq:ebound1}, and we move on to show \eqref{eq:ebound2}. Observe that
\begin{align}
\abs{r(x)} \leq&\ 2\abs{x}1(x \leq -\zeta) + \abs{\zeta} 1(x \geq -\zeta+\delta), \notag \\
\abs{r(x) - r(y)} \leq&\ \int_{x-\delta}^{x} \abs{r'(u)} du \leq 4\delta 1(x \leq -\zeta), \notag \\
\abs{f_h''(y)} \leq&\ \frac{C}{\mu }\Big[\Big(1 +  \frac{1}{\abs{\zeta}}\Big)1(x \leq -\zeta) + \frac{1}{\abs{\zeta}} 1(x \geq -\zeta+\delta)\Big], \notag \\
\abs{f_h''(x) - f_h''(y)} \leq&\ \int_{x-\delta}^{x} \abs{f_h'''(u)} du \leq \frac{C\delta }{\mu }\Big[\Big(1 + \frac{1}{\abs{\zeta}}\Big)1(x \leq -\zeta) +  1(x \geq -\zeta+\delta)\Big], \label{inl:e2}
\end{align}
where the first two lines above are obtained using the form of $r(x)$ in \eqref{eq:rform}, and in the last two lines we used the gradient bounds \eqref{eq:WCder2} and \eqref{eq:WCder3}. Combining the bounds above proves \eqref{eq:ebound2}, and we move on to prove \eqref{eq:ebound3}. Observe that
\begin{align}
\abs{2/a(x)} \leq&\ 2/\mu, \notag \\
\abs{2/a(x) - 2/a(y)} \leq&\ 2\int_{x-\delta}^{x} \abs{\frac{a'(u)}{a^2(u)}} du \leq \frac{2\delta}{\mu} 1(x \in [-1/\delta + \delta, -\zeta]), \notag\\
\abs{h'(x-)} \leq&\ 1, \quad \text{ and } \quad \abs{h'(x-) - h'(y)} \leq \norm{h''} \abs{x-y} \leq \delta, \label{inl:e3}
\end{align}
where in the first two lines we used the forms of $a(x)$ and $a'(x)$ from \eqref{DS:adef} and \eqref{eq:ap}, and in the last line we used the fact that $h(x) \in W_2$. Combining these bounds proves \eqref{eq:ebound3}, and we move on to prove \eqref{eq:ebound4}. Observe that 
\begin{align*}
\abs{\frac{2a'(x-)}{a^2(x)}} =&\  \abs{\frac{2\delta}{\mu (2+\delta x)^2}} 1(x \in [-1/\delta + \delta, -\zeta]) \leq \frac{2\delta}{\mu}1(x \in [-1/\delta + \delta, -\zeta]),\\
\abs{\frac{2a'(y)}{a^2(y)}} \leq&\ \frac{2\delta}{\mu}1(x \in [-1/\delta + \delta, -\zeta]),\\
\abs{\EE h(Y(\infty))} \leq&\ \EE \big| Y(\infty)\big|, \quad \text{ and } \quad \abs{h(x)-h(y)} \leq \norm{h'} \abs{x-y} \leq \delta,
\end{align*} 
where in the first line we used the forms of $a(x)$ and $a'(x)$ from \eqref{DS:adef} and \eqref{eq:ap}, and in the last line we used the fact that $h(x) \in W_2$. We use the bounds above together with \eqref{DS:fbound7} and the fact that $\delta \leq 1$ to see that
\begin{align}
&\ \abs{\frac{2a'(x-)}{a^2(x)} - \frac{2a'(y)}{a^2(y)}} \abs{\EE h(Y(\infty))} + \abs{\frac{2a'(x-)}{a^2(x)}}\abs{h(x)-h(y)} \notag \\
\leq&\ \abs{\frac{2a'(x-)}{a^2(x)}}\EE \big| Y(\infty)\big| + \abs{\frac{2a'(y)}{a^2(y)}} \EE \big| Y(\infty)\big| + \frac{2\delta^2}{\mu}1(x \in [-1/\delta + \delta, -\zeta]) \notag \\
\leq&\ \frac{C\delta}{\mu}\Big(1 + \frac{1}{\abs{\zeta}} \Big)1(x \in [-1/\delta + \delta, -\zeta]) + \frac{2\delta^2}{\mu}1(x \in [-1/\delta + \delta, -\zeta]) \notag \\
\leq&\ \frac{C\delta}{\mu }\Big(1 + \frac{1}{\abs{\zeta}} \Big)1(x \in [-1/\delta + \delta, -\zeta]), \label{inl:e4}
\end{align}
which proves \eqref{eq:ebound4}. Lastly we show \eqref{eq:ebound5}. Observe that
\begin{align*}
\abs{\frac{2a'(x-)}{a^2(x)} - \frac{2a'(y)}{a^2(y)}} = \abs{\frac{2a'(x-)}{a^2(x)} - \frac{2a'(y)}{a^2(y)}} 1(x \in [-1/\delta + \delta, -\zeta]),
\end{align*} 
and that the derivative of $2a'(u-)/a^2(u)$ exists for all $u \in (-1/\delta, -\zeta)$ and satisfies 
\begin{align*}
\abs{\bigg(\frac{2a'(u)}{a^2(u)} \bigg)'} = \frac{4\delta^2}{\mu (2 + \delta u)^3}, \quad u \in (-1/\delta, -\zeta).
\end{align*}
Recalling that $\abs{h(y)} \leq \abs{y}$, we see that
\begin{align}
\abs{\frac{2a'(x-)}{a^2(x)} - \frac{2a'(y)}{a^2(y)}} \abs{h(y)} \leq&\ 1(x \in [-1/\delta + \delta, -\zeta]) \int_{x-\delta}^{x} \abs{y}\abs{\bigg(\frac{2a'(u)}{a^2(u)} \bigg)'} du  \notag \\
=&\ 1(x \in [-1/\delta + \delta, -\zeta]) \int_{x-\delta}^{x} \frac{4\delta}{\mu (2 + \delta u)^2} \abs{\frac{\delta y}{(2 + \delta u)}} du \notag  \\
\leq&\ 1(x \in [-1/\delta + \delta, -\zeta]) \frac{4\delta}{\mu }\int_{x-\delta}^{x}  \abs{\frac{\delta y}{ (2 + \delta u)}} du \notag \\
\leq&\ 1(x \in [-1/\delta + \delta, -\zeta]) \frac{4\delta}{\mu } \delta(\delta^2+1), \label{inl:e5}
\end{align}
where to obtain the last inequality, we used the fact that $\abs{y-u} \leq \delta$ and $\delta u \geq -1$ to see that
\begin{align*}
\abs{\frac{\delta y}{ (2 + \delta u)}} = \abs{\frac{\delta (y-u) + \delta u}{ (2 + \delta u)}} \leq \delta^2 + \abs{\frac{\delta u}{ 2 + \delta u}} \leq \delta^2 + 1.
\end{align*}
Recalling that $\delta \leq 1$ establishes \eqref{eq:ebound5}, and concludes the proof of this lemma.
\end{proof}

\subsubsection{Proof of Lemma~\ref{lem:llplus}}
\begin{proof}[Proof of Lemma~\ref{lem:llplus}]
Fix $k \in \Z_+$, let $x = x_k = \delta(k - R)$, and fix $y \in (x,x+\delta)$. Throughout the proof we use $C> 0$ to denote a generic constant that may change from line to line, but does not depend on $\lambda, n$, and $\mu$. The proof for this lemma is very similar to the proof of Lemma~\ref{lem:llminus}. In most cases, the only adjustment necessary to the proof is to consider cases when $x \leq -\zeta - \delta$ and $x \geq -\zeta$, instead of $x \leq -\zeta$ and $x \geq -\zeta + \delta$.
We now verify \eqref{eq:epbound1}--\eqref{eq:epbound5} in order, starting with \eqref{eq:epbound1}. Using the form of $r'(x)$ in \eqref{eq:rform}, we see that
\begin{align*}
\abs{r'(y)-r'(x-)} =&\ \abs{r'(y)-r'(x-)} 1(x \in [-1/\delta + \delta, -\zeta  - \delta]) \\
&+ (\abs{r'(y)}+2) 1(x = -1/\delta) + \abs{r'(x-)} 1(x = -\zeta).
\end{align*} 
Therefore, 
\begin{align*}
&\ \abs{r'(y)-r'(x-)}\abs{f_h'(y)}\\
=&\ \abs{r'(y)-r'(x-)} \abs{f_h'(y)} 1(x \in [-1/\delta + \delta, -\zeta  - \delta]) \\
&+ (\abs{r'(y)}+2)\abs{f_h'(y)}  1(x = -1/\delta) + \abs{r'(x-)} \abs{f_h'(y)}1(x = -\zeta) \\
 \leq&\ \frac{C\delta^2}{\mu }\Big(1 +  \frac{1}{\abs{\zeta}}\Big) 1(x \in [-1/\delta + \delta, -\zeta - \delta]) + \frac{C}{\mu }\Big(1 +  \frac{1}{\abs{\zeta}}\Big) 1(x = -1/\delta) \\
 &+ \abs{r'(x-)} \abs{f_h'(y)}1(x = -\zeta),
\end{align*}
where in the last inequality, the first term is obtained just like in \eqref{inl:e1}, and the second term comes from the gradient bound \eqref{eq:WCder1} and the fact that $\abs{r'(y)} \leq 4$, which can be seen from \eqref{eq:rform}. Now using the gradient bounds \eqref{eq:WCder1} and \eqref{eq:WCder2}, together with the facts that $\abs{r'(\abs{\zeta}-)} \leq 4$ and $\delta \leq 1$, we see that
\begin{align*}
&\ \abs{r'(x-)} \abs{f_h'(y)}1(x = -\zeta) \\
\leq&\ \abs{r'(x-)} \abs{f_h'(x)}1(x = -\zeta) + \abs{r'(x-)} \abs{f_h'(x) - f_h'(y)}1(x = -\zeta) \\
\leq&\ \frac{C}{\mu }\Big(1 +  \frac{1}{\abs{\zeta}}\Big) 1(x = -\zeta) + \abs{r'(x-)} 1(x = -\zeta) \int_{-\zeta}^{-\zeta + \delta} \abs{f_h''(u)} du \\
\leq&\ \frac{C}{\mu }\Big(1 +  \frac{1}{\abs{\zeta}}\Big) 1(x = -\zeta),
\end{align*}
and therefore
\begin{align*}
\abs{r'(y)-r'(x-)}\abs{f_h'(y)} \leq&\ \frac{C\delta}{\mu }\Big(1 +  \frac{1}{\abs{\zeta}}\Big) 1(x \in [-1/\delta + \delta, -\zeta - \delta]) \\
&+ \frac{C}{\mu }\Big(1 +  \frac{1}{\abs{\zeta}}\Big) 1(x \in \{-1/\delta, -\zeta\}).
\end{align*}
Furthermore, 
\begin{align*}
\abs{r'(x-)}\abs{f_h'(x)-f_h'(y)} \leq&\ \abs{r'(x-)} \int_{x}^{x+\delta} \abs{f_h''(u)} du\\
\leq&\ \frac{C\delta}{\mu }\Big(1 +  \frac{1}{\abs{\zeta}}\Big)1(x \leq -\zeta-\delta) + \frac{C\delta}{\mu \abs{\zeta}}1(x = -\zeta) ,
\end{align*}
where in the second inequality we used that $\abs{r'(x)} \leq 4$ and the gradient bound in \eqref{eq:WCder2}. Recalling that $\delta \leq 1$, we can combine the bounds above to see that 
\begin{align*}
&\ \abs{r'(y)-r'(x-)}\abs{f_h'(y)} + \abs{r'(x-)}\abs{f_h'(x)-f_h'(y)} \notag \\
\leq&\ \frac{C\delta}{\mu } \Big[ \Big(1 +  \frac{1}{\abs{\zeta}}\Big) 1(x \leq -\zeta-\delta)+ \frac{1}{\delta} \Big(1 +  \frac{1}{\abs{\zeta}}\Big)1(x\in \{-1/\delta, -\zeta\}) \Big],
\end{align*}
which proves \eqref{eq:epbound1}.

 The proofs for \eqref{eq:epbound2}, \eqref{eq:epbound3}, and \eqref{eq:epbound4}, are nearly identical to the proofs of \eqref{eq:ebound2}, \eqref{eq:ebound3}, and \eqref{eq:ebound4} from Lemma~\ref{lem:llminus}, respectively, and we do not repeat them here. The only differences to note is that \eqref{eq:epbound2} is separated into the cases $x \leq -\zeta - \delta$ and $x \geq -\zeta$, as opposed to \eqref{eq:ebound2} which has $x \leq -\zeta$ and $x \geq -\zeta + \delta$. Likewise, \eqref{eq:epbound4} contains $1(x \in [-1/\delta, -\zeta])$, whereas \eqref{eq:ebound4} contains $1(x \in [-1/\delta + \delta, -\zeta])$.

 Lastly we prove \eqref{eq:epbound5}. From the form of $a'(x)$ in \eqref{eq:ap}, we see that
\begin{align*}
\abs{\frac{2a'(x-)}{a^2(x)} - \frac{2a'(y)}{a^2(y)}} =&\ \abs{\frac{2a'(x-)}{a^2(x)} - \frac{2a'(y)}{a^2(y)}} 1(x \in [-1/\delta + \delta, -\zeta-\delta]) \\
&+ \abs{\frac{2a'(y)}{a^2(y)}} 1(x = -1/\delta) + \abs{\frac{2a'(x-)}{a^2(x-)}} 1(x = -\zeta).
\end{align*}  
We can repeat the argument from \eqref{inl:e5} to get
\begin{align*}
&\abs{\frac{2a'(x-)}{a^2(x)} - \frac{2a'(y)}{a^2(y)}} \abs{h(y)} \\
 \leq&\ \frac{4\delta}{\mu } \delta(\delta^2+1)1(x \in [-1/\delta + \delta, -\zeta - \delta])  +  \abs{\frac{2a'(y)}{a^2(y)}} \abs{y}1(x=-1/\delta)\\
 & +  \abs{\frac{2a'(x-)}{a^2(x-)}}\abs{y}1(x = -\zeta).
\end{align*}
Then using \eqref{eq:ap1}, the form of $a'(x)$ in \eqref{eq:ap}, and the fact that $a(x) \leq 1/\mu$, we can bound the term above by
\begin{align*}
&\ \frac{C\delta}{\mu }1(x \in [-1/\delta + \delta, -\zeta - \delta]) +  \frac{2}{\abs{a(y)}} \abs{\frac{ya'(y)}{a(y)}}1(x=-1/\delta)  \\
&+  \frac{2}{\abs{a(x-)}} \abs{\frac{(\abs{x}+\delta)a'(x-)}{a(x-)}}1(x = -\zeta) \\
\leq&\ \frac{C\delta}{\mu }1(x \in [-1/\delta + \delta, -\zeta - \delta]) + \frac{C}{\mu }1(x=-1/\delta) + \frac{C}{\mu}1(x = -\zeta).
\end{align*}
Hence,
\begin{align*}
\abs{\frac{2a'(x-)}{a^2(x)} - \frac{2a'(y)}{a^2(y)}} \abs{h(y)} \leq&\ \frac{C\delta}{\mu} \Big[ 1(x \in [-1/\delta + \delta, -\zeta - \delta]) + \frac{1}{\delta} 1(x \in \{-1/\delta, -\zeta\})\Big],
\end{align*}
which proves \eqref{eq:epbound5} and concludes the proof of this lemma.
\end{proof}
\subsection{Proof of Lemma~\ref{lem:MDgradient_bounds} (Kolmogorov Bounds)} \label{app:MDgb}
\begin{proof}[Proof of Lemma~\ref{lem:MDgradient_bounds}]
From \eqref{eq:fprimeneg} and \eqref{eq:fprimepos}
Its not hard to check that
\begin{align*}
f_z'(w) = 
\begin{cases}
\Prob(Y_S \geq z)e^{-\int_{0}^{w} \frac{2\bar b(u)}{\bar a(u)}du}\int_{-\infty}^{w} \frac{2}{\bar a(y)}e^{\int_{0}^{y} \frac{2\bar b(u)}{\bar a(u)}du} dy , \quad w \leq z, \\
 \Prob(Y_S \leq z)e^{-\int_{0}^{w} \frac{2\bar b(u)}{\bar a(u)}du} \int_{w}^{\infty} \frac{2}{\bar a(y)} e^{\int_{0}^{y} \frac{2\bar b(u)}{\bar a(u)}du} dy , \quad w \geq z,
\end{cases}
\end{align*}
In fact, for $w \geq z \geq -\zeta$, 
\begin{align*}
f_z'(w) =&\ e^{-\int_{0}^{-\zeta} \frac{2\bar b(y)}{\bar a(y)}dy}e^{-(w+\zeta) \frac{2\bar b(-\zeta)}{\bar a(-\zeta)}} \Prob(Y_S \leq z)\int_{w}^{\infty} \frac{2}{\bar a(y)} e^{\int_{0}^{-\zeta} \frac{2\bar b(u)}{\bar a(u)}du}e^{(y+\zeta) \frac{2\bar b(-\zeta)}{\bar a(-\zeta)}} dy \\
=&\ e^{-w \frac{2\bar b(-\zeta)}{\bar a(-\zeta)}} \frac{2\Prob(Y_S \leq z)}{\bar a(-\zeta)}\int_{w}^{\infty}  e^{y \frac{2\bar b(-\zeta)}{\bar a(-\zeta)}} dy \\
=&\ -\frac{\Prob(Y_S \leq z)}{\bar b(-\zeta)},
\end{align*}
and hence, $f_z''(w) = 0$ for $w \geq z$. Applying \eqref{DS:fbound1} to the form of $f_z'(w)$ tells us that for $x \leq -\zeta$, 
\begin{align*}
\abs{f_z'(w)} \leq \frac{1}{\mu }e^{\zeta^2}(3+\abs{\zeta}),
\end{align*}
which proves \eqref{MD:gb1}. To prove the rest of the bounds on $\abs{f_z''(w)}$, we differentiate $f_z'(w)$ to see that for $w \leq z$, 
\begin{align}
\frac{1}{\Prob(Y_S \geq z)}f_z''(w) = -\frac{2\bar b(w)}{\bar a(w)}e^{-\int_{0}^{w} \frac{2\bar b(u)}{\bar a(u)}du}\int_{-\infty}^{w} \frac{2}{\bar a(y)}e^{\int_{0}^{y} \frac{2\bar b(u)}{\bar a(u)}du} dy + \frac{2}{\bar a(w)}. \label{MD:fpp}
\end{align}
We claim that the right hand side above is bounded by $2/\bar a(w) \leq 2/\mu$ when $w \leq 0$. This is true for $w = 0$ because $\bar b(0) = 0$. For $w < 0$, we use \eqref{eq:main1} to see that 
\begin{align*}
 &\ \frac{2\bar b(w)}{\bar a(w)}e^{-\int_{0}^{w} \frac{2\bar b(u)}{\bar a(u)}du}\int_{-\infty}^{w} \frac{2}{\bar a(y)}e^{\int_{0}^{y} \frac{2\bar b(u)}{\bar a(u)}du} dy \leq \frac{2}{\bar a(w)}.
\end{align*}
Combining this with the fact that $\bar b(w) > 0$ for $w < 0$ verifies our claim. When $w \in [0,-\zeta]$, we apply \eqref{DS:fbound1} and the fact that $\abs{\bar b(w)} = \mu w$ to \eqref{MD:fpp} to conclude that
\begin{align*}
\frac{1}{\Prob(Y_S \geq z)}\abs{f_z''(w)} \leq&\  \frac{2}{\bar a(w)}\Big(\mu w\frac{1}{\mu}e^{\zeta^2} (3 + \abs{\zeta})+1\Big)\\
\leq&\  \frac{C}{\mu}e^{\zeta^2}(w(1+\abs{\zeta})+1),
\end{align*}
where in the last inequality we used the fact that $2/\bar a(w) \leq 2/\mu$. This proves \eqref{MD:gb3}, and it remains to prove the bound on $\abs{f_z''(w)}$ in the case when $w \in [-\zeta, z]$. Observe that 
\begin{align*}
&\ e^{-\int_{0}^{w} \frac{2\bar b(u)}{\bar a(u)}du}\int_{-\infty}^{w} \frac{2}{\bar a(y)}e^{\int_{0}^{y} \frac{2\bar b(u)}{\bar a(u)}du} dy \\
=&\ e^{-\int_{0}^{-\zeta} \frac{2\bar b(u)}{\bar a(u)}du}e^{-\int_{-\zeta}^{w} \frac{2\bar b(u)}{\bar a(u)}du}\int_{-\infty}^{-\zeta} \frac{2}{\bar a(y)}e^{\int_{0}^{y} \frac{2\bar b(u)}{\bar a(u)}du} dy \\
&+ e^{-\int_{0}^{-\zeta} \frac{2\bar b(u)}{\bar a(u)}du}e^{-\int_{-\zeta}^{w} \frac{2\bar b(u)}{\bar a(u)}du}\int_{-\zeta}^{w} \frac{2}{\bar a(y)}e^{\int_{0}^{-\zeta} \frac{2\bar b(u)}{\bar a(u)}du}e^{\int_{-\zeta}^{y} \frac{2\bar b(u)}{\bar a(u)}du} dy\\
\leq&\ e^{-\int_{-\zeta}^{w} \frac{2\bar b(u)}{\bar a(u)}du}\frac{1}{\mu}e^{\zeta^2} (3 + \abs{\zeta})+ e^{-\int_{-\zeta}^{w} \frac{2\bar b(u)}{\bar a(u)}du}\int_{-\zeta}^{w} \frac{2}{\bar a(y)}e^{\int_{-\zeta}^{y} \frac{2\bar b(u)}{\bar a(u)}du} dy\\
=&\ e^{-(w+\zeta)\frac{2\bar b(-\zeta)}{\bar a(-\zeta)}}\frac{1}{\mu}e^{\zeta^2} (3 + \abs{\zeta})+ e^{-(w+\zeta)\frac{2\bar b(-\zeta)}{\bar a(-\zeta)}}\int_{-\zeta}^{w} \frac{2}{\bar a(-\zeta)}e^{(y+\zeta)\frac{2\bar b(-\zeta)}{\bar a(-\zeta)}} dy\\
=&\ e^{-(w+\zeta)\frac{2\bar b(-\zeta)}{\bar a(-\zeta)}}\frac{1}{\mu}e^{\zeta^2} (3 + \abs{\zeta})+ \frac{1}{\bar b(-\zeta)} \Big(1 - e^{-(w+\zeta)\frac{2\bar b(-\zeta)}{\bar a(-\zeta)}}\Big)\\
\leq&\ e^{-(w+\zeta)\frac{2\bar b(-\zeta)}{\bar a(-\zeta)}}\frac{1}{\mu}e^{\zeta^2} (3 + \abs{\zeta})+ \frac{1}{\abs{\bar b(-\zeta)}} e^{-(w+\zeta)\frac{2\bar b(-\zeta)}{\bar a(-\zeta)}}.
\end{align*}
We combine the above inequality with \eqref{MD:fpp} to see that for $w \in [-\zeta, z]$, 
and therefore 
\begin{align*}
&\frac{1}{\Prob(Y_S \geq z)}|f_z''(w)| \\
=&\ \bigg| -\frac{2\bar b(-\zeta)}{\bar a(-\zeta)}e^{-\int_{0}^{w} \frac{2\bar b(u)}{\bar a(u)}du}\int_{-\infty}^{w} \frac{2}{\bar a(y)}e^{\int_{0}^{y} \frac{2\bar b(u)}{\bar a(u)}du} dy + \frac{2}{\bar a(-\zeta)} \bigg| \\
\leq&\  \abs{\frac{2\bar b(-\zeta)}{\bar a(-\zeta)}}\Big(e^{-(w+\zeta)\frac{2\bar b(-\zeta)}{\bar a(-\zeta)}}\frac{1}{\mu}e^{\zeta^2} (3 + \abs{\zeta})+ \frac{1}{\abs{\bar b(-\zeta)}} e^{-(w+\zeta)\frac{2\bar b(-\zeta)}{\bar a(-\zeta)}}\Big)+ \frac{1}{\mu } \\
\leq&\ \frac{1}{\mu}e^{-(w+\zeta)\frac{2\bar b(-\zeta)}{\bar a(-\zeta)}}e^{\zeta^2} (3\abs{\zeta} + \zeta^2)+ \frac{1}{\mu} e^{-(w+\zeta)\frac{2\bar b(-\zeta)}{\bar a(-\zeta)}}+ \frac{1}{\mu } \\
\leq&\ \frac{1}{\mu}e^{w\frac{2\abs{\bar b(-\zeta)}}{\bar a(-\zeta)}}e^{\zeta^2} (3\abs{\zeta} + \zeta^2)+ \frac{1}{\mu} e^{w\frac{2\abs{\bar b(-\zeta)}}{\bar a(-\zeta)}}+ \frac{1}{\mu }\\
\leq&\ \frac{1}{\mu}e^{w\frac{2\abs{\bar b(-\zeta)}}{\bar a(-\zeta)}}e^{\zeta^2} (1+3\abs{\zeta} + \zeta^2)+ \frac{1}{\mu },
\end{align*}
where in the first inequality we used the fact that $a(w) \geq 2\mu$ for $w \in [-\zeta, z]$. This proves \eqref{MD:gb4} once we recall that $2\abs{\bar b(-\zeta)}/\bar a(-\zeta) = 2\abs{\zeta}/(2+\delta \abs{\zeta})$. 
\end{proof}

\section{Gradient Bounds for Chapter~\ref{chap:phasetype} } \label{app:PHgradbounds}
In this section, we prove  Lemma~\ref{lemma:poisson}. We adopt the notation from Chapter~\ref{chap:phasetype}. Before proving the lemma, we introduce an important common quadratic Lyapunov
function from \cite{DiekGao2013}.  This Lyapunov function
plays a key role in the proof of this lemma. As in (5.24) of \cite{DiekGao2013}, for
$x \in \R^d$, define
\begin{equation} \label{eq:deflyapou}
V(x) = (e^Tx)^2 + \kappa[x-p \phi(e^Tx)]' M [x-p \phi(e^Tx)],
\end{equation}
where $\kappa>0$ is some constant, $M$ is some $d\times d$ positive definite matrix, and the function $\phi$ is a smooth approximation to $x \longmapsto x^+$ and is defined by
\begin{displaymath}
\phi(x) =  \begin{cases}
    x, & \text{if $x \geq 0$},\\
    -\frac{1}{2}\epsilon, & \text{if $x \leq -\epsilon$},\\
    \text{smooth}, & \text{if $-\epsilon < x < 0$}.
  \end{cases}
\end{displaymath}
In (5.24) of \cite{DiekGao2013}, the authors use $\tilde Q$ to represent the positive definite matrix that we called $M$ in \eqref{eq:deflyapou}. We use $M$ instead of $\tilde Q$ on purpose, to avoid any potential confusion with the queue size $Q(t)$. 
For our purposes, ``smooth" means that $\phi$ can be anything as long as $\phi \in C^3(\R^d)$.  We require that the ``smooth" part of $\phi$ also satisfies $-\frac{1}{2}\epsilon < \phi (x) < x$ and $0 \leq {\phi}'(x)\leq 1$. For example, $\phi$ can be taken to be a polynomial of sufficiently high degree on $(-\epsilon, 0)$ and this will satisfy our requirements.  The vector $p$ is as in (\ref{eq:defou}). The constant $\kappa$ and matrix $M$ are chosen just as in \cite{DiekGao2013}; their exact values are not important to us. In their paper, they show that $V(x)$ satisfies
\begin{displaymath}
G_YV(x) \leq -c_1V(x) + c_2 \quad \text{ for all } x\in \R^d
\end{displaymath}
for some positive constants $c_1$,$c_2$; this result requires $\alpha > 0$, i.e.\ a strictly positive abandonment rate. Before proceeding to the proof of Lemma~\ref{lemma:poisson}, we state two bounds on $V(x)$ that shall be useful in the future. For some constant $C>0$,
\begin{eqnarray} 
V(x) \leq C(1+ \abs{x}^2), \label{eq:lyapupperbound}\\
\abs{x}^2 \leq C(1+V(x)). \label{eq:lyaplowbound}
\end{eqnarray}
The first is immediate from the form of $V(x)$, while the second is proved in \cite{DiekGao2013}.

\begin{proof}[Proof of Lemma~\ref{lemma:poisson}]
Without loss of generality, we may assume that $h(0) = 0$, otherwise one may consider $h(x) -h(0)$. This lemma is essentially a restatement of equation (22) and equation (40) from the discussion that follows after \cite[Theorem 4.1]{Gurv2014}. We verify that (22) and (40) are applicable in our case by first confirming that we have a function satisfying assumption 3.1 of \cite{Gurv2014}. Recalling the definition of $V(x)$ from (\ref{eq:deflyapou}), when $\phi$ is taken to be a polynomial (of sufficiently high degree to guarantee $V(x) \in C^3(\R^d)$), the function 
\begin{displaymath}
1+V(x)
\end{displaymath}
satisfies assumption 3.1. To verify condition (17) of Assumption 3.1, one observes that 
\begin{displaymath}
X^{(\lambda)}(t) \leq X^{(\lambda)}(0) + n + A^{(\lambda)}(t),
\end{displaymath}
where $A^{(\lambda)}(t)$ is the total number of arrivals to the system by time $t$
and it is a Poisson random variable with mean $\lambda t$ for each
$t\ge 0$. The properties of Poisson processes then yield (17). By
\cite[Remark 3.4]{Gurv2014},
\begin{displaymath}
C(1+V(x))^m
\end{displaymath}
also satisfies assumption 3.1 for any constant $C>0$. Since we require that $\abs{h(x)} \leq \abs{x}^m$, by (\ref{eq:lyaplowbound}) we have 
\begin{displaymath}
\abs{h(x) - \E h(Y(\infty))} \leq \abs{x}^m + \E \abs{Y(\infty)}^m \leq C_m (1+V(x))^m.
\end{displaymath}
The finiteness of $\E \abs{Y(\infty)}^m$ is guaranteed because one of the conditions of assumption 3.1 is that
\begin{displaymath}
G_Y (1+V(x))^m \leq -c_1 (1+V(x))^m + c_2
\end{displaymath}
for some positive constants $c_1$ and $c_2$.\ Therefore, equation (22) gives us (\ref{eq:gradbound1}) and equation (40) gives us (\ref{eq:gradbound2}) and (\ref{eq:gradbound3}). We get (\ref{eq:gradbound4}) by observing that in the discussion preceding (40), everything still holds if we replace $B_x(\bar l / \sqrt{n})$ by an open ball of radius $1$ centered at $x$. We wish to point out that the constants in (40) and (22) do not depend on the choice of function $h(x)$.
\end{proof}

\bibliography{dai20170325}

\end{document}